\numberwithin{equation}{section}
\theoremstyle{plain}
\newtheorem{Theorem}{Theorem}[section]
\newtheorem{Corollary}{Corollary}[section]
\newtheorem{Proposition}{Proposition}[section]
\newtheorem{Lemma}{Lemma}[section]
\theoremstyle{definition}
\newtheorem{Definition}{Definition}[section]
\def\R{\mathbb{R}}
\def\Q{\mathbb{Q}}
\def\N{\mathbb{N}}
\def\Z{\mathbb{Z}}
\def\U{\mathbb{U}}
\def\A{\mathbb{A}}
\def\eps{\varepsilon}
\def\ph{\varphi}
\def\Eta{\mathrm{H}}
\def\Iota{\mathrm{I}}
\def\Mu{\mathrm{M}}
\def\Nu{\mathrm{N}}
\def\Rho{\mathrm{P}}
\def\Upsilon{\mathrm{Y}}
\def\Tau{\mathrm{T}}
\def\ee{\mathrm{e}}
\def\calA{\mathcal{A}}
\def\calB{\mathcal{B}}
\def\calC{\mathcal{C}}
\def\calE{\mathcal{E}}
\def\calF{\mathcal{F}}
\def\calI{\mathcal{I}}
\def\calK{\mathcal{K}}
\def\calN{\mathcal{N}}
\def\calP{\mathcal{P}}
\def\calQ{\mathcal{Q}}
\def\rad01{\mathcal{R}}
\def\calZ{\mathcal{Z}}
\def\frakH{\mathfrak{H}}
\def\frakL{\mathfrak{L}}
\def\frakQ{\mathfrak{Q}}
\def\frakA{\mathfrak{A}}
\def\frakF{\mathfrak{F}}
\def\frakn{\mathfrak{n}}
\def\fraks{\mathfrak{s}}
\def\frakj{\mathfrak{j}}
\def\frakJ{\mathfrak{J}}
\def\rmr{\mathrm{r}}
\def\ind{\mathbf{1}}
\def\zerofunc{\mathbf{0}}
\def\majo{\mathfrak{M}}
\def\mino{\mathfrak{m}}
\def\zeroleb{\calZ}
\def\gauge{\mathfrak{G}}
\def\bor{\calB}
\def\leb{\mathcal{L}}
\def\hau{\mathcal{H}}
\def\netm{\mathcal{M}}
\def\pack{\mathcal{P}}
\def\prepack{P}
\newcommand{\lic}[2]{\mathcal{G}^{#1}(#2)}
\newcommand{\croc}[2]{\langle {#1},{#2}\rangle}
\def\prob{\mathbb{P}}
\def\esp{\mathbb{E}}
\def\as{\mathrm{a.s.}}
\def\Hdim{\dim_{\mathrm{H}}}
\def\Pdim{\dim_{\mathrm{P}}}
\def\UBdim{\operatorname{\overline{dim}_{B}}}
\def\opball{\mathrm{B}}
\def\clball{\overline{\mathrm{B}}}
\def\lad{\underline{\delta}}
\newcommand{\diam}[1]{|#1|}
\newcommand{\gene}[1]{\langle #1\rangle}
\newcommand{\dist}[2]{\mathrm{d}(#1,#2)}
\def\prd{\calP(\R^d)}
\def\cantor{\mathbb{K}}
\newcommand{\distZ}[1]{\left\| #1\right\|}
\newcommand{\closure}[1]{\overline{#1}}
\newcommand{\interior}[1]{\operatorname{int} #1}
\def\dd{\mathrm{d}}
\def\smallo{\mathrm{o}}
\def\bad{\mathrm{Bad}}
\def\well{\mathrm{Well}}
\begin{document}

\title[Describability via ubiquity and eutaxy]{Describability via ubiquity and eutaxy in Diophantine approximation}
\author{Arnaud Durand}\address{Arnaud Durand\\ Universit\'e Paris-Sud\\ Laboratoire de Math\'ematiques d'Orsay -- UMR 8628\\ B\^atiment 425\\ 91405 Orsay Cedex, France}
\email{arnaud.durand@math.u-psud.fr}

\subjclass[2010]{11J82, 11J83, 28A78, 28A80, 60D05, 60G17, 60G51}
\date{\today}

\begin{abstract}
	We present a comprehensive framework for the study of the size and large intersection properties of sets of limsup type that arise naturally in Diophantine approximation and multifractal analysis. This setting encompasses the classical ubiquity techniques, as well as the mass and the large intersection transference principles, thereby leading to a thorough description of the properties in terms of Hausdorff measures and large intersection classes associated with general gauge functions. The sets issued from eutaxic sequences of points and optimal regular systems may naturally be described within this framework. The discussed applications include the classical homogeneous and inhomogeneous approximation, the approximation by algebraic numbers, the approximation by fractional parts, the study of uniform and Poisson random coverings, and the multifractal analysis of L\'evy processes.
\end{abstract}

\maketitle


\section{Introduction}

The aim of these notes is to present a comprehensive framework for the study of the size and large intersection properties of sets of limsup type. Such sets arise naturally in Diophantine approximation and multifractal analysis. A simple example is the set of real numbers approximable at rate $\tau$ by rational numbers, namely,
	\[
		J_\tau=\left\{x\in\R\Biggm|\left|x-\frac{p}{q}\right|<\frac{1}{q^\tau}\quad\text{for i.m.~}(p,q)\in\Z\times\N\right\},
	\]
	where i.m.~stands for ``infinitely many''. It follows from Dirichlet's pigeon-hole principle that the sets $J_\tau$, for $\tau\leq 2$, all coincide with the whole real line. For $\tau>2$ however, the sets are small in the sense that they have Lebesgue measure zero. The notion of Hausdorff dimension enables one to give a more precise description of their size. Specifically, a result due to Jarn\'ik and Besicovitch ensures that each set $J_\tau$ has dimension equal to $2/\tau$. Moreover, the fractal structure of these sets is striking: they satisfy the large intersection property discovered by Falconer.
	
The sets $J_\tau$, though representative, are only a particular instance of a wide category of sets enjoying the same remarkable properties. They are of the form
	\[
		\frakF((x_i,r_i)_{i\in\calI})=\left\{x\in\R^d\bigm||x-x_i|<r_i\quad\text{for i.m.~} i\in\calI\right\},
	\]
	where the countably many points $x_i$ and positive real numbers $r_i$ constitute an {\em approximation system}. Under natural hypotheses on the system, a general construction called {\em ubiquity} will enable us to derive the Hausdorff dimension of the latter set, and also to show that the large intersection property holds. Moreover, through {\em the mass and the large intersection transference principles}, we shall explain how to extend the study to Hausdorff measures and large intersection classes associated with general gauge functions. We shall then present a new setting for the analysis of these sets: we shall show that, in most situations, they are {\em fully describable}, meaning that the description of their size and large intersection properties is as complete and precise as possible.

Full describability arises in particular when the underlying approximation system is issued from a {\em eutaxic sequence of points} or an {\em optimal regular system}. We shall study thoroughly these two situations and illustrate them by many examples. Optimal regular systems will enable us to discuss the homogeneous and inhomogeneous Diophantine approximation problems, as well as the approximation by algebraic numbers. With the help of eutaxy, we shall be able study the approximation by fractional parts of sequences, random coverings problems, and finally the multifractal properties of L\'evy processes.

\medskip
{\em Acknowledgements.} These notes are based on series of lectures given during the 2012 Program on Stochastics, Dimension and Dynamics at Morningside Center of Mathematics in Beijing, the 2013 Arithmetic Geometry Year at Poncelet Laboratory in Moscow, and the 2014 Spring School in Analysis held at Universit\'e Blaise Pascal in Clermont-Ferrand. The author would like to thank the organizers of these events, especially Fr\'ed\'eric Bayart, Ai-Hua Fan, Yanick Heurteaux, Philippe Lebacque, Andrzej St\'os and Alexey Zykin, and all the attendees for their valuable comments and questions.


\section{Elementary Diophantine approximation}

\subsection{Very well approximable numbers}\label{subsec:verywell}

Diophantine approximation is originally concerned with the approximation of real numbers by rational numbers or, more generally, the approximations of points in $\R^d$ by points with integer coordinates. The first result on this topic is due to Dirichlet. Throughout, $|\,\cdot\,|_{\infty}$ denotes the supremum norm on $\R^d$.

\begin{Theorem}[Dirichlet, 1842]\label{thm:Dirichlet}
	Let us consider a point $x\in\R^{d}$. Then, for any integer $Q>1$, the next system admits a solution $(p,q)$ in $\Z^d\times\N$\,:
	\begin{equation}\label{eq:thm:Dirichlet}
		\left\{\begin{array}{l}
			1\leq q<Q^d\\[1mm]
			|qx-p|_{\infty}\leq 1/Q
		\end{array}\right.
	\end{equation}
\end{Theorem}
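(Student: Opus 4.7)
The plan is to apply the Dirichlet pigeon-hole principle on the $d$-dimensional unit cube. First I would partition $[0,1)^d$ into the $Q^d$ half-open subcubes $\prod_{i=1}^d[a_i/Q,(a_i+1)/Q)$ with $a_i\in\{0,\ldots,Q-1\}$, each of diameter strictly less than $1/Q$ in the supremum norm. Writing $\{y\}\in[0,1)^d$ for the componentwise fractional part of $y\in\R^d$, I would then drop the points $\{jx\}$ for $j=0,1,\ldots,Q^d$ into these subcubes. By pigeon-hole, two indices $j_1<j_2$ produce $\{j_1x\}$ and $\{j_2x\}$ in the same subcube; setting $q=j_2-j_1$ and $p=\lfloor j_2 x\rfloor-\lfloor j_1 x\rfloor\in\Z^d$ (coordinatewise floors) yields at once $|qx-p|_\infty<1/Q$ together with $1\leq q\leq Q^d$.

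The main obstacle is securing the strict upper bound $q<Q^d$ rather than the natural $q\leq Q^d$, because the collision supplied by pigeon-hole could involve the extreme indices $j_1=0$ and $j_2=Q^d$. I plan to bypass this with a two-case argument. If among the $Q^d$ points $\{jx\}$ for $j\in\{0,\ldots,Q^d-1\}$ some two already share a subcube, I am done with $1\leq q\leq Q^d-1$ and $|qx-p|_\infty<1/Q$. Otherwise these $Q^d$ points must bijectively cover the $Q^d$ subcubes; in particular the ``top-corner'' subcube $[(Q-1)/Q,1)^d$ contains some $\{j^\star x\}$ with $j^\star\geq 1$ (since $\{0\}=0$ belongs to $[0,1/Q)^d$ and $Q\geq 2$ forces these two subcubes to be distinct). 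Then the shifted integer vector $p=\lfloor j^\star x\rfloor+(1,\ldots,1)$ satisfies $|j^\star x-p|_\infty=|(1,\ldots,1)-\{j^\star x\}|_\infty\leq 1/Q$, while $1\leq j^\star\leq Q^d-1$, and we again exhibit a solution of \eqref{eq:thm:Dirichlet}.

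The remainder is routine verification: the half-openness of the subcubes is what licenses the strict supremum bound on the coordinate residuals, and the difference of coordinatewise floors indeed lies in $\Z^d$. No deeper tool is needed, although I note that Minkowski's convex body theorem applied to the symmetric body $\{(y,t)\in\R^d\times\R:|y|_\infty\leq 1/Q,\ |t|\leq Q^d\}$ (volume $2^{d+1}$) offers an alternative route, producing a non-zero lattice point of $\Z^d\times\Z$ with essentially the same conclusion. I would privilege the pigeon-hole argument here, since it is elementary and its two-case refinement is exactly what is needed to get the strict inequality on $q$.
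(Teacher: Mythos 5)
Your proof is correct and follows essentially the same pigeon-hole argument as the paper. The only difference lies in how the strict bound $q<Q^d$ is secured: the paper augments the collection $\{0,\{x\},\ldots,\{(Q^d-1)x\}\}$ with the extra point $(1,\ldots,1)$ and closes each subcube on the right when the corresponding index equals $Q-1$, so that pigeon-hole among these $Q^d+1$ points directly produces indices $0\leq r_1<r_2<Q^d$, whereas your two-case dichotomy (collision among the first $Q^d$ fractional parts, or else bijectivity forcing some $\{j^\star x\}$ with $j^\star\geq 1$ into the top-corner subcube) handles the same boundary issue in an equivalent, equally elementary way.
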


\begin{proof}
Let us consider the points $0, 1, \{x\},\{2x\},\ldots,\{(Q^{d}-1)x\}$, where $\{\,\cdot\,\}$ denotes the coordinate-wise fractional part, and $1$ is the point with all coordinates equal to one. These points all lie in the unit cube $[0,1]^{d}$, which we may decompose as the disjoint union over $u_{1},\ldots,u_{d}\in\{0,\ldots,Q-1\}$ of the cubes $\prod_{i=1}^{d}[u_i/Q,(u_i+1)/Q\rangle$, where $\rangle$ stands for the symbol $]$ if $u_{i}=Q-1$, and for the symbol $)$ otherwise; in other words, the interval is closed if and only if $u_{i}=Q-1$.

There are $Q^d$ such subcubes, and $Q^d+1$ points. Thus, the pigeon-hole principle ensures that there is at least one subcube that contains two of the points. As a result, there exist either two distinct integers $r_1$ and $r_2$ between zero and $Q^d-1$ such that $\{r_1x\}$ and $\{r_2x\}$ are in the same subcube, or one integer $r_2$ between one and $Q^d-1$ such that $\{r_2x\}$ and $1$ belong to the same subcube. In both cases, we deduce that there exist two integers $r_1$ and $r_2$ satisfying $0\leq r_1<r_2<Q^d$, and two points with integers coordinates $s_1$ and $s_2$ in $\Z^d$ such that
	\[
		\left|(r_{1}x-s_{1})-(r_{2}x-s_{2})\right|_{\infty}\leq\frac{1}{Q}.
	\]
	The result now follows from letting $q=r_2-r_1$ and $p=s_2-s_1$.
\end{proof}

Theorem~\ref{thm:Dirichlet} means that the $d$ real numbers $x_{1},\ldots,x_{d}$ may simultaneously be approximated at a distance at most $1/Q$ by $d$ rational numbers with common denominator an integer less than $Q^d$, namely, the rationals $p_{1}/q,\ldots,p_{d}/q$. In fact, this yields a uniform estimate on the quality with which these real numbers may simultaneously be approximated by a sequence of rationals with common denominator. In the next statement, $\gcd(p,q)$ denotes the greatest common divisor of $q$ and all the coordinates of the integer point $p$.

\begin{Corollary}\label{cor:Dirichlet}
	For any point $x\in\R^d\setminus\Q^d$, there exist infinitely many pairs $(p,q)$ in $\Z^d\times\N$ such that
	\begin{equation}\label{eq:cor:Dirichlet}		
		\left|x-\frac{p}{q}\right|_{\infty}<\frac{1}{q^{1+1/d}}
		\qquad\text{and}\qquad
		\gcd(p,q)=1.
	\end{equation}
\end{Corollary}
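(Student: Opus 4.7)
The plan is to obtain the pairs by repeatedly invoking Theorem~\ref{thm:Dirichlet} with an unboundedly growing parameter $Q$, then reduce the fractions to coprime form, and finally exploit the hypothesis $x\notin\Q^d$ to guarantee that infinitely many distinct pairs are produced.

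First, I would fix an integer $Q>1$ and apply Theorem~\ref{thm:Dirichlet} to obtain $(p_Q,q_Q)\in\Z^d\times\N$ with $1\leq q_Q<Q^d$ and $|q_Q x-p_Q|_\infty\leq 1/Q$. Dividing by $q_Q$ gives $|x-p_Q/q_Q|_\infty\leq 1/(q_Q Q)$, and the bound $q_Q<Q^d$ rewrites as $q_Q^{1/d}<Q$, which sharpens the previous estimate into the strict inequality
\[
	\left|x-\frac{p_Q}{q_Q}\right|_\infty<\frac{1}{q_Q^{1+1/d}}.
\]

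Next, to ensure coprimality, let $d_Q=\gcd(p_Q,q_Q)$ and set $p'_Q=p_Q/d_Q\in\Z^d$ and $q'_Q=q_Q/d_Q\in\N$. The quotient $p'_Q/q'_Q$ still equals $p_Q/q_Q$, so the above inequality is preserved, and since $q'_Q\leq q_Q$ the right-hand side $1/(q'_Q)^{1+1/d}$ is even larger. Hence $(p'_Q,q'_Q)$ satisfies \eqref{eq:cor:Dirichlet}.

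The main point is step three: showing that as $Q$ ranges over the integers $>1$, the reduced pairs $(p'_Q,q'_Q)$ take infinitely many distinct values. I would argue by contradiction. Suppose only finitely many coprime pairs $(p^{(1)},q^{(1)}),\dots,(p^{(N)},q^{(N)})$ satisfy \eqref{eq:cor:Dirichlet}. Since $x\notin\Q^d$, each quantity $|x-p^{(i)}/q^{(i)}|_\infty$ is strictly positive, so their minimum
\[
	\delta=\min_{1\leq i\leq N}\left|x-\frac{p^{(i)}}{q^{(i)}}\right|_\infty
\]
is positive. Now choose $Q$ so large that $1/Q<\delta$. The pair produced above satisfies $|x-p'_Q/q'_Q|_\infty\leq 1/Q<\delta$, which forces $(p'_Q,q'_Q)$ to lie outside the finite list — a contradiction. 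Thus infinitely many coprime pairs satisfy \eqref{eq:cor:Dirichlet}.

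The main obstacle is precisely the last step, ensuring \emph{infinitely many} pairs rather than just one per $Q$: different values of $Q$ could conceivably yield the same reduced pair, so one has to use the irrationality hypothesis rather than a mere counting argument over $Q$.
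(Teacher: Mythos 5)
Your proposal is correct and follows essentially the same route as the paper: apply Dirichlet's theorem for each $Q$, reduce to lowest terms (noting that this preserves the ratio and only improves the bound), and then use the irrationality of $x$ to conclude that the approximation quantities cannot be bounded away from zero, hence the set of solutions is infinite. The paper phrases the final step as ``$\inf_{(p,q)\in\calE_x}|qx-p|_\infty=0$, so $\calE_x$ cannot be finite'' while you phrase it as a proof by contradiction with an explicit $\delta>0$; these are the same argument.
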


\begin{proof}
For any point $x$ in $\R^d\setminus\Q^d$, let $\calE_x$ denote the set of pairs $(p,q)$ in $\Z^d\times\N$ such that~(\ref{eq:cor:Dirichlet}) holds. Moreover, for any integer $Q>1$, let $\calE_x(Q)$ denote the set of pairs $(p,q)$ in $\Z^d\times\N$ satisfying~(\ref{eq:thm:Dirichlet}). Theorem~\ref{thm:Dirichlet} ensures that all the sets $\calE_x(Q)$ are nonempty. Moreover, the mapping $(p,q)\mapsto (p,q)/\gcd(p,q)$ sends the sets $\calE_x(Q)$ into $\calE_x$, and reduces the value of $|qx-p|_\infty$. Thus,
\[
\inf_{(p,q)\in\calE_x}|qx-p|_\infty\leq\inf_{(p,q)\in\calE_x(Q)}|qx-p|_\infty\leq\frac{1}{Q}.
\]
Letting $Q\to\infty$, we deduce that the infimum of $|qx-p|_\infty$ over $(p,q)\in\calE_x$ vanishes. Since $x$ is not in $\Q^d$, this implies that $\calE_x$ is necessarily infinite.
\end{proof}

In order to further study the quality of the approximation by points with rational coordinates, we introduce for any real parameter $\tau$, the set
	\begin{equation}\label{eq:df:Jdtau}
		J_{d,\tau}=\left\{x\in\R^d\Biggm|\left|x-\frac{p}{q}\right|_\infty<\frac{1}{q^\tau}\quad\text{for i.m.~}(p,q)\in\Z^d\times\N\right\}.
	\end{equation}
	A plain consequence of Corollary~\ref{cor:Dirichlet} is that the above sets coincide with $\R^d$ for all $\tau\leq 1+1/d$. Furthermore, the sets $J_{d,\tau}$ are clearly nonincreasing with respect to $\tau$.

In fact, the points in $J_{d,\tau}$ are better and better approximated by points with rational coordinates as $\tau$ becomes larger. The quality of the approximation may thus be measured in terms of membership in $J_{d,\tau}$, specifically, through the {\em irrationality exponent} defined by
	\begin{equation}\label{eq:df:irrexpo}
		\tau(x)=\sup\{\tau\in\R\:|\:x\in J_{d,\tau}\}
	\end{equation}
	for any point $x$ in $\R^d\setminus\Q^d$. This exponent is always bounded below by $1+1/d$. A point $x$ for which $\tau(x)$ is larger than $1+1/d$ is called {\em very well approximable}. The set of very well approximable points is denoted by $\well_d$.

It is clear from the above definition that the irrationality exponent reflects the quality with which the points in $\R^d\setminus\Q^d$ are approximated by those with rational coordinates: the higher the exponent, the better the approximation. Besides, observe that the set of very well approximable points satisfies
	\begin{equation}\label{eq:welldcupJdtau}
		\well_d=(\R^d\setminus\Q^d)\cap\bigcup_{\tau>1+1/d} J_{d,\tau}.
	\end{equation}

The main purpose of the metric theory of Diophantine approximation is then to describe the size properties of sets such as $J_{d,\tau}$, or generalizations thereof, in the case of course where they do not coincide with the whole space $\R^d$. To this purpose, the most basic tool, but also the less precise, is the Lebesgue measure. As regards the specific case of the sets $J_{d,\tau}$, and their companion set $\well_d$, we have the following elementary result. The Lebesgue measure in $\R^d$ is denoted by $\leb^d$ in what follows; its basic properties are recalled in Section~\ref{subsec:outmeas}.

\begin{Proposition}\label{prp:lebJdtau}
	The set $\well_d$ of very well approximable points has Lebesgue measure zero, that is,
	\[
		\leb^d(\well_d)=0.
	\]
	Equivalently, all the sets $J_{d,\tau}$, for $\tau>1+1/d$, have Lebesgue measure zero.
\end{Proposition}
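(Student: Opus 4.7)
The plan is to prove the second, equivalent, statement — namely that $\leb^d(J_{d,\tau})=0$ for every real $\tau>1+1/d$ — by a direct covering/Borel--Cantelli estimate, and then to derive the first one from the decomposition~(\ref{eq:welldcupJdtau}).

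First I would fix such a $\tau$ and an integer $N\geq 1$, and estimate the Lebesgue measure of the intersection of $J_{d,\tau}$ with the cube $[-N,N]^d$. For each integer $q\geq 1$, set
\[
A_q=\bigcup_{p\in\Z^d}\{x\in\R^d:|x-p/q|_\infty<1/q^\tau\},
\]
so that $J_{d,\tau}=\limsup_q A_q$. Only integer points $p$ with $|p|_\infty\leq q(N+1)$ contribute to $A_q\cap[-N,N]^d$, which gives at most $(2q(N+1)+1)^d=\bigo(q^d)$ relevant values of $p$; each of the corresponding cubes has volume $(2/q^\tau)^d$, and hence
\[
\leb^d(A_q\cap[-N,N]^d)\leq C_N\,q^{d(1-\tau)}.
\]
Since $\tau>1+1/d$ means $d(\tau-1)>1$, the series $\sum_q q^{d(1-\tau)}$ converges. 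Countable subadditivity applied to $\limsup_q A_q\subseteq\bigcup_{q\geq M}A_q$ (the easy half of Borel--Cantelli) then forces $\leb^d(J_{d,\tau}\cap[-N,N]^d)=0$ by letting $M\to\infty$, and sending $N\to\infty$ yields $\leb^d(J_{d,\tau})=0$.

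To pass from $J_{d,\tau}$ to $\well_d$, I would exploit that $\tau\mapsto J_{d,\tau}$ is nonincreasing, which gives the countable rewriting $\bigcup_{\tau>1+1/d}J_{d,\tau}=\bigcup_{n\geq 1}J_{d,1+1/d+1/n}$; combined with~(\ref{eq:welldcupJdtau}) and $\leb^d(\Q^d)=0$, this yields $\leb^d(\well_d)=0$. The converse direction of the equivalence is trivial, since each $J_{d,\tau}$ with $\tau>1+1/d$ is contained in $\well_d\cup\Q^d$. The argument is essentially a standard volume-counting estimate and presents no real obstacle; the only two mild technical points worth flagging are the need to truncate to bounded cubes (since $J_{d,\tau}$ is unbounded, so one cannot directly apply Borel--Cantelli on all of $\R^d$ with a finite measure) and the handling of the \emph{a priori} uncountable union over $\tau$ via the monotonicity of the family $\{J_{d,\tau}\}_\tau$.
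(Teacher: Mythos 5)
Your argument is correct and follows essentially the same route as the paper's proof: a covering of $J_{d,\tau}$ by the balls $\opball_\infty(p/q,q^{-\tau})$, the tail estimate $\sum_q q^d\cdot q^{-d\tau}<\infty$ when $\tau>1+1/d$, and reduction of the uncountable union over $\tau$ to a countable one via monotonicity, combined with $\leb^d(\Q^d)=0$ and~(\ref{eq:welldcupJdtau}). The only cosmetic difference is that you truncate to $[-N,N]^d$ and let $N\to\infty$, whereas the paper restricts to $[0,1]^d$ and invokes the $\Z^d$-translation invariance of $J_{d,\tau}$.
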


\begin{proof}
	The proof is elementary, and amounts to using an appropriate covering of the set $J_{d,\tau}$. To be specific, for any integer $Q\geq 1$, we have
	\[
	J_{d,\tau}\cap[0,1]^d\subseteq\bigcup_{q\geq Q}\bigcup_{p\in\{0,\ldots,q\}^d}\opball_\infty\left(\frac{p}{q},\frac{1}{q^\tau}\right),
	\]
	where $\opball_\infty(x,r)$ denotes the open ball centered at $x$ with radius $r$, in the sense of the supremum norm. The subadditivity of Lebesgue measure yields
	\[
	\leb^d(J_{d,\tau}\cap[0,1]^d)\leq\sum_{q\geq Q}(q+1)^d\left(\frac{2}{q^\tau}\right)^d
	\]
	The above series clearly converges when $\tau>1+1/d$. Letting $Q\to\infty$, we deduce that the Lebesgue measure of $J_{d,\tau}\cap[0,1]^d$ vanishes. The set $J_{d,\tau}$ being invariant under the action of $\Z^d$, its Lebesgue measure thus vanishes in the whole space.
	
	To establish that the set $\well_d$ has Lebesgue measure zero as well, it suffices to observe that the union in~(\ref{eq:welldcupJdtau}) may be indexed by a countable dense subset of values of $\tau$, because of the monotonicity of the sets $J_{d,\tau}$ with respect to $\tau$. More precisely, letting for instance $\tau_n=(1+1/d)+1/n$, we may write that
	\[
	\leb^d(\well_d)\leq\leb^d\left(\bigcup_{n=1}^\infty J_{d,\tau_n}\right)
	\leq\sum_{n=1}^\infty\leb^d(J_{d,\tau_n})=0.
	\]
	
	Finally, knowing that $\well_d$ has Lebesgue measure zero, we can easily recover the fact that the sets $J_{d,\tau}$, for $\tau>1+1/d$, have Lebesgue measure zero as well. It suffices to use~(\ref{eq:welldcupJdtau}) and the fact that $\Q^d$ is countable and thus Lebesgue null.
\end{proof}

It readily follows from Proposition~\ref{prp:lebJdtau} that, in the sense of Lebesgue measure, the irrationality exponent is minimal almost everywhere, that is,
\begin{equation}\label{eq:irrexpotypical}
\text{for~}\leb^d\text{-a.e.~}x\in\R^d\setminus\Q^d
\qquad\tau(x)=1+\frac{1}{d},
\end{equation}
where a.e.~means ``almost every''. Moreover, as shown by Proposition~\ref{prp:lebJdtau}, describing the size of the sets $J_{d,\tau}$ in terms of Lebesgue measure only is not very precise, as we just have the following dichotomy: the set $J_{d,\tau}$ has full Lebesgue measure in $\R^d$ if $\tau\leq 1+1/d$, and Lebesgue measure zero otherwise. A first way of giving a more precise description is then to compute the Hausdorff dimension of the set $J_{d,\tau}$; this will be performed in Sections~\ref{subsec:upbndhausdorff} and~\ref{subsec:JarnikBesicovitch}. One of the purpose of these notes is in fact to provide a description as comprehensive as possible of the size, and also the large intersection, properties of these sets and natural extensions thereof.

\subsection{Badly approximable points}\label{subsec:bad}

These points play a particular r\^ole in Diophantine approximation. A point $x\in\R^d$ is called {\em badly approximable} if
	\[
		\exists\eps>0 \quad \forall (p,q)\in\Z^d\times\N \qquad \left|x-\frac{p}{q}\right|_\infty\geq\frac{\eps}{q^{1+1/d}}.
	\]
	The set of badly approximable points is denoted by $\bad_d$. In dimension $d=1$, the badly approximable points are called {\em badly approximable numbers}. As the name indicates, the elements of $\bad_d$ are badly approximated by the points with rational coordinates. Indeed, the irrationality exponent satisfies
	\[
		\forall x\in\bad_d \qquad \tau(x)=1+\frac{1}{d}.
	\]
	This means that the points in $\bad_d$ attain the bound imposed by Dirichlet's theorem and its corollary, namely, Theorem~\ref{thm:Dirichlet} and Corollary~\ref{cor:Dirichlet}. In other words,
	\begin{equation}\label{eq:badinclwell}
		\bad_d\subseteq (\R^d\setminus\Q^d)\setminus\well_d.
	\end{equation}
	Due to Proposition~\ref{prp:lebJdtau}, the set in the right-hand side of~(\ref{eq:badinclwell}) has full Lebesgue measure in $\R^d\setminus\Q^d$. The badly approximable points thus supply specific examples of points for which the typical property~(\ref{eq:irrexpotypical}) holds. Turning our attention to the left-hand side of~(\ref{eq:badinclwell}), we now establish the following result.

\begin{Proposition}\label{prp:badlebesgued}
	The set $\bad_d$ of badly approximable points has Lebesgue measure zero, that is,
	\[
		\leb^d(\bad_d)=0.
	\]
\end{Proposition}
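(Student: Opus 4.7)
The plan is to reduce to the family $\bad_d^{(\eps)}=\{x\in\R^d:\forall(p,q)\in\Z^d\times\N,\ |x-p/q|_\infty\geq\eps/q^{1+1/d}\}$ indexed by $\eps>0$, and to show that each $\bad_d^{(\eps)}$ is Lebesgue null by a density argument. Since $\bad_d=\bigcup_{n\geq 1}\bad_d^{(1/n)}$, this is enough.

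The key geometric observation is that $\bad_d^{(\eps)}$ is, by its very definition, disjoint from every open ball $\opball_\infty(p/q,\eps/q^{1+1/d})$ with $(p,q)\in\Z^d\times\N$. I would use Dirichlet's theorem to exploit this: given $x\in\bad_d^{(\eps)}$ and an arbitrary integer $Q>1$, Theorem~\ref{thm:Dirichlet} supplies a pair $(p,q)\in\Z^d\times\N$ with $1\leq q<Q^d$ and $|x-p/q|_\infty\leq 1/(qQ)$. The defining inequality of $\bad_d^{(\eps)}$ then forces
\[
\frac{\eps}{q^{1+1/d}}\leq\left|x-\frac{p}{q}\right|_\infty\leq\frac{1}{qQ},
\]
so that $q\in[(\eps Q)^d,Q^d)$, and both scales $R_1=\eps/q^{1+1/d}$ and $R_2=1/(qQ)$ satisfy $R_1\leq R_2<R_1/\eps$.

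The second step is to transfer this to a density statement. The inclusion $\opball_\infty(p/q,R_1)\subseteq\opball_\infty(x,3R_2)$ follows from the triangle inequality and $R_1\leq R_2$. Comparing Lebesgue measures,
\[
\frac{\leb^d(\opball_\infty(p/q,R_1))}{\leb^d(\opball_\infty(x,3R_2))}=\left(\frac{R_1}{3R_2}\right)^d=\left(\frac{\eps Q}{3q^{1/d}}\right)^d\geq\left(\frac{\eps}{3}\right)^d,
\]
using $q^{1/d}<Q$. Hence inside the ball $\opball_\infty(x,3R_2)$ there is a subregion of relative measure at least $(\eps/3)^d$ that is entirely disjoint from $\bad_d^{(\eps)}$. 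Letting $Q\to\infty$ yields $R_2\to 0$, so this happens at arbitrarily small scales around $x$.

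The last step is a standard appeal to the Lebesgue density theorem: every point $x\in\bad_d^{(\eps)}$ fails to be a density point of $\bad_d^{(\eps)}$, since its upper density there is at most $1-(\eps/3)^d<1$. Because almost every point of a measurable set is a point of density $1$, this forces $\leb^d(\bad_d^{(\eps)})=0$, and summing over $\eps=1/n$ completes the proof. I expect no real obstacle here, since the only ingredient beyond Theorem~\ref{thm:Dirichlet} is the density theorem; the delicate point is simply verifying that the ball supplied by Dirichlet is genuinely contained in a controlled neighbourhood of $x$ and occupies a definite proportion of it, which is exactly what the comparison of $R_1$ and $R_2$ above gives.
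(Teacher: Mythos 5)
Your proof is correct, and it takes a genuinely different route from the paper's. The paper writes $\bad_d\subseteq\bigcup_n(\R^d\setminus\widetilde J_n)$, where $\widetilde J_n$ is the limsup set with shrunken radii $1/(n\,q^{1+1/d})$; it disposes of $n=1$ via Corollary~\ref{cor:Dirichlet} and of larger $n$ by appealing to Proposition~\ref{prp:homubsyscst}, a forward reference to the ubiquity section whose proof itself passes through the covering Lemma~\ref{lem:lobndhomubsys} and the Lebesgue density theorem. You instead decompose $\bad_d=\bigcup_n\bad_d^{(1/n)}$ and kill each $\bad_d^{(\eps)}$ by a direct density computation: Dirichlet's theorem supplies a rational $p/q$ within $R_2=1/(qQ)$ of $x$; the defining inequality of $\bad_d^{(\eps)}$ gives $R_1=\eps/q^{1+1/d}\leq R_2$, while $q<Q^d$ gives $R_1/R_2=\eps Q/q^{1/d}>\eps$, so the excluded ball $\opball_\infty(p/q,R_1)$ occupies a fixed proportion $(\eps/3)^d$ of $\opball_\infty(x,3R_2)$ and is disjoint from $\bad_d^{(\eps)}$, at arbitrarily small scales $R_2\leq 1/Q\to 0$. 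Since $\bad_d^{(\eps)}$ is closed and hence measurable, the density theorem forces $\leb^d(\bad_d^{(\eps)})=0$. Both arguments ultimately rest on the Lebesgue density theorem, but yours applies it directly with only Theorem~\ref{thm:Dirichlet} as input, avoiding the forward reference and the abstract ubiquity machinery; the paper's version is longer to unfold but buys generality, since Proposition~\ref{prp:homubsyscst} applies verbatim to any homogeneous ubiquitous system rather than just the rationals.
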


\begin{proof}
	For any integer $n\geq 1$, let $\widetilde J_n$ be the set obtained when replacing by $1/(n\,q^{1+1/d})$ the approximation radii $1/q^\tau$ in the definition~(\ref{eq:df:Jdtau}) of $J_{d,\tau}$. Clearly,
	\[
		\R^d\setminus\bad_d\supseteq\bigcap_{n=1}^\infty\widetilde J_n,
	\]
	Due to the subadditivity of Lebesgue measure, the proof reduces to showing that for all $n\geq 1$, the set $\R^d\setminus\widetilde J_n$ has Lebesgue measure zero. The corollary to Dirichlet's theorem, namely, Corollary~\ref{cor:Dirichlet} implies that this holds for $n=1$. To prove that this also holds for higher values of $n$, one may then use Proposition~\ref{prp:homubsyscst} below.
\end{proof}

The above measure theoretic considerations directly imply that the inclusion in~(\ref{eq:badinclwell}) is strict. Actually, Lebesgue-almost every point in the set $\R^d\setminus\Q^d$ is neither very well nor badly approximable. The next step in the description of the size properties of $\bad_d$ was first performed by Schmidt~\cite{Schmidt:1969fk} who showed that the Hausdorff dimension of this set is equal to $d$.

\subsection{Inhomogeneous approximation}\label{subsec:inhomapprox}

Inhomogeneous Diophantine approximation usually refers to the approximation of points in $\R^d$ by the system obtained by the points of the form $(p+\alpha)/q$, where as usual $p$ is an integer point, and $q$ is a positive integer, and where $\alpha$ is a point in $\R^d$ that is fixed in advance. When $\alpha$ is equal to zero, one obviously recovers the situation discussed in Section~\ref{subsec:verywell}, which is referred to as the homogeneous one.

The next result due to Khintchine~\cite{Khintchine:1937aa} complements in some sense Dirichlet's theorem, namely, Theorem~\ref{thm:Dirichlet}. Among Khintchine's works, this result may be regarded as an anticipation of his deep transference principle that relates homogeneous and inhomogeneous problems, see {\em e.g.}~\cite[Chapter~V]{Cassels:1957uq}.

\begin{Theorem}\label{thm:Dirichletinhom}
For any point $x\in\R^d$, the following properties are equivalent:
\begin{enumerate}
	\item there exists a real number $\gamma>0$ such that for any integer $Q>1$, the next system admits no solution $(p,q)$ in $\Z^d\times\N$\,:
	\[
		\left\{\begin{array}{l}
		1\leq q<\gamma Q^d\\[1mm]
		|qx-p|_{\infty}\leq 1/Q\,;
		\end{array}\right.
	\]
	\item there exists a real number $\Gamma>0$ such that for any point $\alpha\in\R^d$ and any integer $Q>1$, the next system 	admits a solution $(p,q)$ in $\Z^d\times\N$\,:
	\[
		\left\{\begin{array}{l}
		1\leq q<\Gamma Q^d\\[1mm]
		|qx-p-\alpha|_{\infty}\leq 1/Q.
		\end{array}\right.
	\]
\end{enumerate}
Moreover, if $\gamma$ exists, then $\Gamma$ depends on $\gamma$ and $d$ only. Likewise, if $\Gamma$ exists, then $\gamma$ depends on $\Gamma$ and $d$ only.
\end{Theorem}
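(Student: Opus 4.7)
Both implications are instances of Khintchine's transference principle, and the cleanest unified approach proceeds through the geometry of numbers applied to the unimodular lattice $\Lambda = \{(qx-p,q) : (p,q)\in\Z^{d+1}\} \subset \R^{d+1}$. Hypothesis~(1) is equivalent to a lower bound on the first successive minimum of the symmetric convex body $K_Q = \{(y,s)\in\R^{d+1} : |y|_\infty\le 1/Q,\ |s|\le\gamma Q^d\}$ (of volume $2^{d+1}\gamma$) with respect to $\Lambda$, while (2) is equivalent to an upper bound on the covering radius of $\Lambda$ relative to a similar body; Minkowski's second theorem relates these two quantities, which is precisely what the transference delivers.

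For the direction $(1)\Rightarrow(2)$, I would first attempt a self-contained pigeonhole argument in $\T^d = \R^d/\Z^d$. Fix $\alpha\in\R^d$ and $Q>1$, and consider the two finite families
\[
\Omega_1 = \bigl\{\{qx\} : 1\le q\le N\bigr\}, \qquad \Omega_2 = \bigl\{\{\alpha-qx\} : 0\le q\le N\bigr\},
\]
for an integer $N$ to be chosen in terms of $Q$, $\gamma$ and $d$. Partition $\T^d$ into the $Q^d$ half-open sub-cubes of side $1/Q$. Hypothesis~(1) prevents two distinct elements of $\Omega_1$ (or of $\Omega_2$) from sharing a sub-cube provided $N \le \gamma Q^d$, since such a collision would furnish $q_1\ne q_2$ and $p\in\Z^d$ with $1\le|q_1-q_2|<\gamma Q^d$ and $|(q_1-q_2)x-p|_\infty<1/Q$. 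If simultaneously $|\Omega_1|+|\Omega_2|=2N+1>Q^d$, the pigeonhole principle then forces a collision across the two families, and this produces $q_1\in[1,N]$, $q_2\in[0,N]$ and $p\in\Z^d$ with $|(q_1+q_2)x-\alpha-p|_\infty<1/Q$, which is (2) upon setting $q:=q_1+q_2\ge 1$ and $\Gamma$ of order $2N/Q^d$.

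For the reverse implication $(2)\Rightarrow(1)$, I would argue via the covering radius bound furnished by~(2): the translates of $\Lambda$ by a body of the form $\{|y|_\infty\le 1/Q,\ 0<s<\Gamma Q^d\}$ cover $\R^{d+1}$ uniformly, and by Minkowski's second theorem this forces a uniform lower bound on $\lambda_1(K_{Q'},\Lambda)$ for a suitable $K_{Q'}$, yielding (1) with $\gamma$ depending only on $\Gamma$ and $d$. Alternatively, a more elementary contradiction argument chooses a specific $\alpha$ avoiding the finite subgroup $(1/q)\Z^d/\Z^d\subset\T^d$ on which the iterates $q'x$ with $q'\le q$ cluster whenever $(p,q)$ is a very good homogeneous approximation to $x$; the details of this route, however, are more delicate and only handle directly the regime $q\ll Q$.

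The hard part is the range $\gamma\le 1/2$: the elementary pigeonhole above requires $Q^d/2<N\le\gamma Q^d$, which is vacuous in this range. For general $\gamma>0$, the pigeonhole must be replaced by Minkowski's second theorem applied to $K_Q$ and $\Lambda$: hypothesis~(1) gives $\lambda_1(K_Q,\Lambda)\ge 1$, whence all the successive minima lie in $[1,C(\gamma,d)]$ by Minkowski's product relations, and a controlled basis of $\Lambda$ extracted from these minima yields the covering radius estimate that solves the inhomogeneous system. This also produces $\Gamma$ as an explicit function of $\gamma$ and $d$, and symmetrically $\gamma$ as a function of $\Gamma$ and $d$, confirming the quantitative dependence asserted in the theorem. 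The full argument is carried out in~\cite[Chapter~V]{Cassels:1957uq}.
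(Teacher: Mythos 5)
The paper does not actually prove Theorem~\ref{thm:Dirichletinhom}: it attributes the result to Khintchine~\cite{Khintchine:1937aa} and, for the transference principle, points to Cassels~\cite[Chapter~V]{Cassels:1957uq}, without reproducing an argument. So there is no in-paper proof to compare yours against, and your deferral to the same source is consistent with the paper's own treatment.

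As a plan, what you write is a sound outline of the classical geometry-of-numbers approach. Your pigeonhole argument for $(1)\Rightarrow(2)$ is correctly set up: with $N<\gamma Q^d$, hypothesis~(1) forbids internal collisions inside $\Omega_1$ and inside $\Omega_2$, and with $2N+1>Q^d$ the pigeonhole forces a cross-collision which yields $q=q_1+q_2\in\{1,\dots,2N\}$ with $|qx-p-\alpha|_\infty<1/Q$; taking $\Gamma$ slightly larger than $2\gamma$ (depending on $\gamma,d$ only) then solves~(2). You also correctly identify that this window $Q^d/2<N<\gamma Q^d$ is empty once $\gamma\le 1/2$, so the elementary version does not cover the general case. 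That is a genuine gap in the plan: for small $\gamma$ you invoke Minkowski's second theorem to bound all successive minima and thence the covering radius, but you do not carry this out, nor do you carry out either route for $(2)\Rightarrow(1)$ (the covering-radius-to-$\lambda_1$ step needs the "controlled basis" lemma, and the elementary alternative you mention requires choosing $\alpha$ away from the cluster set generated by a good homogeneous approximation, which you yourself flag as delicate). None of this is wrong; it is just a sketch. Since the paper also leaves the proof to the references, your proposal matches the paper's level of detail, but it should be understood as an outline rather than a self-contained proof.
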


Inspecting the proof of Theorem~\ref{thm:Dirichletinhom} given in~\cite{Khintchine:1937aa}, we may deduce the next complementary result. For any point $x\in\R^d$ and any integer $Q>1$, let us define
\[
q(x,Q)=\inf\left\{q\in\N\Biggm||qx-p|_{\infty}\leq\frac{1}{Q}\text{ for some }p\in\Z^d\right\}.
\]
It follows from Dirichlet's theorem that $q(x,Q)$ is less than $Q^d$.

\begin{Proposition}\label{prp:Dirichletinhomvar}
For any real number $\gamma\in(0,1)$, there exist a real number $\Gamma_\ast>1$ and an integer $Q_\ast\geq 1$, both depending on $\gamma$ and $d$ only, such that the following property holds: for any points $x$ and $\alpha$ in $\R^d$ and for any integer $Q>Q_\ast$,
\[
q(x,Q)\geq\gamma Q^d
\quad\Longrightarrow\quad
\exists(p,q)\in\Z^d\times\N\quad
\left\{\begin{array}{l}
q(x,Q)\leq q<2q(x,Q)\\[1mm]
|qx-p-\alpha|_{\infty}\leq\Gamma_\ast/q(x,Q)^{1/d}.
\end{array}\right.
\]
\end{Proposition}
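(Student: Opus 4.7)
The plan is to revisit the proof of Khintchine's inhomogeneous transference (Theorem~\ref{thm:Dirichletinhom}) from~\cite{Khintchine:1937aa} and extract a localized form of it: the fact that condition~(1) holds at the single value of $Q$ under consideration, with defect $\gamma$, already forces the inhomogeneous estimate at the same $Q$, with constants depending only on $\gamma$ and~$d$.

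Set $N=q(x,Q)$. By hypothesis, $N\geq\gamma Q^{d}$, and Dirichlet's theorem (Theorem~\ref{thm:Dirichlet}) gives $N\leq Q^{d}$, so the scales $1/Q$ and $N^{-1/d}$ are comparable, namely $\gamma^{1/d}N^{-1/d}\leq 1/Q\leq N^{-1/d}$. By the definition of $q(x,Q)$, some $p_{0}\in\Z^{d}$ satisfies $|Nx-p_{0}|_{\infty}\leq 1/Q$, while $|kx-p|_{\infty}>1/Q$ for every integer $1\leq k<N$ and every $p\in\Z^{d}$. Applying this minimality to $k=r_{2}-r_{1}$ with $0\leq r_{1}<r_{2}<N$, the $N$ points $\{rx\}\bmod\Z^{d}$, $r=0,\ldots,N-1$, are pairwise separated by more than $1/Q$ in the supremum metric on the torus $\T^{d}=\R^{d}/\Z^{d}$. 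Writing next any candidate $q\in[N,2N)$ as $q=N+r$ with $r\in[0,N)$, decomposing $p=p_{0}+p'$ with $p'\in\Z^{d}$, and setting $\beta=\alpha-(Nx-p_{0})$, the target inequality rewrites as
\[
|rx-p'-\beta|_{\infty}\leq\Gamma_{\ast}N^{-1/d},
\]
so it suffices to show that the orbit $\{rx\bmod\Z^{d}\}_{0\leq r<N}$ forms a $\Gamma_{\ast}N^{-1/d}$-net of $\T^{d}$ for a suitable constant $\Gamma_{\ast}=\Gamma_{\ast}(\gamma,d)$.

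The main obstacle is this torus-covering statement itself. In dimension $d=1$ it is essentially the three-distance theorem; in higher dimensions, a pure packing-density argument fails, since a near-optimal $\epsilon$-packing may leave an empty region of diameter much larger than $\epsilon$. The argument must therefore exploit the additive-group structure of the orbit. I would proceed via a geometry-of-numbers reduction: the hypothesis $q(x,Q)\geq\gamma Q^{d}$ translates into a lower bound on the first successive minimum of the $(d+1)$-dimensional lattice built from $(x,1)$ and the standard basis of $\R^{d}\times\{0\}$, rescaled so that the unit box corresponds to $|qx-p|_{\infty}\leq 1/Q$ and $|q|\leq Q^{d}$. Minkowski's second theorem together with the classical Khintchine transference arguments then converts this homogeneous bound into an upper bound on the inhomogeneous covering radius, with constants depending only on $\gamma$ and~$d$. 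Finally, $Q_{\ast}$ is chosen so that for $Q>Q_{\ast}$ one has $N\geq 1$ (whence $[N,2N)$ contains an integer) and so that the residual shift $|\beta-\alpha|_{\infty}\leq 1/Q\leq N^{-1/d}$ is absorbed into the final constant $\Gamma_{\ast}$; this yields the announced proposition.
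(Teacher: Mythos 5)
The paper does not give a proof of Proposition~\ref{prp:Dirichletinhomvar}: it simply states that the result follows from inspecting Khintchine's proof of Theorem~\ref{thm:Dirichletinhom} in~\cite{Khintchine:1937aa}, so there is no paper argument to compare against. Your reduction is correct: setting $N=q(x,Q)$, choosing $p_0$ with $|Nx-p_0|_\infty\leq 1/Q$, and writing $q=N+r$, $p=p_0+p'$, $\beta=\alpha-(Nx-p_0)$ does reduce the claim to showing $\{rx\bmod\Z^d\}_{0\leq r<N}$ is a $\Gamma_\ast N^{-1/d}$-net of $\T^d$. The geometry-of-numbers set-up is also the right one: the hypothesis $N\geq\gamma Q^d$ gives $\lambda_1\geq\gamma$ for the unimodular lattice $L_Q=\{(q/Q^d,\,Q(qx-p)):(q,p)\in\Z^{d+1}\}$ relative to $[-1,1]^{d+1}$, Minkowski's second theorem then gives $\lambda_{d+1}\leq\gamma^{-d}$, and the covering radius of $L_Q$ is $\mu\leq\tfrac{1}{2}(\lambda_1+\cdots+\lambda_{d+1})\leq\tfrac{d+1}{2}\gamma^{-d}$.

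Here is the gap. Unwinding this covering-radius bound gives, for each $\alpha$, a pair $(q',p')\in\Z\times\Z^d$ with $|q'x-p'-\alpha|_\infty\leq\mu/Q$ and $|q'|\leq\mu Q^d$. But $\mu$ can be as large as $\tfrac{d+1}{2}\gamma^{-d}>1>N/Q^d$, so $q'$ is only localized in an interval longer than $[N,2N)$ by a factor of order $\gamma^{-(d+1)}$: nothing forces $q'\in[N,2N)$, equivalently $r=q'-N\in[0,N)$. The covering radius of the $(d+1)$-dimensional lattice is not the covering radius of the \emph{truncated} orbit, which is what you reduced to. You announce that one must "exploit the additive-group structure," but you never actually do so; this is the missing idea. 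Concretely, replace $q'$ by $q=q'+kN$, with $k\in\Z$ the unique integer making $q\in[N,2N)$ (this interval contains an integer since $N\geq 1$). Then $|k|\leq 2+|q'|/N\leq 2+\mu/\gamma$, and since $|Nx-p_0|_\infty\leq 1/Q$, taking $p=p'+kp_0$ gives
\[
|qx-p-\alpha|_\infty
\leq|q'x-p'-\alpha|_\infty+|k|\,|Nx-p_0|_\infty
\leq\left(\mu+2+\frac{\mu}{\gamma}\right)\frac{1}{Q}
\leq\frac{\Gamma_\ast}{N^{1/d}},
\]
the last step using $N<Q^d$ (Dirichlet). With this correction the plan closes, with $\Gamma_\ast$ and $Q_\ast$ depending only on $\gamma$ and $d$.
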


This result will be called upon in the proof of Theorem~\ref{thm:inhomratoptregsys}. The latter theorem will then enable us to study the metric properties of a natural inhomogeneous analog of the set $J_{d,\tau}$ defined by~(\ref{eq:df:Jdtau}), specifically, the set
	\begin{equation}\label{eq:df:Jdtaualpha}
		J^\alpha_{d,\tau}=\left\{x\in\R^d\Biggm|\left|x-\frac{p+\alpha}{q}\right|_\infty<\frac{1}{q^\tau}\quad\text{for i.m.~}(p,q)\in\Z^d\times\N\right\}.
	\end{equation}
	Note that Proposition~\ref{prp:lebJdtau} may straightforwardly be extended to that case. Specifically, one easily checks that $J^\alpha_{d,\tau}$ has Lebesgue measure zero for any $\tau>1+1/d$. Some more work is required to show that, as in the homogeneous setting, the set $J^\alpha_{d,\tau}$ has full Lebesgue measure in the whole space $\R^d$ in the opposite case; this will be a consequence of Corollary~\ref{cor:inhomJBdesc}, which actually gives a much more precise description of the size of the set $J^\alpha_{d,\tau}$.


\section{Hausdorff measures and dimension}\label{sec:hausdim}

\subsection{Premeasures and outer measures}\label{subsec:outmeas}

Before dealing with Hausdorff measures, we introduce general definitions and state classical results from geometric measure theory. We shall not follow here the standard approach that originates in the work of Radon and consists in defining measures on prespecified $\sigma$-fields. Instead, our viewpoint is that initiated by Carath\'eodory: considering {\em outer measures} on all the subsets of the space $\R^d$, and then discussing further {\em measurability} properties of the subsets. Our treatment will be rather brief and we refer to~\cite{Rogers:1970wb} for missing proofs and details. Throughout, we restrict our attention to the space $\R^d$, even if the discussed notions may be defined in general metric spaces.

The collection of all subsets of $\R^d$ is denoted by $\prd$. We recall that a function $\mu:\prd\to [0,\infty]$ is an {\em outer measure} if the next conditions are fulfilled: $\mu(\emptyset)=0$\,; for any sets $E_1$ and $E_2$ in $\prd$ such that $E_1\subseteq E_2$, we have $\mu(E_1)\leq\mu(E_2)$\,; for any sequence $(E_n)_{n\geq 1}$ in $\prd$,
	\[
		\mu\left(\bigcup_{n=1}^\infty E_n\right)\leq\sum_{n=1}^\infty\mu(E_n).
	\]
	Hence, an outer measure $\mu$ is defined on the whole collection $\prd$. However, it enjoys further properties when restricted to the {\em $\mu$-measurable} sets, namely, the sets $E\in\prd$ such that for all $A$ and $B$ in $\prd$,
\[
\left\{\begin{array}{l}
A\subseteq E\\[1mm]
B\subseteq \R^d\setminus E
\end{array}\right.
\qquad\Longrightarrow\qquad
\mu(A\sqcup B)=\mu(A)+\mu(B).
\]
The collection of all $\mu$-measurable sets is denoted by $\calF_\mu$. The connection with the standard approach of measures on $\sigma$-fields is then given by the following result. In its statement, we say that a set $N\in\prd$ is {\em $\mu$-negligible} if $\mu(N)=0$.

\begin{Theorem}\label{thm:measfield}
For any outer measure $\mu$, the following properties hold:
\begin{enumerate}
\item the collection $\calF_\mu$ is a $\sigma$-field of $\R^d$;
\item every $\mu$-negligible set in $\prd$ belongs to $\calF_\mu$;
\item for any sequence $(E_n)_{n\geq 1}$ of {\em disjoint} sets in $\calF_\mu$, we have
\[
\mu\left(\bigsqcup_{n=1}^\infty E_n\right)=\sum_{n=1}^\infty\mu(E_n).
\]
\end{enumerate}
\end{Theorem}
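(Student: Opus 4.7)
The plan is to reduce the definition of $\mu$-measurability to the standard Carath\'eodory splitting condition, and then carry out the classical $\sigma$-algebra argument. First I would observe that a set $E\in\prd$ lies in $\calF_\mu$ if and only if
\[
\mu(T)=\mu(T\cap E)+\mu(T\setminus E)
\qquad\text{for every }T\in\prd.
\]
Indeed, applying the hypothesis to $A=T\cap E$ and $B=T\setminus E$ gives the displayed equality; conversely, any $A\subseteq E$ and $B\subseteq\R^d\setminus E$ satisfy $(A\sqcup B)\cap E=A$ and $(A\sqcup B)\setminus E=B$, so the Carath\'eodory condition applied to $T=A\sqcup B$ recovers the original requirement. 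This equivalence will be the workhorse for everything below, and subadditivity of $\mu$ means that only the inequality $\mu(T)\geq\mu(T\cap E)+\mu(T\setminus E)$ needs to be checked in practice.

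Assertion (ii) is then immediate: if $\mu(N)=0$, any $T\in\prd$ satisfies $\mu(T\cap N)\leq\mu(N)=0$ and $\mu(T\setminus N)\leq\mu(T)$, so $\mu(T)\geq\mu(T\setminus N)=\mu(T\cap N)+\mu(T\setminus N)$, and $N\in\calF_\mu$. For (i), emptiness and complementation are trivial from the symmetric form of the condition. To handle finite unions I would take $E_1,E_2\in\calF_\mu$ and, for an arbitrary test set $T$, apply the Carath\'eodory identity for $E_1$ to $T$ and then for $E_2$ to $T\setminus E_1$, obtaining
\[
\mu(T)=\mu(T\cap E_1)+\mu(T\cap E_1^c\cap E_2)+\mu(T\cap(E_1\cup E_2)^c),
\]
and then collapsing the first two terms using subadditivity to bound them below by $\mu(T\cap(E_1\cup E_2))$. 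So $\calF_\mu$ is closed under finite unions, hence a field, and finite additivity on disjoint measurable sets follows by plugging $T=E_1\sqcup E_2$.

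For countable unions, given a sequence $(E_n)$ in $\calF_\mu$ I would first reduce to the disjoint case by replacing $E_n$ with $E_n\setminus(E_1\cup\ldots\cup E_{n-1})$, which is permissible since $\calF_\mu$ is already a field. Setting $F_n=\bigsqcup_{k\leq n}E_k$ and $F=\bigsqcup_{k\geq 1}E_k$, an induction using the finite-union result yields
\[
\mu(T\cap F_n)=\sum_{k=1}^{n}\mu(T\cap E_k),
\]
and Carath\'eodory for $F_n\in\calF_\mu$ together with the monotonicity inequality $\mu(T\setminus F_n)\geq\mu(T\setminus F)$ gives
\[
\mu(T)\geq\sum_{k=1}^{n}\mu(T\cap E_k)+\mu(T\setminus F).
\]
Letting $n\to\infty$ and invoking countable subadditivity of $\mu$ to bound $\sum_k\mu(T\cap E_k)\geq\mu(T\cap F)$ produces the Carath\'eodory condition for $F$, so $F\in\calF_\mu$. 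Finally, specialising this last inequality to $T=F$ and combining with subadditivity yields assertion (iii).

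The only mildly delicate point is the countable-union step, where one must be careful to pass to the limit before applying subadditivity on the series (the reverse order would already need what we are trying to prove). Everything else is routine manipulation once the equivalence with the Carath\'eodory splitting condition is in hand.
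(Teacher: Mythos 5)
Your proof is correct. The paper does not reproduce an argument for this statement — it states the theorem and defers to Rogers' \emph{Hausdorff Measures} for the proof — and what you have written is precisely the standard Carath\'eodory argument found there: reformulate the definition of $\mu$-measurability as the splitting identity $\mu(T)=\mu(T\cap E)+\mu(T\setminus E)$ for all test sets $T$, note that subadditivity reduces this to one inequality, dispose of negligible sets immediately, establish the field property via the double splitting for two sets, disjointify, derive $\mu(T)\geq\sum_k\mu(T\cap E_k)+\mu(T\setminus F)$ by induction and monotonicity, and then pass to the limit before folding the series back into $\mu(T\cap F)$ via countable subadditivity. Your remark about the order of the limit and the subadditivity step is exactly the right point to flag; the rest is routine and correctly executed.
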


In addition, we also have the following useful properties concerning monotonic sequences of $\mu$-measurable sets: for any nondecreasing sequence $(F_n)_{n\geq 1}$ of $\mu$-measurable subsets of $\R^d$ and for any subset $E$ of $\R^d$,
	\begin{equation}\label{eq:measincunion}
		\mu\left(E\cap\bigcup_{n=1}^\infty\uparrow F_n\right)
		=\lim_{n\to\infty}\uparrow\mu(E\cap F_n)\,;
	\end{equation}
	for any nonincreasing sequence $(F_n)_{n\geq 1}$ of $\mu$-measurable sets and for any subset $E$ of $\R^d$ such that $\mu(E\cap F_n)<\infty$ for some integer $n\geq 1$,
	\begin{equation}\label{eq:measdecint}
		\mu\left(E\cap\bigcap_{n=1}^\infty\downarrow F_n\right)
		=\lim_{n\to\infty}\downarrow\mu(E\cap F_n).	
	\end{equation}

Theorem~\ref{thm:measfield} ensures that the restriction of an outer measure $\mu$ to the $\sigma$-field $\calF_\mu$ is a measure in the usual sense. Conversely, let us consider a measure $\nu$ defined on some $\sigma$-field $\calF$ of subsets of $\R^d$. We may extend $\nu$ to the whole $\prd$ by letting
	\[
		\nu^\ast(E)=\inf_{F\in\calF\atop F\supseteq E} \nu(F)
	\]
	for any set $E\in\prd$. We obtain an outer measure $\nu^\ast$ whose restriction to $\calF$ coincides with $\nu$, and such that the $\sigma$-field of all $\nu^\ast$-measurable sets contains $\calF$. This is a particular case of the following general construction where, rather than just being the extension of a usual measure, an outer measure is derived from a function defined on a class of subsets of $\R^d$.

\begin{Definition}
	A {\em premeasure} is a function of the form $\zeta:\calC\to [0,\infty]$, where $\calC$ is a collection of subsets of $\R^d$ containing the empty set, that satisfies $\zeta(\emptyset)=0$.
\end{Definition}

The construction makes use of the standard notion of covering. Given a set $E$ in $\prd$ and a collection $\calC$ of subsets of $\R^d$ containing the empty set, recall that a sequence of sets $(C_n)_{n\geq 1}$ in $\calC$ is called a {\em covering} of $E$ if
	\[
		E\subseteq\bigcup_{n=1}^\infty C_n.
	\]
	Note that this definition encompasses the case of coverings by finitely many sets, as we can choose the sets $C_n$ to be empty when $n$ is large enough. The next result gives a general method to build an outer measure starting from a premeasure.

\begin{Theorem}\label{thm:absmeas}
	Let $\calC$ be a collection of subsets of $\R^d$ containing the empty set, and let $\zeta$ be a premeasure defined on $\calC$. Then, the function $\zeta^\ast$ defined on $\prd$ by
	\begin{equation}\label{eq:df:zetaasta}
		\zeta^\ast(E)=\inf_{E\subseteq\bigcup_n C_n \atop C_n\in\calC}\sum_{n=1}^\infty\zeta(C_n)
	\end{equation}
	is an outer measure. Here, the infimum is taken over all coverings of the set $E$ by sequences $(C_n)_{n\geq 1}$ of sets that belong to $\calC$.
\end{Theorem}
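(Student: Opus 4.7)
The plan is to check the three defining properties of an outer measure directly from the definition~(\ref{eq:df:zetaasta}).

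\textbf{Step 1: $\zeta^\ast(\emptyset)=0$.} Since $\emptyset\in\calC$ by hypothesis, the constant sequence $C_n=\emptyset$ is a valid covering of the empty set, giving $\sum_n\zeta(C_n)=0$ because $\zeta(\emptyset)=0$. Combined with the fact that $\zeta^\ast$ is plainly nonnegative, this forces $\zeta^\ast(\emptyset)=0$.

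\textbf{Step 2: Monotonicity.} If $E_1\subseteq E_2$, then any covering $(C_n)_{n\geq 1}$ of $E_2$ by elements of $\calC$ is automatically a covering of $E_1$. Thus the infimum defining $\zeta^\ast(E_1)$ is taken over a collection at least as large as the one defining $\zeta^\ast(E_2)$, whence $\zeta^\ast(E_1)\leq\zeta^\ast(E_2)$.

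\textbf{Step 3: Countable subadditivity.} Let $(E_n)_{n\geq 1}$ be a sequence in $\prd$ and $E=\bigcup_n E_n$. If $\sum_n\zeta^\ast(E_n)=\infty$, the inequality is trivial, so assume the sum is finite; in particular, each $\zeta^\ast(E_n)$ is finite. Fix $\eps>0$. For every $n\geq 1$, by the definition of the infimum there exists a covering $(C_{n,k})_{k\geq 1}$ of $E_n$ by elements of $\calC$ such that
\[
\sum_{k=1}^\infty\zeta(C_{n,k})\leq\zeta^\ast(E_n)+\frac{\eps}{2^n}.
\]
Fix a bijection between $\N\times\N$ and $\N$ to relabel the doubly indexed family $(C_{n,k})_{n,k\geq 1}$ as a single sequence; this sequence is a covering of $E$ by elements of $\calC$. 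By the definition of $\zeta^\ast$ together with Tonelli's identity for nonnegative double series,
\[
\zeta^\ast(E)\leq\sum_{n=1}^\infty\sum_{k=1}^\infty\zeta(C_{n,k})\leq\sum_{n=1}^\infty\left(\zeta^\ast(E_n)+\frac{\eps}{2^n}\right)=\sum_{n=1}^\infty\zeta^\ast(E_n)+\eps.
\]
Letting $\eps\to 0$ yields the desired inequality.

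\textbf{Expected obstacle.} The only nonroutine point is the reindexing in Step~3: one must ensure that concatenating countably many countable coverings still produces an admissible covering in the sense of the definition, and that the order of summation in the resulting double sum is irrelevant. Both are standard consequences of dealing with nonnegative terms and a countable bijection $\N\times\N\to\N$, but they are what makes $\sigma$-subadditivity (rather than just finite subadditivity) work. The case $\zeta^\ast(E_n)=\infty$ for some $n$ must be dispatched separately at the outset to justify choosing a finite-$\eps$-optimal covering of each $E_n$.
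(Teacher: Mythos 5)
Your proof is correct and follows the standard Carath\'eodory argument. The paper does not supply its own proof of this theorem---it defers to Rogers~\cite{Rogers:1970wb} for missing proofs in this section---and what you have written is precisely the classical argument one would find there, including the $\eps/2^n$ refinement and the reindexing via a bijection $\N\times\N\to\N$ for countable subadditivity. One small remark: in Step~2 you should also note that the inequality is trivially satisfied when $E_2$ admits no covering by elements of $\calC$ (so that $\zeta^\ast(E_2)=\infty$), just as you carefully dispatch the infinite case at the start of Step~3; otherwise the argument is complete.
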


Obviously, the above procedure is ``closed'', in the sense that if $\mu$ denotes an outer measure, then $\mu$ may be seen as a premeasure on $\prd$ and the outer measure $\mu^\ast$ defined via~(\ref{eq:df:zetaasta}) coincides with $\mu$. Let us now present another way of extending a premeasure into an outer measure, by taking additionally into account the metric structure of $\R^d$. The diameter of a set $E\in\prd$ is denoted by $\diam{E}$.

\begin{Theorem}\label{thm:metmeas}
	Let $\calC$ be a collection of subsets of $\R^d$ containing the empty set, and let $\zeta$ be a premeasure defined on $\calC$. Then, the function $\zeta_\ast$ defined on $\prd$ by
		\begin{equation}\label{eq:df:zetaastm}
			\zeta_\ast(E)=\lim_{\delta\downarrow 0}\uparrow \zeta_\delta(E)
			\qquad\text{with}\qquad
			\zeta_\delta(E)=\inf_{E\subseteq\bigcup_n C_n \atop C_n\in\calC, \diam{C_n}\leq\delta}\sum_{n=1}^\infty\zeta(C_n)
		\end{equation}
		is an outer measure. Here, the infimum is taken over all coverings of the set $E$ by sequences $(C_n)_{n\geq 1}$ of sets belonging to $\calC$ with diameter at most $\delta$.
\end{Theorem}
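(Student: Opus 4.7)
The plan is to reduce the statement to Theorem~\ref{thm:absmeas} by freezing the diameter threshold, and then to pass to the monotone limit. For each $\delta>0$, introduce the subcollection
\[
\calC_\delta=\{C\in\calC\mid \diam{C}\leq\delta\}.
\]
Since $\diam\emptyset=0$, this subcollection contains the empty set, and the restriction $\zeta|_{\calC_\delta}$ is still a premeasure. Comparing definitions, $\zeta_\delta$ is exactly the outer measure obtained from $\zeta|_{\calC_\delta}$ via~(\ref{eq:df:zetaasta}); the convention that $\inf\emptyset=\infty$ handles the case where $E$ admits no covering by sets of $\calC_\delta$. Thus Theorem~\ref{thm:absmeas} yields that $\zeta_\delta$ is an outer measure on $\prd$ for every $\delta>0$.

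Next I would verify that $\delta\mapsto\zeta_\delta(E)$ is nonincreasing in $\delta$. Indeed, if $0<\delta'\leq\delta$, then $\calC_{\delta'}\subseteq\calC_\delta$, so any covering admissible at level $\delta'$ is admissible at level $\delta$ and the infimum can only grow when we shrink $\delta$. This justifies the existence of the monotone limit
\[
\zeta_\ast(E)=\lim_{\delta\downarrow 0}\uparrow\zeta_\delta(E)=\sup_{\delta>0}\zeta_\delta(E)\in[0,\infty],
\]
well defined for every $E\in\prd$.

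It remains to check the three defining conditions of an outer measure for $\zeta_\ast$. The identity $\zeta_\ast(\emptyset)=0$ is immediate since $\zeta_\delta(\emptyset)=0$ for every $\delta$, as one sees by covering $\emptyset$ by $\emptyset$ itself. Monotonicity under inclusion passes to the supremum: if $E_1\subseteq E_2$, then $\zeta_\delta(E_1)\leq\zeta_\delta(E_2)\leq\zeta_\ast(E_2)$ for every $\delta>0$, whence $\zeta_\ast(E_1)\leq\zeta_\ast(E_2)$. For countable subadditivity, fix a sequence $(E_n)_{n\geq 1}$ in $\prd$; then for each $\delta>0$,
\[
\zeta_\delta\left(\bigcup_{n=1}^\infty E_n\right)\leq\sum_{n=1}^\infty\zeta_\delta(E_n)\leq\sum_{n=1}^\infty\zeta_\ast(E_n),
\]
where the first inequality uses that $\zeta_\delta$ is an outer measure and the second uses the definition of $\zeta_\ast$. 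Letting $\delta\downarrow 0$ on the left yields the desired subadditivity for $\zeta_\ast$.

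The only delicate point is the countable subadditivity step, where one must be careful not to interchange a supremum over $\delta$ with the infinite sum on the right: the argument sidesteps this by bounding each term uniformly in $\delta$ by $\zeta_\ast(E_n)$ \emph{before} taking the limit. Everything else is bookkeeping that rides on Theorem~\ref{thm:absmeas} and the elementary fact that a pointwise monotone limit of outer measures (restricted to sets whose covers are progressively constrained) remains an outer measure.
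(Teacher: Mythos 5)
Your proof is correct, and it follows the standard route (the paper itself omits this proof and refers to Rogers): freeze the diameter threshold to recover the Method~I construction of Theorem~\ref{thm:absmeas}, note that $\delta\mapsto\zeta_\delta(E)$ is nonincreasing so that $\zeta_\ast=\sup_{\delta>0}\zeta_\delta$, and observe that a pointwise supremum of outer measures is an outer measure. The only place where one could go wrong is the countable subadditivity step, and you handle it correctly by bounding each term $\zeta_\delta(E_n)\leq\zeta_\ast(E_n)$ uniformly in $\delta$ before passing to the limit on the left, thereby avoiding an illegal interchange of supremum and infinite sum.
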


Let us mention that it is obvious from~(\ref{eq:df:zetaasta}) and~(\ref{eq:df:zetaastm}) that for any premeasure $\zeta$ and any subset $E$ of $\R^d$, we have $\zeta^\ast(E)\leq\zeta_\delta(E)$ for all $\delta>0$; thus, taking the limit as $\delta\to 0$, we deduce that $\zeta^\ast(E)\leq\zeta_\ast(E)$. The main advantage of the above construction over that given by Theorem~\ref{thm:absmeas} is that the outer measure $\zeta_\ast$ is {\em metric}, namely, for all sets $A$ and $B$ in $\prd\setminus\{\emptyset\}$,
	\[
		\dist{A}{B}>0
		\qquad\Longrightarrow\qquad
		\mu(A\sqcup B)=\mu(A)+\mu(B).
	\]
Here, $\dist{A}{B}$ is the distance between $A$ and $B$, that is, the infimum of $|a-b|$ over all $a\in A$ and $b\in B$. This property implies that the Borel sets are measurable with respect to the outer measures $\zeta_\ast$. The Borel $\sigma$-field is denoted by $\bor$.

\begin{Theorem}\label{thm:bormeasm}
	Let $\zeta_\ast$ be the outer measure obtained from a given premeasure $\zeta$ through~(\ref{eq:df:zetaastm}). Then, the Borel subsets of $\R^d$ are $\zeta_\ast$-measurable, that is, $\bor\subseteq\calF_{\zeta_\ast}$.
\end{Theorem}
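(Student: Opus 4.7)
\medskip

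\noindent\textbf{Proof proposal.} The plan is the classical Carathéodory argument in two stages. First I would establish that $\zeta_\ast$ is a \emph{metric} outer measure, namely that $\zeta_\ast(A\sqcup B)=\zeta_\ast(A)+\zeta_\ast(B)$ whenever $\dist{A}{B}>0$. Fix such $A,B$ and set $\eta=\dist{A}{B}$. Subadditivity gives one inequality. For the reverse, pick any $\delta<\eta$ and any covering $(C_n)_{n\geq 1}$ of $A\cup B$ by sets $C_n\in\calC$ with $\diam{C_n}\leq\delta$. Since $\diam{C_n}<\eta$, no $C_n$ can meet both $A$ and $B$, so the index set splits as $\N=N_A\sqcup N_B$, where those indexed by $N_A$ form a $\delta$-covering of $A$ and those indexed by $N_B$ a $\delta$-covering of $B$. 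Hence $\sum_n \zeta(C_n)\geq \zeta_\delta(A)+\zeta_\delta(B)$, and taking the infimum over coverings followed by $\delta\downarrow 0$ yields $\zeta_\ast(A\cup B)\geq\zeta_\ast(A)+\zeta_\ast(B)$.

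Next I would show that every closed set $F\subseteq\R^d$ lies in $\calF_{\zeta_\ast}$. By definition of $\mu$-measurability it suffices, given any test set $E$ with $\zeta_\ast(E)<\infty$, to verify that
\[
\zeta_\ast(E)\geq\zeta_\ast(E\cap F)+\zeta_\ast(E\setminus F).
\]
Write $A=E\cap F$ and $B=E\setminus F$, and introduce the approximating sets
\[
B_n=\left\{x\in B\bigm|\dist{x}{F}\geq 1/n\right\},\qquad D_n=B_{n+1}\setminus B_n.
\]
Since $F$ is closed and $B\cap F=\emptyset$, the sequence $(B_n)_{n\geq 1}$ is nondecreasing with union $B$. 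Moreover $\dist{A}{B_n}\geq 1/n>0$, so the metric property yields $\zeta_\ast(E)\geq\zeta_\ast(A\cup B_n)=\zeta_\ast(A)+\zeta_\ast(B_n)$. The conclusion will follow once I prove that $\zeta_\ast(B_n)\to\zeta_\ast(B)$.

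The main obstacle is precisely this last convergence: the $B_n$ need not be $\zeta_\ast$-measurable, so~(\ref{eq:measincunion}) does not apply. The standard trick uses the decomposition $B=B_n\cup\bigcup_{k\geq n}D_k$, together with the observation that $\dist{D_k}{D_\ell}>0$ as soon as $|k-\ell|\geq 2$. Iterating the metric property therefore gives, for every $N\geq 1$,
\[
\sum_{k=1}^{N}\zeta_\ast(D_{2k})=\zeta_\ast\biggl(\bigsqcup_{k=1}^{N} D_{2k}\biggr)\leq\zeta_\ast(B)\leq\zeta_\ast(E)<\infty,
\]
and likewise for the odd-indexed sum, so that $\sum_k\zeta_\ast(D_k)<\infty$. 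By subadditivity,
\[
\zeta_\ast(B)\leq\zeta_\ast(B_n)+\sum_{k\geq n}\zeta_\ast(D_k),
\]
and the tail of the convergent series tends to $0$, whence $\zeta_\ast(B_n)\to\zeta_\ast(B)$, as desired. This proves that every closed set is $\zeta_\ast$-measurable; since $\calF_{\zeta_\ast}$ is a $\sigma$-field by Theorem~\ref{thm:measfield}, it contains the $\sigma$-field $\bor$ generated by the closed sets, completing the proof.
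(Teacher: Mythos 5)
Your proof is correct, and it follows precisely the route the paper indicates: the paper states without proof that $\zeta_\ast$ is a metric outer measure and that metricity implies Borel measurability, deferring the details to Rogers. You have supplied the standard Carath\'eodory argument filling in both steps, and the splitting into $D_k=B_{k+1}\setminus B_k$ with the even/odd summation trick to establish $\zeta_\ast(B_n)\to\zeta_\ast(B)$ without measurability of the $B_n$ is exactly the right device.
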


The general theory discussed above may be applied to define the important example of Lebesgue measure and recover its main properties. The starting point is the premeasure $\upsilon$ defined on the open rectangles of $\R^d$ by
	\begin{equation}\label{eq:df:premeasLeb}
		\upsilon\left(\prod_{i=1}^d (a_i,b_i)\right)=\prod_{i=1}^d (b_i-a_i)
	\end{equation}
	for any points $(a_1,\ldots,a_d)$ and $(b_1,\ldots,b_d)$ in the space $\R^d$ such that the condition $a_i\leq b_i$ holds for any $i\in\{1,\ldots,d\}$. The {\em $d$-dimensional Lebesgue outer measure} $\leb^d$ is then defined as the outer measure on $\prd$ defined with the help of~(\ref{eq:df:zetaasta}) from the premeasure $\upsilon$, namely, $\leb^d=\upsilon^\ast$. The {\em $d$-dimensional Lebesgue measure}, still denoted by $\leb^d$, is the restriction of this outer measure to the $\sigma$-field of its measurable sets. It follows this definition that the Lebesgue outer measure is translation invariant and homogeneous of degree $d$ under dilations. Actually, the Lebesgue outer measure coincides with the outer measure defined on $\prd$ from the premeasure $\upsilon$ with the help of~(\ref{eq:df:zetaastm}), that is, $\leb^d=\upsilon_\ast$. It thus satisfy additional metric properties, and in particular the Borel sets of $\R^d$ are measurable. Finally, though this does not follow from the general theory presented above, one may of course show that $\leb^d(R)=\upsilon(R)$ for any open rectangle $R$ of $\R^d$. Similarly, the right-hand side of~(\ref{eq:df:premeasLeb}) gives the Lebesgue measure of any closed, or half-open, rectangle determined by the points $(a_1,\ldots,a_d)$ and $(b_1,\ldots,b_d)$.

\subsection{Hausdorff measures}\label{subsec:Hausdorff}

\subsubsection{Definition and main properties}

The first definitions and properties of Hausdorff measures were established by Carath\'eodory and Hausdorff. They are obtained by applying Theorem~\ref{thm:metmeas} to premeasures defined in terms of {\em gauge functions}.

\begin{Definition}
A {\em gauge function} is a function $g$ defined on $[0,\infty]$ which is nondecreasing in a neighborhood of zero and satisfies the conditions
	\[
		\lim_{r\to 0}g(r)=g(0)=0
		\qquad\text{and}\qquad
		g(\infty)=\infty.
	\]
	 The collection of all gauge functions is denoted by $\gauge$.
\end{Definition}

The convention that gauge functions take an infinite value at infinity has very little importance and is only aimed at lightening some of the statements below. Note in addition that we do not exclude {\em a priori} the possibility that a gauge function assigns an infinite value to some positive real numbers.

\begin{Definition}
	For any gauge function $g$, the {\em Hausdorff $g$-measure} $\hau^g$ is the outer measure on $\prd$ defined with the help of~(\ref{eq:df:zetaastm}) from the premeasure $g\circ\diam{\cdot}$, namely,
	\[
		\hau^g=(g\circ\diam{\cdot})_\ast,
	\]
	where $g\circ\diam{\cdot}$ is a shorthand for the premeasure defined on $\prd$ by $E\mapsto g(\diam{E})$.
\end{Definition}

It follows that the Hausdorff measures are translation invariant. Moreover, Theorem~\ref{thm:bormeasm} ensures that the Borel subsets of $\R^d$ are measurable with respect to the Hausdorff measures. Besides, for any $\delta>0$, we shall also use the outer measures
\[
\hau^g_\delta=(g\circ\diam{\cdot})_\delta
\]
defined by~(\ref{eq:df:zetaastm}) in terms of the premeasure $g\circ\diam{\cdot}$. Note that they are indeed outer measures as a result of Theorem~\ref{thm:absmeas}.

We may derive from the relative behavior at zero of two given gauge functions a comparison between the corresponding Hausdorff measures. This is the purpose of the next result; its proof is simple and therefore omitted.

\begin{Proposition}\label{prp:compgauge0}
For any gauge functions $g$ and $h$ and for any set $E\subseteq\R^d$,
\[
\left(\liminf_{r\to 0}\frac{g(r)}{h(r)}\right)\hau^h(E)\leq\hau^g(E)\leq\left(\limsup_{r\to 0}\frac{g(r)}{h(r)}\right)\hau^h(E),
\]
except if the lower or upper bound is of the indeterminate form $0\cdot\infty$, in which case the corresponding inequality has no meaning.
\end{Proposition}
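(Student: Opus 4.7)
The plan is to translate the pointwise asymptotic control of $g/h$ at $0$ into a corresponding inequality between the $\hau^g_\delta$ and $\hau^h_\delta$ premeasures used in~(\ref{eq:df:zetaastm}), and then let $\delta\to 0$. Nothing more than the definitions should be needed.

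For the upper bound I would set $M=\limsup_{r\to 0}g(r)/h(r)$ and handle the case $M<\infty$ first. Given $\eps>0$, pick $\delta_0>0$ such that $g(r)\le (M+\eps)\,h(r)$ for every $r\in(0,\delta_0]$. Then, for any $\delta\in(0,\delta_0]$ and any covering $(C_n)_{n\ge 1}$ of $E$ by sets with $\diam{C_n}\le\delta$, summing the pointwise inequality termwise yields $\sum_n g(\diam{C_n})\le (M+\eps)\sum_n h(\diam{C_n})$. Taking the infimum over such coverings gives $\hau^g_\delta(E)\le (M+\eps)\hau^h_\delta(E)$; letting $\delta\to 0$ and then $\eps\to 0$ will produce the claimed bound. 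If instead $M=\infty$, the right-hand side is infinite whenever $\hau^h(E)>0$, so the inequality is trivial except in the excluded indeterminate case $\hau^h(E)=0$.

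The lower bound is entirely symmetric. Writing $L=\liminf_{r\to 0}g(r)/h(r)$, I would treat $0<L<\infty$ by the same $\delta$-premeasure argument with $g$ and $h$ interchanged and $(L-\eps)$ in place of $(M+\eps)$, for $\eps\in(0,L)$. When $L=0$, the inequality $0\cdot\hau^h(E)\le\hau^g(E)$ is vacuous provided $\hau^h(E)<\infty$, the remaining case being excluded. When $L=\infty$, applying the comparison with an arbitrary constant $K>0$ in place of $(L-\eps)$ gives $\hau^g(E)\ge K\hau^h(E)$ for every $K$, which forces $\hau^g(E)=\infty$ as soon as $\hau^h(E)>0$.

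I do not expect a real obstacle: this is essentially bookkeeping about limsup/liminf and the definition of $\hau^g$ as a metric outer measure. The only subtlety is to check that the inequality $g(r)\le (M+\eps)\,h(r)$ is meaningful on $(0,\delta_0]$; if $h$ vanished near $0$ while $g$ did not, then $\limsup_{r\to 0}g(r)/h(r)$ would already be $\infty$, placing us in the trivial regime handled above.
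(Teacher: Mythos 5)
Your proposal is correct and is precisely the elementary argument that the paper alludes to when it says the proof is ``simple and therefore omitted.'' The pointwise comparison of $g$ and $h$ on $(0,\delta_0]$ (with $g(0)=h(0)=0$ handling diameter-zero covering sets), passing through $\hau^g_\delta$ and $\hau^h_\delta$, and then letting $\delta\to 0$ and $\eps\to 0$, together with your case analysis at $M=\infty$ and $L\in\{0,\infty\}$, is the standard route and matches the exclusion clause of the statement.
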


Let us now explain how Hausdorff measures behave when taking the image of the set of interest under a mapping that satisfies a form of Lipschitz condition.

\begin{Proposition}\label{prp:haulip}
Let $V$ be a nonempty open subset of $\R^d$ and let $f$ be a mapping defined on $V$ with values in $\R^{d'}$. Let us assume that there exists a continuous increasing function $\ph$ defined on the interval $[0,\infty)$ such that $\ph(0)=0$ and
\[
\forall x,y\in V \qquad |f(x)-f(y)|\leq\ph(|x-y|).
\]
Then, for any gauge function $g$, the function $g\circ\ph^{-1}$ may be extended to a gauge function, and for any subset $E$ of $V$,
\[
\hau^{g\circ\ph^{-1}}(f(E))\leq\hau^g(E).
\]
\end{Proposition}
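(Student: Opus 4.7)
The plan has two parts: first, promote $g\circ\ph^{-1}$ to a genuine gauge function; second, push a near-optimal $\delta$-cover of $E$ through $f$ to produce a $\ph(\delta)$-cover of $f(E)$ whose $(g\circ\ph^{-1})$-weight is bounded by the $g$-weight of the original. Since $\ph$ is continuous and strictly increasing on $[0,\infty)$ with $\ph(0)=0$, its range is some interval $[0,L)$ with $L=\lim_{r\to\infty}\ph(r)\in(0,\infty]$, and its inverse $\ph^{-1}:[0,L)\to[0,\infty)$ is continuous and strictly increasing with $\ph^{-1}(0)=0$. I would extend $\ph^{-1}$ to $[0,\infty]$ by setting it equal to $\infty$ on $[L,\infty]$. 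The composition $g\circ\ph^{-1}$ is then defined on $[0,\infty]$, vanishes at $0$, tends to $0$ as its argument tends to $0$ (by continuity of $\ph^{-1}$ at $0$ combined with the defining property of $g$), equals $\infty$ at $\infty$, and is nondecreasing on $[0,\ph(\eta)]$ whenever $g$ is nondecreasing on $[0,\eta]$. Hence $g\circ\ph^{-1}\in\gauge$.

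For the inequality, I would fix $\delta\in(0,\eta]$ and let $(C_n)_{n\geq 1}$ be any covering of $E$ with $\diam{C_n}\leq\delta$. Since $E\subseteq V$, replacing each $C_n$ by $C_n\cap E$ still yields a cover of $E$, now contained in $V$ and of diameter at most $\delta$. For any $x,y\in C_n\cap E$, the hypothesis gives $|f(x)-f(y)|\leq\ph(|x-y|)\leq\ph(\diam{C_n\cap E})$, whence $\diam{f(C_n\cap E)}\leq\ph(\diam{C_n\cap E})\leq\ph(\delta)$. Thus $(f(C_n\cap E))_{n\geq 1}$ is a $\ph(\delta)$-cover of $f(E)$, and using the monotonicity of $g\circ\ph^{-1}$ established above, together with the identity $(g\circ\ph^{-1})(\ph(r))=g(r)$ valid for $r\in[0,\infty)$, and the monotonicity of $g$ on $[0,\delta]$,
\[
\sum_{n=1}^\infty (g\circ\ph^{-1})(\diam{f(C_n\cap E)})\leq\sum_{n=1}^\infty g(\diam{C_n\cap E})\leq\sum_{n=1}^\infty g(\diam{C_n}).
\]
Taking the infimum over all admissible $\delta$-covers yields $\hau^{g\circ\ph^{-1}}_{\ph(\delta)}(f(E))\leq\hau^g_\delta(E)$, and letting $\delta\downarrow 0$, so that $\ph(\delta)\downarrow 0$ by continuity of $\ph$, delivers the claim.

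The main obstacle is the first step: one must check carefully that $g\circ\ph^{-1}$ genuinely qualifies as a gauge, in particular handling the restricted natural domain of $\ph^{-1}$ and recovering the ``nondecreasing near zero'' clause from the corresponding property of $g$ alone (which also justifies the monotonicity step buried in the chain of inequalities above). Once this is in place, the covering-transport argument is a routine extension of the classical Lipschitz case $\ph(r)=cr$.
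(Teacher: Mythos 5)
Your argument is correct and is the standard elementary proof the paper alludes to when it writes that ``the proof is omitted because it is elementary'': verify that $g\circ\ph^{-1}$, suitably extended past the range of $\ph$, is a gauge function, then push a $\delta$-cover of $E$ through $f$ (first intersecting each piece with $E$ so the modulus bound applies and so $\diam{f(C_n\cap E)}\leq\ph(\diam{C_n\cap E})$) to get a $\ph(\delta)$-cover of $f(E)$ of no larger weight, and let $\delta\downarrow 0$. The two points you rightly flag as needing care — extending $\ph^{-1}$ by $\infty$ past $L=\lim_{r\to\infty}\ph(r)$, and deducing monotonicity of $g\circ\ph^{-1}$ near $0$ from that of $g$ on $[0,\eta]$ via $\ph(\eta)$ — are handled correctly.
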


We omit the proof because it is elementary. Proposition~\ref{prp:haulip} is typically applied to mappings $f$ that are Lipschitz, or uniform H\"older; $\ph$ is then of the form $r\mapsto c\, r^\alpha$.

\subsubsection{Normalized gauge functions and net measures}\label{subsubsec:normgaugenetm}

We shall hardly be interested in the precise value of the Hausdorff $g$-measure of a set, but only in its finiteness or its positiveness. Thus, it will be useful to compare the Hausdorff $g$-measures with simpler objects obtained for instance by making further assumptions on the gauge function $g$ or the form of the coverings. This is the purpose of the next two results. The first statement calls upon the following notion of normalized gauge functions.

\begin{Definition}\label{df:normgauge}
	For any gauge function $g$, we consider the function $g_d$ defined for all real numbers $r>0$ by
	\begin{equation}\label{eq:df:normgauge}
		g_d(r)=r^d\inf_{0<\rho\leq r}\frac{g(\rho)}{\rho^d},
	\end{equation}
	along with $g_d(0)=0$ and $g_d(\infty)=\infty$\,; the function $g_d$ is then called the {\em $d$-normalization of $g$}. Moreover, we say that a gauge function is {\em $d$-normalized} if it coincides with its $d$-normalization in a neighborhood of zero.
\end{Definition}

The next result shows that the Hausdorff measure associated with some gauge function is comparable with the measure associated with its {\em $d$-normalization}. We refer to~\cite[Proposition~3.2]{Durand:2007lr} or to~\cite[Lemma~2.2]{Olsen:2006fk} for the proof.

\begin{Proposition}\label{prp:compnormalgauge}
For any gauge function $g$ in $\gauge$, the function $g_d$ defined above is a gauge function for which the mapping $r\mapsto g_d(r)/r^d$ is nonincreasing on $(0,\infty)$. Moreover, there is a real number $\kappa\geq 1$ such that for any $g\in\gauge$ and any $E\in\prd$,
\[
\hau^{g_d}(E)\leq\hau^g(E)\leq\kappa\,\hau^{g_d}(E).
\]
\end{Proposition}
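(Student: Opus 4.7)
The proposition has three ingredients: (i) $g_d\in\gauge$, (ii) $r\mapsto g_d(r)/r^d$ is nonincreasing, and (iii) the two-sided comparison $\hau^{g_d}\leq\hau^g\leq\kappa\hau^{g_d}$. Claim (ii) is immediate from the very definition, since $g_d(r)/r^d=\inf_{0<\rho\leq r}g(\rho)/\rho^d$ and enlarging $r$ enlarges the set over which the infimum is taken, thereby weakening it. For (i), taking $\rho=r$ in the infimum gives $g_d(r)\leq g(r)$, so $g_d(r)\to 0$ as $r\to 0$; together with $g_d(0)=0$ and $g_d(\infty)=\infty$ by fiat, only monotonicity near $0$ remains. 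Pick $r_0>0$ with $g$ nondecreasing on $(0,r_0)$, fix $0<r_1<r_2<r_0$, and rewrite $g_d(r_2)=\inf_{0<\rho\leq r_2}(r_2/\rho)^d g(\rho)$. Split the infimum at $r_1$: when $\rho\leq r_1$ we bound $(r_2/\rho)^d g(\rho)=(r_2/r_1)^d(r_1/\rho)^d g(\rho)\geq g_d(r_1)$; when $r_1<\rho\leq r_2$ we use $(r_2/\rho)^d\geq 1$ and $g(\rho)\geq g(r_1)\geq g_d(r_1)$. Hence $g_d(r_2)\geq g_d(r_1)$.

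The easy half of (iii), namely $\hau^{g_d}(E)\leq\hau^g(E)$, follows termwise from $g_d\leq g$ applied to any $\delta$-covering and then passing to the infimum and letting $\delta\downarrow 0$.

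The crux is the reverse inequality. The idea is that a $g_d$-efficient covering of $E$ by sets of diameter $|C_n|$ can be refined into a $g$-efficient covering by sets of diameter $\rho_n\in(0,|C_n|]$ chosen to (almost) realize the infimum defining $g_d(|C_n|)/|C_n|^d$. More precisely, given $\delta>0$, pick a covering $(C_n)$ of $E$ with $|C_n|\leq\delta$ and $\sum_n g_d(|C_n|)\leq\hau^{g_d}_\delta(E)+\eps$; for each $n$ with $|C_n|>0$, choose $\rho_n\in(0,|C_n|]$ such that $g(\rho_n)/\rho_n^d\leq g_d(|C_n|)/|C_n|^d+\eps_n$ with $\eps_n:=\eps 2^{-n}/|C_n|^d$. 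Since $C_n$ lies in a closed ball of radius $|C_n|$, a standard volumetric argument in $\R^d$ (independent of $g$) provides a covering of that ball by at most $\kappa(|C_n|/\rho_n)^d$ sets of diameter $\leq\rho_n$, the constant $\kappa$ depending only on $d$. The refined cover of $E$ still has mesh $\leq\delta$, and its total $g$-cost is
\[
\sum_n\kappa\left(\frac{|C_n|}{\rho_n}\right)^d g(\rho_n)=\kappa\sum_n|C_n|^d\frac{g(\rho_n)}{\rho_n^d}\leq\kappa\sum_n g_d(|C_n|)+\kappa\eps.
\]
Thus $\hau^g_\delta(E)\leq\kappa\hau^{g_d}_\delta(E)+2\kappa\eps$; letting $\eps\downarrow 0$ then $\delta\downarrow 0$ yields $\hau^g(E)\leq\kappa\hau^{g_d}(E)$.

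The only delicate point is the calibration of $\rho_n$: the refined covering contains roughly $(|C_n|/\rho_n)^d$ sets, each of $g$-cost $g(\rho_n)$, so the product is $|C_n|^d g(\rho_n)/\rho_n^d$, which is precisely what the infimum in the definition of $g_d$ is designed to control. The degenerate case $g_d(|C_n|)=0$ poses no obstacle since one then picks $\rho_n$ so small that $g(\rho_n)/\rho_n^d$ is arbitrarily small, making the contribution of $C_n$ negligible.
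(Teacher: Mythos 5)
Your proof is correct. The paper itself does not supply a proof, instead delegating to \cite[Proposition~3.2]{Durand:2007lr} and \cite[Lemma~2.2]{Olsen:2006fk}; your argument is the standard one given in those references. All the ingredients are in place: $g_d\leq g$ (take $\rho=r$ in the infimum) gives both the decay of $g_d$ at the origin and the easy half of the comparison; the clean split at $r_1$ of the infimum expression $g_d(r_2)=\inf_{0<\rho\leq r_2}(r_2/\rho)^d g(\rho)$ establishes monotonicity of $g_d$ near $0$, using monotonicity of $g$ only on the range $r_1<\rho\leq r_2$ where it is available; and the hard inequality follows by refining an almost-optimal $\delta$-cover $(C_n)$ into one of mesh still at most $\delta$, choosing $\rho_n\in(0,|C_n|]$ nearly realizing the infimum defining $g_d(|C_n|)$ and paying a volumetric factor of order $(|C_n|/\rho_n)^d$, exactly as designed to cancel against $g(\rho_n)/\rho_n^d$. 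The only points to keep explicit in a polished write-up: take $\delta$ small enough that $g$ is nondecreasing on $(0,\delta]$ so that $g(|D_{n,k}|)\leq g(\rho_n)$ for the refined sets; and for $n$ with $|C_n|=0$ keep $C_n$ unrefined since both sides contribute zero there.
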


The second statement shows that we may restrict our attention to coverings with dyadic cubes when estimating Hausdorff measures of sets. The main advantage of working with coverings by dyadic cubes is that they may easily be reduced to coverings by disjoint cubes; this is due to the fact that two dyadic cubes are either disjoint or contained in one another. Recall that a dyadic cube is a set of the form
\[
\lambda=2^{-j}(k+[0,1)^d),
\]
with $j\in\Z$ and $k\in\Z^d$. We also adopt the convention that the empty set is a dyadic cube. The collection of all dyadic cubes, including the empty set, is denoted by $\Lambda$. Given a gauge function $g$, let us consider the premeasure that maps each set $\lambda$ in $\Lambda$ to $g(\diam{\lambda})$, and which is denoted by $g\circ\diam{\cdot}_\Lambda$ for brevity. Then, Theorem~\ref{thm:metmeas} enables us to introduce the outer measure
	\begin{equation}\label{eq:df:netmgauge}
		\netm^g=(g\circ\diam{\cdot}_\Lambda)_\ast,
	\end{equation}
	and Theorem~\ref{thm:bormeasm} shows that the Borel sets are measurable with respect to $\netm^g$; this outer measure is usually termed as a {\em net measure}. We refer to~\cite[Theorem~49]{Rogers:1970wb} for the proof of the following result.

\begin{Proposition}\label{prp:compnetm}
There exists a real $\kappa'\geq 1$ such that for any gauge function $g$ and any subset $E$ of $\R^d$,
\[
\hau^g(E)\leq\netm^g(E)\leq\kappa'\,\hau^g(E).
\]
\end{Proposition}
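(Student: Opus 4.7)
For the easy inequality $\hau^g(E) \leq \netm^g(E)$: each covering of $E$ by dyadic cubes of diameter at most $\delta$ is in particular a covering of $E$ by subsets of $\R^d$ of diameter at most $\delta$, so the infimum in the definition of $\hau^g_\delta(E)$ is at most the corresponding infimum for $\netm^g$ at the same scale $\delta$. Passing to the limit $\delta \downarrow 0$ gives the inequality.

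For the right inequality, the strategy is to refine any near-optimal covering of $E$ by arbitrary small sets into a dyadic covering whose total $g$-mass exceeds the original by at most a dimensional multiplicative constant. Since $g$ is nondecreasing in a neighborhood of zero, fix $\delta_0 > 0$ such that $g$ is nondecreasing on $[0,\delta_0]$, and restrict attention to $\delta \in (0,\delta_0)$ throughout.

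Take any covering $(C_n)_{n \geq 1}$ of $E$ by nonempty sets with $r_n := \diam{C_n} \leq \delta$, and with $\sum_n g(r_n)$ close to $\hau^g_\delta(E)$. For each $n$, pick $j_n \in \Z$ with $\sqrt{d}\, 2^{-j_n} \leq r_n < 2\sqrt{d}\, 2^{-j_n}$. Fixing any $x_n \in C_n$, the set $C_n$ sits inside the axis-aligned closed cube of side $2 r_n$ centered at $x_n$, and since $2 r_n < 4\sqrt{d}\, 2^{-j_n}$, this cube meets at most $N_d := (\lceil 4 \sqrt{d}\, \rceil + 1)^d$ dyadic cubes $\lambda_{n,1}, \ldots, \lambda_{n,K_n}$ of side length $2^{-j_n}$, each of diameter $\sqrt{d}\, 2^{-j_n} \leq r_n \leq \delta$. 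Those $\lambda_{n,k}$ cover $C_n$, and monotonicity of $g$ on $[0,\delta_0]$ gives
\[
\sum_{k=1}^{K_n} g(\diam{\lambda_{n,k}}) \leq N_d \cdot g(\sqrt{d}\, 2^{-j_n}) \leq N_d \cdot g(r_n).
\]

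The resulting family $(\lambda_{n,k})_{n,k}$ is a dyadic covering of $E$ at scale $\delta$, so summing over $n$ bounds the net-measure approximation at scale $\delta$ by $N_d \sum_n g(r_n)$; taking the infimum over $(C_n)$ and letting $\delta \downarrow 0$ yields $\netm^g(E) \leq N_d \, \hau^g(E)$, so $\kappa' = N_d$ does the job. The only subtle point is that gauge functions are required to be nondecreasing only near zero; this is precisely why we impose $\delta < \delta_0$ in the construction, but since both $\hau^g$ and $\netm^g$ are defined as limits as $\delta \downarrow 0$, no generality is lost.
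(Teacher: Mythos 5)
Your proposal is essentially correct and follows the standard approach: the paper does not prove this proposition itself but simply cites Rogers' Theorem~49, whose argument is precisely the refine-into-dyadic-cubes construction you describe. The easy inequality $\hau^g\leq\netm^g$ is handled correctly, and for the other direction the monotonicity of $g$ on a fixed neighbourhood $[0,\delta_0]$ together with the dimensional bound $N_d$ on the number of dyadic cubes of side $2^{-j_n}$ meeting a cube of side $2r_n$ is exactly the right idea, and the restriction $\delta<\delta_0$ followed by $\delta\downarrow 0$ is sound.

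There is, however, one genuine gap. The choice of $j_n\in\Z$ with $\sqrt{d}\,2^{-j_n}\leq r_n<2\sqrt{d}\,2^{-j_n}$ requires $r_n=\diam{C_n}>0$; if some $C_n$ is a singleton, no such $j_n$ exists, and the subsequent bound $g(\sqrt{d}\,2^{-j_n})\leq g(r_n)=g(0)=0$ would force you to cover a point by a dyadic cube of zero $g$-mass, which is impossible unless $g$ vanishes near zero. This is not a vacuous case: if $E$ is countable then the cover by singletons witnesses $\hau^g(E)=0$, and your argument must still produce dyadic covers with arbitrarily small cost. The repair is routine but should be said: for each index $n$ with $r_n=0$, cover the singleton $C_n$ by a single dyadic cube $\lambda_n$ of generation large enough that $\diam{\lambda_n}\leq\delta$ and $g(\diam{\lambda_n})\leq\varepsilon\,2^{-n}$ (possible since $g(r)\to 0$ as $r\to 0$); the total additional cost contributed by these cubes is at most $\varepsilon$, which is removed by letting $\varepsilon\downarrow 0$ before the infimum over covers and the limit in $\delta$. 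With this supplement the proof is complete, with $\kappa'=N_d$ still independent of $g$.
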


\subsubsection{Connection with Lebesgue measure}\label{subsubsec:linkHauLeb}

Finally, it is important to observe that the Lebesgue measure $\leb^d$ is a particular example of Hausdorff measure. We have indeed the next statement; we refer to~\cite[Theorem~30]{Rogers:1970wb} for a proof.

\begin{Proposition}\label{prp:comphauleb}
	There exists a real number $\kappa''>0$ such that for any $B\in\bor$,
		\[
			\hau^{r\mapsto r^d}(B)=\kappa''\leb^d(B).
		\]
\end{Proposition}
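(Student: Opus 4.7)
The plan is to recognise the restriction of $\hau^g$, with $g(r)=r^d$, to the Borel $\sigma$-field as a translation-invariant, $d$-homogeneous Borel measure, and then to identify it with a scalar multiple of $\leb^d$ by a Carathéodory-style uniqueness argument on the algebra generated by half-open dyadic cubes. Inspecting the definition~(\ref{eq:df:zetaastm}) shows directly that $\hau^g$ is translation invariant and satisfies $\hau^g(\lambda E)=\lambda^d\hau^g(E)$ for every $E\in\prd$ and $\lambda>0$, since translating preserves diameters and dilating by $\lambda$ scales them by $\lambda$. By Theorem~\ref{thm:bormeasm}, $\hau^g$ restricts to a countably additive measure on $\bor$.

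The key quantity is $\kappa'':=\hau^g([0,1]^d)$, which I claim lies in $(0,\infty)$. The upper bound comes from partitioning the unit cube into $n^d$ subcubes of side $1/n$ and diameter $\sqrt d/n$, which gives $\hau^g_{\sqrt d/n}([0,1]^d)\leq n^d(\sqrt d/n)^d=d^{d/2}$. For the lower bound I would use that every nonempty set $C\subseteq\R^d$ is contained in the closed ball centred at any of its points with radius $\diam{C}$, hence $\leb^d(C)\leq\omega_d\diam{C}^d$, where $\omega_d$ denotes the volume of the Euclidean unit ball. Applied to any covering $(C_n)_{n\geq 1}$ of $[0,1]^d$, subadditivity of $\leb^d$ then yields $1\leq\omega_d\sum_n g(\diam{C_n})$, so $\hau^g_\delta([0,1]^d)\geq 1/\omega_d$ for every $\delta>0$, whence $\kappa''\geq 1/\omega_d$.

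With $\kappa''\in(0,\infty)$ in hand, both $\hau^g$ and $\kappa''\leb^d$ are translation-invariant, $d$-homogeneous Borel measures on $\R^d$ that take the value $\kappa''$ on $[0,1]^d$. Lower-dimensional affine subspaces are $\hau^g$-negligible, as can be seen by covering a bounded piece of an affine hyperplane by $n^{d-1}$ cubes of side $1/n$ and letting $n\to\infty$; consequently the boundary of any cube has $\hau^g$-measure zero. Combining invariance, homogeneity, and this negligibility, the two measures agree on every half-open dyadic cube $2^{-j}(k+[0,1)^d)$ with $j\in\Z$ and $k\in\Z^d$, and hence on the algebra $\calA$ of finite disjoint unions of such cubes. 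Both measures are $\sigma$-finite on $\calA$, so Carathéodory's uniqueness theorem extends the equality $\hau^g=\kappa''\leb^d$ from $\calA$ to the generated $\sigma$-field, which is precisely $\bor$.

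The main difficulty is the positivity $\kappa''>0$, because the admissible covering sets have unrestricted shape, and one must convert an arbitrary small-diameter covering into a volumetric lower bound. Enclosing each covering set in a circumscribed ball and paying only the dimensional constant $\omega_d$ resolves this and is strong enough for the asserted existence statement; the sharper constant $\omega_d/2^d$ would require the full isodiametric inequality, which is not needed here.
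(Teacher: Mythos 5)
Your proof is correct. The paper does not actually give a proof of this proposition; it simply refers to~\cite[Theorem~30]{Rogers:1970wb}. What you write out --- translation invariance and $d$-homogeneity of $\hau^{r\mapsto r^d}$ straight from the definition~(\ref{eq:df:zetaastm}); positivity and finiteness of $\kappa''=\hau^{r\mapsto r^d}([0,1]^d)$ via, respectively, circumscribed balls and a fine dyadic partition; $\hau^{r\mapsto r^d}$-negligibility of lower-dimensional affine pieces so that half-open and closed cubes have the same measure; agreement of the two measures on all dyadic cubes by invariance and homogeneity; and finally the uniqueness-of-extension argument on $\bor$ --- is precisely the strategy the paper itself uses in the proof of the adjacent Proposition~\ref{prp:dichogauge}, where it cites the measurability of dyadic cubes, translation invariance, and the uniqueness lemma from~\cite[Lemma~1.6(a)]{Williams:1991hb}. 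So your proof essentially reconstructs the argument the paper treats as standard.

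One small terminological quibble: the collection of finite disjoint unions of half-open dyadic cubes is a \emph{ring}, not an algebra of subsets of $\R^d$, since the complement of a bounded dyadic cube in $\R^d$ is unbounded and hence not in the collection. This does not harm the argument: the dyadic cubes form a $\pi$-system generating $\bor$, the boxes $[-2^n,2^n)^d$ (finite disjoint unions of dyadic cubes of generation $-n$) provide a $\sigma$-finite exhaustion on which both measures take the common finite value $\kappa''2^{(n+1)d}$, and the uniqueness-of-extension lemma for $\sigma$-finite measures agreeing on a $\pi$-system (or on a ring) then gives equality on all of $\bor$. You also correctly observe that the crude circumscribed-ball bound $\leb^d(C)\leq\omega_d\diam{C}^d$ suffices for the positivity of $\kappa''$, and that the sharper isodiametric constant is not needed for the bare existence statement.
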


If the space $\R^d$ is endowed with the Euclidean norm, it can be shown that the constant $\kappa''$ arising in the statement of Proposition~\ref{prp:comphauleb} is given by
	\[
		\kappa''=\left(\frac{4}{\pi}\right)^{d/2}\gamma_d
		\qquad\text{with}\qquad
		\gamma_d=\Gamma\left(\frac{d}{2}+1\right),
	\]
where $\Gamma$ denotes the gamma function, see~\cite[pp.~56--58]{Rogers:1970wb} for a detailed proof.

Furthermore, we shall often use the following noteworthy result for general Hausdorff measures. For any gauge function $g$ in $\gauge$, we introduce the parameter
	\begin{equation}\label{eq:df:liminfgrd}
		\ell_g=\liminf_{r\to 0}\frac{g(r)}{r^d}\in[0,\infty],
	\end{equation}
	which enables us to define the subsets of gauge functions
	\begin{equation}\label{eq:df:gaugeinftyast}
		\gauge^\infty=\{g\in\gauge\:|\:\ell_g=\infty\}
		\qquad\text{and}\qquad
		\gauge^\ast=\{g\in\gauge\:|\:\ell_g\in(0,\infty]\}
	\end{equation}

\begin{Proposition}\label{prp:dichogauge}
	For any gauge function $g$ in $\gauge$, depending on the value of $\ell_g$, one of the three following situations occurs:
	\begin{enumerate}
		\item\label{item:prp:dichogauge1} if $\ell_g=\infty$, then for any $B\in\bor$,
			\[
				\leb^d(B)>0
				\qquad\Longrightarrow\qquad
				\hau^g(B)=\infty\,;
			\]
		\item\label{item:prp:dichogauge2} if $\ell_g\in(0,\infty)$, then there is a real number $\kappa_g>0$ such that for any $B\in\bor$,
			\[
				\hau^g(B)=\kappa_g\,\leb^d(B)\,;
			\]
		\item\label{item:prp:dichogauge3} if $\ell_g=0$, then the outer measure $\hau^g$ is equal to zero.
	\end{enumerate}
\end{Proposition}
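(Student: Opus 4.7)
The three cases share one key auxiliary fact: by its very definition, $g_d(r)/r^d = \inf_{0<\rho \leq r} g(\rho)/\rho^d$ is monotonic in $r$, hence $\lim_{r \to 0} g_d(r)/r^d$ exists and equals $\ell_g$. Applying Proposition~\ref{prp:compgauge0} to $g_d$ and $h(r) = r^d$ is then particularly clean since both $\liminf$ and $\limsup$ of $g_d/h$ equal $\ell_g$. I would treat the three parts in the order (\ref{item:prp:dichogauge3}), (\ref{item:prp:dichogauge1}), (\ref{item:prp:dichogauge2}).

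For~(\ref{item:prp:dichogauge3}), with $\ell_g = 0$, the upper bound in Proposition~\ref{prp:compgauge0} reads $\hau^{g_d}(E) \leq 0 \cdot \hau^h(E)$. For any bounded Borel set $E$ one has $\hau^h(E) = \kappa''\leb^d(E) < \infty$ by Proposition~\ref{prp:comphauleb}, so there is no indeterminacy and $\hau^{g_d}(E) = 0$. Decomposing an arbitrary set along the exhaustion $\R^d = \bigcup_N [-N,N]^d$ and using countable subadditivity extends this to $\hau^{g_d} \equiv 0$ on $\prd$, and Proposition~\ref{prp:compnormalgauge} then yields $\hau^g \leq \kappa \hau^{g_d} \equiv 0$.

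For~(\ref{item:prp:dichogauge1}), with $\ell_g = \infty$, I apply Proposition~\ref{prp:compgauge0} directly to $g$ and $h(r) = r^d$. For any Borel $B$ with $\leb^d(B) > 0$, Proposition~\ref{prp:comphauleb} gives $\hau^h(B) = \kappa''\leb^d(B) > 0$, so the lower bound $(\liminf g/h)\,\hau^h(B) = \infty \cdot \kappa''\leb^d(B)$ is unambiguously $+\infty$, whence $\hau^g(B) = \infty$.

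For~(\ref{item:prp:dichogauge2}), with $\ell_g \in (0,\infty)$, Proposition~\ref{prp:compgauge0} applied to $g_d$ gives the exact identity $\hau^{g_d}(B) = \ell_g \kappa''\leb^d(B)$ on Borel sets. Combined with Proposition~\ref{prp:compnormalgauge}, this shows that $\hau^g$, restricted to Borel sets, is a measure (Theorem~\ref{thm:bormeasm}) which is translation invariant, finite on bounded sets, and bounded below by a positive multiple of $\leb^d$. I would then invoke the standard Haar-uniqueness argument on $\R^d$: splitting $[0,1)^d$ into $n^d$ translates of a half-open cube of side $1/n$ forces each such cube to have $\hau^g$-measure $\hau^g([0,1)^d)/n^d$, and a Dynkin-class extension from finite disjoint unions of half-open dyadic cubes to all Borel sets delivers $\hau^g(B) = \kappa_g \leb^d(B)$ with $\kappa_g = \hau^g([0,1)^d) > 0$. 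The main obstacle lies precisely here: upgrading the two-sided comparison $\hau^g \asymp \leb^d$ produced by Propositions~\ref{prp:compgauge0} and~\ref{prp:compnormalgauge} to an exact proportionality requires translation invariance of $\hau^g$, not merely the gauge-comparison bounds.
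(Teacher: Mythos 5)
Your proposal is correct and follows essentially the same route as the paper's own proof: an exact computation of $\hau^{g_d}$ via Propositions~\ref{prp:compgauge0} and~\ref{prp:comphauleb} (using that $g_d(r)/r^d\to\ell_g$, so there is no $\liminf$/$\limsup$ gap), transfer to $\hau^g$ through Proposition~\ref{prp:compnormalgauge}, and in case~(\ref{item:prp:dichogauge2}) the translation-invariance plus uniqueness-of-extension argument on dyadic cubes to upgrade the two-sided comparison to exact proportionality. The only minor streamlining is in case~(\ref{item:prp:dichogauge1}), where you apply Proposition~\ref{prp:compgauge0} directly to $g$ rather than to $g_d$; this is harmless since only the $\liminf$ lower bound is needed there.
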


\begin{proof}
	Let $g_d$ denote the $d$-normalization of $g$. Since $g_d(r)/r^d$ tends to $\ell_g$ when $r$ goes to zero, we deduce from Propositions~\ref{prp:compgauge0} and~\ref{prp:comphauleb} that $\hau^{g_d}(B)$ is equal to $\kappa''\ell_g\leb^d(B)$ for any Borel set $B\in\bor$, except if latter value is of the indeterminate form $0\cdot\infty$. Along with Proposition~\ref{prp:compnormalgauge}, this directly yields~(\ref{item:prp:dichogauge1}).
	
	Let us now assume that $\ell_g$ is finite. It follows from Proposition~\ref{prp:compnormalgauge} that
		\[
			\kappa_g=\hau^g([0,1)^d)\leq\kappa\,\hau^{g_d}([0,1)^d)=\kappa\kappa''\ell_g\leb^d([0,1)^d)=\kappa\kappa''\ell_g<\infty.
		\]
		In particular, $\kappa_g$ vanishes when $\ell_g$ does. The countable subadditivity and the translation invariance of $\hau^g$ then lead to~(\ref{item:prp:dichogauge3}). Finally, note that $\kappa_g$ is both positive and finite when $\ell_g$ is. Indeed, in addition to the previous bound, we have
		\[
			\kappa_g=\hau^g([0,1)^d)\geq\hau^{g_d}([0,1)^d)=\kappa''\ell_g\leb^d([0,1)^d)=\kappa''\ell_g>0.
		\]
		The measurability of the dyadic cubes with respect to $\hau^g$ and the translation invariance of that outer measure imply that $\hau^g(\lambda)$ equals $\kappa_g\leb^d(\lambda)$ for any dyadic cube $\lambda$. These cubes generate the Borel $\sigma$-field, so we deduce~(\ref{item:prp:dichogauge2}) as in the proof of~\cite[Theorem~30]{Rogers:1970wb}, or from the uniqueness of extension lemma, see {\em e.g.}~\cite[Lemma~1.6(a)]{Williams:1991hb}.
\end{proof}

\subsection{Hausdorff dimension}

The Hausdorff measures associated with gauge functions enable to give a precise description of the size of a subset of $\R^d$. However, it is arguably more intuitive, and often sufficient, to restrict to a specific class of gauge functions, namely, the power functions $r\mapsto r^s$, for $s>0$. This approach gives rise to the notion of Hausdorff dimension.

For these particular gauge functions, we use the notation $\hau^s$ instead of $\hau^{r\mapsto r^s}$, for brevity, and we call this outer measure the $s$-dimensional Hausdorff measure. It is clear that the gauge function $r\mapsto r^s$ is normalized if and only if $s\leq d$; when $s>d$, the corresponding $d$-normalization is the zero function and, on account of Proposition~\ref{prp:compnormalgauge}, the $s$-dimensional Hausdorff measure is constant equal to zero.

Specializing Proposition~\ref{prp:compgauge0} to the power gauge functions, we see that for any nonempty set $E\subseteq\R^d$, there is a critical value $s_0\in [0,d]$ such that the $s$-dimensional Hausdorff measure of $E$ is infinite when $s\in(0,s_0)$, and zero when $s>s_0$. This observation yields the notion of Hausdorff dimension.

\begin{Definition}\label{df:Hausdorffdim}
	The {\em Hausdorff dimension} of a nonempty set $E\subseteq\R^d$ is defined by
	\[
		\Hdim E=
		\sup\{s\in(0,d]\:|\:\hau^s(E)=\infty\}=\inf\{s\in(0,d]\:|\:\hau^s(E)=0\}.
	\]
\end{Definition}

We adopt the convention that the supremum and the infimum are equal to zero and $d$, respectively, if the inner sets are empty. Moreover, the Hausdorff dimension of the empty set is $-\infty$. Specializing the results of Section~\ref{subsec:Hausdorff} to the power gauge functions leads to the following proposition. We recall that a mapping $f$ defined on an open set $V\subseteq\R^d$ and valued in $\R^{d'}$ is {\em bi-Lipschitz with constant $c_f$} if
	\begin{equation}\label{eq:df:biLip}
		\forall x,y\in V \qquad \frac{|x-y|}{c_f}\leq|f(x)-f(y)|\leq c_f|x-y|.
	\end{equation}

\begin{Proposition}\label{prp:Hausdorff}
	Hausdorff dimension satisfies the following properties.
	\begin{enumerate}
		\item\label{item:prp:Hausdorff1} {\em Monotonicity}: for any subsets $E_1$ and $E_2$ of $\R^d$,
		\[
		E_1\subseteq E_2 \qquad\Longrightarrow\qquad \Hdim E_1\leq\Hdim E_2.
		\]
		\item\label{item:prp:Hausdorff2} {\em Countable stability}: for any sequence $(E_n)_{n\geq 1}$ of subsets of $\R^d$,
		\[
		\Hdim\bigcup_{n=1}^\infty E_n=\sup_{n\geq 1}\Hdim E_n.
		\]
		\item {\em Countable sets}: if $E\subseteq\R^d$ is nonempty and countable, then $\Hdim E=0$.
		\item {\em Sets with positive Lebesgue measure}: if a subset $E$ of $\R^d$ has positive Lebesgue measure, then $\Hdim E=d$.
		\item {\em Invariance under bi-Lipschitz mappings}: if $V$ is an open subset of $\R^d$ and $f:V\to\R^{d'}$ is a bi-Lipschitz mapping, then for any subset $E$ of $V$,
		\[
		\Hdim f(E)=\Hdim E.
		\]
	\end{enumerate}
\end{Proposition}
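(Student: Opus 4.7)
The plan is to handle the five properties in order, leveraging the outer-measure machinery developed in Section~\ref{subsec:Hausdorff} and reserving a slightly delicate argument for property~(iv). Monotonicity (i) will follow at once from the monotonicity of $\hau^s$: if $E_1\subseteq E_2$ then $\hau^s(E_2)=0$ forces $\hau^s(E_1)=0$. Countable stability (ii) will be a two-part argument: the inequality $\sup_n \Hdim E_n \leq \Hdim \bigcup_n E_n$ is a consequence of (i), and for the opposite inequality I will pick $s > \sup_n \Hdim E_n$ and combine $\hau^s(E_n)=0$ with countable subadditivity to conclude $\hau^s(\bigcup_n E_n)=0$. Property (iii) will reduce via (ii) to the single fact that a singleton has Hausdorff dimension zero, which is in turn immediate because $\{x\}$ admits the one-set cover $\{\{x\}\}$ of arbitrarily small diameter, hence $\hau^s_\delta(\{x\})=0$ for every $s>0$ and $\delta>0$.

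For property (iv), the upper bound $\Hdim E\leq d$ is built into Definition~\ref{df:Hausdorffdim}: indeed Proposition~\ref{prp:compnormalgauge} together with the vanishing $d$-normalization of $r\mapsto r^s$ when $s>d$ forces $\hau^s\equiv 0$ for such exponents. For the lower bound I intend to compare $\leb^d$ and $\hau^d$ directly at the level of coverings. Each subset $C\subseteq\R^d$ lies inside a ball of radius $\diam{C}$ and therefore satisfies $\leb^d(C)\leq v_d\,(\diam{C})^d$ for the absolute constant $v_d$ equal to the Lebesgue measure of the Euclidean unit ball. Applied to any covering $(C_n)_n$ of $E$ by sets of diameter at most $\delta$, countable subadditivity of the Lebesgue outer measure yields
\[
\leb^d(E)\leq\sum_n\leb^d(C_n)\leq v_d\sum_n (\diam{C_n})^d;
\]
taking the infimum over such coverings and then letting $\delta\downarrow 0$ will produce $\leb^d(E)\leq v_d\,\hau^d(E)$ for every set $E\subseteq\R^d$. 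Hence $\leb^d(E)>0$ forces $\hau^d(E)>0$, giving $\Hdim E\geq d$.

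For property (v), I will apply Proposition~\ref{prp:haulip} with $\varphi(r)=c_f r$, so that $g\circ\varphi^{-1}$ equals $r\mapsto r^s/c_f^s$ when $g(r)=r^s$; this directly yields $\hau^s(f(E))\leq c_f^s\,\hau^s(E)$. The bi-Lipschitz condition~(\ref{eq:df:biLip}) also makes $f$ injective, so that $f^{-1}:f(V)\to V$ is well defined and satisfies the same inequality with the same constant $c_f$; running the covering estimate in the reverse direction (a cover of $f(E)$ of diameter $\leq\delta$ pulls back under $f^{-1}$ to a cover of $E$ of diameter $\leq c_f\delta$) produces $\hau^s(E)\leq c_f^s\,\hau^s(f(E))$. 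Both measures are then simultaneously zero, positive-finite, or infinite, so the critical exponents agree and $\Hdim f(E)=\Hdim E$.

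The only step that requires genuine care is the lower bound in (iv). Proposition~\ref{prp:dichogauge}, applied to $g(r)=r^d$, only gives $\hau^d=\kappa_{r^d}\leb^d$ on Borel sets, and while one could recover the general case by appealing to Borel regularity of $\hau^d$ (any set of diameter $\leq\delta$ is contained in a closed set of the same diameter, so coverings can be refined to Borel ones without enlarging the sum), I find it cleaner, and logically more self-contained, to derive $\leb^d\leq v_d\,\hau^d$ by hand from the definitions as sketched above.
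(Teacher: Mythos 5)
Your proof is correct throughout, and it is essentially the intended argument: the paper offers no explicit proof, stating only that the proposition results from specializing Section~\ref{subsec:Hausdorff} to power gauge functions, and your arguments for (i)--(iii) and (v) are exactly these specializations (subadditivity of $\hau^s$ for (i)--(ii), the one-set zero-diameter covering for (iii), and Proposition~\ref{prp:haulip} applied to $\varphi(r)=c_f r$, with a hand-run reverse covering estimate, for (v)). Your one genuine deviation is in (iv), where you correctly observe that Proposition~\ref{prp:dichogauge} is stated only for Borel sets and instead derive $\leb^d(E)\leq v_d\,\hau^d(E)$ for arbitrary $E\subseteq\R^d$ directly from the definitions; this is a clean and legitimate way to avoid the regularity argument (one could alternatively pass to a Borel hull $B\supseteq E$ of the same $\hau^d$-measure and note $\leb^d(B)\geq\leb^d(E)>0$, then apply Proposition~\ref{prp:dichogauge}(\ref{item:prp:dichogauge2}) to $B$, but your route is more self-contained).
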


Finally, we directly infer from Proposition~\ref{prp:compnetm} that the formula in Definition~\ref{df:Hausdorffdim} is still valid when $\hau^s$ is replaced by $\netm^s$, where $\netm^s$ is a shorthand for the net measure $\netm^{r\mapsto r^s}$ introduced in Section~\ref{subsubsec:normgaugenetm}.

\subsection{Upper bounds for limsup sets}\label{subsec:upbndhausdorff}

Deriving upper bounds on Hausdorff dimensions or, more generally, obtaining an upper bound on the Hausdorff measure of a set is usually elementary: it suffices to make use of an appropriate covering of the set. There is a situation that we shall often encounter where the covering is natural: when the set under study is a limsup of simpler sets, such as balls for instance. Let us recall that the limsup of a sequence $(E_n)_{n\geq 1}$ of subsets of $\R^d$ is
	\[
		\limsup_{n\to\infty} E_n=\bigcap_{m=1}^\infty\bigcup_{n=m}^\infty E_n,
	\]
	and consists of the points that belong to $E_n$ for infinitely many values of $n$. We then have the following elementary result.

\begin{Lemma}\label{lem:upbndlimsup}
	For any sequence $(E_n)_{n\geq 1}$ of subsets of $\R^d$ and for any gauge function $g$, the following implication holds:
	\[
		\sum_{n=1}^\infty g(\diam{E_n})<\infty
		\qquad\Longrightarrow\qquad
		\hau^g\left(\limsup_{n\to\infty} E_n\right)=0.
	\]
\end{Lemma}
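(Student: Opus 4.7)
The plan is to use the tails of the sequence $(E_n)_{n\geq 1}$ themselves as natural coverings of the limsup set. Writing
\[
E:=\limsup_{n\to\infty}E_n=\bigcap_{m\geq 1}\bigcup_{n\geq m}E_n,
\]
I observe that for every $m\geq 1$ the family $(E_n)_{n\geq m}$ covers $E$. To bound $\hau^g(E)$ through the definition~(\ref{eq:df:zetaastm}), it suffices to bound $\hau^g_\delta(E)$ for each $\delta>0$ using these coverings; this requires that the covering sets have diameter at most $\delta$.

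The first step is therefore to argue that $\diam{E_n}\to 0$. Since the series $\sum_n g(\diam{E_n})$ converges, its general term satisfies $g(\diam{E_n})\to 0$. The definition of a gauge function provides an $r_0>0$ on which $g$ is nondecreasing, and one may assume $g(r)>0$ on $(0,r_0]$ (otherwise $g$ vanishes near the origin and the statement is trivial in view of Proposition~\ref{prp:compgauge0}). If a subsequence satisfied $\diam{E_{n_k}}\geq c$ for some $c\in(0,r_0)$, monotonicity of $g$ on $[0,r_0]$ would force $g(\diam{E_{n_k}})\geq g(c)>0$, contradicting $g(\diam{E_n})\to 0$. Hence for any fixed $\delta\in(0,r_0)$ one eventually has $\diam{E_n}\leq\delta$.

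Fix such a $\delta$ and choose $N$ so that $\diam{E_n}\leq\delta$ for all $n\geq N$. For any $m\geq N$, the family $(E_n)_{n\geq m}$ is an admissible $\delta$-cover of $E$, so directly from~(\ref{eq:df:zetaastm}),
\[
\hau^g_\delta(E)\leq\sum_{n\geq m}g(\diam{E_n}).
\]
The right-hand side is the tail of a convergent series, hence tends to $0$ as $m\to\infty$, giving $\hau^g_\delta(E)=0$ for every $\delta\in(0,r_0)$. Letting $\delta\downarrow 0$ in the definition of $\hau^g$ then yields $\hau^g(E)=0$, as desired.

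The main delicate point is the first step, namely deducing $\diam{E_n}\to 0$ from summability using only that $g$ is nondecreasing near the origin; the rest is direct bookkeeping of the tail of a convergent series.
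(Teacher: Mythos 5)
Your proof follows the same route as the paper's: use the convergence of $\sum_n g(\diam{E_n})$ to deduce that $\diam{E_n}\leq\delta$ eventually (after dispatching the trivial case where $g$ vanishes near the origin), cover $\limsup_n E_n$ by tails $(E_n)_{n\geq m}$ to bound $\hau^g_\delta$, let $m\to\infty$ along the convergent tail, and finally let $\delta\downarrow 0$. The only quibble is cosmetic: for the trivial case where $g$ vanishes near the origin, Proposition~\ref{prp:compgauge0} does not directly apply (one would face the indeterminate form $0\cdot\infty$); it is cleaner to observe directly from~(\ref{eq:df:zetaastm}) that $\hau^g\equiv 0$ whenever $g\equiv 0$ on a neighborhood of zero, or to invoke Proposition~\ref{prp:dichogauge}(\ref{item:prp:dichogauge3}).
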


\begin{proof}
	Let us consider a real $\delta>0$ and a gauge function $g$ such that the series $\sum_{n} g(\diam{E_n})$ converges. In particular, $g(\diam{E_n})$ tends to zero as $n\to\infty$; thus, unless $g$ is the zero function in a neighborhood of the origin, in which case the result is trivial, we deduce that $\diam{E_n}\leq\delta$ for all $n$ larger than some integer $n_0\geq 1$. We then choose an integer $m>n_0$ and cover $E$ by the sets $E_n$, for $n\geq m$, thereby obtaining
	\[
		\hau^g_\delta(E)\leq\sum_{n=m}^\infty g(\diam{E_n}).
	\]
	The series being convergent, the right-hand side tends to zero as $m\to\infty$, and the result follows from letting $\delta$ tend to zero.
\end{proof}

Specializing the above result to power gauge functions, we directly deduce an upper bound on the Hausdorff dimension of the limsup of the sets $E_n$, namely,
	\[
		\Hdim\left(\limsup_{n\to\infty}E_n\right)
		\leq\inf\left\{s>0\Biggm|\sum_{n=1}^\infty \diam{E_n}^s<\infty\right\}.
	\]

A typical application of Lemma~\ref{lem:upbndlimsup} is the derivation of an upper bound on the Hausdorff dimension of the homogeneous approximation set $J_{d,\tau}$ defined by~(\ref{eq:df:Jdtau}). Recall that this set is equal to the whole space $\R^d$ when $\tau\leq 1+1/d$, so that we may suppose that we are in the opposite case. It is then clear that
	\[
		J_{d,\tau}=\bigcup_{k\in\Z^d}(k+J'_{d,\tau})
		\qquad\text{with}\qquad
		J'_{d,\tau}=\bigcap_{Q=1}^\infty\bigcup_{q=Q}^\infty\bigcup_{p\in\{0,\ldots,q\}^d} \opball_\infty\left(\frac{p}{q},\frac{1}{q^\tau}\right).
	\]
	The set $J'_{d,\tau}$ is the limsup of the balls $\opball_\infty(p/q,q^{-\tau})$, for $p\in\{0,\ldots,q\}^d$ and $q\geq 1$. The previous bound prompts us to examine the convergence of $\sum_q (q+1)^d (2/q^\tau)^s$, for $s>0$. Obviously this series converges if $s$ is larger than $(d+1)/\tau$. We deduce that the set $J'_{d,\tau}$ has Hausdorff dimension bounded above by this value. The countable stability of Hausdorff dimension, namely, Proposition~\ref{prp:Hausdorff}(\ref{item:prp:Hausdorff2}) finally yields
	\begin{equation}\label{eq:upbndJdtau}
		\Hdim J_{d,\tau}\leq\frac{d+1}{\tau}.
	\end{equation}

Likewise, we may also compute an upper bound on the Hausdorff dimension of a very classical fractal set: the middle-third Cantor set, denoted by $\cantor$. There are several ways of writing this set; the most suitable for dimension estimates is
	\[
		\cantor=\bigcap_{j=0}^\infty\downarrow\bigsqcup_{u\in\{0,1\}^j}I_u.
	\]
	Here, $I_u$ denotes the closed interval with left endpoint $2u_1/3+\ldots+2u_j/3^j$ and length $3^{-j}$, if $u$ is the word $u_1\ldots u_j$ in $\{0,1\}^j$. For consistency, we adopt the convention that the set $\{0,1\}^0$ contains only one element, the empty word $\varnothing$, and that the set $I_\varnothing$ is equal to the whole interval $[0,1]$. Note that every point of the Cantor set $\cantor$ belongs to one of the intervals $I_u$ with $u\in\{0,1\}^j$, for every integer $j\geq 0$. In particular, $\cantor$ is the limsup of the intervals $I_u$. In the spirit of the previous general bound, we need to inspect the convergence of $\sum_j 2^j(3^{-j})^s$. Accordingly,
	\begin{equation}\label{eq:upbndcantor}
		\Hdim\cantor\leq\frac{\log 2}{\log 3}.
	\end{equation}

\subsection{Lower bounds: the mass distribution principle}\label{subsec:lobndhausdorff}

Whereas deriving upper bounds on Hausdorff dimensions often amounts to finding appropriate coverings, a standard way of establishing lower bounds is to build a clever outer measure on the set under study. This remark is embodied by the next simple, but crucial, result.

\begin{Lemma}[mass distribution principle]\label{lem:massdist}
	Let $E$ be a subset of $\R^d$, let $\mu$ be an outer measure on $\R^d$ such that $\mu(E)>0$, and let $g$ be a gauge function. If there exists a real number $\delta_0>0$ such that for any subset $C$ of $\R^d$,
	\[
		\diam{C}\leq\delta_0
		\qquad\Longrightarrow\qquad
		\mu(C)\leq g(\diam{C}),
	\]
	then the set $E$ has positive Hausdorff $g$-measure, specifically,
	\[
		\hau^g(E)\geq\mu(E)>0.
	\]
\end{Lemma}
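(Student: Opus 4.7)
The plan is to bound $\mu(E)$ above by $\hau^g_\delta(E)$ for every sufficiently small $\delta$, and then pass to the limit. The mechanism is the countable subadditivity of the outer measure $\mu$, combined with the pointwise hypothesis relating $\mu$ to $g\circ\diam{\cdot}$.

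More concretely, I would fix a real number $\delta\in(0,\delta_0]$ and consider an arbitrary covering $(C_n)_{n\geq 1}$ of $E$ by subsets of $\R^d$ with $\diam{C_n}\leq\delta$. Since $\mu$ is an outer measure, monotonicity and countable subadditivity yield
\[
\mu(E)\leq\mu\left(\bigcup_{n=1}^\infty C_n\right)\leq\sum_{n=1}^\infty\mu(C_n).
\]
Every $C_n$ satisfies $\diam{C_n}\leq\delta\leq\delta_0$, so the hypothesis gives $\mu(C_n)\leq g(\diam{C_n})$ for each $n$, hence
\[
\mu(E)\leq\sum_{n=1}^\infty g(\diam{C_n}).
\]
Taking the infimum over all such coverings, which matches the definition~(\ref{eq:df:zetaastm}) of $\hau^g_\delta(E)$ associated with the premeasure $g\circ\diam{\cdot}$, we obtain $\mu(E)\leq\hau^g_\delta(E)$.

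This inequality is uniform in $\delta\in(0,\delta_0]$, so letting $\delta\downarrow 0$ and recalling that $\hau^g(E)=\lim_{\delta\downarrow 0}\uparrow\hau^g_\delta(E)$ gives $\mu(E)\leq\hau^g(E)$. Combined with the standing assumption $\mu(E)>0$, this yields $\hau^g(E)\geq\mu(E)>0$, as desired. There is no substantive obstacle here: the statement is essentially a direct translation of the definition of Hausdorff measures via~(\ref{eq:df:zetaastm}) using the monotonicity and countable subadditivity of the outer measure $\mu$. The only point worth a brief check is that the argument applies to \emph{arbitrary} coverings by subsets of $\R^d$ (not merely coverings from a restricted class), which is precisely the form in which $\hau^g$ was constructed from the premeasure $g\circ\diam{\cdot}$ on $\prd$.
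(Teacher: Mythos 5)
Your argument is correct and follows essentially the same route as the paper's proof: covering $E$ by sets of diameter at most $\delta\leq\delta_0$, applying monotonicity and countable subadditivity of $\mu$ together with the pointwise bound $\mu(C)\leq g(\diam{C})$, taking the infimum over coverings to get $\mu(E)\leq\hau^g_\delta(E)$, and letting $\delta\downarrow 0$. There is nothing to add or amend.
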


\begin{proof}
Let us consider a real number $\delta\in(0,\delta_0]$ and a sequence $(C_n)_{n\geq 1}$ of subsets of $\R^d$ with diameter at most $\delta$ such that $E\subseteq\bigcup_n C_n$. Then,
	\[
		\mu(E)\leq\mu\left(\bigcup_{n=1}^\infty C_n\right)
		\leq\sum_{n=1}^\infty\mu(C_n)\leq\sum_{n=1}^\infty g(\diam{C_n}).
	\]
	The result follows from taking the infimum over all $(C_n)_{n\geq 1}$ and letting $\delta\to 0$.
\end{proof}

In particular, if the gauge function is of the form $r\mapsto c\,r^s$ for some $c>0$ and $s\in(0,d]$, the set $E$ has positive $s$-dimensional Hausdorff measure, and thus Hausdorff dimension at least $s$. Applying this idea to the Cantor set yields
	\begin{equation}\label{eq:lobndcantor}
		\Hdim\cantor\geq\frac{\log 2}{\log 3},
	\end{equation}
	a bound that matches~(\ref{eq:upbndcantor}). Indeed, let $\calC$ denote the collection formed by the empty set and all the intervals $I_u$, for $u\in\{0,1\}^j$ and $j\geq 0$. We define a premeasure $\zeta$ on $\calC$ by letting $\zeta(\emptyset)=0$, and $\zeta(I_u)=2^{-j}$ if the word $u$ has length $j$. Theorem~\ref{thm:absmeas} enables us to extend via the formula~(\ref{eq:df:zetaasta}) the premeasure $\zeta$ to an outer measure $\zeta^\ast$ on all the subsets of $\R$. One then easily checks that the function $\mu$ that maps a subset $E$ of $\R$ to the value $\zeta^\ast(E\cap\cantor)$ is also an outer measure. Now, given a subset $C$ of $\R$ with diameter at most one, let us derive an appropriate upper bound on $\mu(C)$. We may clearly assume that $C\cap\cantor$ is nonempty, as $\mu(C)$ vanishes otherwise. Moreover, if $C$ has positive diameter, there is a unique integer $j\geq 0$ such that $3^{-(j+1)}\leq\diam{C}<3^{-j}$. The intervals $I_u$, for $u\in\{0,1\}^j$, are separated by a distance at least $3^{-j}$. Hence, the set $C$ intersects only one of these intervals, which is denoted by $I(C)$. Therefore, $C\cap\cantor$ is included in $I(C)$, so that
	\[
		\mu(C)=\zeta^\ast(C\cap\cantor)\leq\zeta(I(C))=2^{-j}=(3^{-j})^s\leq 3^s\diam{C}^s=2\diam{C}^s,
	\]
	where $s$ is equal to $\log 2/\log 3$. The same bound holds when $C$ has diameter zero. Actually, in that case, $C$ is reduced to a single point in $\cantor$. For each integer $j\geq 0$, there is a unique $u\in\{0,1\}^j$ such that this point belongs to $I_u$, so that
	\[
		\mu(C)=\zeta^\ast(C\cap\cantor)\leq\zeta(I_u)=2^{-j}\xrightarrow[j\to\infty]{}0.
	\]
	We conclude by observing that $\mu(\cantor)\geq 1$,; this is in fact a consequence of Lemma~\ref{lem:basictree} below. By Lemma~\ref{lem:massdist}, this implies that $\hau^s(\cantor)\geq 1/2$, and hence~(\ref{eq:lobndcantor}).

\subsection{The general Cantor construction}\label{subsec:genCantor}

This is an extension of the above approach. This construction is indexed by a {\em tree}, that is, a subset $T$ of the set
	\[
		\U=\bigcup_{j=0}^\infty\N^j
	\]
	such that the three following properties hold:
	\begin{itemize}
		\item The empty word $\varnothing$ belongs to $T$.
		\item If the word $u=u_1\ldots u_j$ is not empty and belongs to $T$, then the word $\pi(u)=u_1\ldots u_{j-1}$ also belongs to $T$; this word is the {\em parent} of $u$.
		\item For every word $u$ in $T$, there exists an integer $k_u(T)\geq 0$ such that the word $uk$ belongs to $T$ if and only if $1\leq k\leq k_u(T)$; the number of {\em children} of $u$ in $T$ is then equal to $k_u(T)$.
	\end{itemize}
	Let us recall here that, in accordance with a convention adopted previously, the set $\N^0$ arising in the definition of $\U$ is reduced to the singleton $\{\varnothing\}$; the empty word $\varnothing$ clearly corresponds to the {\em root} of the tree.

To each element $u$ of the tree $T$, we may then associate a compact subset $I_u$ of $\R^d$, and a possibly infinite nonnegative value $\zeta(I_u)$. Defining in addition $\zeta(\emptyset)=0$, we thus obtain a premeasure $\zeta$ on the collection $\calC$ formed by the empty set together with all the sets $I_u$. We assume these objects are compatible with the tree structure, in the sense that for every $u\in T$,
	\begin{equation}\label{eq:compattree}
		I_u\supseteq\bigsqcup_{k=1}^{k_u(T)} I_{uk}
		\qquad\text{and}\qquad
		\zeta(I_u)\leq\sum_{k=1}^{k_u(T)}\zeta(I_{uk}).
	\end{equation}
	In particular, nodes $u\in T$ such that $k_u(T)$ vanishes, {\em i.e.}~childless nodes, are not excluded {\em a priori} but the corresponding sets necessarily satisfy $\zeta(I_u)=0$. More generally, $\zeta(I_u)$ surely vanishes when the subtree of $T$ formed by the descendants of $u$ is finite; this is easily seen by induction on the height of this subtree. Besides, the second condition in~(\ref{eq:compattree}) may easily be replaced by an equality. Indeed, it suffices to replace $\zeta$ by the premeasure $\xi$ defined on $\calC$ by $\xi(I_\varnothing)=\zeta(I_\varnothing)$ and
	\[
		\xi(I_{uk})=\frac{\zeta(I_{uk})}{\sum\limits_{l=1}^{k_u(T)}\zeta(I_{ul})}\xi(I_u),
	\]
	for $u\in T$ and $k\in\{1,\ldots,k_u(T)\}$. When the denominator vanishes, the numerator vanishes as well, and we adopt the convention that the quotient is zero. Note that the premeasure thus obtained bounds $\zeta$ from below.

Thanks to Theorem~\ref{thm:absmeas}, we may then extend the premeasure $\zeta$ to an outer measure $\zeta^\ast$ on all the subsets of $\R^d$ through the formula~(\ref{eq:df:zetaasta}). This finally enables us to consider the limiting set
	\begin{equation}\label{eq:limittree}
		K=\bigcap_{j=0}^\infty\downarrow\bigsqcup_{u\in T\cap\N^j}I_u,
	\end{equation}
	together with the outer measure $\mu$ that maps a set $E\subseteq\R^d$ to the value $\zeta^\ast(E\cap K)$. If the tree $T$ is finite, it is clear that $K$ is empty and $\mu$ is the zero measure, and so the construction is pointless. The next result discusses the basic properties of $K$ and $\mu$ in the opposite situation; we leave its proof to the reader.

\begin{Lemma}\label{lem:basictree}
	Let us assume that the tree $T$ is infinite. Then, $K$ is a nonempty compact subset of $I_\varnothing$. Moreover, the outer measure $\mu$ has total mass $\mu(K)=\zeta(I_\varnothing)$.
\end{Lemma}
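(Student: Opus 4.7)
The plan is to treat the two assertions in turn. For the topological properties, I would first observe that each node of $T$ has finitely many children, so an easy induction on $j$ shows that every level $T\cap\N^j$ is finite; moreover, since $T$ is infinite and closed under parents, no level may be empty (otherwise all deeper levels would be empty too and $T$ finite). Consequently each $K_j:=\bigsqcup_{u\in T\cap\N^j}I_u$ is a finite union of compact sets and hence compact, the first inclusion in~(\ref{eq:compattree}) gives $K_{j+1}\subseteq K_j\subseteq K_0=I_\varnothing$, and $K=\bigcap_j K_j$ is compact and contained in $I_\varnothing$. Granting that every $I_u$ is nonempty (otherwise one may prune $T$ without affecting $K$ or $\zeta$), the nested sets $K_j$ are nonempty and $K$ is nonempty by the finite intersection property for compact sets.

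For the total mass, the upper bound $\mu(K)=\zeta^\ast(K)\leq\zeta(I_\varnothing)$ is immediate from the singleton covering $\{I_\varnothing\}$. For the reverse inequality, consider an arbitrary covering of $K$ by elements of $\calC$; after discarding empty sets it takes the form $(I_{u_n})_{n\geq 1}$ with $u_n\in T$. If $u_m$ is a proper descendant of $u_n$ in $T$, then $I_{u_m}\subseteq I_{u_n}$, so removing $u_m$ keeps $K$ covered while only decreasing $\sum_n\zeta(I_{u_n})$; it therefore suffices to establish the bound $\sum_n\zeta(I_{u_n})\geq\zeta(I_\varnothing)$ under the extra assumption that the $u_n$ form an antichain of $T$, i.e., no one is a prefix of another.

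The heart of the proof is that such a covering antichain must be finite. Let $S\subseteq T$ denote the subtree of nodes having no proper prefix among the $u_n$'s; its leaves are precisely the $u_n$ themselves, together with any childless node of $T$ whose ancestors all avoid the antichain. If $S$ had an infinite branch $\beta$, then $\omega(\beta):=\bigcap_j I_{\beta_j}$ would be a nested intersection of nonempty compacts, hence a nonempty subset of $K$; but the sets $I_v$ at any fixed level are pairwise disjoint (a direct consequence of the $\bigsqcup$ in~(\ref{eq:compattree}) together with induction on $j$), so no point of $\omega(\beta)$ could lie in any $I_{u_n}$, contradicting the covering hypothesis. Thus $S$ is a finitely branching tree with no infinite branch, hence finite by K\"onig's lemma. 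Writing $\{u_1,\ldots,u_N\}$ for the resulting finite covering antichain, a straightforward induction on $\max_n|u_n|$, applied to each child of the root and using the second inequality in~(\ref{eq:compattree}), yields $\sum_{n=1}^{N}\zeta(I_{u_n})\geq\zeta(I_\varnothing)$; combined with the upper bound, this gives $\mu(K)=\zeta(I_\varnothing)$. The main obstacle is the finiteness of the antichain via K\"onig's lemma, which in turn hinges on the disjointness of the $I_v$ at each fixed level; the remaining steps are essentially bookkeeping.
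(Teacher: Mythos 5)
The paper explicitly leaves the proof of this lemma to the reader, so there is no reference argument to compare against; judged on its own merits, your proof is correct. The topological part is routine once you note each level of $T$ is finite and nonempty, and your handling of the (implicitly assumed) nonemptiness of the sets $I_u$ is fair. The heart of the matter, the lower bound $\zeta^\ast(K)\geq\zeta(I_\varnothing)$, is where you earn your keep: the reduction of an arbitrary $\calC$-cover to a covering antichain, the use of the level-by-level disjointness of the $I_u$ (which indeed follows by induction from the $\bigsqcup$ in~(\ref{eq:compattree})) together with K\"onig's lemma to force that antichain to be finite, and the final induction on $\max_n|u_n|$ using the mass condition $\zeta(I_u)\leq\sum_k\zeta(I_{uk})$, all fit together. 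Two small points worth being explicit about when writing this up: when a child $k$ of the root carries no element of the antichain, you need to observe that then $K\cap I_k=\emptyset$ forces the subtree below $k$ to be finite and hence $\zeta(I_k)=0$ (the paper records this consequence just after~(\ref{eq:compattree})); and to apply the inductive hypothesis within the subtree rooted at a child $k$ you should note that $K\cap I_k$ is exactly the limit set of that subtree, so the restricted antichain does cover it.
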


The next result shows that, under further conditions on the compact sets $I_u$, we may derive a lower bound on the Hausdorff dimension of the limiting set $K$. In its statement, $(\eps_j)_{j\geq 1}$ and $(m_j)_{j\geq 1}$ are the sequences defined by
	\[
		\eps_j=\min_{u,v\in T\cap\N^j\atop u\neq v}\dist{I_u}{I_v}
		\qquad\text{and}\qquad
		m_j=\min_{u\in T\cap\N^{j-1}}k_u(T).
	\]

\begin{Lemma}\label{lem:mindimgenCantor}
	Let us assume that the sequence $(\eps_j)_{j\geq 1}$ is decreasing and that the sequence $(m_j)_{j\geq 1}$ is positive. Then,
	\[
		\Hdim K\geq\liminf_{j\to\infty}\frac{\log(m_1\ldots m_{j-1})}{-\log(m_j^{1/d}\eps_j)}.
	\]
\end{Lemma}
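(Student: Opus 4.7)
The plan is to apply the mass distribution principle (Lemma~\ref{lem:massdist}) to a natural uniform mass on the tree. Define a premeasure on $\calC=\{\emptyset\}\cup\{I_u:u\in T\}$ by $\zeta(\emptyset)=0$, $\zeta(I_\varnothing)=1$, and recursively $\zeta(I_{uk})=\zeta(I_u)/k_u(T)$ for every $u\in T$ and $1\leq k\leq k_u(T)$. This satisfies~(\ref{eq:compattree}) with equality, so Theorem~\ref{thm:absmeas} together with Lemma~\ref{lem:basictree} supplies an outer measure $\mu(E)=\zeta^\ast(E\cap K)$ with $\mu(K)=1$. Since $k_v(T)\geq m_i$ whenever $v\in T\cap\N^{i-1}$, a straightforward induction yields the key pointwise bound $\zeta(I_u)\leq 1/(m_1\cdots m_j)$ for every $u\in T\cap\N^j$.

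Denote by $L$ the liminf on the right-hand side of the statement; one may assume $L>0$ and $\eps_j\to 0$, since the conclusion is otherwise trivial. Fix $s\in(0,L)$ with $s\leq d$, and take an arbitrary $C\subseteq\R^d$ with $\diam{C}<\eps_1$. Let $j\geq 1$ be the largest integer such that $\diam{C}<\eps_j$, so $\eps_{j+1}\leq\diam{C}<\eps_j$. The $\eps_j$-separation of the level-$j$ cells forces $C$ to meet at most one of them, giving
\[
\mu(C)\leq\frac{1}{m_1\cdots m_j}.
\]
Picking one point of $C$ in each level-$(j+1)$ cell met by $C$ produces points that lie in $C$ and are pairwise $\eps_{j+1}$-separated, so a standard packing estimate in $\R^d$ bounds their number by $c_d(\diam{C}/\eps_{j+1})^d$ for some constant $c_d$ depending only on $d$; hence
\[
\mu(C)\leq \frac{c_d\,\diam{C}^d}{\eps_{j+1}^d\,(m_1\cdots m_{j+1})}.
\]

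Taking the geometric mean of the two upper bounds above with respective weights $1-s/d$ and $s/d$ yields
\[
\mu(C)\leq \frac{c_d^{s/d}\,\diam{C}^s}{(m_1\cdots m_j)(m_{j+1}^{1/d}\eps_{j+1})^s}.
\]
The choice $s<L$ translates, after the index shift $j\leftrightarrow j+1$, into $(m_1\cdots m_j)(m_{j+1}^{1/d}\eps_{j+1})^s\geq 1$ for every $j$ sufficiently large, hence $\mu(C)\leq c_d^{s/d}\diam{C}^s$ whenever $\diam{C}$ is small enough. Lemma~\ref{lem:massdist} applied to the gauge function $r\mapsto c_d^{s/d}\,r^s$ then yields $\hau^s(K)>0$, so $\Hdim K\geq s$; letting $s\uparrow L$ completes the plan. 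The principal difficulty lies in calibrating the interpolation: the level-$j$ bound is blind to $\diam{C}$, whereas the level-$(j+1)$ packing bound over-counts by a factor $m_{j+1}$, and only the weighting $s/d$ combines the surplus $m_{j+1}^{s/d}$ with $\eps_{j+1}^s$ so as to reproduce the $(m_{j+1}^{1/d}\eps_{j+1})^s$ factor appearing in the target formula.
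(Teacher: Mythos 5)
Your proof is correct and follows precisely the mass-distribution strategy indicated by the paper, which refers to Falconer's Example~4.6 for the one-dimensional version: the uniform premeasure on the tree, the two-scale comparison (at most one level-$j$ cell met versus the $\eps_{j+1}$-separation packing count at level $j+1$), and the $s/d$-weighted geometric mean are exactly the right $d$-dimensional extension of that argument. One small imprecision: the reduction to $\eps_j\to 0$ is better justified as automatic rather than ``otherwise trivial'' --- if $\eps_j$ decreased to a positive limit, the $\eps_j$-separation inside the compact $I_\varnothing$ would bound the number of level-$j$ cells, so the tree would eventually consist of singly-branching chains of nested compacta, along which the pairwise distances (hence $\eps_j$) are non-decreasing, contradicting strict monotonicity.
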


We omit the proof from these notes, and we content ourselves with mentioning that it relies crucially on Lemma~\ref{lem:massdist}, namely, the mass distribution principle. We refer to~\cite[Example~4.6]{Falconer:2003oj} for a discussion of the one-dimensional case.


\section{Homogeneous ubiquity and dimensional results}\label{sec:firstubiq}

The purpose of this section is twofold. First, we present an abstract framework which encompasses and naturally extends the set of points that are approximable at rate at least $\tau$ by points with rational coordinates; recall that this set is denoted by $J_{d,\tau}$ and defined by~(\ref{eq:df:Jdtau}), and arises in the classical homogeneous approximation problem. Second, we present a general method to determine the Hausdorff dimension of the sets that fit into this framework. This will enable us in particular to determine the Hausdorff dimension of the set $J_{d,\tau}$, thereby recovering a famous result due to Jarn\'ik~\cite{Jarnik:1929mf} and Besicovitch~\cite{Besicovitch:1934ly}, see Section~\ref{subsec:JarnikBesicovitch}.

We also recall from Section~\ref{subsec:verywell} that the set $J_{d,\tau}$ coincides with the whole space $\R^d$ when the parameter $\tau$ is not larger than $1+1/d$. This follows from Dirichlet's theorem, through Corollary~\ref{cor:Dirichlet}. Hence, every point in $\R^d$ is approximated by points with rational coordinates at rate at least $1+1/d$. The first step is then to identify an appropriate notion of {\em approximation system} to generalize the combination of the points $p/q$ with the radii $1/q^{1+1/d}$ for which the uniform approximation is achieved. The second step is to introduce natural generalizations of the smaller sets $J_{d,\tau}$, for $\tau>1+1/d$. The third step is finally to provide optimal upper and lower bounds on the Hausdorff dimension of these generalized sets.

As explained hereunder, through the remarkable notion of ubiquity, an {\em a priori} lower bound on the Hausdorff dimension can be derived from the sole knowledge that one of the sets has full Lebesgue measure. Thanks to ubiquity, the arguably difficult lower bound in the Jarn\'ik-Besicovitch theorem will in fact quite amazingly be a straightforward consequence of a simple result, namely, Dirichlet's theorem.

\subsection{Approximation system}

This notion is modeled on the emblematic example that consists of the pairs $(p/q,1/q^{1+1/d})$, for $p\in\Z^d$ and $q\in\N$, on which the sets $J_{d,\tau}$ are based. We shall present many other examples throughout these notes.

\begin{Definition}\label{df:approxsys}
Let $\calI$ be a countably infinite index set. We say that a family $(x_i,r_i)_{i\in\calI}$ of elements of $\R^d\times(0,\infty)$ is an {\em approximation system} if
	\[
		\sup_{i\in\calI} r_i<\infty
		\qquad\text{and}\qquad
		\forall m\in\N \quad \#\left\{i\in\calI\Biggm| |x_i|<m \text{ and } r_i>\frac{1}{m} \right\}<\infty.
	\]
\end{Definition}

Let us point out that we do not need to specify the norm $|\,\cdot\,|$ the space $\R^d$ is endowed with. In fact, Proposition~\ref{prp:homubsyscst} below implies that the notions considered in this section do not depend on the chosen norm. Now, replacing the system supplied by the rational points by an arbitrary approximation system $(x_i,r_i)_{i\in\calI}$, we may naturally rewrite the sets $J_{d,\tau}$, for $\tau\geq 1+1/d$, in the form
	\begin{equation}\label{eq:df:Ft}
		F_t=\left\{x\in\R^d\bigm||x-x_i|<r_i^t\quad\text{for i.m.~} i\in\calI\right\},
	\end{equation}
	where $t\geq 1$. If $x$ is in $F_t$, then there exists an injective sequence $(i_n)_{n\geq 1}$ of indices in $\calI$ such that $|x-x_{i_n}|<r_{i_n}^t$ for all $n\geq 1$. Since $(x_i,r_i)_{i\in\calI}$ is an approximation system, for any $\eps>0$ and any $n\geq 1$ such that $r_{i_n}>\eps$, we have
	\[
		|x_{i_n}|\leq |x|+|x-x_{i_n}|<|x|+\sup_{i\in\calI} r_i^t.
	\]
	Thus, letting $m$ denote an integer larger than both $1/\eps$ and the right-hand side above, we deduce that $|x_{i_n}|<m$ and $r_{i_n}>1/m$, which means that there are only finitely many possible values of the integer $n$ when $\eps$ is given. We readily deduce that $r_{i_n}$ tends to zero and $x_{i_n}$ tends to $x$ as $n\to\infty$. The point $x$ is thus approximated by the sequence $(x_{i_n})_{n\geq 1}$ at a rate given by the sequence $(r_{i_n}^t)_{n\geq 1}$\,; this justifies the terminology of the previous definition. Moreover, it is obvious and useful to remark that, up to extracting, we may suppose that the latter sequence is decreasing without losing the approximation property.

Our main purpose is now to give an upper and a lower bound on the Hausdorff dimension of the sets $F_t$ under appropriate assumptions on the approximation system $(x_i,r_i)_{i\in\calI}$. In Sections~\ref{sec:largeint} and~\ref{sec:transference}, we subsequently extend these bounds toward large intersection properties and general Hausdorff measures.

\subsection{Upper bound on the Hausdorff dimension}

The preceding discussion shows that the sets $F_t$ are essentially of limsup type, thereby falling in the framework dealt with in Section~\ref{subsec:upbndhausdorff}. More precisely, for any bounded open set $U\subseteq\R^d$, let
	\[
		\calI_U=\{i\in\calI\:|\: x_i\in U\}.
	\]
	If a given point $x$ belongs to $F_t\cap U$, the above remark ensures that there exists a sequence $(i_n(x))_{n\geq 1}$ of indices in $\calI$ such that $x_{i_n(x)}$ tends to $x$ as $n\to\infty$. As the set $U$ is open, the indices $i_n(x)$ thus belong to $\calI_U$ for $n$ sufficiently large. On top of that, for any real number $\eps>0$, we have
	\[
		\#\{i\in\calI_U\:|\: r_i>\eps\}
		\leq\#\left\{i\in\calI\Biggm| |x_i|<m \text{ and } r_i>\frac{1}{m} \right\}<\infty
	\]
	for $m$ large enough. We may thus find an enumeration $(i_n)_{n\geq 1}$ of the set $\calI_U$ such that the sequence $(r_{i_n})_{n\geq 1}$ is nonincreasing and tends to zero at infinity. We finally end up with an approximate local expression of $F_t$ as a limsup set, namely,
	\begin{equation}\label{eq:sandwichlimsup}
		F_t\cap U\subseteq
		\limsup_{n\to\infty}\opball(x_{i_n},r_{i_n}^t)
		\subseteq F_t\cap\closure{U},
	\end{equation}
where $\closure{U}$ stands for the closure of the open set $U$.

In view of Section~\ref{subsec:upbndhausdorff}, it is thus natural to examine the convergence of the series $\sum_n\diam{\opball(x_{i_n},r_{i_n}^t)}^s$, for $s>0$. To be more specific, making a convenient change of variable, this amounts to considering the infimum of all $s$ such that $\sum_{i\in\calI_U} r_i^s$ converges. Note that this infimum is clearly a nondecreasing function of $U$. In order to cover the case where $U$ is unbounded, and maybe also obtain a better value in the bounded case, we finally introduce the exponent
	\begin{equation}\label{eq:df:sU}
		s_U=
		\inf_{U=\bigcup_\ell U_\ell}
		\sup_{\ell\geq 1}
		\inf\left\{s>0\Biggm|\sum_{i\in\calI_{U_\ell}} r_i^s<\infty\right\},
	\end{equation}
	where the infimum is taken over all sequences $(U_\ell)_{\ell\geq 1}$ of bounded open sets whose union is equal to $U$. Our approach thus leads to the following statement.

\begin{Proposition}\label{prp:upbndapproxsys}
For any approximation system $(x_i,r_i)_{i\in\calI}$, any open subset $U$ of $\R^d$ and any real number $t\geq 1$,
\[
\Hdim(F_t\cap U)\leq\frac{s_U}{t}.
\]
\end{Proposition}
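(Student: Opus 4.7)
The backbone of the argument is already laid out in the discussion preceding the statement, so my plan is to assemble the three pieces: the sandwich inclusion~(\ref{eq:sandwichlimsup}), the standard covering estimate for limsup sets (Lemma~\ref{lem:upbndlimsup}), and the countable stability of Hausdorff dimension (Proposition~\ref{prp:Hausdorff}(\ref{item:prp:Hausdorff2})). The only real subtlety is that the exponent $s_U$ is defined via an infimum over open coverings by bounded sets, so one must reduce to the bounded case first.

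\emph{Step 1: bounded case.} Fix a bounded open $U\subseteq\R^d$ and set $s^0_U=\inf\{s>0\mid\sum_{i\in\calI_U}r_i^s<\infty\}$. By the argument already given in the text, I can enumerate $\calI_U=\{i_n\}_{n\geq 1}$ so that the sequence $(r_{i_n})_{n\geq 1}$ is nonincreasing and tends to zero, and~(\ref{eq:sandwichlimsup}) gives
\[
F_t\cap U\subseteq\limsup_{n\to\infty}\opball(x_{i_n},r_{i_n}^t).
\]
Pick any $s>s^0_U$ and test the gauge $g(r)=r^{s/t}$ against this cover: since $t\geq 1$,
\[
\sum_{n=1}^\infty g(\diam{\opball(x_{i_n},r_{i_n}^t)})=\sum_{n=1}^\infty(2r_{i_n}^t)^{s/t}=2^{s/t}\sum_{i\in\calI_U}r_i^s<\infty.
\]
Lemma~\ref{lem:upbndlimsup} then yields $\hau^{s/t}(F_t\cap U)=0$, hence $\Hdim(F_t\cap U)\leq s/t$. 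Taking the infimum over admissible $s$ gives $\Hdim(F_t\cap U)\leq s^0_U/t$.

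\emph{Step 2: general open $U$.} Let $(U_\ell)_{\ell\geq 1}$ be any sequence of bounded open subsets of $\R^d$ whose union is $U$. Writing $F_t\cap U=\bigcup_\ell (F_t\cap U_\ell)$ and invoking countable stability of Hausdorff dimension together with Step~1,
\[
\Hdim(F_t\cap U)=\sup_{\ell\geq 1}\Hdim(F_t\cap U_\ell)\leq\frac{1}{t}\sup_{\ell\geq 1}s^0_{U_\ell}.
\]
Taking the infimum of the right-hand side over all such decompositions and comparing with the definition~(\ref{eq:df:sU}) of $s_U$ gives $\Hdim(F_t\cap U)\leq s_U/t$.

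There is no real obstacle here beyond bookkeeping; the only point to be mildly careful about is that the definition of an approximation system ensures that $\calI_U$ is countable (indeed at most countably infinite) whenever $U$ is bounded, which is exactly what licenses the enumeration used in Step~1 and the application of Lemma~\ref{lem:upbndlimsup}. No assumption on $U$ beyond openness is needed, and the factor $2^{s/t}$ appearing in the tested series is immaterial because the convergence only depends on $s$.
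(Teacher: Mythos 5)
Your proof is correct and follows essentially the same route as the paper's: use the sandwich inclusion~(\ref{eq:sandwichlimsup}) on each bounded piece of a covering, apply Lemma~\ref{lem:upbndlimsup} with the gauge $r\mapsto r^{s/t}$, and finish with countable stability of Hausdorff dimension and the infimum over decompositions built into the definition~(\ref{eq:df:sU}). You spell out the computation more explicitly than the paper does (and, correctly, note that Step~1 alone is not enough even for bounded $U$ since $s_U$ is an infimum over decompositions), but there is no difference in substance.
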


\begin{proof}
	Let $(U_\ell)_{\ell\geq 1}$ denote a sequence of bounded open sets whose union is equal to $U$. For any $\ell\geq 1$, the open set $U_\ell$ is bounded, so the inclusions~(\ref{eq:sandwichlimsup}) are valid. As a consequence, if $s$ is a positive real number such that $\sum_{i\in\calI_{U_\ell}} r_i^s$ converges, we may apply Lemma~\ref{lem:upbndlimsup} with the gauge function $r\mapsto r^{s/t}$, thereby deducing that the set $F_t\cap U_\ell$ has dimension at most $s/t$. We conclude thanks to the countable stability of Hausdorff dimension, namely, Proposition~\ref{prp:Hausdorff}(\ref{item:prp:Hausdorff2}).
\end{proof}

In most situations, the na\"ive bound supplied by Proposition~\ref{prp:upbndapproxsys} gives the exact value of Hausdorff dimension, and moreover the parameter $s_U$ does not depend on the choice of the open set $U$. This happens for instance when the approximation system are derived from eutaxic sequences or optimal regular systems; these two notions are discussed in Sections~\ref{sec:eutaxy} and~\ref{sec:optregsys}, respectively.

\subsection{Lower bound on the Hausdorff dimension}

Our goal is now to establish a lower bound on the Hausdorff dimension of the set $F_t$ defined by~(\ref{eq:df:Ft}) under the following simple assumption on the underlying approximation system $(x_i,r_i)_{i\in\calI}$.

\begin{Definition}\label{df:homubiqsys}
	Let $\calI$ be a countably infinite index set, let $(x_i,r_i)_{i\in\calI}$ be an approximation system in $\R^d\times(0,\infty)$ and let $U$ be a nonempty open subset of $\R^d$. We call $(x_i,r_i)_{i\in\calI}$ a {\em homogeneous ubiquitous system} in $U$ if the set $F_1$ has full Lebesgue measure in $U$, {\em i.e.}
	\[
		\text{for~}\leb^d\text{-a.e.~}x\in U
		\quad \exists\text{~i.m.~}i\in\calI
		\qquad |x-x_i|<r_i.
	\]
\end{Definition}

Note that we do not impose that all the points $x_i$ belong to the open set $U$. Actually, the approximation system is usually fixed at the beginning, and the open set is then allowed to change so that one can examine local approximation properties. Moreover, the fact that a given approximation system $(x_i,r_i)_{i\in\calI}$ is homogeneously ubiquitous ensures that the approximating points $x_i$ are well spread, in accordance with the corresponding approximation radii $r_i$. The following remarkable result, established to Jaffard~\cite{Jaffard:2000mk}, shows that this assumption suffices to obtain an {\em a priori} lower bound on the Hausdorff dimension of the sets $F_t$.

\begin{Theorem}\label{thm:lobndhomubsys}
	Let $(x_i,r_i)_{i\in\calI}$ denote a homogeneous ubiquitous system in some nonempty open subset $U$ of $\R^d$. Then, for any real number $t>1$,
	\[
		\Hdim(F_t\cap U)\geq\frac{d}{t}.
	\]
	More precisely, the set $F_t\cap U$ has positive Hausdorff measure with respect to the gauge function $r\mapsto r^{d/t}|\log r|$.
\end{Theorem}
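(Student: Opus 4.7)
The approach is to apply the mass distribution principle (Lemma~\ref{lem:massdist}) to a suitable Cantor-like subset of $F_t \cap U$, using the gauge $g(r) = r^{d/t}|\log r|$; this is a legitimate gauge function since it is monotone in a neighborhood of zero and tends to zero there. By countable stability of $\Hdim$ and monotonicity of $\hau^g$, it suffices to fix a single open ball $B \subset U$ and prove $\hau^g(F_t \cap B) > 0$, from which $\Hdim(F_t \cap B) \geq d/t$ follows because Proposition~\ref{prp:compgauge0} then forces $\hau^s(F_t \cap B) = \infty$ for every $s < d/t$.

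The construction proceeds generation by generation along a rapidly decreasing sequence of scales $\rho_j \downarrow 0$. Generation $0$ is $B$ itself. Given generation $j-1$, consisting of finitely many disjoint \emph{reduced} balls $\clball(x_{i'}, r_{i'}^t)$ with radii near $\rho_{j-1}$, the hypothesis that $F_1$ has full Lebesgue measure in $U$ implies that $\leb^d$-almost every point of each such reduced ball is covered by $\opball(x_i, r_i)$ for infinitely many $i \in \calI$ with $r_i$ arbitrarily small. A Vitali-type extraction then produces a finite disjoint family of \emph{enclosing} balls $\clball(x_i, r_i)$ whose radii lie in a narrow multiplicative band around $\rho_j$ and whose total Lebesgue measure is at least $(1 - 2^{-j})$ times the parent's volume. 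The corresponding reduced balls $\clball(x_i, r_i^t)$, nested inside the enclosing ones because $r_i^t < r_i$ for $t > 1$ and $r_i$ small, form generation $j$. Let $K := \bigcap_{j \geq 0} E_j$, where $E_j$ is the union of all generation-$j$ reduced balls; then $K$ is a nonempty compact subset of $F_t \cap B$ because any $x \in K$ lies in $\clball(x_i, r_i^t)$ for infinitely many distinct indices $i$ with $r_i \to 0$. Distribute mass on $K$ by passing from each parent to its children in proportion to the Lebesgue measures of their enclosing balls, after the premeasure renormalization described following~\eqref{eq:compattree}; the filling condition keeps the total mass bounded below by $\prod_{j \geq 1}(1 - 2^{-j}) > 0$.

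The technical heart of the proof is the verification $\mu(C) \leq g(\diam C)$ for $C$ of small diameter. Given such $C$ of diameter $\rho$, locate the generation $j$ with $\rho \asymp \rho_j^t$; disjointness of the generation-$j$ enclosing balls, of radii $\asymp \rho_j \gg \rho_j^t$, enforces a separation of order $\rho_j$ between distinct generation-$j$ reduced balls, so that $C$ meets at most a dimension-dependent constant number of them. The mass on any single generation-$j$ reduced ball is, by construction, a telescoping product of sibling Lebesgue ratios $\leb^d(\tilde B_i) / \leb^d(\text{parent reduced ball})$ along its ancestry. The principal obstacle is to calibrate the sequence $(\rho_j)$ and the widths of the radius bands at each generation so that these telescoping products compound, over all ancestors, to at most $\rho_j^d \cdot |\log \rho_j|$ rather than a polynomial blowup; the strict inequality $t > 1$ is crucial here, since it provides the multiplicative gap $r_i^t \ll r_i$ that makes the enclosing balls geometrically sparse enough to absorb the mass inflation. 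Once this calibration is executed, Lemma~\ref{lem:massdist} yields $\hau^g(K) \geq \mu(K) > 0$, and the theorem follows.
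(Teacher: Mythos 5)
You follow the paper's overall strategy — a Cantor construction inside a ball of $U$, with reduced balls $\clball(x_i,r_i^t)$ nested in enclosing balls $\clball(x_i,r_i)$ obtained by Vitali-type extraction, a mass supported on the limit set proportional to the Lebesgue measures of the enclosing balls, and the mass distribution principle applied with the gauge $r\mapsto r^{d/t}|\log r|$. This is essentially the paper's proof. However, there is a genuine gap: the one step you label the ``principal obstacle,'' namely calibrating the scales so that the telescoping product of sibling ratios compounds to $\rho_j^d|\log\rho_j|$ instead of a polynomial blowup, is exactly the technical heart of the argument and you do not execute it. The paper does so by imposing the superexponential constraint in condition~(\ref{item:condubiqconstruc3}), $\diam{B_u}\leq 2\exp\bigl(-\tfrac{2\cdot 6^d}{t}\diam{I_{\pi(u)}}^{d(1/t-1)-1}\bigr)$, and then proving the inductive bound $\zeta(I_u)\leq\diam{I_u}^{d/t}\log(1/\diam{I_u})$ of Lemma~\ref{lem:bndmuIu}; you only say such a calibration must exist. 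Without specifying the constraint and verifying the induction, the scaling estimate $\mu(C)\leq c\,g(\diam C)$ is unproved and the proposal does not constitute a proof.

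Two secondary issues. First, the filling ratio $1-2^{-j}$ is not what a single Vitali extraction gives: the paper's Lemma~\ref{lem:lobndhomubsys} only yields the fixed efficiency $1/(2\cdot 3^d)$, and this turns out to be enough — the fixed loss is absorbed by the superexponential shrinking of scales, which is precisely why the calibration is unavoidable even with improved filling. Second, your mass lower bound $\prod_j(1-2^{-j})$ is in tension with the renormalization ``following~(\ref{eq:compattree})'' that you invoke, which preserves total mass exactly; if you intend an unnormalized distribution $\zeta(I_{uk})=\zeta(I_u)\,\leb^d(B_{uk})/\leb^d(I_u)$ you should say so, though then the filling ratio does matter and you would again need to justify $1-2^{-j}$. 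Neither issue is fatal, but both need to be reconciled with the missing calibration step to yield a complete argument.
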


Combining Theorem~\ref{thm:lobndhomubsys} with Proposition~\ref{prp:upbndapproxsys} above, we remark that if $(x_i,r_i)_{i\in\calI}$ is a homogeneous ubiquitous system in $U$, then the parameter $s_U$ defined by~(\ref{eq:df:sU}) is necessarily bounded below by $d$. We also readily deduce the following result.

\begin{Corollary}\label{cor:eqhomubsys}
	Let $(x_i,r_i)_{i\in\calI}$ denote a homogeneous ubiquitous system in some nonempty open subset $U$ of $\R^d$. Let us assume that $s_U\leq d$. Then, for any $t>1$,
	\[
		\Hdim(F_t\cap U)=\frac{d}{t}.
	\]
\end{Corollary}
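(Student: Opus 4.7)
The proof will be a direct combination of the two main results proved just above, so the plan is essentially one of bookkeeping rather than new argument. The strategy is to sandwich $\Hdim(F_t\cap U)$ between two matching bounds.

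First, for the upper bound, I would invoke Proposition~\ref{prp:upbndapproxsys} applied to the open set $U$: this immediately yields
\[
\Hdim(F_t\cap U)\leq\frac{s_U}{t}.
\]
Since the hypothesis gives $s_U\leq d$, this can be strengthened to $\Hdim(F_t\cap U)\leq d/t$.

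Second, for the lower bound, I would apply Theorem~\ref{thm:lobndhomubsys}, whose hypothesis (homogeneous ubiquity of $(x_i,r_i)_{i\in\calI}$ in $U$) is precisely what is assumed here. Since $t>1$, the theorem yields
\[
\Hdim(F_t\cap U)\geq\frac{d}{t}.
\]
(In fact, as observed in the remark following Theorem~\ref{thm:lobndhomubsys}, combining these two bounds forces $s_U\geq d$, so under the present hypothesis $s_U=d$ exactly; this is a side observation, not needed for the argument.)

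There is no real obstacle: both directions have already been established in the body of the section, and the corollary consists in noting that the extra assumption $s_U\leq d$ makes the general upper bound $s_U/t$ meet the general lower bound $d/t$. The statement and its proof could just as well have been written inline after Theorem~\ref{thm:lobndhomubsys}; isolating it as a corollary simply highlights the frequent situation in which the naive covering bound is sharp.
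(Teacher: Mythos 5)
Your proof is correct and matches the paper's (implicit) argument exactly: the upper bound from Proposition~\ref{prp:upbndapproxsys} together with $s_U\leq d$, and the lower bound from Theorem~\ref{thm:lobndhomubsys}, combine to give the stated equality. The side remark that the two bounds force $s_U=d$ is also the same observation the paper makes just before the corollary.
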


Again, an emblematic situation where this holds is when the approximation system are issued from eutaxic sequences or optimal regular systems, see Sections~\ref{sec:eutaxy} and~\ref{sec:optregsys}. We shall prove an extension of Theorem~\ref{thm:lobndhomubsys} to the framework of sets with large intersection in Section~\ref{sec:largeint}, see Theorem~\ref{thm:lihomubsys} for a precise statement. In addition, the transference principles discussed in Section~\ref{sec:transference} natural generalize these results to Hausdorff measures and large intersection classes associated with arbitrary gauge functions. Besides, let us mention that a heterogeneous and a localized version of Theorem~\ref{thm:lobndhomubsys} are established in~\cite{Barral:2004ae} and~\cite{Barral:2011aa}, respectively.

The remainder of this section is devoted to the proof of Theorem~\ref{thm:lobndhomubsys}. We thus fix a homogeneous ubiquitous system $(x_i,r_i)_{i\in\calI}$ and a nonempty open subset $U$ of $\R^d$. We may obviously assume that $U$ has diameter at most one. Consequently, the index set $\calI_U$ admits an enumeration $(i_n)_{n\geq 1}$ such that the sequence $(r_{i_n})_{n\geq 1}$ is nonincreasing and tends to zero at infinity.

\subsubsection{A covering lemma}

The proof of Theorem~\ref{thm:lobndhomubsys} calls upon a simple result in the spirit of Vitali's covering lemma but with a measure theoretic flavor.

\begin{Lemma}\label{lem:lobndhomubsys}
	For any nonempty open set $V\subseteq U$ and any $\rho>0$, there is a finite set $\calI(V,\rho)\subseteq\calI_U$ such that $r_i\leq\rho$ for all $i\in\calI(V,\rho)$, and
	\[
		\bigsqcup_{i\in\calI(V,\rho)}\clball(x_i,r_i)\subseteq V
		\qquad\text{and}\qquad
		\sum_{i\in\calI(V,\rho)}\leb^d(\clball(x_i,r_i))\geq\frac{\leb^d(V)}{2\cdot 3^d}.
	\]
\end{Lemma}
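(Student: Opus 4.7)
The plan is to realize the desired finite family as a truncation of a greedy Vitali-type selection drawn from a reservoir of balls that cover $V$ up to a Lebesgue null set. This keeps the measure loss concentrated in two places: the $3^d$ factor comes from the selection, and the extra factor of $2$ comes from passing to a finite subfamily.

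\textbf{Step 1 (reservoir and almost-cover).} I would first introduce
\[
\calJ=\{i\in\calI_U:r_i\leq\rho\text{ and }\clball(x_i,r_i)\subseteq V\},
\qquad W=\bigcup_{i\in\calJ}\clball(x_i,r_i),
\]
and show that $\leb^d(V\setminus W)=0$. Since $V\subseteq U$, homogeneous ubiquity supplies, for $\leb^d$-a.e.\ $x\in V$, infinitely many $i\in\calI$ with $|x-x_i|<r_i$. The discussion following Definition~\ref{df:approxsys} then extracts a subsequence $(i_n)$ with $x_{i_n}\to x$ and $r_{i_n}\to 0$. Because $x$ is interior to the open set $V$, for large $n$ we have simultaneously $x_{i_n}\in U$ (so $i_n\in\calI_U$), $r_{i_n}\leq\rho$, and $\clball(x_{i_n},r_{i_n})\subseteq V$; hence $i_n\in\calJ$ and $x\in\clball(x_{i_n},r_{i_n})\subseteq W$.

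\textbf{Step 2 (Vitali-type greedy extraction with dilation factor $3$).} The set $\calI_U$ is countable, and because $U$ is bounded (diameter at most one), Definition~\ref{df:approxsys} forces $\{i\in\calI_U:r_i>\eps\}$ to be finite for every $\eps>0$. I would therefore enumerate $\calJ=\{i_1,i_2,\ldots\}$ with nonincreasing radii $r_{i_1}\geq r_{i_2}\geq\ldots$ and run the standard greedy algorithm: put $i_1$ in $\calJ'$, and for each $k\geq 2$ add $i_k$ to $\calJ'$ iff $\clball(x_{i_k},r_{i_k})$ is disjoint from every ball already selected. The resulting family $\{\clball(x_j,r_j)\}_{j\in\calJ'}$ is pairwise disjoint. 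For any $i_k\notin\calJ'$, some $i_l\in\calJ'$ with $l<k$ has $\clball(x_{i_k},r_{i_k})\cap\clball(x_{i_l},r_{i_l})\neq\emptyset$, and since $r_{i_l}\geq r_{i_k}$ a triangle-inequality argument yields $\clball(x_{i_k},r_{i_k})\subseteq\clball(x_{i_l},3r_{i_l})$. Consequently $W\subseteq\bigcup_{j\in\calJ'}\clball(x_j,3r_j)$.

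\textbf{Step 3 (mass balance and truncation).} Combining the two previous steps with the homogeneity of Lebesgue measure under dilations gives
\[
0<\leb^d(V)\leq\leb^d(W)\leq\sum_{j\in\calJ'}\leb^d(\clball(x_j,3r_j))=3^d\sum_{j\in\calJ'}\leb^d(\clball(x_j,r_j)),
\]
so the series on the right is at least $\leb^d(V)/3^d$. A sufficiently long finite truncation $\calI(V,\rho)\subseteq\calJ'$ therefore captures at least half of that mass, producing the factor $1/(2\cdot 3^d)$. The remaining requirements ($r_i\leq\rho$ and disjoint closed balls contained in $V$) are inherited from the definition of $\calJ$ and from the greedy construction.

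\textbf{Main obstacle.} The only subtle point is Step~1: ubiquity alone only gives approximants, and one must combine it with the openness of $V$ and the decay $r_{i_n}\to 0$ to ensure that almost every $x\in V$ is covered by balls \emph{admissible for the reservoir}~$\calJ$, rather than by arbitrary balls of the system. Once this is secured, Step~2 is a routine greedy argument, with the ordering by nonincreasing radii being precisely what forces the factor $3$ (instead of a naive factor $5$) in the dilation, thereby producing the stated constant $2\cdot 3^d$.
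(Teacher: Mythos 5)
Your proof is correct and follows essentially the same route as the paper: a greedy Vitali-type selection ordered by nonincreasing radii, giving the dilation factor $3$, followed by the measure estimate and a finite truncation that yields the factor $1/(2\cdot 3^d)$. The only (cosmetic) reorganization is that you factor out a preliminary step showing that the reservoir $\calJ$ covers almost all of $V$, whereas the paper establishes this simultaneously with the greedy selection when proving the inclusion $V\cap\limsup_n\opball(x_{i_n},r_{i_n})\subseteq\bigcup_k\clball(x_{i_{n_k}},3r_{i_{n_k}})$.
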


\begin{proof}
	For any $\rho>0$, there exists an integer $n_\rho\geq 1$ such that $r_{i_n}\leq\rho$ for all $n\geq n_\rho$. We observe that $(x_{i_n},r_{i_n})_{n\geq n_\rho}$ is a homogeneous ubiquitous system in $U$. Hence, every nonempty open set $V\subseteq U$ necessarily contains a closed ball of the form $\clball(x_{i_n},r_{i_n})$, for $n\geq n_\rho$. Indeed, any such open set $V$ contains an open ball of the form $\opball(x_0,r_0)$, and the smaller ball $\opball(x_0,r_0/2)$ contains a point $x$ that belongs to infinitely many open balls of the form $\opball(x_{i_n},r_{i_n})$ with $n\geq n_\rho$\,; choosing $n$ so large that $r_{i_n}$ is smaller than $r_0/4$, we may use the point $x$ to ensure that
	\[
		\clball(x_{i_n},r_{i_n})\subseteq\opball(x_0,r_0)\subseteq V.
	\]
	
	Therefore, if $V$ denotes a nonempty open subset of $U$, we can define
	\[
		n_1=\min\left\{n\geq n_\rho\bigm|\clball(x_{i_n},r_{i_n})\subseteq V\right\}.
	\]
	For any integer $K\geq 1$, the same argument allows us to define in a recursive manner
	\[
		n_{K+1}=\min\left\{n>n_K\Biggm|
		\clball(x_{i_n},r_{i_n})\subseteq V\setminus\bigcup_{k=1}^K \clball(x_{i_{n_k}},r_{i_{n_k}})
		\right\}.
	\]
	We thus obtain a increasing sequence of positive integers $(n_K)_{K\geq 1}$. Then, recalling that the radii $r_{i_n}$ monotonically tend to zero as $n\to\infty$, we infer that
	\begin{equation}\label{eq:incUlimsup}
		V\cap\limsup_{n\to\infty} \opball(x_{i_n},r_{i_n})
		\subseteq
		\bigcup_{k=1}^\infty \clball(x_{i_{n_k}},3r_{i_{n_k}}).
	\end{equation}
	Indeed, if $x$ belongs to the set in the left-hand side of~(\ref{eq:incUlimsup}), we necessarily have $x\in\clball(x_{i_n},r_{i_n})\subseteq V$ for some sufficiently large integer $n\geq n_1$. Letting $K$ denote the unique integer such that $n_K\leq n<n_{K+1}$, we deduce from the mere definition of $n_{K+1}$ that the ball $\clball(x_{i_n},r_{i_n})$ meets at least one of the balls $\clball(x_{i_{n_k}},r_{i_{n_k}})$, for $k\in\{1,\ldots,K\}$, at some point denoted by $y$. Hence,
	\[
		|x-x_{i_{n_k}}|\leq|x-x_{i_n}|+|x_{i_n}-y|+|y-x_{i_{n_k}}|\leq r_{i_n}+r_{i_n}+r_{i_{n_k}}\leq 3r_{i_{n_k}},
	\]
	where the latter bound results from the fact that $n\geq n_K\geq n_k$ and that the radii are nonincreasing. We deduce that $x$ belongs to the right-hand side of~(\ref{eq:incUlimsup})
	
	Finally, since $(x_{i_n},r_{i_n})_{n\geq 1}$ is a homogeneous ubiquitous system in $U$, the left-hand side of~(\ref{eq:incUlimsup}) has Lebesgue measure equal to $\leb^d(V)$. Consequently, along with~(\ref{eq:incUlimsup}), the subadditivity and dilation behavior of Lebesgue measure imply that
	\[
		\leb^d(V)\leq\leb^d\left(\bigcup_{k=1}^\infty\clball(x_{i_{n_k}},3r_{i_{n_k}})\right)
		\leq 3^d\sum_{k=1}^\infty\leb^d(\clball(x_{i_{n_k}},r_{i_{n_k}})).
	\]
	To conclude, we define $K$ as the first integer such that the $K$-th partial sum of the series appearing in the right-hand side exceeds $\leb^d(V)/(2\cdot 3^d)$, and then $\calI(V,\rho)$ as the set of all indices $i_{n_k}$, for $k\in\{1,\ldots,K\}$.
\end{proof}

\subsubsection{The ubiquity construction}

After fixing a real number $t>1$, the proof of Theorem~\ref{thm:lobndhomubsys} now consists in applying Lemma~\ref{lem:lobndhomubsys} repeatedly in order to build a generalized Cantor set that is embedded in the set $F_t\cap U$, together with an appropriate outer measure thereon. We shall ultimately apply the mass distribution principle, namely, Lemma~\ref{lem:massdist} to this outer measure. To this end, we shall need an estimate on the mass of balls, {\em i.e.}~on the scaling properties of the outer measure.

The construction is modeled on that presented in Section~\ref{subsec:genCantor}; recall that it is indexed by a tree $T$ and consists of a collection of compact sets $(I_u)_{u\in T}$ and a companion premeasure $\zeta$ such that the compatibility conditions~(\ref{eq:compattree}) hold. However, we need to be more precise in the present construction, and we actually require the following more specific conditions:
\begin{enumerate}\setcounter{enumi}{-1}
	\item\label{item:condubiqconstruc0} every node in the indexing tree $T$ has at least one child, that is,
		\[
			\min_{u\in T}k_u(T)\geq 1\,;
		\]
	\item\label{item:condubiqconstruc1} the compact set $I_\varnothing$ indexed by the root of the tree is a closed ball contained in $U$ with diameter in $(0,1)$ and
		\begin{equation}\label{eq:df:zetaIvarnothing}
			\zeta(I_\varnothing)=\diam{I_\varnothing}^{d/t}\log\frac{1}{\diam{I_\varnothing}}\,;
		\end{equation}
	\item\label{item:condubiqconstruc2} for every node $u\in T\setminus\{\varnothing\}$, there exists an index $i_u\in\calI_U$ such that
		\[
			I_u=B_u^t\subset B_u\subseteq I_{\pi(u)}\,;
		\]
	\item\label{item:condubiqconstruc3} for every node $u\in T\setminus\{\varnothing\}$, we have simultaneously
		\[
			\diam{B_u}\leq 2\exp\left(-\frac{2\cdot 6^d}{t}\diam{I_{\pi(u)}}^{d(1/t-1)-1}\right),
		\]
	in addition to both
		\[
			\bigsqcup_{v\in S_u} B_{v}\subseteq I_{\pi(u)}
			\qquad\text{and}\qquad
			\sum_{v\in S_u}\leb^d(B_v)\geq\frac{\leb^d(I_{\pi(u)})}{2\cdot 3^d}\,;
		\]
	\item\label{item:condubiqconstruc4} for every node $u\in T\setminus\{\varnothing\}$, the premeasure $\zeta$ satisfies
		\[
			\zeta(I_u)=\frac{\leb^d(B_u)}{\sum\limits_{v\in S_u}\leb^d(B_v)}\,\zeta(I_{\pi(u)}).
		\]
\end{enumerate}
In the above conditions, $S_u$ denotes the set formed by a given node $u$ and its siblings, namely, the nodes $v\in T$ such that $\pi(v)=\pi(u)$. Moreover, the sets $B_u$ and $B_u^t$ are the closed balls defined by
	\begin{equation}\label{eq:df:Bu}
		B_u=\clball(x_{i_u},r_{i_u})
		\qquad\text{and}\qquad
		B_u^t=\clball\left(x_{i_u},\frac{r_{i_u}^t}{2}\right).
	\end{equation}
	In addition, let us recall that $\pi(u)$ denotes the parent of a given node $u$, and $k_u(T)$ is the size of its progeny. Also, note that the compatibility conditions~(\ref{eq:compattree}) easily result from~(\ref{item:condubiqconstruc0}--\ref{item:condubiqconstruc4}) above; we even have equality in the compatibility condition that concerns the premeasure $\zeta$. Lastly, it is useful to remark that the ball $B_u$ involved in the construction all have diameter at most one, since they are included in $U$.

The construction is performed inductively on the generation of the indexing tree. In order to guarantee~(\ref{item:condubiqconstruc1}), we begin the construction by considering an arbitrary closed ball with diameter in $(0,1)$ that is contained in the nonempty open set $U$\,; this ball is the compact set $I_\varnothing$ indexed by the root of the tree. We also define $\zeta(I_\varnothing)$ by~(\ref{eq:df:zetaIvarnothing}), in addition to the compulsory condition $\zeta(\emptyset)=0$.

Furthermore, let us assume that the tree, the compact sets and the companion premeasure have been defined up to a given generation $j$ in such a way that the conditions~(\ref{item:condubiqconstruc0}--\ref{item:condubiqconstruc4}) above hold; we now build the tree, the compacts and the premeasure at the next generation $j+1$ in the following manner. For each node $u$ of the $j$-th generation, we apply Lemma~\ref{lem:lobndhomubsys} to the interior of $I_u$ and the real number
	\[
		\rho_u=\exp\left(-\frac{2\cdot 6^d}{t}\diam{I_u}^{d(1/t-1)-1}\right)\,;
	\]
	the resulting finite subset of $\calI_U$ is denoted by $\calI(\interior{I_u},\rho_u)$. We then decide that the progeny of the node $u$ in the tree $T$ has cardinality $k_u(T)$ equal to that of $\calI(\interior{I_u},\rho_u)$. Furthermore, we let $i_{uk}$, for $k\in\{1,\ldots,k_u(T)\}$, denote the elements of $\calI(\interior{I_u},\rho_u)$. Making use of the notation~(\ref{eq:df:Bu}), we therefore have
	\[
		\bigsqcup_{k=1}^{k_u(T)}B_{uk}\subseteq\interior{I_u}\subseteq I_u
		\qquad\text{and}\qquad
		\sum_{k=1}^{k_u(T)}\leb^d(B_{uk})\geq\frac{\leb^d(I_u)}{2\cdot 3^d}.
	\]
	On top of that, the radii of the balls $B_{uk}$ are bounded above by $\rho_u$. Using the notation~(\ref{eq:df:Bu}) again, we also define the compact sets $I_{uk}$ as being equal to the closed balls $B_{uk}^t$, for $k\in\{1,\ldots,k_u(T)\}$. This way, the condition~(\ref{item:condubiqconstruc0}) is satisfied by the nodes of the $j$-th generation, and the conditions~(\ref{item:condubiqconstruc2}--\ref{item:condubiqconstruc3}) hold for those of the $(j+1)$-th generation. Finally, for $k\in\{1,\ldots,k_u(T)\}$, we define
	\[
		\zeta(I_{uk})=\frac{\leb^d(B_{uk})}{\sum\limits_{l=1}^{k_u(T)}\leb^d(B_{ul})}\,\zeta(I_{u}),
	\]
	so that~(\ref{item:condubiqconstruc4}) holds for the nodes of the $(j+1)$-th generation. Finally, the above procedure clearly implies that every node of the tree has at least one child, {\em i.e.}~ the condition~(\ref{item:condubiqconstruc0}) holds.

\subsubsection{Scaling properties of the premeasure}

The next result gives an upper bound on the premeasure $\zeta$ in terms of the diameters of sets.

\begin{Lemma}\label{lem:bndmuIu}
	For any node $u\in T$,
	\begin{equation}\label{eq:bndmuIu}
		\zeta(I_u)\leq\diam{I_u}^{d/t}\log\frac{1}{\diam{I_u}}.
	\end{equation}
\end{Lemma}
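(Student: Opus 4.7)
The plan is to prove (\ref{eq:bndmuIu}) by induction on the generation $|u|$ of the node in the tree $T$. The base case $u=\varnothing$ is immediate: condition (\ref{item:condubiqconstruc1}) was imposed precisely so that equality in (\ref{eq:bndmuIu}) holds at the root by the very definition (\ref{eq:df:zetaIvarnothing}).

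For the inductive step, fix a non-root node $u$ and assume the bound for its parent $\pi(u)$. The recursive formula in condition~(\ref{item:condubiqconstruc4}) expresses $\zeta(I_u)$ as a fraction of $\zeta(I_{\pi(u)})$, and the lower bound on the denominator provided by condition~(\ref{item:condubiqconstruc3}) (the Vitali-type estimate coming from Lemma~\ref{lem:lobndhomubsys}) controls this fraction. Since both $B_u$ and $I_{\pi(u)}$ are closed balls, I would replace the ratio $\leb^d(B_u)/\leb^d(I_{\pi(u)})$ by $(\diam{B_u}/\diam{I_{\pi(u)}})^d$ up to the constant $2\cdot 3^d$, so that combining with the inductive hypothesis yields
\[
\zeta(I_u)\leq 2\cdot 6^d\, r_{i_u}^d\, \diam{I_{\pi(u)}}^{d/t-d}\log\frac{1}{\diam{I_{\pi(u)}}}
\]
after using $\diam{B_u}=2r_{i_u}$. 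The target bound, on the other hand, rewrites as $r_{i_u}^d\cdot t\log(2/\diam{B_u})$ once we note that $\diam{I_u}=r_{i_u}^t$, so $\diam{I_u}^{d/t}=r_{i_u}^d$.

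The heart of the argument is then to invoke the quantitative smallness of $\diam{B_u}$ built into condition~(\ref{item:condubiqconstruc3}): taking logarithms in the inequality $\diam{B_u}\leq 2\exp(-\tfrac{2\cdot 6^d}{t}\diam{I_{\pi(u)}}^{d(1/t-1)-1})$ produces the clean lower bound
\[
t\log\frac{2}{\diam{B_u}}\geq 2\cdot 6^d\,\diam{I_{\pi(u)}}^{d/t-d-1}.
\]
After cancelling the common factor $2\cdot 6^d\,\diam{I_{\pi(u)}}^{d/t-d-1}$, what remains to check is the elementary inequality $\diam{I_{\pi(u)}}\log(1/\diam{I_{\pi(u)}})\leq 1$, which holds because every $I_v$ is contained in $I_\varnothing\subseteq U$ and thus has diameter strictly less than one, and because $x\mapsto x\log(1/x)$ is bounded by $1/e<1$ on $(0,1)$.

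I expect no genuine obstacle: the whole scheme has been rigged for this computation to close, the constant $2\cdot 6^d$ in condition~(\ref{item:condubiqconstruc3}) and the logarithmic factor in (\ref{eq:df:zetaIvarnothing}) being tailored exactly to make the telescoping go through. The only mildly delicate point is remembering that the inductive bound at $\pi(u)$ involves a logarithmic factor whose growth as $\diam{I_{\pi(u)}}\to 0$ has to be absorbed; this is precisely the role of the extra factor $\diam{I_{\pi(u)}}^{-1}$ hidden in the exponent $d(1/t-1)-1$ of condition~(\ref{item:condubiqconstruc3}), so the proof amounts to verifying that the construction was calibrated consistently.
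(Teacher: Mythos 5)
Your proof is correct and follows essentially the same induction as the paper: base case from condition~(\ref{item:condubiqconstruc1}), then combine conditions~(\ref{item:condubiqconstruc3})--(\ref{item:condubiqconstruc4}) with the fact that $B_u$ and $I_{\pi(u)}$ are balls to bound $\zeta(I_u)$ by $2\cdot 6^d\diam{I_u}^{d/t}\diam{I_{\pi(u)}}^{d(1/t-1)}\log(1/\diam{I_{\pi(u)}})$, and finally invoke the calibrated smallness of $\diam{B_u}$ from~(\ref{item:condubiqconstruc3}). The only cosmetic difference is that you close the argument with $x\log(1/x)\leq 1/\ee$ whereas the paper phrases it as $\log(1/r)\leq 1/r$; these are the same inequality. (One small wrinkle: the fact that $\diam{I_{\pi(u)}}<1$ does not follow merely from $I_\varnothing\subseteq U$ — it is guaranteed directly by condition~(\ref{item:condubiqconstruc1}), which requires $\diam{I_\varnothing}\in(0,1)$ — but this does not affect the correctness of the proof.)
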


\begin{proof}
	Let us prove~(\ref{eq:bndmuIu}) by induction on the length of the word $u\in T$. First, equality holds when $u$ is the empty word, due to the mere value of $\zeta(I_\varnothing)$ determined by~(\ref{eq:df:zetaIvarnothing}). Moreover, if we consider a node $u\in T\setminus\{\varnothing\}$ and if we assume that~(\ref{eq:bndmuIu}) holds for its parent node $\pi(u)$, then the conditions~(\ref{item:condubiqconstruc2}--\ref{item:condubiqconstruc4}) yield
	\begin{align*}
		\zeta(I_u)&\leq 2\cdot 3^d\leb^d(B_u)\frac{\zeta(I_{\pi(u)})}{\leb^d(I_{\pi(u)})}
		=2\cdot 6^d\diam{I_u}^{d/t}\frac{\zeta(I_{\pi(u)})}{\diam{I_{\pi(u)}}^d}\\
		&\leq 2\cdot 6^d\diam{I_u}^{d/t}\diam{I_{\pi(u)}}^{d(1/t-1)}\log\frac{1}{\diam{I_{\pi(u)}}}.
	\end{align*}
	Finally, in view of the restriction on the diameter of the ball $B_u$ imposed by the condition~(\ref{item:condubiqconstruc3}) and the obvious fact that $\log(1/r)\leq 1/r$ for all $r>0$, we have
	\[
		\diam{I_{\pi(u)}}^{d(1/t-1)}\log\frac{1}{\diam{I_{\pi(u)}}}
		\leq\diam{I_{\pi(u)}}^{d(1/t-1)-1}\leq\frac{t}{2\cdot 6^d}\log\frac{2}{\diam{B_u}}
		=\frac{1}{2\cdot 6^d}\log\frac{1}{\diam{I_u}},
	\]
	which leads to~(\ref{eq:bndmuIu}) for the node $u$ itself.
\end{proof}

\subsubsection{The limiting outer measure and its scaling properties}

With the help of Theorem~\ref{thm:absmeas}, we may extend as usual the premeasure $\zeta$ to an outer measure $\zeta^\ast$ on all the subsets of $\R^d$ {\em via}~(\ref{eq:df:zetaasta}). We may also consider the limiting compact set $K$ defined by~(\ref{eq:limittree}), in addition to the outer measure $\mu$ that maps a set $E\subseteq\R^d$ to the value $\zeta^\ast(E\cap K)$. The tree $T$ considered here is infinite, so Lemma~\ref{lem:basictree} shows that $K$ is a nonempty compact subset of $I_\varnothing$. Moreover, the outer measure $\mu$ has total mass $\mu(K)=\zeta(I_\varnothing)$. The next result shows that $K\subseteq F_t\cap U$ as required.

\begin{Proposition}\label{prp:KinclFtcapU}
	The compact set $K$ is contained in $F_t\cap U$. As a consequence,
	\[
		\mu(F_t\cap U)=\mu(K)=\zeta(I_\varnothing)=\diam{I_\varnothing}^{d/t}\log\frac{1}{\diam{I_\varnothing}}.
	\]
\end{Proposition}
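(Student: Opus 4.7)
The plan is to prove $K \subseteq F_t \cap U$ and then, since the outer measure $\mu$ is concentrated on $K$, to read off the mass formula from Lemma~\ref{lem:basictree}.

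The inclusion $K \subseteq U$ is immediate: condition~(\ref{item:condubiqconstruc1}) gives $I_\varnothing \subseteq U$, while~(\ref{eq:limittree}) gives $K \subseteq I_\varnothing$. To show $K \subseteq F_t$, fix $x \in K$. The sets $(I_u)_{u \in T \cap \N^j}$ at each generation $j$ are pairwise disjoint, as indicated by the $\bigsqcup$ in~(\ref{eq:limittree}) and justified inductively from the disjointness clause in condition~(\ref{item:condubiqconstruc3}) together with $I_u \subseteq I_{\pi(u)}$ from condition~(\ref{item:condubiqconstruc2}). Hence there is a unique $u_j \in T \cap \N^j$ with $x \in I_{u_j}$, and the sequence $(u_j)_{j \geq 0}$ forms an infinite branch in $T$. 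For each $j \geq 1$, condition~(\ref{item:condubiqconstruc2}) yields $I_{u_j} = B_{u_j}^t = \clball(x_{i_{u_j}}, r_{i_{u_j}}^t/2)$, whence $|x - x_{i_{u_j}}| < r_{i_{u_j}}^t$.

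The key technical step is to show that $(i_{u_j})_{j \geq 1}$ takes infinitely many distinct values in $\calI$. For this I would first argue that $r_{i_{u_j}} \to 0$ along the branch. Indeed, the diameters $\diam{I_{u_j}}$ form a nonincreasing sequence (by condition~(\ref{item:condubiqconstruc2})) starting from $\diam{I_\varnothing} \in (0,1)$; since the exponent $d(1/t - 1) - 1$ is strictly negative because $t > 1$, the bound in condition~(\ref{item:condubiqconstruc3}) yields $\diam{B_{u_j}} \leq 2 \exp(-c)$ for some $c > 0$ at the first step, and then, iteratively, a super-exponential decay of $\diam{I_{u_j}}$ to zero. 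Since $\diam{B_{u_j}} = 2 r_{i_{u_j}}$, this forces $r_{i_{u_j}} \to 0$. On the other hand, the points $x_{i_u}$ all lie in the bounded set $U$, so the approximation system axiom (Definition~\ref{df:approxsys}) allows only finitely many $i \in \calI$ to satisfy $r_i$ above any given positive threshold; combined with $r_{i_{u_j}} \to 0$, this shows that $\{i_{u_j} : j \geq 1\}$ is infinite, hence $x \in F_t$.

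Finally, since the tree $T$ is infinite by condition~(\ref{item:condubiqconstruc0}), Lemma~\ref{lem:basictree} gives $\mu(K) = \zeta(I_\varnothing)$. As $\mu(E) = \zeta^\ast(E \cap K)$ depends only on $E \cap K$, the inclusion $K \subseteq F_t \cap U$ yields $\mu(F_t \cap U) = \zeta^\ast(K) = \mu(K)$, and the announced value $\diam{I_\varnothing}^{d/t} \log(1/\diam{I_\varnothing})$ follows from~(\ref{eq:df:zetaIvarnothing}). The main obstacle is the argument that the indices $i_{u_j}$ are not eventually constant; it rests on combining the rapid diameter decay imposed by~(\ref{item:condubiqconstruc3}) with the finiteness built into Definition~\ref{df:approxsys}. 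The remaining steps are essentially bookkeeping given the carefully tailored conditions~(\ref{item:condubiqconstruc0})--(\ref{item:condubiqconstruc4}).
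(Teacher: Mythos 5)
Your proof is correct and takes essentially the same route as the paper: identify the infinite branch $(u_j)_{j\geq 0}$ of the tree through $x$, observe $K\subseteq I_\varnothing\subseteq U$, use condition~(\ref{item:condubiqconstruc2}) to rewrite $I_{u_j}=B_{u_j}^t=\clball(x_{i_{u_j}},r_{i_{u_j}}^t/2)\subseteq\opball(x_{i_{u_j}},r_{i_{u_j}}^t)$, and invoke Lemma~\ref{lem:basictree} for the mass. The one place you spend significantly more effort than the paper is on verifying that the indices $i_{u_j}$ are infinitely many; the paper simply says ``the infinitely many nested balls'' and leaves it at that. Your argument there is valid, but it is more roundabout than necessary: the chain $I_{u_j}=B_{u_j}^t\subset B_{u_j}\subseteq I_{u_{j-1}}$ from condition~(\ref{item:condubiqconstruc2}), with the middle inclusion strict because $r_{i_{u_j}}^t/2<r_{i_{u_j}}$ whenever $t>1$ and $r_{i_{u_j}}\leq 1$, already shows the sets $I_{u_j}$ are strictly decreasing and hence pairwise distinct. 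Since $B_u^t$ is determined by $i_u$, equal indices would produce equal balls, so the $i_{u_j}$ are pairwise distinct --- no need to establish $r_{i_{u_j}}\to 0$ or to appeal to the finiteness clause in Definition~\ref{df:approxsys}. Everything else (the disjointness at each generation propagating inductively from condition~(\ref{item:condubiqconstruc3}), the reading of $\mu(K)=\zeta(I_\varnothing)$ from Lemma~\ref{lem:basictree}, the restriction $\mu(E)=\zeta^\ast(E\cap K)$) is exactly as in the paper.
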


\begin{proof}
	On the one hand, we already mentioned that $K\subseteq I_\varnothing\subseteq U$. On the other hand, if a point $x$ belongs to $K$, then there exists a sequence $(\xi_j)_{j\geq 1}$ of positive integers such that $x\in I_{\xi_1\ldots\xi_j}$ for all $j\geq 1$. Hence, the point $x$ belong to the infinitely many nested balls $B_{\xi_1\ldots\xi_j}^t\subseteq\opball(x_{i_{\xi_1\ldots\xi_j}},r_{i_{\xi_1\ldots\xi_j}}^t)$, and so belongs to $F_t$.
\end{proof}

Thanks to Lemma~\ref{lem:bndmuIu}, we may now derive an upper bound on the $\mu$-mass of sufficiently small closed balls $\R^d$.

\begin{Proposition}\label{prp:bndmuB}
	For any closed ball $B$ of $\R^d$ with diameter less than $\ee^{-d/t}$,
	\[
		\mu(B)\leq 2\cdot 12^d\diam{B}^{d/t}\log\frac{1}{\diam{B}}.
	\]
\end{Proposition}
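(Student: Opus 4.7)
The strategy is to bound $\mu(B)=\zeta^\ast(B\cap K)$ by producing an efficient cover of $B\cap K$ by sets of the form $I_u$, and then estimating each $\zeta(I_u)$ via Lemma~\ref{lem:bndmuIu} and the recursive properties of the premeasure $\zeta$ built into the ubiquity construction.

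\emph{Step 1 (covering).} Call a node $u\in T$ \emph{small} when $\diam{I_u}\leq\diam B$ and \emph{large} otherwise. Condition~(\ref{item:condubiqconstruc3}) forces the diameters along any infinite branch of $T$ to decrease super-exponentially to zero, so every branch contains a unique \emph{first-small} node, namely, a small node whose parent is large (or the root itself if already small). Let $\calN\subseteq T$ be the collection of first-small nodes $u$ such that $I_u\cap B\neq\emptyset$. A K\"onig-type argument applied to the subtree of large nodes (finite branching by construction, and no infinite branches by the rapid decrease) shows that $\calN$ is finite. Since every point of $B\cap K$ lies in some $I_u$ with $u\in\calN$, the extension formula~(\ref{eq:df:zetaasta}) yields
\[
\mu(B)=\zeta^\ast(B\cap K)\leq\sum_{u\in\calN}\zeta(I_u).
\]

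\emph{Step 2 (degenerate case).} If $\varnothing\in\calN$, then $\calN=\{\varnothing\}$ and Lemma~\ref{lem:bndmuIu} reduces the bound to a direct study of the function $r\mapsto r^{d/t}\log(1/r)$ in a neighbourhood of zero, for which the restriction $\diam B<\ee^{-d/t}$ is precisely calibrated.

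\emph{Step 3 (main case).} Otherwise, every $u\in\calN$ has a large parent $w=\pi(u)$. Partition $\calN$ according to the parent as $\calN=\bigsqcup_{w\in\mathcal{W}}\calN_w$. Applying condition~(\ref{item:condubiqconstruc4}) followed by the packing lower bound from condition~(\ref{item:condubiqconstruc3}), one obtains for each $w\in\mathcal{W}$
\[
\sum_{u\in\calN_w}\zeta(I_u)\leq 2\cdot 3^d\,\zeta(I_w)\,\frac{\sum_{u\in\calN_w}\leb^d(B_u)}{\leb^d(I_w)}.
\]
For each $u\in\calN_w$ the ball $B_u\supseteq I_u$ meets $B$ and has radius bounded by $\min(\rho_w,\diam B^{1/t})$; the $B_u$'s being pairwise disjoint siblings inside $I_w$, their total Lebesgue measure is controlled by that of $I_w$ intersected with a concentric enlargement $\widetilde B$ of $B$. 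Combined with the estimate from Lemma~\ref{lem:bndmuIu} on $\zeta(I_w)$ and a short case analysis depending on whether $\diam{I_w}$ is small or large with respect to the natural scale $\diam B^{1/t}$, each contribution is shown to be at most $2\cdot 12^d\diam B^{d/t}\log(1/\diam B)$. The summation over $\mathcal{W}$ finally telescopes: distinct $I_w$'s are either disjoint or nested, and the nested contributions are absorbed by their outermost ancestor, so the desired constant survives.

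\textbf{Main obstacle.} The crux is the geometric estimate in Step 3: because $t>1$, the radius $r_{i_u}$ of a first-small node may be as large as $\diam B^{1/t}$, which is significantly larger than $\diam B$ itself, so the balls $B_u$ need not sit close to $B$ and the enlargement $\widetilde B$ needed to contain them is substantially bigger than $B$. Preventing this from inflating the final estimate requires exploiting the rapid decay of diameters encoded in condition~(\ref{item:condubiqconstruc3}) together with the recursive mass apportionment~(\ref{item:condubiqconstruc4}); the threshold $\ee^{-d/t}$ is precisely what is needed to make the resulting arithmetic close up with the advertised constant $2\cdot 12^d$.
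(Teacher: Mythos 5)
Your covering strategy (first-small nodes, grouped by parent) is a genuinely different decomposition from the paper's, but as written the geometric bound in Step~3 does not close, and the ``main obstacle'' you identify is in fact fatal to the approach in its current form. The enlargement $\widetilde B$ must have radius on the order of $\diam{B}^{1/t}$ (that is the only available a priori bound on $r_{i_u}$ for a first-small node $u$), so at best you get $\sum_{u\in\calN_w}\leb^d(B_u)\lesssim\diam{B}^{d/t}$. Feeding this into the recursion and using Lemma~\ref{lem:bndmuIu} on $\zeta(I_w)$ yields a quantity of order $\diam{I_w}^{d(1/t-1)}\log(1/\diam{I_w})\,\diam{B}^{d/t}$, and since $d(1/t-1)<0$ and $\diam{I_w}>\diam{B}$ is the only constraint, the worst case is $\diam{I_w}\approx\diam{B}$, giving $\diam{B}^{2d/t-d}\log(1/\diam{B})$. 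Because $2d/t-d<d/t$ for every $t>1$, this is \emph{larger} than the target $\diam{B}^{d/t}\log(1/\diam{B})$ by a factor that blows up as $\diam{B}\to 0$. The threshold $\diam{B}<\ee^{-d/t}$ does not rescue this; it only controls the monotonicity of $r\mapsto r^{d/t}\log(1/r)$ near the origin and has no bearing on the lost power.

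What saves the paper's proof, and what is missing from yours, is that the right Lebesgue bound on the relevant siblings' balls is $\leb^d(B)\sim\diam{B}^d$, not $\diam{B}^{d/t}$. The paper never covers $B\cap K$ by first-small nodes: it first disposes of the degenerate case where $B$ meets only one compact at each generation (the nested bound $\zeta(I_{\xi_1\ldots\xi_j})\to 0$), and otherwise singles out the \emph{unique} node $u$ of minimal generation such that $B$ meets $I_u$ and at least two compacts $I_{uk}$. The point of ``at least two'' is that the disjointness of the two balls $B_{uk}$, $B_{ul}$, combined with the fact that the much smaller concentric balls $I_{uk}$, $I_{ul}$ both meet $B$, forces $\rho_B\gtrsim r_{i_{uk}}$ for every $k\in\calK$. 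Thus the relevant balls $B_{uk}$ already live at scale $\diam{B}$, and the volume estimate~(\ref{eq:volestimubiq}) gives $\sum_{k\in\calK}\leb^d(B_{uk})\leq 4^d\leb^d(B)$, which is the correct exponent. Your decomposition loses exactly this leverage in the sub-case $\#\calN_w=1$: there is no second sibling meeting $B$ to force $r_{i_u}\lesssim\diam{B}$, so $B_u$ may sit almost entirely outside $B$, and you additionally need (but do not supply) a bound on how many parents $w$ can contribute a singleton. To repair the proposal you would have to split $\calN_w$ into the sub-cases $\#\calN_w\geq 2$ (where the disjointness argument recovers the right scale) and $\#\calN_w=1$ (where Lemma~\ref{lem:bndmuIu} bounds $\zeta(I_u)$ directly but a separate count of the contributing parents is required), and show the total sum over $w$ remains $O(\diam{B}^{d/t}\log(1/\diam{B}))$ — precisely the bookkeeping that the paper's choice of a single minimal $u$ avoids.
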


\begin{proof}
	We may obviously assume that the ball $B$ intersects the compact set $K$, as otherwise $\mu(B)$ clearly vanishes. Besides, if the ball $B$ intersects only one compact set $I_u$ at each generation, then there exists a sequence $(\xi_j)_{j\geq 1}$ of positive integers such that $B\cap K\subseteq I_{\xi_1\ldots\xi_j}$ for all $j\geq 1$, so that
	\[
		\mu(B)=\zeta^\ast(B\cap K)\leq\zeta(I_{\xi_1\ldots\xi_j})
		\leq\diam{I_{\xi_1\ldots\xi_j}}^{d/t}\log\frac{1}{\diam{I_{\xi_1\ldots\xi_j}}}
		\xrightarrow[j\to\infty]{}0,
	\]
	thanks to Lemma~\ref{lem:bndmuIu}. The upshot is that we may suppose in what follows that there exists a node $u\in T$ such that the ball $B$ intersects the compact set $I_u$, and at least two compacts indexed by the children of $u$. We further assume that $u$ has minimal length, which in fact ensures its uniqueness.
	
	The easy case is when the diameter of the ball $B$ exceeds that of the compact set $I_u$\,; indeed, as the intersection set $B\cap K$ is covered by the sole $I_u$, we may then deduce from Lemma~\ref{lem:bndmuIu} that
	\[
		\mu(B)=\zeta^\ast(B\cap K)\leq\zeta(I_u)\leq\diam{I_u}^{d/t}\log\frac{1}{\diam{I_u}}
		\leq\diam{B}^{d/t}\log\frac{1}{\diam{B}}.
	\]
	Note that the last inequality holds because $\diam{B}$ is small enough to ensure that the considered function of the diameter is nondecreasing.
	
	Let us now deal with the opposite case in which $\diam{B}$ is smaller than $\diam{I_u}$. Let $\calK$ denote the set of all integers $k\in\{1,\ldots,k_u(T)\}$ such that $I_{uk}$ intersects $B$. We shall use the next simple volume estimate, whose proof is left to the reader:
		\begin{equation}\label{eq:volestimubiq}
			\forall k\in\calK \qquad \leb^d(B\cap B_{uk})\geq 4^{-d}\,\leb^d(B_{uk}).
		\end{equation}
	
	The previous lemma enables us to estimate the $\mu$-mass of the ball $B$. Indeed, the ball intersects the compact set $K$ inside the compact sets $I_{uk}$, for $k\in\calK$, so the conditions~(\ref{item:condubiqconstruc3}) and~(\ref{item:condubiqconstruc4}) yield
	\begin{align*}
		\mu(B)&=\zeta^\ast(B\cap K)\\
		&\leq\sum_{k\in\calK}\zeta(I_{uk})
		=\sum_{k\in\calK}\frac{\leb^d(B_{uk})}{\sum\limits_{l=1}^{k_u(T)}\leb^d(B_{ul})}\,\zeta(I_u)
		\leq 2\cdot 3^d\frac{\zeta(I_u)}{\leb^d(I_u)}\sum_{k\in\calK}\leb^d(B_{uk}).
	\end{align*}
	Now, making use of~(\ref{eq:volestimubiq}) and the disjointness of the balls $B_{uk}$, we infer that
	\[
		\mu(B)\leq 2\cdot 12^d\frac{\zeta(I_u)}{\leb^d(I_u)}\sum_{k\in\calK}\leb^d(B\cap B_{uk})
		\leq 2\cdot 12^d\frac{\zeta(I_u)}{\leb^d(I_u)}\leb^d(B).
	\]
	Combining the condition~(\ref{item:condubiqconstruc2}), the definition~(\ref{eq:df:Bu}) of the balls $B_u^t$ and the bound on the $\zeta$-mass of $I_u$ given by Lemma~\ref{lem:bndmuIu}, we deduce that
	\[
		\mu(B)\leq 2\cdot 12^d\diam{B}^d\diam{I_u}^{d(1/t-1)}\log\frac{1}{\diam{I_u}}
		\leq 2\cdot 12^d\diam{B}^{d/t}\log\frac{1}{\diam{B}}.
	\]
	For the latter bound, we used the fact that $t>1$ and $\diam{I_u}>\diam{B}$. We conclude by combining this bound with the one obtained in the previous easier case.
\end{proof}

To finish the proof of Theorem~\ref{thm:lobndhomubsys}, it remains to apply the mass distribution principle, namely, Lemma~\ref{lem:massdist}. In fact, any bounded subset $C$ of $\R^d$ may be embedded in a closed ball $B$ with radius equal to $\diam{C}$. If we assume in addition that $\diam{C}<\ee^{-d/t}/2$, the ball $B$ has diameter less than $\ee^{-d/t}$, and Proposition~\ref{prp:bndmuB} gives
	\[
		\mu(C)\leq\mu(B)\leq 2\cdot 12^d\diam{B}^{d/t}\log\frac{1}{\diam{B}}
		\leq 2\cdot 12^d 2^{d/t}\diam{C}^{d/t}\log\frac{1}{\diam{C}}.
	\]
	Letting $g$ denote the gauge function $r\mapsto r^{d/t}|\log r|$, the mass distribution principle and Proposition~\ref{prp:KinclFtcapU} finally ensure that
	\[
		\hau^g(F_t\cap U)\geq\frac{\mu(F_t\cap U)}{2\cdot 12^d 2^{d/t}}=\frac{g(\diam{I_\varnothing})}{2\cdot 12^d 2^{d/t}}>0,
	\]
	from which we deduce that the set $F_t\cap U$ has Hausdorff dimension at least $d/t$.

\subsection{Behavior under uniform dilations}\label{subsec:unifdil}

The covering lemma coming into play in the above proof, namely, Lemma~\ref{lem:lobndhomubsys} also entails that multiplying all the approximation radii by a common positive factor does not alter the property of being a homogeneous ubiquitous system. We have indeed the next useful statement, which implies in particular that homogeneous ubiquity is independent on the norm.

\begin{Proposition}\label{prp:homubsyscst}
	Let $(x_i,r_i)_{i\in\calI}$ be a homogeneous ubiquitous system in some nonempty open subset $U$ of $\R^d$. Then, for any real number $c>0$, the family $(x_i,c\,r_i)_{i\in\calI}$ is also a homogeneous ubiquitous system in $U$.
\end{Proposition}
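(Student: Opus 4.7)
The case $c \geq 1$ is immediate, since the inclusion $\opball(x_i,r_i) \subseteq \opball(x_i,c\,r_i)$ transfers the infinitely-often property from the original system to the dilated one. Assume therefore $c \in (0,1)$ and write
\[
	F_1^{(c)} = \left\{x\in\R^d \bigm| |x-x_i|<c\,r_i \text{ for i.m. } i\in\calI\right\},
\]
so that the goal is to prove $\leb^d(U\setminus F_1^{(c)})=0$. For each $\rho>0$, set $G_\rho = \bigcup_{i\in\calI_U,\,r_i\leq\rho} \opball(x_i,c\,r_i)$, which is open. The argument that the approximation system condition in Definition~\ref{df:approxsys} forces $r_i\to 0$ along any infinite subsequence of indices $i_n$ with $x_{i_n}$ approaching a point of $U$ (exactly as carried out just after~(\ref{eq:df:Ft})) yields the identity $F_1^{(c)}\cap U = \bigcap_{n\geq 1} G_{1/n}\cap U$, so that $F_1^{(c)}$ is Borel.

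The core of the argument is a density lower bound. Fix any nonempty open set $V\subseteq U$ and any $\rho>0$, and apply Lemma~\ref{lem:lobndhomubsys} to the homogeneous ubiquitous system $(x_i,r_i)_{i\in\calI}$. This provides a finite set $\calI(V,\rho)\subseteq\calI_U$ such that $r_i\leq\rho$ for all $i$, the closed balls $\clball(x_i,r_i)$ for $i\in\calI(V,\rho)$ are pairwise disjoint and contained in $V$, and their total Lebesgue measure is at least $\leb^d(V)/(2\cdot 3^d)$. Because $c<1$, the concentric balls $\clball(x_i,c\,r_i)$ are still contained in $V$ and each has measure $c^d$ times that of $\clball(x_i,r_i)$, giving
\[
	\leb^d(G_\rho\cap V)\;\geq\;\frac{c^d}{2\cdot 3^d}\,\leb^d(V).
\]
Since $(G_{1/n}\cap V)_{n\geq 1}$ is a nonincreasing sequence of sets of finite measure, continuity of $\leb^d$ from above, together with the identity of the first paragraph, yields
\[
	\leb^d(F_1^{(c)}\cap V)\;\geq\;\frac{c^d}{2\cdot 3^d}\,\leb^d(V)
	\qquad\text{for every nonempty open } V\subseteq U.
\]

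To conclude, suppose for contradiction that $N=U\setminus F_1^{(c)}$ has positive Lebesgue measure. By the Lebesgue density theorem, there exists a point $x_0\in N$ at which the Lebesgue density of $N$ equals $1$; hence for $r>0$ small enough, $B(x_0,r)\subseteq U$ and
\[
	\leb^d(B(x_0,r)\cap N)\;>\;\left(1-\frac{c^d}{2\cdot 3^d}\right)\leb^d(B(x_0,r)).
\]
Combined with the previous display applied to $V=B(x_0,r)$, this forces $\leb^d(B(x_0,r))$ to be strictly greater than itself, a contradiction. Therefore $\leb^d(N)=0$ and $(x_i,c\,r_i)_{i\in\calI}$ is a homogeneous ubiquitous system in $U$.

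The principal obstacle is the gap between the purely \emph{quantitative} density estimate supplied by Lemma~\ref{lem:lobndhomubsys}, which only guarantees a fixed positive fraction of the Lebesgue measure on each open subset, and the required \emph{full-measure} conclusion. Bridging this gap is precisely the role played by the Lebesgue density theorem in the final step; the whole argument therefore reduces to harvesting a uniform local density from the covering lemma and then self-improving it by a density-point contradiction.
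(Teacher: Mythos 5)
Your proof is correct. Its engine is the same as the paper's: apply Lemma~\ref{lem:lobndhomubsys} at every scale $\rho>0$, shrink the disjoint balls by the factor $c$, and obtain the uniform lower bound $\leb^d(F_1^{(c)}\cap V)\geq\frac{c^d}{2\cdot 3^d}\leb^d(V)$ for bounded nonempty open $V\subseteq U$. Where the two proofs genuinely differ is the closing self-improvement step. The paper argues by inner regularity of Lebesgue measure: assuming the complement has positive measure, it extracts a compact $K\subseteq R_c\cap U_m$ (where $R_c$ is your $F_1^{(c)}$ and $U_m=U\cap(-m,m)^d$) capturing nearly all of $R_c\cap U_m$, and applies the density estimate to the bounded open set $V=U_m\setminus K$ to reach a contradiction. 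You instead invoke the Lebesgue density theorem to produce a density-one point $x_0$ of $N=U\setminus F_1^{(c)}$ and contradict the estimate on small balls about $x_0$. Both closings are standard and valid; yours is a touch slicker to state, while the paper's rests only on inner regularity and avoids the density theorem as a black box (though in fact the paper invokes that theorem anyway in the proof of Theorem~\ref{thm:CSeutaxy}, so no real parsimony is gained). One small imprecision in your writeup: the estimate stated ``for every nonempty open $V\subseteq U$'' requires $V$ to be bounded so that continuity from above of $\leb^d$ applies to the decreasing sequence $(G_{1/n}\cap V)_{n\geq 1}$; since you only apply it to balls this is harmless, but the qualifier should be added.
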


\begin{proof}
	The family $(x_i,c\,r_i)_{i\in\calI}$ is clearly an approximation system, so it remains to show that the set $R_c$ of all points $x\in\R^d$ such that $|x-x_i|<c\,r_i$ for infinitely many indices $i\in\calI$ has full Lebesgue measure in $U$. This is obvious if $c\geq 1$, because $R_c$ contains $R_1$, which has full Lebesgue measure in $U$. We may thus restrict our attention to the case in which $c<1$.
	
	Let $V$ be a nonempty bounded open subset of $U$ and let $j$ be a positive integer. By Lemma~\ref{lem:lobndhomubsys}, there is a finite subset $\calI_j=\calI(V,2^{-j})$ of $\calI$ such that the balls $\clball(x_i,r_i)$ are disjoint, contained in $V$, with radius at most $2^{-j}$, and a total Lebesgue measure at least $\leb^d(V)/(2\cdot 3^d)$. In particular,
		\[
			R_c\cap V\supseteq\limsup_{j\to\infty}\bigsqcup_{i\in\calI_j} \opball(x_i,c\,r_i)
			=\bigcap_{j=1}^\infty\downarrow\bigcup_{j'=j}^\infty\bigsqcup_{i\in\calI_{j'}} \opball(x_i,c\,r_i).
		\]
		The set $V$ is bounded, thereby having finite measure. Hence,~(\ref{eq:measdecint}) yields
		\[
			\leb^d(R_c\cap V)
			\geq\limsup_{j\to\infty}\sum_{i\in\calI_j}\leb^d(\opball(x_i,c\,r_i))
			\geq\frac{c^d\,\leb^d(V)}{2\cdot 3^d}.
		\]
	
	Let us assume that $\leb^d(U\setminus R_c)$ is positive. Then $\leb^d(U_m\setminus R_c)$ is positive for $m$ large enough, where $U_m$ denotes the set $U\cap (-m,m)^d$. Furthermore, we have
		\[
			\leb^d((R_c\cap U_m)\setminus K)<\frac{c^d\,\leb^d(U_m\setminus R_c)}{2\cdot 3^d}
		\]
		for some compact subset $K$ of $R_c\cap U_m$, see {\em e.g.}~\cite[Theorem~1.10]{Mattila:1995fk}. Applying what precedes to the bounded open set $V=U_m\setminus K$, we obtain
		\[
			\leb^d(R_c\cap (U_m\setminus K))
			\geq\frac{c^d\,\leb^d(U_m\setminus K)}{2\cdot 3^d}
			\geq\frac{c^d\,\leb^d(U_m\setminus R_c)}{2\cdot 3^d},
		\]
		and we end up with a contradiction. Hence, $R_c$ has full Lebesgue measure in $U$.
\end{proof}

\subsection{The Jarn\'ik-Besicovitch theorem}\label{subsec:JarnikBesicovitch}

Let us recall from~(\ref{eq:df:Jdtau}) that $J_{d,\tau}$ is the set of points that are approximable at rate at least $\tau$ by the points with rational coordinates. The set $J_{d,\tau}$ coincides with the whole space $\R^d$ when $\tau\leq 1+1/d$, as a consequence of Dirichlet's theorem, see Corollary~\ref{cor:Dirichlet}. We thus assume throughout that $\tau$ is larger than $1+1/d$. In that case, we know from Proposition~\ref{prp:lebJdtau} that $J_{d,\tau}$ has Lebesgue measure zero. Moreover, using elementary methods, we established in Section~\ref{subsec:upbndhausdorff} that $J_{d,\tau}$ has Hausdorff dimension at most $(d+1)/\tau$, see~(\ref{eq:upbndJdtau}). Actually, the latter bound is the exact value of the dimension. The above theory of homogeneous ubiquitous systems will indeed enable us to recover the next result, obtained independently by Jarn\'ik~\cite{Jarnik:1929mf} and Besicovitch~\cite{Besicovitch:1934ly}.

\begin{Theorem}[Jarn\'ik, Besicovitch]\label{thm:JarnikBesicovitch}
	For any real number $\tau>1+1/d$ and any nonempty open subset $U$ of $\R^d$,
	\[
		\Hdim(J_{d,\tau}\cap U)=\frac{d+1}{\tau}.
	\]
\end{Theorem}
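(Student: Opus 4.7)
The plan is to apply the ubiquity machinery developed in Section~\ref{sec:firstubiq} to the canonical approximation system supplied by Dirichlet's theorem, namely the rationals with denominator raised to the power dictated by Corollary~\ref{cor:Dirichlet}. The upper bound is essentially free: we already derived $\Hdim J_{d,\tau}\leq (d+1)/\tau$ in~(\ref{eq:upbndJdtau}) by a direct covering computation, and monotonicity of Hausdorff dimension, Proposition~\ref{prp:Hausdorff}(\ref{item:prp:Hausdorff1}), gives the same bound for the intersection with $U$. So the substantive content lies in the matching lower bound, which I would obtain via Theorem~\ref{thm:lobndhomubsys}.

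The first step is to set up the approximation system. Enumerate the pairs $(p,q)\in\Z^d\times\N$ (or, if one prefers, the coprime pairs produced by Corollary~\ref{cor:Dirichlet}) through a countable index set $\calI$, and define $x_i=p/q$ and $r_i=1/q^{1+1/d}$. A quick verification shows that $(x_i,r_i)_{i\in\calI}$ is an approximation system in the sense of Definition~\ref{df:approxsys}: one has $\sup_i r_i=1$, and the constraint $|x_i|_\infty<m$, $r_i>1/m$ forces $q<m^{d/(d+1)}$ and $|p|_\infty<mq$, leaving only finitely many pairs. Corollary~\ref{cor:Dirichlet} then asserts exactly that the associated set $F_1$ contains $\R^d\setminus\Q^d$, so $F_1$ has full Lebesgue measure in every nonempty open $U\subseteq\R^d$; that is, $(x_i,r_i)_{i\in\calI}$ is a homogeneous ubiquitous system in $U$ in the sense of Definition~\ref{df:homubiqsys}.

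The second step is just a change of variable. For real $t\geq 1$, the set $F_t$ of~(\ref{eq:df:Ft}) associated to this system is
\[
F_t=\left\{x\in\R^d\Biggm|\left|x-\frac{p}{q}\right|_\infty<\frac{1}{q^{t(1+1/d)}}\ \text{for i.m.}\ (p,q)\right\}=J_{d,t(1+1/d)}.
\]
Given $\tau>1+1/d$, set $t=d\tau/(d+1)>1$, so that $F_t=J_{d,\tau}$ and $d/t=(d+1)/\tau$. Applying Theorem~\ref{thm:lobndhomubsys} to the homogeneous ubiquitous system above, in the open set $U$, yields $\Hdim(J_{d,\tau}\cap U)\geq d/t=(d+1)/\tau$, matching the upper bound.

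I do not anticipate a genuine obstacle: the heavy lifting has been pushed into Theorem~\ref{thm:lobndhomubsys}, whose proof (the generalized Cantor construction driven by Lemma~\ref{lem:lobndhomubsys}) is what really does the work. The only subtleties to confirm are book-keeping ones: that the enumeration yields a bona fide approximation system, that the exponent matching $t=d\tau/(d+1)$ is correctly in the range $t>1$ precisely when $\tau>1+1/d$, and that the upper bound~(\ref{eq:upbndJdtau}) survives intersection with an arbitrary open $U$. Alternatively, one could bypass even the upper-bound computation of Section~\ref{subsec:upbndhausdorff} by invoking Corollary~\ref{cor:eqhomubsys}: a direct evaluation of the series $\sum_{p/q\in V} q^{-s(1+1/d)}\asymp\sum_q q^{d-s(1+1/d)}$ on any bounded open $V$ shows that the exponent $s_U$ from~(\ref{eq:df:sU}) equals $d$, and Corollary~\ref{cor:eqhomubsys} then delivers the equality in one stroke.
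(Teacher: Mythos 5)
Your proof is correct and follows essentially the same route as the paper: you recognize $(p/q,q^{-1-1/d})$ as a homogeneous ubiquitous system via Dirichlet's theorem, write $J_{d,\tau}=F_t$ with $t=d\tau/(d+1)>1$, and invoke the ubiquity lower bound against the elementary covering upper bound. The paper simply packages the two bounds through Corollary~\ref{cor:eqhomubsys} in one stroke -- exactly the alternative you mention at the end -- so the two arguments are the same in substance.
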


\begin{proof}
	The set $J_{d,1+1/d}$ coincides with the whole $\R^d$, so it obviously has full measure therein, namely, for Lebesgue-almost every $x\in\R^d$, there are infinitely many pairs $(p,q)\in\Z^d\times\N$ such that $|x-p/q|_\infty<q^{-1-1/d}$. This means that the family $(p/q,q^{-1-1/d})_{(p,q)\in\Z^d\times\N}$ is a homogeneous ubiquitous system in $\R^d$. Besides, for any integer $M\geq 1$ and any real number $s>0$, note that
	\[
		\sum_{(p,q)\in\Z^d\times\N\atop p/q\in\opball_\infty(0,M)} (q^{-1-1/d})^s
		=\sum_{q=1}^\infty q^{-(1+1/d)s}\#(\Z^d\cap\opball_\infty(0,qM)).
	\]
	The cardinality appearing in the sum is of the order of $(qM)^d$, up to numerical constants. Hence, the critical value $s$ for the convergence of the series is that for which $(1+1/d)s-d$ is equal to one. We deduce that for any open subset $U$ of $\R^d$, the parameter $s_U$ defined by~(\ref{eq:df:sU}) is bounded above by $d$.

	We are now in position to apply Corollary~\ref{cor:eqhomubsys}. For any $\tau>1+1/d$, the approximation radii $q^{-\tau}$ in the definition of $J_{d,\tau}$ may be written in the form $(q^{-1-1/d})^t$ with $t=\tau d/(d+1)>1$. We deduce that for any nonempty open set $U\subseteq\R^d$, the set $J_{d,\tau}\cap U$ has Hausdorff dimension equal to $d/t$, {\em i.e.}~equal to $(d+1)/\tau$.
\end{proof}

We can relate this result with the notion of irrationality exponent, defined by~(\ref{eq:df:irrexpo}). In fact, for any $\tau\geq 1+1/d$, let $\Tau^\geq_{d,\tau}$ denote the set of points $x$ in $\R^d\setminus\Q^d$ with irrationality exponent satisfying $\tau(x)\geq\tau$. It is then clear that
	\[
		J_{d,\tau}\setminus\Q^d
		\subseteq\Tau^\geq_{d,\tau}=
		\bigcap_{\tau'<\tau}\downarrow J_{d,\tau'}\setminus\Q^d.
	\]
	Due to Theorem~\ref{thm:JarnikBesicovitch} and the fact that $\Q^d$ is countable, we deduce that $\Tau^\geq_{d,\tau}$ has Hausdorff dimension equal to $(d+1)/\tau$ in every nonempty open subset of $\R^d$. Theorem~\ref{thm:lobndhomubsys} actually gives a slightly more precise result: letting $g_\tau$ denote the gauge function $r\mapsto r^{(d+1)/\tau}|\log r|$ and $U$ be such an open set, we know that the set $J_{d,\tau}\cap U$ has positive Hausdorff $g_\tau$-measure, so the set $\Tau^\geq_{d,\tau}\cap U$ satisfies the same property. This allows us to determine the Hausdorff dimension of the set $\Tau^=_{d,\tau}$ of points $x$ in $\R^d\setminus\Q^d$ with irrationality exponent equal to $\tau$. As a matter of fact,
	\begin{equation}\label{eq:settauexact}
		\Tau^=_{d,\tau}
		=\Tau^\geq_{d,\tau}\setminus\bigcup_{\tau'>\tau}\uparrow J_{d,\tau'}.
	\end{equation}
	Moreover, thanks to Proposition~\ref{prp:compgauge0}, we easily infer that the sets $J_{d,\tau'}$, for $\tau'>\tau$, have Hausdorff $g_\tau$-measure zero. Finally, the mapping $\tau'\mapsto J_{d,\tau'}$ is nonincreasing, so the union in~(\ref{eq:settauexact}) may be written as a countable one, and the subadditivity of Hausdorff measures entails that its $g_\tau$-measure vanishes. This yields
	\[
		\Hdim(\Tau^=_{d,\tau}\cap U)=\frac{d+1}{\tau},
	\]
	as indeed the set in the left-hand side of~(\ref{eq:settauexact}) has positive $g_\tau$-measure in $U$.


\section{Large intersection properties}\label{sec:largeint}

We present in this section the classes of sets with large intersection introduced by Falconer~\cite{Falconer:1985fc,Falconer:1994hx}, and we enlighten their remarkable interplay with the general theory of homogeneous ubiquitous systems discussed in Section~\ref{sec:firstubiq}.

\subsection{The large intersection classes}\label{subsec:largeintclass}

These classes are composed of subsets of $\R^d$ with Hausdorff dimension at least a given $s$ satisfying the striking counterintuitive property that countable intersections of the sets also have Hausdorff dimension at least $s$. This is in stark contrast with, for instance, the case of two affine subspaces with dimension $s_1$ and $s_2$, respectively, where the intersection is generically expected to have dimension $s_1+s_2-d$. The aforementioned classes are formally defined as follows. Recall that a $G_\delta$-set is one that may be written as the intersection of a countable sequence of open sets.

\begin{Definition}\label{df:lic}
	For any $s\in(0,d]$, the class $\lic{s}{\R^d}$ of {\em sets with large intersection in $\R^d$ with dimension at least $s$} is the collection of all $G_\delta$-subsets $F$ of $\R^d$ such that
	\[
		\Hdim\bigcap_{n=1}^\infty\varsigma_n(F)\geq s
	\]
	for any sequence $(\varsigma_n)_{n\geq 1}$ of similarity transformations of $\R^d$.
\end{Definition}

As shown later in these notes, numerous examples of sets with large intersection arise in metric number theory. Let us point out that the middle-third Cantor set $\cantor$ gives a typical example of set that is {\em not} with large intersection. Indeed, letting $\varsigma$ denote the mapping that sends a real number $x$ to $(x+1)/3$, we readily observe that $\cantor\cap\varsigma(\cantor)$ is reduced to the points $1/3$ and $2/3$, thereby having Hausdorff dimension zero, whereas $\cantor$ itself has dimension $\log 2/\log 3$, see~(\ref{eq:upbndcantor}) and~(\ref{eq:lobndcantor}).

As mentioned above, the main property of the large intersection classes $\lic{s}{\R^d}$ are their stability under countable intersections; remarkably, they are also stable under bi-Lipischitz transformations, {\em i.e.}~mappings satisfying~(\ref{eq:df:biLip}). This is the purpose of the next statement.

\begin{Theorem}\label{thm:stablic}
	For any real number $s\in(0,d]$, the class $\lic{s}{\R^d}$ is closed under countable intersections and bi-Lipschitz transformations of $\R^d$.
\end{Theorem}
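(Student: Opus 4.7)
The definition of $\lic{s}{\R^d}$ via similarity transformations is convenient for exhibiting specific examples but awkward for the two stability statements: neither do bi-Lipschitz images of a similarity-transformed set remain similarity-transformed, nor do countable intersections interact transparently with indexed sequences of similarities. The plan is therefore to first establish an intrinsic reformulation of $\lic{s}{\R^d}$ in terms of dyadic Hausdorff contents, and then deduce both closure properties from it.

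Specifically, I would prove the equivalence: a $G_\delta$-set $F\subseteq\R^d$ lies in $\lic{s}{\R^d}$ if and only if, for every $t\in(0,s)$ and every dyadic cube $\lambda$, the net $t$-content (the quantity obtained from~(\ref{eq:df:zetaasta}) applied to the premeasure $\diam{\cdot}^t$ on dyadic cubes) of $F\cap\lambda$ equals that of $\lambda$ itself. The reverse implication relies on the mass distribution principle (Lemma~\ref{lem:massdist}) combined with Proposition~\ref{prp:compnetm}: a full-content property of $F$ propagates to any countable intersection of similarity images by a Baire-type construction on the $G_\delta$-presentations, producing an inner Frostman-type measure witnessing dimension at least $t$ for every $t<s$. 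The forward implication is proved by contraposition: if the content property fails in some dyadic cube $\lambda$, then by replicating the deficiency across $\R^d$ via translations and dilations (all similarities), one builds a countable sequence of similarity images whose intersection would have dimension strictly smaller than $s$.

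Armed with this characterization, closure under countable intersections becomes manageable. Given $(F_m)_{m\geq 1}$ in $\lic{s}{\R^d}$, the intersection $F=\bigcap_m F_m$ is $G_\delta$. To verify the content property at scale $t<s$ within a dyadic cube $\lambda$, one starts from the trivial cover $\{\lambda\}$ and inductively refines it so that, at the $m$-th stage, the cover still covers $F_1\cap\ldots\cap F_m\cap\lambda$ and its total $t$-content differs from that of $\lambda$ by an error summable in $m$; the refinement at each stage uses the full-content hypothesis on $F_m$ applied separately inside each constituent cube of the current cover, with a safety parameter strictly less than $t$ ensuring that the losses are geometrically controlled. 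The $G_\delta$-hypothesis on each $F_m$ guarantees that the refinement can be carried to the limit on a cover of $F\cap\lambda$.

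Closure under bi-Lipschitz maps is then straightforward from the same characterization. Given $F\in\lic{s}{\R^d}$ and a bi-Lipschitz bijection $f$ of $\R^d$ with constant $c_f$, the image $f(F)$ is $G_\delta$ by homeomorphism, and any dyadic cover of $f(F)\cap V$ pulls back under $f^{-1}$ to a cover of $F\cap f^{-1}(V)$ whose sets have diameters distorted by a factor at most $c_f$; each pulled-back set is then re-covered by a bounded number of dyadic cubes of comparable diameter, which changes the total $t$-content by a uniform multiplicative constant only. Choosing some $t'\in(t,s)$ and letting the scale go to zero absorbs this constant, and the characterization transfers from $F$ to $f(F)$. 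The main obstacle is the refinement procedure in the intersection step, since the net content is only subadditive; the argument hinges on tracking how the content defects accumulate through infinitely many refinements, and it is precisely the $G_\delta$-structure together with the scale-invariant nature of the net content that keeps this under control.
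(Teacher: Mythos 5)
Your overall strategy is the same as the paper's: replace the definition of $\lic{s}{\R^d}$ via similarity transformations by an intrinsic characterization in terms of the dyadic net contents $\netm^t_\infty$, and then read both closure properties off that characterization. This is exactly the role that Theorem~\ref{thm:caraclic} and the ancillary lemmas of Section~\ref{subsubsec:anclemlic} play in the paper: the proof of Theorem~\ref{thm:stablic} there is a direct application of characterizations~(\ref{item:lic4}) and~(\ref{item:lic6}) combined with Lemmas~\ref{lem:netminterGdelta} and~\ref{lem:netmbilip}. Your bi-Lipschitz step --- pull back a dyadic cover under $f^{-1}$, re-cover each preimage by a bounded number of dyadic cubes of comparable diameter, lose only a multiplicative constant, then absorb it by working with some $t'\in(t,s)$ --- is precisely the content of Lemma~\ref{lem:netmbilip} followed by Lemma~\ref{lem:netmst}, and it is correct.

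The intersection step, however, has a genuine gap as written. To verify $\netm^t_\infty(F\cap\lambda)=\netm^t_\infty(\lambda)$ you propose to start from the trivial cover $\{\lambda\}$ and refine so that at stage $m$ the cover still covers $F_1\cap\dots\cap F_m\cap\lambda$ while its total $t$-content stays close to $\diam{\lambda}^t$. But $\netm^t_\infty$ is an \emph{infimum} over coverings, so producing a single cover whose total content is $\approx\diam{\lambda}^t$ only yields the trivial bound $\netm^t_\infty(F\cap\lambda)\leq\netm^t_\infty(\lambda)$, which holds automatically by inclusion. What you actually need is the reverse inequality: \emph{every} cover of $F\cap\lambda$ must have total $t$-content at least $\netm^t_\infty(\lambda)$. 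That is precisely the assertion of Lemma~\ref{lem:netminterGdelta}, which the paper cites from~\cite{Falconer:1994hx}. Obtaining it does involve a nested refinement through the $G_\delta$-presentations, but the purpose of the refinement is to construct a compatible premeasure and a limiting mass distribution supported on $\bigcap_m F_m\cap\lambda$ with controlled scaling, and then to apply a mass-distribution-type argument --- a device you do invoke in your sketch of the equivalence itself, but not here. As your paragraph stands, the covers carry no measure, and the chain of inequalities runs in the useless direction. Once the measure-theoretic ingredient is made explicit, at which point your ``safety parameter strictly less than $t$'' becomes the exponent margin needed to control the measure's scaling (in the spirit of Lemma~\ref{lem:netmst}), the argument goes through; as stated, the intersection closure is not established.
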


The proof of Theorem~\ref{thm:stablic} is postponed to Section~\ref{subsec:licproofs} so as not to interrupt the flow of the presentation. Combined with the definition of the classes $\lic{s}{\R^d}$ given above, Theorem~\ref{thm:stablic} directly yields the following maximality property with respect to countable intersections and similarities.

\begin{Corollary}
	For any real number $s\in(0,d]$, the class $\lic{s}{\R^d}$ is the maximal class of $G_\delta$-subsets of $\R^d$ with Hausdorff dimension at least $s$ that is closed under countable intersections and similarity transformations.
\end{Corollary}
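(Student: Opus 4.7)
The proof will have two parts: first checking that $\lic{s}{\R^d}$ itself satisfies the stated properties, then verifying maximality.

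For the first part, I would observe that every element of $\lic{s}{\R^d}$ is by definition a $G_\delta$-subset of $\R^d$, and taking the sequence $(\varsigma_n)_{n\geq 1}$ constantly equal to the identity in Definition~\ref{df:lic} shows that any $F\in\lic{s}{\R^d}$ has Hausdorff dimension at least $s$. Closure under countable intersections and under similarity transformations (being a special case of bi-Lipschitz transformations) is then immediate from Theorem~\ref{thm:stablic}.

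For maximality, I would argue as follows. Let $\calA$ denote any class of $G_\delta$-subsets of $\R^d$ whose elements all have Hausdorff dimension at least $s$, and which is stable under countable intersections and under similarity transformations of $\R^d$. Fix an arbitrary $F\in\calA$ together with an arbitrary sequence $(\varsigma_n)_{n\geq 1}$ of similarities. By stability of $\calA$ under similarities, each $\varsigma_n(F)$ belongs to $\calA$; by stability under countable intersections, the set $\bigcap_{n\geq 1}\varsigma_n(F)$ belongs to $\calA$ as well, hence has Hausdorff dimension at least $s$. Since $F$ is itself a $G_\delta$-set, Definition~\ref{df:lic} yields $F\in\lic{s}{\R^d}$, and consequently $\calA\subseteq\lic{s}{\R^d}$.

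The argument is essentially a formal unpacking of the definition combined with Theorem~\ref{thm:stablic}, so there is no real obstacle; the only point worth emphasizing is that similarities are a subclass of bi-Lipschitz maps, so that the stability statement in Theorem~\ref{thm:stablic} transfers without loss. The proof does not require any new construction and fits in a few lines.
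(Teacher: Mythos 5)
Your proof is correct and follows the exact route the paper intends: the paper states this corollary as a direct combination of Definition~\ref{df:lic} and Theorem~\ref{thm:stablic}, which is precisely how you argue. Both the verification that $\lic{s}{\R^d}$ has the listed properties (using the constant identity sequence for the dimension bound, and similarities being bi-Lipschitz for stability) and the maximality argument (given any such class $\calA$, stability pushes every sequence $\bigcap_n\varsigma_n(F)$ back into $\calA$, so the dimension bound from Definition~\ref{df:lic} is automatic) match the intended reasoning.
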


We now give several characterizations of the classes $\lic{s}{\R^d}$. Some of them are expressed in terms of outer net measures that are obtained by restricting to coverings by dyadic cubes. For any $s\in(0,d]$, let us consider the premeasure, denoted by $\diam{\cdot}^s_\Lambda$, that maps a given cube $\lambda\in\Lambda$ to $\diam{\lambda}^s$. Then, as in Section~\ref{subsubsec:normgaugenetm}, Theorem~\ref{thm:metmeas} allows us to consider the net measure
	\[
		\netm^s=(\diam{\cdot}^s_\Lambda)_\ast
	\]
	defined by~(\ref{eq:df:zetaastm}). In view of Proposition~\ref{prp:compnetm}, this outer measure is comparable with the $s$-dimensional Hausdorff measure. In addition, Theorem~\ref{thm:absmeas} enables us to introduce the outer measure
	\begin{equation}\label{eq:df:netmsinfty}
		\netm^s_\infty=(\diam{\cdot}^s_\Lambda)^\ast
	\end{equation}
	that is defined by~(\ref{eq:df:zetaasta}), and thus corresponds to coverings by dyadic cubes of arbitrary diameter. It is clear that the outer measures $\netm^s_\infty$ bound the net measures $\netm^s$ from below. Hence, for any subset $E$ of $\R^d$,
	\begin{equation}\label{eq:netmHdim}
		\netm^s_\infty(E)>0 \qquad\Longrightarrow\qquad \Hdim E\geq s.
	\end{equation}
	Moreover, the next lemma shows that the $\netm^s_\infty$-mass of dyadic cubes may easily be expressed in terms of their diameters. The proof is omitted because this result actually follows from the more general formula~(\ref{eq:netmlambdagauge}) that will be established later.

\begin{Lemma}\label{lem:netmlambda}
	For any real number $s\in(0,d]$ and any dyadic cube $\lambda\in\Lambda$,
	\[
		\netm^s_\infty(\lambda)=\netm^s_\infty(\interior{\lambda})=\diam{\lambda}^s.
	\]
\end{Lemma}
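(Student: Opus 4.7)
The upper bound is immediate: the singleton sequence $(\lambda,\emptyset,\emptyset,\ldots)$ is an admissible dyadic covering of $\lambda$, hence of $\interior{\lambda}$ as well, so
\[
\netm^s_\infty(\interior{\lambda})\leq\netm^s_\infty(\lambda)\leq\diam{\lambda}^s,
\]
where the first inequality is the monotonicity of the outer measure. All the work is in proving that $\netm^s_\infty(\interior{\lambda})\geq\diam{\lambda}^s$.

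To this end, I would fix an arbitrary covering $(\lambda_n)_{n\geq 1}\subseteq\Lambda$ of $\interior{\lambda}$ and show that $\sum_n\diam{\lambda_n}^s\geq\diam{\lambda}^s$. Discarding empty cubes and cubes disjoint from $\interior{\lambda}$ changes nothing. For each remaining $\lambda_n$, the nesting property of dyadic cubes (two dyadic cubes are disjoint or one contains the other) together with the fact that $\lambda_n\cap\lambda\supseteq\lambda_n\cap\interior\lambda\neq\emptyset$ forces either $\lambda\subseteq\lambda_n$ or $\lambda_n\subseteq\lambda$. In the former case $\diam{\lambda_n}^s\geq\diam{\lambda}^s$ and we are done; so we may assume $\lambda_n\subseteq\lambda$ for every $n$. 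Next, I would reduce to a \emph{disjoint} subcovering by keeping only the maximal elements of $(\lambda_n)_n$ for inclusion: whenever $\lambda_n\subseteq\lambda_m$ with $n\neq m$, remove $\lambda_n$. The remaining cubes still cover $\interior{\lambda}$, are pairwise either equal or disjoint by the nesting property, and only decrease the sum $\sum_n\diam{\lambda_n}^s$.

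At this point the situation is that of pairwise disjoint dyadic subcubes of $\lambda$ whose union contains $\interior\lambda$. Since each dyadic cube $\lambda_n\subseteq\lambda$ satisfies $\diam{\lambda_n}=2^{-k_n}\diam{\lambda}$ for some integer $k_n\geq 0$, and since dyadic cubes are half-open, disjointness gives $\sum_n\leb^d(\lambda_n)\leq\leb^d(\lambda)$, while the covering of $\interior\lambda$ (which has full Lebesgue measure in $\lambda$) gives the reverse inequality. Writing $c_n=\diam{\lambda_n}/\diam{\lambda}\in(0,1]$ and using the proportionality $\leb^d(\lambda_n)=c_n^d\,\leb^d(\lambda)$ coming from the common scaling of the norm on all dyadic cubes, equality of the two Lebesgue sums becomes
\[
\sum_{n=1}^\infty c_n^d=1.
\]
Since $s\leq d$ and $c_n\leq 1$, we have $c_n^s\geq c_n^d$ termwise, so $\sum_n c_n^s\geq 1$, which multiplied by $\diam{\lambda}^s$ gives $\sum_n\diam{\lambda_n}^s\geq\diam{\lambda}^s$, as required.

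The only step that calls for slight care is the reduction to a disjoint subcovering: one must check that the maximal elements are well defined (any chain of nested dyadic subcubes of $\lambda$ has bounded depth), and that the removed cubes really were redundant; both are immediate from the fact that any point of $\interior\lambda$ belonging to some $\lambda_n$ also belongs to any dyadic cube containing $\lambda_n$ and included in $\lambda$. Everything else is a one-line consequence of dyadic nesting and the comparison $c^s\geq c^d$ for $c\in(0,1]$ and $s\leq d$.
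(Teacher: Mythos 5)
Your proof is correct, and the mathematical core is the same as in the paper: compare $\sum_n\diam{\lambda_n}^s$ to a Lebesgue-measure count through the proportionality $\leb^d(\lambda_n)=c_n^d\leb^d(\lambda)$, and exploit $c^s\geq c^d$ for $c\in(0,1]$ and $s\leq d$. In the paper the lemma is stated without proof, as a special case of the general formula~(\ref{eq:netmlambdagauge}) which follows from Lemma~\ref{lem:netmfullLeb}; your argument is essentially a self-contained re-derivation of that proof for the power gauge function $r\mapsto r^s$, where the condition ``$g(r)/r^d$ nonincreasing'' becomes the termwise inequality $c_n^s\geq c_n^d$.

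One step in your plan is unnecessary: the reduction to a pairwise disjoint collection of maximal cubes. After discarding the cubes not contained in $\lambda$ (having handled the $\lambda\subseteq\lambda_{n_0}$ case separately), subadditivity of Lebesgue measure already gives
\[
\sum_n\leb^d(\lambda_n)\geq\leb^d(\interior{\lambda})=\leb^d(\lambda),
\]
that is, $\sum_n c_n^d\geq 1$, which is all you need to conclude $\sum_n c_n^s\geq 1$. Disjointness is only required if one wants the exact equality $\sum_n c_n^d=1$, which your proof never actually uses. Avoiding the maximality step also sidesteps the small bookkeeping issue of duplicate cubes (which are neither strictly contained in one another nor disjoint, so ``pairwise equal or disjoint'' needs you to also discard repeats to apply disjointness). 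This is a simplification rather than a correction: your version is complete as written.
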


We can now enumerate the properties that characterize the large intersection classes; note that the formulations given by Falconer~\cite{Falconer:1994hx} are slightly erroneous and one has to consider the corrected versions below, where $s\in(0,d]$ and $F\subseteq\R^d$\,:
\begin{enumerate}
	\item\label{item:lic1} for any nonempty open subset $U$ of $\R^d$ and any sequence $(f_n)_{n\geq 1}$ of bi-Lipschitz transformations from $U$ to $\R^d$,
		\[
			\Hdim\bigcap_{n=1}^\infty f_n^{-1}(F)\geq s\,;
		\]
	\item\label{item:lic2} for any sequence $(\varsigma_n)_{n\geq 1}$ of similarity transformations of $\R^d$,
		\[
			\Hdim\bigcap_{n=1}^\infty \varsigma_n(F)\geq s\,;
		\]
	\item\label{item:lic3} for any positive real number $t<s$ and any dyadic cube $\lambda\in\Lambda$,
		\[
			\netm^t_\infty(F\cap\lambda)=\netm^t_\infty(\lambda)\,;
		\]
	\item\label{item:lic4} for any positive real number $t<s$ and any open subset $V$ of $\R^d$,
		\[
			\netm^t_\infty(F\cap V)=\netm^t_\infty(V)\,;
		\]
	\item\label{item:lic5} for any positive real number $t<s$, there exists a real number $c\in(0,1]$ such that for any dyadic cube $\lambda\in\Lambda$,
		\[
			\netm^t_\infty(F\cap\lambda)\geq c\,\netm^t_\infty(\lambda)\,;
		\]
	\item\label{item:lic6} for any positive real number $t<s$, there exists a real number $c\in(0,1]$ such that any open subset $V$ of $\R^d$,
		\[
			\netm^t_\infty(F\cap V)\geq c\,\netm^t_\infty(V).
		\]
\end{enumerate}

Note that the property~(\ref{item:lic2}) coincides with the definition of the large intersection class $\lic{s}{\R^d}$ under the assumption that $F$ is a $G_\delta$-set. The next result details the logical relationships between the previous properties, and in fact implies that they give equivalent characterizations of the large intersection classes.

\begin{Theorem}\label{thm:caraclic}
	Let us consider a real number $s\in(0,d]$ and a subset $F$ of $\R^d$.
	\begin{itemize}
		\item The following implications hold:
			\[
				(\ref{item:lic1})
				\ \Longrightarrow\ 
				(\ref{item:lic2})
				\ \Longrightarrow\ 
				(\ref{item:lic3})
				\ \Longleftrightarrow\ 
				(\ref{item:lic4})
				\ \Longrightarrow\ 
				(\ref{item:lic5})
				\ \Longleftrightarrow\ 
				(\ref{item:lic6}).
			\]
		\item If $F$ is a $G_\delta$-set, then the properties~(\ref{item:lic1}--\ref{item:lic6}) are all equivalent, and characterize the class $\lic{s}{\R^d}$.
	\end{itemize}
\end{Theorem}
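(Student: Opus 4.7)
The plan is to prove the chain $(1) \Rightarrow (2) \Rightarrow (3) \Leftrightarrow (4) \Rightarrow (5) \Leftrightarrow (6)$ for an arbitrary $F \subseteq \R^d$, and then close the loop by proving $(5) \Rightarrow (1)$ under the additional $G_\delta$ hypothesis. The implication $(1) \Rightarrow (2)$ is immediate since every similarity is bi-Lipschitz: take $U = \R^d$ and $f_n = \varsigma_n^{-1}$. The implication $(3) \Rightarrow (5)$ holds with $c = 1$. For the equivalences $(3) \Leftrightarrow (4)$ and $(5) \Leftrightarrow (6)$, I would use the standard dyadic decomposition of open sets: every nonempty open $V \subseteq \R^d$ is a disjoint countable union $\bigsqcup_i \lambda_i$ of the maximal dyadic cubes it contains, and a $\sigma$-additivity argument on disjoint dyadic cubes together with Lemma~\ref{lem:netmlambda} yields $\netm^t_\infty(V) = \sum_i \netm^t_\infty(\lambda_i)$ and $\netm^t_\infty(F \cap V) = \sum_i \netm^t_\infty(F \cap \lambda_i)$, so the open-set formulations $(4)$ and $(6)$ are equivalent to the dyadic-cube formulations $(3)$ and $(5)$.

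The crucial step is $(2) \Rightarrow (3)$, which I would argue by contradiction. Suppose that for some dyadic cube $\lambda_0$ and some $t < s$ one has $\netm^t_\infty(F \cap \lambda_0) \leq \eta \diam{\lambda_0}^t$ with $\eta < 1$. One can find a dyadic cover $(\lambda_n)_{n \geq 1}$ of $F \cap \lambda_0$ consisting of dyadic subcubes of $\lambda_0$ with $\sum_n \diam{\lambda_n}^t \leq \eta \diam{\lambda_0}^t$ (any cube of the cover strictly containing $\lambda_0$ would by itself contradict the hypothesis, and cubes disjoint from $\lambda_0$ can be discarded). For each $n$, let $\varsigma_n$ be a dyadic similarity satisfying $\varsigma_n(\lambda_0) = \lambda_n$; such maps preserve the dyadic grid and scale diameters by $\diam{\lambda_n}/\diam{\lambda_0}$. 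For each dyadic cube $\mu \in \Lambda$ of diameter $\diam{\lambda_0}$, let $\tau_\mu$ be the dyadic translation sending $\lambda_0$ onto $\mu$. The countable family $\Sigma$ of all finite compositions of these similarities is a sequence to which $(2)$ applies, so $F_\infty = \bigcap_{\sigma \in \Sigma} \sigma(F)$ has Hausdorff dimension at least $s$. On the other hand, a straightforward induction on $k$, transferring the initial cover through each $\tau_\mu \circ \varsigma_{n_1} \circ \cdots \circ \varsigma_{n_k}$, shows that
\[
F_\infty \cap \mu \subseteq \bigcup_{|\vec n| = k} \tau_\mu \circ \varsigma_{n_1} \circ \cdots \circ \varsigma_{n_k}(\lambda_0),
\]
and since similarities scale diameters multiplicatively, the total $t$-power of the right-hand side is at most $\eta^k \diam{\mu}^t$. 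Hence $\netm^t_\infty(F_\infty \cap \mu) = 0$ for every such $\mu$, and summing over the countably many disjoint $\mu$ tiling $\R^d$ yields $\netm^t_\infty(F_\infty) = 0$. Since any dyadic cover realizing arbitrarily small net measure automatically consists of cubes of vanishing diameter, this forces $\hau^t(F_\infty) = 0$ and $\Hdim F_\infty \leq t < s$, contradicting $(2)$.

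It remains to prove $(5) \Rightarrow (1)$ when $F$ is a $G_\delta$-set. Write $F = \bigcap_k U_k$ with each $U_k$ open, fix $t < t' < s$ and the constant $c = c(t')$ coming from $(5)$, and let $(f_n)$ be a sequence of bi-Lipschitz maps defined on an open set $U$. The plan is to build via the general Cantor construction of Section~\ref{subsec:genCantor} a compact subset of $\bigcap_n f_n^{-1}(F) = \bigcap_{n,k} f_n^{-1}(U_k)$ of Hausdorff dimension at least $t$, and then let $t \uparrow s$. Inductively, once a dyadic cube $I_u$ has been chosen inside $\bigcap_{n,k \leq |u|} f_n^{-1}(U_k)$, the children $I_{uk}$ are selected among dyadic subcubes of $I_u$ of a common small diameter $\delta_{|u|+1}$ contained in the open set $W_u = \interior{I_u} \cap \bigcap_{n,k \leq |u|+1} f_n^{-1}(U_k)$ and meeting $f_n^{-1}(F)$ for every $n \leq |u|+1$. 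The existence of enough such children is forced by applying $(6)$ at scale $t'$ to $F$ within each open set $f_n(W_u)$: since $f_n$ is bi-Lipschitz, the estimate $\netm^{t'}_\infty(F \cap f_n(W_u)) \geq c \netm^{t'}_\infty(f_n(W_u))$ transfers to a quantitative lower bound on the $t'$-net mass of the dyadic subcubes of $W_u$ meeting $f_n^{-1}(F)$, and taking a common refinement over the finitely many relevant $n$ still leaves abundantly many candidates. By choosing the $\delta_j$ to decrease sufficiently fast, the resulting progeny sizes $m_j$ and separations $\eps_j$ will satisfy the hypothesis of Lemma~\ref{lem:mindimgenCantor} with liminf exponent at least $t$.

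The main obstacle is this last construction, where three constraints---the layered $G_\delta$ structure of $F$, the bi-Lipschitz distortion of each $f_n$, and the quantitative density estimate from $(5)$---must be interleaved while keeping the diameters and progeny counts delicately balanced so that Lemma~\ref{lem:mindimgenCantor} applies; the remaining implications are either trivial or, in the case of $(2) \Rightarrow (3)$, reduce to the transparent geometric iteration described above.
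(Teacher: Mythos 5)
Your chain of implications is organized the same way as the paper's, but your arguments for several of the individual links diverge, and two of these replacements have genuine gaps.

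Your explicit argument for $(2)\Rightarrow(3)$ is a self-contained iteration: starting from a subcritical dyadic cover of $F\cap\lambda_0$ you generate a countable family of dyadic similarities (the maps $\varsigma_n$ together with the translations $\tau_\mu$ onto all dyadic cubes of the same generation), and show that the resulting intersection $F_\infty$ has $\netm^t_\infty(F_\infty)=0$ and hence $\Hdim F_\infty\leq t<s$, contradicting~$(2)$. The induction and the passage from $\netm^t_\infty$ to $\hau^t$ are sound. This is a worthwhile alternative to the paper, which merely cites Falconer's original argument.

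Your treatment of $(3)\Leftrightarrow(4)$ and $(5)\Leftrightarrow(6)$, however, is broken. The asserted identity $\netm^t_\infty(V)=\sum_i\netm^t_\infty(\lambda_i)$ over the maximal dyadic cubes $\lambda_i\subseteq V$ is false whenever $t<d$: the outer measure $\netm^t_\infty$ is built from coverings of unrestricted diameter, so it is subadditive but in general strictly subadditive over this decomposition, because a single large dyadic cube covers $V$ at far lower $t$-cost than the maximal subcubes do. Concretely, for $d=1$ and $V=(0,1)$, the maximal dyadic subintervals are $[2^{-j-1},2^{-j})$ for $j\geq 0$, and $\sum_j(2^{-j-1})^t=2^{-t}/(1-2^{-t})$, which exceeds $\netm^t_\infty((0,1))=1$ (by Lemma~\ref{lem:netmlambda}) as soon as $t<1$. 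The directions $(4)\Rightarrow(3)$ and $(6)\Rightarrow(5)$ do hold by taking $V=\interior{\lambda}$ and using Lemma~\ref{lem:netmlambda}, but the converses $(3)\Rightarrow(4)$ and $(5)\Rightarrow(6)$ need the covering argument of Lemma~\ref{lem:netmlambdaV}, which is what the paper invokes in place of any additivity.

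The Cantor construction you propose for $(5)\Rightarrow(1)$ also has a gap, which you flag but do not close. At each generation you need subcubes of $I_u$ inside $W_u$ that meet $f_n^{-1}(F)$ \emph{simultaneously} for every $n\leq|u|+1$; separate intersections are not enough, since to continue the construction one must have $\interior{I_{uk}}\cap\bigcap_{n,m\leq|u|+2}f_n^{-1}(U_m)\neq\emptyset$, and a cube meeting $f_1^{-1}(F)$ at a point $p$ and $f_2^{-1}(F)$ at a distinct point $q$ supplies no point known to lie in that intersection. From $(6)$ and Lemma~\ref{lem:netmbilip} you only obtain bounds $\netm^{t'}_\infty(f_n^{-1}(F)\cap W_u)\geq c_n'\,\netm^{t'}_\infty(W_u)$ with $n$-dependent constants $c_n'$ that may be far below $1$, so the collections of subcubes meeting the different $f_n^{-1}(F)$ have no reason to overlap; the phrase ``taking a common refinement'' asserts exactly what needs proof. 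The paper handles this head-on: Lemma~\ref{lem:netmst} first upgrades each partial bound to \emph{full} net mass $\netm^t_\infty(f_n^{-1}(F)\cap\lambda)=\netm^t_\infty(\lambda)$ at any lower scale $t<t'$, and only then does Lemma~\ref{lem:netminterGdelta}, exploiting the $G_\delta$ structure, yield a quantitative lower bound for the countable intersection, from which the dimension estimate follows via~$(\ref{eq:netmHdim})$. Without reproducing the content of these two lemmas your construction cannot get started.
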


The proof of Theorem~\ref{thm:caraclic} is discussed in Section~\ref{subsec:licproofs}. Note that the characterizations~(\ref{item:lic5}) and~(\ref{item:lic6}) still hold when changing the norm on $\R^d$\,; the large intersection classes are thus independent on the choice of the norm. Hereunder are several other noteworthy properties of these classes. They all readily follow from Definition~\ref{df:lic}, except the last one for which we refer to~\cite[Theorem~C(f)]{Falconer:1994hx}.

\begin{Proposition}\label{prp:morelic}
	The large intersection classes $\lic{s}{\R^d}$, for $s\in(0,d]$, satisfy all the following properties.
	\begin{enumerate}
		\item\label{item:prp:morelic1} Any $G_\delta$-subset of $\R^d$ that contains a set in the class $\lic{s}{\R^d}$ also belongs to the class $\lic{s}{\R^d}$.
		\item\label{item:prp:morelic2} The mapping $s\mapsto\lic{s}{\R^d}$ is nonincreasing.
		\item The class $\lic{s}{\R^d}$ is the intersection over $t<s$ of the classes $\lic{t}{\R^d}$.
		\item For any sets $F\in\lic{s}{\R^d}$ and $F'\in\lic{s'}{\R^{d'}}$, the product set $F\times F'$ belongs to the class $\lic{s+s'}{\R^{d+d'}}$.
	\end{enumerate}
\end{Proposition}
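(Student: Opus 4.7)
Items (\ref{item:prp:morelic1})--(3) follow directly from Definition~\ref{df:lic}, whereas item~(4), the product property, is the substantive one.

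For (\ref{item:prp:morelic1}): if $F\in\lic{s}{\R^d}$ is contained in a $G_\delta$-set $F'$, then for any sequence $(\varsigma_n)_{n\geq 1}$ of similarities of $\R^d$ the inclusion $\bigcap_n \varsigma_n(F) \subseteq \bigcap_n \varsigma_n(F')$ and the monotonicity of Hausdorff dimension (Proposition~\ref{prp:Hausdorff}(\ref{item:prp:Hausdorff1})) give $\Hdim \bigcap_n \varsigma_n(F')\geq s$. For (\ref{item:prp:morelic2}): when $s\leq s'$, the dimension lower bound defining $\lic{s'}{\R^d}$ is stronger than the one defining $\lic{s}{\R^d}$. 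For~(3): one inclusion is item~(\ref{item:prp:morelic2}) applied to each $t<s$, and conversely, if a $G_\delta$-set $F$ lies in $\lic{t}{\R^d}$ for every $t<s$, then for any similarities $(\varsigma_n)_{n\geq 1}$ one has $\Hdim\bigcap_n\varsigma_n(F)\geq t$ for every $t<s$, hence $\Hdim\bigcap_n\varsigma_n(F)\geq s$.

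The real work lies in item~(4), and the plan is to pass through characterization~(\ref{item:lic5}) of Theorem~\ref{thm:caraclic}. Fix $t''<s+s'$ and decompose $t''=t+t'$ with $t<s$ and $t'<s'$; characterization~(\ref{item:lic5}) then supplies constants $c,c'\in(0,1]$ such that
\[
\netm^t_\infty(F\cap\lambda)\geq c\diam{\lambda}^t
\qquad\text{and}\qquad
\netm^{t'}_\infty(F'\cap\lambda')\geq c'\diam{\lambda'}^{t'}
\]
for all dyadic cubes $\lambda$ of $\R^d$ and $\lambda'$ of $\R^{d'}$. Every dyadic cube of $\R^{d+d'}$ factors as a product $\lambda\times\lambda'$ of dyadic cubes of the same generation in $\R^d$ and $\R^{d'}$, so in view of Lemma~\ref{lem:netmlambda} it will suffice to establish a product lower bound
\[
\netm^{t+t'}_\infty\bigl((F\times F')\cap(\lambda\times\lambda')\bigr) \;\geq\; c_0\, c\, c'\, \diam{\lambda\times\lambda'}^{t+t'}
\]
for some dimensional constant $c_0>0$, together with the easy observation that $F\times F'$ is visibly a $G_\delta$-subset of $\R^{d+d'}$.

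The technical heart of the argument, and the step I expect to be the main obstacle, is a Fubini-type slicing inequality for the net outer measure. Given any near-optimal covering $\{\alpha_n\times\beta_n\}_n$ of a set $A\subseteq\R^{d+d'}$ by dyadic cubes, grouping the cubes according to their first factor $\alpha_n$ produces, for each $x\in\R^d$, a dyadic covering of the slice $A_x=\{y\in\R^{d'}\:|\:(x,y)\in A\}$ by the $\beta_n$ whose paired $\alpha_n$ contains $x$. Combining this grouping with the elementary inequality $\diam{\alpha_n\times\beta_n}^{t+t'}\geq c_0\diam{\alpha_n}^t\diam{\beta_n}^{t'}$ produces an estimate controlling a weighted sum of $\netm^{t'}_\infty(A_x)$ from above by $c_0^{-1}\netm^{t+t'}_\infty(A)$. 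Specialized to $A=(F\times F')\cap(\lambda\times\lambda')$, whose slice is $F'\cap\lambda'$ for every $x\in F\cap\lambda$ and empty otherwise, this transforms the two one-dimensional lower bounds above into the desired product estimate. This slicing step is essentially the content of \cite[Theorem~C(f)]{Falconer:1994hx}, and once it is formulated precisely the remainder of the argument is routine bookkeeping with the notation used here.
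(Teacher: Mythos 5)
Your proof is correct and matches the paper's treatment: items~(\ref{item:prp:morelic1})--(3) follow directly from Definition~\ref{df:lic} exactly as you argue, and item~(4), as in the paper, is ultimately deferred to Falconer's Theorem~C(f), for which your decomposition $t''=t+t'$, factorization of dyadic cubes of $\R^{d+d'}$, and Fubini-type slicing (which, to be made precise, integrates the slice estimates against a Frostman-type measure furnished by the lower bound $\netm^t_\infty(F\cap\lambda)\geq c\diam{\lambda}^t$) is an accurate roadmap.
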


Finally, note that a set with large intersection is necessarily dense in the whole space $\R^d$. This is easily seen for instance by considering the characterization~(\ref{item:lic3}) of the large intersection classes given by Theorem~\ref{thm:caraclic}, and by making use of Lemma~\ref{lem:netmlambda}. However, in some applications, the considered sets are thought of satisfying a large intersection property in some nonempty open subset $U$ of $\R^d$, but fail to be dense in the whole space $\R^d$ itself. We therefore need to introduce localized versions of the large intersection classes. In that situation, the use of similarity transformations is not suitable anymore; a convenient way of proceeding is thus to localize the characterization~(\ref{item:lic4}) of the large intersection classes given by Theorem~\ref{thm:caraclic} in the following manner.

\begin{Definition}\label{df:licloc}
	For any real number $s\in(0,d]$ and any nonempty open subset $U$ of $\R^d$, the class $\lic{s}{U}$ of {\em sets with large intersection in $U$ with dimension at least $s$} is the collection of all $G_\delta$-subsets $F$ of $\R^d$ such that
		\[
			\netm^t_\infty(F\cap V)=\netm^t_\infty(V)
		\]
		for any positive real number $t<s$ and any open subset $V$ of $U$.
\end{Definition}

Obviously, thanks to Theorem~\ref{thm:caraclic}, each class $\lic{s}{U}$ coincides with the class $\lic{s}{\R^d}$ introduced in Definition~\ref{df:lic} when $U=\R^d$. Moreover, similarly to $\lic{s}{\R^d}$, the class $\lic{s}{U}$ does not depend on the norm, either. This is again due to the fact that the properties~(\ref{item:lic5}) and~(\ref{item:lic6}) arising in the statement of Theorem~\ref{thm:caraclic} are invariant under a change of norm and, up to a localization in the vein of Definition~\ref{df:licloc}, still characterize the newly introduced classes. We also have the next result.

\begin{Theorem}\label{thm:licloc}
	Let $s\in(0,d]$ and let $U$ be a nonempty open subset of $\R^d$. Then:
	\begin{enumerate}
		\item\label{item:thm:licloc1} the class $\lic{s}{U}$ is closed under countable intersections;
		\item\label{item:thm:licloc2} for any set $F\in\lic{s}{U}$ and any nonempty open set $V\subseteq U$,
			\[
				\Hdim(F\cap V)\geq s.
			\]
	\end{enumerate}
\end{Theorem}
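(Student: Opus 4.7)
For part~(\ref{item:thm:licloc2}), the argument is immediate. Let $F\in\lic{s}{U}$ and let $V\subseteq U$ be nonempty and open. For any $t\in(0,s)$, Definition~\ref{df:licloc} gives $\netm^t_\infty(F\cap V)=\netm^t_\infty(V)$, and since $V$ contains the interior of some nonempty dyadic cube $\lambda$, Lemma~\ref{lem:netmlambda} combined with monotonicity yields $\netm^t_\infty(V)\geq\netm^t_\infty(\interior{\lambda})=\diam{\lambda}^t>0$. Thus $\netm^t_\infty(F\cap V)>0$, and~(\ref{eq:netmHdim}) gives $\Hdim(F\cap V)\geq t$; letting $t\uparrow s$ concludes.

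For part~(\ref{item:thm:licloc1}), let $F=\bigcap_n F_n$ with each $F_n\in\lic{s}{U}$. A countable intersection of $G_\delta$-sets is $G_\delta$, so $F$ is $G_\delta$; it remains to show $\netm^t_\infty(F\cap V)=\netm^t_\infty(V)$ for every $t\in(0,s)$ and every open $V\subseteq U$, the $\leq$ direction being trivial. My plan is to adapt the closure-under-intersection argument behind Theorem~\ref{thm:stablic} to the localized setting, in two stages. The key preliminary observation is that part~(\ref{item:thm:licloc2}), applied to each $F_n$ individually, shows that every $F_n$ (and hence every set $G_{n,k}$ in a representation $F_n=\bigcap_k G_{n,k}$ with $G_{n,k}$ open) is dense in $U$. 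In the first stage, one derives a uniform-constant version of the reverse inequality by a Baire/covering hybrid: given a near-optimal dyadic cover $(\mu_i)$ of $F\cap V$, one either produces a nonempty open set $O\subseteq V$ disjoint from $\bigcup_i\overline{\mu_i}$---which is automatically disjoint from $F$ and therefore covered by the countable family of sets $(G_{n,k}^c\cap O)_{n,k}$, each closed in $O$ with dense open complement by density of $G_{n,k}$ in $U\supseteq O$, contradicting the Baire category theorem---or else one promotes the cover $(\mu_i)$ via dyadic thickenings of bounded multiplicity into a dyadic cover of $V$ whose total $t$-mass is at most a dimensional constant $C_d$ times $\sum_i\diam{\mu_i}^t$, yielding $\sum_i\diam{\mu_i}^t\geq C_d^{-1}\netm^t_\infty(V)$ directly. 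Combining these alternatives gives $\netm^t_\infty(F\cap V)\geq C_d^{-1}\netm^t_\infty(V)$ uniformly for all open $V\subseteq U$ and all $t<s$. In the second stage, this uniform-constant inequality is upgraded to the sharp equality of Definition~\ref{df:licloc} by invoking the localized analogue of the implication $(\ref{item:lic5})\Rightarrow(\ref{item:lic4})$ from Theorem~\ref{thm:caraclic}: subdividing a near-extremal cover at an auxiliary exponent $t'\in(t,s)$ and rerouting through the uniform bound at exponent $t'$ promotes the constant to $1$, with the slack $t'-t$ absorbing the accumulated error.

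The main obstacles are technical. In the first stage, ensuring that the alleged open set $O$ is genuinely open (not merely $G_\delta$) and that the open sets $G_{n,k}$ are dense in $O$ in the subspace topology forces one to work with open thickenings of the cubes $\mu_i$ rather than their closures, and to track the dimensional constant $C_d$ that this introduces; the Baire-category core itself is clean once one has observed that the hypothesis $F_n\in\lic{s}{U}$ automatically forces density of every $G_{n,k}$ in $U$. In the second stage, upgrading the uniform constant $C_d^{-1}$ to the sharp value $1$ is a genuine combinatorial task mirroring the chain of equivalences in Theorem~\ref{thm:caraclic}: the exponent slack $t'-t$ is what allows a cover with small $t$-mass to be refined into one with small $t'$-mass and then reassigned, via the uniform estimate at exponent $t'$, without unbounded mass accumulation back at exponent $t$.
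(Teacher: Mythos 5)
Your treatment of part~(\ref{item:thm:licloc2}) is correct and matches the paper's: it follows at once from Definition~\ref{df:licloc}, Lemma~\ref{lem:netmlambda}, and the implication~(\ref{eq:netmHdim}).

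For part~(\ref{item:thm:licloc1}), your overall two-stage plan --- first a uniform bound $\netm^t_\infty(F\cap V)\geq c\,\netm^t_\infty(V)$ with a dimensional constant $c$, then an upgrade to $c=1$ using a slack exponent $t'\in(t,s)$ --- is precisely the paper's route: it applies Lemma~\ref{lem:netminterGdelta} (Falconer's Lemma~4, with $c=3^{-d}$) and then Lemmas~\ref{lem:netmlambdaV} and~\ref{lem:netmst}. Your second stage is an accurate sketch of the latter. The gap is in your first stage, where you try to re-derive the constant bound via a Baire dichotomy. The two horns of that dichotomy are not exhaustive: the failure of a nonempty open $O\subseteq V$ to avoid $\bigcup_i\overline{\mu_i}$ says only that $\bigcup_i\overline{\mu_i}$ is \emph{dense} in $V$, not that it \emph{covers} $V$, and a dense union of tiny dyadic cubes can have arbitrarily small total $t$-mass while no bounded-multiplicity thickening of it covers $V$ (enclose the $j$-th term of a dense sequence in $V$ in a cube of diameter $2^{-j}\eps$). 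Worse, the first horn is vacuous for exactly the reason you yourself point out: each $F_n\in\lic{s}{U}$ is dense in $U$ by part~(\ref{item:thm:licloc2}), hence $F=\bigcap_n F_n$ is a dense $G_\delta$ in $V$ by the Baire category theorem, so every cover of $F\cap V$ already has dense union in $V$ and the set $O$ never exists. You are therefore always in the second horn, whose conclusion is false. The root of the problem is that Baire's theorem only delivers density of $F$, a qualitative statement, and cannot by itself control the $\netm^t_\infty$-mass of covers of $F\cap V$, which is quantitative; the proof of Lemma~\ref{lem:netminterGdelta} (Falconer's Lemma~4) is genuinely more elaborate, proceeding through a nested exhaustion by open sets and quantitative cover manipulations, and does not reduce to the density-vs-gap alternative you describe.
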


The second property in Theorem~\ref{thm:licloc} follows from~(\ref{eq:netmHdim}), whereas the first is proven in Section~\ref{subsec:licproofs}. This result suggests that the large intersection property is a combination of a density property with a measure theoretic aspect. Theorem~\ref{thm:stablic} may thus be seen as a Hausdorff dimensional analog of the Baire category theorem.

\subsection{Packing dimension}

The sets with large intersection also display a remarkable behavior with respect to packing dimension. Let us explain how this notion of dimension, due to Tricot~\cite{Tricot:1982fk}, is defined. First, given a gauge function $g$, we define on the collection of all subsets $F$ of $\R^d$ the {\em packing $g$-premeasure} by
	\[
		\prepack^g(F)=\lim_{\delta\downarrow 0}\downarrow \prepack^g_\delta(F)
		\qquad\text{with}\qquad
		\prepack^g_\delta(F)=\sup\sum_{n=1}^\infty g(\diam{B_{n}}),
	\]
	where the supremum is taken over all sequences $(B_{n})_{n\geq 1}$ of disjoint closed balls of $\R^d$ centered in the set $F$ and with diameter less than $\delta$. The premeasures $\prepack^g$ are only finitely subadditive; it is thus more convenient to work with the corresponding {\em packing $g$-measure}, defined by
	\[
		\pack^g=(\prepack^g)^\ast
	\]
	as in the formula~(\ref{eq:df:zetaasta}), which is an outer measure on $\R^d$, as a consequence of Theorem~\ref{thm:absmeas}. It is actually possible to show that the Borel subsets of $\R^d$ are $\pack^g$-measurable, see~\cite[Chapter~5]{Mattila:1995fk} for details.

The definition of packing dimension is then very similar to that of Hausdorff dimension, namely, Definition~\ref{df:Hausdorffdim}. Specifically, when the gauge function $g$ is of the form $r\mapsto r^{s}$ with $s>0$, it is customary to use $\pack^s$ as a shorthand for $\pack^g$, and the packing dimension of a nonempty set $F\subseteq\R^d$ is defined by
	\[
		\Pdim F=\sup\{ s\in (0,d) \:|\: \pack^s(F)=\infty \}=\inf\{ s\in (0,d) \:|\: \pack^s(F)=0 \},
	\]
	with the convention that $\sup\emptyset=0$ and $\inf\emptyset=d$. When the set $F$ is empty, we adopt the convention that the packing dimension is equal to $-\infty$. Moreover, one recovers the upper box-counting dimension $\UBdim E$ by considering the premeasures $\prepack^s$ instead of $\pack^s$ in the latter formula.

The packing dimension of sets with large intersection is discussed in the next statement, which may be seen as a counterpart to Theorem~\ref{thm:licloc}(\ref{item:thm:licloc2}).

\begin{Proposition}\label{prp:licPdim}
	Let $s\in(0,d]$ and let $U$ be a nonempty open subset of $\R^d$. Then, for any set $F\in\lic{s}{U}$ and for any nonempty open set $V\subseteq U$,
	\[
		\Pdim(F\cap V)=d.
	\]
\end{Proposition}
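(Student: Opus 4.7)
The upper bound $\Pdim(F \cap V) \leq d$ is trivial, so the content lies in the matching lower bound. My plan combines three ingredients: the density that the large intersection hypothesis forces on $F \cap V$, the Baire category theorem applied to the $G_\delta$-set $F \cap V$ inside the Baire space $V$, and the classical characterization of packing dimension via countable covers by sets of prescribed upper box-counting dimension.

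\textbf{Step 1 (density from large intersection).} Fix any $t \in (0, s)$. By Definition~\ref{df:licloc} together with Lemma~\ref{lem:netmlambda}, every nonempty open subset $W$ of $V$ satisfies $\netm^t_\infty(F \cap W) = \netm^t_\infty(W) > 0$, so $F \cap W$ is nonempty. Hence $F \cap V$ is dense in $V$. Since $F$ is a $G_\delta$-subset of $\R^d$ and $V$ is open, $F \cap V$ is a dense $G_\delta$ subset of the Baire space $V$, and Baire's theorem gives that $V \setminus F$ is meager in $V$.

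\textbf{Step 2 (from Baire to packing dimension).} I invoke the classical characterization (see, e.g., \cite{Falconer:2003oj,Mattila:1995fk})
\[
\Pdim E = \inf\left\{\sup_{i \geq 1} \UBdim E_i : E \subseteq \bigcup_{i=1}^\infty E_i\right\}
\]
valid for any nonempty $E \subseteq \R^d$. Given any countable cover $F \cap V \subseteq \bigcup_i E_i$, the sets $E'_i = E_i \cap V$ satisfy $V = (V \setminus F) \cup \bigcup_i E'_i$. Since $V \setminus F$ is meager while $V$ is non-meager in itself, at least one $E'_{i_0}$ fails to be nowhere dense in $V$; equivalently, $\overline{E'_{i_0}}$ contains a nonempty open subset of $V$, and hence an open ball of $\R^d$. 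Using the standard invariance $\UBdim A = \UBdim \overline{A}$, I deduce
\[
\UBdim E_{i_0} \geq \UBdim E'_{i_0} = \UBdim \overline{E'_{i_0}} \geq d.
\]
Taking the infimum over covers then yields $\Pdim(F \cap V) \geq d$, which, combined with the trivial reverse inequality, gives the claim.

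\textbf{Main obstacle.} There is no essential obstacle: the argument is a clean three-step reduction. The only points requiring care are the appeal to the modified box-counting characterization of packing dimension (standard but not otherwise developed in these notes) and the invariance of upper box-counting dimension under closure. The bulk of the conceptual content is in Step~1, where the measure-theoretic large intersection hypothesis is transmuted into topological density, after which Baire category does all the remaining work.
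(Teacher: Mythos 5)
Your proof is correct, and it takes a genuinely different route from the paper's. The paper reduces the local case $\lic{s}{U}$ to Falconer's Theorem~D(b) for the global classes $\lic{s}{\R^d}$ by periodizing: it picks a dyadic cube $\lambda_0\subseteq V$, forms $\widetilde F=\bigsqcup_{k\in\Z^d}(k\,2^{-j_0}+(F\cap\interior{\lambda_0}))$, and uses the ancillary lemmas on $\netm^t_\infty$ to verify $\widetilde F\in\lic{s}{\R^d}$, after which Falconer's theorem applies to $\widetilde F\cap\interior{\lambda_0}$. Your argument instead extracts only the topological content of the hypothesis --- that $F\cap V$ is a dense $G_\delta$ in the Baire space $V$ --- and then deduces $\Pdim(F\cap V)=d$ from the Baire category theorem together with the modified box-dimension characterization $\Pdim E=\inf\{\sup_i\UBdim E_i:E\subseteq\bigcup_i E_i\}$ and the closure-invariance of $\UBdim$. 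This is more elementary and in fact proves a strictly stronger statement (any dense $G_\delta$ in $V$ has full packing dimension there), with the large intersection hypothesis entering only to supply density; the trade-off is that you must import the modified-box characterization of $\Pdim$, which the paper mentions only obliquely and does not develop, whereas the paper's route stays entirely inside the $\netm^t_\infty$ machinery it has already set up. Both proofs are valid; yours isolates the mechanism more cleanly, while the paper's keeps the development internally self-contained relative to Falconer's reference.
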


In other words, a set with large intersection has maximal packing dimension in any nonempty open set; the same property obviously holds for box-counting dimensions as well, because sets with large intersection are dense. Again, for the sake of clarity, the proof of Proposition~\ref{prp:licPdim} is postponed to Section~\ref{subsec:licproofs}.

\subsection{Proof of the main results}\label{subsec:licproofs}

\subsubsection{Ancillary lemmas}\label{subsubsec:anclemlic}

The above results are established with the help of several technical lemmas concerning the outer measures $\netm^s_\infty$ that we now state.

\begin{Lemma}\label{lem:netmlambdaV}
	Let us consider two real numbers $s\in(0,d]$ and $c\in(0,1]$, a subset $F$ of $\R^d$, and an open subset $V$ of $\R^d$. Suppose that there is a $\delta>0$ such that
	\[
		\netm^s_\infty(F\cap\lambda)\geq c\,\netm^s_\infty(\lambda)
	\]
	for all dyadic cubes $\lambda\in\Lambda$ with diameter at most $\delta$ that are contained in $V$. Then,
	\[
		\netm^s_\infty(F\cap V)\geq c\,\netm^s_\infty(V).
	\]
\end{Lemma}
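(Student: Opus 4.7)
My plan is to decompose $V$ into a disjoint union of maximal dyadic cubes of diameter at most $\delta$, and then, starting from an arbitrary dyadic covering of $F\cap V$, to manufacture a dyadic covering of $V$ whose total cost exceeds that of the original by no more than the factor $1/c$. Since $V$ is open, every point $x\in V$ lies in some dyadic cube contained in $V$ with diameter at most $\delta$, and among such cubes containing $x$ there is a maximal one. The nesting property of dyadic cubes forces two such distinct maximal cubes to be disjoint, so I obtain a countable partition
\[
	V=\bigsqcup_{n}\lambda_n
\]
into dyadic cubes with $\lambda_n\subseteq V$ and $\diam{\lambda_n}\leq\delta$, to which the hypothesis, combined with Lemma~\ref{lem:netmlambda}, supplies $\netm^s_\infty(F\cap\lambda_n)\geq c\,\diam{\lambda_n}^s$ for every $n$.

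Now let $(C_m)_{m\geq 1}$ be any dyadic covering of $F\cap V$. By the nesting property, each $C_m$ falls into exactly one of three classes: it is contained in some necessarily unique $\lambda_{n(m)}$, and I call it \emph{inner}; it strictly contains at least one $\lambda_n$, and I call it \emph{outer}; or it is disjoint from every $\lambda_n$, in which case $C_m\cap V=\emptyset$ and we may safely discard it. Classify the indices $n$ into the set $\mathcal{A}$ of those $n$ for which $\lambda_n\subsetneq C_m$ for some outer $C_m$, and its complement $\mathcal{B}$. For every $n\in\mathcal{B}$, the set $F\cap\lambda_n$ is covered by the inner cubes $C_m\subseteq\lambda_n$ alone; the lower bound on $\netm^s_\infty(F\cap\lambda_n)$ together with the definition of $\netm^s_\infty$ then yields $c\,\diam{\lambda_n}^s\leq\sum_{m\,:\,C_m\subseteq\lambda_n}\diam{C_m}^s$. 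Summing over $n\in\mathcal{B}$ and using that each inner $C_m$ belongs to a single $\lambda_{n(m)}$, I deduce
\[
	c\sum_{n\in\mathcal{B}}\diam{\lambda_n}^s\leq\sum_{m\text{ inner}}\diam{C_m}^s.
\]

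It remains to assemble an honest dyadic covering of $V$. For each $n\in\mathcal{A}$, I choose one outer cube $C_{m(n)}$ with $\lambda_n\subsetneq C_{m(n)}$, and set $M'=\{m(n)\,:\,n\in\mathcal{A}\}$. Because $\lambda_n\subseteq C_{m(n)}$ for every $n\in\mathcal{A}$, the family $\{\lambda_n\,:\,n\in\mathcal{B}\}\cup\{C_m\,:\,m\in M'\}$ is then a dyadic covering of $V$, whence
\[
	\netm^s_\infty(V)\leq\sum_{n\in\mathcal{B}}\diam{\lambda_n}^s+\sum_{m\in M'}\diam{C_m}^s\leq\frac{1}{c}\sum_{m\text{ inner}}\diam{C_m}^s+\sum_{m\text{ outer}}\diam{C_m}^s\leq\frac{1}{c}\sum_{m}\diam{C_m}^s,
\]
the final inequality relying on $c\leq 1$. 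Taking the infimum over all dyadic coverings of $F\cap V$ then produces the announced bound $\netm^s_\infty(F\cap V)\geq c\,\netm^s_\infty(V)$. The principal hurdle I expect is that $\netm^s_\infty$ is \emph{not} additive on disjoint unions of dyadic cubes, so that simply adding the local inequalities is inadequate; the essential trick is to charge each block $\lambda_n$ exactly once, either through the inner cubes contained in it or through a single outer ancestor.
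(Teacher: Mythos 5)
Your proof is correct and faithfully reconstructs the covering argument that the paper invokes by reference to Falconer's Lemma~1 in~\cite{Falconer:1994hx}: partition $V$ into maximal dyadic cubes of diameter at most $\delta$, classify the cubes of an arbitrary dyadic covering of $F\cap V$ as inner, outer, or disjoint, apply the local hypothesis plus Lemma~\ref{lem:netmlambda} on the blocks not already absorbed by an outer cube, and reassemble a covering of $V$ of cost at most $1/c$ times the original. The only step worth stating a shade more explicitly when writing this up is the justification that for $n\in\mathcal{B}$ the inner cubes inside $\lambda_n$ genuinely cover $F\cap\lambda_n$ (any $C_m$ meeting $\lambda_n$ and not contained in it would, by nesting, strictly contain $\lambda_n$ and thus force $n\in\mathcal{A}$), but the argument as you outline it is sound.
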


\begin{proof}
	This is a direct extension of~\cite[Lemma~1]{Falconer:1994hx}.
\end{proof}

\begin{Lemma}\label{lem:netmst}
	Let us consider two real numbers $s\in(0,d]$ and $c\in(0,1]$, a subset $F$ of $\R^d$, and an open subset $V$ of $\R^d$. Let us suppose that
	\[
		\netm^s_\infty(F\cap\lambda)\geq c\,\netm^s_\infty(\lambda)
	\]
	for all dyadic cubes $\lambda\in\Lambda$ that are contained in $V$. Then,
	\[
		\netm^t_\infty(F\cap\lambda)=\netm^t_\infty(\lambda)
	\]
	for all dyadic cubes $\lambda\in\Lambda$ that are contained in $V$ and all real numbers $t\in(0,s)$.
\end{Lemma}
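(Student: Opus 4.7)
\medskip

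The goal is to show $\netm^t_\infty(F\cap\lambda)\geq\diam{\lambda}^t$ for every dyadic cube $\lambda\subseteq V$ and every $t\in(0,s)$, the reverse inequality being immediate from Lemma~\ref{lem:netmlambda}. Fix such a $\lambda$ and $t$, and consider an arbitrary covering $(\mu_n)_{n\geq 1}$ of $F\cap\lambda$ by dyadic cubes. Since any two dyadic cubes are nested or disjoint, either some $\mu_n$ contains $\lambda$ (whence $\sum_n\diam{\mu_n}^t\geq\diam{\lambda}^t$ trivially) or every $\mu_n$ meeting $\lambda$ is a subcube of $\lambda$. In the latter case, discarding any $\mu_n$ strictly contained in another $\mu_m$ only decreases the total $t$-mass while preserving the covering property, so I may assume the family $(\mu_n)$ is pairwise disjoint and contained in $\lambda$. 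I argue by contradiction: assume $M:=\sum_n\diam{\mu_n}^t<\diam{\lambda}^t$, and aim to reach $M=\infty$.

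The argument compares the covering against a fine dyadic decomposition of $\lambda$. For each integer $j\geq 1$, partition $\lambda$ into its $2^{dj}$ dyadic subcubes $\lambda_1^{(j)},\ldots,\lambda_{2^{dj}}^{(j)}$, all of diameter $2^{-j}\diam{\lambda}$ and contained in $V$. Split the family $(\mu_n)$ into a ``large'' part $N_A:=\{n:\diam{\mu_n}\geq 2^{-j}\diam{\lambda}\}$ and a ``small'' part $\{n:\diam{\mu_n}<2^{-j}\diam{\lambda}\}$, and let $\mathcal{K}_B$ be the set of indices $k$ such that $\lambda_k^{(j)}$ is contained in no $\mu_n$ with $n\in N_A$. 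Dyadic nesting and disjointness give the exact count
\[
|\mathcal{K}_B|\;=\;2^{dj}(1-\beta_j),
\qquad
\beta_j\;:=\;\diam{\lambda}^{-d}\sum_{n\in N_A}\diam{\mu_n}^d.
\]
Moreover, for each $k\in\mathcal{K}_B$, dyadic nesting forces every $\mu_n$ meeting $\lambda_k^{(j)}$ to be a subcube of it (since otherwise $\mu_n\supseteq\lambda_k^{(j)}$, which would place $n$ in $N_A$ and contradict $k\in\mathcal{K}_B$); hence the small $\mu_n$'s contained in $\lambda_k^{(j)}$ already cover $F\cap\lambda_k^{(j)}$.

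I now invoke the hypothesis at exponent $s$ on each $\lambda_k^{(j)}\subseteq V$ with $k\in\mathcal{K}_B$, yielding $\sum_{\mu_n\subseteq\lambda_k^{(j)}}\diam{\mu_n}^s\geq c(2^{-j}\diam{\lambda})^s$. Since $\diam{\mu_n}\leq 2^{-j}\diam{\lambda}$ in this range, the elementary bound $\diam{\mu_n}^s\leq\diam{\mu_n}^t(2^{-j}\diam{\lambda})^{s-t}$ (using $s-t>0$) converts this into $\sum_{\mu_n\subseteq\lambda_k^{(j)}}\diam{\mu_n}^t\geq c(2^{-j}\diam{\lambda})^t$. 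Summing over the pairwise disjoint subcubes indexed by $\mathcal{K}_B$ gives
\[
M\;\geq\;c\,|\mathcal{K}_B|\,(2^{-j}\diam{\lambda})^t\;=\;c\,(1-\beta_j)\,2^{j(d-t)}\,\diam{\lambda}^t.
\]

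It remains to show that $1-\beta_j$ is bounded below by a positive constant independent of $j$. From $\mu_n\subseteq\lambda$ and $d-t>0$, one has $\diam{\mu_n}^d\leq\diam{\mu_n}^t\diam{\lambda}^{d-t}$, whence $\beta_j\leq\diam{\lambda}^{-t}M$, which is strictly less than $1$ by the contradiction hypothesis. Consequently $1-\beta_j\geq 1-\diam{\lambda}^{-t}M>0$ uniformly in $j$, while $t<s\leq d$ ensures $d-t>0$, so $2^{j(d-t)}\to\infty$ with $j$. The right-hand side of the displayed inequality therefore diverges, contradicting the finiteness of $M$. Hence $\sum_n\diam{\mu_n}^t\geq\diam{\lambda}^t$ for every covering, which gives $\netm^t_\infty(F\cap\lambda)\geq\diam{\lambda}^t$ as required. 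The main technical obstacle, visible throughout, is the two-scale bookkeeping: one has to track which fine cubes $\lambda_k^{(j)}$ are swallowed by oversized cubes of the cover versus genuinely covered only by small ones, and then exploit the gap $s-t>0$ to trade the exponent $s$ for $t$ while preserving the multiplicative constant $c$.
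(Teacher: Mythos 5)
Your proof is correct. The paper does not actually give a proof of this lemma — it simply writes ``This is a simple refinement of \cite[Lemma~2]{Falconer:1994hx}'' — so there is no internal argument to compare against, but your reasoning fills in the details in what is very likely the same spirit as Falconer's original argument, localised to $V$. The reduction to pairwise disjoint subcubes of $\lambda$ via dyadic nesting, the exact two-scale count $|\mathcal{K}_B|=2^{dj}(1-\beta_j)$, the observation that a fine cube not absorbed by a coarse cover element inherits a self-contained cover by small $\mu_n$'s, the conversion $\diam{\mu_n}^s\leq\diam{\mu_n}^t(2^{-j}\diam{\lambda})^{s-t}$, and the uniform bound $\beta_j\leq\diam{\lambda}^{-t}M<1$ are all sound. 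The only thing worth flagging is a minor presentational point: the contradiction you aim for should be phrased as the right-hand side $c(1-\beta_j)2^{j(d-t)}\diam{\lambda}^t$ growing without bound as $j\to\infty$, while $M$ was assumed finite; stating the goal as ``reach $M=\infty$'' is a slightly loose shorthand, but the logic is unaffected since $M<\diam{\lambda}^t$ is finite and the lower bound tends to infinity because $d-t>0$ and $1-\beta_j\geq 1-\diam{\lambda}^{-t}M>0$ uniformly.
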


\begin{proof}
	This is a simple refinement of~\cite[Lemma~2]{Falconer:1994hx}.
\end{proof}

\begin{Lemma}\label{lem:netmbilip}
	Let $U$ be a nonempty open subset of $\R^d$ and let $f$ be a bi-Lipschitz mapping from $U$ to $\R^d$ with constant $c_f\geq 1$, see~(\ref{eq:df:biLip}). Let us consider two real numbers $s\in(0,d]$ and $c\in(0,1]$ and a subset $F$ of $\R^d$, and suppose that
	\[
		\netm^s_\infty(F\cap V)\geq c\,\netm^s_\infty(V)
	\]
	for any open subset $V$ of $\R^d$. Then, for any open subset $V$ of $U$,
	\[
		\netm^s_\infty(f^{-1}(F)\cap V)\geq \frac{c}{(3c_f)^{2d}}\netm^s_\infty(V).
	\]
\end{Lemma}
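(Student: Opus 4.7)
The plan is to transfer the hypothesis from $f(V)$ back to $V$ using the bi-Lipschitz structure. First, I observe that $f(V)$ is an open subset of $\R^d$: for each $y=f(x)\in f(V)$, any open ball $\opball(x,\rho)\subseteq V$ gives $\opball(y,\rho/c_f)\subseteq f(V)$, because $f^{-1}$ is $c_f$-Lipschitz on $f(U)$. The hypothesis then applies to $f(V)$, yielding
\[
\netm^s_\infty(F\cap f(V))\geq c\,\netm^s_\infty(f(V)).
\]
It remains to relate $\netm^s_\infty(F\cap f(V))$ with $\netm^s_\infty(f^{-1}(F)\cap V)$, and $\netm^s_\infty(f(V))$ with $\netm^s_\infty(V)$.

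Both relations will follow from a single transfer sublemma: if $g:W\to\R^d$ is Lipschitz with constant $c_g$ on an open set $W\subseteq\R^d$, then for every $E\subseteq W$,
\[
\netm^s_\infty(g(E))\leq 3^d c_g^s\,\netm^s_\infty(E).
\]
To prove it, take any dyadic-cube cover $(\lambda_n)_{n\geq 1}$ of $E$; each image $g(E\cap\lambda_n)$ has diameter at most $c_g\diam{\lambda_n}$, and any subset of $\R^d$ of diameter $r>0$ can be covered by at most $3^d$ dyadic cubes of diameter $\leq r$ (choose the level $j$ with $2^{-j}\leq r<2\cdot 2^{-j}$; then along each axis the set meets at most three dyadic intervals of length $2^{-j}$). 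This produces a dyadic cover of $g(E)$ whose total $s$-cost is at most $3^d c_g^s\sum_n\diam{\lambda_n}^s$, and taking the infimum over covers gives the claim. Applying the sublemma to $g=f$ with $E=f^{-1}(F)\cap V$ (using $f(f^{-1}(F)\cap V)=F\cap f(V)$ by injectivity of $f$) and to $g=f^{-1}$ with $E=f(V)$ (using $f^{-1}(f(V))=V$) yields
\[
\netm^s_\infty(F\cap f(V))\leq 3^d c_f^s\,\netm^s_\infty(f^{-1}(F)\cap V),\qquad \netm^s_\infty(V)\leq 3^d c_f^s\,\netm^s_\infty(f(V)).
\]

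Chaining these two estimates with the hypothesis gives
\[
\netm^s_\infty(f^{-1}(F)\cap V)\geq\frac{c}{9^d c_f^{2s}}\,\netm^s_\infty(V),
\]
and since $c_f\geq 1$ and $s\leq d$ imply $9^d c_f^{2s}\leq 9^d c_f^{2d}=(3c_f)^{2d}$, the desired lower bound follows. The main obstacle, and the origin of the constant $(3c_f)^{2d}$, is the sublemma: the $3^d$ factor is the unavoidable price of replacing a general small-diameter set by dyadic cubes, and this factor is effectively squared because the sublemma is applied once to $f$ (to push $V$ forward to $f(V)$) and once to $f^{-1}$ (to pull $F\cap f(V)$ back to $f^{-1}(F)\cap V$).
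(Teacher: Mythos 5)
Your argument is the paper's argument in all essentials: prove a Lipschitz transfer estimate for $\netm^s_\infty$, apply it once to $f^{-1}$ (to pass from $V$ to $f(V)$) and once to $f$ (to pull $F\cap f(V)$ back to $f^{-1}(F)\cap V$), with the hypothesis applied to the open set $f(V)$ in between. The only structural difference is the cover-counting inside the sublemma: the paper covers each image $g(\lambda_n)$ by dyadic cubes of the \emph{same} generation as $\lambda_n$, needing $(\lceil k\rceil+1)^d\leq(3k)^d$ of them, whereas you drop to a smaller generation adapted to the image's diameter, cover with only $3^d$ cubes, and absorb the Lipschitz factor into the per-cube cost, which gives the slightly sharper constant $3^dc_g^s$. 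Both choices yield $(3c_f)^{2d}$ after the double application.

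There is, however, a gap you need to close. The assertion that a set of diameter $r$ can be covered by $3^d$ dyadic cubes of diameter $\leq r$ holds only for the supremum norm, where a side-$2^{-j}$ dyadic cube has diameter exactly $2^{-j}$. In the Euclidean norm that cube has diameter $\sqrt{d}\,2^{-j}$, so the per-cube cost becomes $(\sqrt{d}\,c_g\diam{\lambda_n})^s$, the sublemma constant inflates to $3^dd^{s/2}c_g^s$, and after chaining twice you get $9^dd^sc_f^{2s}$, which is \emph{not} bounded by $(3c_f)^{2d}$ in general (take $s=d$ and $c_f=1$: you would need $d^d\leq 1$). The paper's proof opens by noting that the statement is invariant under a change of norm and then fixing $|\cdot|_\infty$ throughout; you should make that same reduction before starting the covering argument, otherwise the stated constant does not come out.
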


\begin{proof}
	The statement is clearly invariant under a change of norm, so we may assume throughout the proof that $\R^d$ is endowed with the supremum norm $|\,\cdot|_\infty$. Let us observe that a Lipschitz mapping $g:U\to\R^d$ with constant $k\geq 1$ satisfies
	\begin{equation}\label{eq:netmlip}
		\netm^s_\infty(g(A))\leq (3 k)^d\netm^s_\infty(A)	
	\end{equation}
	for any subset $A$ of $U$. Indeed, if $(\lambda_n)_{n\geq 1}$ denotes a covering of the set $A$, then $g(A)$ is covered by the image sets $g(\lambda_n)$, and each of these sets is itself covered by $(\lceil k\rceil+1)^d$ dyadic cubes with diameter equal to that of the initial cube $\lambda_n$. Hence, if $V$ is an open subset of $U$, then $f(V)$ is an open subset of $\R^d$ and
	\begin{align*}
		\netm^s_\infty(V)
		&\leq (3c_f)^d\netm^s_\infty(f(V))\\
		&\leq \frac{(3c_f)^d}{c}\netm^s_\infty(F\cap f(V))
		\leq \frac{(3c_f)^{2d}}{c}\netm^s_\infty(f^{-1}(F)\cap V),
	\end{align*}
	which gives the required estimate.
\end{proof}

\begin{Lemma}\label{lem:netminterGdelta}
	Let $U$ be a nonempty subset of $\R^d$ and let $s\in(0,d]$. Let us consider a sequence $(F_k)_{k\geq 1}$ of $G_\delta$-subsets of $\R^d$ such that
	\[
		\netm^s_\infty(F_k\cap V)=\netm^s_\infty(V)
	\]
	for any $k\geq 1$ and any open subset $V$ of $U$. Then, for any open subset $V$ of $U$,
	\[
		\netm^s_\infty\left(\bigcap_{k=1}^\infty F_k\cap V\right)
		\geq 3^{-d}\netm^s_\infty(V).
	\]
\end{Lemma}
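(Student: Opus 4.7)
The plan is to combine a Baire-category density argument with a dyadic $3$-inflation covering.

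First, I establish that $\bigcap_{k \geq 1} F_k$ is dense in $V$. The hypothesis forces each $F_k$ to meet every nonempty open subset $W$ of $U$: otherwise the empty covering would give $\netm^s_\infty(F_k \cap W) = 0$, contradicting $\netm^s_\infty(F_k \cap W) = \netm^s_\infty(W) > 0$. Since each $F_k$ is a $G_\delta$-set and $V$, being a nonempty open subset of $\R^d$, is a Baire space (locally compact Hausdorff), the Baire category theorem ensures that $\bigcap_{k \geq 1} F_k$ is dense in $V$.

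Second, I let $(\lambda_n)_{n \geq 1}$ be an arbitrary countable covering of $\bigcap_{k} F_k \cap V$ by dyadic cubes; it suffices to prove $\sum_n \diam{\lambda_n}^s \geq 3^{-d} \netm^s_\infty(V)$. By passing to the maximal sub-family in the dyadic partial order---which does not increase the sum---I may assume the $\lambda_n$ are pairwise disjoint. Writing $\lambda_n = 2^{-j_n}(k_n + [0,1)^d)$, I introduce the $3^d$ dyadic cubes $\lambda_n^{(i)} = 2^{-j_n}(k_n + i + [0,1)^d)$ for $i \in \{-1, 0, 1\}^d$, whose half-open union $3 \lambda_n$ is the $3\diam{\lambda_n}$-block centered on $\lambda_n$. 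The enlarged dyadic family $\{\lambda_n^{(i)}\}$ has total weight $3^d \sum_n \diam{\lambda_n}^s$.

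Third, the core claim is that this enlarged family covers $V$; by the definition of $\netm^s_\infty$ this yields $\netm^s_\infty(V) \leq 3^d \sum_n \diam{\lambda_n}^s$ and hence the desired inequality. To establish the coverage, let $\widetilde{\lambda_n}$ denote the open cube with the same center as $\lambda_n$ and triple side length, so $\lambda_n \subseteq \widetilde{\lambda_n} \subseteq 3 \lambda_n$, and set $W = V \cap \bigcup_n \widetilde{\lambda_n}$, an open subset of $V$ containing $\bigcap_k F_k \cap V$. The open set $V \setminus \overline{W}$ must be empty: otherwise, by Baire density, it would contain a point of $\bigcap_k F_k$ lying in some $\lambda_{n_0} \subseteq \widetilde{\lambda_{n_0}} \subseteq \overline{W}$, a contradiction. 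Hence $V \subseteq \overline{W}$.

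The principal obstacle is then to upgrade the topological inclusion $V \subseteq \overline{W}$ to an actual dyadic covering of $V$ by the $\lambda_n^{(i)}$, handling accumulation points in $\overline{W} \setminus W$. I would address this by exploiting the half-open structure of dyadic cubes: a limit point of the $\widetilde{\lambda_n}$'s in $V$ either lies in some $\widetilde{\lambda_n}$ itself (and hence in $3 \lambda_n$), or is an accumulation point of a sequence of distinct $\widetilde{\lambda_n}$'s, in which case the density of $\bigcap_k F_k$ at the accumulating scale forces the existence of some $\lambda_m$ large enough that its $3$-inflation captures the accumulation point. This geometric analysis of dyadic accumulation behavior is the main technical step, and it is precisely where the constant $3^{-d}$ emerges from the use of the threefold enlargement of dyadic cubes.
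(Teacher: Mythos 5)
Your reduction to bounding $\sum_n \diam{\lambda_n}^s$ for an arbitrary dyadic cover $(\lambda_n)$ of $\bigcap_k F_k\cap V$, and the Baire-category argument establishing that $\bigcap_k F_k$ is dense in $V$ (hence $V \subseteq \closure{W}$), are both correct. However, the \emph{core claim}---that the $3^d$-inflated dyadic family covers $V$---is false, and the ``obstacle'' you flag at the end cannot be overcome in the way you propose. Here is a one-dimensional counterexample. Take $V = (-1,1)$ and $F_k = V\setminus\{0\}$ for every $k$; the hypothesis holds because removing a single point leaves $\netm^s_\infty$ unchanged when $s > 0$. Cover $\bigcap_k F_k\cap V = V\setminus\{0\}$ by the disjoint dyadic intervals
\[
\lambda_i^- = 2^{-i}[-2,-1), \qquad \lambda_i' = 2^{-i-1}[2,3), \qquad \lambda_i'' = 2^{-i-1}[3,4), \qquad i\geq 1.
\]
Then $3\lambda_i^- = 2^{-i}[-3,0)$, $3\lambda_i' = 2^{-i-1}[1,4)$, $3\lambda_i'' = 2^{-i-1}[2,5)$, and none of these contains $0$. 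Hence $0 \in V \setminus\bigcup_n 3\lambda_n$, so the inflated dyadic family does not cover $V$. Your suggested resolution---that the density of $\bigcap_k F_k$ at the accumulating scale forces some $3\lambda_m$ to capture the accumulation point---is exactly what fails here; and note that the half-open convention you wish to exploit actually works against you, since it is precisely why $3\lambda_i^- = [-3\cdot 2^{-i},0)$ excludes $0$.

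The deeper issue is that your argument only uses the topological density of $\bigcap_k F_k$ in $V$, not the quantitative $\netm^s_\infty$-hypothesis. Density is far weaker: a dense $G_\delta$ can be covered by dyadic cubes of arbitrarily small total $s$-weight, in which case the $3$-inflations miss most of $V$. You would then need $\netm^s_\infty\bigl(V\setminus\bigcup_n 3\lambda_n\bigr) = 0$, but $\netm^s_\infty$ is only an outer measure (not additive), so this does not follow from the hypotheses. The paper proves the lemma by reference to Falconer's Lemma~4 in~\cite{Falconer:1994hx}; since the coverage claim is false, whatever argument that lemma uses must exploit the $\netm^s_\infty$-hypothesis more directly rather than the topological density it implies.
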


\begin{proof}
	See the proof of~\cite[Lemma~4]{Falconer:1994hx}.
\end{proof}

\subsubsection{Proof of Theorem~\ref{thm:caraclic}}

We may now establish the various relationships between the properties~(\ref{item:lic1}--\ref{item:lic6}) involved in the statement of Theorem~\ref{thm:caraclic}.

First, the proof that (\ref{item:lic1}) $\Rightarrow$ (\ref{item:lic2}) follows from the observation that the inverse of a similarity transformation of $\R^d$ is a bi-Lipschitz mapping.

For the proof that (\ref{item:lic2}) $\Rightarrow$ (\ref{item:lic3}), we refer to~\cite{Falconer:1994hx} even though there is a slight mistake at this point of the paper. However, the properties~(\ref{item:lic1}--\ref{item:lic6}) above are exact; comparing them with those in~\cite{Falconer:1994hx}, it is easy to spot the error and correct the proof.

The proof that (\ref{item:lic3}) $\Leftrightarrow$ (\ref{item:lic4}) $\Rightarrow$ (\ref{item:lic5}) $\Leftrightarrow$ (\ref{item:lic6}) follows straightforwardly from Lemma~\ref{lem:netmlambda}, together with the observation that the interior of a dyadic cube $\lambda$ is an open set with the same $\netm^t_\infty$-mass than $\lambda$ itself, by virtue of Lemma~\ref{lem:netmlambdaV}.

Finally, to establish that (\ref{item:lic6}) $\Rightarrow$ (\ref{item:lic1}) for $G_\delta$-sets, let us assume that $F$ is a $G_\delta$-set satisfying~(\ref{item:lic6}), and let $(f_n)_{n\geq 1}$ denote a sequence of bi-Lipschitz transformations defined on a nonempty open set $U$. For each $n\geq 1$, let $c_n$ denote a constant such that $f_n$ satisfies~(\ref{eq:df:biLip}). Given $t\in(0,s)$, Lemma~\ref{lem:netmbilip} ensures that for any $t'\in(t,s)$, there is a real number $c\in(0,1]$ such that for any open subset $V$ of $U$,
	\[
		\netm^{t'}_\infty(f_n^{-1}(F)\cap V)\geq \frac{c}{(3c_n)^{2d}}\netm^{t'}_\infty(V).
	\]
	Applying this estimate to the interior of dyadic cubes and making use of Lemma~\ref{lem:netmlambda}, we get for every dyadic cube $\lambda$ contained in $U$,
	\begin{align*}
		\netm^{t'}_\infty(f_n^{-1}(F)\cap\lambda)
		&\geq\netm^{t'}_\infty(f_n^{-1}(F)\cap\interior{\lambda})\\
		&\geq\frac{c}{(3c_n)^{2d}}\netm^{t'}_\infty(\interior{\lambda})
		=\frac{c}{(3c_n)^{2d}}\netm^{t'}_\infty(\lambda).
	\end{align*}
	Then, it follows from Lemma~\ref{lem:netmst} that for every dyadic cube $\lambda\subseteq U$,
	\[
		\netm^t_\infty(f_n^{-1}(F)\cap\lambda)=\netm^t_\infty(\lambda),
	\]
	and Lemma~\ref{lem:netmlambdaV} now ensures that this also holds when $\lambda$ is replaced by an arbitrary open subset of $U$. Finally, Lemma~\ref{lem:netminterGdelta} ensures that
	\[
		\netm^t_\infty\left(\bigcap_{n=1}^\infty f_n^{-1}(F)\cap U\right)
		\geq 3^{-d}\netm^t_\infty(U)>0.
	\]
	To conclude, it remains to use~(\ref{eq:netmHdim}) to deduce that the intersection of the sets $f_n^{-1}(F)$ has Hausdorff dimension at least $t$, and to let $t$ tend to $s$.

\subsubsection{Proof of Theorem~\ref{thm:stablic}}\label{subsubsec:proof:thm:stablic}

This is a direct consequence of Theorem~\ref{thm:caraclic}. To prove the stability under countable intersections, let us consider a sequence $(F_n)_{n\geq 1}$ of sets in $\lic{s}{\R^d}$. When $t\in(0,s)$, the characterization~(\ref{item:lic4}) ensures that all the sets $F_n$ have maximal $\netm^t_\infty$-mass in all the open subsets of $\R^d$. Lemma~\ref{lem:netminterGdelta} implies that
	\[
		\netm^t_\infty\left(\bigcap_{n=1}^\infty F_n\cap V\right)
		\geq 3^{-d}\netm^t_\infty(V)
	\]
	for any open subset $V$ of $\R^d$, and the characterization~(\ref{item:lic6}) shows that the intersection of the sets $F_n$ belongs to the class $\lic{s}{\R^d}$.

To establish the stability under bi-Lipschitz mappings, let us consider a set $F$ in $\lic{s}{\R^d}$ and a bi-Lipschitz mapping $f$ defined on $\R^d$. Again, when $t\in(0,s)$, the characterization~(\ref{item:lic4}) of this class ensures that the set $F$ has maximal $\netm^t_\infty$-mass in all the open subsets of $\R^d$. Lemma~\ref{lem:netmbilip} then shows that for any open set $V\subseteq\R^d$,
	\[
		\netm^t_\infty(f^{-1}(F)\cap V)\geq\frac{\netm^t_\infty(V)}{(3c_f)^{2d}},
	\]
	where $c_f$ is a constant associated with $f$ as in~(\ref{eq:df:biLip}). We conclude that $f^{-1}(F)$ is in $\lic{s}{\R^d}$ thanks to the characterization~(\ref{item:lic6}) of this class.

\subsubsection{Proof of Theorem~\ref{thm:licloc}(\ref{item:thm:licloc1})}

The proof is parallel to that of the stability under countable intersections of the classes $\lic{s}{\R^d}$ given in Section~\ref{subsubsec:proof:thm:stablic}. It suffices to replace the characterization~(\ref{item:lic4}) of the class $\lic{s}{\R^d}$ by the definition of the generalized classes $\lic{s}{U}$, namely, Definition~\ref{df:licloc}. As above, we then apply Lemma~\ref{lem:netminterGdelta}. Finally, we obtain an analog of the characterization~(\ref{item:lic6}) of the large intersection classes by applying Lemma~\ref{lem:netmst}.

\subsubsection{Proof of Proposition~\ref{prp:licPdim}}

When $U=\R^d$, the result is~\cite[Theorem~D(b)]{Falconer:1994hx}. We thus refer to that paper for the proof in that case, and we content ourselves here with extending the result to arbitrary nonempty open sets $U$. Let us consider a set $F\in\lic{s}{U}$, a nonempty open set $V\subseteq U$, and an arbitrary nonempty dyadic cube $\lambda_0$ contained in $V$. We write $\lambda_0$ in the form $2^{-j_0}(k_0+[0,1)^d)$ with $j_0\in\Z$ and $k_0\in\Z^d$, and we define
	\[
		\widetilde F=\bigsqcup_{k\in\Z^d}(k 2^{-j_0}+(F\cap\interior{\lambda_0})).
	\]
	The fact that $F$ is a $G_\delta$-subset of $\R^d$ implies that $\widetilde F$ is a $G_\delta$-set as well. Furthermore, for any dyadic cube $\lambda$ with diameter at most that of $\lambda_0$, there exists a unique integer point $k\in\Z^d$ such that $\lambda$ is contained in $k 2^{-j_0}+\lambda_0$, so that
	\[
		\widetilde F\cap\lambda=(k 2^{-j_0}+(F\cap\interior{\lambda_0}))\cap\lambda.
	\]
	With the help of~(\ref{eq:netmlip}), we deduce that for any $t\in(0,s)$,
	\begin{align*}
		\netm^t_\infty(\widetilde F\cap\lambda)
		&\geq 3^{-d}\netm^t_\infty(F\cap\interior{\lambda_0}\cap(-k 2^{-j_0}+\lambda))\\
		&\geq 3^{-d}\netm^t_\infty(F\cap\interior{(-k 2^{-j_0}+\lambda)})\\
		&=3^{-d}\netm^t_\infty(\interior{(-k 2^{-j_0}+\lambda)})
		=3^{-d}\netm^t_\infty(\lambda).
	\end{align*}
	The last equality is due to Lemma~\ref{lem:netmlambda}. The previous one holds because the interior of $-k 2^{-j_0}+\lambda$ is an open subset of $U$, and the set $F$ is in $\lic{s}{U}$. Finally, Lemmas~\ref{lem:netmlambdaV} and~\ref{lem:netmst} enable us to deduce that $\widetilde F\in\lic{s}{\R^d}$, from which it follows that
	\[
		\Pdim(F\cap V)\geq\Pdim(F\cap\lambda_0)
		\geq\Pdim(F\cap\interior{\lambda_0})=\Pdim(\widetilde F\cap\interior{\lambda_0})=d.
	\]
	This results from applying~\cite[Theorem~D(b)]{Falconer:1994hx} to the set with large intersection $\widetilde F$ and the open set $\interior{\lambda_0}$, and from the packing counterpart of the monotonicity property satisfied by Hausdorff dimension, see Proposition~\ref{prp:Hausdorff}(\ref{item:prp:Hausdorff1}).

\subsection{Link with ubiquitous systems}

We showed in Section~\ref{sec:firstubiq} that if $(x_i,r_i)_{i\in\calI}$ denotes a homogeneous ubiquitous system in some nonempty open subset $U$ of $\R^d$, then for any real number $t>1$, the set $F_t$ defined by~(\ref{eq:df:Ft}) satisfies
	\[
		\Hdim(F_t\cap U)\geq\frac{d}{t},
	\]
	see Theorem~\ref{thm:lobndhomubsys}. The next result shows that the sets $F_t$ actually belong to the large intersection classes given by Definition~\ref{df:licloc}. Let us mention in passing that, similarly to Theorem~\ref{thm:lobndhomubsys}, heterogeneous versions of that result are proven in~\cite{Barral:2006fk,Durand:2006uq}.

\begin{Theorem}\label{thm:lihomubsys}
	Let $(x_i,r_i)_{i\in\calI}$ denote a homogeneous ubiquitous system in some nonempty open subset $U$ of $\R^d$. Then, for any real number $t>1$,
	\[
		F_t\in\lic{d/t}{U}.
	\]
\end{Theorem}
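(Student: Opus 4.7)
The plan is to verify Definition~\ref{df:licloc} directly. First, $F_t$ is a $G_\delta$-set: enumerating $\calI$ as a sequence $(i_n)_{n\geq 1}$, we have $F_t=\bigcap_{N\geq 1}\bigcup_{n\geq N}\opball(x_{i_n},r_{i_n}^t)$, which is manifestly $G_\delta$. The substantive task is to prove that for every $t'\in(0,d/t)$ and every open $V\subseteq U$,
\[
\netm^{t'}_\infty(F_t\cap V)=\netm^{t'}_\infty(V).
\]

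The whole argument hinges on the following local claim: for every $s\in(0,d/t)$ there exist constants $c=c(s,t,d)>0$ and $\delta=\delta(s,t,d)>0$ such that
\[
\netm^s_\infty(F_t\cap\lambda)\geq c\,\diam{\lambda}^s
\]
for every dyadic cube $\lambda\subseteq U$ with $\diam{\lambda}\leq\delta$. Granted this, fix $s\in(t',d/t)$; for any dyadic cube $\lambda\subseteq V$, applying Lemma~\ref{lem:netmlambdaV} to $\interior{\lambda}$ together with Lemma~\ref{lem:netmlambda} upgrades the local claim to $\netm^s_\infty(F_t\cap\lambda)\geq c\,\netm^s_\infty(\lambda)$ for \emph{every} dyadic cube $\lambda\subseteq V$, regardless of diameter. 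Lemma~\ref{lem:netmst} then converts this into the equality $\netm^{t'}_\infty(F_t\cap\lambda)=\netm^{t'}_\infty(\lambda)$ for all such $\lambda$, and a second invocation of Lemma~\ref{lem:netmlambdaV} together with trivial monotonicity yields the displayed equality for $V$. Since $s$ can be taken arbitrarily close to $d/t$, every $t'<d/t$ is handled, and Definition~\ref{df:licloc} gives $F_t\in\lic{d/t}{U}$.

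To establish the local claim for fixed $s<d/t$, I would rerun the generalized Cantor construction from the proof of Theorem~\ref{thm:lobndhomubsys}, replacing the gauge $r\mapsto r^{d/t}\log(1/r)$ by the simpler $r\mapsto r^s$ throughout. Specifically, take $I_\varnothing$ to be a closed ball contained in $\interior{\lambda}$ of diameter $c_d\diam{\lambda}$ and set $\zeta(I_\varnothing)=\diam{I_\varnothing}^s$; recursively apply Lemma~\ref{lem:lobndhomubsys} to $\interior{I_{\pi(u)}}$ to obtain disjoint balls $B_u$, define $I_u=B_u^t$, and distribute $\zeta$ proportionally to volume as in condition~(iv). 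The induction analogous to Lemma~\ref{lem:bndmuIu} produces $\zeta(I_u)\leq 2\cdot 6^d\,\diam{I_u}^{d/t}\diam{I_{\pi(u)}}^{s-d}$ under the inductive hypothesis $\zeta(I_{\pi(u)})\leq\diam{I_{\pi(u)}}^s$; to close the induction $\zeta(I_u)\leq\diam{I_u}^s$, it suffices to choose the radii of the $B_u$ small enough that
\[
\diam{I_u}\leq c_0\,\diam{I_{\pi(u)}}^{(d-s)/(d/t-s)},
\]
which is feasible since the exponent exceeds $1$ (because $t>1$). This polynomial shrinkage, available precisely because we work strictly below the critical dimension $d/t$, plays the role of the logarithmic correction in the original construction.

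With the construction in place, the mass distribution $\mu$ built via Theorem~\ref{thm:absmeas} is supported on a compact set $K\subseteq F_t\cap\lambda$ with $\mu(K)=\diam{I_\varnothing}^s\asymp\diam{\lambda}^s$ (analogue of Proposition~\ref{prp:KinclFtcapU}), and the scaling estimate $\mu(B)\leq C\diam{B}^s$ for every closed ball $B$ follows from a direct transcription of Proposition~\ref{prp:bndmuB}. Since any dyadic cube sits in a ball of comparable diameter, the mass distribution principle (Lemma~\ref{lem:massdist}) in its net-measure form yields $\netm^s_\infty(F_t\cap\lambda)\geq\mu(K)/C'\geq c\,\diam{\lambda}^s$, proving the local claim. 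The main obstacle is the uniform ball estimate $\mu(B)\leq C\diam{B}^s$ in the modified construction: as in the proof of Proposition~\ref{prp:bndmuB}, one must perform the case split between balls meeting a single $I_u$ and those meeting many sibling sub-compacts, invoke the volume estimate $\leb^d(B\cap B_{uk})\geq 4^{-d}\leb^d(B_{uk})$ in the latter case, and check that the polynomial shrinkage of the previous paragraph absorbs the geometric constants at every scale uniformly in $\lambda$.
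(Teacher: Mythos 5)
Your proof is correct, but the route through the Cantor construction differs from the paper's in one substantive way. The paper does not alter the construction at all: it reruns the proof of Theorem~\ref{thm:lobndhomubsys} verbatim inside $\interior{\lambda}$, keeping the gauge $r\mapsto r^{d/t}|\log r|$ and hence Proposition~\ref{prp:bndmuB} unchanged, and then extracts the $\netm^s_\infty$-bound from the purely local observation that $r\mapsto r^{d/t-s}\log(1/r)$ is nondecreasing near zero. Concretely, for a covering of $F_t\cap\interior{\lambda}$ by dyadic subcubes $\lambda_n$ of $\lambda$, one has
\[
\diam{\lambda_n}^{d/t}\log\frac{1}{\diam{\lambda_n}}\leq\diam{\lambda_n}^{s}\,\diam{\lambda}^{d/t-s}\log\frac{1}{\diam{\lambda}},
\]
so the very same measure $\mu$ supplies the $r^s$-estimate once the $\lambda$-dependent factor is divided out. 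You instead rebuild $\mu$ with the target gauge $r\mapsto r^s$, trading the exponential shrinkage requirement of condition~(\ref{item:condubiqconstruc3}) for the polynomial constraint $\diam{I_u}\leq c_0\,\diam{I_{\pi(u)}}^{(d-s)/(d/t-s)}$; your observation that the exponent exceeds $1$ exactly when $t>1$ and $s<d/t$ is the correct feasibility check, and the ball estimate indeed closes since $\diam{B}\leq\diam{I_u}$ gives $\diam{B}^d\,\diam{I_u}^{s-d}=(\diam{B}/\diam{I_u})^{d-s}\diam{B}^s\leq\diam{B}^s$, dispensing with the logarithmic cancellation. Both arguments are valid. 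The paper's is more economical: it reuses the single construction that already delivered the positive $r^{d/t}|\log r|$-Hausdorff measure, so Theorems~\ref{thm:lobndhomubsys} and~\ref{thm:lihomubsys} come from one $\mu$. Your variant is conceptually cleaner in isolating what the subcritical regime buys---below $d/t$, polynomial shrinkage suffices and the log factor is an artifact needed only to touch the critical exponent itself---but it costs you re-verifying the analogues of Lemma~\ref{lem:bndmuIu} and Proposition~\ref{prp:bndmuB}, which you correctly flag as the main obstacle and which do in fact go through as you outline.
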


\begin{proof}
	As mentioned in Sections~\ref{subsec:unifdil} and~\ref{subsec:largeintclass}, neither the notion of homogeneous ubiquitous system nor the large intersection classes depend on the choice of the norm. For convenience, we assume throughout the proof that the space $\R^d$ is endowed with the supremum norm; the diameter of a set $E$ is denoted by $\diam{E}_\infty$.
	
	Let us consider two real numbers $\alpha\in(0,1)$ and $s\in(0,d/t)$, and a nonempty dyadic cube $\lambda\subseteq U$ with diameter at most one. Dilating the closure of $\lambda$ around its center, we obtain a closed ball $B$ with diameter $\alpha\diam{\lambda}_\infty$ that is contained in the interior of $\lambda$. We can reproduce the proof of Theorem~\ref{thm:lobndhomubsys} with $U$ being the interior of $\lambda$ and $I_\varnothing$ being the ball $B$. We thus obtain an outer measure $\mu$ supported in $F_t\cap\interior{\lambda}$ with total mass given by~(\ref{eq:df:zetaIvarnothing}) and such that Proposition~\ref{prp:bndmuB} holds.
	
	Moreover, let $(\lambda_n)_{n\geq 1}$ denote a covering of the set $F_t\cap\interior{\lambda}$ by dyadic cubes. As already observed multiple times, there exists a subset $\calN$ of $\N$ such that the cubes $\lambda_n$, for $n\in\calN$, are disjoint and contained in $\lambda$, and still cover $\interior{\lambda}$. If we assume in addition that the latter set has diameter less than $\ee^{-d/t}/2$, we see that every cube $\lambda_n$ with $n\in\calN$ is included in a closed ball $B_n$ with radius equal to $\diam{\lambda_n}_\infty$, and thus diameter smaller than $\ee^{-d/t}$. Applying Proposition~\ref{prp:bndmuB}, we get
	\[
		\mu(\lambda_n)\leq\mu(B_n)\leq 2\cdot 12^d\diam{B_n}_\infty^{d/t}\log\frac{1}{\diam{B_n}_\infty}
		\leq 2\cdot 12^d 2^{d/t}\diam{\lambda_n}_\infty^{d/t}\log\frac{1}{\diam{\lambda_n}_\infty}.
	\]
	Arguing as in the proof of the mass distribution principle, {\em i.e.}~Lemma~\ref{lem:massdist}, we get
	\begin{align*}
		(\alpha\diam{\lambda}_\infty)^{d/t}\log\frac{1}{\alpha\diam{\lambda}_\infty}
		&=\diam{I_\varnothing}_\infty^{d/t}\log\frac{1}{\diam{I_\varnothing}_\infty}=\mu(F_t\cap\interior{\lambda})\\
		&\leq 2\cdot 12^d 2^{d/t}\sum_{n=1}^\infty\diam{\lambda_n}_\infty^{d/t}\log\frac{1}{\diam{\lambda_n}_\infty}.
	\end{align*}
	We then use the fact that the function $r\mapsto r^{d/t-s}\log(1/r)$ is nondecreasing near zero. Specifically, if the diameter of $\lambda$ is less than $\ee^{-t/(d-st)}$, we have
	\[
		\diam{\lambda_n}_\infty^{d/t}\log\frac{1}{\diam{\lambda_n}_\infty}
		=\diam{\lambda_n}_\infty^s\diam{\lambda_n}_\infty^{d/t-s}\log\frac{1}{\diam{\lambda_n}_\infty}
		\leq\diam{\lambda_n}_\infty^s\diam{\lambda}_\infty^{d/t-s}\log\frac{1}{\diam{\lambda}_\infty}
	\]
	for all $n\geq 1$. Combining this observation with the previous bound, and then taking the infimum over all dyadic coverings, we obtain
	\[
		\netm^s_\infty(F_t\cap\lambda)\geq\netm^s_\infty(F_t\cap\interior{\lambda})
		\geq\frac{\alpha^{d/t}\log(\alpha\diam{\lambda}_\infty)}{2\cdot 12^d 2^{d/t}\log\diam{\lambda}_\infty}\diam{\lambda}_\infty^s,
	\]
	with the proviso that the diameter of $\lambda$ is smaller than $\delta_{s,t}$, defined as the minimum of $\ee^{-d/t}/2$ and $\ee^{-t/(d-st)}$. Now, thanks to Lemma~\ref{lem:netmlambda}, we may replace $\diam{\lambda}_\infty^s$ by $\netm^s_\infty(\lambda)$. Hence, letting $\alpha$ tend to one, we end up with
	\[
		\netm^s_\infty(F_t\cap\lambda)\geq\frac{\netm^s_\infty(\lambda)}{2\cdot 12^d 2^{d/t}}
	\]
	for any dyadic cube $\lambda\subseteq U$ with diameter smaller than $\delta_{s,t}$. The restriction on the diameter may easily be removed. Indeed, if $\lambda$ is an arbitrary dyadic cube contained in $U$, applying Lemma~\ref{lem:netmlambdaV} to its interior, and then Lemma~\ref{lem:netmlambda} again, we get
	\[
		\netm^s_\infty(F_t\cap\lambda)\geq\netm^s_\infty(F_t\cap\interior{\lambda})\geq\frac{\netm^s_\infty(\interior{\lambda})}{2\cdot 12^d 2^{d/t}}=\frac{\netm^s_\infty(\lambda)}{2\cdot 12^d 2^{d/t}}
	\]
	for all real numbers $s\in(0,d/t)$ and all dyadic cubes $\lambda\subseteq U$. Finally, Lemma~\ref{lem:netmst} implies that for all such $s$ and $\lambda$, we have in fact
	\[
		\netm^s_\infty(F_t\cap\lambda)=\netm^s_\infty(\lambda).
	\]
	The result follows from another utilization of Lemma~\ref{lem:netmlambdaV}.
\end{proof}

\subsection{The Jarn\'ik-Besicovitch theorem revisited}

As an immediate application, let us show that the set $J_{d,\tau}$ defined by~(\ref{eq:df:Jdtau}) is a set with large intersection. Recall that $J_{d,\tau}$ is formed by the points that are approximable at rate at least $\tau$ by those with rational coordinates. We already know that this set coincides with the whole space $\R^d$ when $\tau\leq 1+1/d$ and has Hausdorff dimension equal to $(d+1)/\tau$ in every nonempty open subset of $\R^d$ otherwise, see Corollary~\ref{cor:Dirichlet} and Theorem~\ref{thm:JarnikBesicovitch}.

We obtained the latter dimensional result, known as the Jarn\'ik-Besicovitch theorem, in Section~\ref{subsec:JarnikBesicovitch} above. We started from the following two observations: the family $(p/q,q^{-1-1/d})_{(p,q)\in\Z^d\times\N}$ is a homogeneous ubiquitous system in the whole space $\R^d$\,; for this system, the sets $F_t$ defined by~(\ref{eq:df:Ft}) coincide with the sets $J_{d,\tau}$, with the proviso that the parameters are such that $t=\tau d/(d+1)$. Thanks to Theorem~\ref{thm:lihomubsys}, the same observations lead to the next statement.

\begin{Corollary}\label{cor:JarnikBesicovitchsli}
	For any real paramter $\tau>1+1/d$, the set $J_{d,\tau}$ is a set with large intersection in the whole space $\R^d$ with dimension at least $(d+1)/\tau$, namely,
	\[
		J_{d,\tau}\in\lic{(d+1)/\tau}{\R^d}.
	\]
\end{Corollary}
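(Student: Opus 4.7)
The proof is essentially a direct application of Theorem~\ref{thm:lihomubsys} to the approximation system already used in the proof of the Jarn\'ik-Besicovitch theorem (Theorem~\ref{thm:JarnikBesicovitch}). The plan is to reuse the two observations that drove the dimension computation in Section~\ref{subsec:JarnikBesicovitch}, but feed them into the large intersection version of the ubiquity theorem rather than the dimensional one.

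First, I would recall that the family $(p/q, q^{-1-1/d})_{(p,q)\in\Z^d\times\N}$ is a homogeneous ubiquitous system in $\R^d$. This was established in the proof of Theorem~\ref{thm:JarnikBesicovitch} from Corollary~\ref{cor:Dirichlet}, which precisely states that $\leb^d$-almost every $x\in\R^d$ satisfies $|x-p/q|_\infty<q^{-1-1/d}$ for infinitely many $(p,q)$.

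Next, I would set $t=\tau d/(d+1)$, observing that the hypothesis $\tau>1+1/d$ translates into $t>1$, so Theorem~\ref{thm:lihomubsys} is applicable. For this value of $t$, the set $F_t$ associated via~\eqref{eq:df:Ft} to the approximation system $(p/q,q^{-1-1/d})_{(p,q)\in\Z^d\times\N}$ is exactly
\[
F_t=\left\{x\in\R^d\Biggm|\left|x-\frac{p}{q}\right|_\infty<\frac{1}{q^{(1+1/d)t}}\text{ for i.m.~}(p,q)\in\Z^d\times\N\right\}=J_{d,\tau},
\]
since $(1+1/d)t=\tau$.

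Finally, Theorem~\ref{thm:lihomubsys} gives $F_t\in\lic{d/t}{\R^d}$. The identity $d/t=d(d+1)/(\tau d)=(d+1)/\tau$ then yields $J_{d,\tau}\in\lic{(d+1)/\tau}{\R^d}$, as required. There is no real obstacle here: all the work lies in Theorem~\ref{thm:lihomubsys}, and the proof reduces to matching parameters.
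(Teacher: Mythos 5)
Your proposal is correct and follows exactly the route the paper itself takes: recall that $(p/q,q^{-1-1/d})_{(p,q)\in\Z^d\times\N}$ is a homogeneous ubiquitous system in $\R^d$ (established en route to Theorem~\ref{thm:JarnikBesicovitch}), identify $F_t$ with $J_{d,\tau}$ under the change of parameter $t=\tau d/(d+1)$, and invoke Theorem~\ref{thm:lihomubsys}. The parameter bookkeeping ($t>1\Leftrightarrow\tau>1+1/d$ and $d/t=(d+1)/\tau$) is also handled correctly.
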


This result was already obtained in~\cite{Falconer:1994hx}. Combined with Proposition~\ref{prp:licPdim}, this shows in particular that the set $J_{d,\tau}$ has packing dimension equal to $d$ in every nonempty open subset of $\R^d$. For the sake of completeness, let us point out that in the opposite case where $\tau\leq 1+1/d$, the set $J_{d,\tau}$ clearly belongs to the class $\lic{d}{\R^d}$ because it coincides with the whole space $\R^d$ itself.


\section{Transference principles}\label{sec:transference}

The purpose of this section is to extend the above theory of homogeneous ubiquitous systems, especially Theorems~\ref{thm:lobndhomubsys} and~\ref{thm:lihomubsys}, toward Hausdorff measures and large intersection classes associated with arbitrary gauge functions, thereby aiming at a complete and precise description of the size and large intersection properties of associated limsup sets.

\subsection{Homogeneous $g$-ubiquitous system}

We begin by recalling the main results of Section~\ref{sec:firstubiq}, and shedding new light thereon. Let $\calI$ be a countably infinite index set, let $(x_i,r_i)_{i\in\calI}$ be an approximation system in the sense of Definition~\ref{df:approxsys}, and let $F_t$ be the sets defined by~(\ref{eq:df:Ft}), namely,
	\[
		F_t=\left\{x\in\R^d\bigm||x-x_i|<r_i^t\quad\text{for i.m.~} i\in\calI\right\}.
	\]
	Moreover, let $U$ denote a nonempty open subset of $\R^d$. According to Definition~\ref{df:homubiqsys}, the family is a homogeneous ubiquitous system in $U$ if the set $F_1$ has full Lebesgue measure in $U$. In that situation, Theorem~\ref{thm:lobndhomubsys} shows that for any real number $t>1$,
	\[
		\Hdim(F_t\cap U)\geq\frac{d}{t}.
	\]
	In fact, the set $F_t\cap U$ has positive Hausdorff measure with respect to the gauge function $r\mapsto r^{d/t}|\log r|$. Thus, the mere fact that the set $F_1$ has full Lebesgue measure in $U$ yields an {\em a priori} lower bound on the Hausdorff dimension of the sets $F_t$, which are smaller than $F_1$ when $t$ is larger than one.
	
Let us adopt a new perspective: we consider from now on that the set
	\begin{equation}\label{eq:df:Fxiri}
		\frakF((x_i,r_i)_{i\in\calI})=\left\{x\in\R^d\bigm||x-x_i|<r_i\quad\text{for i.m.~} i\in\calI\right\}
	\end{equation}
	is that of which we seek to estimate the size. In the above notations, this set coincides with the set $F_1$. The trick however is to observe that for any $t\geq 1$, this set also coincides with the set $F_t$ associated with the underlying family $(x_i,r_i^{1/t})_{i\in\calI}$, which is an approximation system as well. In that new situation, Theorem~\ref{thm:lobndhomubsys} ensures that if $(x_i,r_i^{1/t})_{i\in\calI}$ is a homogeneous ubiquitous system in $U$, that is, if
	\begin{equation}\label{eq:condFxiritmod}
		\text{for~}\leb^d\text{-a.e.~}x\in U
		\quad \exists\text{~i.m.~}i\in\calI
		\qquad |x-x_i|<r_i^{1/t},
	\end{equation}
	then the set $\frakF((x_i,r_i)_{i\in\calI})$ has positive Hausdorff measure in the open set $U$ with respect to the gauge function $r\mapsto r^{d/t}|\log r|$, so in particular
	\[
		\Hdim(\frakF((x_i,r_i)_{i\in\calI})\cap U)\geq\frac{d}{t}.
	\]
	
A further way to recast this result is to let $g$ denote the gauge function $r\mapsto r^{d/t}$, to rewrite the assumption~(\ref{eq:condFxiritmod}) in the form
	\begin{equation}\label{eq:condFxirigauge}
		\leb^d(U\setminus\frakF((x_i,g(r_i)^{1/d})_{i\in\calI}))=0,		
	\end{equation}
	where the involved set is defined as in~(\ref{eq:df:Fxiri}), and to reinterpret the conclusion as the fact that the set $\frakF((x_i,r_i)_{i\in\calI})$ has positive Hausdorff measure in $U$ with respect to the gauge function $r\mapsto g(r)|\log r|$. Note that the gauge function $g$ is $d$-normalized in the sense of Definition~\ref{df:normgauge}, because $g$ coincides on the interval $(0,\infty)$ with its $d$-normalization $g_d$, defined by~(\ref{eq:df:normgauge}). Thus, the condition~(\ref{eq:condFxirigauge}) still holds when $g$ is replaced by $g_d$. In that situation, the approximation system $(x_i,r_i)_{i\in\calI}$ will be called {\em $g$-ubiquitous}, in accordance with the following definition.

\begin{Definition}\label{df:homubiqsysgauge}
	Let $\calI$ be a countably infinite index set, let $(x_i,r_i)_{i\in\calI}$ be an approximation system in $\R^d\times(0,\infty)$, let $g$ be a gauge function and let $U$ be a nonempty open subset of $\R^d$. We say that $(x_i,r_i)_{i\in\calI}$ is a {\em homogeneous $g$-ubiquitous system} in $U$ if the following condition holds:
		\[
			\leb^d(U\setminus\frakF((x_i,g_d(r_i)^{1/d})_{i\in\calI}))=0.
		\]
\end{Definition}

The latter condition means that for Lebesgue-almost every point $x$ in the open set $U$, the inequality $|x-x_i|<g_d(r_i)^{1/d}$ holds for infinitely many indices $i$ in $\calI$. Hence, the previous definition may be seen as an extension of that of a homogeneous ubiquitous system. In fact, according to Definitions~\ref{df:homubiqsys} and~\ref{df:homubiqsysgauge}, respectively, an approximation system is a homogeneous ubiquitous system in some nonempty open set $U$ if and only if it is homogeneously ubiquitous in $U$ with respect to any gauge function whose $d$-normalization is $r\mapsto r^d$.

\subsection{Mass transference principle}

Remarkably, Beresnevich and Velani~\cite{Beresnevich:2005vn} managed to extend the above approach to any gauge function $g$, and also improved the above conclusion. Specifically, they established the following {\em mass transference principle} for the sets defined by~(\ref{eq:df:Fxiri}).

\begin{Theorem}[Beresnevich and Velani]\label{thm:masstransprinc}
	Let $\calI$ be a countably infinite index set, let $(x_i,r_i)_{i\in\calI}$ be an approximation system in $\R^d\times(0,\infty)$, let $g$ be a gauge function and let $U$ be a nonempty open subset of $\R^d$. If $(x_i,r_i)_{i\in\calI}$ is a homogeneous $g$-ubiquitous system in $U$, then for every nonempty open subset $V$ of $U$,
	\[
		\hau^g(\frakF((x_i,r_i)_{i\in\calI})\cap V)=\hau^g(V).
	\]
\end{Theorem}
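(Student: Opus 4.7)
The plan is to adapt the generalized Cantor construction from the proof of Theorem~\ref{thm:lobndhomubsys} to a general gauge $g$, after first reducing to a normalized form and disposing of degenerate cases. By Proposition~\ref{prp:compnormalgauge}, we may replace $g$ by its $d$-normalization $g_d$ at the cost of a fixed multiplicative constant in $\hau^g$, without altering the hypothesis (which is already written in terms of $g_d$). We henceforth assume $g=g_d$, so that $r\mapsto g(r)/r^d$ is nonincreasing on $(0,\infty)$ with limit $\ell_g\in[0,\infty]$ at zero.

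Two degenerate regimes can be discarded immediately. If $\ell_g=0$, then Proposition~\ref{prp:dichogauge}(\ref{item:prp:dichogauge3}) gives $\hau^g\equiv 0$ and the conclusion is trivial. If $\ell_g\in(0,\infty)$, then $g(r)^{1/d}$ and $r$ are comparable near zero, so Proposition~\ref{prp:homubsyscst} shows that $\frakF((x_i,r_i)_{i\in\calI})$ itself has full Lebesgue measure in $U$; combined with Proposition~\ref{prp:dichogauge}(\ref{item:prp:dichogauge2}), which gives $\hau^g=\kappa_g\,\leb^d$ on Borel sets in this regime, the desired equality follows at once. The substantive case is $\ell_g=\infty$, where $g(r)^{1/d}$ is arbitrarily larger than $r$ near zero and the target set $\frakF((x_i,r_i)_{i\in\calI})$ may well be Lebesgue null.

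In this regime, the heart of the matter is to establish, for every closed ball $B\subseteq V$ of sufficiently small diameter, the local bound
\[
\hau^g\bigl(\frakF((x_i,r_i)_{i\in\calI})\cap B\bigr)\geq c\,g(\diam{B})
\]
with $c>0$ depending only on $d$. Following the blueprint of Section~\ref{subsec:lobndhausdorff}, I would build a generalized Cantor set $K\subseteq\frakF((x_i,r_i)_{i\in\calI})\cap B$ indexed by a tree. Start with a closed ball $I_\varnothing\subseteq B$ nearly filling $B$; given a compact $I_u$ already constructed, apply the covering lemma (Lemma~\ref{lem:lobndhomubsys}), this time invoked for the homogeneously ubiquitous system $(x_i,g(r_i)^{1/d})_{i\in\calI}$ inside $\interior{I_u}$, to extract finitely many pairwise disjoint \emph{ubiquity balls} $\clball(x_{i_{uk}},g(r_{i_{uk}})^{1/d})\subseteq\interior{I_u}$ with total Lebesgue mass at least $\leb^d(I_u)/(2\cdot 3^d)$, and declare the concentric smaller \emph{target balls} $I_{uk}=\clball(x_{i_{uk}},r_{i_{uk}}/2)$ to be the children compacts, forcing their diameters to decay fast enough along the tree. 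Equip the tree with the premeasure $\zeta(I_\varnothing)=g(\diam{I_\varnothing})$ and the recursive rule
\[
\zeta(I_{uk})=\zeta(I_u)\cdot\frac{\leb^d(\clball(x_{i_{uk}},r_{i_{uk}}))}{\sum_v\leb^d(\clball(x_{i_{uv}},r_{i_{uv}}))};
\]
a straightforward induction, in the spirit of Lemma~\ref{lem:bndmuIu}, yields $\zeta(I_u)\leq g(\diam{I_u})$ at every node, and Theorem~\ref{thm:absmeas} produces the outer measure $\mu$ obtained by extending $\zeta$ and restricting to $K$.

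The main obstacle is the mass scaling estimate $\mu(C)\leq c'\,g(\diam{C})$ for every sufficiently small set $C$, which is the analog of Proposition~\ref{prp:bndmuB}. The argument splits on whether $C$ meets a single descendant at every generation, in which case $\mu(C)\leq\zeta(I_u)\leq g(\diam{I_u})\to 0$ along a nested sequence, or whether there is a shallowest node $u$ at which $C$ meets at least two children. In the latter case one sums $\zeta(I_{uk})$ over the children actually intersected, applies the same volume-comparison argument as in~(\ref{eq:volestimubiq}) to relate $\leb^d(B_{uk})$ to $\leb^d(C\cap B_{uk})$, and uses the monotonicity of $r\mapsto g(r)/r^d$ to conclude
\[
\mu(C)\leq c'\,\leb^d(C)\cdot\frac{g(\diam{I_u})}{\diam{I_u}^d}\leq c'\,\diam{C}^d\cdot\frac{g(\diam{C})}{\diam{C}^d}=c'\,g(\diam{C}).
\]
The crucial inequality $g(\diam{I_u})/\diam{I_u}^d\leq g(\diam{C})/\diam{C}^d$ holds precisely because $\diam{C}\leq\diam{I_u}$ and $g/r^d$ is nonincreasing: this is where $d$-normalization pays off, and it cleanly replaces the $|\log r|$ factor that arose in Theorem~\ref{thm:lobndhomubsys}. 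The mass distribution principle (Lemma~\ref{lem:massdist}) then produces the local lower bound. To upgrade this to the global equality $\hau^g(\frakF((x_i,r_i)_{i\in\calI})\cap V)=\hau^g(V)$, observe that the homogeneous $g$-ubiquity assumption in $U$ trivially persists on every open $V\subseteq U$, partition $V$ into small half-open dyadic cubes, apply the local bound inside each (using that the constant $c$ is absolute and that $I_\varnothing$ may be chosen to nearly fill each cube), and sum via the countable additivity of $\hau^g$ on the resulting Borel partition; the reverse inequality $\hau^g(\frakF((x_i,r_i)_{i\in\calI})\cap V)\leq\hau^g(V)$ is trivial by monotonicity.
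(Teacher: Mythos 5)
The paper itself does not prove Theorem~\ref{thm:masstransprinc}: it explicitly notes that the argument of Theorem~\ref{thm:lobndhomubsys} ``does not require as much accuracy'' as the mass transference principle and refers the reader to Beresnevich and Velani. Your attempt runs straight into the precise reason for that warning. The reductions and the degenerate cases ($\ell_g=0$ and $\ell_g\in(0,\infty)$) are fine, and the mass-scaling estimate $\mu(C)\leq c'\,g(\diam{C})$ is correct \emph{granted} the inductive bound $\zeta(I_u)\leq g(\diam{I_u})$. That inductive bound is where the argument breaks. Tracing the recursion with the covering lemma and the ubiquity balls $B_{uk}=\clball(x_{i_{uk}},g(r_{i_{uk}})^{1/d})$, one gets
\[
\zeta(I_{uk})\ \leq\ 2\cdot 6^d\cdot\frac{g(\diam{I_u})}{\diam{I_u}^d}\cdot g(\diam{I_{uk}}),
\]
and for this to close the induction one needs $2\cdot 6^d\,g(\diam{I_u})/\diam{I_u}^d\leq 1$. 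Since $g$ is $d$-normalized with $\ell_g=\infty$, the ratio $g(\diam{I_u})/\diam{I_u}^d$ tends to infinity as the tree deepens, so the condition fails after finitely many generations. In Theorem~\ref{thm:lobndhomubsys} it is precisely the $|\log r|$ factor in the gauge that absorbs the constant $2\cdot 6^d$ via the restriction in condition~(\ref{item:condubiqconstruc3}); $d$-normalization does not ``cleanly replace'' it, it is in fact the source of the obstruction.

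The underlying issue is quantitative: a single application of Lemma~\ref{lem:lobndhomubsys} yields disjoint ubiquity balls whose total $g$-mass is only $\gtrsim\leb^d(I_u)=\diam{I_u}^d$, which is far below the $g(\diam{I_u})$ that the premeasure at $I_u$ has to split into. No choice of weights in the recursive rule repairs this. Closing the gap requires iterating the covering lemma at several shrinking scales \emph{within a single generation} so as to accumulate enough $g$-mass before passing to the next level --- this is exactly the content of Beresnevich--Velani's $K_{G,B}$ construction, and it is the technically involved step the paper chooses to omit.
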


Some of the ideas supporting Theorem~\ref{thm:masstransprinc} are similar to those developed in the proof of Theorem~\ref{thm:lobndhomubsys} above. However, Theorem~\ref{thm:lobndhomubsys} being essentially concerned with Hausdorff dimension only, its proof does not require as much accuracy as in the proof of Theorem~\ref{thm:masstransprinc}, where Hausdorff measures associated with arbitrary gauge functions are considered. The proof of Theorem~\ref{thm:masstransprinc} is therefore somewhat technically involved. Consequently, we omit it from these notes, and we refer the reader to Beresnevich and Velani's paper~\cite{Beresnevich:2005vn}. We just mention that Theorem~\ref{thm:masstransprinc} is a straightforward consequence of Theorem~2 in~\cite{Beresnevich:2005vn}, except that Beresnevich and Velani only considered $d$-normalized functions. However, this assumption may easily be removed with the help of Propositions~\ref{prp:compnormalgauge} and~\ref{prp:dichogauge}.

Theorem~\ref{thm:masstransprinc} is remarkable because of its universality. The general philosophy behind this result is that it enables one to automatically convert a property concerning the Lebesgue measure of a limsup of balls to a property concerning the Hausdorff measures of similar sets obtained by dilating the balls. This leads in particular to a full description of the size properties of limsup of balls for which the description of the Lebesgue measure is known. This principle may be applied to many approximation systems arising in metric number theory and probability, especially those coming from eutaxic sequences and optimal regular systems, see Sections~\ref{subsec:eutaxyapprox} and~\ref{subsec:optregsysapprox}, as well as the applications discussed in Sections~\ref{sec:inhom}--\ref{sec:Poisson}. However, our approach below relies on the notion of describability introduced in Section~\ref{sec:desc} and, at heart, on the large intersection transference principle. Hence, the mass transference will never be used {\em per se} in what follows.

\subsection{Large intersection transference principle}\label{subsec:largeintprinc}

The purpose of this section is to give an analog of the mass transference principle for large intersection properties. In the spirit of Theorem~\ref{thm:masstransprinc}, this result leads to a very precise description of the large intersection properties of a limsup of balls in terms of arbitrary gauge functions. Accordingly, we first need to introduce large intersection classes that are associated with arbitrary gauge functions, thereby generalizing the original classes introduced by Falconer and presented in Section~\ref{subsec:largeintclass}. We adopt the same viewpoint as in the definition of the localized classes $\lic{s}{U}$, namely, Definition~\ref{df:licloc}. In particular, the generalized classes are defined with the help of outer net measures; these are built in terms of general gauge functions and coverings by dyadic cubes.

\subsubsection{Net measures revisited}\label{subsubsec:netmrev}

The net measures were first discussed in Section~\ref{subsubsec:normgaugenetm}. We restrict ourselves here to gauge functions that are $d$-normalized in the sense of Definition~\ref{df:normgauge}. The resulting outer net measures then satisfy additional properties that are in fact necessary to an appropriate definition of the generalized classes.

If $g$ denotes a $d$-normalized gauge function, the set of all real numbers $\eps>0$ such that $g$ is nondecreasing on $[0,\eps]$ and $r\mapsto g(r)/r^d$ is nonincreasing on $(0,\eps]$ is nonempty. We may thus define $\eps_g$ as the supremum of this set, and next $\Lambda_g$ as the collection of all dyadic cubes with diameter less than $\eps_g$. We then consider the premeasure $g\circ\diam{\cdot}_{\Lambda_g}$ that sends each set $\lambda$ in $\Lambda_g$ to $g(\diam{\lambda})$, and Theorem~\ref{thm:absmeas} allows us to define similarly to~(\ref{eq:df:zetaasta}) the outer measure
	\[
		\netm^g_\infty=(g\circ\diam{\cdot}_{\Lambda_g})^\ast
	\]
	resulting from coverings by dyadic cubes with diameter less than $\eps_g$.
	
The outer measure $\netm^g_\infty$ provides a lower bound on the corresponding net measure $\netm^g$, which is defined by~(\ref{eq:df:netmgauge}) and is comparable with the Hausdorff measure $\hau^g$, see Proposition~\ref{prp:compnetm}. As a consequence, there is a real number $\kappa\geq 1$ independent on $g$ such that for any set $E\subseteq\R^d$,
	\begin{equation}\label{eq:comphaunetmgauge}
		\kappa\,\hau^g(E)\geq\netm^g_\infty(E).
	\end{equation}

Recall that the outer net measures $\netm^s_\infty$, defined by~(\ref{eq:df:netmsinfty}) for $s\in(0,d]$, played a crucial r\^ole in the characterization of Falconer's classes and the definition of their localized counterparts $\lic{s}{U}$, see Theorem~\ref{thm:caraclic} and Definition~\ref{df:licloc}, respectively. These outer measures are actually an instance of the above construction. Specifically, for any $s\in(0,d]$, the gauge function $r\mapsto r^s$ is clearly $d$-normalized and the parameter $\eps_{r\mapsto r^s}$ is infinite. Hence, the collection $\Lambda_{r\mapsto r^s}$ coincides with the whole $\Lambda$, from which it follows that $\netm^{r\mapsto r^s}_\infty$ is merely equal to $\netm^s_\infty$. The outer measures $\netm^g_\infty$ thus extend naturally those used in Section~\ref{sec:largeint}\,; this hints at why they will play a key r\^ole in the definition of the generalized large intersection classes.

Finally, it is useful to point out that the value in each dyadic cube of the $\netm^g_\infty$-mass of Lebesgue full sets has a very simple expression.

\begin{Lemma}\label{lem:netmfullLeb}
	For any $d$-normalized gauge function $g$, any dyadic cube $\lambda$ in $\Lambda_g$, and any subset $F$ of $\R^d$, the following implication holds:
	\[
		\leb^d(\lambda\setminus F)=0
		\qquad\Longrightarrow\qquad
		\netm^g_\infty(F\cap\lambda)=g(\diam{\lambda}).
	\]	
\end{Lemma}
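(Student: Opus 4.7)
The upper bound is immediate: since $\lambda$ is itself a dyadic cube belonging to $\Lambda_g$ and covers $F\cap\lambda$, the singleton covering $\{\lambda\}$ (padded with empty cubes) shows that $\netm^g_\infty(F\cap\lambda)\leq g(\diam{\lambda})$. The whole content of the lemma is thus the matching lower bound, which rests on the two defining properties of the parameter $\eps_g$ introduced in Section~\ref{subsubsec:netmrev}: the gauge $g$ is nondecreasing on $[0,\eps_g]$ and $r\mapsto g(r)/r^d$ is nonincreasing on $(0,\eps_g]$.

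Fix an arbitrary covering $(\lambda_n)_{n\geq 1}$ of $F\cap\lambda$ by cubes in $\Lambda_g$. The plan is to distinguish two cases according to how the $\lambda_n$ sit with respect to $\lambda$. Since dyadic cubes are either disjoint or nested, every $\lambda_n$ meeting $\lambda$ is either contained in $\lambda$ or else contains $\lambda$. If some $\lambda_{n_0}$ contains $\lambda$, then $\diam{\lambda}\leq\diam{\lambda_{n_0}}<\eps_g$, and the monotonicity of $g$ on $[0,\eps_g]$ yields at once $\sum_n g(\diam{\lambda_n})\geq g(\diam{\lambda_{n_0}})\geq g(\diam{\lambda})$. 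Otherwise, every $\lambda_n$ that meets $\lambda$ is contained in $\lambda$, so in particular $\diam{\lambda_n}\leq\diam{\lambda}<\eps_g$, and the monotonicity of $r\mapsto g(r)/r^d$ on $(0,\eps_g]$ gives
\[
g(\diam{\lambda_n})\geq \frac{g(\diam{\lambda})}{\diam{\lambda}^d}\diam{\lambda_n}^d.
\]

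To exploit this, I will use the fact that, for dyadic cubes, the diameter to the power $d$ is proportional to the Lebesgue measure: there is a constant $c_d>0$ depending only on the norm and $d$ such that $\diam{\lambda'}^d=c_d\leb^d(\lambda')$ for every dyadic cube $\lambda'$. Since $\leb^d(\lambda\setminus F)=0$, the sets $\lambda_n$ (now restricted to those meeting $\lambda$, hence contained in $\lambda$) cover $\lambda$ up to a Lebesgue null set, and subadditivity of $\leb^d$ gives $\sum_n\leb^d(\lambda_n)\geq\leb^d(\lambda)$, so $\sum_n\diam{\lambda_n}^d\geq\diam{\lambda}^d$. Combining with the inequality above yields
\[
\sum_{n=1}^\infty g(\diam{\lambda_n})\geq \frac{g(\diam{\lambda})}{\diam{\lambda}^d}\sum_{n=1}^\infty\diam{\lambda_n}^d\geq g(\diam{\lambda}).
\]
In both cases the covering sum dominates $g(\diam{\lambda})$; taking the infimum over all admissible coverings gives $\netm^g_\infty(F\cap\lambda)\geq g(\diam{\lambda})$, and the lemma follows. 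The only real subtlety is the dichotomy above, which is forced upon us because we may only invoke the normalization inequality $g(\diam{\lambda_n})/\diam{\lambda_n}^d\geq g(\diam{\lambda})/\diam{\lambda}^d$ when $\diam{\lambda_n}\leq\diam{\lambda}$; this is precisely what breaks down for the ``oversized'' cubes, and that is why they must be handled separately via pure monotonicity.
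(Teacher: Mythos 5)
Your proof is correct and follows essentially the same strategy as the paper's: the trivial upper bound from the singleton covering, the dichotomy according to whether some covering cube contains $\lambda$, monotonicity of $g$ in the first case, and in the second case the nonincreasing property of $r\mapsto g(r)/r^d$ combined with the proportionality $\diam{\lambda'}^d\propto\leb^d(\lambda')$ and the Lebesgue-fullness hypothesis. The only cosmetic differences are notational (your $c_d$ versus the paper's $\kappa'^d$) and the closed-versus-open endpoints on the $\eps_g$ intervals, which is immaterial since every cube in $\Lambda_g$ has diameter strictly less than $\eps_g$.
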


\begin{proof}
	The intersection set $F\cap\lambda$ is obviously covered by the sole cube $\lambda$, so that
	\[
		\netm^g_\infty(F\cap\lambda)\leq g(\diam{\lambda}).
	\]
	In order to prove the reverse inequality, let us consider a covering $(\lambda_n)_{n\geq 1}$ of the intersection set $F\cap\lambda$ by dyadic cubes with diameter less than $\eps_g$. If $\lambda$ is contained in some cube $\lambda_{n_0}$, the fact that $g$ is nondecreasing on $[0,\eps_g)$ implies that
	\[
		g(\diam{\lambda})\leq g(\diam{\lambda_{n_0}})\leq\sum_{n=1}^\infty g(\diam{\lambda_n}).
	\]
	Otherwise, we observe that the cubes $\lambda_n\subset\lambda$ suffice to cover the set $F\cap\lambda$. Along with the fact that the mapping $r\mapsto g(r)/r^d$ is nonincreasing on $(0,\eps_g)$, this yields
	\begin{align*}
		\sum_{n=1}^\infty g(\diam{\lambda_n})
		&\geq\sum_{n\geq 1\atop\lambda_n\subset\lambda}\frac{g(\diam{\lambda_n})}{\diam{\lambda_n}^d}\diam{\lambda_n}^d
		\geq\frac{g(\diam{\lambda})}{\diam{\lambda}^d}\sum_{n\geq 1\atop\lambda_n\subset\lambda}\diam{\lambda_n}^d
		=\frac{g(\diam{\lambda})}{\diam{\lambda}^d}\kappa'^d\sum_{n\geq 1\atop\lambda_n\subset\lambda}\leb^d(\lambda_n)\\
		&\geq\frac{g(\diam{\lambda})}{\diam{\lambda}^d}\kappa'^d\leb^d(F\cap\lambda)=\frac{g(\diam{\lambda})}{\diam{\lambda}^d}\kappa'^d\leb^d(\lambda)=g(\diam{\lambda}).
	\end{align*}
	Here, $\kappa'$ stands for the diameter of the unit cube of $\R^d$, which depends on the choice of the norm. We conclude by taking the infimum over all coverings $(\lambda_n)_{n\geq 1}$.
\end{proof}

The previous result may be used to express the $\netm^g_\infty$-mass of dyadic cubes in terms of their diameters. As a matter of fact, using the notations of Lemma~\ref{lem:netmfullLeb}, if the set $F$ is chosen to be the cube $\lambda$ itself, or its interior, we get
	\begin{equation}\label{eq:netmlambdagauge}
		\netm^g_\infty(\lambda)=\netm^g_\infty(\interior{\lambda})=g(\diam{\lambda}),
	\end{equation}
	a formula which extends Lemma~\ref{lem:netmlambda} to any $d$-normalized gauge function. Likewise, all the ancillary lemmas from Section~\ref{subsubsec:anclemlic} may be extended to such gauge functions; we refer to~\cite{Durand:2007uq} for precise statements, see in particular Lemmas~10 and~12 therein.

\subsubsection{Generalized large intersection classes}

We are now in position to define the large intersection classes that are associated with general gauge functions. We defined those classes in~\cite{Durand:2007uq}, and we refer to that paper for all the proofs and details that are missing in the presentation below. As mentioned above, there is a lineage with the definition of the localized classes $\lic{s}{U}$, see Definition~\ref{df:licloc}.

We write $h\prec g$ to indicate that two $d$-normalized gauge functions $g$ and $h$ are such that the quotient $h/g$ monotonically tends to infinity at zero, that is,
	\[
		h\prec g
		\qquad\Longleftrightarrow\qquad
		\lim_{r\downarrow 0}\uparrow\frac{h(r)}{g(r)}=\infty.
	\]
	This means essentially that $h$ increases faster than $g$ near the origin. Note that $g$ may vanish in a neighborhood of zero; in that situation, we adopt the convention that $h\prec g$ for any choice of $h$, even if $h$ also vanishes near zero.

\begin{Definition}\label{df:liclocgauge}
	For any gauge function $g$ and any nonempty open subset $U$ of $\R^d$, the class $\lic{g}{U}$ of {\em sets with large intersection in $U$ with respect to $g$} is the collection of all $G_\delta$-subsets $F$ of $\R^d$ such that
	\begin{equation}\label{eq:df:liclocgauge}
		\netm^h_\infty(F\cap V)=\netm^h_\infty(V)
	\end{equation}
	for any $d$-normalized gauge function $h$ satisfying $h\prec g_d$, where $g_d$ denotes the $d$-normalization of $g$ defined by~(\ref{eq:df:normgauge}), and for any open subset $V$ of $U$.
\end{Definition}

Note that the class $\lic{g}{U}$ associated with a given gauge function $g$ coincides with that associated with its $d$-normalization, namely, the class $\lic{g_d}{U}$. One may therefore restrict oneself to $d$-normalized gauge functions when studying large intersection properties. Moreover, if two gauge functions are such that their respective $d$-normalizations match near the origin, the corresponding classes coincide. Besides, let us point out that, similarly to the classes $\lic{s}{U}$, the generalized classes defined above do not depend on the choice of the norm on $\R^d$, either.

With a view to detailing the connection with the localized classes $\lic{s}{U}$, we associate with any gauge function $g$ the following dimensional parameter $s_g$.

\begin{Definition}
	Let $g$ be a gauge function with $d$-normalization denoted by $g_d$. The {\em lower dimension} of the gauge function $g$ is the parameter defined by
	\[
		s_g=\sup\left\{s\in(0,d]\:|\:(r\mapsto r^s)\prec g_d\right\},
	\]
	with the convention that the supremum is equal to zero if the inner set is empty.
\end{Definition}

Obviously, we have $s_g=\min\{s,d\}$ if the gauge function $g$ is of the form $r\mapsto r^s$, with $s>0$. The relationship between the generalized classes $\lic{g}{U}$ and the original classes $\lic{s}{U}$ is now detailed in the next statement.

\begin{Proposition}\label{prp:linkFalcgauge}
	For any gauge function $g$ with lower dimension satisfying $s_g>0$ and for any nonempty open subset $U$ of $\R^d$, the following inclusion holds:
	\[
		\lic{g}{U}\subseteq\lic{s_g}{U}.
	\]
	In particular, for any set $F$ in $\lic{g}{U}$ and for any nonempty open set $V\subseteq U$,
	\[
	\Hdim(F\cap V)\geq s_g
	\qquad\text{and}\qquad
	\Pdim(F\cap V)=d.
	\]
	Moreover, the left-hand inequality above still holds if $s_g$ vanishes.
\end{Proposition}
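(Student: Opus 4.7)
The plan is to reduce the whole statement to the already-established Theorem~\ref{thm:licloc}(\ref{item:thm:licloc2}) and Proposition~\ref{prp:licPdim} via the inclusion $\lic{g}{U}\subseteq\lic{s_g}{U}$, which is the main content. First I would fix $F\in\lic{g}{U}$ and, assuming $s_g>0$, pick an arbitrary $t\in(0,s_g)$ in order to verify that the power function $h\colon r\mapsto r^t$ can be fed into the defining condition~(\ref{eq:df:liclocgauge}). Since $0<t\leq s_g\leq d$, the function $h$ is $d$-normalized. To check $h\prec g_d$, I would invoke the definition of $s_g$ as a supremum to pick $s\in(t,s_g]$ with $(r\mapsto r^s)\prec g_d$, and write
\[
\frac{r^t}{g_d(r)}=r^{t-s}\cdot\frac{r^s}{g_d(r)};
\]
both factors are positive and monotonically tend to $\infty$ as $r\downarrow 0$ (the first since $t-s<0$, the second by choice of $s$), hence so does their product. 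Feeding $h=r\mapsto r^t$ into~(\ref{eq:df:liclocgauge}) then yields $\netm^t_\infty(F\cap V)=\netm^t_\infty(V)$ for every open $V\subseteq U$, which is exactly the requirement of Definition~\ref{df:licloc} for $F\in\lic{s_g}{U}$.

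Once the inclusion is in place, the two dimensional conclusions follow immediately: Theorem~\ref{thm:licloc}(\ref{item:thm:licloc2}) gives $\Hdim(F\cap V)\geq s_g$, and Proposition~\ref{prp:licPdim} gives $\Pdim(F\cap V)=d$, for every nonempty open $V\subseteq U$. For the borderline case $s_g=0$, the bound $\Hdim(F\cap V)\geq 0$ just reduces to showing $F\cap V\neq\emptyset$; for this it suffices to exhibit a single $d$-normalized gauge function $h$ with $h\prec g_d$ and apply~(\ref{eq:df:liclocgauge}) together with the formula~(\ref{eq:netmlambdagauge}), since picking a dyadic cube $\lambda\subset V$ of small enough diameter gives $\netm^h_\infty(F\cap V)\geq\netm^h_\infty(\lambda)=h(\diam{\lambda})>0$. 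If $g_d$ vanishes in a neighborhood of $0$, any $h$ works by the paper's convention; otherwise I would set $h(r)=g_d(r)\log(1/g_d(r))$ for small $r$, extended to a gauge function on $[0,\infty]$. The ratio $h/g_d=\log(1/g_d)$ then grows monotonically to $\infty$ as $r\downarrow 0$ because $g_d$ is nondecreasing near the origin, and $h/r^d=(g_d/r^d)\log(1/g_d)$ is a product of two nonincreasing nonnegative factors of $r$, so $h$ is $d$-normalized.

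The main obstacle is Step~3: producing an explicit $h$ out of $g_d$ and checking simultaneously that it is $d$-normalized \emph{and} that $h/g_d$ is monotone increasing to infinity is the only place where one actually has to build something, whereas Steps~1--2 are just bookkeeping with the definitions and previously proved results.
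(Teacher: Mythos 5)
Your proposal is correct and follows the same structure as the paper's own proof: feed power gauge functions $r\mapsto r^t$ (for $t<s_g$) into Definition~\ref{df:liclocgauge} to land in $\lic{s_g}{U}$, then invoke Theorem~\ref{thm:licloc}(\ref{item:thm:licloc2}) and Proposition~\ref{prp:licPdim}, and handle $s_g=0$ via density of sets in $\lic{g}{U}$. You merely spell out two points the paper leaves implicit, namely the downward-closedness of the set defining $s_g$ (the paper already records $g_d^{1/2}\prec g_d$ in Section~\ref{sec:desc}, which serves the same role as your explicit $h$ in the $s_g=0$ case, and note that when $s_g=0$ the gauge $g_d$ cannot vanish near $0$, since that would force $s_g=d$, so your first subcase there is vacuous but harmless).
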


\begin{proof}
	Let us assume that $s_g$ is positive and let us consider a set $F$ in $\lic{g}{U}$. First, $F$ is a $G_\delta$-subset of $\R^d$. Then, for any $s\in(0,s_g)$, we have $(r\mapsto r^s)\prec g_d$, and Definition~\ref{df:liclocgauge} implies that
		\[
			\netm^{r\mapsto r^s}_\infty(F\cap V)=\netm^{r\mapsto r^s}_\infty(V)
		\]
		for any open subset $V$ of $U$. Recalling that the outer measure $\netm^{r\mapsto r^s}_\infty$ is identical to the outer measure $\netm^s_\infty$ defined by~(\ref{eq:df:netmsinfty}), we deduce from Definition~\ref{df:licloc} that the set $F$ belongs to the original localized class $\lic{s_g}{U}$.
	
	Moreover, applying Theorem~\ref{thm:licloc} and Proposition~\ref{prp:licPdim}, we deduce that the set $F$ has Hausdorff dimension at least $s_g$ and packing dimension equal to $d$ in every nonempty open subset $V$ of $U$. Finally, in view of Definition~\ref{df:liclocgauge}, any set in the class $\lic{g}{U}$ has to be dense in $U$. Therefore, the Hausdorff dimension of $F\cap V$ is necessarily bounded below by zero, that is, by $s_g$ when this value vanishes.
\end{proof}

Choosing $U$ equal to the whole space $\R^d$, we clearly deduce from Proposition~\ref{prp:linkFalcgauge} a statement bearing on Falconer's original classes $\lic{s}{\R^d}$. In addition, as easily seen for instance musing on the examples discussed in Sections~\ref{sec:inhom}--\ref{sec:Poisson}, the inclusion appearing in the statement of Proposition~\ref{prp:linkFalcgauge} is strict.

Let us now briefly discuss the case in which the gauge function $g$ has a $d$-normalization $g_d$ that vanishes in a neighborhood of zero. The $d$-normalized gauge function that is constant equal to zero is denoted by $\zerofunc$\,; let us mention in passing that its lower dimension clearly satisfies $s_\zerofunc=d$.

\begin{Proposition}\label{prp:lic0fullLeb}
	For any nonempty open set $U\subseteq\R^d$, the large intersection class $\lic{\zerofunc}{U}$ is formed by the $G_\delta$-subsets of $\R^d$ with full Lebesgue measure in $U$.
\end{Proposition}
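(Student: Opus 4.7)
The plan is to establish the set equality by proving the two inclusions separately. Throughout, I would work with the supremum norm on $\R^d$, which is harmless by the norm-independence of $\lic{g}{U}$ noted after Definition~\ref{df:liclocgauge}, and which provides the pivotal identity $\diam{\lambda}^d=\leb^d(\lambda)$ for every dyadic cube $\lambda$.

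For the inclusion that any $G_\delta$-set $F$ with $\leb^d(U\setminus F)=0$ lies in $\lic{\zerofunc}{U}$, recall that the convention recorded just before Definition~\ref{df:liclocgauge} makes $h\prec\zerofunc$ hold for every $d$-normalized gauge $h$. Fix such an $h$ and an open set $V\subseteq U$. For any dyadic cube $\lambda\in\Lambda_h$ with $\lambda\subseteq V$, the inclusion $\lambda\setminus F\subseteq U\setminus F$ forces $\leb^d(\lambda\setminus F)=0$, so Lemma~\ref{lem:netmfullLeb} combined with~(\ref{eq:netmlambdagauge}) yields $\netm^h_\infty(F\cap\lambda)=h(\diam{\lambda})=\netm^h_\infty(\lambda)$. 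The extension of Lemma~\ref{lem:netmlambdaV} to arbitrary $d$-normalized gauges mentioned right after~(\ref{eq:netmlambdagauge}) then promotes this cube-by-cube identity to $\netm^h_\infty(F\cap V)=\netm^h_\infty(V)$, placing $F$ in $\lic{\zerofunc}{U}$.

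For the reverse inclusion, let $F\in\lic{\zerofunc}{U}$, so $F$ is a $G_\delta$-set; apply Definition~\ref{df:liclocgauge} with the $d$-normalized gauge $h(r)=r^d$ to obtain $\netm^d_\infty(F\cap W)=\netm^d_\infty(W)$ for every open $W\subseteq U$. I next claim that, in the supremum norm, $\netm^d_\infty(E)\leq\leb^d(E)$ for every Borel set $E$. Indeed, any open $W$ is a countable disjoint union of dyadic cubes $\mu_n$, so
\[
\netm^d_\infty(W)\leq\sum_{n=1}^\infty\diam{\mu_n}^d=\sum_{n=1}^\infty\leb^d(\mu_n)=\leb^d(W),
\]
and the inequality extends to Borel sets by the outer regularity of Lebesgue measure. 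Now for any dyadic cube $\lambda$ with $\lambda\subseteq U$, taking $W=\interior{\lambda}$ and invoking~(\ref{eq:netmlambdagauge}) gives
\[
\leb^d(F\cap\interior{\lambda})\geq\netm^d_\infty(F\cap\interior{\lambda})=\netm^d_\infty(\interior{\lambda})=\diam{\lambda}^d=\leb^d(\interior{\lambda}),
\]
so $\leb^d(\interior{\lambda}\setminus F)=0$. Exhausting $U$ up to a Lebesgue null set by a countable dyadic tiling whose cubes are eventually contained in $U$, and summing, yields $\leb^d(U\setminus F)=0$.

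The main obstacle is the comparison $\netm^d_\infty\leq\leb^d$ on Borel sets used in the reverse inclusion: because $\netm^d_\infty$ is built via the absolute (non-metric) Carath\'eodory construction of Theorem~\ref{thm:absmeas}, this bound is not automatic, and the argument depends critically on being able to switch to the supremum norm, which in turn rests on the norm-invariance of the classes $\lic{g}{U}$. Once this comparison is in place, both directions unfold directly from Lemma~\ref{lem:netmfullLeb} and formula~(\ref{eq:netmlambdagauge}).
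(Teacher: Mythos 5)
Your forward inclusion coincides with the paper's: apply Lemma~\ref{lem:netmfullLeb} together with~(\ref{eq:netmlambdagauge}) cube by cube, then promote to open subsets via the generalized form of Lemma~\ref{lem:netmlambdaV}. Your reverse inclusion, however, takes a genuinely different route. The paper dominates $\netm^d_\infty$ by $\kappa\kappa''\leb^d$ by passing through the Hausdorff measure $\hau^d$, combining~(\ref{eq:comphaunetmgauge}) with Proposition~\ref{prp:comphauleb}; it then inscribes a dyadic cube $\lambda$ of minimal generation inside each ball $\opball(x,r)\subseteq U$, obtains only the strictly positive density bound
\[
\frac{\leb^d(F\cap\opball(x,r))}{\leb^d(\opball(x,r))}\geq\frac{c^d}{\kappa\kappa''\leb^d(\opball(0,1))},
\]
and therefore has to invoke the Lebesgue density theorem to conclude. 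You instead fix the supremum norm, correctly justified by the norm-invariance of $\lic{g}{U}$, so that $\diam{\lambda}^d=\leb^d(\lambda)$ holds exactly on dyadic cubes; the sharp inequality $\netm^d_\infty\leq\leb^d$ then follows directly from the disjoint dyadic decomposition of open sets and the outer regularity of Lebesgue measure, yielding the exact identity $\leb^d(\interior{\lambda}\setminus F)=0$ for every dyadic $\lambda\subseteq U$, with no appeal to the density theorem; summing over a countable dyadic cover of $U$ finishes. Your route is more elementary, avoiding both Hausdorff measures and the density theorem, at the cost of pinning down the norm; the paper's route is norm-agnostic at the cost of the constants $\kappa$, $\kappa''$, $c$ and a heavier measure-theoretic tool. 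Both are correct.
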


\begin{proof}
	Let us consider a $G_\delta$-subset $F$ of $\R^d$ with full Lebesgue measure in $U$. Lemma~\ref{lem:netmfullLeb}, combined with~(\ref{eq:netmlambdagauge}), ensures that for any $d$-normalized gauge function $g$ and any dyadic cube $\lambda$ in $\Lambda_g$ that is contained in $U$,
		\[
			\netm^g_\infty(F\cap\lambda)=g(\diam{\lambda})=\netm^g_\infty(\lambda).
		\]
		We finally conclude that $F$ belongs to the class $\lic{\zerofunc}{U}$ thanks to the extension of Lemma~\ref{lem:netmlambdaV} to arbitrary $d$-normalized gauge functions, see~\cite[Lemma~10]{Durand:2007uq}.

	Conversely, let us consider a set $F$ in the class $\lic{\zerofunc}{U}$. First, $F$ is necessarily a $G_\delta$-set. Moreover, we know that~(\ref{eq:df:liclocgauge}) holds in particular for the $d$-normalized gauge function $r\mapsto r^d$ and for all open balls $\opball(x,r)$ contained in $U$. Using~(\ref{eq:comphaunetmgauge}) and~(\ref{eq:df:liclocgauge}), and letting $\kappa''$ be the constant appearing in Proposition~\ref{prp:comphauleb}, we get
		\[
			\kappa\kappa''\leb^d(F\cap\opball(x,r))=\kappa\hau^d(F\cap\opball(x,r))\geq\netm^d_\infty(F\cap\opball(x,r))=\netm^d_\infty(\opball(x,r)).
		\]
		Letting $\lambda$ be a nonempty dyadic cube contained in $\opball(x,r)$ with minimal generation, we have $\diam{\lambda}\geq c\,r$ for some $c>0$ depending on the norm only, and Lemma~\ref{lem:netmlambda} yields
		\[
			\netm^d_\infty(\opball(x,r))\geq\netm^d_\infty(\lambda)=\diam{\lambda}^d
			\geq c^d r^d=\frac{c^d}{\leb^d(\opball(0,1))}\leb^d(\opball(x,r)),
		\]
		where the last equality follows from fact that the Lebesgue measure is translation invariant and homogeneous with degree $d$ with respect to dilations. Hence,
		\[
			\frac{\leb^d(F\cap\opball(x,r))}{\leb^d(\opball(x,r))}
			\geq\frac{c^d}{\kappa\kappa''\leb^d(\opball(0,1))}>0
		\]
		for any open ball $\opball(x,r)$ contained in $U$. It follows from the Lebesgue density theorem that $F$ has full Lebesgue measure in $U$, see~\cite[Corollary~2.14]{Mattila:1995fk}.
\end{proof}

The various remarkable properties of the large intersection classes $\lic{g}{U}$ naturally extend those satisfied by Falconer's classes, see Section~\ref{subsec:largeintclass}. We begin by stating the properties that follow immediately from the definition. The next result may be seen as a partial analog of Proposition~\ref{prp:morelic}.

\begin{Proposition}\label{prp:morelicgauge}
	Let $g$ be in the set $\gauge$ of gauge functions, let $g_d$ denote its $d$-normalization, and let $U$ be a nonempty open subset of $\R^d$.
	\begin{enumerate}
		\item\label{item:prp:morelicgauge1} Any $G_\delta$-subset of $\R^d$ that contains a set in $\lic{g}{U}$ also belongs to $\lic{g}{U}$.
		\item\label{item:prp:morelicgauge2} The following equalities hold:
		\[
			\lic{g}{U}=\bigcap_{V\text{ open}\atop\emptyset\neq V\subseteq U}\lic{g}{V}
			\qquad\text{and}\qquad
			\lic{g}{U}=\bigcap_{h\in\gauge\atop h_d\prec g_d}\lic{h}{U}.
		\]
	\end{enumerate}
\end{Proposition}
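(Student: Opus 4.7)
The plan is to prove the two items separately. Part~(\ref{item:prp:morelicgauge1}) and the first equality of part~(\ref{item:prp:morelicgauge2}) follow immediately from the definitions, while the second equality of part~(\ref{item:prp:morelicgauge2}) rests on constructing an intermediate gauge function. For part~(\ref{item:prp:morelicgauge1}), if $F\subseteq F'$ with $F\in\lic{g}{U}$ and $F'$ a $G_\delta$-set, then for every $d$-normalized gauge $h\prec g_d$ and every open $V\subseteq U$, monotonicity of $\netm^h_\infty$ yields the sandwich
\[
\netm^h_\infty(V)=\netm^h_\infty(F\cap V)\leq\netm^h_\infty(F'\cap V)\leq\netm^h_\infty(V),
\]
forcing equality throughout and thereby placing $F'$ in $\lic{g}{U}$. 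For the first equality in part~(\ref{item:prp:morelicgauge2}), the inclusion $\supseteq$ is obtained by choosing $V=U$ in the intersection, while the inclusion $\subseteq$ follows because Definition~\ref{df:liclocgauge} already requires~(\ref{eq:df:liclocgauge}) to hold on every open subset of $U$, hence a fortiori on every open subset of any nonempty open $V\subseteq U$.

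For the inclusion $\lic{g}{U}\subseteq\bigcap_{h_d\prec g_d}\lic{h}{U}$ in the second equality, the plan is to invoke transitivity of the relation $\prec$. Given $h\in\gauge$ with $h_d\prec g_d$ and a $d$-normalized gauge $k$ with $k\prec h_d$, the factorization $k(r)/g_d(r)=(k(r)/h_d(r))\,(h_d(r)/g_d(r))$ expresses $k/g_d$ as a product of two positive functions that are each nondecreasing as $r\downarrow 0$ and tend to infinity, so $k\prec g_d$ as well. Thus the defining identity of $\lic{g}{U}$ immediately supplies that of $\lic{h}{U}$.

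The substantive step is the reverse inclusion $\bigcap_{h_d\prec g_d}\lic{h}{U}\subseteq\lic{g}{U}$. Given $F$ in the intersection and a $d$-normalized $k\prec g_d$, the idea is to manufacture a single intermediate gauge function interpolating between $k$ and $g_d$, and to apply membership in the corresponding class. The natural candidate is the geometric mean $h:=\sqrt{k\cdot g_d}$; three verifications are required. First, $h$ is a gauge function since $k$ and $g_d$ both vanish at the origin, are nondecreasing near zero, and equal $\infty$ at $\infty$. Second, $h$ is $d$-normalized in a neighborhood of zero because $h(r)/r^d=\sqrt{(k(r)/r^d)(g_d(r)/r^d)}$ is the square root of a product of two positive nonincreasing factors, hence nonincreasing. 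Third, both $h/k$ and $g_d/h$ coincide near zero with $\sqrt{g_d/k}$, which by hypothesis increases monotonically to infinity as $r\downarrow 0$, so $k\prec h=h_d$ and $h_d\prec g_d$. With $h$ thus qualifying for the indexing set, $F\in\lic{h}{U}$, and applying~(\ref{eq:df:liclocgauge}) with the admissible choice $k\prec h_d$ gives $\netm^k_\infty(F\cap V)=\netm^k_\infty(V)$ for every open $V\subseteq U$, proving $F\in\lic{g}{U}$. The only genuine obstacle is this construction: the geometric mean is a natural choice, but the simultaneous verification of being a valid $d$-normalized gauge and lying strictly between $k$ and $g_d$ in the $\prec$-order is where the argument requires care.
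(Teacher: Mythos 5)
Your overall plan is sound and, since the paper omits the proof of Proposition~\ref{prp:morelicgauge} as "essentially immediate," your geometric-mean interpolation for the only nontrivial inclusion $\bigcap_{h_d\prec g_d}\lic{h}{U}\subseteq\lic{g}{U}$ is a natural and correct choice, very much in the spirit of the paper's own interpolation tricks (compare the remark $\overline g=g_d^{1/2}$ just before the definition of left/right-openness). Part~(\ref{item:prp:morelicgauge1}) and the first equality of~(\ref{item:prp:morelicgauge2}) are handled correctly.

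However, in the third verification for $h=\sqrt{k\,g_d}$ you have the orientation of $\prec$ reversed. The paper defines $a\prec b$ by $a/b\uparrow\infty$ as $r\downarrow 0$, so $k\prec g_d$ means $k/g_d\to\infty$, hence $g_d/k\to 0$. Your claim that $\sqrt{g_d/k}$ "by hypothesis increases monotonically to infinity" is therefore false; it decreases to zero. The ratios relevant to $k\prec h$ and $h\prec g_d$ are $k/h$ and $h/g_d$, not $h/k$ and $g_d/h$, and a short computation gives
\[
\frac{k}{h}=\frac{k}{\sqrt{k\,g_d}}=\sqrt{\frac{k}{g_d}}
\qquad\text{and}\qquad
\frac{h}{g_d}=\frac{\sqrt{k\,g_d}}{g_d}=\sqrt{\frac{k}{g_d}},
\]
both of which do increase monotonically to infinity because $k\prec g_d$. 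Your final conclusions $k\prec h_d$ and $h_d\prec g_d$ are thus correct, but only because you applied the reversed convention consistently; as written, the intermediate statements contradict the paper's definition. Once the ratios are written the right way up, the proof is complete. One further small point worth a sentence: when $g_d$ vanishes near zero, $h=\sqrt{k\,g_d}=\zerofunc$ near zero, and the relation $k\prec\zerofunc$ holds by the paper's explicit convention, so the argument still goes through in that degenerate case.
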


All the properties are essentially immediate from the definition of the generalized large intersection classes, and the proof is therefore omitted here. The next result extends Theorem~\ref{thm:stablic} to the large intersection classes $\lic{g}{U}$, thereby showing that they enjoy the same stability properties as Falconer's classes.

\begin{Theorem}\label{thm:stablicgauge}
	Let $g$ be a gauge function with $d$-normalization denoted by $g_d$ and with lower dimension denoted by $s_g$, and let $U$ be a nonempty open subset of $\R^d$. The following properties hold:
	\begin{enumerate}
		\item\label{item:thm:stablicgauge1} the class $\lic{g}{U}$ is closed under countable intersections;
		\item\label{item:thm:stablicgauge2} for any bi-Lipschitz transformation $f:U\to\R^d$ and any set $F\subseteq\R^d$,
		\[
			F\in\lic{g}{f(U)}
			\qquad\Longrightarrow\qquad
			f^{-1}(F)\in\lic{g}{U}\,;
		\]
		\item\label{item:thm:stablicgauge3} for any set $F$ in the class $\lic{g}{U}$ and for every gauge function $h$,
		\[
			h_d\prec g_d
			\qquad\Longrightarrow\qquad
			\hau^h(F\cap U)=\hau^h(U).
		\]
	\end{enumerate}
\end{Theorem}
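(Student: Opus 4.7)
The three parts admit a unified approach based on Definition~\ref{df:liclocgauge} combined with the extensions to arbitrary $d$-normalized gauge functions of the ancillary Lemmas~\ref{lem:netmlambdaV}, \ref{lem:netmst}, \ref{lem:netmbilip}, and~\ref{lem:netminterGdelta}; these extensions are proved in~\cite{Durand:2007uq}. Since $\lic{g}{U}=\lic{g_d}{U}$, I assume throughout that $g=g_d$ is $d$-normalized. A recurring device is the \emph{intermediate gauge trick}: given any $d$-normalized $h\prec g$, the $d$-normalization of $\sqrt{h\cdot g}$ produces a $d$-normalized gauge $h'$ satisfying $h\prec h'\prec g$, which lets one invoke the class property at the level $h'$ and then sharpen to an equality at the finer level $h$ via the generalized Lemma~\ref{lem:netmst}.

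For~(\ref{item:thm:stablicgauge1}), let $(F_n)_{n\geq 1}$ be a sequence in $\lic{g}{U}$ and pick intermediate $h\prec h'\prec g$. The class property of each $F_n$ at level $h'$, together with the generalized Lemma~\ref{lem:netminterGdelta}, yields
\[
\netm^{h'}_\infty\Bigl(\textstyle\bigcap_{n\geq 1} F_n\cap V\Bigr)\geq 3^{-d}\netm^{h'}_\infty(V)
\]
for every open $V\subseteq U$. The generalized Lemmas~\ref{lem:netmst} and~\ref{lem:netmlambdaV} then upgrade this constant-factor inequality at level $h'$ into the equality $\netm^h_\infty(\bigcap_n F_n\cap V)=\netm^h_\infty(V)$. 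A countable intersection of $G_\delta$-sets being $G_\delta$, the intersection lies in $\lic{g}{U}$.

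For~(\ref{item:thm:stablicgauge2}), let $f:U\to\R^d$ be bi-Lipschitz and $F\in\lic{g}{f(U)}$. With intermediate $h\prec h'\prec g$, the class property of $F$ at level $h'$ combined with the generalized Lemma~\ref{lem:netmbilip} produces a constant $c>0$ such that $\netm^{h'}_\infty(f^{-1}(F)\cap V)\geq c\,\netm^{h'}_\infty(V)$ for every open $V\subseteq U$. The same upgrade scheme as in~(\ref{item:thm:stablicgauge1}) promotes this to the equality at level $h$, and $f^{-1}(F)$ is $G_\delta$ by continuity of $f$.

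For~(\ref{item:thm:stablicgauge3}), fix $h$ with $h_d\prec g$; the upper bound $\hau^h(F\cap U)\leq\hau^h(U)$ is monotonicity. For the reverse, I argue dyadically. Any $\delta$-covering $(C_n)_n$ of $F\cap U$ refines into a dyadic covering of $F\cap U$ of total $h$-mass at most $6^d\sum_n h(\diam{C_n})$: each $C_n$ lies in a union of at most $3^d$ dyadic cubes of diameter in $[\diam{C_n},2\diam{C_n}]$, and the $d$-normalized growth inequality $h_d(2r)\leq 2^d h_d(r)$ controls the resulting $h$-mass. The class property of $F$ at an intermediate $d$-normalized $h'$ with $h_d\prec h'\prec g$ lower-bounds the $h'$-mass of any dyadic cover of $F\cap U$ by $\netm^{h'}_\infty(U)$, and the comparison $h\geq h_d\geq h'$ near zero transfers this to a lower bound on the $h$-mass. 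Sending $\delta\to 0$ while driving $h'$ monotonically toward $h_d$ through a sequence of $d$-normalized approximations recovers $\hau^h(U)$ as the sharp limit, after applying Proposition~\ref{prp:compnormalgauge} to pass between $\hau^h$ and $\hau^{h_d}$ and noting that the degenerate case is handled through Proposition~\ref{prp:dichogauge}. This final limiting step is the main technical obstacle: it is the large intersection counterpart of the mass transference principle (Theorem~\ref{thm:masstransprinc}) and relies on the monotone convergence of the net-measure masses $\netm^{h'}_\infty(U)$ along the chosen family of intermediate gauges.
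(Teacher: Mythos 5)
Your proofs of parts~(\ref{item:thm:stablicgauge1}) and~(\ref{item:thm:stablicgauge2}) are correct: the intermediate-gauge trick replacing the intermediate exponent $t<s$, combined with the generalized versions of Lemmas~\ref{lem:netminterGdelta}, \ref{lem:netmbilip}, \ref{lem:netmst} and~\ref{lem:netmlambdaV}, mirrors exactly the mechanism the paper uses for Falconer's classes in the proof of Theorem~\ref{thm:stablic}. The paper itself simply defers this theorem to~\cite{Durand:2007uq}, so your reconstruction for those two parts is, in effect, the intended one.

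Part~(\ref{item:thm:stablicgauge3}) has a genuine gap. Your chain of inequalities gives, for any $\delta$-cover $(C_n)$ of $F\cap U$,
\[
6^d\sum_n h(\diam{C_n})\geq\bigl(\text{$h$-mass of the dyadic refinement}\bigr)\geq\bigl(\text{$h'$-mass of the refinement}\bigr)\geq\netm^{h'}_\infty(U),
\]
and hence only $\hau^h(F\cap U)\geq 6^{-d}\netm^{h'}_\infty(U)$. But $\netm^{h'}_\infty(U)$ is independent of $\delta$, and for bounded $U$ it is a \emph{finite} positive constant for every $h'$ with $h_d\prec h'\prec g_d$: it is dominated by $\sum_\lambda h'(\diam{\lambda})$ over any finite dyadic cover of $U$ by cubes of diameter below $\eps_{h'}$. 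As $h'$ approaches $h_d$ these finite masses merely converge; they do not diverge. So your limiting scheme cannot reach the infinite value that the conclusion actually requires in the nontrivial case $\ell_g>0$: there $h_d\prec g_d$ forces $\ell_h=\infty$, hence $\hau^h(U)=\infty$ by Proposition~\ref{prp:dichogauge}(\ref{item:prp:dichogauge1}), and the statement to prove is $\hau^h(F\cap U)=\infty$.

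The missing idea is to exploit the divergence of the ratio $h_d/h'$ at small scales, instead of letting $h'$ creep toward $h_d$. Fix any single $d$-normalized $h'$ with $h_d\prec h'\prec g_d$, fix a nonempty dyadic cube $\lambda_0\subseteq U$ with $\diam{\lambda_0}<\eps_{h'}$, and set $C_\delta=\inf_{0<r\leq\delta}h_d(r)/h'(r)$; the relation $h_d\prec h'$ is precisely the statement that $C_\delta\uparrow\infty$ as $\delta\downarrow 0$. For any dyadic covering $(\lambda_n)$ of $F\cap\lambda_0$ with $\diam{\lambda_n}\leq\delta<\diam{\lambda_0}$, every cube meeting $F\cap\lambda_0$ lies in $\lambda_0$, so
\[
\sum_n h_d(\diam{\lambda_n})\geq C_\delta\sum_n h'(\diam{\lambda_n})\geq C_\delta\,\netm^{h'}_\infty(F\cap\lambda_0)=C_\delta\,h'(\diam{\lambda_0}),
\]
the last equality coming from $F\in\lic{g}{U}$ together with~(\ref{eq:netmlambdagauge}). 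Since $C_\delta\to\infty$, this forces $\netm^{h_d}(F\cap\lambda_0)=\infty$, hence $\hau^{h_d}(F\cap U)=\infty$ by Proposition~\ref{prp:compnetm}, and finally $\hau^h(F\cap U)=\infty$ by Proposition~\ref{prp:compnormalgauge}. The degenerate case $\ell_g=0$ is handled, as you note, through Propositions~\ref{prp:lic0fullLeb} and~\ref{prp:dichogauge}.
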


\begin{proof}[A few words on the proof]
The result corresponds to Theorem~1 in~\cite{Durand:2007uq}, so we refer to that paper for the whole proof. Let us just mention that the statement in~\cite{Durand:2007uq} only addresses the $d$-normalized gauge functions $g$ for which the parameter $\ell_g$ defined by~(\ref{eq:df:liminfgrd}) is positive. In that situation, note that the Hausdorff $h$-measure of the set $F\cap U$ that appears in~(\ref{item:thm:stablicgauge3}) is actually infinite, as a consequence of Propositions~\ref{prp:compgauge0} and~\ref{prp:dichogauge}. Furthermore, the normalization assumption made in~\cite{Durand:2007uq} may easily be dropped with the help of Proposition~\ref{prp:compnormalgauge}. In addition, Theorem~\ref{thm:stablicgauge} clearly holds for $\ell_g=0$. Indeed, in that situation, the gauge function $g_d$ vanishes near zero and Proposition~\ref{prp:lic0fullLeb} ensures that the class $\lic{g}{U}$ is formed by the $G_\delta$-sets with full Lebesgue measure in $U$. All the properties are thus satisfied, even~(\ref{item:thm:stablicgauge3}) which may be obtained with the help of Propositions~\ref{prp:compgauge0} and~\ref{prp:dichogauge}.
\end{proof}

A plain consequence of Theorem~\ref{thm:stablicgauge} is that if $(F_n)_{n\geq 1}$ is a sequence of sets in $\lic{g}{U}$ and if $h$ is a gauge function, then
	\[
		h_d\prec g_d
		\qquad\Longrightarrow\qquad
		\hau^h\left(\bigcap_{n=1}^\infty F_n\cap U\right)=\hau^h(U).
	\]
	Thanks to Proposition~\ref{prp:dichogauge}, the latter equality may be rewritten in various alternate forms depending on the value of the parameter $\ell_h$ defined as in~(\ref{eq:df:liminfgrd}). In addition, this equality implies that the intersection of all the sets $F_n$ has Hausdorff dimension bounded below by $s_g$, and this bound is clearly attained if one of the sets has Hausdorff dimension at most $s_g$.

\subsubsection{The transference principle}

The classes associated with arbitrary gauge functions being defined, we may state the large intersection analog counterpart of the mass transference principle. While the latter result discusses the Hausdorff measures for the set $\frakF((x_i,r_i)_{i\in\calI})$ defined by~(\ref{eq:df:Fxiri}), the next statement concerns its large intersection properties.

\begin{Theorem}\label{thm:largeintprinc}
	Let $\calI$ be a countably infinite set, let $(x_i,r_i)_{i\in\calI}$ be an approximation system in $\R^d\times(0,\infty)$, let $g$ be a gauge function and let $U$ be a nonempty open subset of $\R^d$. If $(x_i,r_i)_{i\in\calI}$ is a homogeneous $g$-ubiquitous system in $U$, then
	\[
		\frakF((x_i,r_i)_{i\in\calI})\in\lic{g}{U}.
	\]
\end{Theorem}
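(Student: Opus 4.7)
The plan is to verify directly that $F := \frakF((x_i,r_i)_{i\in\calI})$ satisfies Definition~\ref{df:liclocgauge}, namely (i) that $F$ is a $G_\delta$-set and (ii) that for every $d$-normalized gauge function $h$ with $h \prec g_d$ and every open set $V \subseteq U$, the equality $\netm^h_\infty(F \cap V) = \netm^h_\infty(V)$ holds. Property~(i) is immediate: fixing any enumeration $(i_n)_{n \geq 1}$ of $\calI$ expresses $F$ as $\bigcap_{N \geq 1} \bigcup_{n \geq N} \opball(x_{i_n}, r_{i_n})$, a countable intersection of open sets. For~(ii), using the gauge-function extensions of Lemmas~\ref{lem:netmlambdaV} and~\ref{lem:netmst} (proved in just the same way as their power-gauge counterparts recalled in Section~\ref{subsubsec:anclemlic}), it suffices to exhibit a constant $c > 0$ and a threshold $\delta_0 > 0$ such that
\[
\netm^h_\infty(F \cap \lambda) \geq c\, h(\diam{\lambda})
\]
for every dyadic cube $\lambda \subseteq U$ with $\diam{\lambda} \leq \delta_0$.

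To obtain this local bound, I would adapt the generalized Cantor construction used in the proof of Theorem~\ref{thm:lobndhomubsys}, but driven by the \emph{enlarged} approximation radii. The crucial observation is that, by Definition~\ref{df:homubiqsysgauge}, the family $(x_i, g_d(r_i)^{1/d})_{i \in \calI}$ is itself a homogeneous ubiquitous system in $U$ in the sense of Definition~\ref{df:homubiqsys}, so Lemma~\ref{lem:lobndhomubsys} applies to it. Fixing $\lambda$, I would start from a closed ball $I_\varnothing \subseteq \interior{\lambda}$ of diameter comparable to $\diam{\lambda}$. At each generation, given the parent compact set $I_{\pi(u)}$, I would apply the covering lemma inside $\interior{I_{\pi(u)}}$ with a carefully tuned descent scale $\rho_{\pi(u)}$, obtaining pairwise disjoint enlarged balls $B_u^\sharp := \clball(x_{i_u}, g_d(r_{i_u})^{1/d})$ packed inside $I_{\pi(u)}$ with total Lebesgue mass at least $(2\cdot 3^d)^{-1}\leb^d(I_{\pi(u)})$. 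I would then set $I_u := \clball(x_{i_u}, r_{i_u}) \subseteq B_u^\sharp$, the \emph{small} ball, so that the limiting compact set $K := \bigcap_j \bigsqcup_{u \in T \cap \N^j} I_u$ lies inside $F \cap \interior{\lambda}$.

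For the companion premeasure I would set $\zeta(I_\varnothing) := h(\diam{I_\varnothing})$ and propagate mass down the tree in proportion to the Lebesgue volumes of the enlarged balls, namely
\[
\zeta(I_u) := \frac{\leb^d(B_u^\sharp)}{\sum_{v \in S_u}\leb^d(B_v^\sharp)}\,\zeta(I_{\pi(u)}),
\]
where $S_u$ denotes the siblings of $u$. The main technical obstacle is then a gauge-adapted analogue of Lemma~\ref{lem:bndmuIu}: to show, by induction along the tree, that $\zeta(I_u) \lesssim h(\diam{I_u})$. Unfolding the recursion and using the lower bound on the sum of sibling volumes, the inductive step reduces to the requirement
\[
\frac{h(2\,r_{i_u})}{h(2\,r_{i_{\pi(u)}})} \gtrsim \frac{g_d(r_{i_u})}{r_{i_{\pi(u)}}^d}.
\]
Via the $d$-normalization of $g_d$ and the monotonicity of $h/g_d$ near zero, this in turn reduces to the condition $(h/g_d)(r_{i_u}) \gtrsim (h/g_d)(r_{i_{\pi(u)}})\cdot g_d(r_{i_{\pi(u)}})/r_{i_{\pi(u)}}^d$. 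This is exactly where the assumption $h \prec g_d$ is used decisively: since $h/g_d$ monotonically tends to infinity near zero, I can always choose the descent scale $\rho_{\pi(u)}$ (constraining the diameters of the children) small enough to make this ratio as large as required. This gauge-adapted choice of $\rho_{\pi(u)}$ plays the role of the explicit exponential descent rate used in the power-law case of Theorem~\ref{thm:lobndhomubsys}, and is where the bulk of the technical work will go.

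Once the inductive bound $\zeta(I_u) \lesssim h(\diam{I_u})$ is secured, an argument strictly parallel to Proposition~\ref{prp:bndmuB}, relying on the $d$-normalization of $h$ together with the same volume estimates, yields the scaling bound $\mu(B) \lesssim h(\diam{B})$ for every sufficiently small closed ball $B$, where $\mu(E) := \zeta^\ast(E \cap K)$. The mass distribution principle (Lemma~\ref{lem:massdist}), transcribed to $\netm^h_\infty$ by taking infima over dyadic coverings rather than arbitrary ones, then gives
\[
\netm^h_\infty(F \cap \lambda) \gtrsim \mu(K) = h(\diam{I_\varnothing}) \asymp h(\diam{\lambda}),
\]
and an invocation of the gauge-function analogue of Lemma~\ref{lem:netmlambda}, namely formula~(\ref{eq:netmlambdagauge}), completes the estimate. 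The principal difficulty lies entirely in the inductive scaling step of the previous paragraph: in the power-law setting one closes the induction through an explicit computation involving $r^{d/t}\lvert\log r\rvert$, whereas with a general gauge the induction must be monitored solely through the abstract modulus $h/g_d$ and its monotonicity near the origin.
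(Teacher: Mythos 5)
Your plan follows the route the paper itself indicates: the proof is deferred to~\cite{Durand:2007uq}, but described as analogous to those of Theorems~\ref{thm:lobndhomubsys} and~\ref{thm:lihomubsys}, and your sketch is the natural adaptation of that Cantor--tree ubiquity construction. The decisive reduction of the inductive scaling bound to the inequality $(h/g_d)(r_{i_u}) \gtrsim (h/g_d)(r_{i_{\pi(u)}})\cdot g_d(r_{i_{\pi(u)}})/r_{i_{\pi(u)}}^d$, and the observation that this can be met by shrinking the descent scale because $h/g_d$ diverges monotonically at the origin, is exactly the right way to exploit $h\prec g_d$. You also correctly identify a genuine structural shift compared with the power-gauge proof of Theorem~\ref{thm:lihomubsys}: there a single measure, calibrated to $r^{d/t}|\log r|$, serves simultaneously for all targets $r^s$ with $s<d/t$ via the monotonicity of $r^{d/t-s}|\log r|$; since no such universal intermediate gauge is available for a general $g_d$, the tree must indeed be rebuilt for each $h\prec g_d$, with the required monotonicity baked into the choice of $\rho_{\pi(u)}$.

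A few technicalities need tightening. First, take $I_u := \clball(x_{i_u}, r_{i_u}/2)$ rather than $\clball(x_{i_u}, r_{i_u})$, mimicking the factor $1/2$ in the paper's $B_u^t$: with the full closed ball, a point of $K$ lying on the sphere $\partial I_u$ at every generation need not satisfy the strict inequality $|x-x_{i_u}|<r_{i_u}$, so the inclusion $K\subseteq\frakF((x_i,r_i)_{i\in\calI})$ is not guaranteed. Second, the inclusion $I_u\subseteq B_u^\sharp=\clball(x_{i_u},g_d(r_{i_u})^{1/d})$ requires $(r_{i_u}/2)^d\leq g_d(r_{i_u})$, which holds at small scales only when $\ell_g=\infty$; the cases $\ell_g\in(0,\infty)$ and $\ell_g=0$ should therefore be disposed of up front (in the former, $g_d\asymp r^d$, the $g$-ubiquity hypothesis reduces to ordinary homogeneous ubiquity via Proposition~\ref{prp:homubsyscst}, and the conclusion follows from Propositions~\ref{prp:lic0fullLeb} and~\ref{prp:morelicgauge}(\ref{item:prp:morelicgauge2}); in the latter, $g_d$ vanishes on an initial interval, so the approximation-system condition forbids any nontrivial $g$-ubiquitous system in a nonempty $U$). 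Finally, note that Lemma~\ref{lem:lobndhomubsys} applied to the enlarged family controls $g_d(r_{i_u})^{1/d}$, not $r_{i_u}$: the step ``small $\rho_{\pi(u)}$ forces small $r_{i_u}$'' uses that $g_d$ is nondecreasing and positive near $0$, which is worth making explicit when choosing the descent scales.
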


The result is a straightforward consequence of Theorem~2 in~\cite{Durand:2007uq}\,; we refer to that paper for a comprehensive proof. Similarly to the mass transference principle, some ideas supporting Theorem~\ref{thm:largeintprinc} are analogous to those developed in the proof of Theorem~\ref{thm:lobndhomubsys} above, and also that of Theorem~\ref{thm:lihomubsys} which is more specifically concerned with large intersection properties. Just as the mass transference principle extends Theorem~\ref{thm:lobndhomubsys} to arbitrary Hausdorff measures, the above {\em large intersection transference principle} may indeed be seen as an extension of Theorem~\ref{thm:lihomubsys}. To be specific, let $(x_i,r_i)_{i\in\calI}$ denote a homogeneous ubiquitous system in $U$ in the sense of Definition~\ref{df:homubiqsys}. Thus, for any $t>1$, the family $(x_i,r_i^t)_{i\in\calI}$ is homogeneously ubiquitous in $U$ with respect to the gauge function $r\mapsto r^{d/t}$. Theorem~\ref{thm:largeintprinc} then ensures that the set $F_t$ defined by~(\ref{eq:df:Ft}) is a set with large intersection in $U$ with respect to the same gauge function. This gauge function clearly has lower dimension equal to $d/t$, so we deduce with the help of Corollary~\ref{prp:linkFalcgauge} that the set $F_t$ belongs to Falconer's class $\lic{d/t}{U}$, which is exactly the conclusion of Theorem~\ref{thm:lihomubsys}.

Furthermore, the large intersection transference principle nicely complements the mass transference principle: under similar hypotheses, it shows that the size properties of sets of the form~(\ref{eq:df:Fxiri}) are in fact stable under countable intersections and bi-Lipschitz mappings. Also, due to Proposition~\ref{prp:morelicgauge}(\ref{item:prp:morelicgauge2}) and Theorem~\ref{thm:stablicgauge}(\ref{item:thm:stablicgauge3}), it implies that for any gauge function $h$ and any nonempty open set $V\subseteq U$,
	\[
		h_d\prec g_d
		\qquad\Longrightarrow\qquad
		\hau^h(\frakF((x_i,r_i)_{i\in\calI})\cap V)=\infty=\hau^h(V).
	\]
	Note that the last equality follows from Proposition~\ref{prp:dichogauge}(\ref{item:prp:dichogauge1}), because $h(r)/r^d$ necessarily tends to infinity as $r$ goes to zero. Unfortunately, we may not apply this with $h$ being equal to $g$, thereby failing narrowly to recover the conclusion of the mass transference principle, specifically,
	\[
		\hau^g(\frakF((x_i,r_i)_{i\in\calI})\cap V)=\hau^g(V).
	\]
	However, we may often in practice circumvent this problem and, through the notion of describability introduced in Section~\ref{sec:desc}, the large intersection transference principle will be sufficient to describe both size {\em and} large intersection properties of limsup of balls for which the description of the Lebesgue measure is known. We shall apply this principle to the limsup sets issued from eutaxic sequences and optimal regular systems, see Sections~\ref{subsec:eutaxyapprox} and~\ref{subsec:optregsysapprox}, and the examples discussed in Sections~\ref{sec:inhom}--\ref{sec:Poisson}.


\section{Describable sets}\label{sec:desc}

The purpose of this section is to combine the conclusions of the mass and the large intersection principles discussed in Section~\ref{sec:transference} and place them in a wider setting. This new framework aims at describing in a complete and precise manner the size and large intersection properties of various subsets of $\R^d$ that are derived from the eutaxic sequences and optimal regular systems discussed in Sections~\ref{sec:eutaxy} and~\ref{sec:optregsys}.

Note that the size and large intersection properties of Lebesgue full sets are easily described as follows. Let $E$ be a Borel subset of $\R^d$ and let $U$ be a nonempty open subset of $\R^d$. If $E$ has full Lebesgue measure in $U$, then Proposition~\ref{prp:dichogauge} ensures that for any gauge function $g$ and any nonempty open set $V\subseteq U$,
	\[
		\hau^g(E\cap V)=\hau^g(V).
	\]
	In particular, the set $E$ has Hausdorff dimension equal to $d$ in $V$. Furthermore, under the stronger assumption that $E$ admits a $G_\delta$-subset with full Lebesgue measure in $U$, Propositions~\ref{prp:lic0fullLeb} and~\ref{prp:morelicgauge}(\ref{item:prp:morelicgauge2}) imply that for any gauge function $g$ and any nonempty open set $V\subseteq U$,
	\[
		\exists F\in\lic{g}{V} \qquad F\subseteq E.
	\]
	By Proposition~\ref{prp:linkFalcgauge}, the set $E$ thus admits a subset in the class $\lic{d}{V}$, thereby having packing dimension equal to $d$ in $V$. The above description of the size and large intersection properties of Lebesgue full sets being both precise and complete, we shall exclude such sets from our analysis.
	
	Our framework will enable us to achieve a similar description for some Lebesgue null sets. The collection of all Borel subsets of $\R^d$ that are Lebesgue null in the open set $U$ is denoted by $\zeroleb(U)$, specifically,
	\[
		\zeroleb(U)=\{E\in\bor\:|\:\leb^d(E\cap U)=0\},
	\]
	where $\bor$ is the Borel $\sigma$-field, in accordance with the notation initiated in Section~\ref{subsec:outmeas}. The starting point is the notion of majorizing and minorizing collections of gauge functions that we now introduce.

\subsection{Majorizing and minorizing gauge functions}

Let $E$ be a set in $\zeroleb(U)$. On the one hand, Proposition~\ref{prp:dichogauge} ensures that for any gauge function $g$,
	\[
		\ell_g<\infty
		\qquad\Longrightarrow\qquad
		\hau^g(E\cap U)=0,
	\]
	where $\ell_g$ is defined by~(\ref{eq:df:liminfgrd}). Studying what happens for the other gauge functions, namely, those belonging to the set $\gauge^\infty$ defined by~(\ref{eq:df:gaugeinftyast}) gives rise to the following notion of majorizing gauge function.

\begin{Definition}\label{df:majo}
	Let $U$ be a nonempty open subset of $\R^d$ and let $E$ be a set in $\zeroleb(U)$. We say that a gauge function $g\in\gauge^\infty$ is a {\em majorizing} for $E$ in $U$ if
	\[
		\hau^g(E\cap U)=0.
	\]
	Such gauge functions form the {\em majorizing collection} of $E$ in $U$, denoted by $\majo(E,U)$.
\end{Definition}

It is plain from Proposition~\ref{prp:compnormalgauge} that a gauge function $g\in\gauge^\infty$ is majorizing for $E$ in $U$ if and only if its $d$-normalization $g_d$ satisfies the same property. Also, as a simple example, let us point out that
	\begin{equation}\label{eq:countablemajo}
		E\cap U\text{ countable}
		\qquad\Longrightarrow\qquad
		\majo(E,U)=\gauge^\infty,
	\end{equation}
	because a countable set has Hausdorff $g$-measure zero for any gauge function $g$.

On the other hand, Proposition~\ref{prp:lic0fullLeb} shows that a $G_\delta$-subset of $\R^d$ with Lebesgue measure zero in $U$ cannot belong to the large intersection class $\lic{\zerofunc}{U}$, and therefore cannot belong to any of the classes $\lic{g}{U}$ for which $\ell_g=0$. Similarly to the previous definition, looking at the other gauge functions, specifically, those in the set $\gauge^\ast$ defined by~(\ref{eq:df:gaugeinftyast}) results in the following notion of minorizing gauge function.

\begin{Definition}\label{df:mino}
	Let $U$ be a nonempty open subset of $\R^d$ and let $E$ be a set in $\zeroleb(U)$. We say that a gauge function $g\in\gauge^\ast$ is a {\em minorizing} for $E$ in $U$ if
	\[
		\exists F\in\lic{g}{U} \qquad F\subseteq E.
	\]
	Such gauge functions form the {\em minorizing collection} of $E$ in $U$, denoted by $\mino(E,U)$.
\end{Definition}

Similarly to what happens for majorizing gauge functions, a gauge function $g\in\gauge^\ast$ is minorizing for $E$ in $U$ if and only if $g_d$ is; this follows from Definition~\ref{df:liclocgauge}. Moreover, if $E$ is a $G_\delta$-set for which $g$ is minorizing in $U$, it follows from Proposition~\ref{prp:morelicgauge}(\ref{item:prp:morelicgauge1}) that $E$ belongs to the class $\lic{g}{U}$. Finally, we now have
	\begin{equation}\label{eq:countablemino}
		E\cap U\text{ countable}
		\qquad\Longrightarrow\qquad
		\mino(E,U)=\emptyset,
	\end{equation}
	because the existence of a minorizing gauge function requires that $E$ is dense in $U$.

The next result enlightens the monotonicity properties of $\majo(E,U)$ and $\mino(E,U)$ when regarded as functions defined on the set of pairs $(E,U)$ such that $U$ is a nonempty open subset of $\R^d$ and $E$ is in $\zeroleb(U)$. The proof is omitted, as it just relies on Proposition~\ref{prp:morelicgauge}(\ref{item:prp:morelicgauge2}) and the fact that Hausdorff measures are outer measures.

\begin{Proposition}\label{prp:monomajomino}
	The majorizing and minorizing collections satisfy the following monotonicity properties:
	\begin{enumerate}
		\item the mappings $E\mapsto\majo(E,U)$ and $U\mapsto\majo(E,U)$ are both nonincreasing;
		\item the mappings $E\mapsto\mino(E,U)$ and $U\mapsto\mino(E,U)$ are nondecreasing and nonincreasing, respectively.
	\end{enumerate}
\end{Proposition}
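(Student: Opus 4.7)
The plan is to verify each of the four monotonicity claims directly from the definitions of $\majo(E,U)$ and $\mino(E,U)$, using only two ingredients: the monotonicity of the outer measure $\hau^g$ for the claims about $\majo$, and Proposition~\ref{prp:morelicgauge}(\ref{item:prp:morelicgauge2}) for the claim about $U \mapsto \mino(E,U)$. In particular, no structural theorem about the gauge functions themselves (such as $d$-normalization or Proposition~\ref{prp:compnormalgauge}) is needed.

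For the statements about $\majo(E,U)$, I would argue as follows. Fix $g \in \gauge^\infty$. If $E_1 \subseteq E_2$ are Borel sets with $\leb^d(E_2 \cap U) = 0$ (so also $\leb^d(E_1 \cap U) = 0$) and $g \in \majo(E_2,U)$, then $E_1 \cap U \subseteq E_2 \cap U$, so the monotonicity of the outer measure $\hau^g$ gives $\hau^g(E_1 \cap U) \leq \hau^g(E_2 \cap U) = 0$, whence $g \in \majo(E_1,U)$. Likewise, if $U_1 \subseteq U_2$ are nonempty open sets and $g \in \majo(E,U_2)$, then $E \cap U_1 \subseteq E \cap U_2$ and the same monotonicity argument delivers $g \in \majo(E,U_1)$; note also that $\leb^d(E \cap U_1) \leq \leb^d(E \cap U_2) = 0$, so the hypothesis $E \in \zeroleb(U_1)$ is automatic.

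For the statements about $\mino(E,U)$, fix $g \in \gauge^\ast$. If $E_1 \subseteq E_2$ and $g \in \mino(E_1,U)$, there exists $F \in \lic{g}{U}$ with $F \subseteq E_1$; then $F \subseteq E_2$ also, so $g \in \mino(E_2,U)$, giving the nondecreasing behavior in $E$. If now $U_1 \subseteq U_2$ are nonempty open sets with $g \in \mino(E,U_2)$, choose $F \in \lic{g}{U_2}$ with $F \subseteq E$. By Proposition~\ref{prp:morelicgauge}(\ref{item:prp:morelicgauge2}),
\[
\lic{g}{U_2} = \bigcap_{V \text{ open} \atop \emptyset \neq V \subseteq U_2} \lic{g}{V},
\]
so in particular $F \in \lic{g}{U_1}$, whence $g \in \mino(E,U_1)$. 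As before, $E \in \zeroleb(U_1)$ follows from $E \in \zeroleb(U_2)$ by monotonicity of $\leb^d$.

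There is essentially no obstacle here: the statement packages four elementary consequences of how the defining conditions of $\majo$ and $\mino$ interact with inclusion of sets and of open sets. The only point worth flagging is the direction reversal between $E$ and $U$ for $\mino$, which reflects that enlarging $E$ makes it easier to contain a large-intersection subset, whereas shrinking $U$ makes the class $\lic{g}{U}$ larger (by Proposition~\ref{prp:morelicgauge}(\ref{item:prp:morelicgauge2})) and hence makes the existence of such a subset easier to achieve.
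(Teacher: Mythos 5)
Your proposal is correct and uses exactly the two ingredients the paper cites in lieu of a written-out argument: monotonicity of the outer measure $\hau^g$ for the two claims about $\majo$, and Proposition~\ref{prp:morelicgauge}(\ref{item:prp:morelicgauge2}) for the monotonicity of $U\mapsto\mino(E,U)$. The case-by-case verification you give is the routine unpacking the paper leaves to the reader, so there is nothing to reconcile.
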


Let us now turn our attention to the behavior under countable unions and intersections of the two collections. The next result is a plain consequence of Theorem~\ref{thm:stablicgauge}(\ref{item:thm:stablicgauge1}) and the fact that Hausdorff measures are outer measures.

\begin{Proposition}\label{prp:cupcapmajomino}
	Let us consider a nonempty open subset $U$ of $\R^d$. Then, for any sequence $(E_n)_{n\geq 1}$ in the collection $\zeroleb(U)$,
			\[
				\majo\left(\bigcup_{n=1}^\infty E_n,U\right)=\bigcap_{n=1}^\infty\majo(E_n,U)
				\quad\text{and}\quad
				\mino\left(\bigcap_{n=1}^\infty E_n,U\right)=\bigcap_{n=1}^\infty\mino(E_n,U).
			\]
\end{Proposition}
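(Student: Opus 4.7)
The plan is to verify the two equalities separately, and in both cases the argument amounts to a direct double inclusion that exploits, on one side, a monotonicity property and, on the other, a countable stability property of the underlying measure-theoretic notion. Before tackling either identity, I would quickly observe that the left-hand sides make sense: $\bigcup_n E_n$ is Borel (as a countable union of Borel sets) and has $\leb^d$-measure zero in $U$ by countable subadditivity of Lebesgue measure, so it lies in $\zeroleb(U)$; likewise $\bigcap_n E_n$ is Borel and, being contained in $E_1$, is Lebesgue null in $U$, so it too lies in $\zeroleb(U)$.

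For the first equality, concerning the majorizing collections, I would fix a gauge function $g\in\gauge^\infty$ and use the fact that $\hau^g$ is an outer measure. The inclusion $\majo(\bigcup_n E_n,U)\subseteq\bigcap_n\majo(E_n,U)$ is immediate from monotonicity of $\hau^g$: if $\hau^g((\bigcup_n E_n)\cap U)=0$, then $\hau^g(E_n\cap U)=0$ for every $n$. For the reverse inclusion, I would invoke countable subadditivity: if each $\hau^g(E_n\cap U)$ vanishes, then so does $\hau^g((\bigcup_n E_n)\cap U)\leq\sum_n\hau^g(E_n\cap U)$. This establishes the majorizing identity; none of these steps is delicate.

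For the second equality, concerning the minorizing collections, I would fix a gauge function $g\in\gauge^\ast$. The inclusion $\mino(\bigcap_n E_n,U)\subseteq\bigcap_n\mino(E_n,U)$ follows from the monotonicity property already recorded in Proposition~\ref{prp:monomajomino}: if $F\in\lic{g}{U}$ satisfies $F\subseteq\bigcap_n E_n$, then the same $F$ witnesses that $g\in\mino(E_n,U)$ for each $n$. For the reverse inclusion, I would assume that $g\in\mino(E_n,U)$ for every $n$ and pick, for each $n$, a set $F_n\in\lic{g}{U}$ with $F_n\subseteq E_n$; setting $F=\bigcap_n F_n$ gives a subset of $\bigcap_n E_n$, and Theorem~\ref{thm:stablicgauge}(\ref{item:thm:stablicgauge1}) ensures that $F$ still belongs to $\lic{g}{U}$, so $g\in\mino(\bigcap_n E_n,U)$.

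There is no real obstacle here: the whole content of the proposition is the pairing of the two basic countable closure properties, namely countable subadditivity of $\hau^g$ on the majorizing side and stability of $\lic{g}{U}$ under countable intersections on the minorizing side. The mildest care needed is only to ensure that both sides are well-defined as subsets of $\gauge^\infty$ and $\gauge^\ast$, which is automatic since these sets depend only on $g$ itself and not on the chosen Borel set.
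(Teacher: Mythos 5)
Your proof is correct and follows exactly the route the paper intends: the majorizing identity is a direct double inclusion via monotonicity and countable subadditivity of the outer measure $\hau^g$, and the minorizing identity is a direct double inclusion via the monotonicity of $\mino(\cdot,U)$ from Proposition~\ref{prp:monomajomino} and the closure of $\lic{g}{U}$ under countable intersections from Theorem~\ref{thm:stablicgauge}(\ref{item:thm:stablicgauge1}). The paper indeed leaves the proof implicit, remarking only that the result is a plain consequence of these two facts, and your preliminary check that both left-hand sides lie in $\zeroleb(U)$ is the right small piece of bookkeeping to add.
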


The structure of the majorizing and minorizing collections is reminiscent of that of two intervals of the real line whose intersection is at most a singleton. We have indeed the next result; it may easily be proven with the help of Propositions~\ref{prp:compgauge0},~\ref{prp:compnormalgauge},~\ref{prp:dichogauge}(\ref{item:prp:dichogauge1}) and~\ref{prp:morelicgauge}(\ref{item:prp:morelicgauge2}), along with Theorem~\ref{thm:stablicgauge}(\ref{item:thm:stablicgauge3}).

\begin{Proposition}\label{prp:majominoint}
	Consider a nonempty open set $U\subseteq\R^d$, a set $E$ in $\zeroleb(U)$, and two gauge functions $g$ and $h$ with $d$-normalizations such that $g_d\prec h_d$. Then,
	\[
		\left\{\begin{array}{lcl}
			g\in\majo(E,U) & \qquad\Longrightarrow\qquad & h\in\majo(E,U)\setminus\mino(E,U)\\[2mm]
			h\in\mino(E,U) & \qquad\Longrightarrow\qquad & g\in\mino(E,U)\setminus\majo(E,U).
		\end{array}\right.
	\]
\end{Proposition}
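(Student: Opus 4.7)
The statement consists of two symmetric implications, and I plan to treat them in parallel using four ingredients: Proposition~\ref{prp:compnormalgauge} to pass from $g$ and $h$ to their $d$-normalizations; pointwise monotonicity of $\hau^{\cdot}$ in the gauge; Proposition~\ref{prp:dichogauge}(\ref{item:prp:dichogauge1}), which ensures $\hau^g(U)=\infty$ as soon as $g\in\gauge^\infty$ and $\leb^d(U)>0$; and, most importantly, Theorem~\ref{thm:stablicgauge}(\ref{item:thm:stablicgauge3}), which turns a large-intersection hypothesis into full Hausdorff mass for every gauge of superior growth.

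For the first implication, the assumption $g_d\prec h_d$ means $g_d(r)/h_d(r)\to\infty$, and in particular $h_d\leq g_d$ in a neighbourhood of zero. Combined with Proposition~\ref{prp:compnormalgauge}, this gives the chain $\hau^h(E\cap U)\leq\kappa\hau^{h_d}(E\cap U)\leq\kappa\hau^{g_d}(E\cap U)\leq\kappa\hau^g(E\cap U)=0$, so $h\in\majo(E,U)$ as soon as one records that $h$ belongs to the ambient class $\gauge^\infty$ in which $\majo$ is defined. To establish $h\notin\mino(E,U)$, I will argue by contradiction: if some $F\in\lic{h}{U}$ were contained in $E$, applying Theorem~\ref{thm:stablicgauge}(\ref{item:thm:stablicgauge3}) to the class $\lic{h}{U}$ with the gauge $g$ (for which $g_d\prec h_d$ by hypothesis) would force $\hau^g(F\cap U)=\hau^g(U)=\infty$, in direct conflict with $F\subseteq E$ together with $\hau^g(E\cap U)=0$.

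For the second implication, fix $F\in\lic{h}{U}$ with $F\subseteq E$. The inclusion $\lic{h}{U}\subseteq\lic{g}{U}$ will follow from the observation that whenever $h'_d\prec g_d$, the identity $h'_d/h_d=(h'_d/g_d)(g_d/h_d)$ tends to infinity, so $h'_d\prec h_d$ as well; hence the defining equalities of Definition~\ref{df:liclocgauge} are already guaranteed for $F$. Furthermore, $\ell_h>0$ together with $g_d/h_d\to\infty$ and the factorisation $g_d(r)/r^d=(g_d(r)/h_d(r))(h_d(r)/r^d)$ yield $\ell_g=\infty$, placing $g$ in $\gauge^\infty\subseteq\gauge^\ast$ and hence establishing $g\in\mino(E,U)$. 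The same contradiction scheme as above---applying Theorem~\ref{thm:stablicgauge}(\ref{item:thm:stablicgauge3}) to $F\in\lic{h}{U}$ with the gauge $g$ to obtain $\hau^g(F\cap U)=\infty$, then comparing with $F\subseteq E$ and a hypothetical $\hau^g(E\cap U)=0$---will rule out $g\in\majo(E,U)$.

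The principal subtlety throughout is to verify, at each step, that the gauges under consideration actually lie in $\gauge^\infty$ or $\gauge^\ast$ as required by the definitions of $\majo$ and $\mino$; all such checks reduce to the multiplicative relations between the three quotients $g_d/r^d$, $h_d/r^d$, and $g_d/h_d$.
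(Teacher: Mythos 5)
Your proof follows exactly the route the paper sketches (it says the result "relies on Propositions~\ref{prp:compgauge0},~\ref{prp:compnormalgauge},~\ref{prp:dichogauge}(\ref{item:prp:dichogauge1}) and~\ref{prp:morelicgauge}(\ref{item:prp:morelicgauge2}), along with Theorem~\ref{thm:stablicgauge}(\ref{item:thm:stablicgauge3})"): the Hausdorff-measure chain giving $\hau^h(E\cap U)=0$, the inclusion $\lic{h}{U}\subseteq\lic{g}{U}$, and the contradiction via Theorem~\ref{thm:stablicgauge}(\ref{item:thm:stablicgauge3}) and Proposition~\ref{prp:dichogauge}(\ref{item:prp:dichogauge1}) are all correct. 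The only cosmetic difference is that you obtain $\lic{h}{U}\subseteq\lic{g}{U}$ directly from transitivity of $\prec$ rather than by quoting Proposition~\ref{prp:morelicgauge}(\ref{item:prp:morelicgauge2}); these are equivalent.

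Your closing sentence, however, overstates one check. In the second implication the verification $\ell_g=\infty$ does follow from the quotients, exactly as you write, because $g_d(r)/r^d=(g_d(r)/h_d(r))\cdot(h_d(r)/r^d)$ is a product of two limits both bounded away from $0$. But in the first implication, the needed membership $h\in\gauge^\infty$, i.e.\ $\ell_h=\infty$, does \emph{not} "reduce to the multiplicative relations": from $g_d(r)/r^d\to\infty$ and $g_d(r)/h_d(r)\to\infty$, the ratio $h_d(r)/r^d=(g_d(r)/r^d)\big/(g_d(r)/h_d(r))$ is an $\infty/\infty$ indeterminate, and one can in fact have $g_d\prec h_d$ with $\ell_g=\infty$ yet $\ell_h<\infty$ (take $d=1$, $h_d(r)=r$, $g_d(r)=r\log(1/r)$ near the origin). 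So $h\in\gauge^\infty$ has to be supplied as a tacit hypothesis on $h$ in the first implication; your phrase "as soon as one records that $h$ belongs to $\gauge^\infty$" is the right instinct, but it is not a consequence of the hypotheses. This is admittedly a latent imprecision in the statement itself: the proposition is always invoked (e.g.\ in establishing the right-openness of $\majo$ in Corollary~\ref{cor:linkopen}) with an $h$ that is explicitly built inside $\gauge^\infty$, so the assumption is harmless in context, but it should not be presented as automatic.
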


The above analogy with intervals of the real line can in fact be pursued, so as to introduce natural definitions concerning sets of gauge functions. We begin by remarking that for any gauge function $g\in\gauge^\ast$, we may obtain a gauge function $\overline g\in\gauge^\ast$ satisfying $\overline g_d\prec g_d$ by considering $\overline g=g_d^{1/2}$. Likewise, for any gauge function $g\in\gauge^\infty$, we get a gauge function $\underline g\in\gauge^\infty$ with $g_d\prec\underline g_d$ by considering $\underline g(r)=r^{d/2}g_d(r)^{1/2}$. Studying whether these properties still hold for given subsets of $\gauge^\ast$ and $\gauge^\infty$ yields the notions of left-openness and right-openness, respectively.

\begin{Definition}
	Let $\frakH$ denote a subset of $\gauge^\ast$. We say that the collection $\frakH$ is:
	\begin{itemize}
		\item {\em $d$-normalized} if for any $g\in\gauge^\ast$, the gauge function $g$ belongs to the set $\frakH$ if and only if its $d$-normalization $g_d$ does;
		\item {\em left-open} if it is $d$-normalized and for any gauge function $g\in\frakH$, there exists a gauge function $\overline g\in\frakH$ such that $\overline g_d\prec g_d$\,;
		\item {\em right-open} if it is $d$-normalized, contained in $\gauge^\infty$, and for any gauge function $g\in\frakH$, there exists a gauge function $\underline g\in\frakH$ such that $g_d\prec\underline g_d$.
	\end{itemize}
\end{Definition}

The above observations ensure that the whole collection $\gauge^\ast$ is left-open, and the collection $\gauge^\infty$ is both left-open and right-open. Along with Propositions~\ref{prp:monomajomino} and~\ref{prp:majominoint}, they also straightforwardly lead to the following  link between the majorizing and minorizing collections, and their openness properties.

\begin{Corollary}\label{cor:linkopen}
	Let us consider a nonempty open subset $U$ of $\R^d$ and a set $E$ belonging to the collection $\zeroleb(U)$.
	\begin{enumerate}
		\item The collection $\majo(E,U)$ is right-open. If it is also left-open, then
			\[
				\majo(E,U)\subseteq\gauge^\infty\setminus\bigcup_{V\text{ open}\atop\emptyset\neq V\subseteq U}\mino(E,V).
			\]
		\item The collections $\mino(E,U)$ and $\mino(E,U)\cap\gauge^\infty$ are left-open. If the latter is also right-open, then
			\[
				\mino(E,U)\cap\gauge^\infty\subseteq\gauge^\infty\setminus\bigcup_{V\text{ open}\atop\emptyset\neq V\subseteq U}\majo(E,U).
			\]
	\end{enumerate}
\end{Corollary}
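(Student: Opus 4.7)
The plan is to address the two items in parallel: each consists of an unconditional openness assertion, proved by an explicit construction of companion gauge functions, and an inclusion which becomes available once the missing kind of openness is also assumed and which reduces, via the monotonicity of Proposition~\ref{prp:monomajomino}, to a single invocation of Proposition~\ref{prp:majominoint}.

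For part~1, the fact that $\majo(E,U)$ is $d$-normalized was already recorded after Definition~\ref{df:majo}. Given $g\in\majo(E,U)$, I would establish right-openness with the candidate $\underline g(r)=r^{d/2}g_d(r)^{1/2}$ suggested in the text: the identity $\underline g(r)/r^d=(g_d(r)/r^d)^{1/2}$ places $\underline g$ in $\gauge^\infty$ and, together with the monotonicity of $\rho\mapsto g_d(\rho)/\rho^d$ furnished by Proposition~\ref{prp:compnormalgauge}, shows that $\underline g$ is itself $d$-normalized near zero; the companion identity $\underline g(r)/g_d(r)=(r^d/g_d(r))^{1/2}\to 0$ simultaneously yields $g_d\prec\underline g_d$ and, through Proposition~\ref{prp:compgauge0}, the vanishing $\hau^{\underline g}(E\cap U)=0$. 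For the inclusion, assuming moreover that $\majo(E,U)$ is left-open, I would pick $\overline g\in\majo(E,U)$ with $\overline g_d\prec g_d$; for any nonempty open $V\subseteq U$, Proposition~\ref{prp:monomajomino} places $\overline g$ in $\majo(E,V)$, and the first implication of Proposition~\ref{prp:majominoint}, applied in $V$ to the pair $(\overline g,g)$, yields $g\in\majo(E,V)\setminus\mino(E,V)$, hence $g\notin\mino(E,V)$ as required.

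Part~2 is handled symmetrically. For any $g\in\mino(E,U)$, the candidate $\overline g=g_d^{1/2}$ from the text is $d$-normalized and belongs to $\gauge^\infty$ (using $\ell_g>0$ and Proposition~\ref{prp:compnormalgauge} in $\overline g(r)/r^d=(g_d(r)/r^d)^{1/2}r^{-d/2}$), the ratio $\overline g(r)/g_d(r)=g_d(r)^{-1/2}$ tends to infinity and gives $\overline g_d\prec g_d$, and the inclusion $\lic{g}{U}\subseteq\lic{\overline g}{U}$ from Proposition~\ref{prp:morelicgauge}(\ref{item:prp:morelicgauge2}) transfers any witness $F\subseteq E$ in $\lic{g}{U}$ to a witness in $\lic{\overline g}{U}$; this proves simultaneously that $\mino(E,U)$ and $\mino(E,U)\cap\gauge^\infty$ are left-open. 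For the inclusion, assuming right-openness of $\mino(E,U)\cap\gauge^\infty$ and given $g$ there, I would pick $\underline g$ in this collection with $g_d\prec\underline g_d$, use monotonicity to place $\underline g$ in $\mino(E,V)$ for every nonempty open $V\subseteq U$, and then invoke the second implication of Proposition~\ref{prp:majominoint} to exclude $g$ from $\majo(E,V)$ (in particular from $\majo(E,U)$, which is what the statement asks for).

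The only delicate point is verifying that the candidate gauges $\underline g$ and $\overline g$ agree with their own $d$-normalizations near zero, so that the strict comparisons $\prec$ can be read off directly from the ratios against $g_d$ instead of from less tractable quotients involving $\underline g_d$ and $\overline g_d$; this reduces to elementary monotonicity of the square root applied to the nonincreasing function $\rho\mapsto g_d(\rho)/\rho^d$ granted by Proposition~\ref{prp:compnormalgauge}, so overall there is no substantial obstacle.
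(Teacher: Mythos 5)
Your proof is correct and follows exactly the route the paper sketches: the two explicit companion constructions $\overline g=g_d^{1/2}$ and $\underline g(r)=r^{d/2}g_d(r)^{1/2}$ are precisely the ones the text records just before the corollary, and the inclusions are obtained, as the paper indicates, by combining the monotonicity of Proposition~\ref{prp:monomajomino} with the dichotomy of Proposition~\ref{prp:majominoint}. Your careful verification that $\overline g$ and $\underline g$ are $d$-normalized near zero and that the relevant ratios are monotone, and your use of Proposition~\ref{prp:compgauge0} (for part~1) and Proposition~\ref{prp:morelicgauge}(\ref{item:prp:morelicgauge2}) (for part~2) to transfer the majorizing/minorizing property to the companion gauge, supply the details the paper omits; there is nothing to correct.
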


In particular, if either of the collections $\majo(E,U)$ and $\mino(E,U)\cap\gauge^\infty$ is simultaneously left-open and right-open, then these two collections are necessarily disjoint, so no gauge function can be majorizing and minorizing at the same time. Under the stronger assumption that {\em both} collections are left-open and right-open simultaneously, we have the next implications for size and large intersection properties.

\begin{Corollary}\label{cor:linkopenmet}
	Consider a nonempty open set $U\subseteq\R^d$ and a set $E\in\zeroleb(U)$, and assume that $\majo(E,U)$ and $\mino(E,U)\cap\gauge^\infty$ are both left-open and right-open. Then, for any gauge function $g\in\gauge^\ast$ and any nonempty open set $V\subseteq U$,
		\[
			\left\{\begin{array}{lcl}
				g\in\majo(E,U)\cup(\gauge^\ast\setminus\gauge^\infty) & \quad\Longrightarrow\quad & \hau^g(E\cap V)=0\\[2mm]
				g\in\mino(E,U)\cap\gauge^\infty & \quad\Longrightarrow\quad & \hau^g(E\cap V)=\infty
			\end{array}\right.
		\]
		and
		\[
			\left\{\begin{array}{lcl}
				g\in\majo(E,U) & \quad\Longrightarrow\quad & \forall F\in\lic{g}{V} \quad F\not\subseteq E\\[2mm]
				g\in\mino(E,U) & \quad\Longrightarrow\quad & \exists F\in\lic{g}{V} \quad F\subseteq E.
			\end{array}\right.
		\]
\end{Corollary}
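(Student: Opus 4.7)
The plan is to handle the four conclusions in turn; each reduces to a short application of the hypotheses once one matches the correct half of the openness assumption to the correct statement.

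Conclusions (i) and (iv) are essentially immediate. For (i), split on whether $g\in\majo(E,U)$ or $g\in\gauge^\ast\setminus\gauge^\infty$. In the first case, monotonicity of $\hau^g$ and $V\subseteq U$ give $\hau^g(E\cap V)\leq\hau^g(E\cap U)=0$. In the second case, $\ell_g\in(0,\infty)$, so Proposition~\ref{prp:dichogauge}(\ref{item:prp:dichogauge2}) yields $\hau^g(E\cap V)=\kappa_g\,\leb^d(E\cap V)$, which vanishes because $E\in\zeroleb(U)$ and $V\subseteq U$. For (iv), the very definition of $\mino(E,U)$ furnishes $F\in\lic{g}{U}$ with $F\subseteq E$, and Proposition~\ref{prp:morelicgauge}(\ref{item:prp:morelicgauge2}) upgrades this to $F\in\lic{g}{V}$ since $V$ is an open subset of $U$. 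Neither the openness hypotheses nor the condition $g\in\gauge^\infty$ play any role here.

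For (ii), I would use the right-openness of $\mino(E,U)\cap\gauge^\infty$ to select $\underline{g}\in\mino(E,U)\cap\gauge^\infty$ with $g_d\prec\underline{g}_d$. The defining property of $\mino$ then produces $F\in\lic{\underline{g}}{U}$ with $F\subseteq E$, and Proposition~\ref{prp:morelicgauge}(\ref{item:prp:morelicgauge2}) shows $F\in\lic{\underline{g}}{V}$. Since $g_d\prec\underline{g}_d$, Theorem~\ref{thm:stablicgauge}(\ref{item:thm:stablicgauge3}) yields $\hau^g(F\cap V)=\hau^g(V)$, and since $g\in\gauge^\infty$ and $V$ is a nonempty open set, Proposition~\ref{prp:dichogauge}(\ref{item:prp:dichogauge1}) forces $\hau^g(V)=\infty$. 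Monotonicity of $\hau^g$ combined with $F\subseteq E$ gives $\hau^g(E\cap V)=\infty$, as required.

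For (iii), the cleanest route is to invoke Corollary~\ref{cor:linkopen}(1): the left-openness of $\majo(E,U)$ yields $\majo(E,U)\subseteq\gauge^\infty\setminus\bigcup_{V}\mino(E,V)$, so $g\in\majo(E,U)$ forces $g\notin\mino(E,V)$, which by definition of the minorizing collection is exactly the statement that no $F\in\lic{g}{V}$ satisfies $F\subseteq E$. An equivalent self-contained proof proceeds by contradiction: assuming such an $F$ exists, left-openness of $\majo(E,U)$ supplies $\overline{g}\in\majo(E,U)$ with $\overline{g}_d\prec g_d$, so Theorem~\ref{thm:stablicgauge}(\ref{item:thm:stablicgauge3}) applied to $F\in\lic{g}{V}$ gives $\hau^{\overline{g}}(F\cap V)=\hau^{\overline{g}}(V)=\infty$ via Proposition~\ref{prp:dichogauge}(\ref{item:prp:dichogauge1}), contradicting $\hau^{\overline{g}}(F\cap V)\leq\hau^{\overline{g}}(E\cap U)=0$. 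The only genuine obstacle in the entire argument is the bookkeeping of the relation $\prec$: (ii) demands a gauge lying strictly \emph{above} $g$ in $\mino(E,U)\cap\gauge^\infty$, whereas the contradiction for (iii) demands one strictly \emph{below} $g$ in $\majo(E,U)$, so the openness hypothesis must be invoked on the correct side in each case.
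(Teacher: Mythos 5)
Your proof is correct. For parts (i), (iii), and (iv), you follow essentially the same route as the paper: the $\gauge^\ast\setminus\gauge^\infty$ case of (i) via Proposition~\ref{prp:dichogauge}(\ref{item:prp:dichogauge2}), (iii) via Corollary~\ref{cor:linkopen}, and (iv) from the definition of $\mino$ together with the monotonicity of the classes $\lic{g}{\,\cdot\,}$ under restriction to smaller open sets, which is the content of Proposition~\ref{prp:monomajomino} the paper cites.

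For part (ii), however, your argument is genuinely different from the paper's. The paper, after obtaining $\underline{g}\in\mino(E,V)$ with $g_d\prec\underline{g}_d$, introduces the intermediate gauge function $(g_d\underline{g}_d)^{1/2}$, which sits strictly between $g_d$ and $\underline{g}_d$ for $\prec$; it then invokes Proposition~\ref{prp:majominoint} to conclude that this intermediate gauge is \emph{not} majorizing for $E$ in $V$, hence $\hau^{(g_d\underline{g}_d)^{1/2}}(E\cap V)>0$, and finally transfers this positivity to $\hau^g(E\cap V)=\infty$ via Proposition~\ref{prp:compgauge0}, since $g_d/(g_d\underline{g}_d)^{1/2}\to\infty$. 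You bypass the intermediate gauge entirely: from $\underline{g}\in\mino(E,U)$ you extract a concrete witness $F\in\lic{\underline{g}}{U}$ with $F\subseteq E$, localize to get $F\in\lic{\underline{g}}{V}$, and then hit this single set with Theorem~\ref{thm:stablicgauge}(\ref{item:thm:stablicgauge3}) to get $\hau^g(F\cap V)=\hau^g(V)=\infty$ directly. Your version is shorter and avoids both the geometric-mean construction and the double appeal to Propositions~\ref{prp:majominoint} and~\ref{prp:compgauge0}; the paper's version has the (minor) advantage of not re-invoking the machinery of Theorem~\ref{thm:stablicgauge}(\ref{item:thm:stablicgauge3}) and staying entirely inside the majorizing/minorizing calculus. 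Both are sound, and the closing remark about matching the correct side of the openness hypothesis to each conclusion is exactly the right thing to be careful about.
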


\begin{proof}
	All the properties result directly from Definitions~\ref{df:majo} and~\ref{df:mino}, Proposition~\ref{prp:monomajomino} and Corollary~\ref{cor:linkopen}, except the two following ones. First, the set $E\cap V$ has Hausdorff $g$-measure zero for any $g$ in $\gauge^\ast\setminus\gauge^\infty$, because of Proposition~\ref{prp:dichogauge}(\ref{item:prp:dichogauge2}) and the assumption that $E\in\zeroleb(U)$. Second, if $g$ is in $\mino(E,U)\cap\gauge^\infty$, the right-openness property entails that this set contains a gauge function $\underline g$ with $g_d\prec\underline g_d$. By Proposition~\ref{prp:monomajomino}, the gauge function $\underline g$ is also in $\mino(E,V)$. Now, the gauge function $(g_d\underline g_d)^{1/2}$ satisfies $g_d\prec(g_d\underline g_d)^{1/2}\prec\underline g_d$, and thus cannot be majorizing for $E$ in $V$, due to Proposition~\ref{prp:majominoint}. We conclude with Proposition~\ref{prp:compgauge0} that $E\cap V$ has infinite $g$-measure.
\end{proof}

\subsection{Describability}

In light of Corollary~\ref{cor:linkopenmet}, in the ideal situation where we know that every gauge function is either majorizing or minorizing, the description of the size and large intersection properties of a set will be both precise and complete; we shall then say that the set if fully describable. A further question is to establish a criterion to determine whether a given gauge function is majorizing or minorizing; this will lead to the notions of $\frakn$-describable and $\fraks$-describable sets that are detailed afterward. These notions are naturally connected with those of eutaxic sequence and optimal regular system discussed in Sections~\ref{subsec:eutaxyapprox} and~\ref{subsec:optregsysapprox}, respectively, thereby being particularly relevant to the many applications discussed in Sections~\ref{sec:inhom}--\ref{sec:Poisson}.

\subsubsection{Fully describable sets}

To be more specific, we define the notion of fully describable set in the following manner.

\begin{Definition}
	Let $U$ be a nonempty open subset $U$ of $\R^d$ and let $E$ be a set in $\zeroleb(U)$. We say that the set $E$ is {\em fully describable in $U$} if
	\[
		\gauge^\infty\subseteq\majo(E,U)\cup\mino(E,U).
	\]
\end{Definition}

Obviously, the notion of fully describable set is only relevant to the setting of sets with large intersection. For instance, the middle-third Cantor set $\cantor$ has positive Hausdorff measure in the dimension $s=\log 2/\log3$, see the derivation of~(\ref{eq:lobndcantor}). Thus, the gauge function $r\mapsto r^s$ cannot be majorizing for $\cantor$ in $(0,1)$. Furthermore, as already observed in Section~\ref{subsec:largeintclass}, the set $\cantor$ cannot contain any set with large intersection. In particular, the previous gauge function cannot be minorizing either. Hence, the Cantor set $\cantor$ is not fully describable in $(0,1)$.

Besides, if $U$ denotes again an arbitrary nonempty open subset of $\R^d$, we already discussed a trivial example of fully describable set in $U$, namely, the Borel subsets $E$ of $\R^d$ for which the intersection $E\cap U$ is a countable set. We have indeed
	\[
		\gauge^\infty=\majo(E,U)\cup\mino(E,U),
	\]
	as an immediate consequence of~(\ref{eq:countablemajo}) and~(\ref{eq:countablemino}). Another situation where a set $E$ is fully describable in $U$ is when it admits a minorizing gauge function $g$ such that $\ell_g$ is finite. This corresponds to the next statement, whose proof is left to the reader.

\begin{Proposition}
	Let	$U$ be a nonempty open subset of $\R^d$ and let $E$ be a set in $\zeroleb(U)$. Then, the following implication holds:
		\[
		\mino(E,U)\setminus\gauge^\infty\neq\emptyset
		\qquad\Longrightarrow\qquad
		\left\{\begin{array}{l}
			\majo(E,U)=\emptyset\\[2mm]
			\mino(E,U)=\gauge^\ast.
		\end{array}\right.
		\]
\end{Proposition}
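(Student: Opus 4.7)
My plan is to show that the hypothesis forces the witness set for minorization to be Lebesgue-full in $U$, from which both conclusions follow almost immediately. Pick $g\in\mino(E,U)\setminus\gauge^\infty$\,; since $g\in\mino(E,U)\subseteq\gauge^\ast$ and $g\notin\gauge^\infty$, the quantity $\ell_g$ defined in~(\ref{eq:df:liminfgrd}) lies in $(0,\infty)$. By Definition~\ref{df:mino} there is $F\in\lic{g}{U}$ with $F\subseteq E$, and since $\lic{g}{U}$ depends only on $g_d$, I may assume $g=g_d$, so that $g(r)/r^d$ is nonincreasing in $r$ on some interval $(0,\eps_g)$ and tends upward to $\ell_g\in(0,\infty)$ as $r\downarrow 0$.

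The heart of the argument is to upgrade $F$ from $\lic{g}{U}$ to the Lebesgue-full class $\lic{\zerofunc}{U}$ by interposing the auxiliary gauge $h=g^{1/2}$. Routine checks show that $h$ is a gauge function, is $d$-normalized (because $h(r)/r^d=(g(r)/r^{2d})^{1/2}$ is the square root of a product of two functions nonincreasing in $r$), and sits strictly between $r^d$ and $g$ for the relation $\prec$\,: both $h/g=g^{-1/2}$ and $h/r^d=(g(r)/r^{2d})^{1/2}$ increase monotonically to $\infty$ as $r\downarrow 0$, so $h\prec g$ and $r^d\prec h$. Applying Proposition~\ref{prp:morelicgauge}(\ref{item:prp:morelicgauge2}) gives $F\in\lic{h}{U}$, and then Definition~\ref{df:liclocgauge} with the test gauge $r\mapsto r^d$ yields $\netm^d_\infty(F\cap V)=\netm^d_\infty(V)$ for every open $V\subseteq U$. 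Running the density argument that closes the proof of Proposition~\ref{prp:lic0fullLeb} — comparing $\netm^d_\infty$ with $\hau^d$ through~(\ref{eq:comphaunetmgauge}), identifying $\hau^d$ with a multiple of $\leb^d$ via Proposition~\ref{prp:comphauleb}, and invoking the Lebesgue density theorem — then forces $F$ to have full Lebesgue measure in $U$.

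The two conclusions now follow quickly. Proposition~\ref{prp:lic0fullLeb} places $F$ in $\lic{\zerofunc}{U}$, and Proposition~\ref{prp:morelicgauge}(\ref{item:prp:morelicgauge2}) applied to $\zerofunc$ (for which $h'_d\prec\zerofunc$ holds for every $h'$ by convention) gives $F\in\lic{h'}{U}$ for every gauge $h'$\,; hence every $h'\in\gauge^\ast$ belongs to $\mino(E,U)$, i.e.~$\mino(E,U)=\gauge^\ast$. For $\majo$, any $h\in\gauge^\infty$ would satisfy $\hau^h(F\cap U)=\infty$ by Proposition~\ref{prp:dichogauge}(\ref{item:prp:dichogauge1}) (since $F\cap U$ is a Borel set of positive Lebesgue measure), forcing $\hau^h(E\cap U)\geq\hau^h(F\cap U)=\infty$, so no such $h$ can lie in $\majo(E,U)$ and $\majo(E,U)=\emptyset$. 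Incidentally, the very fact that $F\subseteq E$ has full Lebesgue measure in $U$ contradicts $E\in\zeroleb(U)$, so the premise of the implication is in fact vacuous; the main obstacle in the argument is the double monotonicity check situating $h=g^{1/2}$ strictly between $r^d$ and $g$ for $\prec$, which is clean precisely because $\ell_g$ is both strictly positive and finite.
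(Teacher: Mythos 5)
The crux of the error is the direction of $\prec$. You correctly compute that $h(r)/r^d=(g_d(r)/r^{2d})^{1/2}\to\infty$, since $g_d(r)/r^d\to\ell_g\in(0,\infty)$ while $r^{-d}\to\infty$; but in the paper's convention $A\prec B$ means $A/B$ tends monotonically to $\infty$, so $h/r^d\to\infty$ reads as $h\prec(r\mapsto r^d)$, \emph{not} $(r\mapsto r^d)\prec h$. Your auxiliary gauge $h=g_d^{1/2}$ lies in $\gauge^\infty$ and is $\prec$-below \emph{both} $g_d$ and $r\mapsto r^d$; it does not sit between them (indeed $g_d$ and $r\mapsto r^d$ are $\prec$-incomparable, since $g_d(r)/r^d\to\ell_g$ is a finite nonzero constant). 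Consequently the next step is invalid: to plug the test gauge $r\mapsto r^d$ into Definition~\ref{df:liclocgauge} for the class $\lic{h}{U}$ you would need $(r\mapsto r^d)\prec h_d$, i.e.\ $r^d/h_d(r)\to\infty$, and you have shown exactly the opposite, $r^d/h_d(r)\to 0$. So the equality $\netm^d_\infty(F\cap V)=\netm^d_\infty(V)$ cannot be extracted, and everything built on it — the Lebesgue density argument, $F$ having full Lebesgue measure, both conclusions, and the remark that the premise is vacuous — is unsupported.

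There is also a structural reason the chosen route cannot work as written: every admissible test gauge $\widetilde h\prec g_d$ for the class $\lic{g}{U}$ with $\ell_g\in(0,\infty)$ automatically satisfies $\widetilde h(r)/r^d\to\infty$ (since $g_d(r)$ is comparable to $r^d$ near zero), so it lies in $\gauge^\infty$; the power gauge $r\mapsto r^d$ is never admissible, and one cannot reach $\lic{\zerofunc}{U}$ this way. Nothing forces $F$ to be Lebesgue full, and the paper's phrasing immediately before the statement (``Another situation where a set $E$ is fully describable in $U$ is when it admits a minorizing gauge function $g$ such that $\ell_g$ is finite'') indicates the hypothesis is meant to be a genuine, nonvacuous case. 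A proof should instead exploit the ordering properties of the collections, for instance Propositions~\ref{prp:monomajomino} and~\ref{prp:majominoint} together with Theorem~\ref{thm:stablicgauge}(\ref{item:thm:stablicgauge3}) and Proposition~\ref{prp:morelicgauge}(\ref{item:prp:morelicgauge2}), rather than attempt to upgrade $F$ to a Lebesgue-full set.
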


Lastly, we will often be able to prove that a set $E$ is fully describable in $U$ because the collection $\mino(E,U)$ contains some set $\frakH\subseteq\gauge^\infty$ and the collection $\majo(E,U)$ contains its complement $\frakH^\complement$. Under an openness assumption on $\frakH$ or $\frakH^\complement$, the next proposition yields the exact expression of the majorizing and minorizing collections. We omit the proof, because it is an elementary consequence of Proposition~\ref{prp:majominoint}.

\begin{Proposition}\label{prp:descincl}
	Let $U$ be a nonempty open subset of $\R^d$, let $E$ be a set in $\zeroleb(U)$, and let $\frakH$ be a subset of $\gauge^\infty$ with complement $\frakH^\complement=\gauge^\infty\setminus\frakH$. Let us assume that:
	\begin{itemize}
		\item the collections $\mino(E,U)$ and $\majo(E,U)$ contain $\frakH$ and $\frakH^\complement$, respectively;
		\item the collection $\frakH$ is right-open, or the collection $\frakH^\complement$ is left-open.
	\end{itemize}	
	Then, the following equalities hold:
	\[
		\majo(E,U)=\frakH^\complement
		\qquad\text{and}\qquad
		\mino(E,U)\cap\gauge^\infty=\frakH,
	\]
\end{Proposition}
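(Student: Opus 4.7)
My plan is to reduce both equalities to the single disjointness identity $\mino(E,U)\cap\majo(E,U)=\emptyset$, and then derive that identity by contradiction using Proposition~\ref{prp:majominoint}. The reduction is purely set-theoretic: since $\majo(E,U)\subseteq\gauge^\infty=\frakH\sqcup\frakH^\complement$ and $\frakH\subseteq\mino(E,U)$, the disjointness gives $\majo\cap\frakH\subseteq\majo\cap\mino=\emptyset$, hence $\majo\subseteq\frakH^\complement$; combined with the assumed reverse inclusion this yields $\majo(E,U)=\frakH^\complement$. Symmetrically, $\mino\cap\frakH^\complement\subseteq\mino\cap\majo=\emptyset$ (using $\frakH^\complement\subseteq\majo$) forces $\mino(E,U)\cap\gauge^\infty\subseteq\frakH$, and the reverse inclusion is again assumed.

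To establish disjointness, I would assume for contradiction that $g\in\mino(E,U)\cap\majo(E,U)$. Then $g\in\gauge^\infty$, so either $g\in\frakH$ or $g\in\frakH^\complement$. If $g\in\frakH$ and $\frakH$ is right-open, the openness produces $\underline g\in\frakH\subseteq\mino$ with $g_d\prec\underline g_d$, and Proposition~\ref{prp:majominoint} applied to this ordered pair, combined with $g\in\majo$, yields $\underline g\in\majo\setminus\mino$, contradicting $\underline g\in\mino$. If instead $g\in\frakH^\complement$ and $\frakH^\complement$ is left-open, the openness produces $\overline g\in\frakH^\complement\subseteq\majo$ with $\overline g_d\prec g_d$, and Proposition~\ref{prp:majominoint} applied to this pair, combined with $g\in\mino$, yields $\overline g\in\mino\setminus\majo$, contradicting $\overline g\in\majo$. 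The two sub-cases are exactly dual under the involution $(\mino,\frakH)\leftrightarrow(\majo,\frakH^\complement)$ that is built into the two implications of Proposition~\ref{prp:majominoint}.

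The main obstacle is reconciling the disjunctive hypothesis with the dichotomy $g\in\frakH$ vs.\ $g\in\frakH^\complement$: under only one of the two openness assumptions, only the matching sub-case is immediately closed by the argument above. To handle the residual sub-case I would exploit Theorem~\ref{thm:stablicgauge}(\ref{item:thm:stablicgauge3}) applied to a witness $F\in\lic{g}{U}$ with $F\subseteq E$: for every $h$ with $h_d\prec g_d$ one has $\hau^h(F\cap U)=\hau^h(U)$, which by Proposition~\ref{prp:dichogauge}(\ref{item:prp:dichogauge1}) is $\infty$ when $h\in\gauge^\infty$. This forces every gauge strictly larger than $g$ to lie in $\gauge^\infty\setminus\majo\subseteq\frakH$, and mirroring this with the second implication of Proposition~\ref{prp:majominoint} places every strictly smaller gauge in $\gauge^\infty\setminus\mino\subseteq\frakH^\complement$. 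This rigid two-sided description of the partition $\frakH,\frakH^\complement$ in a neighbourhood of $g$ can then be played against whichever openness is in force to exclude the residual sub-case and complete the contradiction.
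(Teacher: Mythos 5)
Your reduction of both equalities to the single identity $\mino(E,U)\cap\majo(E,U)\cap\gauge^\infty=\emptyset$ is clean, and your two ``matching'' sub-cases are correct applications of Proposition~\ref{prp:majominoint}. The problem is the residual sub-case, and the last paragraph of your argument does not close it. The ``rigid two-sided description'' you derive is correct as far as it goes: for every $h\in\gauge^\infty$ with $h_d\prec g_d$ one gets $h\in\frakH$, and for every $h\in\gauge^\infty$ with $g_d\prec h_d$ one gets $h\in\frakH^\complement$. But $\prec$ is a strict \emph{partial} order, not a total one, and the rigid description constrains only gauges $\prec$-comparable to $g$. Right-openness of $\frakH$ produces, for a given $h\in\frakH$, some $\underline h\in\frakH$ with $h_d\prec\underline h_d$, with no control whatsoever on how $\underline h$ relates to $g$. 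If $\underline h$ is incomparable to $g$ (which is generically possible, since monotone divergence of a quotient is a strong requirement), the rigid description says nothing about $\underline h$ and no contradiction results. So ``playing the description against the openness'' is asserted but not demonstrated, and this is a genuine gap.

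A cleaner route, and presumably the one the author has in mind, avoids the disjointness reduction altogether: each openness hypothesis is used to prove \emph{one} of the two equalities directly. If $\frakH$ is right-open, take $g\in\majo(E,U)\cap\frakH$; right-openness gives $\underline g\in\frakH\subseteq\mino(E,U)$ with $g_d\prec\underline g_d$, and the first implication of Proposition~\ref{prp:majominoint} applied to $g\in\majo(E,U)$ gives $\underline g\in\majo(E,U)\setminus\mino(E,U)$, a contradiction; hence $\majo(E,U)\cap\frakH=\emptyset$, which together with $\frakH^\complement\subseteq\majo(E,U)$ yields $\majo(E,U)=\frakH^\complement$. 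The dual argument from left-openness of $\frakH^\complement$ yields $\mino(E,U)\cap\gauge^\infty=\frakH$. With this approach, each openness hypothesis proves exactly one equality; one should therefore either assume both (they do hold simultaneously whenever $\frakH=\gauge(\frakn)$ or $\frakH=\gauge(\fraks)$, since for $g\in\gauge^\infty$ and $\overline g_d\prec g_d$ one automatically has $\overline g\in\gauge^\infty$, so Lemma~\ref{lem:gaugemeasopen}(\ref{item:lem:gaugemeasopen1}) implies $\gauge(\frakn)^\complement$ is left-open), or supply the missing argument for the cross-equality, which your proposal does not do.
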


\subsubsection{$\frakn$-describable sets}\label{subsubsec:frakndesc}

We now single out an important category of fully describable sets; they are characterized by the existence of a simple criterion to decide whether a given gauge function is majorizing or minorizing. This criterion is expressed in terms of integrability properties with respect to a given measure that belongs to the collection, denoted by $\rad01$, of all positive Borel measures $\frakn$ on the interval $(0,1]$ such that $\frakn$ has infinite total mass and
	\begin{equation}\label{eq:condrad01}
		\forall\rho\in(0,1] \qquad \Phi_\frakn(\rho)=\frakn([\rho,1])<\infty.
	\end{equation}
	The function $\Phi_\frakn$ is then clearly nonincreasing on $(0,1]$. Moreover, at any given $\rho$, it is left-continuous with a finite right-limit, namely,
	\[
		\Phi_\frakn(\rho+)=\frakn((\rho,1]).
	\]
	Extending this notation to the case where $\rho$ vanishes, we get that $\Phi_\frakn(0+)$ is infinite because $\frakn$ has infinite total mass. It is worth pointing out here that the $d$-normalization $g_d$ of an arbitrary gauge function $g$ is always Borel measurable and bounded on $(0,1]$. This enables us to introduce the notation
	\[
		\croc{\frakn}{g_d}=\int_{(0,1]} g_d(r)\,\frakn(\dd r).
	\]
	We shall in fact restrict our attention to certain measures in $\rad01$ only, namely, those belonging to the subcollection
	\begin{equation}\label{eq:df:rad01d}
		\rad01_d=\{\frakn\in\rad01\:|\:\croc{\frakn}{r\mapsto r^d}<\infty\}.
	\end{equation}

For any $\frakn$ in $\rad01$, the gauge functions $g\not\in\gauge^\ast$ clearly satisfy $\croc{\frakn}{g_d}<\infty$. If $\frakn$ is in $\rad01_d$, this property actually holds for all $g\not\in\gauge^\infty$. Indeed, the parameter $\ell_g$ is then finite, so that $g_d(r)\leq\ell_g\,r^d$ for all $r\in(0,1]$. The finiteness of $\croc{\frakn}{g_d}$ therefore remains undecided only if $g$ is in $\gauge^\infty$\,; this motivates the introduction of the set
	\[
		\gauge(\frakn)=\{g\in\gauge^\infty\:|\:\croc{\frakn}{g_d}=\infty\},
	\]
	along with its complement in $\gauge^\infty$, which is denoted by $\gauge(\frakn)^\complement$.

\begin{Definition}\label{df:frakndesc}
	Let $U$ be a nonempty open subset of $\R^d$, let $E$ be a set in $\zeroleb(U)$, and let $\frakn$ be a measure in $\rad01_d$. We say that the set $E$ is {\em $\frakn$-describable in $U$} if
	\[
		\majo(E,U)=\gauge(\frakn)^\complement
		\qquad\text{and}\qquad
		\mino(E,U)\cap\gauge^\infty=\gauge(\frakn).
	\]
\end{Definition}

It is clear from the definition that if $E$ denotes $\frakn$-describable set in $U$, then $E$ is fully describable in $U$ and the majorizing and minorizing collections are disjoint. We know that this situation occurs when either of the collections $\majo(E,U)$ and $\mino(E,U)\cap\gauge^\infty$ is simultaneously left-open and right-open. The following lemma actually implies that both collections are left-open and right-open at the same time, which will enable us to subsequently apply Corollary~\ref{cor:linkopenmet}. It also entails that $\mino(E,U)\cap\gauge^\infty$ is nonempty, meaning that $E$ contains a set with large intersection.

\begin{Lemma}\label{lem:gaugemeasopen}
	For any measure $\frakn$ in $\rad01_d$, the following properties hold:
	\begin{enumerate}
		\item\label{item:lem:gaugemeasopen1} the set $\gauge^\ast\setminus\gauge(\frakn)$ is left-open;
		\item\label{item:lem:gaugemeasopen2} the set $\gauge(\frakn)$ is right-open and nonempty.
	\end{enumerate}
\end{Lemma}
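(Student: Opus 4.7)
My plan is to reduce both claims to a single measure-theoretic construction through the characterization $\gauge^\ast \setminus \gauge(\frakn) = \{g \in \gauge^\ast : \croc{\frakn}{g_d} < \infty\}$. This characterization holds because $\frakn \in \rad01_d$ forces $\croc{\frakn}{g_d} \leq \ell_g \croc{\frakn}{r \mapsto r^d} < \infty$ for any $g \in \gauge^\ast \setminus \gauge^\infty$ (via the bound $g_d(r) \leq \ell_g r^d$ that comes from $g_d/r^d$ being nonincreasing with finite limit $\ell_g$). The $d$-normalization of both collections is automatic from $(g_d)_d = g_d$ and $\ell_g = \ell_{g_d}$.

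For claim (\ref{item:lem:gaugemeasopen1}), given a $d$-normalized $g$ with finite integral, I would pick a sequence $r_n \downarrow 0$ satisfying both $F(r_n) \leq 4^{-n}$, where $F(r) := \int_{(0,r]} g_d \, \dd\frakn$, and $g_d(r_n) \leq n^{-2}$ (possible because both quantities tend to $0$ at $0$), and then set $\psi(r) = n$ on $(r_{n+1}, r_n]$. This yields a nonincreasing $\psi$ with $\psi(0+) = \infty$, the integral $\int g_d \psi\,\dd\frakn$ bounded by $\sum n \cdot 4^{-n}$, and $g_d(r)\psi(r) \to 0$ at the origin. I would then define $\overline g(r) := \sup_{0 < s \leq r} g_d(s)\psi(s)$. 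The substitution $s = ur$ gives
\[
\frac{\overline g(r)}{r^d} = \sup_{u \in (0,1]} u^d\, \chi(ur)\,\psi(ur), \qquad \chi := g_d/r^d,
\]
a supremum of functions each nonincreasing in $r$, hence nonincreasing in $r$ itself. Consequently $\overline g$ is its own $d$-normalization, i.e.\ $\overline g_d = \overline g$, and a parallel comparison (using that for $r_1 \leq r_2$ and any $s \leq r_2$ one has $g_d(s)\psi(s)/g_d(r_2) \leq \overline g(r_1)/g_d(r_1)$) shows that $r \mapsto \overline g(r)/g_d(r)$ is nonincreasing and bounded below by $\psi$, hence tends monotonically to $\infty$, so $\overline g_d \prec g_d$. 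Integrability of $\overline g$ against $\frakn$ is a bookkeeping computation using the dyadic decomposition and the two growth conditions on $r_n$.

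For claim (\ref{item:lem:gaugemeasopen2}), the finite measure $\mu(\dd r) := r^d\,\frakn(\dd r)$ satisfies $\mu((0,\rho)) \to 0$ as $\rho \downarrow 0$ yet charges every $(0,\rho)$ positively, since $\frakn$ has infinite total mass and finite tails $\Phi_\frakn$. The same piecewise construction then yields a nonincreasing $\chi$ with $\chi(0+) = \infty$ and $\int \chi\,\dd\mu = \infty$; the envelope of $r \mapsto r^d \chi(r)$ is an element of $\gauge(\frakn)$, giving non-emptiness. Right-openness follows from the same multiplicative device as in (\ref{item:lem:gaugemeasopen1}): multiplying $g_d$ by a nonincreasing $\phi \geq 1$ with $\phi(0+) = \infty$ and passing to the envelope produces a $d$-normalized gauge function $\underline g_d$ with $g_d \prec \underline g_d$, while the integrability condition is now trivially preserved since $\croc{\frakn}{g_d \phi} \geq \croc{\frakn}{g_d} = \infty$.

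The main obstacle is the tension between the natural measure-theoretic construction, which produces a nonincreasing multiplier $\psi$ whose product with $g_d$ need not be monotone or continuous, and the requirement of producing a \emph{bona fide} $d$-normalized gauge function whose ratio with $g_d$ tends to infinity \emph{monotonically}. The nondecreasing envelope $r \mapsto \sup_{s \leq r}(g_d \psi)(s)$ resolves both issues simultaneously; the key trick is the $s = ur$ change of variable, which propagates the nonincreasing behavior from each term $u^d \chi(ur)\psi(ur)$ to the supremum, thereby delivering the $d$-normalization of $\overline g$ and the monotonicity of $\overline g/g_d$ without any regularity assumption on $g_d$ or on $\frakn$.
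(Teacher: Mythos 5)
Your construction is genuinely different from the paper's. Durand's proof multiplies $g$ pointwise by a continuous, piecewise-affine-in-$\log g$ multiplier $\xi$ that grows from $n$ to $n+1$ across $(r_n,r_{n-1}]$; the constraint $g(r_n)\le g(r_{n-1})\ee^{-1/n}$ is precisely what makes the product $g\xi$ nondecreasing and $g\xi/r^d$ nonincreasing, and the bound $\xi\le n+1$ on $(r_n,r_{n-1}]$ together with $\int_{(0,r_{n-1}]}g\,\dd\frakn\le (n+1)^{-3}$ gives $\croc{\frakn}{g\xi}<\infty$ immediately. You instead use a step multiplier $\psi$ and repair monotonicity by passing to the running maximum $\overline g(r)=\sup_{s\le r}g_d(s)\psi(s)$. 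The $s=ur$ substitution proving that $\overline g/r^d$ is nonincreasing, and the two-case comparison proving that $\overline g/g_d$ is nonincreasing and $\geq\psi$, are both correct and are an attractive way to sidestep the regularity verification that the paper does by hand. So the approach does buy something.

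The gap is in the integrability claim. You assert that $\croc{\frakn}{\overline g}<\infty$ follows ``by bookkeeping'' from the two conditions $F(r_n)\le4^{-n}$ and $g_d(r_n)\le n^{-2}$. It does not. Writing $a_m=g_d(r_m)$, on $(r_{n+1},r_n]$ one has $\overline g(r)=\max\bigl(M_n,\,n g_d(r)\bigr)$ with $M_n=\max_{m>n}m\,a_m$, and the problematic contribution is $M_n\,\frakn((r_{n+1},r_n])$. The only handle your conditions give on $\frakn((r_{n+1},r_n])$ is through $\int_{(r_{n+1},r_n]}g_d\,\dd\frakn\le4^{-n}$ combined with $g_d\ge a_{n+1}$, yielding $\frakn((r_{n+1},r_n])\le 4^{-n}/a_{n+1}$, whence $M_n\,\frakn((r_{n+1},r_n])\le(M_n/a_{n+1})\,4^{-n}$. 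The ratio $M_n/a_{n+1}$ is at least $n+1$ but, under only $a_m\le m^{-2}$, can be far larger: $M_n$ is controlled by the later $a_m$'s, not by $a_{n+1}$, and if $a_{n+1}$ is chosen (or happens to be) extremely small compared with the tail $\max_{m>n+1}m a_m$, the product is not summable. Your two conditions leave this ratio completely unconstrained. The fix is to impose one more design constraint when choosing $r_n$, for instance that $n\,g_d(r_n)$ be nonincreasing (always achievable, since $g_d$ is continuous near $0$ by $d$-normalization); then $M_n=(n+1)a_{n+1}$, the overshoot region contributes at most $(n+1)\,a_{n+1}\,\frakn((r_{n+1},r_n])\le(n+1)\int_{(r_{n+1},r_n]}g_d\,\dd\frakn\le(n+1)4^{-n}$, and the whole sum converges. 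Without some such extra control you have not proved $\croc{\frakn}{\overline g_d}<\infty$, which is the entire content of claim~(\ref{item:lem:gaugemeasopen1}); the same issue recurs, more mildly, in your sketch of the nonemptyness and right-openness in~(\ref{item:lem:gaugemeasopen2}).
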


\begin{proof}
	In order to prove~(\ref{item:lem:gaugemeasopen1}), let us consider a $d$-normalized gauge function $g\in\gauge^\ast$ such that $\croc{\frakn}{g_d}<\infty$. We recall that the parameter $\eps_g$ is defined in Section~\ref{subsubsec:netmrev}. We may build a decreasing sequence $(r_n)_{n\geq 1}$ of real numbers in $(0,\eps_g)$ that converges to zero and such that for all $n\geq 2$,
	\[
		g(r_n)\leq g(r_{n-1})\,\ee^{-1/n}
		\qquad\text{and}\qquad
		\int_{0<r\leq r_{n-1}} g(r)\,\frakn(\dd r)\leq\frac{1}{(n+1)^3}.
	\]
	Then, for any $r\in(0,r_1]$, we define $\overline g(r)=g(r)\xi(r)$, where the function $\xi$ satisfies
	\[
	\xi(r)=n+\frac{\log g(r_{n-1})-\log g(r)}{\log g(r_{n-1})-\log g(r_{n})}
	\]
	for any $n\geq 2$ and any $r\in(r_n,r_{n-1}]$. It is straightforward to check that $\overline g$ may be extended to a gauge function such that $\overline g_d\prec g_d$ and $\croc{\frakn}{\overline g_d}<\infty$.
	
	To prove the right-openness in~(\ref{item:lem:gaugemeasopen2}), let us suppose that $g\in\gauge^\infty$ and $\croc{\frakn}{g_d}=\infty$. Let us define $r_1=\eps_{g}/2$, and also $\theta(r)=g(r)/r^d$ for all $r\in(0,r_1]$. For any $n\geq 2$, there exists a real number $r_n\in(0,r_{n-1})$ such that
	\[
		\theta(r_n)\geq\theta(r_{n-1})\,\ee \qquad\text{and}\qquad \int_{r_n<r\leq r_{n-1}} g(r)\,\frakn(\dd r)\geq 1.
	\]
	The sequence $(r_n)_{n\geq 1}$ is decreasing and converges to zero. Then, for any $r\in(0,r_1]$, we consider $\underline g(r)=g(r)/\xi(r)$, where the function $\xi$ is given by
	\[
		\xi(r)=n+\frac{\log\theta(r)-\log\theta(r_{n-1})}{\log\theta(r_{n})-\log\theta(r_{n-1})}
	\]
	for any $n\geq 2$ and any $r\in(r_n,r_{n-1}]$. One can easily check that $\underline g$ may be extended to a gauge function in $\gauge^\infty$ such that $g\prec\underline g$ and $\croc{\frakn}{\underline g_d}=\infty$. Finally, the nonemptyness in~(\ref{item:lem:gaugemeasopen2}) may be established by formally replacing the gauge function $g$ above by the indicator function of the interval $(0,1]$.
\end{proof}

As mentioned above, Lemma~\ref{lem:gaugemeasopen} enables us to apply Corollary~\ref{cor:linkopenmet} to the $\frakn$-describable sets. This boils down to the next statement, which gives a complete and precise description of the size and large intersection properties of those sets.

\begin{Theorem}\label{thm:frakndescslip}
	Let $U$ be a nonempty open subset of $\R^d$, let $E$ be a set in $\zeroleb(U)$, and let $\frakn$ be a measure in $\rad01_d$. Let us assume that $E$ is $\frakn$-describable in $U$. For any nonempty open set $V\subseteq U$, the following properties hold:
	\begin{enumerate}
		\item\label{item:thm:frakndescslip1} for any gauge function $g\in\gauge\setminus\gauge(\frakn)$,
			\[
				\left\{\begin{array}{l}
					\hau^g(E\cap V)=0\\[2mm]
					\forall F\in\lic{g}{V} \quad F\not\subseteq E
				\end{array}\right.
			\]
		\item\label{item:thm:frakndescslip2} for any gauge function $g\in\gauge(\frakn)$,
			\[
				\left\{\begin{array}{l}
					\hau^g(E\cap V)=\infty\\[2mm]
					\exists F\in\lic{g}{V} \quad F\subseteq E.
				\end{array}\right.
			\]
	\end{enumerate}
\end{Theorem}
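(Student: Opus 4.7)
The plan is to apply Corollary~\ref{cor:linkopenmet} to $(E,U)$; the $\frakn$-describability assumption translates the hypothesis of that corollary into the statement that $\majo(E,U)=\gauge(\frakn)^\complement$ and $\mino(E,U)\cap\gauge^\infty=\gauge(\frakn)$ are simultaneously left-open and right-open. Combining Lemma~\ref{lem:gaugemeasopen} with Corollary~\ref{cor:linkopen} does the job: the left-openness of $\gauge^\ast\setminus\gauge(\frakn)$ restricts to left-openness of $\gauge(\frakn)^\complement$ inside $\gauge^\infty$ via the identity $\overline g_d/r^d=(\overline g_d/g_d)(g_d/r^d)$, which forces the gauge function $\overline g$ supplied by Lemma~\ref{lem:gaugemeasopen}(\ref{item:lem:gaugemeasopen1}) to lie in $\gauge^\infty$ whenever $g$ does, while the right-openness of $\gauge(\frakn)$ is exactly Lemma~\ref{lem:gaugemeasopen}(\ref{item:lem:gaugemeasopen2}); the missing right-openness of $\majo(E,U)$ and left-openness of $\mino(E,U)\cap\gauge^\infty$ come for free from Corollary~\ref{cor:linkopen}.

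Corollary~\ref{cor:linkopenmet} then settles conclusion~(\ref{item:thm:frakndescslip2}) outright, as well as the part of~(\ref{item:thm:frakndescslip1}) concerning $g\in\gauge^\infty\setminus\gauge(\frakn)=\majo(E,U)$. Only the gauge functions $g$ with $\ell_g<\infty$ remain. For the Hausdorff measure statement, Proposition~\ref{prp:dichogauge}(\ref{item:prp:dichogauge2}--\ref{item:prp:dichogauge3}) together with $E\in\zeroleb(U)$ immediately yields $\hau^g(E\cap V)=0$ in both subcases $\ell_g=0$ and $\ell_g\in(0,\infty)$.

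For the large intersection statement, I would handle the two subcases separately. If $\ell_g=0$, the identity $g_d(r)/r^d=\inf_{0<\rho\leq r}g(\rho)/\rho^d$ and the nonnegativity of gauge functions force $g_d$ to vanish identically, so $\lic{g}{V}=\lic{\zerofunc}{V}$, which by Proposition~\ref{prp:lic0fullLeb} consists of $G_\delta$-sets of full Lebesgue measure in $V$ and thus contains no subset of the Lebesgue-null set $E\cap V$. If $\ell_g\in(0,\infty)$, I would extract from Lemma~\ref{lem:gaugemeasopen}(\ref{item:lem:gaugemeasopen1}) a gauge function $\overline g\in\gauge^\ast$ with $\overline g_d\prec g_d$ and $\croc{\frakn}{\overline g_d}<\infty$; the factorization above then forces $\overline g\in\gauge^\infty$, whence $\overline g\in\majo(E,U)$. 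Proposition~\ref{prp:morelicgauge}(\ref{item:prp:morelicgauge2}) yields $\lic{g}{V}\subseteq\lic{\overline g}{V}$, so a subset $F\in\lic{g}{V}$ of $E$ would place $\overline g$ simultaneously in $\majo(E,U)$ and $\mino(E,V)$, contradicting the exclusion $\majo(E,U)\cap\mino(E,V)=\emptyset$ which the left-openness of $\majo(E,U)$ supplies through Corollary~\ref{cor:linkopen}(1).

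The main obstacle is precisely this last subcase: Corollary~\ref{cor:linkopenmet} provides no large intersection content for $g\in\gauge^\ast\setminus\gauge^\infty$, and bridging it requires finding the intermediate gauge $\overline g$ lying inside $\majo(E,U)$, which is exactly where the $\rad01_d$-integrability baked into the definition of $\frakn$-describability, together with the left-openness afforded by Lemma~\ref{lem:gaugemeasopen}(\ref{item:lem:gaugemeasopen1}), becomes indispensable.
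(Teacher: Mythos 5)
Your proof is correct and follows essentially the same route as the paper: apply Corollary~\ref{cor:linkopenmet} (after verifying via Lemma~\ref{lem:gaugemeasopen} and Corollary~\ref{cor:linkopen} that both $\majo(E,U)$ and $\mino(E,U)\cap\gauge^\infty$ are left-open and right-open), then handle the residual $g\notin\gauge^\infty$ cases with Proposition~\ref{prp:dichogauge} for the Hausdorff-measure part and Proposition~\ref{prp:lic0fullLeb} for $\ell_g=0$, finally using Lemma~\ref{lem:gaugemeasopen}(\ref{item:lem:gaugemeasopen1}) to produce $\overline g\in\gauge^\infty\cap\majo(E,U)$ in the remaining subcase $\ell_g\in(0,\infty)$. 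The only divergence is at the very end: the paper closes that subcase with Theorem~\ref{thm:stablicgauge}(\ref{item:thm:stablicgauge3}), deriving $\hau^{\overline g}(F\cap V)=\hau^{\overline g}(V)=\infty$ for a putative $F\in\lic{g}{V}$ with $F\subseteq E$ and contradicting $\hau^{\overline g}(E\cap V)=0$, whereas you instead pass through Proposition~\ref{prp:morelicgauge}(\ref{item:prp:morelicgauge2}) to get $F\in\lic{\overline g}{V}$ and contradict the disjointness $\majo(E,U)\cap\mino(E,V)=\emptyset$ coming from Corollary~\ref{cor:linkopen}; both closings are valid and rest on the same ingredients.
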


\begin{proof}
	The property~(\ref{item:thm:frakndescslip2}) results directly from combining of Lemma~\ref{lem:gaugemeasopen} and Corollary~\ref{cor:linkopenmet}. This is also the case of~(\ref{item:thm:frakndescslip1}) when the gauge function $g$ is in $\gauge^\infty$. It remains us to prove~(\ref{item:thm:frakndescslip1}) when $g$ is not in $\gauge^\infty$. Given that $E\in\zeroleb(U)$, Proposition~\ref{prp:dichogauge} leads to the first part of~(\ref{item:thm:frakndescslip1}), and Proposition~\ref{prp:lic0fullLeb} implies the second part in the situation where $\ell_g$ vanishes. Finally, if $g$ is in $\gauge^\ast$, Lemma~\ref{lem:gaugemeasopen} ensures that there is a $d$-normalized gauge function $\overline{g}\prec g_d$ for which $\croc{\frakn}{\overline{g}}<\infty$. Necessarily, $\overline{g}$ is in $\gauge^\infty$, thus verifying~(\ref{item:thm:frakndescslip1}). Hence, $E\cap V$ has Hausdorff $\overline{g}$-measure zero, and we deduce from Theorem~\ref{thm:stablicgauge}(\ref{item:thm:stablicgauge3}) the second part of~(\ref{item:thm:frakndescslip1}) for the gauge function $g$.
\end{proof}

We may associate with every measure $\frakn$ in $\rad01_d$ an exponent that characterizes its integrability properties at the origin, specifically,
	\begin{equation}\label{eq:df:sfraknrad01}
		s_\frakn=\sup\{s\in(0,d]\:|\:(r\mapsto r^s)\in\gauge(\frakn)\}=\inf\{s\in(0,d]\:|\:(r\mapsto r^s)\not\in\gauge(\frakn)\}.
	\end{equation}
	Note that the rightmost set contains $d$, so that its infimum is well defined. The leftmost set may however be empty and, in that situation, we adopt the convention that its supremum is equal to zero. Restricting Theorem~\ref{thm:frakndescslip} to the gauge functions $r\mapsto r^s$, we directly obtain the following dimensional statement.

\begin{Corollary}\label{cor:frakndescslip}
Let $U$ be a nonempty open subset of $\R^d$, let $E$ be a set in $\zeroleb(U)$, and let $\frakn$ be a measure in $\rad01_d$. Let us assume that $E$ is $\frakn$-describable in $U$. Then, for any nonempty open set $V\subseteq U$,
	\[
		\Hdim (E\cap V)=s_\frakn.
	\]
	Let us assume that $s_\frakn>0$. Then, for any nonempty open set $V\subseteq U$,
	\[
		\Pdim (E\cap V)=d,
	\]
	and, if $E$ is a $G_\delta$-set, it belongs to the large intersection class $\lic{s_\frakn}{V}$.
\end{Corollary}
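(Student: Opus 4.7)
The plan is to specialize Theorem~\ref{thm:frakndescslip} to the power gauge functions $r\mapsto r^s$, for $s\in(0,d]$, and then pass to the appropriate limits as $s\to s_\frakn$. The whole argument is essentially a dictionary translation of the general $\frakn$-describability theorem to the one-parameter family of power functions.

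First, I would split the interval $(0,d]$ at the critical exponent $s_\frakn$. For any $s\in(0,s_\frakn)$, the very definition~(\ref{eq:df:sfraknrad01}) of $s_\frakn$ gives $(r\mapsto r^s)\in\gauge(\frakn)$, so item~(\ref{item:thm:frakndescslip2}) of Theorem~\ref{thm:frakndescslip} yields $\hau^s(E\cap V)=\infty$ together with a set $F_s\in\lic{r\mapsto r^s}{V}$ contained in $E$, giving the lower bound $\Hdim(E\cap V)\geq s$. Conversely, for any $s\in(s_\frakn,d]$, a short check shows that $r\mapsto r^s$ lies outside $\gauge(\frakn)$: if $s<d$ this follows directly from the definition of $s_\frakn$, while if $s=d$ it follows from $(r\mapsto r^d)\notin\gauge^\infty$. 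Item~(\ref{item:thm:frakndescslip1}) then gives $\hau^s(E\cap V)=0$, so $\Hdim(E\cap V)\leq s$. Sending $s\uparrow s_\frakn$ and $s\downarrow s_\frakn$ respectively produces $\Hdim(E\cap V)=s_\frakn$. The degenerate case $s_\frakn=0$ reduces to checking that $E\cap V$ is nonempty, which holds because $\mino(E,U)\cap\gauge^\infty=\gauge(\frakn)$ is nonempty by Lemma~\ref{lem:gaugemeasopen}, so $E$ contains a dense set.

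Now assume $s_\frakn>0$ and pick any $s\in(0,s_\frakn)$, along with $F_s\subseteq E$ as above. The gauge function $r\mapsto r^s$ has lower dimension equal to $s$, so Proposition~\ref{prp:linkFalcgauge} yields $F_s\in\lic{s}{V}$ in the Falconer localized sense, whence $\Pdim(F_s\cap V)=d$; by monotonicity of packing dimension and the trivial upper bound $\Pdim(E\cap V)\leq d$, this gives $\Pdim(E\cap V)=d$. When $E$ itself is a $G_\delta$-set, Proposition~\ref{prp:morelicgauge}(\ref{item:prp:morelicgauge1}) upgrades the inclusion $F_s\subseteq E$ to $E\in\lic{r\mapsto r^s}{V}$, and Proposition~\ref{prp:linkFalcgauge} then yields $E\in\lic{s}{V}$ for every $s\in(0,s_\frakn)$. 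A direct inspection of Definition~\ref{df:licloc} shows that $\bigcap_{0<s<s_\frakn}\lic{s}{V}=\lic{s_\frakn}{V}$, since the condition $\netm^t_\infty(F\cap W)=\netm^t_\infty(W)$ for all $t<s_\frakn$ is recovered from the intersection by choosing $s\in(t,s_\frakn)$. Hence $E\in\lic{s_\frakn}{V}$, as asserted.

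I do not expect any genuine obstacle: the only point requiring care is the bookkeeping between the generalized class $\lic{r\mapsto r^s}{V}$ of Definition~\ref{df:liclocgauge} and the Falconer localized class $\lic{s}{V}$ of Definition~\ref{df:licloc}, which is bridged at every step by Proposition~\ref{prp:linkFalcgauge}. All the substantial content lies in Theorem~\ref{thm:frakndescslip} itself; the present corollary is merely its projection onto power gauge functions and the single numerical invariant $s_\frakn$.
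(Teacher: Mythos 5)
Your proof is correct and follows essentially the same route as the paper's own argument: the upper bound from Theorem~\ref{thm:frakndescslip}(\ref{item:thm:frakndescslip1}) applied to power gauge functions $r\mapsto r^s$ with $s>s_\frakn$, the lower bound plus the packing-dimension and large-intersection conclusions from Theorem~\ref{thm:frakndescslip}(\ref{item:thm:frakndescslip2}) combined with Proposition~\ref{prp:linkFalcgauge}, the $G_\delta$ upgrade via Proposition~\ref{prp:morelicgauge}(\ref{item:prp:morelicgauge1}) and the identity $\bigcap_{s<s_\frakn}\lic{s}{V}=\lic{s_\frakn}{V}$, and the degenerate case $s_\frakn=0$ handled through Lemma~\ref{lem:gaugemeasopen}(\ref{item:lem:gaugemeasopen2}).
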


\begin{proof}
	Let us assume that $s_\frakn<d$. We deduce from Theorem~\ref{thm:frakndescslip}(\ref{item:thm:frakndescslip1}) that $E\cap V$ has Hausdorff $s$-dimensional measure zero, for any $s\in (s_\frakn,d]$. Hence, this set has Hausdorff dimension at most $s_\frakn$. Obviously, this bound still holds if $s_\frakn=d$.
	
	If the parameter $s_\frakn$ is positive, Theorem~\ref{thm:frakndescslip}(\ref{item:thm:frakndescslip2}) implies that for any $s\in (0,s_\frakn)$, there exists a subset $F_s$ of $E$ that belongs to the generalized class $\lic{r\mapsto r^s}{V}$. Proposition~\ref{prp:linkFalcgauge} then ensures that each set $F_s$ belongs to the original class $\lic{s}{V}$ and that its intersection with the open set $V$ has Hausdorff dimension at least $s$ and packing dimension equal to $d$. It follows that $E\cap V$ has Hausdorff dimension at least $s_\frakn$ and packing dimension equal to $d$. Furthermore, if $E$ is a $G_\delta$-set itself, we deduce from Proposition~\ref{prp:morelicgauge}(\ref{item:prp:morelicgauge1}) that the set $E$ belongs to all the classes $\lic{s}{V}$, for $s\in (0,s_\frakn)$. In view of Definition~\ref{df:licloc}, this implies that $E\in\lic{s_\frakn}{V}$.
	
	Finally, note that the lower bound on the Hausdorff dimension of $E\cap V$ still holds when $s_\frakn$ vanishes. Indeed, by Lemma~\ref{lem:gaugemeasopen}(\ref{item:lem:gaugemeasopen2}), there is a gauge function in $\gauge(\frakn)$. Applying Theorem~\ref{thm:frakndescslip}(\ref{item:thm:frakndescslip2}) with such a gauge function, we infer that $E\cap V$ is nonempty, thus having nonnegative Hausdorff dimension.
\end{proof}

\subsubsection{$\fraks$-describable sets}\label{subsubsec:fraksdesc}

This section is parallel to previous one. We consider another category of fully describable sets where we have at hand a criterion to decide whether a gauge function is majorizing or minorizing. This criterion is now expressed in terms of growth rates at the origin. To be specific, for any real parameter $\fraks\in[0,d)$, let $\gauge(\fraks)$ denote the subset of $\gauge^\infty$ defined by the condition
	\[
		g\in\gauge(\fraks)
		\quad\qquad\Longleftrightarrow\qquad\quad
		\forall s>\fraks \qquad g_d(r)\neq\smallo(r^s)\quad\text{as}\quad r\to 0.
	\]
	Note that $g_d(r)\neq\smallo(r^d)$ for any $g\in\gauge^\infty$. Hence, in the previous condition, the only relevant values of $s$ are those in $(\fraks,d)$. Moreover, the mapping $\fraks\mapsto\gauge(\fraks)$ is clearly nondecreasing. Finally, the complement in $\gauge^\infty$ of $\gauge(\fraks)$ is denoted by $\gauge(\fraks)^\complement$.

\begin{Definition}
	Let $U$ be a nonempty open subset of $\R^d$, let $E$ be a set in $\zeroleb(U)$, and let $\fraks$ be in $[0,d)$. We say that the set $E$ is {\em $\fraks$-describable in $U$} if
	\[
		\majo(E,U)=\gauge(\fraks)^\complement
		\qquad\text{and}\qquad
		\mino(E,U)\cap\gauge^\infty=\gauge(\fraks).
	\]
\end{Definition}

Similarly to $\frakn$-describable sets, it is clear that $\fraks$-describable sets are fully describable, with disjoint majorizing and minorizing collections. The connection with $\frakn$-describability is even deeper. In fact, for any $s\in[0,d)$, let us consider
	\begin{equation}\label{eq:df:frakns}
		\frakn_s(\dd r)=\frac{\ind_{(0,1]}(r)}{r^{s+1}}\,\dd r.
	\end{equation}
	It is elementary to check that each measure $\frakn_s$ belongs to the collection $\rad01_d$, and that the associated exponent given by~(\ref{eq:df:sfraknrad01}) is equal to $s$. In particular, in view of Corollary~\ref{cor:frakndescslip}, every $\frakn_s$-describable set has Hausdorff dimension equal to $s$. Moreover, the sets $\gauge(\frakn_s)$ are nondecreasing with respect to $s$, and taking monotonic intersections yield the newly introduced sets $\gauge(\fraks)$, specifically, for any $\fraks\in[0,d)$,
	\begin{equation}\label{eq:linkfraknfraksdesc}
		\gauge(\fraks)=\bigcap_{s\in(\fraks,d)}\downarrow\gauge(\frakn_s).
	\end{equation}
	The above link between $\fraks$-describability and $\frakn$-describability leads to the following analog of Lemma~\ref{lem:gaugemeasopen}.

\begin{Lemma}\label{lem:gaugerealopen}
	For any real number $\fraks\in [0,d)$, the following properties hold:
	\begin{enumerate}
		\item\label{item:lem:gaugerealopen1} the set $\gauge^\ast\setminus\gauge(\fraks)$ is left-open;
		\item\label{item:lem:gaugerealopen2} the set $\gauge(\fraks)$ is right-open and nonempty.
	\end{enumerate}
\end{Lemma}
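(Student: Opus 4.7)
My plan is to derive both parts from Lemma~\ref{lem:gaugemeasopen} through the identity~(\ref{eq:linkfraknfraksdesc}), which expresses $\gauge(\fraks)$ as the intersection $\bigcap_{s\in(\fraks,d)}\gauge(\frakn_s)$ of integrability-based collections already treated there. The $d$-normalization of the sets involved will be immediate because membership in $\gauge^\ast$ and in $\gauge(\fraks)$ depends only on $g_d$.

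For part~(\ref{item:lem:gaugerealopen1}), I would first establish the set-theoretic identity
\[
\gauge^\ast\setminus\gauge(\fraks)=\bigcup_{s\in(\fraks,d)}\bigl(\gauge^\ast\setminus\gauge(\frakn_s)\bigr),
\]
a consequence of~(\ref{eq:linkfraknfraksdesc}) together with the elementary fact that every $g\in\gauge^\ast\setminus\gauge^\infty$ satisfies $g_d(r)\leq\ell_g r^d$ for small $r$, whence $\croc{\frakn_s}{g_d}<\infty$ for every $s<d$, so such a $g$ already lies in each $\gauge^\ast\setminus\gauge(\frakn_s)$. Given any $g$ in the left-hand side, I would select $s_*\in(\fraks,d)$ with $g\in\gauge^\ast\setminus\gauge(\frakn_{s_*})$, apply the left-openness of the latter collection supplied by Lemma~\ref{lem:gaugemeasopen}(\ref{item:lem:gaugemeasopen1}) to obtain $\overline g\in\gauge^\ast\setminus\gauge(\frakn_{s_*})$ with $\overline g_d\prec g_d$, and conclude via the inclusion $\gauge(\fraks)\subseteq\gauge(\frakn_{s_*})$ that $\overline g\in\gauge^\ast\setminus\gauge(\fraks)$.

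For part~(\ref{item:lem:gaugerealopen2}), nonemptiness is witnessed by $r\mapsto r^\fraks$ when $\fraks\in(0,d)$, since $r^{\fraks-s}\to\infty$ for every $s>\fraks$ and $r^{\fraks-d}\to\infty$ as $r\to 0$; the case $\fraks=0$ is handled for instance by $r\mapsto 1/\log(1/r)$ near the origin. For right-openness, I cannot merely invoke the right-openness of a single $\gauge(\frakn_s)$, because the resulting $\underline g$ would lie in only one factor of the intersection $\bigcap_{s>\fraks}\gauge(\frakn_s)$; a diagonal argument is needed. After reducing to the $d$-normalized case $g=g_d$, my plan is to adapt the construction of the proof of Lemma~\ref{lem:gaugemeasopen}(\ref{item:lem:gaugemeasopen2}). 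Setting $s_n=\fraks+(d-\fraks)/(n+1)\in(\fraks,d)$, the identity~(\ref{eq:linkfraknfraksdesc}) forces $\croc{\frakn_{s_n}}{g}=\infty$ for each $n$, so that starting from $r_1=\eps_g/2$ and with $\theta=g/r^d$, I may recursively pick $r_n\in(0,r_{n-1})$ fulfilling the twin conditions
\[
\theta(r_n)\geq\ee\,\theta(r_{n-1})\qquad\text{and}\qquad\int_{(r_n,r_{n-1}]} g(r)\,r^{-s_n-1}\,\dd r\geq 1.
\]
I would then define $\xi$ and $\underline g=g/\xi$ by the same interpolation as in the proof of Lemma~\ref{lem:gaugemeasopen}(\ref{item:lem:gaugemeasopen2}); the arguments given there already yield that $\underline g$ extends to a gauge function in $\gauge^\infty$ with $g_d\prec\underline g_d$.

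The remaining, central verification is that $\underline g\in\gauge(\fraks)$: by~(\ref{eq:linkfraknfraksdesc}) it suffices to prove $\croc{\frakn_s}{\underline g}=\infty$ for every $s>\fraks$. Fixing such an $s$ and picking $N$ with $s_n\leq s$ for all $n\geq N$, the monotonicity $r^{-s-1}\geq r^{-s_n-1}$ on $(0,1]$ together with the bound $\xi\leq n+1$ on $(r_n,r_{n-1}]$ yields
\[
\int_{(r_n,r_{n-1}]}\underline g(r)\,r^{-s-1}\,\dd r\geq\frac{1}{n+1}\int_{(r_n,r_{n-1}]} g(r)\,r^{-s_n-1}\,\dd r\geq\frac{1}{n+1}
\]
for each $n\geq N$, and summing the divergent harmonic tail gives $\croc{\frakn_s}{\underline g}=\infty$. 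The chief technical obstacle is precisely this last point: ensuring that a \emph{single} sequence $(r_n)$ controls the entire continuum of integrals $\croc{\frakn_s}{\underline g}$ indexed by $s>\fraks$. The diagonal choice $s_n\downarrow\fraks$ at step $n$, coupled with the crucial inequality $r^{-s-1}\geq r^{-s_n-1}$ on $(0,1]$ valid as soon as $s\geq s_n$, is exactly what lets one construction handle all such $s$ simultaneously.
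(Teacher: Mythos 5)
Your part~(\ref{item:lem:gaugerealopen1}) is essentially the paper's argument: from $g\in\gauge^\ast\setminus\gauge(\fraks)$ you pick $s_*\in(\fraks,d)$ with $g\notin\gauge(\frakn_{s_*})$ via~(\ref{eq:linkfraknfraksdesc}), pull $\overline g$ from the left-openness of $\gauge^\ast\setminus\gauge(\frakn_{s_*})$ given by Lemma~\ref{lem:gaugemeasopen}(\ref{item:lem:gaugemeasopen1}), and return to $\gauge^\ast\setminus\gauge(\fraks)$ through $\gauge(\fraks)\subseteq\gauge(\frakn_{s_*})$; the union-identity you begin with is a harmless rephrasing of this. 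For nonemptiness in part~(\ref{item:lem:gaugerealopen2}) the paper also uses~(\ref{eq:linkfraknfraksdesc}): since $s\mapsto\gauge(\frakn_s)$ is nondecreasing, $\gauge(\frakn_\fraks)\subseteq\gauge(\fraks)$, and the former is nonempty by Lemma~\ref{lem:gaugemeasopen}(\ref{item:lem:gaugemeasopen2}); your explicit witnesses $r\mapsto r^\fraks$ (for $\fraks>0$) and $r\mapsto 1/\log(1/r)$ (for $\fraks=0$) are correct but not the route taken.

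Where your proof genuinely diverges is right-openness. The paper needs no diagonalization: for a $d$-normalized $g\in\gauge(\fraks)$ it simply takes $\underline g(r)=g(r)/\log(g(r)/r^d)$. Since $r\mapsto g(r)/r^d$ is nonincreasing to infinity near zero, $g/\underline g=\log(g/r^d)$ tends monotonically to infinity as $r\downarrow 0$, so $g\prec\underline g$; and dividing by a purely logarithmic term cannot destroy the power-law condition $g_d(r)\neq\smallo(r^s)$ for $s\in(\fraks,d)$, so $\underline g$ remains in $\gauge(\fraks)$ (and one checks $\underline g$ stays $d$-normalized, since $u\mapsto u/\log u$ is increasing). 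Your approach instead represents $\gauge(\fraks)$ through the intersection~(\ref{eq:linkfraknfraksdesc}) and diagonalizes: you correctly identify that invoking the right-openness of a single $\gauge(\frakn_s)$ is insufficient, and your scheme of letting $s_n\downarrow\fraks$ in the recursive choice of $r_n$ — coupled with $r^{-s-1}\geq r^{-s_n-1}$ on $(0,1]$ for $s\geq s_n$ — does make one construction control the entire continuum of integrals $\croc{\frakn_s}{\underline g}$ at once. This is a valid alternative, and conceptually more robust in that it exploits nothing beyond the intersection structure; but it reconstructs by hand, through the machinery of Lemma~\ref{lem:gaugemeasopen}(\ref{item:lem:gaugemeasopen2}), precisely the kind of sub-logarithmic deflation of $g$ that the paper's closed formula $g/\log(g/r^d)$ produces in a single line by exploiting the polynomial-versus-logarithmic structure of $\gauge(\fraks)$.
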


\begin{proof}
	The left-openness of the set $\gauge^\ast\setminus\gauge(\fraks)$ is inherited from that of the sets $\gauge^\ast\setminus\gauge(\frakn)$, for $\frakn\in\rad01_d$. Indeed, if $g$ is $d$-normalized gauge function in $\gauge^\ast\setminus\gauge(\fraks)$, then~(\ref{eq:linkfraknfraksdesc}) ensures that $g\not\in\gauge(\frakn_s)$ for some $s\in(\fraks,d)$. By Lemma~\ref{lem:gaugemeasopen}(\ref{item:lem:gaugemeasopen1}), there is a $d$-normalized gauge function $\overline{g}$ in $\gauge^\ast\setminus\gauge(\frakn_s)$ such that $\overline{g}\prec g$. By~(\ref{eq:linkfraknfraksdesc}) again, $\overline{g}$ does not belong to $\gauge(\fraks)$, and we end up with~(\ref{item:lem:gaugerealopen1}).
	
	Furthermore, let us recall that the mapping $s\mapsto\gauge(\frakn_s)$ is nondecreasing. Thanks to~(\ref{eq:linkfraknfraksdesc}), we deduce that $\gauge(\fraks)$ contains $\gauge(\frakn_\fraks)$. Lemma~\ref{lem:gaugemeasopen}(\ref{item:lem:gaugemeasopen2}) shows that the latter set is nonempty, so the former is nonempty as well.
	
	Finally, the right-openness property in~(\ref{item:lem:gaugerealopen2}) follows from the fact that, if $g$ is a $d$-normalized gauge function in $\gauge(\fraks)$, letting $\underline{g}(r)=g(r)/\log(g(r)/r^d)$ yields as required a $d$-normalized gauge function in $\gauge(\fraks)$ such that $g\prec\underline{g}$.
\end{proof}

Owing to Lemma~\ref{lem:gaugerealopen}, if a set $E$ is $\fraks$-describable in $U$, then $\mino(E,U)\cap\gauge^\infty$ is nonempty, so $E$ necessarily contains a set with large intersection. Furthermore, both $\majo(E,U)$ and $\mino(E,U)\cap\gauge^\infty$ are left-open and right-open at the same time. We may thus apply Corollary~\ref{cor:linkopenmet}, and deduce the following complete and precise description of the size and large intersection properties of the set $E$.

\begin{Theorem}\label{thm:fraksdescslip}
	Let $U$ be a nonempty open subset of $\R^d$, let $E$ be a set in $\zeroleb(U)$, and let $\fraks$ be in $[0,d)$. Let us assume that $E$ is $\fraks$-describable in $U$. For any nonempty open set $V\subseteq U$, the following properties hold:
	\begin{enumerate}
		\item\label{item:thm:fraksdescslip1} for any gauge function $g\in\gauge\setminus\gauge(\fraks)$,
			\[
				\left\{\begin{array}{l}
					\hau^g(E\cap V)=0\\[2mm]
					\forall F\in\lic{g}{V} \quad F\not\subseteq E
				\end{array}\right.
			\]
		\item\label{item:thm:fraksdescslip2} for any gauge function $g\in\gauge(\fraks)$,
			\[
				\left\{\begin{array}{l}
					\hau^g(E\cap V)=\infty\\[2mm]
					\exists F\in\lic{g}{V} \quad F\subseteq E\,;
				\end{array}\right.
			\]
	\end{enumerate}
\end{Theorem}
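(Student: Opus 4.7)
The plan is to mimic the proof of Theorem~\ref{thm:frakndescslip} verbatim, with Lemma~\ref{lem:gaugemeasopen} replaced by its analog Lemma~\ref{lem:gaugerealopen}. Indeed, the $\fraks$-describability of $E$ in $U$ means that $\majo(E,U) = \gauge(\fraks)^\complement$ and $\mino(E,U)\cap\gauge^\infty = \gauge(\fraks)$, and by Lemma~\ref{lem:gaugerealopen} these two collections are both left-open and right-open. By Proposition~\ref{prp:monomajomino}, the same holds with $U$ replaced by the nonempty open subset $V\subseteq U$. Hence Corollary~\ref{cor:linkopenmet} applies to $E$ in $V$ and yields both implications in~(\ref{item:thm:fraksdescslip2}) as well as the implications in~(\ref{item:thm:fraksdescslip1}) for every gauge function $g$ belonging to $\gauge^\infty \setminus \gauge(\fraks) = \gauge(\fraks)^\complement$.

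It then remains to handle~(\ref{item:thm:fraksdescslip1}) when $g \notin \gauge^\infty$, which splits naturally into two sub-cases according to whether $\ell_g$ is finite and positive, or vanishes. In the first sub-case, $g \in \gauge^\ast \setminus \gauge^\infty$, so Proposition~\ref{prp:dichogauge}(\ref{item:prp:dichogauge2}) gives $\hau^g(E\cap V) = \kappa_g \,\leb^d(E\cap V) = 0$, where the last equality uses $E\in\zeroleb(U)$. For the large intersection part, apply the left-openness of $\gauge^\ast\setminus\gauge(\fraks)$ from Lemma~\ref{lem:gaugerealopen}(\ref{item:lem:gaugerealopen1}): there exists $\overline g \in \gauge^\ast$ with $\overline g_d \prec g_d$ and $\overline g \notin \gauge(\fraks)$. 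Since $g_d(r)/r^d$ stays bounded, while $\overline g_d / g_d \to \infty$ forces $\overline g_d/r^d \to \infty$, the gauge function $\overline g$ lies in $\gauge^\infty\setminus\gauge(\fraks) = \gauge(\fraks)^\complement$. Thus $\overline g \in \majo(E,V)$, and Theorem~\ref{thm:stablicgauge}(\ref{item:thm:stablicgauge3}) precludes the existence of any $F\in\lic{g}{V}$ with $F\subseteq E$ (if such an $F$ existed, it would also lie in $\lic{\overline g}{V}$ by Proposition~\ref{prp:morelicgauge}(\ref{item:prp:morelicgauge2}), forcing $\hau^{\overline g}(E\cap V) = \hau^{\overline g}(V) = \infty$, contradicting $\overline g \in \majo(E,V)$).

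The remaining sub-case is $\ell_g = 0$, that is, $g \notin \gauge^\ast$. Here Proposition~\ref{prp:dichogauge}(\ref{item:prp:dichogauge3}) directly gives $\hau^g(E\cap V) = 0$. Moreover, by Proposition~\ref{prp:lic0fullLeb} the class $\lic{g}{V} = \lic{\zerofunc}{V}$ consists of $G_\delta$-subsets of $\R^d$ with full Lebesgue measure in $V$; since $E\cap V$ has Lebesgue measure zero, no such $F$ can be contained in $E$, which concludes the second part of~(\ref{item:thm:fraksdescslip1}).

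The only mildly delicate point is the case $g \in \gauge^\ast \setminus \gauge^\infty$ in~(\ref{item:thm:fraksdescslip1}); there one must invoke the left-openness supplied by Lemma~\ref{lem:gaugerealopen}(\ref{item:lem:gaugerealopen1}) to step down to a smaller gauge function in $\gauge^\infty$ for which $\majo$-membership can be read off the describability assumption. All other cases reduce, mechanically, either to Corollary~\ref{cor:linkopenmet} or to the dichotomy Proposition~\ref{prp:dichogauge} combined with Proposition~\ref{prp:lic0fullLeb}.
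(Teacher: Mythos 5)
Your proof follows exactly the route the paper itself prescribes for Theorem~\ref{thm:fraksdescslip}: it is the proof of Theorem~\ref{thm:frakndescslip} with Lemma~\ref{lem:gaugemeasopen} replaced by Lemma~\ref{lem:gaugerealopen}, with Corollary~\ref{cor:linkopenmet} doing the heavy lifting and the dichotomy $\ell_g\in\{0\}\cup(0,\infty)\cup\{\infty\}$ handled through Propositions~\ref{prp:dichogauge} and~\ref{prp:lic0fullLeb}. Your case analysis is complete and essentially correct.

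Two small points are worth tidying up. First, the appeal to Proposition~\ref{prp:monomajomino} to ``transfer'' describability from $U$ to $V$ is superfluous and slightly misleading: monotonicity only gives $\majo(E,V)\supseteq\majo(E,U)$, not equality, so it does not by itself yield left/right-openness for the collections attached to $V$. Fortunately this is not needed, since Corollary~\ref{cor:linkopenmet} already has its conclusions quantified over all nonempty open $V\subseteq U$ once the openness hypotheses are checked for $U$. Second, in the parenthetical for the case $g\in\gauge^\ast\setminus\gauge^\infty$, the step ``$F\in\lic{\overline g}{V}$ \ldots\ forcing $\hau^{\overline g}(E\cap V)=\hau^{\overline g}(V)$'' does not follow from Theorem~\ref{thm:stablicgauge}(\ref{item:thm:stablicgauge3}) applied to the class $\lic{\overline g}{V}$, since that result requires a gauge function strictly preceding $\overline g_d$. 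The cleaner route, which is the one the paper implicitly uses, is to apply Theorem~\ref{thm:stablicgauge}(\ref{item:thm:stablicgauge3}) directly to $F\in\lic{g}{V}$ together with the gauge function $\overline g$ (for which $\overline g_d\prec g_d$), obtaining $\hau^{\overline g}(F\cap V)=\hau^{\overline g}(V)=\infty$ and hence $\hau^{\overline g}(E\cap V)=\infty$, contradicting $\overline g\in\majo(E,U)$. Neither wrinkle affects the correctness of the argument; they are presentation issues rather than gaps.
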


Theorem~\ref{thm:fraksdescslip} above may be regarded as an analog of Theorem~\ref{thm:frakndescslip}, and may be established by easily adapting the proof of the latter. The proof is therefore omitted here. We just mention that one needs to use Lemma~\ref{lem:gaugerealopen} instead of Lemma~\ref{lem:gaugemeasopen} whenever necessary, and that Corollary~\ref{cor:linkopenmet} is crucial in that proof too.

For all $s\in(0,d]$ and $\fraks\in[0,d)$, one easily checks that the gauge function $r\mapsto r^s$ belongs to the set $\gauge(\fraks)$ if and only if $s\leq\fraks$. Therefore, restricting Theorem~\ref{thm:fraksdescslip} to these specific gauge functions leads to the following dimensional statement which is parallel to Corollary~\ref{cor:frakndescslip}. Again, the proof is very similar to that of the latter result; for that reason, it is left to the reader.

\begin{Corollary}\label{cor:fraksdescslip}
	Let $U$ be a nonempty open subset of $\R^d$, let $E$ be a set in $\zeroleb(U)$, and let $\fraks$ be in $[0,d)$. Let us assume that $E$ is $\fraks$-describable in $U$. Then, for any nonempty open set $V\subseteq U$,
		\[
			\Hdim (E\cap V)=\fraks
			\qquad\text{with}\qquad
			\hau^\fraks(E\cap V)=\infty.
		\]
		Let us assume that $\fraks>0$. Then, for any nonempty open set $V\subseteq U$,
		\[
			\Pdim (E\cap V)=d.
		\]
		Moreover, there exists a subset of $E$ in the large intersection class $\lic{\fraks}{U}$. In particular, if $E$ is a $G_\delta$-set itself, it belongs to the latter class.
\end{Corollary}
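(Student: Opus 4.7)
The plan is to mimic the proof of Corollary~\ref{cor:frakndescslip}, specializing Theorem~\ref{thm:fraksdescslip} to the power gauge functions $r\mapsto r^s$. The key preliminary observation is that for $s\in(0,d)$ the gauge function $r\mapsto r^s$ lies in $\gauge^\infty$ (since $\ell_{r\mapsto r^s}=\infty$ when $s<d$), and its membership in $\gauge(\fraks)$ is decided by comparison with $\fraks$: because $r^s=\smallo(r^{s'})$ as $r\to 0$ iff $s>s'$, we have $r\mapsto r^s\in\gauge(\fraks)$ iff $s\leq\fraks$. Everything else is bookkeeping with the machinery already established.

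First, I would derive the upper bound $\Hdim(E\cap V)\leq\fraks$ by applying Theorem~\ref{thm:fraksdescslip}(\ref{item:thm:fraksdescslip1}) to every $s\in(\fraks,d)$: since $r\mapsto r^s\notin\gauge(\fraks)$, we get $\hau^s(E\cap V)=0$, and letting $s\downarrow\fraks$ gives the bound. For the matching lower bound together with $\hau^\fraks(E\cap V)=\infty$, I would apply Theorem~\ref{thm:fraksdescslip}(\ref{item:thm:fraksdescslip2}) directly to $g(r)=r^\fraks$, which is in $\gauge(\fraks)\subseteq\gauge^\infty$ because $\fraks<d$; this both yields the required divergence of the $\fraks$-dimensional Hausdorff measure and forces $\Hdim(E\cap V)\geq\fraks$.

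Assuming $\fraks>0$, for the packing dimension statement and for producing a subset with large intersection, I would for each $s\in(0,\fraks)$ apply Theorem~\ref{thm:fraksdescslip}(\ref{item:thm:fraksdescslip2}) with $g(r)=r^s$ to obtain a subset $F_s\subseteq E$ belonging to $\lic{r\mapsto r^s}{V}$. Proposition~\ref{prp:linkFalcgauge} then places $F_s$ in the Falconer class $\lic{s}{V}$, and Proposition~\ref{prp:licPdim} gives $\Pdim(F_s\cap V)=d$, hence $\Pdim(E\cap V)=d$ by monotonicity of packing dimension. For the $\lic{\fraks}{U}$ subset, I would take $V=U$ and the exponent $s=\fraks$: Theorem~\ref{thm:fraksdescslip}(\ref{item:thm:fraksdescslip2}) produces $F\subseteq E$ with $F\in\lic{r\mapsto r^\fraks}{U}$, and Proposition~\ref{prp:linkFalcgauge} upgrades this to $F\in\lic{\fraks}{U}$. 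Finally, when $E$ is itself a $G_\delta$-set, the inclusion $F\subseteq E$ combined with Definition~\ref{df:licloc} and the monotonicity of the outer net measures $\netm^t_\infty$ yields $E\in\lic{\fraks}{U}$ (equivalently, this is the localized version of Proposition~\ref{prp:morelic}(\ref{item:prp:morelic1})).

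I do not anticipate any serious obstacle: the argument is a routine translation of the $\frakn$-describable case, the only subtle point being to verify that $r\mapsto r^\fraks$ genuinely belongs to $\gauge(\fraks)$, which is immediate from the definition, and that the critical exponent $\fraks$ is strictly less than $d$ so that Theorem~\ref{thm:fraksdescslip}(\ref{item:thm:fraksdescslip2}) can be invoked at that exponent.
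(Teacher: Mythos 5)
Your plan is essentially the intended one — the paper explicitly leaves this proof to the reader as an adaptation of that of Corollary~\ref{cor:frakndescslip}, and your key reduction, that $r\mapsto r^s$ belongs to $\gauge(\fraks)$ exactly when $s\leq\fraks$ (for $s\in(0,d)$), is correct and is precisely the lens through which to specialize Theorem~\ref{thm:fraksdescslip}. The upper bound on the dimension, the packing dimension claim, the production of a subset in $\lic{\fraks}{U}$ via $g(r)=r^\fraks$ combined with Proposition~\ref{prp:linkFalcgauge}, and the upgrade for $G_\delta$ sets are all in order.

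There is, however, a gap at $\fraks=0$. Your step of applying Theorem~\ref{thm:fraksdescslip}(\ref{item:thm:fraksdescslip2}) to $g(r)=r^\fraks$ fails there: $r\mapsto r^0$ is the constant function equal to one, which is not a gauge function at all (it violates $\lim_{r\to 0}g(r)=g(0)=0$), so it lies in no set $\gauge(\,\cdot\,)$ and the theorem cannot be invoked with it. Yet the corollary still asserts $\hau^\fraks(E\cap V)=\infty$ and $\Hdim(E\cap V)=0$ when $\fraks=0$, and you have not produced them. The fix mirrors the last paragraph of the proof of Corollary~\ref{cor:frakndescslip}, with Lemma~\ref{lem:gaugerealopen}(\ref{item:lem:gaugerealopen2}) replacing Lemma~\ref{lem:gaugemeasopen}(\ref{item:lem:gaugemeasopen2}): the set $\gauge(0)$ is nonempty, so pick any $g\in\gauge(0)$ and apply Theorem~\ref{thm:fraksdescslip}(\ref{item:thm:fraksdescslip2}) to obtain $\hau^g(E\cap V)=\infty$. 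A gauge function assigns Hausdorff measure zero to every countable set, so this forces $E\cap V$ to be uncountable; in particular $\hau^0(E\cap V)=\infty$ and, a fortiori, $\Hdim(E\cap V)\geq 0$, which closes the argument.
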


Lastly, especially in view of the applications in Diophantine approximation, it is important to observe that countable intersections of $\frakn_s$-describable sets can lead to $\frakn_s$-describable sets, but also to $\fraks$-describable sets.

\begin{Proposition}\label{prp:dichofraksdesc}
	Let $U$ be a nonempty open subset of $\R^d$ and, for each $n\geq 1$, let $E_n$ be a set in $\zeroleb(U)$ that is $\frakn_{s_n}$-describable in $U$ for some $s_n\in[0,d)$. Letting
	\[
		E=\bigcap_{n=1}^\infty E_n
		\qquad\text{and}\qquad
		\fraks=\inf_{n\geq 1}s_n,
	\]
	we then have the following dichotomy:
	\begin{itemize}
		\item if the infimum is attained at some $n_0$, then $E$ is $\frakn_{s_{n_0}}$-describable in $U$\,;
		\item if the infimum is not attained, then $E$ is $\fraks$-describable in $U$.
	\end{itemize}
\end{Proposition}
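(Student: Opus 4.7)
The plan is to reduce both cases to an application of Proposition~\ref{prp:descincl} with an appropriate choice of the subset $\frakH\subseteq\gauge^\infty$. To prepare the ground, I would first note that $E\in\zeroleb(U)$ since $E\subseteq E_1$, and then combine the $\frakn_{s_n}$-describability of each $E_n$ with Propositions~\ref{prp:monomajomino} and~\ref{prp:cupcapmajomino} to obtain the two basic relations
\[
\mino(E,U)\cap\gauge^\infty=\bigcap_{n=1}^\infty\gauge(\frakn_{s_n})
\qquad\text{and}\qquad
\majo(E,U)\supseteq\bigcup_{n=1}^\infty\gauge(\frakn_{s_n})^\complement.
\]
A recurring technical ingredient will be the fact that the mapping $s\mapsto\gauge(\frakn_s)$ is nondecreasing on $[0,d)$, which follows from the explicit form~(\ref{eq:df:frakns}), as raising $s$ concentrates $\frakn_s$ near the origin and thus enlarges the set of $d$-normalized gauge functions with divergent integral.

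In the case where the infimum is attained at some $n_0$, the inequality $s_n\geq s_{n_0}$ valid for every $n$ combined with the monotonicity just mentioned yields $\bigcap_n\gauge(\frakn_{s_n})=\gauge(\frakn_{s_{n_0}})$, so that the first displayed equality specializes to $\mino(E,U)\cap\gauge^\infty=\gauge(\frakn_{s_{n_0}})$. Retaining only the $n_0$-th term of the union in the second display gives $\majo(E,U)\supseteq\gauge(\frakn_{s_{n_0}})^\complement$. Since $\gauge(\frakn_{s_{n_0}})$ is right-open by Lemma~\ref{lem:gaugemeasopen}(\ref{item:lem:gaugemeasopen2}), Proposition~\ref{prp:descincl} applied with $\frakH=\gauge(\frakn_{s_{n_0}})$ promotes the latter inclusion to an equality and establishes the $\frakn_{s_{n_0}}$-describability of $E$ in $U$.

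In the case where the infimum is not attained, the heart of the argument will be the identification $\bigcap_n\gauge(\frakn_{s_n})=\gauge(\fraks)$, which the relation~(\ref{eq:linkfraknfraksdesc}) is designed to facilitate. The inclusion $\gauge(\fraks)\subseteq\gauge(\frakn_{s_n})$ valid for every $n$ (thanks to $s_n>\fraks$ and the monotonicity recalled above) furnishes one direction, while for the reverse direction every $s\in(\fraks,d)$ exceeds some $s_n$ by the non-attainment hypothesis, so that $\gauge(\frakn_{s_n})\subseteq\gauge(\frakn_s)$, and intersecting over $s$ combined with~(\ref{eq:linkfraknfraksdesc}) closes the loop. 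Together with the first display and De Morgan's law, this yields $\mino(E,U)\cap\gauge^\infty=\gauge(\fraks)$ and $\majo(E,U)\supseteq\gauge(\fraks)^\complement$. Since $\gauge(\fraks)$ is right-open by Lemma~\ref{lem:gaugerealopen}(\ref{item:lem:gaugerealopen2}), Proposition~\ref{prp:descincl} applied with $\frakH=\gauge(\fraks)$ promotes the latter inclusion to an equality and concludes that $E$ is $\fraks$-describable in $U$.

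The only real subtlety I anticipate is the non-attainment analysis of the second case, where the strict inequalities $s_n>\fraks$ and the existence of an $s_n<s$ for each $s>\fraks$ must combine symmetrically with the integral characterization of $\gauge(\fraks)$ in~(\ref{eq:linkfraknfraksdesc}); the remaining steps amount to routine bookkeeping of the stability of $\majo$ and $\mino$ under monotonic set operations, fed into the sandwich mechanism of Proposition~\ref{prp:descincl}.
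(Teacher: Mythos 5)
Your proof is correct and follows essentially the same route as the paper: the basic relations are obtained from Propositions~\ref{prp:monomajomino} and~\ref{prp:cupcapmajomino}, the identification of $\bigcap_n\gauge(\frakn_{s_n})$ is carried out via the monotonicity of $s\mapsto\gauge(\frakn_s)$ together with~(\ref{eq:linkfraknfraksdesc}), and the conclusion is reached by Proposition~\ref{prp:descincl} combined with Lemma~\ref{lem:gaugemeasopen}(\ref{item:lem:gaugemeasopen2}) or Lemma~\ref{lem:gaugerealopen}(\ref{item:lem:gaugerealopen2}) as appropriate. Your explicit verification that $E\in\zeroleb(U)$ and the fully-worked two-sided argument for $\bigcap_n\gauge(\frakn_{s_n})=\gauge(\fraks)$ in the non-attained case are welcome details that the paper leaves implicit, but the structure is identical.
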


\begin{proof}
	To begin with, Propositions~\ref{prp:monomajomino} and~\ref{prp:cupcapmajomino} show  that the minorizing and majorizing collections of $E$ in $U$ satisfy
	\begin{equation}\label{eq:majominoEnfraknsn}
		\mino(E,U)\cap\gauge^\infty=\bigcap_{n=1}^\infty\gauge(\frakn_{s_n})
		\qquad\text{and}\qquad
		\majo(E,U)\supseteq\gauge^\infty\setminus\bigcap_{n=1}^\infty\gauge(\frakn_{s_n}).
	\end{equation}
	If the infimum is attained at a given integer $n_0$, the intersection over all $n\geq 1$ of the sets $\gauge(\frakn_{s_n})$ coincides with the sole $\gauge(\frakn_{s_{n_0}})$, so that
	\[
		\mino(E,U)\cap\gauge^\infty=\gauge(\frakn_{s_{n_0}})
		\qquad\text{and}\qquad
		\majo(E,U)\supseteq\gauge(\frakn_{s_{n_0}})^\complement.
	\]
	It follows from Proposition~\ref{prp:descincl} and Lemma~\ref{lem:gaugemeasopen}(\ref{item:lem:gaugemeasopen2}) that $E$ is $\frakn_{s_{n_0}}$-describable in $U$.
	
	The proof is very similar when the infimum is not attained. Indeed, using the monotonicity of $s\mapsto\gauge(\frakn_s)$ and combining~(\ref{eq:linkfraknfraksdesc}) with~(\ref{eq:majominoEnfraknsn}), we now get
	\[
		\mino(E,U)\cap\gauge^\infty=\gauge(\fraks)
		\qquad\text{and}\qquad
		\majo(E,U)\supseteq\gauge(\fraks)^\complement.
	\]
	By Proposition~\ref{prp:descincl} and Lemma~\ref{lem:gaugerealopen}(\ref{item:lem:gaugerealopen2}), the set $E$ is thus $\fraks$-describable in $U$.
\end{proof}	

Slightly modifying this approach leads to another situation where $\fraks$-describable sets arise naturally. Given a real number $\fraks\in[0,d)$ and a nonempty open set $U$, we consider a sequence $(E_s)_{s\in(\fraks,d)}$ of sets in $\zeroleb(U)$, and we assume that the mapping $s\mapsto E_s$ is increasing and that each set $E_s$ is $\frakn_s$-describable in $U$. We then choose in the interval $(\fraks,d)$ an arbitrary decreasing sequence $(s_n)_{n\geq 1}$ that converges to $\fraks$. The monotonicity of the sets $E_s$ with respect to $s$ implies that their intersection is equal to that of the sets $E_{s_n}$. Moreover, the latter sets fall into the above setting because the infimum of the real numbers $s_n$ is not attained. Hence, the intersection over all $s\in(\fraks,d)$ of the sets $E_s$ is $\fraks$-describable in $U$.


\section{Eutaxic sequences}\label{sec:eutaxy}

The notion of eutaxic sequence was introduced by Lesca~\cite{Lesca:1968rm} and later studied by Reversat~\cite{Reversat:1976yq}. It provides a nice setting to the study of Diophantine approximation properties, and we shall indeed use it in Sections~\ref{sec:fracpart},~\ref{sec:Dvoretzky} and~\ref{sec:Poisson} to analyze the approximation by fractional parts of sequences and by random sequences of points. As shown in Section~\ref{subsec:eutaxyapprox}, the limsup sets issued from eutaxic sequences fall into the framework of describable sets; we shall thus be able to completely describe size and large intersection properties for the sets arising in all these examples.

\subsection{Sequencewise eutaxy}\label{subsec:eutaxyseq}

With the notion of eutaxy, the emphasis is put on the sequence $(x_n)_{n\geq 1}$ of approximating points in $\R^d$, and one is ultimately interested in its uniform approximation behavior with respect to {\em all} possible sequences $(r_n)_{n\geq 1}$ of approximation radii. Let us assume that the series $\sum_n r_n^d$ converges. It is clear that the set of all points $x\in\R^d$ for which
	\begin{equation}\label{eq:ximnxnrn}
		\exists\text{~i.m.~}n\geq 1
		\qquad |x-x_n|<r_n
	\end{equation}
	has Lebesgue measure zero; this may indeed be deduced from applying Lemma~\ref{lem:upbndlimsup} with the gauge function $r\mapsto r^d$, which essentially yields the Lebesgue measure, in view of Proposition~\ref{prp:comphauleb}. In that situation, we may rearrange the points in such a way that the sequence $(r_n)_{n\geq 1}$ is nonincreasing and converges to zero. Now, eutaxy comes into play when one assumes that the series $\sum_n r_n^d$ is divergent, or equivalently that $(r_n)_{n\geq 1}$ belongs to the collection $\Rho_d$ of real sequences defined by
	\[
		(r_n)_{n\geq 1}\in\Rho_d
		\qquad\Longleftrightarrow\qquad
		\left\{\begin{array}{l}
		\forall n\geq 1 \quad r_{n+1}\leq r_n\\[2mm]
		\lim\limits_{n\to\infty}r_n=0\\[2mm]
		\sum\limits_{n=1}^\infty r_n^d=\infty.
		\end{array}
		\right.
	\]

The simplest notion of eutaxy is obtained when specifying a sequence $(r_n)_{n\geq 1}$ in $\Rho_d$ and deciding on whether or not~(\ref{eq:ximnxnrn}) is satisfied by Lebesgue-almost every point of some open set under consideration.

\begin{Definition}\label{df:eutaxicseqwise}
Let $U$ be a nonempty open subset of $\R^d$, and let $(r_n)_{n\geq 1}$ be a sequence in $\Rho_d$. A sequence $(x_n)_{n\geq 1}$ of points in $\R^d$ is called {\em eutaxic in $U$ with respect to $(r_n)_{n\geq 1}$} if the following condition holds:
	\[
		\text{for~}\leb^d\text{-a.e.~}x\in U
		\quad \exists\text{~i.m.~}n\geq 1
		\qquad |x-x_n|<r_n.
	\]
\end{Definition}

The link with approximation problems will be discussed in Section~\ref{subsec:eutaxyapprox}. However, let us point out now that for any sequence $(r_n)_{n\geq 1}$ in $\Rho_d$ and any sequence $(x_n)_{n\geq 1}$ of points in $\R^d$, the family $(x_n,r_n)_{n\geq 1}$ is necessarily an approximation system. Moreover, if $U$ denotes a nonempty open subset of $\R^d$, this family is a homogeneous ubiquitous system in $U$ if and only if $(x_n)_{n\geq 1}$ is eutaxic in $U$ with respect to $(r_n)_{n\geq 1}$. This readily follows from the respective definitions of the various involved notions, namely, Definitions~\ref{df:approxsys},~\ref{df:homubiqsys} and~\ref{df:eutaxicseqwise}. In light of Proposition~\ref{prp:homubsyscst}, we deduce that $(x_n)_{n\geq 1}$ is eutaxic with respect to $(r_n)_{n\geq 1}$ if and only if it is eutaxic with respect to $(c\,r_n)_{n\geq 1}$, for any fixed real number $c>0$. In particular, the fact that a sequence is eutaxic does not depend on the norm the space $\R^d$ is endowed with.

\subsection{Uniform eutaxy}\label{subsec:eutaxyunif}

Rather than the sequencewise, the notion of uniform eutaxy is the one studied by Lesca~\cite{Lesca:1968rm} and Reversat~\cite{Reversat:1976yq}. It is obtained when sequencewise eutaxy holds regardless of the choice of the sequence $(r_n)_{n\geq 1}$ in $\Rho_d$. In view of the remark following Definition~\ref{df:eutaxicseqwise}, it is clear that this notion does not depend on the choice of the norm, either.

\begin{Definition}\label{df:eutaxicunif}
	Let $U$ be a nonempty open subset of $\R^d$. A sequence $(x_n)_{n\geq 1}$ of points in $\R^d$ is called {\em uniformly eutaxic in $U$} if the following condition holds:
	\[
		\forall (r_n)_{n\geq 1}\in\Rho_d
		\quad\text{for~}\leb^d\text{-a.e.~}x\in U
		\quad \exists\text{~i.m.~}n\geq 1
		\qquad |x-x_n|<r_n.
	\]
\end{Definition}

We shall establish a sufficient and a necessary condition for a sequence of points to be uniformly eutaxic. They are expressed in terms of the dyadic cubes introduced in Section~\ref{subsubsec:normgaugenetm}. When a cube $\lambda$ is of the form $2^{-j}(k+[0,1)^d)$, the integer $j$ is its {\em generation} and is denoted by $\gene{\lambda}$. For any $x\in\R^d$ and any $j\in\Z$, there is a unique dyadic cube with generation $j$ that contains $x$\,; this cube is denoted by $\lambda_j(x)$. Let us now fix a sequence $(x_n)_{n\geq 1}$ of points in $\R^d$. For any nonempty dyadic cube $\lambda\in\Lambda$ and any integer $j\geq 0$, let us define a collection $\Mu((x_n)_{n\geq 1};\lambda,j)$ of dyadic cubes by the following condition:
	\[
		\lambda'\in\Mu((x_n)_{n\geq 1};\lambda,j)
		\qquad\Longleftrightarrow\qquad
		\left\{\begin{array}{l}
			\lambda'\subseteq\lambda\\[1mm]
			\gene{\lambda'}=\gene{\lambda}+j\\[1mm]
			x_n\in\lambda'\text{ for some }n\leq 2^{d\gene{\lambda'}}.
		\end{array}\right.
	\]
It will be clear from the context what sequence $(x_n)_{n\geq 1}$ is considered, and there should be no confusion if we write $\Mu(\lambda,j)$ as a shorthand for $\Mu((x_n)_{n\geq 1};\lambda,j)$.

\subsubsection{A sufficient condition for uniform eutaxy}\label{subsubsec:eutaxyunifCS}

It is obvious that the cardinality of the set $\Mu(\lambda,j)$ is bounded above by $2^{dj}$. When it is bounded below by a fraction of $2^{dj}$, the sequence $(x_n)_{n\geq 1}$ is uniformly eutaxic, as shown by the following criterion.

\begin{Theorem}\label{thm:CSeutaxy}
Let $U$ be a nonempty open subset of $\R^d$ and let $(x_n)_{n\geq 1}$ be a sequence of points in $\R^d$. Let us assume that
\begin{equation}\label{eq:liminfpos}
\text{for~}\leb^d\text{-a.e.~}x\in U
\qquad
\liminf_{j_0,j\to\infty} 2^{-dj}\#\Mu((x_n)_{n\geq 1};\lambda_{j_0}(x),j)>0.
\end{equation}
Then, the sequence $(x_n)_{n\geq 1}$ is uniformly eutaxic in $U$.
\end{Theorem}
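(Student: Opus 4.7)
The plan is to exploit the hypothesis via a Lebesgue-density argument at dyadic cubes containing typical points, and to upgrade from ``some $n$'' to ``infinitely many $n$'' by a tail-shift trick.

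First I would reduce the theorem to proving that, for every $(r_n)_{n\ge 1}\in\Rho_d$, the union $\bigcup_{n\ge 1}\opball(x_n,r_n)$ has full Lebesgue measure in $U$. Indeed, the hypothesis concerns only the positions of the points $x_n$ and is preserved under truncation, since for each fixed $N$ the counts $\#\Mu((x_{n+N})_{n\ge 1};\lambda,j)$ and $\#\Mu((x_n)_{n\ge 1};\lambda,j)$ differ by at most $N$, negligible compared to $c\cdot 2^{dj}$ for $j$ large. Applying this cover property to each shifted pair $(x_n,r_n)_{n\ge N}$ yields $\leb^d(U\setminus\bigcup_{n\ge N}\opball(x_n,r_n))=0$ for every $N\ge 1$, and intersecting over $N$ produces the uniform eutaxy conclusion.

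Next I would set up the density step. By monotonicity of the quantifiers in $(c,J)$, the hypothesis implies that for some $c>0$ and $J\in\N$ the set
\[
E=\{x\in U\;:\;\#\Mu(\lambda_{j_0}(x),j)\ge c\cdot 2^{dj}\text{ for all }j_0,j\ge J\}
\]
has Lebesgue measure arbitrarily close to $\leb^d(U)$. Writing $A_k$ for the union of all dyadic cubes $\lambda'$ of generation $k$ that contain some $x_n$ with $n\le 2^{dk}$, the defining condition of $E$ rewrites as $\leb^d(\lambda_{j_0}(x)\cap A_{j_0+j})\ge c\,\leb^d(\lambda_{j_0}(x))$ for every $x\in E$ and $j_0,j\ge J$. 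Now fix $(r_n)\in\Rho_d$ and assume for contradiction that $F:=U\setminus\bigcup_n\opball(x_n,r_n)$ has positive measure. Picking $x_0\in E$ that is simultaneously a Lebesgue density point of $F$, the two lower bounds combine, for $j_0$ large and every $j\ge J$, into
\[
\leb^d(\lambda_{j_0}(x_0)\cap A_{j_0+j}\cap F)\ge (c/2)\,\leb^d(\lambda_{j_0}(x_0)).
\]
The desired contradiction hinges on the elementary observation that a dyadic cube $\lambda'$ of generation $k$ that contains some $x_n$ with $r_n$ at least the diameter of $\lambda'$ is entirely contained in $\opball(x_n,r_n)$, hence disjoint from $F$.

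The main obstacle is to reconcile the hypothesis, which only guarantees a hitter $n\le 2^{dk}$, with the eutaxy requirement of a hitter $n\le N(k):=\max\{n:r_n 2^k\text{ exceeds the unit-cube diameter}\}$. When $N(k)\ge 2^{dk}$, which happens for instance whenever $r_n\gtrsim n^{-1/d}$, the two conditions coincide and the contradiction closes at once: every cube counted by $\Mu(\lambda_{j_0}(x_0),j)$ is a good cube and has empty intersection with $F$. In the general case one only has the weaker integral information $\sum_k N(k)\,2^{-dk}=\infty$, obtained from $\sum_n r_n^d=\infty$ via Abel summation. To handle this case I would run the density argument along pairs $(j_0,j)$ adapted to the function $N(k)$, relying on the fact that each index $n>N(k)$ can be responsible for at most one bad cube at generation $k$, and average the density lower bound across ranges of generations to extract infinitely many scales at which a positive proportion of cubes in $\Mu(\lambda_{j_0}(x_0),j)$ are good. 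This quantitative bookkeeping is the genuinely technical step, and is the place in the argument where the integrability assumption $(r_n)\in\Rho_d$ is really used.
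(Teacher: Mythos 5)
Your reduction via tail shifts and the Lebesgue density framework are sound, and the observation that a cube of generation $k$ is contained in $\opball_\infty(x_n,r_n)$ as soon as it contains some $x_n$ with $r_n>2^{-k}$ is the right preliminary remark. However, the step you yourself flag as ``genuinely technical'' contains the real gap, and the sketch does not indicate a way to close it. The trouble is that your ``good cube'' count is never guaranteed to be a positive fraction of $\Mu(\lambda_{j_0}(x_0),j)$. The number of bad cubes at generation $k=j_0+j$ inside $\lambda_{j_0}(x_0)$ is bounded only by $\min\{2^{dj},\ \#\{n\in(N(k),2^{dk}]: x_n\in\lambda_{j_0}(x_0)\}\}$, and nothing in the hypothesis prevents the second quantity from being $\geq 2^{dj}$: for example with $d=1$ and $r_n=1/(n\log n)$ one has $N(k)/2^{k}\to 0$ while $\sum_k N(k)2^{-k}=\infty$, so at every generation $k$ the range $(N(k),2^k]$ contains almost all of $\{1,\dots,2^k\}$, and the bound you invoke degenerates. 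The divergence $\sum_k N(k)2^{-dk}=\infty$ is thus too weak to ``extract infinitely many scales at which a positive proportion of cubes are good''; aggregating generations in the way you suggest does not repair this because a single index $n$ can witness a bad cube across an arbitrarily long range of generations $k\in[(\log_2 n)/d,\ -\log_2 r_n]$.

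The paper resolves exactly this difficulty by abandoning the idea of full cube coverage. It replaces $r_n$ by $\widetilde r_n=\min\{r_n,\,1/(2n^{1/d})\}$, selects at each generation $j$ a separated family $S_j(x,j_0)\subseteq\{1,\dots,2^{d(j_0+j)}\}$ of size $\gtrsim\alpha(x)2^{dj}$ extracted from $\Mu(\lambda_{j_0}(x),j)$, and considers balls of a common radius $\rho_{j_0+j}=\widetilde r_{2^{d(j_0+j)}}$ — typically far smaller than the cube diameter $2^{-(j_0+j)}$, so these balls cover only a tiny portion of their cubes. A single generation therefore contributes little measure; the key is to accumulate these contributions over $j$ via the second-moment inequality (Lemma~\ref{lem:2ndmoment}, Sprind\v{z}uk's lemma), with the divergence $\sum_n\widetilde r_n^{\,d}=\infty$ entering through~(\ref{eq:sum2drhogr1}) and the equal radii making the pairwise-intersection term controllable. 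This second-moment mechanism, rather than any cube-counting, is what produces the lower bound $\leb^d(F\cap\lambda_{j_0}(x))\geq\kappa\,\alpha(x)^2\leb^d(\lambda_{j_0}(x))$, and it is the essential missing ingredient in your proposal.
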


The proof of Theorem~\ref{thm:CSeutaxy} relies on the next useful measure theoretic result excerpted from Sprind\v{z}uk's book, see~\cite[Lemma~5]{Sprindzuk:1979eu}.

\begin{Lemma}\label{lem:2ndmoment}
	Let $\mu$ be an outer measure on $\R^d$ such that $\mu(\R^d)$ is finite, and let $(E_n)_{n\geq 1}$ be a sequence of $\mu$-measurable sets such that $\sum_n\mu(E_n)$ diverges. Then,
	\[
		\mu\left(\limsup_{n\to\infty}E_n\right)
		\geq\limsup_{N\to\infty}
		\frac{\left(\sum\limits_{n=1}^N\mu(E_n)\right)^{2}}
		{\sum\limits_{m=1}^N\sum\limits_{n=1}^N\mu(E_m\cap E_n)}.
	\]
\end{Lemma}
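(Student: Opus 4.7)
The heart of the argument is a second-moment (Paley–Zygmund/Cauchy–Schwarz) computation applied to the indicator sums
\[
T_N^M=\sum_{n=M}^{N}\ind_{E_n}\qquad(1\leq M\leq N),
\]
together with the continuity-from-above property~(\ref{eq:measdecint}) in order to pass from the tail unions $A_M=\bigcup_{n\geq M}E_n$ to the limsup. Since the sets $E_n$ are $\mu$-measurable, Theorem~\ref{thm:measfield} ensures that the restriction of $\mu$ to $\calF_\mu$ is a genuine finite measure, so the usual integration theory, and in particular the Cauchy–Schwarz inequality in $L^2(\mu)$, is available.

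First, I would fix $M\geq 1$ and apply Cauchy–Schwarz to $T_N^M$ relative to the set $A_M$, on which $T_N^M$ is supported. This gives
\[
\left(\sum_{n=M}^{N}\mu(E_n)\right)^{\!2}=\left(\int T_N^M\,\dd\mu\right)^{\!2}\leq \mu(A_M)\int (T_N^M)^2\,\dd\mu=\mu(A_M)\sum_{m,n=M}^{N}\mu(E_m\cap E_n).
\]
Hence, writing $S_N=\sum_{n=1}^{N}\mu(E_n)$ and $D_N=\sum_{m,n=1}^{N}\mu(E_m\cap E_n)$, and using the trivial bound $\sum_{m,n=M}^{N}\mu(E_m\cap E_n)\leq D_N$, we obtain
\[
\mu(A_M)\geq \frac{(S_N-S_{M-1})^{2}}{D_N}
\]
for every $N\geq M$.

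Next I would take the $\limsup$ over $N$. The divergence hypothesis $\sum_{n}\mu(E_n)=\infty$ forces $S_N\to\infty$, so $(1-S_{M-1}/S_N)^{2}\to 1$; multiplying a nonnegative sequence by a sequence tending to a positive limit does not alter its $\limsup$, which yields
\[
\mu(A_M)\geq \limsup_{N\to\infty}\frac{(S_N-S_{M-1})^{2}}{D_N}=\limsup_{N\to\infty}\frac{S_N^{2}}{D_N}=:L,
\]
and the bound so obtained does not depend on $M$.

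Finally, the sequence $(A_M)_{M\geq 1}$ is a nonincreasing sequence of $\mu$-measurable sets with $\mu(A_1)\leq\mu(\R^d)<\infty$, and its intersection is $\limsup_{n\to\infty}E_n$. Property~(\ref{eq:measdecint}) therefore gives
\[
\mu\!\left(\limsup_{n\to\infty}E_n\right)=\lim_{M\to\infty}\downarrow\mu(A_M)\geq L,
\]
which is exactly the claimed inequality. The only delicate point is the transition from the shifted sums (starting at $n=M$) back to the sums starting at $n=1$ appearing in the statement; this is handled by the divergence of $S_N$ and the elementary fact that adding or subtracting a bounded quantity to the numerator has no asymptotic effect on the $\limsup$. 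Everything else is Cauchy–Schwarz and the standard continuity property of outer measures on decreasing sequences of measurable sets of finite total mass.
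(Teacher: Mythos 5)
Your proof is correct and follows the standard second-moment (Cauchy--Schwarz) argument; the paper does not reproduce its own proof but merely refers to Sprind\v{z}uk's book~\cite[Lemma~5]{Sprindzuk:1979eu}, which uses this same approach. The key steps---applying Cauchy--Schwarz to $T_N^M$ on its support $A_M$ (legitimate since the restriction of $\mu$ to $\calF_\mu$ is a finite measure by Theorem~\ref{thm:measfield}), discarding the shift $S_{M-1}$ via the divergence of $S_N$, and passing from the tail unions to the limsup by the continuity property~(\ref{eq:measdecint})---are all present and correctly executed.
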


\begin{proof}[Proof of Theorem~\ref{thm:CSeutaxy}]
	We work with the supremum norm, which does not alter the notion of eutaxy. Let us consider a nonempty open subset $U$ of $\R^d$ and a sequence $(x_n)_{n\geq 1}$ of points in $\R^d$ such that~(\ref{eq:liminfpos}) holds for Lebesgue-almost every $x\in U$. Our goal is to establish that for any sequence $(r_n)_{n\geq 1}$ in $\Rho_d$, the set
		\[
			F=\left\{x\in\R^d\bigm||x-x_n|_\infty<r_n\quad\text{for i.m.~}n\geq 1\right\}
		\]
		has full Lebesgue measure in $U$. To proceed, let $U_\ast$ denote the set of all points $x$ in $U$ such that~(\ref{eq:liminfpos}) holds and none of the coordinates of $x$ is a dyadic number. Then, $U_\ast$ has full Lebesgue measure in $U$. Furthermore, for any $x\in U_\ast$, there exist a real number $\alpha(x)>0$ and an integer $\underline{j}(x)\geq 0$ such that
		\[
			\forall j_0,j\geq\underline{j}(x) \qquad \#\Mu(\lambda_{j_0}(x),j)\geq\alpha(x)\, 2^{dj}.
		\]
		The proof now reduces to showing that there is a real number $\kappa>0$ such that
		\begin{equation}\label{eq:minlebF1}
			\forall j_0\geq\underline{j}(x) \qquad \leb^d(F\cap\lambda_{j_0}(x))\geq\kappa\,\alpha(x)^2\leb^d(\lambda_{j_0}(x)).
		\end{equation}
		Indeed,~(\ref{eq:minlebF1}) implies that the density of the set $F$ at the point $x$ is positive. Therefore, if this holds for any $x$ in $U_\ast$, then the Lebesgue density theorem shows that Lebesgue-almost every point of $U_\ast$ belongs to $F$, see~\cite[Corollary~2.14]{Mattila:1995fk}. As a result, $F$ has full Lebesgue measure in $U$.
	
	It now remains to show that any point $x$ in $U_\ast$ satisfies~(\ref{eq:minlebF1}). For any fixed integer $j_0\geq\underline{j}(x)$, we begin by observing that for any integer $j\geq\underline{j}(x)$, there exists a set $S_j(x,j_0)\subseteq\{1,\ldots,2^{d(j_0+j)}\}$ with:
		\begin{itemize}
			\item $\# S_j(x,j_0)\geq\alpha(x)\,2^{d(j-1)}$\,;
			\item $x_n\in\lambda_{j_0}(x)$ for any $n\in S_j(x,j_0)$\,;
			\item $|x_n-x_{n'}|_\infty\geq 2^{-(j_0+j)}$ for any distinct $n,n'\in S_j(x,j_0)$.
		\end{itemize}
		Indeed, for each $\beta\in\{0,1\}^d$, let us consider the cubes in $\Mu(\lambda_{j_0}(x),j)$ of the form $2^{-(j_0+j)}(k+[0,1)^d)$, where the coordinates of $k$ are equal to those of $\beta$ modulo two. For a suitable $\beta$, there are at least $2^{-d}\,\#\Mu(\lambda_{j_0}(x),j)$ such cubes. The result then follows from the observation that these cubes are at a distance at least $2^{-(j_0+j)}$ of each other and that each cube contains at least a point $x_n$ with $n\leq 2^{d(j_0+j)}$.
	
	Then, let us define $\widetilde r_n=\min\{r_n,1/(2n^{1/d})\}$ for each $n\geq 1$. We thereby obtain another sequence $(\widetilde r_n)_{n\geq 1}$ in $\Rho_d$. Indeed, otherwise, the sequence $(\widetilde r_n^d)_{n\geq 1}$ would be nonincreasing and have a finite sum, so that $n\widetilde r_n^d$ would tend to zero as $n$ goes to infinity. Thus, $\widetilde r_n$ would be equal to $r_n$ for $n$ large enough and the series $\sum_n r_n^d$ would converge, contradicting the assumption that $(r_n)_{n\geq 1}$ belongs to $\Rho_d$. Now, for any integer $j\geq\underline{j}(x)$, let us consider the set
		\[
			V_j(x,j_0)=\bigcup_{n\in S_j(x,j_0)} \opball_\infty(x_n,\rho_{j_0+j}),
		\]
		where $\rho_j$ is a shorthand for $\widetilde r_{2^{dj}}$. Since the sequence $(\widetilde r_n)_{n\geq 1}$ is nonincreasing and converges to zero, all the points in the limsup of these sets, except maybe those forming the sequence $(x_n)_{n\geq 1}$, belong to both the closure of $\lambda_{j_0}(x)$ and the set $\widetilde F$ obtained by replacing $r_n$ by $\widetilde r_n$ in the definition of $F$. Therefore,
		\[
			\leb^d\left(\limsup_{j\to\infty} V_j(x,j_0)\right)\leq\leb^d\left(\widetilde F\cap\closure{\lambda_{j_0}(x)}\right)\leq\leb^d(F\cap\lambda_{j_0}(x)).
		\]
		Hence, to obtain~(\ref{eq:minlebF1}), it suffices to provide an appropriate lower bound on the Lebesgue measure of the limsup of the sets $V_j(x,j_0)$. This may be done with the help of Lemma~\ref{lem:2ndmoment}. In fact, the sets $V_j(x,j_0)$ are all contained in the closure of the cube $\lambda_{j_0}(x)$, so that we may apply this lemma with the restriction of the Lebesgue measure to this closed cube. The resulting lower bound yields
		\begin{equation}\label{eq:minVjxj0}
			\leb^d(F\cap\lambda_{j_0}(x))
			\geq\limsup_{J\to\infty}
			\frac{\left(\sum\limits_{j=\underline{j}(x)}^{J}\leb^d(V_j(x,j_0))\right)^{2}}
			{\sum\limits_{j=\underline{j}(x)}^{J}\sum\limits_{j'=\underline{j}(x)}^{J}\leb^d(V_j(x,j_0)\cap V_{j'}(x,j_0))}.
		\end{equation}
	
	However, to apply Lemma~\ref{lem:2ndmoment}, we need to check the divergence condition
		\begin{equation}\label{eq:sumVjxj0div}
			\sum_{j=\underline{j}(x)}^\infty\leb^d(V_j(x,j_0))=\infty.
		\end{equation}
		To this end, we observe that for any $j\geq\underline{j}(x)$ and any distinct $n$ and $n'$ in $S_j(x,j_0)$, the two open balls with common radius $\rho_{j_0+j}$ and center $x_n$ and $x_{n'}$, respectively, are disjoint. Otherwise, any point $y$ in their intersection would satisfy
		\[
			|x_n-x_{n'}|_\infty\leq|y-x_n|_\infty+|y-x_{n'}|_\infty<2\rho_{j_0+j}\leq 2^{-(j_0+j)},
		\]
		which would contradict the third property of the set $S_j(x,j_0)$ given above. As a result, the balls forming the set $V_j(x,j_0)$ are disjoint, so that
		\begin{equation}\label{eq:minlebVjxj0}
			\leb^d\left(V_j(x,j_0)\right)=(2\rho_{j_0+j})^d\,\# S_j(x,j_0)\geq\alpha(x)\,2^{dj}\,\rho_{j_0+j}^d.
		\end{equation}
		In order to derive~(\ref{eq:sumVjxj0div}), we finally use the fact that the sequence $(\widetilde r_n)_{n\geq 1}$ is nonincreasing, as this enables us to write that
		\begin{equation}\label{eq:sum2drhogr1}
			2^{d j_0}(2^d-1)\sum_{j=\underline{j}(x)}^\infty 2^{dj}\,\rho_{j_0+j}^d\geq\sum_{j=j_0+\underline{j}(x)}^\infty\sum_{n=2^{dj}}^{2^{d(j+1)}-1}\widetilde r_n^d=\infty.
		\end{equation}
	
	To obtain~(\ref{eq:minlebF1}), and thus complete the proof, it suffices to combine the lower bound~(\ref{eq:minVjxj0}) with the following inequality that holds for any integer $J$ sufficiently large and that we now establish:
		\begin{equation}\label{eq:sumlebcapVj}
			\sum_{j=\underline{j}(x)}^{J}\sum_{j'=\underline{j}(x)}^{J}\leb^d\left(V_j(x,j_0)\cap V_{j'}(x,j_0)\right)
			\leq\frac{2^{d(j_0+4)}}{\alpha(x)^2}\left(\sum_{j=\underline{j}(x)}^{J}\leb^d\left(V_j(x,j_0)\right)\right)^{2}.
		\end{equation}
		Let us consider two integers $j$ and $j'$ such that $\underline{j}(x)\leq j<j'$. With a view to giving an upper bound on the Lebesgue measure of the intersection of the sets $V_j(x,j_0)$ and $V_{j'}(x,j_0)$, let us observe that for any integer $n\in S_j(x,j_0)$,
		\[
			\opball_\infty(x_n,\rho_{j_0+j})\cap V_{j'}(x,j_0)=\bigcup_{n'\in S_{j'}(x,j_0)} \left( \opball_\infty(x_n,\rho_{j_0+j})\cap \opball_\infty(x_{n'},\rho_{j_0+j'}) \right).
		\]
		The points $x_{n'}$, with $n'\in S_{j'}(x,j_0)$ such that this last intersection is nonempty, all lie in the open ball with center $x_n$ and radius $2\rho_{j_0+j}$. Moreover, there are at most $(2^{j_0+j'+2}\rho_{j_0+j}+2)^d$ cubes with generation $j_0+j'$ that intersect this ball and each of them contains at most one of the points $x_{n'}$. Thus,
		\begin{align*}
			\leb^d(\opball_\infty(x_n,\rho_{j_0+j})\cap V_{j'}(x,j_0))
			&\leq (2^{j_0+j'+2}\rho_{j_0+j}+2)^d (2\rho_{j_0+j'})^d\\
			&\leq 2^{3d-1}\rho_{j_0+j'}^d (1+2^{d(j_0+j'+1)}\rho_{j_0+j}^d).
		\end{align*}
		Along with the fact that there are at most $2^{dj}$ integers in $S_j(x,j_0)$, this yields
		\[
			\leb^d\left(V_j(x,j_0)\cap V_{j'}(x,j_0)\right)\leq 2^{d(j+3)-1}\rho_{j_0+j'}^d  (1+2^{d(j_0+j'+1)}\rho_{j_0+j}^d).
		\]
		As a consequence, for any integer $J\geq\underline{j}(x)$, the left-hand side of~(\ref{eq:sumlebcapVj}) is at most
		\[
			2^d\sum_{j=\underline{j}(x)}^{J} 2^{dj}\rho_{j_0+j}^d + 2^{3d} \sum_{j,j'} 2^{dj}\rho_{j_0+j'}^d + 2^{4d} \sum_{j,j'} 2^{d(j_0+j+j')}\rho_{j_0+j}^d\rho_{j_0+j'}^d,
		\]
		where the second and third sums are both over the integers $j$ and $j'$ that satisfy $\underline{j}(x)\leq j<j'\leq J$. Note that the second sum is equal to
		\[
			\sum_{j'=\underline{j}(x)+1}^{J} 2^{dj'}\rho_{j_0+j'}^d \sum_{j=\underline{j}(x)}^{j'-1} 2^{d(j-j')}
			\leq\frac{1}{2^d-1}\sum_{j'=\underline{j}(x)+1}^{J} 2^{dj'}\rho_{j_0+j'}^d,
		\]
		and the third sum is obviously smaller than half the sum bearing on all the integers $j$ and $j'$ between $\underline{j}(x)$ and $J$. Thus, the left-hand side of~(\ref{eq:sumlebcapVj}) is at most
		\[
			\left(2^d+\frac{2^{3d}}{2^d-1}\right)2^{-dj_0}\sum_{j=\underline{j}(x)}^{J} 2^{d(j_0+j)}\rho_{j_0+j}^d+2^{4d-1} 2^{-d j_0}\left(\sum_{j=\underline{j}(x)}^{J} 2^{d(j_0+j)}\rho_{j_0+j}^d\right)^{2}.
		\]
		In view of~(\ref{eq:sum2drhogr1}), the first sum tends to infinity as $J\to\infty$, thereby being larger than one, and thus smaller than its square, for $J$ large enough. The left-hand side of~(\ref{eq:sumlebcapVj}) is therefore bounded above by
		\[
			2^{-d(j_0-4)} \left(\sum_{j=\underline{j}(x)}^{J} 2^{d(j_0+j)}\rho_{j_0+j}^d \right)^{2},
		\]
		for any integer $J$ sufficiently large, and this bound leads to the right-hand side of~(\ref{eq:sumlebcapVj}) with the help of~(\ref{eq:minlebVjxj0}).
\end{proof}

\subsubsection{A necessary condition for uniform eutaxy}

It is not known whether Theorem~\ref{thm:CSeutaxy} also yields a necessary condition for uniform eutaxy. However, note that the sufficient condition~(\ref{eq:liminfpos}) clearly holds if
	\begin{equation}\label{eq:infliminfpos}
		\inf_{\lambda\in\Lambda\setminus\{\emptyset\}\atop\lambda\subseteq U}\liminf_{j\to\infty}2^{-dj}\#\Mu((x_n)_{n\geq 1};\lambda,j)>0.
	\end{equation}
	Moreover, it is plain that this stronger assumption fails when the liminf vanishes for some nonempty dyadic cube $\lambda$. The next result shows that, in this situation, the sequence under consideration cannot be uniformly eutaxic.

\begin{Theorem}\label{thm:CNeutaxy}
	Let $U$ be a nonempty open subset of $\R^d$ and let $(x_n)_{n\geq 1}$ be a sequence of points in $\R^d$. Let us assume that
		\[
			\exists\lambda\in\Lambda\setminus\{\emptyset\}
			\qquad
			\left\{\begin{array}{l}
				\lambda\subseteq U\\[1mm]
				\liminf\limits_{j\to\infty} 2^{-dj}\#\Mu((x_n)_{n\geq 1};\lambda,j)=0.
			\end{array}\right.
		\]
		Then, the sequence $(x_n)_{n\geq 1}$ is not uniformly eutaxic in $U$.
\end{Theorem}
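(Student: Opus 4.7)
The plan is to construct explicitly a radius sequence $(r_n)_{n\geq 1}\in\Rho_d$ with respect to which $(x_n)_{n\geq 1}$ fails to be eutaxic in $U$. Since the notion is norm-free, I work throughout with the supremum norm. Write $j_0=\gene{\lambda}$. The hypothesis furnishes a strictly increasing sequence of integers $(j_k)_{k\geq 1}$ with $\eps_k:=2^{-dj_k}\#\Mu((x_n);\lambda,j_k)\to 0$; by thinning the sequence I may moreover assume $\sum_k\eps_k<\infty$, for instance $\eps_k\leq 2^{-k}$. Set $N_0=0$ and $N_k=2^{d(j_0+j_k)}$, and define
\[
r_n=\tfrac{1}{3}\,2^{-(j_0+j_k)}\qquad\text{for }N_{k-1}<n\leq N_k.
\]
This is nonincreasing in $n$, tends to zero, and using $N_{k-1}/N_k\leq 2^{-d}$ one gets $(N_k-N_{k-1})r_n^d\geq 3^{-d}(1-2^{-d})$ on the $k$-th block, so $\sum_n r_n^d=\infty$; thus $(r_n)\in\Rho_d$.

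The crux is to estimate, for each $k$, the Lebesgue measure of
\[
A_k=\bigsqcup_{N_{k-1}<n\leq N_k}\opball_\infty(x_n,r_n)
\]
inside $\lambda$. Whenever $N_{k-1}<n\leq N_k$ and $x_n\in\lambda$, the dyadic cube $\lambda'(x_n)$ of generation $j_0+j_k$ containing $x_n$ belongs to $\Mu(\lambda,j_k)$, since $n\leq N_k=2^{d(j_0+j_k)}$. The ball $\opball_\infty(x_n,r_n)$ has side $2r_n<2^{-(j_0+j_k)}$, hence meets at most $2^d$ dyadic cubes of generation $j_0+j_k$, all lying in the $3^d$-neighborhood of $\lambda'(x_n)$. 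The separate contribution of indices $n$ with $x_n\notin\lambda$ but $\opball_\infty(x_n,r_n)\cap\lambda\neq\emptyset$ is contained in an $r_n$-neighborhood of $\partial\lambda$, of Lebesgue measure at most $C_\lambda\cdot 2^{-(j_0+j_k)}$ for a constant depending only on $d$ and $\lambda$. Summing over the at most $\#\Mu(\lambda,j_k)=\eps_k 2^{dj_k}$ choices of $\lambda'(x_n)$ yields
\[
\leb^d(A_k\cap\lambda)\leq 3^d\,\eps_k\,\leb^d(\lambda)+C_\lambda\,2^{-(j_0+j_k)}.
\]

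Both terms are summable in $k$, so by the Borel--Cantelli lemma (for Lebesgue measure) we get $\leb^d(\lambda\cap\limsup_k A_k)=0$. The set $F=\{x\in\R^d\:|\:|x-x_n|_\infty<r_n\text{ for i.m.~}n\}$ satisfies $F\cap\lambda\subseteq\lambda\cap\limsup_k A_k$, because each block $(N_{k-1},N_k]$ is finite, and any $x\in F$ therefore lies in $A_k$ for infinitely many $k$. Consequently $\leb^d(F\cap\lambda)=0$ while $\leb^d(\lambda)>0$, so $F$ does not have full Lebesgue measure in $U\supseteq\lambda$. Thus $(x_n)$ is not eutaxic with respect to this $(r_n)\in\Rho_d$, and so fails to be uniformly eutaxic in $U$. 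The main technical obstacle is the geometric bookkeeping in the key estimate---making the balls small enough (fraction $1/3$ of the dyadic scale) that each one is trapped in a small neighborhood of cubes of $\Mu(\lambda,j_k)$, while still keeping $\sum_n r_n^d=\infty$---together with controlling the harmless $\partial\lambda$-boundary contribution via the rapid decay of $2^{-(j_0+j_k)}$ along the chosen subsequence.
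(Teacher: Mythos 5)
Your proof is correct and takes essentially the same route as the paper: choose $(r_n)$ to be piecewise constant at scale $\approx 2^{-(\gene{\lambda}+j_k)}$ on dyadic blocks indexed by a subsequence $(j_k)$ realizing the zero liminf, verify that $(r_n)\in\Rho_d$, and bound the Lebesgue measure of the limsup set inside $\lambda$ by an interior term controlled by $\#\Mu(\lambda,j_k)$ plus a summable boundary term. The only cosmetic differences are that the paper uses the full scale $2^{-(\gene{\lambda}+j_k)}$ for the radii (absorbing the leakage with a $3$-fold dilation in the covering estimate) where you shrink by a factor $\tfrac13$, that the paper derives the general measure estimate first and then extracts the good subsequence whereas you pick the subsequence up front, and that you phrase the final step via Borel--Cantelli rather than by directly summing the tail.
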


\begin{proof}
	We still work with the supremum norm. Let us consider an integer $j\geq 0$ and, on the one hand, let us define the set
		\begin{equation}\label{eq:df:UjCNeutaxy}
			U_j=\bigcup_{n\leq 2^{d(\gene{\lambda}+j)}\atop x_n\in\lambda}\opball_\infty(x_n,2^{-(\gene{\lambda}+j)}).
		\end{equation}
		If $\lambda'$ is a nonempty dyadic subcube of $\lambda$, let $\widetilde\lambda'$ stand for the open cube concentric with $\lambda'$ with triple sidelength. If moreover $\lambda'$ has generation $\gene{\lambda}+j$ and contains some point $x_n$, then $\widetilde\lambda'$ contains the ball in~(\ref{eq:df:UjCNeutaxy}) that is centered at this $x_n$. Hence,
		\[
			U_j=\bigcup_{\lambda'\subseteq\lambda\atop\gene{\lambda'}=\gene{\lambda}+j}
			\bigcup_{n\leq 2^{d\gene{\lambda'}}\atop x_n\in\lambda'}\opball_\infty(x_n,2^{-\gene{\lambda'}})
			\subseteq\bigcup_{\lambda'\in\Mu(\lambda,j)}\widetilde\lambda',
		\]
		from which it directly follows that
		\[
			\leb^d(U_j)\leq 3^d 2^{-d(\gene{\lambda}+j)}\#\Mu(\lambda,j).
		\]
		On the other hand, let us consider the set $U'_j$ obtained by replacing in~(\ref{eq:df:UjCNeutaxy}) the condition $x_n\in\lambda$ by the conjunction of the fact that $x_n\not\in\lambda$ and that the open ball with center $x_n$ and radius $2^{-(\gene{\lambda}+j)}$ meets the cube $\lambda$. In that case, the ball actually meets the boundary of the cube $\lambda$. This means that each point of $U'_j$ is within distance $2^{1-(\gene{\lambda}+j)}$ from this boundary, and thus
		\begin{align*}
			\leb^d(U'_j)
			&\leq (2^{-\gene{\lambda}}+2^{2-(\gene{\lambda}+j)})^d-(2^{-\gene{\lambda}}-2^{2-(\gene{\lambda}+j)})^d\\
			&\leq 2^{3-d\gene{\lambda}-j}\sum_{\ell=0}^{d-1}(1+2^{2-j})^{d-1-\ell}(1-2^{2-j})^\ell
			\leq 5^d 2^{3-d\gene{\lambda}-j},
		\end{align*}
		with the proviso that $j\geq 2$. As a consequence, summing the two above upper bounds and letting $j$ go to infinity, we deduce that
		\[
			\liminf_{j\to\infty}\leb^d\left(\lambda\cap\bigcup_{n=1}^{2^{d(\gene{\lambda}+j)}}\opball_\infty(x_n,2^{-(\gene{\lambda}+j)})\right)
			\leq
			3^d 2^{-d\gene{\lambda}}\liminf_{j\to\infty} 2^{-dj}\#\Mu(\lambda,j),
		\]
		because the set in the left-hand side is contained in the union of $U_j$ and $U'_j$.
	
	We now make use of the assumption bearing on the cube $\lambda$, namely, that the lower limit in the right-hand side vanishes. Thus, we may find an increasing sequence $(j_m)_{m\geq 1}$ of nonnegative integers such that $j_1=0$ and for all $m\geq 1$,
		\[
			\leb^d\left(\lambda\cap\bigcup_{n=2^{d(\gene{\lambda}+j_m)}+1}^{2^{d(\gene{\lambda}+j_{m+1})}}\opball_\infty(x_n,2^{-(\gene{\lambda}+j_{m+1})})\right)
			\leq 2^{-m}.
		\]
		For simplicity, we define $n_m=2^{d(\gene{\lambda}+j_m)}$ for all $m\geq 1$, and also $n_0=0$. We then consider the unique sequence $(r_n)_{n\geq 1}$ such that
		\[
			\forall m\geq 0 \quad
			\forall n\in\{n_m+1,\ldots,n_{m+1}\} \qquad
			r_n=n_{m+1}^{-1/d}.
		\]
		Clearly, this sequence is nonincreasing and converges to zero. Moreover, for any integer $m\geq 0$,
		\[
			\sum_{n=n_m+1}^{n_{m+1}} r_n^d
			=1-\frac{n_m}{n_{m+1}}
			\geq 1-2^{-d},
		\]
		so that the series $\sum_n r_n^d$ is divergent. We may therefore conclude that the sequence $(r_n)_{n\geq 1}$ belongs to the collection $\Rho_d$.
	
	On top of that, for any integer $\underline{m}\geq 1$, we have
		\[
			\leb^d\left(\lambda\cap\bigcup_{n=n_{\underline{m}}+1}^\infty\opball_\infty(x_n,r_n)\right)
			\leq\sum_{m=\underline{m}}^\infty\leb^d\left(\lambda\cap\bigcup_{n=n_m+1}^{n_{m+1}}\opball_\infty(x_n,n_{m+1}^{-1/d})\right)
			.
		\]
		By definition of the integers $n_m$, the summand in the right-hand side is bounded above by $2^{-m}$, so that the whole sum is bounded by $2^{-\underline{m}+1}$. The left-hand side thus converges to zero when $\underline{m}$ tends to infinity. We deduce that
		\[
			\leb^d\left(\lambda\cap\limsup_{n\to\infty}\opball_\infty(x_n,r_n)\right)
			\leq\inf_{m\geq 1}\leb^d\left(\lambda\cap\bigcup_{n=m}^\infty\opball_\infty(x_n,r_n)\right)=0,
		\]
		which implies that the sequence $(x_n)_{n\geq 1}$ cannot be uniformly eutaxic in $U$.
\end{proof}

\subsection{Approximation by eutaxic sequences}\label{subsec:eutaxyapprox}

The notion of eutaxic sequence is naturally connected with those of approximation system and homogeneous ubiquitous system introduced in Section~\ref{sec:firstubiq}. In fact, as mentioned in Section~\ref{subsec:eutaxyseq}, for any sequence $(r_n)_{n\geq 1}$ in $\Rho_d$ and any sequence $(x_n)_{n\geq 1}$ of points in $\R^d$, the family $(x_n,r_n)_{n\geq 1}$ is necessarily an approximation system. This prompts us to consider the problem of the approximation within distances $r_n$ by the points $x_n$. Accordingly, the sets $F_t$ defined by~(\ref{eq:df:Ft}) in the general setting are now given by
	\begin{equation}\label{eq:df:Ftn}
		F_t=\left\{x\in\R^d\bigm||x-x_n|<r_n^t\quad\text{for i.m.~}n\geq 1\right\},
	\end{equation}
	and their size and large intersection properties may be studied by specializing the results of Sections~\ref{sec:firstubiq} and~\ref{sec:largeint}. This yields the next statement.

\begin{Theorem}\label{thm:approxeutaxic}
	Let $(x_n)_{n\geq 1}$ be a sequence of points in $\R^d$ that is eutaxic in some nonempty open subset $U$ of $\R^d$, with respect to some sequence $(r_n)_{n\geq 1}$ in $\Rho_d$. We assume that the series $\sum_n r_n^s$ is convergent for all $s>d$. Then, for any $t>1$,
	\[
		\Hdim(F_t\cap U)=\frac{d}{t}
		\qquad\text{and}\qquad
		F_t\in\lic{d/t}{U}.
	\]
\end{Theorem}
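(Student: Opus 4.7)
The plan is to recognize that Theorem~\ref{thm:approxeutaxic} follows essentially directly by assembling the machinery developed earlier: matching the eutaxy hypothesis with the homogeneous ubiquitous system framework, and then invoking the upper and lower bounds proved in Sections~\ref{sec:firstubiq} and~\ref{sec:largeint}.

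First I would observe that the family $(x_n,r_n)_{n\geq 1}$ forms an approximation system in the sense of Definition~\ref{df:approxsys}. The condition $\sup_n r_n<\infty$ is immediate because $(r_n)_{n\geq 1}\in\Rho_d$ is nonincreasing and tends to zero. For the second condition, fix any $m\in\N$\,; since $r_n\to 0$, the set $\{n\geq 1\:|\:r_n>1/m\}$ is finite, which trivially forces $\{n\geq 1\:|\:|x_n|<m\text{ and }r_n>1/m\}$ to be finite as well. Next, comparing Definitions~\ref{df:homubiqsys} and~\ref{df:eutaxicseqwise}, the eutaxy of $(x_n)_{n\geq 1}$ in $U$ with respect to $(r_n)_{n\geq 1}$ is precisely the statement that $(x_n,r_n)_{n\geq 1}$ is a homogeneous ubiquitous system in $U$.

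Having placed ourselves in the setting of Section~\ref{sec:firstubiq}, I would now apply Theorem~\ref{thm:lihomubsys} for any $t>1$ to the approximation system $(x_n,r_n)_{n\geq 1}$ and the open set $U$. This immediately yields the large intersection property $F_t\in\lic{d/t}{U}$, where $F_t$ is defined by~(\ref{eq:df:Ftn}). Combined with Theorem~\ref{thm:licloc}(\ref{item:thm:licloc2}), this also gives the lower bound $\Hdim(F_t\cap U)\geq d/t$.

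For the matching upper bound, I would appeal to Proposition~\ref{prp:upbndapproxsys}, which yields $\Hdim(F_t\cap U)\leq s_U/t$. It then suffices to verify that the exponent $s_U$ defined by~(\ref{eq:df:sU}) satisfies $s_U\leq d$. This is where the hypothesis on convergence of $\sum_n r_n^s$ for $s>d$ enters: writing $U$ as the increasing union of bounded open sets $U_\ell=U\cap\opball(0,\ell)$, we have $\calI_{U_\ell}\subseteq\N$ and therefore
\[
\sum_{n\in\calI_{U_\ell}}r_n^s\leq\sum_{n=1}^\infty r_n^s<\infty
\]
for every $s>d$ and every $\ell\geq 1$. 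Consequently the inner infimum in~(\ref{eq:df:sU}) is at most $d$ for every $\ell$, whence $s_U\leq d$ and $\Hdim(F_t\cap U)\leq d/t$. Combining this with the earlier lower bound yields the claimed equality and completes the proof. There is no genuine obstacle here: the entire argument is a matter of recognizing that eutaxy plus the summability hypothesis exactly corresponds to the pair of conditions (ubiquity plus $s_U\leq d$) required to apply Corollary~\ref{cor:eqhomubsys} upgraded via Theorem~\ref{thm:lihomubsys} to the large intersection statement.
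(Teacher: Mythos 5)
Your proof is correct and follows essentially the same route as the paper: establish that eutaxy makes $(x_n,r_n)_{n\geq 1}$ a homogeneous ubiquitous system, use the summability hypothesis to bound $s_U\leq d$, and then invoke the upper bound from Proposition~\ref{prp:upbndapproxsys} together with the ubiquity lower bound and the large intersection result from Theorem~\ref{thm:lihomubsys}. The only cosmetic difference is that you derive the dimensional lower bound from Theorem~\ref{thm:licloc}(\ref{item:thm:licloc2}) applied to $F_t\in\lic{d/t}{U}$, whereas the paper cites Corollary~\ref{cor:eqhomubsys} directly, but these amount to the same thing.
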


\begin{proof}
The convergence assumption on the series $\sum_n r_n^s$ implies that the parameter $s_U$ defined by~(\ref{eq:df:sU}) is bounded above by $d$ regardless of the choice of the open set $U$. Moreover, since $(x_n)_{n\geq 1}$ is eutaxic in $U$ with respect to $(r_n)_{n\geq 1}$, the family $(x_n,r_n)_{n\geq 1}$ is a homogeneous ubiquitous system in $U$. Therefore, we may apply Corollary~\ref{cor:eqhomubsys}, and deduce that the set $F_t\cap U$ has Hausdorff dimension equal to $d/t$ for any real number $t>1$. For the same reason, due to Theorem~\ref{thm:lihomubsys}, the set $F_t$ belongs to the large intersection class $\lic{d/t}{U}$.
\end{proof}

When the underlying sequence is {\em uniformly} eutaxic, we may dramatically improve on the above approach with the help of the large intersection transference principle presented in Section~\ref{subsec:largeintprinc}. This enables us to show that the limsup sets that are naturally associated with such sequences fall into the category of $\frakn$-describable sets introduced in Section~\ref{subsubsec:frakndesc}. Let $(x_n)_{n\geq 1}$ be a sequence of points in $\R^d$ and let $\rmr=(r_n)_{n\geq 1}$ be a nonincreasing sequence of positive real numbers. Instead of using the notation~(\ref{eq:df:Ftn}), we rather opt for~(\ref{eq:df:Fxiri})\,; specifically, we consider the set
	\begin{equation}\label{eq:df:Fxnrn}
		\frakF((x_n,r_n)_{n\geq 1})
		=\left\{x\in\R^d\bigm||x-x_n|<r_n\quad\text{for i.m.~} n\geq 1\right\}.
	\end{equation}
	Our main result is the following complete description of the size and large intersection properties of this set. Below, $\frakn_\rmr$ is the Borel measure on $(0,1]$ characterized by the fact that for any nonnegative measurable function $f$ supported in $(0,1]$,
	\begin{equation}\label{eq:df:fraknrmr}
		\int_{(0,1]} f(r)\,\frakn_\rmr(\dd r)=\sum_{n=1}^\infty f(r_n).
	\end{equation}
	Equivalently, $\frakn_\rmr$ is the sum of all Dirac point masses located at some $r_n\in(0,1]$.

\begin{Theorem}\label{thm:desceutaxy}
	Let $U$ be a nonempty open subset of $\R^d$, let $(x_n)_{n\geq 1}$ be a sequence in $\R^d$ that is uniformly eutaxic in $U$, and let $\rmr=(r_n)_{n\geq 1}$ be a nonincreasing sequence of positive real numbers. Then, the following properties hold:
	\begin{itemize}
		\item if $\sum_n r_n^d$ diverges, then $\frakF((x_n,r_n)_{n\geq 1})$ has full Lebesgue measure in $U$\,;
		\item if $\sum_n r_n^d$ converges, then $\frakF((x_n,r_n)_{n\geq 1})$ is $\frakn_\rmr$-describable in $U$.
	\end{itemize}
\end{Theorem}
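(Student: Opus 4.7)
The plan is to split along the dichotomy in the statement. In the divergent case, either $r_n \to 0$ (so $\rmr$ already lies in $\Rho_d$) or $r_n$ is bounded below, in which case $\tilde r_n = \min(r_n, n^{-1/(2d)})$ furnishes a sequence in $\Rho_d$ dominated by $\rmr$; applying Definition~\ref{df:eutaxicunif} to the appropriate $\Rho_d$-sequence produces a full Lebesgue measure subset of $U$ contained in $\frakF((x_n, r_n)_{n\geq 1})$. The substantive case is convergence, which I treat by combining an elementary covering bound with the large intersection transference principle, Theorem~\ref{thm:largeintprinc}, then concluding via Proposition~\ref{prp:descincl}.

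In the convergent case, monotonicity forces $r_n \to 0$, so $\sum_n \leb^d(\opball(x_n, r_n))$ is comparable with $\sum r_n^d < \infty$, and Lemma~\ref{lem:upbndlimsup} applied to the gauge $r \mapsto r^d$ gives $\frakF \in \zeroleb(\R^d) \subseteq \zeroleb(U)$; I also check that $\frakn_\rmr$ lies in $\rad01_d$ (infinite total mass from infinitely many $r_n$ in $(0,1]$, finite $\Phi_{\frakn_\rmr}$, and finite $\langle \frakn_\rmr, r \mapsto r^d\rangle$). To obtain $\frakn_\rmr$-describability, I invoke Proposition~\ref{prp:descincl} with $\frakH = \gauge(\frakn_\rmr)$, whose right-openness is Lemma~\ref{lem:gaugemeasopen}(\ref{item:lem:gaugemeasopen2}); the problem then reduces to the inclusions $\gauge(\frakn_\rmr)^\complement \subseteq \majo(\frakF, U)$ and $\gauge(\frakn_\rmr) \subseteq \mino(\frakF, U)$.

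The majorizing inclusion is a routine covering argument. For $g \in \gauge^\infty$ with $\langle \frakn_\rmr, g_d\rangle < \infty$, the monotonicity of $r \mapsto g_d(r)/r^d$ supplied by Proposition~\ref{prp:compnormalgauge} yields $g_d(2 r_n) \leq 2^d g_d(r_n)$, so $\sum_n g_d(\diam{\opball(x_n, r_n)}) < \infty$ up to finitely many terms. Since $\frakF$ is the limsup of these balls, Lemma~\ref{lem:upbndlimsup} gives $\hau^{g_d}(\frakF) = 0$, and Proposition~\ref{prp:compnormalgauge} transfers this to $\hau^g(\frakF \cap U) = 0$.

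The minorizing inclusion is the heart of the proof. For $g \in \gauge^\infty$ with $\langle \frakn_\rmr, g_d\rangle = \infty$, I set $\rho_n = g_d(r_n)^{1/d}$ and aim to show that $(x_n, r_n)_{n\geq 1}$ is homogeneously $g$-ubiquitous in $U$ in the sense of Definition~\ref{df:homubiqsysgauge}, i.e.\ that $\leb^d(U \setminus \frakF((x_n, \rho_n)_{n\geq 1})) = 0$. I derive this from uniform eutaxy by verifying that $(\rho_n)_{n \geq N}$ belongs to $\Rho_d$ for some $N$: convergence to zero follows from $g_d(0+) = 0$, the divergent $d$-sum is $\sum \rho_n^d = \langle \frakn_\rmr, g_d\rangle = \infty$, and monotonicity holds once $r_n$ enters the neighborhood of $0$ on which $g_d$ is nondecreasing. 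Once the $g$-ubiquity is secured, Theorem~\ref{thm:largeintprinc} places $\frakF((x_n, r_n)_{n\geq 1})$---a $G_\delta$-set, as a countable intersection of unions of open balls---in $\lic{g}{U}$, so it is its own minorizing subset. The main obstacle is this eventual monotonicity of the unrearranged $(\rho_n)$, as $g_d$ need not be globally monotone; the gauge-function axiom combined with $r_n \downarrow 0$ resolves it, and truncating finitely many initial terms leaves the limsup in the $g$-ubiquity condition unchanged.
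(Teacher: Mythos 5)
Your proof is correct and essentially reproduces the paper's argument: truncate or modify $\rmr$ to lie in $\Rho_d$ in the divergence case; in the convergence case, Lemma~\ref{lem:upbndlimsup} yields the majorizing inclusion, the large intersection transference principle (Theorem~\ref{thm:largeintprinc}, applied via homogeneous $g$-ubiquity obtained from uniform eutaxy) yields the minorizing inclusion, and Proposition~\ref{prp:descincl} together with Lemma~\ref{lem:gaugemeasopen}(\ref{item:lem:gaugemeasopen2}) closes the argument. The only cosmetic differences are that the paper handles the divergence case with the single uniform substitution $\widetilde r_n=\min\{r_n,1/(2n^{1/d})\}$ rather than a case split, and that you spell out a couple of details (the factor $2^d$ coming from ball diameters, and the verification that $\frakn_\rmr\in\rad01_d$) that the paper leaves implicit.
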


\begin{proof}
	In the divergence case, we cannot use the uniform eutaxy directly because the sequence $(r_n)_{n\geq 1}$ need not converge to zero, and thus need not be in $\Rho_d$. Yet, as seen in the proof of Theorem~\ref{thm:CSeutaxy}, the sequence defined by $\widetilde r_n=\min\{r_n,1/(2n^{1/d})\}$ for each $n\geq 1$ is in $\Rho_d$. We deduce that the smaller set $\frakF((x_n,\widetilde r_n)_{n\geq 1})$ has full Lebesgue measure in $U$, and thus $\frakF((x_n,r_n)_{n\geq 1})$ as well.
	
	Let us suppose from now on that $\sum_n r_n^d$ converges. In particular, the real numbers $r_n$ go to zero as $n\to\infty$, thereby being bounded above by one for $n$ large enough. The set $\frakF((x_n,r_n)_{n\geq 1})$ is unchanged when removing finitely many terms $x_n$ and $r_n$, so there is no loss in generality in assuming that $r_n\in(0,1]$ for all $n$.
	
	For any gauge function $g$ in $\gauge(\frakn_\rmr)^\complement$, the series $\sum_n g_d(r_n)$ is convergent, so Lemma~\ref{lem:upbndlimsup} entails that the set $\frakF((x_n,r_n)_{n\geq 1})$ has Hausdorff $g_d$-measure equal to zero. Proposition~\ref{prp:compnormalgauge} allows us to transfer the previous property to the Hausdorff $g$-measure itself. The gauge function $g$ is thus majorizing for $\frakF((x_n,r_n)_{n\geq 1})$ in $U$. Considering in particular the gauge function $r\mapsto r^d$, we infer from Proposition~\ref{prp:comphauleb} that this set has Lebesgue measure zero in $U$.

	Now, if $g$ is a gauge function in $\gauge(\frakn_\rmr)$, the series $\sum_n g_d(r_n)$ is divergent. Since the real numbers $r_n$ are nonincreasing and tend to zero, the real numbers $g_d(r_n)^{1/d}$ tend to zero as well and, at least for $n$ sufficiently large, are also nonincreasing. The limsup set $\frakF((x_n,r_n)_{n\geq 1})$ being unchanged when removing initial terms, we may assume that this is the case for all $n$. This means that $(g_d(r_n)^{1/d})_{n\geq 1}$ belongs to $\Rho_d$, and we deduce from the uniform eutaxy property that
		\[
			\leb^d(U\setminus\frakF((x_n,g_d(r_n)^{1/d})_{n\geq 1}))=0.
		\]
		Hence, $(x_n,r_n)_{n\geq 1}$ is not only an approximation system, but also a homogeneously $g$-ubiquitous in $U$ in the sense of Definition~\ref{df:homubiqsysgauge}. We are now in position to apply the large intersection transference principle, namely, Theorem~\ref{thm:largeintprinc}. Accordingly, we deduce that $\frakF((x_n,r_n)_{n\geq 1})$ belongs to $\lic{g}{U}$, which means that $g$ is minorizing.
	
	Finally, the majorizing and the minorizing collections of the set $\frakF((x_n,r_n)_{n\geq 1})$ in $U$ contain $\gauge(\frakn_\rmr)^\complement$ and $\gauge(\frakn_\rmr)$, respectively. This means in particular that this set is fully describable in $U$. We conclude using Proposition~\ref{prp:descincl} and Lemma~\ref{lem:gaugemeasopen}(\ref{item:lem:gaugemeasopen2}).
\end{proof}

The complete description of the size and large intersection properties may indeed be obtained by further applying Theorem~\ref{thm:frakndescslip} to the set $\frakF((x_n,r_n)_{n\geq 1})$. We may also apply Corollary~\ref{cor:frakndescslip} if only a dimensional result is needed. We suppose that the series $\sum_n r_n^d$ converges; otherwise, the set has full Lebesgue measure in $U$ and, as explained at the beginning of Section~\ref{sec:desc}, its dimensional properties are trivial. Accordingly, letting $s_\rmr$ be the exponent associated with $\frakn_\rmr$ through~(\ref{eq:df:sfraknrad01}), we have
	\begin{equation}\label{eq:condsumrn}
		\left\{\begin{array}{lll}
			s<s_\rmr & \Longrightarrow & \sum_n r_n^s=\infty\\[2mm]
			s>s_\rmr & \Longrightarrow & \sum_n r_n^s<\infty,
		\end{array}
		\right.
	\end{equation}
	and we end up with the next statement.

\begin{Corollary}\label{cor:desceutaxy}
	Let $(x_n)_{n\geq 1}$ be a sequence in $\R^d$ that is uniformly eutaxic in some nonempty open set $U\subseteq\R^d$, and let $(r_n)_{n\geq 1}$ be a nonincreasing sequence of positive real numbers such that $\sum_n r_n^d$ converges. Then, for any nonempty open set $V\subseteq U$,
	\[
		\left\{\begin{array}{l}
			\Hdim (\frakF((x_n,r_n)_{n\geq 1})\cap V)=s_\rmr\\[1mm]
			\Pdim (\frakF((x_n,r_n)_{n\geq 1})\cap V)=d\\[1mm]
			F_\rmr\in\lic{s_\rmr}{V},
		\end{array}\right.
	\]
	where the last two properties are valid under the assumption that $s_\rmr$ is positive.
\end{Corollary}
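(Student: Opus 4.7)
The plan is very short since essentially all the work has already been done in Theorem~\ref{thm:desceutaxy} and Corollary~\ref{cor:frakndescslip}; only a small amount of bookkeeping remains. Throughout, write $E=\frakF((x_n,r_n)_{n\geq 1})$ for brevity. Since $\sum_n r_n^d$ converges, Theorem~\ref{thm:desceutaxy} gives that $E$ is $\frakn_\rmr$-describable in $U$, where $\frakn_\rmr$ is the measure defined by~(\ref{eq:df:fraknrmr}). The corollary will then follow by specializing the general describability result Corollary~\ref{cor:frakndescslip} to the measure $\frakn=\frakn_\rmr$, provided one identifies the dimensional exponent $s_{\frakn_\rmr}$ appearing there with the exponent $s_\rmr$ appearing in the present statement.

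First, I would check this identification. By the definition~(\ref{eq:df:sfraknrad01}), $s_{\frakn_\rmr}$ is the critical value separating those $s\in(0,d]$ for which $(r\mapsto r^s)\in\gauge(\frakn_\rmr)$ from those for which it is not. Using the characterization of $\frakn_\rmr$ through~(\ref{eq:df:fraknrmr}), one has $\croc{\frakn_\rmr}{r\mapsto r^s}=\sum_n r_n^s$ (up to discarding the finitely many terms with $r_n>1$, which does not affect convergence). Hence $(r\mapsto r^s)\in\gauge(\frakn_\rmr)$ if and only if $\sum_n r_n^s=\infty$, and comparison with~(\ref{eq:condsumrn}) yields $s_{\frakn_\rmr}=s_\rmr$.

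Second, I would verify that $E$ is a $G_\delta$-set, which is needed for the large intersection conclusion in Corollary~\ref{cor:frakndescslip}. This is immediate since
\[
E=\bigcap_{m=1}^\infty\bigcup_{n\geq m}\opball(x_n,r_n),
\]
an intersection of countably many open subsets of $\R^d$.

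With these two observations in hand, Corollary~\ref{cor:frakndescslip} applied with $\frakn=\frakn_\rmr$ and arbitrary nonempty open $V\subseteq U$ yields $\Hdim(E\cap V)=s_\rmr$ unconditionally, and when $s_\rmr>0$ it further yields $\Pdim(E\cap V)=d$ together with $E\in\lic{s_\rmr}{V}$. No step is genuinely an obstacle; the only tiny point requiring attention is the unambiguous translation of $\croc{\frakn_\rmr}{r\mapsto r^s}$ into $\sum_n r_n^s$ via~(\ref{eq:df:fraknrmr}), and the fact that $E$ is syntactically a $G_\delta$-set so that Proposition~\ref{prp:morelicgauge}(\ref{item:prp:morelicgauge1}) is not even needed.
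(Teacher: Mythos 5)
Your proof is correct and takes essentially the same route as the paper: invoke Theorem~\ref{thm:desceutaxy} to get $\frakn_\rmr$-describability of $\frakF((x_n,r_n)_{n\geq 1})$ in $U$, then apply Corollary~\ref{cor:frakndescslip}, after checking that the exponent $s_{\frakn_\rmr}$ from~(\ref{eq:df:sfraknrad01}) agrees with $s_\rmr$ and that the set is a $G_\delta$. The paper treats these last two points as implicit (the identification via~(\ref{eq:condsumrn}) is stated just before the corollary, and the $G_\delta$ structure is a standing observation about limsup sets of open balls), whereas you spell them out; this is harmless and arguably clearer.
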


If the underlying sequence of points is uniformly eutaxic, the previous result is an improvement on Theorem~\ref{thm:approxeutaxic}. In fact, the sets $F_t$ defined by~(\ref{eq:df:Ftn}) are obtained when considering the sequence $(r_n^t)_{n\geq 1}$ instead of $(r_n)_{n\geq 1}$ in the above statement. Moreover, if a sequence $(r_n)_{n\geq 1}$ satisfies the assumptions of Theorem~\ref{thm:approxeutaxic}, one easily checks that the exponent associated with $(r_n^t)_{n\geq 1}$ {\em via}~(\ref{eq:condsumrn}) is equal to $d/t$.

This will enable us to examine in Sections~\ref{sec:fracpart},~\ref{sec:Dvoretzky} and~\ref{sec:Poisson} prominent examples of eutaxic sequences based on fractional parts and on random points. Consequently, through the framework of describable sets, we shall be able to shed light on the size and large intersection properties of the associated limsup sets.


\section{Optimal regular systems}\label{sec:optregsys}

The notion of optimal regular system was introduced by Baker and Schmidt~\cite{Baker:1970jf}, and subsequently refined by Beresnevich~\cite{Beresnevich:2000fk}. As we shall illustrate in Sections~\ref{sec:inhom} and~\ref{sec:alg}, they encompass many important examples arising in the metric theory of Diophantine approximation, such as the points with rational coordinates and the real algebraic numbers with bounded degree. On top of that, they naturally give rise to uniformly eutaxic sequences. In light of Section~\ref{subsec:eutaxyapprox}, we shall thus be able to describe thoroughly the size and large intersection properties of limsup sets issued from optimal regular systems; this will be performed in Section~\ref{subsec:optregsysapprox}.

\subsection{Definition and link with eutaxy}\label{subsec:defoptregsys}

Our purpose now is to define the notion of optimal regular system, and to discuss the connection with eutaxic sequences.

\begin{Definition}\label{df:optregsys}
	Let $\calA$ be a countably infinite subset of $\R^d$, let $H:\calA\to (0,\infty)$ be a {\em height} function, and let $U$ be a nonempty open subset of $\R^d$. The pair $(\calA,H)$ is:
	\begin{enumerate}
		\item {\em admissible} if for any integer $m\geq 1$,
		\begin{equation}\label{eq:df:admissiblepair}
			\#\left\{a\in\calA\bigm| |a|<m \text{ and } H(a)\leq m \right\}<\infty\,;
		\end{equation}
		\item a {\em regular system in $U$} if it is admissible and if one may find a real number $\kappa>0$ such that for any open ball $B\subseteq U$, there is a real number $h_B>0$ such that for all $h>h_B$, there exists a subset $\calA_{B,h}$ of $\calA\cap B$ with
		\[
			\left\{\begin{array}{l}
				\#\calA_{B,h}\geq\kappa\diam{B}^d h^d \\[2mm]
				\forall a\in\calA_{B,h} \qquad H(a)\leq h \\[2mm]
				\forall a,a'\in\calA_{B,h} \qquad a\neq a' \quad\Longrightarrow\quad |a-a'|\geq 1/h\,;
			\end{array}\right.
		\]
		\item an {\em optimal system in $U$} if it is admissible and if for any open ball $B$, there exist two real numbers $\kappa'_B>0$ and $h'_B>0$ such that for all $h>h'_B$,
		\[
			\#\{a\in\calA\cap U\cap B \:|\: H(a)\leq h \}\leq \kappa'_B\, h^d.
		\]
	\end{enumerate}
\end{Definition}

Throughout what follows, we shall freely employ the notations of Definition~\ref{df:optregsys} without necessarily reintroducing them. The admissibility condition~(\ref{eq:df:admissiblepair}) is related with the application to approximation problems, and will be justified in Section~\ref{subsec:optregsysapprox}. Moreover, it is elementary to remark that any regular system in $U$ is also regular in every nonempty open subset of $U$\,; the same observation holds for the optimality property. Now, when the set $U$ is bounded, the next lemma shows that any regular system inside may be enumerated monotonically with respect to the height function. The resulting enumerations will play a key r\^ole in the connection between optimal regular systems and eutaxic sequences.

\begin{Lemma}\label{lem:existsmonoenum}
	Let $U$ be a nonempty bounded open subset of $\R^d$, and let $(\calA,H)$ denote a regular system in $U$. Then, there exists an enumeration $(a_n)_{n\geq 1}$ of the set $\calA\cap U$ such that $H(a_n)$ monotonically tends to infinity as $n\to\infty$.
\end{Lemma}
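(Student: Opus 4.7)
The plan is to show first that $\calA\cap U$ is countably infinite, then that for every $h>0$ only finitely many of its points have height $\leq h$, and finally to conclude by enumerating $\calA\cap U$ in order of nondecreasing height.

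For infiniteness, I would pick any open ball $B$ whose closure is contained in $U$ and apply the regular system condition: for every $h>h_B$ the set $\calA_{B,h}\subseteq\calA\cap B\subseteq\calA\cap U$ has cardinality at least $\kappa\diam{B}^d h^d$, which tends to infinity with $h$. Hence $\calA\cap U$ is infinite (and in particular countably infinite, since $(\calA,H)$ is admissible).

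For the key finiteness statement, fix $h>0$. Since $U$ is bounded, there exists an integer $m\geq 1$ such that $|a|<m$ for every $a\in U$ and $m\geq h$. Then
\[
\{a\in\calA\cap U\::\:H(a)\leq h\}
\subseteq\{a\in\calA\::\:|a|<m\text{ and }H(a)\leq m\},
\]
and the right-hand side is finite by the admissibility condition~(\ref{eq:df:admissiblepair}). This is the main substantive step, and it is really what forces us to assume $U$ is bounded; admissibility is stated in terms of balls centered at the origin, so without boundedness of $U$ one cannot majorize $\{H\leq h\}\cap\calA\cap U$ by an admissibility set.

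With these two facts in hand, the enumeration is immediate. Set $h_0=0$ and recursively, for $n\geq 1$, define $a_n$ to be any element of $(\calA\cap U)\setminus\{a_1,\ldots,a_{n-1}\}$ minimizing $H$ on that set; such a minimizer exists because the preceding paragraph ensures that, for any fixed $h$, only finitely many points of $\calA\cap U$ have height at most $h$, so the set of attained heights is well-ordered. By construction the sequence $(H(a_n))_{n\geq 1}$ is nondecreasing. It exhausts $\calA\cap U$, for if some $a\in\calA\cap U$ were never enumerated then every $a_n$ would satisfy $H(a_n)\leq H(a)$, contradicting the finiteness of $\{b\in\calA\cap U:H(b)\leq H(a)\}$. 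Finally, $H(a_n)\to\infty$, as otherwise the sequence would be bounded by some $h$, forcing the enumeration to lie in the finite set $\{a\in\calA\cap U:H(a)\leq h\}$, contradicting injectivity.
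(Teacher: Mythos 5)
Your proof is correct and follows essentially the same approach as the paper's: establish that $\calA\cap U$ is countably infinite, show via boundedness of $U$ and admissibility that each sub-level set $\{a\in\calA\cap U : H(a)\leq h\}$ is finite, and then enumerate in order of nondecreasing height. The only cosmetic difference is that the paper partitions $\calA\cap U$ into finite nonempty blocks $A_j=\{a : h_j<H(a)\leq h_{j+1}\}$ and orders within blocks, whereas you pick the next point greedily as a minimizer of $H$ on the remaining set; both produce the same kind of enumeration, and your unused definition $h_0=0$ and the superfluous requirement that $\closure{B}\subseteq U$ (regularity only needs $B\subseteq U$) are harmless.
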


\begin{proof}
	On the one hand, the regularity property of the system $(\calA,H)$ ensures that the set $\calA\cap U$ is countably infinite. On the other hand, as the set $U$ is bounded, it is contained in the open ball $\opball(0,m)$, for $m$ sufficiently large, and the admissibility condition~(\ref{eq:df:admissiblepair}) implies that for any $h>0$, only finitely many points in $\calA\cap U$ have height bounded above by $h$. We deduce the existence of an increasing sequence $(h_j)_{j\geq 1}$ of nonnegative integers with initial term zero and such that all the sets
	\[
		A_j=\{a\in\calA\cap U \:|\: h_j<H(a)\leq h_{j+1}\}
	\]
	are both nonempty and finite. For each integer $j\geq 1$, we write the elements of the set $A_j$ in the form $a^{(j)}_1,\ldots,a^{(j)}_{\#A_j}$, in such a way that
	\[
		H(a^{(j)}_1)\leq\ldots\leq H(a^{(j)}_{\#A_j}).
	\]
	It is clear that for any integer $n\geq 1$, there is a unique pair of integers $(j,k)$, with $j\geq 1$ and $k\in\{1,\ldots,\#A_j\}$, such that
	\[
		n=\#A_1+\ldots+\#A_{j-1}+k.
	\]
	We then define $a_n$ as being equal to $a^{(j)}_k$, and it is elementary to check that the sequence $(a_n)_{n\geq 1}$ fulfills the conditions of the lemma.
\end{proof}

Any sequence $(a_n)_{n\geq 1}$ resulting from Lemma~\ref{lem:existsmonoenum} will be called a {\em monotonic enumeration} of the regular system $(\calA,H)$ in the set $U$. We now present the first part of the connection between optimal regular systems and eutaxic sequences.

\begin{Proposition}\label{prp:optregsyseutaxic}
	Let $U$ be a nonempty bounded open subset of $\R^d$, let $(\calA,H)$ be an optimal regular system in $U$, and let $(a_n)_{n\geq 1}$ denote a monotonic enumeration of $(\calA,H)$ in $U$. Then, the sequence $(a_n)_{n\geq 1}$ is uniformly eutaxic in $U$. In fact,
	\begin{equation}\label{eq:prp:optregsyseutaxic}
		\inf_{\lambda\in\Lambda\setminus\{\emptyset\}\atop\lambda\subseteq U}\liminf_{j\to\infty}2^{-dj}\#\Mu((a_n)_{n\geq 1};\lambda,j)>0.
	\end{equation}
\end{Proposition}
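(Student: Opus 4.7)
The idea is to show the stronger uniform lower bound~(\ref{eq:prp:optregsyseutaxic}), because then uniform eutaxy in $U$ is immediate from Theorem~\ref{thm:CSeutaxy}, as~(\ref{eq:prp:optregsyseutaxic}) is exactly~(\ref{eq:infliminfpos}), which implies~(\ref{eq:liminfpos}) at every point of $U$. Throughout the argument I will balance two opposite constraints: the regular system must furnish many well-separated points in a given dyadic subcube of $U$, while the optimality condition must guarantee that all of them are produced \emph{early} in the monotonic enumeration, that is, with index bounded by $2^{d(\gene{\lambda}+j)}$.

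The plan is as follows. Fix a nonempty dyadic cube $\lambda \subseteq U$ and an integer $j \geq 0$, and set $j' = \gene{\lambda}+j$. Since $U$ is bounded, I pick once and for all an open ball $B_0$ containing $\closure{U}$ and apply the optimality property to $B_0$, getting constants $\kappa'>0$ and $h'>0$ (depending only on $U$ and the system) such that, for every $h > h'$,
\[
N(h) := \#\{a\in\calA\cap U\:|\:H(a)\leq h\}\leq\kappa'\,h^d.
\]
Next, using equivalence of norms, I select in $\interior{\lambda}$ an open ball $B$ with $\diam{B}\geq c_1\diam{\lambda}$ for some constant $c_1>0$ depending only on the chosen norm on $\R^d$. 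I then set
\[
h = c_0\,2^{j'} \qquad\text{with}\qquad c_0 = \kappa'^{-1/d},
\]
so that $N(h) \leq 2^{dj'}$ as soon as $j$ is large enough that $h>h'$ (the threshold depending only on $\gene{\lambda}$, not on $\lambda$ once $\lambda$ is fixed).

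For $j$ also larger than the threshold $h_B$ associated with $B$ by the regular system property, I obtain a set $\calA_{B,h}\subseteq\calA\cap B\subseteq\calA\cap\lambda$ with $\#\calA_{B,h}\geq\kappa\diam{B}^d h^d \geq \kappa c_1^d c_0^d\,2^{dj}\,$ (using $\diam{\lambda}$ comparable to $2^{-\gene{\lambda}}$) and with pairwise distances at least $1/h = c_0^{-1}2^{-j'}$. Every such point has height at most $h$, hence, by monotonicity of the enumeration, appears as some $a_n$ with $n \leq N(h) \leq 2^{dj'}$. Now each dyadic cube $\lambda'$ of generation $j'$ has diameter at most $c_2\,2^{-j'}$ for some constant $c_2$ depending only on the norm, so a simple packing argument shows that $\lambda'$ contains at most $M$ points of $\calA_{B,h}$, where $M$ depends only on $c_0$, $c_2$, and $d$. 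It follows that the number of distinct dyadic cubes of generation $j'$ inside $\lambda$ containing at least one such $a_n$ is at least $\#\calA_{B,h}/M \geq \kappa_\ast\,2^{dj}$, where
\[
\kappa_\ast = \frac{\kappa c_1^d c_0^d}{M}
\]
is positive and depends only on $U$, the system and the norm. This shows $2^{-dj}\,\#\Mu((a_n)_{n\geq 1};\lambda,j)\geq\kappa_\ast$ for all sufficiently large $j$, and taking $\liminf$ then $\inf$ over $\lambda\subseteq U$ yields~(\ref{eq:prp:optregsyseutaxic}).

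The only delicate point is the calibration of $h$: it must be large enough to activate the regular system lower bound $\kappa\diam{B}^d h^d$, yet small enough that the optimality bound $\kappa' h^d$ does not exceed the admissible index budget $2^{dj'}$. The fact that the regular and optimal exponents are both equal to $d$ is exactly what makes such a calibration possible with a fixed constant $c_0$, and this is where optimality is crucial. Once this calibration is in place, the counting and packing steps are routine; the constant $\kappa_\ast$ produced does not depend on $\lambda$ (nor on $j$ once $j$ exceeds a $\lambda$-dependent threshold), which is precisely what the liminf inside the infimum in~(\ref{eq:prp:optregsyseutaxic}) requires.
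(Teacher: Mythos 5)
Your proof is essentially the same as the paper's: both inscribe a ball of comparable size in the cube $\lambda$, calibrate $h$ of order $2^{\gene{\lambda}+j}$ so that the optimality bound keeps the index budget within $2^{d(\gene{\lambda}+j)}$, and then use the regularity lower bound $\kappa\diam{B}^d h^d$ to exhibit at least a fixed fraction of $2^{dj}$ distinct subcubes that contain an $a_n$ with admissible index. The only cosmetic difference is in how the separation is exploited: the paper chooses the constant $\gamma<1$ so that the $1/h$-separation strictly exceeds the diameter of the subcubes of generation $\gene{\lambda}+j$, forcing at most one point of $\calA_{B',h}$ per subcube and dispensing with any packing constant, whereas you allow several points per subcube and absorb the multiplicity in a bounded constant $M$; both give the same uniform lower bound $\kappa_\ast 2^{dj}$.
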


\begin{proof}
	The set $U$ being bounded, it is contained in some open ball $B$. We consider a real number $\gamma\in(0,1)$ such that $\kappa'_B\gamma\leq\diam{[0,1)^d}^d$, and a nonempty dyadic cube $\lambda$ contained in $U$. Observe that there exists an open ball $B'\subseteq\lambda$ satisfying $\diam{B'}=\diam{\lambda}$. Then, let $j$ be a nonnegative integer so large that
		\[
			h=\gamma^{1/d}\frac{2^j}{\diam{\lambda}}>\max\{h'_B,h_{B'}\}.
		\]
	
	The choice of $h$ ensures that any dyadic subcube $\lambda'$ of $\lambda$ with generation equal to $\gene{\lambda}+j$ cannot contain more than one point of the set $\calA_{B',h}$. Otherwise, we would have two distinct points in $\calA_{B',h}$ at a distance bounded above by
		\[
			\diam{\lambda'}=2^{-j}\diam{\lambda}=\frac{\gamma^{1/d}}{h}<\frac{1}{h},
		\]
		which would contradict the third property satisfied by $\calA_{B',h}$. Moreover, every point contained in $\calA_{B',h}$ has height bounded above by $h$ and belongs to the set $\calA\cap U$, thereby being of the form $a_n$ for some $n\geq 1$. The monotonicity of the enumeration implies that $n$ is actually bounded above by
		\[
			\#\{a\in\calA\cap U\cap B\:|\:H(a)\leq h\}\leq \kappa'_B\, h^d
			=\kappa'_B\left(\gamma^{1/d}\,\frac{2^j}{\diam{\lambda}}\right)^d
			\leq\left(\diam{[0,1)^d}\frac{2^j}{\diam{\lambda}}\right)^d,
		\]
		so that $n\leq 2^{d(\gene{\lambda}+j)}$. Lastly, all the points of $\calA_{B',h}$ are contained in $B'$, and thus belong to some dyadic subcube of $\lambda$ with generation $\gene{\lambda}+j$. We deduce that
		\[
			\#\Mu((a_n)_{n\geq 1};\lambda,j)\geq\#\calA_{B',h}\geq\kappa\diam{B'}^d h^d
			=\kappa\left(\diam{\lambda}\gamma^{1/d}\frac{2^j}{\diam{\lambda}}\right)^d
			=\kappa\gamma 2^{dj},
		\]
		and we end up with~(\ref{eq:prp:optregsyseutaxic}) by letting $j$ tend to infinity. Hence, the sequence $(a_n)_{n\geq 1}$ satisfies the condition~(\ref{eq:infliminfpos}), and so the weaker condition~(\ref{eq:liminfpos}) holds as well. The uniform eutaxy of the sequence thus follows from Theorem~\ref{thm:CSeutaxy}.
\end{proof}

Further investigating the connection between optimal regular systems and eutaxic sequences, we now give a converse result to Proposition~\ref{prp:optregsyseutaxic}. We start from the property~(\ref{eq:prp:optregsyseutaxic}) that already appeared in the statement of this proposition and is in fact stronger than uniform eutaxy. This means that we assume that the sequence under consideration satisfies a condition of the form~(\ref{eq:infliminfpos}). As already observed, this condition implies the sufficient condition~(\ref{eq:liminfpos}) that guarantees uniform eutaxy.

\begin{Proposition}\label{prp:eutaxicoptregsys}
	Let $U$ be a nonempty open subset of $\R^d$, and let $(a_n)_{n\geq 1}$ denote a sequence of points contained in $U$. We assume that~(\ref{eq:prp:optregsyseutaxic}) holds, so that in particular $(a_n)_{n\geq 1}$ is uniformly eutaxic in $U$. Moreover, let $\calA$ denote the collection of all values $a_n$, for $n\geq 1$. We endow $\calA$ with the height function $H$ defined by
		\[
			H(a)=\inf\{n\geq 1\:|\:a=a_n\}^{1/d}.
		\]
		Then, the pair $(\calA,H)$ is an optimal regular system in the open set $U$.
\end{Proposition}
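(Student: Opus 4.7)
The admissibility and optimality of $(\calA,H)$ are essentially immediate from the definition of the height function. Indeed, if $H(a)\leq h$, then $a=a_n$ for some $n\leq h^d$, and hence
\[
	\#\{a\in\calA\:|\:H(a)\leq h\}\leq h^d.
\]
In particular $\#\{a\in\calA\:|\:|a|<m,\,H(a)\leq m\}\leq m^d$ for every integer $m\geq 1$, which gives admissibility, and the optimality condition holds with $\kappa'_B=1$ and $h'_B=0$ for every open ball $B$ (the points being in $U$ plays no role here).

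The substance of the proof lies in establishing the regularity property with a single constant $\kappa$ independent of the ball $B$. Both~(\ref{eq:prp:optregsyseutaxic}) and the regularity condition are invariant under an equivalent change of norm up to multiplicative constants, so I work throughout with the supremum norm. Let $\alpha>0$ denote the infimum appearing in~(\ref{eq:prp:optregsyseutaxic}) and fix an open ball $B\subseteq U$. Choose $\gene{\lambda}$ to be the smallest integer satisfying $2^{-\gene{\lambda}}\leq\diam{B}/4$; since $B$ is a supremum-norm ball of radius $\diam{B}/2$, any dyadic cube $\lambda$ of generation $\gene{\lambda}$ containing the center of $B$ lies inside $B$ and has $\diam{\lambda}=2^{-\gene{\lambda}}\geq\diam{B}/8$. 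By the assumption applied to this $\lambda$, there is an integer $J(\lambda)$ such that $\#\Mu((a_n)_{n\geq 1};\lambda,j)\geq(\alpha/2)\,2^{dj}$ for all $j\geq J(\lambda)$.

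Given $h\geq h_B:=2^{\gene{\lambda}+J(\lambda)}$, let $j$ be the unique integer with $2^{\gene{\lambda}+j}\leq h<2^{\gene{\lambda}+j+1}$, so $j\geq J(\lambda)$. I now borrow the parity pigeonhole trick from the proof of Theorem~\ref{thm:CSeutaxy}: classify the cubes of $\Mu(\lambda,j)$ according to the residue modulo $2$ of each coordinate of their corner index; at least one residue class $\beta\in\{0,1\}^d$ contains at least $2^{-d}\,\#\Mu(\lambda,j)$ cubes, and any two distinct cubes of sidelength $2^{-(\gene{\lambda}+j)}$ in that class are at supremum-norm distance at least $2^{-(\gene{\lambda}+j)}$. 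For each such cube, select one point $a_n$ it contains with $n\leq 2^{d(\gene{\lambda}+j)}$ and collect these points into $\calA_{B,h}$. The chosen points are pairwise distinct (the cubes being disjoint), lie in $B$, satisfy $H(a_n)\leq n^{1/d}\leq 2^{\gene{\lambda}+j}\leq h$, and are pairwise at distance at least $2^{-(\gene{\lambda}+j)}\geq 1/h$.

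It remains to obtain a uniform lower bound on $\#\calA_{B,h}$. From $2^{\gene{\lambda}+j+1}>h$ we deduce $2^{dj}\geq h^d/(2^d\cdot 2^{d\gene{\lambda}})$, and combining with $2^{-\gene{\lambda}}\geq\diam{B}/8$ yields
\[
	\#\calA_{B,h}\geq 2^{-d}\cdot\frac{\alpha}{2}\cdot 2^{dj}\geq \frac{\alpha}{2^{5d+1}}\,\diam{B}^d\,h^d,
\]
so that $\kappa=\alpha/2^{5d+1}$ works uniformly in $B$. The genuinely delicate point of the argument is this uniformity: the auxiliary cube $\lambda$ has to sit inside $B$ yet have diameter comparable to $\diam{B}$, for otherwise the lower bound $\#\Mu(\lambda,j)\geq(\alpha/2)2^{dj}$ would only translate into a constant $\kappa$ deteriorating with the geometry of $B$. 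Once $\lambda$ is chosen at the right scale, the pigeonhole step and the optimal choice of $j$ do the rest.
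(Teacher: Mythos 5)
Your proof is correct and follows essentially the same route as the paper's: admissibility and optimality are immediate from the definition of $H$, and regularity is obtained by picking a dyadic cube inside $B$ of diameter comparable to $\diam{B}$, applying hypothesis~(\ref{eq:prp:optregsyseutaxic}) to it, using the parity pigeonhole trick from the proof of Theorem~\ref{thm:CSeutaxy} to ensure the separation condition, and tuning the generation $j$ to the value of $h$. The only cosmetic difference is that you work directly in the supremum norm (justified by norm-invariance of the relevant notions), whereas the paper carries an explicit norm-comparison constant $c$ through the estimates.
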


\begin{proof}
	For any open ball $B$ and any real number $h>0$, it is clear that a point $a\in\calA\cap U\cap B$ satisfying $H(a)\leq h$ is among the points $a_1,\ldots,a_{\lfloor h^d\rfloor}$. This proves that the pair $(\calA,H)$ is admissible, and is in fact an optimal system in $U$.
	
	Let us now establish that $(\calA,H)$ is a also a regular system in $U$. Throughout, $c$ denotes a real number such that $|x|_{\infty}/c\leq |x|\leq c|x|_{\infty}$ for all $x$ in $\R^d$. Let $B$ be a nonempty open ball contained in $U$, and let $\lambda_B$ denote a nonempty dyadic cube contained in $B$ with minimal generation. One easily checks that $\diam{B}\leq 6c\,2^{-\gene{\lambda_B}}$. Moreover, there is an integer $\underline{j}(\lambda_B)\geq 0$ such that
		\[
			\forall j\geq\underline{j}(\lambda_B) \qquad \#\Mu((a_n)_{n\geq 1};\lambda_B,j)\geq\alpha\, 2^{d(j-1)},
		\]
		where $\alpha$ denotes the left-hand side of~(\ref{eq:prp:optregsyseutaxic}). Thus, just as in the proof of Theorem~\ref{thm:CSeutaxy}, detailed in Section~\ref{subsubsec:eutaxyunifCS}, we infer that for any integer $j\geq\underline{j}(\lambda_B)$, there exists a set $S_j(\lambda_B)\subseteq\{1,\ldots,2^{d(\gene{\lambda_B}+j)}\}$ satisfying the following properties:
		\begin{itemize}
			\item $\# S_j(\lambda_B)\geq\alpha\,2^{d(j-2)}$\,;
			\item $a_n\in\lambda_B$ for any $n\in S_j(\lambda_B)$\,;
			\item $|a_n-a_{n'}|_\infty\geq 2^{-(\gene{\lambda_B}+j)}$ for any distinct $n,n'\in S_j(\lambda_B)$.
		\end{itemize}
		For any real number $h$ larger than $c\,2^{\gene{\lambda_B}+\underline{j}(\lambda_B)}$, letting $j$ be equal to the integer $\lfloor\log_2(h/c)\rfloor-\gene{\lambda_B}$, where $\log_2$ is the base two logarithm, we have $j\geq\underline{j}(\lambda_B)$. Hence, we may define $\calA_{B,h}$ as the collection of all points $a_n$, for $n$ in $S_j(\lambda_B)$. It is then straightforward to check that $\calA_{B,h}$ is a subset of $\calA\cap B$ such that
		\[
			\left\{\begin{array}{l}
			\#\calA_{B,h}=\# S_j(\lambda_B)\geq\alpha\,2^{d(j-2)}\geq\alpha\diam{B}^d h^d/(48c^2)^d \\[2mm]
			\forall a\in\calA_{B,h} \qquad H(a)\leq (2^{d(\gene{\lambda_B}+j)})^{1/d}\leq h/c\leq h \\[2mm]
			\forall a,a'\in\calA_{B,h} \qquad a\neq a' \quad\Longrightarrow\quad |a-a'|\geq 2^{-(\gene{\lambda_B}+j)}/c\geq 1/h,
			\end{array}\right.
		\]
		and we deduce that the pair $(\calA,H)$ is a regular system in the set $U$.
\end{proof}

Combining Propositions~\ref{prp:optregsyseutaxic} and~\ref{prp:eutaxicoptregsys}, we may finally deduce that, rather than being equivalent to uniform eutaxy, the notion of optimal regular system is essentially comparable with the stronger condition~(\ref{eq:prp:optregsyseutaxic}).

\subsection{Approximation by optimal regular systems}\label{subsec:optregsysapprox}

In the spirit of Diophantine approximation, given an optimal regular system $(\calA,H)$, we may naturally consider the sets of the form
	\begin{equation}\label{eq:df:FphAH}
		F_\ph=\left\{x\in\R^d\bigm||x-a|<\ph(H(a))\quad\text{for i.m.~} a\in\calA\right\},
	\end{equation}
	where $\ph$ is a positive nonincreasing continuous function defined on $[0,\infty)$. Combining Theorem~\ref{thm:desceutaxy} and Proposition~\ref{prp:optregsyseutaxic}, we shall be able to describe the size and large intersection properties of these sets, because the underlying optimal regular system actually results in a uniformly eutaxic sequence. However, though they will play a prominent r\^ole in the proofs, we shall state our results without explicitly mentioning eutaxic sequences.

We may now justify the admissibility condition~(\ref{eq:df:admissiblepair}) arising in the definition of optimal regular systems. In fact, if the pair $(\calA,H)$ is admissible and the function $\ph$ additionally tends to zero at infinity, then the family $(a,\ph(H(a)))_{a\in\calA}$ of elements of $\R^d\times(0,\infty)$ is an approximation system in the sense of Definition~\ref{df:approxsys}. The set $F_\ph$ thus naturally fits into the frameworks supplied by homogeneous ubiquity, and by the mass and large intersection transference principles, see Sections~\ref{sec:firstubiq} and~\ref{sec:transference}.

The complete description of the size and large intersection properties of the set $F_\ph$ calls upon two objects defined in terms of the function $\ph$, specifically, the integral
	\begin{equation}\label{eq:df:Iph}
			I_\ph=\int_0^\infty \eta^{d-1}\ph(\eta)^d\,\dd\eta,
	\end{equation}
	and the Borel measure $\frakn_\ph$ on $(0,1]$ characterized by the condition that for any nonnegative measurable function $f$ supported in $(0,1]$,
	\begin{equation}\label{eq:df:fraknph}
		\int_{(0,1]} f(r)\,\frakn_\ph(\dd r)=\int_0^\infty \eta^{d-1}f(\ph(\eta))\,\dd\eta.
	\end{equation}
	We may now describe the size and large intersection properties of $F_\ph$.

\begin{Theorem}\label{thm:descoptregsys}
	Let $U$ be a nonempty open subset of $\R^d$, let $(\calA,H)$ be an optimal regular system in $U$, and let $\ph$ be a positive nonincreasing continuous function defined on $[0,\infty)$. Then, the following properties hold:
	\begin{itemize}
		\item if $I_\ph$ diverges, then $F_\ph$ has full Lebesgue measure in $U$\,;
		\item if $I_\ph$ converges, then $F_\ph$ is $\frakn_\ph$-describable in $U$.
	\end{itemize}
\end{Theorem}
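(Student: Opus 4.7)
The plan is to work locally inside the dyadic subcubes of $U$ and import Theorem~\ref{thm:desceutaxy} via the eutaxic sequences supplied by Proposition~\ref{prp:optregsyseutaxic}. More precisely, for every nonempty dyadic cube $\lambda\in\Lambda$ with $\lambda\subseteq U$, the interior $\interior{\lambda}$ is a nonempty bounded open subset of $U$, so the remark after Definition~\ref{df:optregsys} ensures that $(\calA,H)$ is still an optimal regular system in $\interior{\lambda}$. Applying Lemma~\ref{lem:existsmonoenum}, I would extract a monotonic enumeration $(a_n^{(\lambda)})_{n\geq 1}$ of $\calA\cap\interior{\lambda}$ along which $H(a_n^{(\lambda)})$ is nondecreasing and tends to infinity (by admissibility, since $\interior{\lambda}$ is bounded). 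Proposition~\ref{prp:optregsyseutaxic} then makes this sequence uniformly eutaxic in $\interior{\lambda}$, and setting $r_n^{(\lambda)}=\ph(H(a_n^{(\lambda)}))$ yields a nonincreasing sequence to which Theorem~\ref{thm:desceutaxy} applies in $\interior{\lambda}$.

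The quantitative bridge is a two-sided counting estimate. Picking an open ball $B'\subseteq\interior{\lambda}$, the regularity of $(\calA,H)$ lower-bounds $N_\lambda(h):=\#\{a\in\calA\cap\interior{\lambda}\:|\:H(a)\leq h\}$ by a constant times $h^d$; picking an open ball $B\supseteq\interior{\lambda}$, optimality yields the matching upper bound $N_\lambda(h)\leq\kappa'_B h^d$. Substituting into $\sum_n(r_n^{(\lambda)})^d=\int_0^\infty\ph(h)^d\,\dd N_\lambda(h)$ gives $\sum_n(r_n^{(\lambda)})^d\asymp I_\ph$, and the same Stieltjes integration against an arbitrary $d$-normalized gauge $g_d$ yields $\croc{\frakn_{\rmr^{(\lambda)}}}{g_d}\asymp\croc{\frakn_\ph}{g_d}$, so that $\gauge(\frakn_{\rmr^{(\lambda)}})=\gauge(\frakn_\ph)$.

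In the divergence case Theorem~\ref{thm:desceutaxy} will say that $\frakF_\lambda:=\frakF((a_n^{(\lambda)},r_n^{(\lambda)})_{n\geq 1})$ has full Lebesgue measure in $\interior{\lambda}$; since $\frakF_\lambda\subseteq F_\ph$ and the boundary of $\lambda$ is Lebesgue null, $F_\ph$ will be Lebesgue full in $\lambda$, and covering $U$ by dyadic cubes $\lambda\subseteq U$ will yield full Lebesgue measure on $U$ by subadditivity. In the convergence case Theorem~\ref{thm:desceutaxy} will make $\frakF_\lambda$ a $\frakn_\ph$-describable subset of $\interior{\lambda}$; for every $g\in\gauge(\frakn_\ph)$ I would have $\frakF_\lambda\in\lic{g}{\interior{\lambda}}$, and since $F_\ph$ is a $G_\delta$-set containing $\frakF_\lambda$, Proposition~\ref{prp:morelicgauge}(\ref{item:prp:morelicgauge1}) upgrades this to $F_\ph\in\lic{g}{\interior{\lambda}}$. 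Consequently $\netm^h_\infty(F_\ph\cap\lambda)\geq\netm^h_\infty(F_\ph\cap\interior{\lambda})=\netm^h_\infty(\interior{\lambda})=\netm^h_\infty(\lambda)$ for every $d$-normalized $h\prec g_d$ and every dyadic $\lambda\subseteq U$ of sufficiently small diameter, and the gauge-function extension of Lemma~\ref{lem:netmlambdaV} will promote this to $\netm^h_\infty(F_\ph\cap V)=\netm^h_\infty(V)$ for all open $V\subseteq U$, so that $F_\ph\in\lic{g}{U}$ and $g\in\mino(F_\ph,U)$.

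The hard part will be the matching majorizing inclusion $\gauge(\frakn_\ph)^\complement\subseteq\majo(F_\ph,U)$, which the local analysis cannot deliver since it only produces a subset of $F_\ph$. I would argue directly with Lemma~\ref{lem:upbndlimsup}: in the convergence case $\ph$ tends to $0$, so $U$ admits a countable cover by open balls $B_k$ whose enlargements $B'_k$ still lie in $U$. Admissibility together with $\ph\to 0$ forces all but finitely many balls $\opball(a,\ph(H(a)))$ that meet $B_k$ to be centered inside $B'_k$, so for any $g\in\gauge(\frakn_\ph)^\complement$ it suffices to check convergence of $\sum_{a\in\calA\cap B'_k}g_d(2\ph(H(a)))$. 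Applying optimality on $B'_k\subseteq U$ and the $d$-normalization inequality $g_d(2r)\leq 2^d g_d(r)$, this sum is dominated by a constant times $\int_0^\infty g_d(\ph(h))h^{d-1}\,\dd h$, which differs from $\croc{\frakn_\ph}{g_d}$ by only a finite boundary term and is therefore finite; Proposition~\ref{prp:compnormalgauge} will transfer the vanishing from $\hau^{g_d}$ to $\hau^g$, and subadditivity over $k$ will give $\hau^g(F_\ph\cap U)=0$. The same computation with $g(r)=r^d$ (a gauge for which $\croc{\frakn_\ph}{g}\lesssim I_\ph<\infty$ in the convergence case) will show $F_\ph\in\zeroleb(U)$, whereupon Proposition~\ref{prp:descincl} combined with the right-openness of $\gauge(\frakn_\ph)$ from Lemma~\ref{lem:gaugemeasopen}(\ref{item:lem:gaugemeasopen2}) packages the two containments into the required $\frakn_\ph$-describability.
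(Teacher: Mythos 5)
Your proposal is correct, and it takes a genuinely different route from the paper for the minorizing direction, while being essentially the same for the divergence and majorizing parts.

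The paper establishes the minorizing inclusion by showing directly that the approximation system $(a,\ph(H(a)))_{a\in\calA}$ is homogeneously $g$-ubiquitous in $U$ for each $g\in\gauge(\frakn_\ph)$: it replaces $\ph$ by the modified function $\widetilde\ph(h)=g_d(\min\{\ph(h),\eps_{g_d}/2\})^{1/d}$ (to guarantee continuity and monotonicity on all of $[0,\infty)$), checks that $I_{\widetilde\ph}$ diverges, invokes the divergence case to get the $g$-ubiquity, and then applies the large intersection transference principle (Theorem~\ref{thm:largeintprinc}) to the whole open set $U$. You instead factor through Theorem~\ref{thm:desceutaxy}: on each dyadic $\lambda\subseteq U$ you build a monotonic enumeration, verify uniform eutaxy of $(a_n^{(\lambda)})$ via Proposition~\ref{prp:optregsyseutaxic}, check $\gauge(\frakn_{\rmr^{(\lambda)}})=\gauge(\frakn_\ph)$ by a two-sided Stieltjes computation (regularity for $\supseteq$, optimality for $\subseteq$), conclude $\frakn_\ph$-describability of $\frakF_\lambda$ in $\interior{\lambda}$, upgrade to $F_\ph$ via Proposition~\ref{prp:morelicgauge}(\ref{item:prp:morelicgauge1}) (note here that $F_\ph$ is indeed $G_\delta$ as a limsup of open balls), and then globalize with the gauge-function extension of Lemma~\ref{lem:netmlambdaV}. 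This trade is sensible: you avoid the $\widetilde\ph$ regularization entirely, because at the sequence level Theorem~\ref{thm:desceutaxy} only needs $(r_n^{(\lambda)})$ to be nonincreasing, which is automatic once $H(a_n^{(\lambda)})$ is nondecreasing; the cost is the extra localize-then-globalize step, which the paper's direct approach avoids. Your majorizing argument (Stieltjes comparison using optimality on an enlarged ball $B'_k\subseteq U$, after the admissibility observation that all but finitely many contributing centers land in $B'_k$) is the same in substance as the paper's, differing only cosmetically: the paper packages the reduction via $F_\ph\cap B\subseteq F_\ph^B$ for a monotonic enumeration in $B$, while you cover directly with enlarged balls. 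Both proofs close the loop identically via Proposition~\ref{prp:descincl} and Lemma~\ref{lem:gaugemeasopen}(\ref{item:lem:gaugemeasopen2}).
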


\begin{proof}
	We begin with the divergence case. The open set $U$ may clearly be written as a countable union of open balls. Hence, the proof reduces to establishing that $F_\ph$ has full Lebesgue measure in any nonempty open ball contained in $U$. If $B$ is such a ball, the pair $(\calA,H)$ is also an optimal regular system in $B$, so Lemma~\ref{lem:existsmonoenum} enables us to consider a monotonic enumeration of $(\calA,H)$ in $B$, denoted by $(a_n)_{n\geq 1}$. Then, it is clear that $F_\ph$ contains the set $F^B_\ph$ defined by
		\begin{equation}\label{eq:FBphenum}
			F^B_\ph=\left\{x\in\R^d\bigm||x-a_n|<r_n\quad\text{for i.m.~} n\geq 1\right\},
		\end{equation}
		where $r_n=\ph(H(a_n))$ for any $n\geq 1$. By virtue of Proposition~\ref{prp:optregsyseutaxic}, the sequence $(a_n)_{n\geq 1}$ is uniformly eutaxic in $B$. Moreover, the divergence of the integral $I_\ph$ implies that of the series $\sum_n r_n^d$ diverges, see hereunder. The sequence defined by $\widetilde r_n=\min\{r_n,1/(2n^{1/d})\}$ for each $n\geq 1$ is then $\Rho_d$. We deduce that for Lebesgue-almost every $x$ in $B$, there are infinitely many integers $n\geq 1$ such that $|x-a_n|<\widetilde r_n$. Hence, the set $F^B_\ph$ has full Lebesgue measure in $B$, and thus the set $F_\ph$ as well.
	
	Let us prove that the series $\sum_n r_n^d$ diverges when the integral $I_\ph$ does. First, we may clearly assume that the function $\ph$ tends to zero at infinity; the result is elementary otherwise. Let $\zeta$ be the premeasure defined on the intervals  $(h,h')$, with $0<h\leq h'<\infty$, by the formula $\zeta((h,h'))=\ph(h)^d-\ph(h')^d$, and let $\zeta_\ast$ be the outer measure defined by~(\ref{eq:df:zetaastm}). It follows from Theorem~\ref{thm:bormeasm} that the Borel sets contained in $(0,\infty)$ are $\zeta_\ast$-measurable. The resulting Borel measure is called the {\em Lebesgue-Stieltjes measure} associated with the monotonic function $\ph^d$, and we may integrate locally bounded measurable functions with respect to that measure. One may prove that the above outer measure $\zeta_\ast$ coincides with the outer measure $\zeta^\ast$ defined by~(\ref{eq:df:zetaasta}), and also coincides with the premeasure $\zeta$ on the intervals where it is defined. Combining this observation with~(\ref{eq:measincunion}) and the fact that $\ph$ tends to zero at infinity, we deduce in particular that $\zeta_\ast([h,\infty))=\ph(h)^d$ for any $h>0$. Accordingly, using Tonelli's theorem, we have
		\[
			\sum_{n=1}^\infty r_n^d
			=\sum_{n=1}^\infty\int_0^\infty\ind_{\{H(a_n)\leq h\}}\,\zeta_\ast(\dd h)
			=\int_0^\infty\#\{n\geq 1\:|\:H(a_n)\leq h\}\,\zeta_\ast(\dd h).
		\]
		The regularity of the system implies that this is bounded below by
		\[
			\int_0^\infty\kappa\diam{B}^d h^d\,\zeta_\ast(\dd h)
				+\int_0^{h_B}\left(\#\{n\geq 1\:|\:H(a_n)\leq h\}-\kappa\diam{B}^d h^d\right)\,\zeta_\ast(\dd h),
		\]
		and the first integral is equal to $\kappa d\diam{B}^d I_\ph$ by Tonelli's theorem again. This proves that the series $\sum_n r_n^d$ is divergent when the integral $I_\ph$ is.
	
	Let us turn our attention to the convergence case. Note that, since the function $\ph$ is nonincreasing, it necessarily tends to zero at infinity. Let us consider a gauge function $g\in\gauge(\frakn_\ph)$. The idea is to replace in the above arguments the function $\ph$ by the function $h\mapsto g_d(\ph(h))^{1/d}$, denoted by $g_d^{1/d}\circ\ph$ for short. This new function might not be continuous and nonincreasing on the whole interval $[0,\infty)$, but surely satisfies these properties on the closed right-infinite interval of all real numbers $h\geq 0$ such that $\ph(h)\leq\eps_{g_d}/2$, where $\eps_{g_d}$ is defined in Section~\ref{subsubsec:netmrev}. Therefore, letting $\widetilde\ph(h)=g_d(\min\{\ph(h),\eps_{g_d}/2\})^{1/d}$, we get a function that is continuous and nonincreasing on the whole $[0,\infty)$ and matches the function of interest near infinity.
	
	The fact that $g$ is in $\gauge(\frakn_\ph)$ implies that the integral $I_{\widetilde\ph}$ is divergent. We deduce from the previous paragraphs that the set $F_{\widetilde\ph}$ has full Lebesgue measure in $U$, and thus that the larger set $F_{g_d^{1/d}\circ\ph}$ has full Lebesgue measure in $U$ as well. As a consequence, $(a,\ph(H(a)))_{a\in\calA}$ is not only an approximation system, but also a homogeneous $g$-ubiquitous system in $U$. We conclude that $F_\ph$ belongs to $\lic{g}{U}$ by means of the large intersection transference principle, namely, Theorem~\ref{thm:largeintprinc}. The gauge function $g$ is thus minorizing for $F_\ph$ in $U$.
	
	Given a nonempty open ball $B\subseteq U$, let $(a_n)_{n\geq 1}$ denote again a monotonic enumeration of $(\calA,H)$ in $B$. The intersection $F_\ph\cap B$ is contained in the set $F^B_\ph$ defined by~(\ref{eq:FBphenum}). Indeed, if $x$ is in $F_\ph\cap B$, we may find in $B$ a ball $B'$ of the form $\opball(x,r)$ for a sufficiently small $r>0$. Moreover, as $\ph$ tends to zero at infinity, we have $\ph(h)\leq r$ for any real number $h$ larger than some $h_0$. Now, there is an infinite subset $\calA_x$ of $\calA$ formed by points $a$ satisfying $|x-a|<\ph(H(a))$. In particular, all these points belong to the open ball centered at $x$ with radius $\ph(0)$, so that
	\[
		\{a\in\calA_x\:|\:H(a)\leq h_0\}\subseteq\left\{a\in\calA\bigm| |a|<|x|+\ph(0) \text{ and } H(a)\leq h_0\right\}.
	\]
	The latter set is finite in view of the admissibility condition~(\ref{eq:df:admissiblepair}). It follows that infinitely many points $a$ in the set $\calA_x$ have height larger than $h_0$, thereby satisfying $\ph(H(a))\leq r$. All these points thus belong to the ball $B'$, and must then be of the form $a_n$ for some integer $n\geq 1$. We deduce that $x$ belongs to the set $F^B_\ph$.

	Let us now consider a gauge function $g\in\gauge(\frakn_\ph)^\complement$. As shown below, the series $\sum_n g_d(r_n)$ is then convergent. Combining Lemma~\ref{lem:upbndlimsup} and Proposition~\ref{prp:compnormalgauge}, we deduce that the set $F^B_\ph$ has Hausdorff $g$-measure zero. Hence, the set $F_\ph\cap B$ has $g$-measure zero as well, and we may in fact replace the ball $B$ above by the whole open set $U$, because the Hausdorff $g$-measure is an outer measure and every open set may be written as a countable union of inside open balls. The gauge function $g$ is thus majorizing for $F_\ph$ in $U$. Besides, let us remark that when $I_\ph$ is convergent, the gauge function $r\mapsto r^d$ is in $\gauge(\frakn_\ph)^\complement$, and we deduce from Proposition~\ref{prp:comphauleb} that $F_\ph$ has Lebesgue measure zero in $U$.

	Let us justify the convergence of $\sum_n g_d(r_n)$. The gauge function $g_d$ is nondecreasing on the interval $[0,\eps_{g_d})$, so we may consider a function $\widetilde g$ that is nondecreasing on $[0,\infty)$ and coincides with $g_d$ on $[0,\eps_{g_d})$. Still reasoning as above, we define a premeasure $\zeta$ by $\zeta((h,h'))=\widetilde g(\ph(h))-\widetilde g(\ph(h'))$ when $0<h\leq h'<\infty$, and then consider the outer measure $\zeta_\ast$ given by~(\ref{eq:df:zetaastm}). We end up with a Borel measure on $(0,\infty)$ such that $\zeta_\ast([h,\infty))=\widetilde g(\ph(h))$ for any $h>0$. Thanks to Tonelli's theorem,
		\[
			\sum_{n=1}^\infty \widetilde g(r_n)=\int_0^\infty\#\{n\geq 1\:|\:H(a_n)\leq h\}\,\zeta_\ast(\dd h).
		\]
		Due to the optimality of the underlying system, this is bounded above by
		\[
			\int_0^\infty\kappa'_B h^d\,\zeta_\ast(\dd h)
			+\int_0^{h'_B}\left(\#\{n\geq 1\:|\:H(a_n)\leq h\}-\kappa'_B h^d\right)\,\zeta_\ast(\dd h),
		\]
		and another application of Tonelli's theorem shows that the first integral equals
		\[
			\kappa'_B d \int_0^\infty\eta^{d-1}\widetilde g(\ph(\eta))\,\dd\eta.
		\]
		Since $g$ is in $\gauge(\frakn_\ph)^\complement$ and $\ph$ tends to zero at infinity, this integral is convergent. So, the series in the left-hand side is also convergent. We may replace $\widetilde g$ by $g_d$ without altering the convergence of the series, again because $\ph$ vanishes at infinity.

	Finally, we established that the majorizing and the minorizing collections of the set $F_\ph$ in $U$ contain $\gauge(\frakn_\ph)^\complement$ and $\gauge(\frakn_\ph)$, respectively. In particular, this set is fully describable in $U$. We conclude using Proposition~\ref{prp:descincl} and Lemma~\ref{lem:gaugemeasopen}(\ref{item:lem:gaugemeasopen2}).
\end{proof}


\section{Homogeneous and inhomogeneous approximation}\label{sec:inhom}

A simple example of optimal regular system is supplied by the points with rational coordinates; this corresponds to the problem of homogeneous Diophantine approximation. We now detail this example, together with its inhomogeneous counterpart. We shall then state the corresponding metric results obtained by further applying Theorem~\ref{thm:descoptregsys}, thereby recovering  famous theorems due to Besicovitch~\cite{Besicovitch:1934ly}, Jarn\'ik~\cite{Jarnik:1929mf,Jarnik:1931qf} and Khintchine~\cite{Khintchine:1926fk}, as well as their inhomogeneous analogs.

\subsection{Associated optimal regular system}

Let us recall from Section~\ref{subsec:inhomapprox} that the inhomogeneous approximation problem is obtained when shifting the approximating rational points $p/q$ by a chosen value $\alpha$ in $\R^d$. The approximation is then realized by the points that belong to the collection
	\[
		\Q^{d,\alpha}=\left\{\frac{p+\alpha}{q},\ (p,q)\in\Z^d\times\N\right\}
	\]
	Obviously, when $\alpha$ vanishes, we recover the set $\Q^d$ of points with rational coordinates, and the homogeneous approximation problem. The collection $\Q^{d,\alpha}$ is endowed with the height function $H^\alpha_d$ defined by
	\begin{equation}\label{eq:df:heightQalphad}
		H^\alpha_d(a)=\inf\{q\in\N\:|\:qa-\alpha\in\Z^d\}^{1+1/d}.
	\end{equation}
	As shown by the next statement, we thus obtain an optimal regular system in $\R^d$. The proof is essentially due to Bugeaud~\cite{Bugeaud:2004zr} and relies on an inhomogeneous approximation result discussed in Section~\ref{subsec:inhomapprox} above, specifically, Proposition~\ref{prp:Dirichletinhomvar}.

\begin{Theorem}\label{thm:inhomratoptregsys}
	For any point $\alpha$ in $\R^d$, the pair $(\Q^{d,\alpha},H^\alpha_d)$ is an optimal regular system in $\R^d$.
\end{Theorem}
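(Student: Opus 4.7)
I would verify the three conditions of Definition~\ref{df:optregsys} in turn, with admissibility and optimality being elementary counting exercises and regularity being the substantial part. Any $a \in \Q^{d,\alpha}$ with $H^\alpha_d(a) \leq h$ may be written $a = (p + \alpha)/q$ with $1 \leq q \leq h^{d/(d+1)}$, and for a fixed such $q$ the number of $p \in \Z^d$ rendering $(p+\alpha)/q$ in a given bounded set $B$ is at most $(q\diam{B}+2)^d$. Summing over $1 \leq q \leq \lfloor h^{d/(d+1)} \rfloor$ yields both the finiteness required by~\eqref{eq:df:admissiblepair} (with $B$ the ball of radius $m$) and the optimality bound with $\kappa'_B$ proportional to $\diam{B}^d+1$.

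For regularity I would rely on Proposition~\ref{prp:Dirichletinhomvar}. Equip $\R^d$ with the supremum norm, fix a small $\gamma \in (0,1)$ (to be constrained below), and let $\Gamma_*, Q_*$ be the constants that Proposition~\ref{prp:Dirichletinhomvar} attaches to $\gamma$. Let $C_\gamma = \Gamma_*/\gamma^{1+1/d}$ and $L = (2C_\gamma + 1)/h$. Given a nonempty open ball $B \subseteq \R^d$ and $h$ sufficiently large (depending on $\diam{B}$, $\gamma$, $d$), shrink $B$ to a closed sup-norm cube $B' \subseteq B$ of sidelength $\geq \diam{B}/2$ at distance at least $C_\gamma/h$ from $\partial B$, cover $B'$ by a grid of sup-norm cubes of side $L$, and retain every second cell in each coordinate direction so that distinct retained cells have centers at sup-distance at least $2L$ and number at least $\kappa_0\,\diam{B}^d h^d$ for some $\kappa_0 > 0$ depending only on $d$ and $C_\gamma$. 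Choose an integer $Q$ with $Q > Q_*$ and $2Q^d \leq h^{d/(d+1)}$ but $\gamma^{1+1/d} Q^{d+1} \geq \Gamma_* h / C_\gamma$; both constraints bracket $Q \sim h^{1/(d+1)}$ and are compatible for $h$ large enough.

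For each retained cell $\lambda$, the aim is to pick $x_\lambda \in \lambda$ satisfying $q(x_\lambda, Q) \geq \gamma Q^d$; Proposition~\ref{prp:Dirichletinhomvar} then produces $(p_\lambda, q_\lambda) \in \Z^d \times \N$ with $q_\lambda < 2Q^d \leq h^{d/(d+1)}$, so that $H^\alpha_d((p_\lambda+\alpha)/q_\lambda) \leq h$, and
\[
	\left|x_\lambda - \frac{p_\lambda + \alpha}{q_\lambda}\right|_\infty \leq \frac{\Gamma_*}{q_\lambda\, q(x_\lambda, Q)^{1/d}} \leq \frac{\Gamma_*}{(\gamma Q^d)^{1+1/d}} \leq \frac{C_\gamma}{h},
\]
which both places $(p_\lambda + \alpha)/q_\lambda$ inside $B$ (thanks to the buffer) and guarantees sup-distance at least $2L - 2(L/2 + C_\gamma/h) = 1/h$ between points coming from distinct retained cells. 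The one non-trivial ingredient is the existence of $x_\lambda$: the \emph{bad} set $\{x \in B' : q(x, Q) < \gamma Q^d\}$ is contained in $\bigcup_{1 \leq q \leq \gamma Q^d,\,p \in \Z^d} \opball_\infty(p/q, 1/(qQ))$, whose Lebesgue measure in $B'$ is bounded by $C_d\,\gamma\,\diam{B'}^d$ for an absolute constant $C_d$ (direct summation). Retained cells entirely filled with bad points therefore make up at most a fraction $2^d C_d \gamma$ of all retained cells, so fixing $\gamma$ small enough -- depending only on $d$ through $C_d$ -- leaves a positive proportion of cells in which $x_\lambda$ can be chosen, giving $\#\calA_{B,h} \geq \kappa\,\diam{B}^d h^d$ with $\kappa$ depending only on $d$ and $\alpha$. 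The main obstacle lies precisely in this coupling: shrinking $\gamma$ raises the density of good cells but inflates $C_\gamma$ and hence $L$, so the grid becomes coarser and the total count shrinks; the argument only closes because $C_\gamma$ grows polynomially in $1/\gamma$, which allows both constraints to be met simultaneously by a single absolute choice of $\gamma$.
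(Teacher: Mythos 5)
Your overall strategy matches the paper's: equip $\R^d$ with the supremum norm, handle admissibility and optimality by elementary counting, and for regularity invoke Proposition~\ref{prp:Dirichletinhomvar} together with a measure estimate on the ``bad'' set where $q(\cdot,Q)$ is small. The organization differs slightly --- you tile a shrunken cube by a grid of $2L$-separated cells and pick one good point per cell, whereas the paper works with a maximal $(2/\gamma)^{1+1/d}/h$-separated subset of $\Q^{d,\alpha}\cap B$ and shows it covers the good set $B'\setminus B''$ --- but both versions deliver the same cardinality bound from the same two inputs, so this is essentially the same proof repackaged.

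There is, however, a concrete arithmetic inconsistency in the choice of $Q$. You require simultaneously $2Q^d\leq h^{d/(d+1)}$ (so that $q_\lambda<2Q^d$ forces $H^\alpha_d\leq h$) and $\gamma^{1+1/d}Q^{d+1}\geq\Gamma_*h/C_\gamma$ (so that the displayed distance bound is $\leq C_\gamma/h$). Substituting $C_\gamma=\Gamma_*/\gamma^{1+1/d}$, the second condition reads $Q^{d+1}\geq h$, while the first gives $Q^{d+1}\leq h/2^{(d+1)/d}<h$. These cannot hold together, so no admissible $Q$ exists no matter how large $h$ is, and the assertion that ``both constraints \ldots{} are compatible for $h$ large enough'' is false as written. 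The fix is straightforward: take $Q=\lfloor 2^{-1/d}h^{1/(d+1)}\rfloor$ (the largest $Q$ the height constraint permits, exactly as the paper does) and accept that the distance bound you then get from Proposition~\ref{prp:Dirichletinhomvar} is $\Gamma_*/(\gamma Q^d)^{1+1/d}\leq 2^{(d+1)/d}C_\gamma/h$ rather than $C_\gamma/h$; enlarge $L$ and the boundary buffer by the corresponding absolute factor. Finally, your closing remark that the argument ``only closes because $C_\gamma$ grows polynomially in $1/\gamma$'' is not actually needed: the fraction $2^dC_d\gamma$ of all-bad cells does not depend on $C_\gamma$ at all, so one simply fixes $\gamma$ once (smaller than $2^{-d}/C_d$) and then $\kappa$ is a fixed positive constant, independently of how $C_\gamma$ behaves as a function of $\gamma$ --- which is fortunate, since Proposition~\ref{prp:Dirichletinhomvar} gives no quantitative control on $\Gamma_*(\gamma,d)$ anyway.
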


\begin{proof}
	When the open set $U$ is equal to the whole space $\R^d$ in Definition~\ref{df:optregsys}, one easily checks that the notion of optimal regular system does not depend on the choice of the norm. We thus choose to work with the supremum norm.

	Establishing the optimality of the system is rather elementary. Indeed, let $B$ denote the open ball $\opball(x,r)$, and let $a$ be a point in $\Q^{d,\alpha}\cap B$ with height at most $h$. We write $a$ in the form $(p+\alpha)/q$, with $p\in\Z^d$ and $q\in\N$ as small as possible. As a result, $H^\alpha_d(a)=q^{1+1/d}$, which means that $q\leq h^{d/(d+1)}$. Moreover, the number of possible values for the point $p$ is not greater than $(2rq+1)^d$. Hence,
	\begin{align*}
		\#\{a\in\Q^{d,\alpha}\cap B \:|\: H^\alpha_d(a)\leq h \}
		&\leq\sum_{1\leq q\leq h^{d/(d+1)}}(2rq+1)^d\\
		&\leq h^{d/(d+1)}(2rh^{d/(d+1)}+1)^d
		\leq (4r)^d h^d,
	\end{align*}
	where the last bound holds for $h\geq (2r)^{-1-1/d}$.

	Let us prove the regularity of the system. For any point $y$ in $\R^d$, let $q(y)$ denote the minimal value of the integer $q\geq 1$ for which
	\[
		\exists p\in\Z^d \qquad |qy-p|_\infty\leq\frac{1}{\lfloor 2^{-1/d} h^{1/(d+1)}\rfloor}.
	\]
	Dirichlet's theorem then shows that $2q(y)$ is bounded above by $h^{d/(d+1)}$, with the proviso that the following condition holds:
	\begin{equation}\label{eq:proof:thm:inhomratoptregsys1}
	h\geq 2^{(d+1)^2/d}.
	\end{equation}
	We assume from now on that this condition is satisfied. Given two parameters $\gamma$ and $\delta$ in $(0,1)$, let $B'$ be the open ball concentric with $B$ with radius $\delta$ times that of $B$, and let $B''$ be the set of points $y$ in $B'$ such that $2q(y)<\gamma h^{d/(d+1)}$. The set $B''$ is covered by the closed balls with radius $2^{1+1/d}/(q h^{1/(d+1)})$ centered at the rational points $p/q$ within distance $1/q$ of the ball $B'$ and with denominator $q<\gamma h^{d/(d+1)}/2$. For any fixed choice of $q$, there are at most $(2q\delta r+3)^d$ such points.	Hence, the Lebesgue measure of the set $B''$ is at most
	\[
		\sum_{1\leq q<\gamma h^{d/(d+1)}/2}(2q\delta r+3)^d\left(\frac{2^{2+1/d}}{q h^{1/(d+1)}}\right)^d
		=\frac{2^{2d+1}}{h^{d/(d+1)}}\sum_{1\leq q<\gamma h^{d/(d+1)}/2}\left(2\delta r+\frac{3}{q}\right)^d.
	\]
In order to derive an upper bound on the sum in the right-hand side, we first consider the case in which $q<3/(2\delta r)$. In that situation, the summand is clearly bounded by $6^d$. In the opposite case, the summand is bounded by $(4\delta r)^d$. Thus,
	\[
	\leb^d(B'')
	\leq\frac{3\cdot 24^d}{\delta r h^{d/(d+1)}}+(16\delta r)^d\gamma.
	\]
	Finally, we define $\calA_{B,h}$ as any maximal collection of points belonging to the set $\Q^{d,\alpha}\cap B$ with height at most $h$ and separated from each other by a distance at least $(2/\gamma)^{1+1/d}/h$, thus in particular at least $1/h$.
	
	We now search for an appropriate lower bound on the cardinality of $\calA_{B,h}$. Note that each point $y$ in the set $B'\setminus B''$ satisfies
	\[
	q(y)\geq\frac{\gamma}{2}\,h^{d/(d+1)}\geq\gamma\lfloor 2^{-1/d}h^{1/(d+1)}\rfloor^d.
	\]
	Applying Proposition~\ref{prp:Dirichletinhomvar} to the integer $\lfloor 2^{-1/d}h^{1/(d+1)}\rfloor$, the point $\alpha$, and each point $y$ in the set $B'\setminus B''$, we infer the existence of two real numbers $\Gamma_\ast$ and $H_\ast$, both larger than one and depending on $\gamma$ and $d$ only, such that the condition
	\begin{equation}\label{eq:proof:thm:inhomratoptregsys2}
	h>H_\ast
	\end{equation}
	implies that for each point $y$ in the set $B'\setminus B''$, there is a pair $(p,q)$ in $\Z^d\times\N$ with
	\[
	q(y)\leq q<2q(y)
	\qquad\text{and}\qquad
	|qy-p-\alpha|_{\infty}\leq\frac{\Gamma_\ast}{q(y)^{1/d}}.
	\]
	In that situation, we straightforwardly deduce that
	\[
	\left|y-\frac{p+\alpha}{q}\right|_\infty\leq\frac{\Gamma_\ast}{q(y)^{1+1/d}}
	\leq\frac{\Gamma_\ast}{h}\left(\frac{2}{\gamma}\right)^{1+1/d}.
	\]
	Given that the point $y$ is in the ball $B'$, this means in particular that the point $(p+\alpha)/q$ belongs to the set $\Q^{d,\alpha}\cap B$ if the following condition holds:
	\begin{equation}\label{eq:proof:thm:inhomratoptregsys3}
	\frac{\Gamma_\ast}{h}\left(\frac{2}{\gamma}\right)^{1+1/d}+\delta r\leq r.
	\end{equation}
	On top of that, we observed previously that $2q(y)$ is bounded above by $h^{d/(d+1)}$, so we deduce that this point satisfies
	\[
	H^\alpha_d\left(\frac{p+\alpha}{q}\right)\leq q^{1+1/d}<(2q(y))^{1+1/d}\leq h.
	\]
	Since the collection $\calA_{B,h}$ is maximal, it contains a point $(p'+\alpha)/q'$ located at a distance smaller than $(2/\gamma)^{1+1/d}/h$ from $(p+\alpha)/q$, so that
	\[
	\left|y-\frac{p'+\alpha}{q'}\right|_\infty
	\leq\left|y-\frac{p+\alpha}{q}\right|_\infty+\left|\frac{p+\alpha}{q}-\frac{p'+\alpha}{q'}\right|_\infty
	<\frac{\Gamma_\ast+1}{h}\left(\frac{2}{\gamma}\right)^{1+1/d}.
	\]
	Hence, the set $B'\setminus B''$ is covered by the open balls centered at the points in $\calA_{B,h}$ with radius the right-hand side above. Using the fact that $\Gamma_\ast>1$, we obtain
	\[
	(2\delta r)^d-\frac{3\cdot 24^d}{\delta r h^{d/(d+1)}}-(16\delta r)^d\gamma
	\leq\leb^d(B'\setminus B'')
	\leq\left(\frac{4\Gamma_\ast}{h}\right)^d\left(\frac{2}{\gamma}\right)^{d+1}\#\calA_{B,h},
	\]
	from which we deduce that
	\begin{equation}\label{eq:proof:thm:inhomratoptregsys4}
	\frac{\#\calA_{B,h}}{\diam{B}^d h^d}\geq
	\left(\frac{\delta}{4\Gamma_\ast}\right)^d\left(\frac{\gamma}{2}\right)^{d+1}\left(1-8^d\gamma-\frac{3\cdot 12^d}{(\delta r)^{d+1}h^{d/(d+1)}}\right).
	\end{equation}
	To conclude, we choose $\gamma$ smaller than $8^{-d}$, and $\delta$ arbitrarily, and we require that $h$ is large enough to ensure that~(\ref{eq:proof:thm:inhomratoptregsys1}),~(\ref{eq:proof:thm:inhomratoptregsys2}) and~(\ref{eq:proof:thm:inhomratoptregsys3}) all hold, and that~(\ref{eq:proof:thm:inhomratoptregsys4}) holds with a constant that depends on $d$ in the right-hand side.
\end{proof}

Combining Proposition~\ref{prp:optregsyseutaxic} and Theorem~\ref{thm:inhomratoptregsys}, we directly get the following property: for any nonempty bounded open subset $U$ of $\R^d$, any monotonic enumeration of the optimal regular system $(\Q^{d,\alpha},H^\alpha_d)$ in the set $U$ is uniformly eutaxic. In particular, the arguably most natural enumeration of the rational numbers that are strictly between zero and one, namely, the sequence
\[
\frac{1}{2},\frac{1}{3},\frac{2}{3},\frac{1}{4},\frac{3}{4},\frac{1}{5},\frac{2}{5},\frac{3}{5},\frac{4}{5},\frac{1}{6},\frac{5}{6},\frac{1}{7},\frac{2}{7},\frac{3}{7},\frac{4}{7},\frac{5}{7},\frac{6}{7},\ldots
\]
is uniformly eutaxic in the open interval $(0,1)$.

\subsection{General metrical implications}\label{subsec:inhomdesc}

We may use Theorem~\ref{thm:inhomratoptregsys} in conjunction with Theorem~\ref{thm:descoptregsys} to describe the size and large intersection properties of the set 
	\begin{equation}\label{eq:df:Qdpsialpha}
		\frakQ^\alpha_{d,\psi}=\left\{x\in\R^d\Biggm|\left|x-\frac{p+\alpha}{q}\right|_\infty<\psi(q)\quad\text{for i.m.~}(p,q)\in\Z^d\times\N\right\},
	\end{equation}
	where $\psi$ denotes a positive nonincreasing continuous function defined on $[0,\infty)$. Indeed, this set is exactly the set $F_\ph$ defined by~(\ref{eq:df:FphAH}) when $\ph(\eta)=\psi(\eta^{d/(d+1)})$ for all $\eta\geq 0$, and the underlying system $(\calA,H)$ is equal to $(\Q^{d,\alpha},H^\alpha_d)$. This prompts us to introduce the associated integral $I_\ph$ and measure $\frakn_\ph$ defined by~(\ref{eq:df:Iph}) and~(\ref{eq:df:fraknph}), respectively. However, it is more natural to express the results in terms of $\psi$ only, so we preferably consider the integral
	\[
		I_{d,\psi}=\int_0^\infty q^d\psi(q)^d\,\dd q
	\]
	and the Borel measure $\frakn_{d,\psi}$ on $(0,1]$ characterized by the condition that for any nonnegative measurable function $f$ supported in $(0,1]$,
	\[
		\int_{(0,1]} f(r)\,\frakn_{d,\psi}(\dd r)=\int_0^\infty q^d f(\psi(q))\,\dd q.
	\]
	Performing a simple change of variable, one easily checks that the convergence of $I_{d,\psi}$ amounts to that of $I_\ph$, and that the sets of gauge functions $\gauge(\frakn_{d,\psi})$ and $\gauge(\frakn_{\ph})$ coincide. The above discussion finally leads to the next statement.

\begin{Theorem}\label{thm:inhomdesc}
	Let $\alpha$ be a point in $\R^d$ and let $\psi$ denote a positive nonincreasing continuous function defined on $[0,\infty)$. Then, the following properties hold:
	\begin{itemize}
		\item if $I_{d,\psi}$ diverges, then $\frakQ^\alpha_{d,\psi}$ has full Lebesgue measure in $\R^d$\,;
		\item if $I_{d,\psi}$ converges, then $\frakQ^\alpha_{d,\psi}$ is $\frakn_{d,\psi}$-describable in $\R^d$.
	\end{itemize}
\end{Theorem}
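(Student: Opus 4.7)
The plan is to realise $\frakQ^\alpha_{d,\psi}$ as (essentially) the set $F_\ph$ built from the optimal regular system $(\Q^{d,\alpha},H^\alpha_d)$ supplied by Theorem~\ref{thm:inhomratoptregsys}, and then to quote Theorem~\ref{thm:descoptregsys}. Concretely, I would set $\ph(\eta)=\psi(\eta^{d/(d+1)})$, which is positive, continuous and nonincreasing on $[0,\infty)$. For $a=(p+\alpha)/q$ with $q$ minimal as in~(\ref{eq:df:heightQalphad}), one has $H^\alpha_d(a)^{d/(d+1)}=q$, so $\ph(H^\alpha_d(a))=\psi(q)$, which identifies the radii appearing in~(\ref{eq:df:FphAH}) and~(\ref{eq:df:Qdpsialpha}). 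The only care needed in the identification is that a single $a\in\Q^{d,\alpha}$ may admit several representations $(p,q)$ (this can occur only when $\alpha\in\Q^d$). However, since $\psi$ is nonincreasing, additional representations use larger $q$ and give smaller balls, so $F_\ph\subseteq\frakQ^\alpha_{d,\psi}\subseteq F_\ph\cup\Q^{d,\alpha}$, and $\Q^{d,\alpha}$ is countable. Note that $\frakQ^\alpha_{d,\psi}$ is a $G_\delta$-set, being the limsup of a countable family of open balls.

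Next, I would translate the integrability condition and the reference measure between the two parametrisations by the change of variable $\eta=q^{(d+1)/d}$: a routine computation gives
\[
I_\ph=\frac{d+1}{d}\,I_{d,\psi}
\qquad\text{and}\qquad
\frakn_\ph=\frac{d+1}{d}\,\frakn_{d,\psi},
\]
the latter as measures on $(0,1]$. In particular $I_\ph$ and $I_{d,\psi}$ are simultaneously convergent or divergent, and $\gauge(\frakn_\ph)=\gauge(\frakn_{d,\psi})$, so $\frakn_\ph$-describability is identical to $\frakn_{d,\psi}$-describability.

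Applying Theorem~\ref{thm:descoptregsys} to the optimal regular system $(\Q^{d,\alpha},H^\alpha_d)$ in $U=\R^d$ and to the function $\ph$ then yields: if $I_{d,\psi}=\infty$, the set $F_\ph$ has full Lebesgue measure in $\R^d$, hence so does $\frakQ^\alpha_{d,\psi}\supseteq F_\ph$; if $I_{d,\psi}<\infty$, the set $F_\ph$ is $\frakn_{d,\psi}$-describable in $\R^d$. In this second case, $F_\ph$ has Lebesgue measure zero, so $\frakQ^\alpha_{d,\psi}\in\zeroleb(\R^d)$ as well, and it remains to transfer describability from $F_\ph$ to $\frakQ^\alpha_{d,\psi}$.

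The main — albeit mild — obstacle is this last transfer, where one must handle the countable discrepancy. I would handle it as follows. By Proposition~\ref{prp:cupcapmajomino} and~(\ref{eq:countablemajo}), $\majo(F_\ph\cup\Q^{d,\alpha},\R^d)=\majo(F_\ph,\R^d)\cap\gauge^\infty=\majo(F_\ph,\R^d)$, and monotonicity (Proposition~\ref{prp:monomajomino}) sandwiches $\majo(\frakQ^\alpha_{d,\psi},\R^d)$ between these two equal collections, giving $\majo(\frakQ^\alpha_{d,\psi},\R^d)=\gauge(\frakn_{d,\psi})^\complement$. For the minorizing side, Proposition~\ref{prp:monomajomino} applied to $F_\ph\subseteq\frakQ^\alpha_{d,\psi}$ yields $\mino(\frakQ^\alpha_{d,\psi},\R^d)\cap\gauge^\infty\supseteq\gauge(\frakn_{d,\psi})$. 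Since $\gauge(\frakn_{d,\psi})$ is right-open by Lemma~\ref{lem:gaugemeasopen}(\ref{item:lem:gaugemeasopen2}), Proposition~\ref{prp:descincl} applied with $\frakH=\gauge(\frakn_{d,\psi})$ promotes these inclusions to equalities, establishing that $\frakQ^\alpha_{d,\psi}$ is $\frakn_{d,\psi}$-describable in $\R^d$ and concluding the proof.
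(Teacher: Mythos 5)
Your proof is correct and follows the paper's intended route: identify $\frakQ^\alpha_{d,\psi}$ with a set of the form $F_\ph$ attached to the optimal regular system $(\Q^{d,\alpha},H^\alpha_d)$, change variables $\eta=q^{(d+1)/d}$ to match $I_\ph$, $\frakn_\ph$ with $I_{d,\psi}$, $\frakn_{d,\psi}$, and invoke Theorems~\ref{thm:inhomratoptregsys} and~\ref{thm:descoptregsys}. In fact your treatment is slightly more careful than the paper's: the paper asserts that $\frakQ^\alpha_{d,\psi}$ is \emph{exactly} $F_\ph$, but, as you rightly point out, when $\alpha\in\Q^d$ a point of $\Q^{d,\alpha}$ may admit several pairs $(p,q)$, so one only has $F_\ph\subseteq\frakQ^\alpha_{d,\psi}\subseteq F_\ph\cup\Q^{d,\alpha}$ in the convergent case (for instance, with $d=1$, $\alpha=0$ and $\psi(q)=2^{-q}$, the point $x=0$ lies in $\frakQ^0_{1,\psi}$ but not in $F_\ph$). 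Your transfer of the majorizing and minorizing collections across this countable discrepancy, using Propositions~\ref{prp:monomajomino}, \ref{prp:cupcapmajomino}, \ref{prp:descincl} together with~(\ref{eq:countablemajo}) and the right-openness of $\gauge(\frakn_{d,\psi})$, is precisely the right way to close this gap.
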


Let us now detail some consequences of Theorem~\ref{thm:inhomdesc}. First, this result shows that the set $\frakQ^\alpha_{d,\psi}$ has full Lebesgue measure in $\R^d$ if the integral $I_{d,\psi}$ diverges, and Lebesgue measure zero if it converges. Hence, for any nonempty open set $V\subseteq\R^d$,
	\[
		\leb^d(\frakQ^\alpha_{d,\psi}\cap V)=
		\begin{cases}
			\leb^d(V) & \text{if } \sum_q q^d \psi(q)^d=\infty \\[2mm]
			0 & \text{if } \sum_q q^d \psi(q)^d<\infty.
		\end{cases}
	\]
	Indeed, the monotonicity of $\psi$ implies that the convergence of $I_{d,\psi}$ amounts to that of the above series. We thus recover a result obtained by Schmidt~\cite{Schmidt:1964vn}. In the homogeneous case, this corresponds to a famous theorem due to Khintchine~\cite{Khintchine:1926fk}.

Thanks to Theorem~\ref{thm:frakndescslip}, we may in fact deduce from Theorem~\ref{thm:inhomdesc} a complete description of the size and large intersection properties of the set $\frakQ^\alpha_{d,\psi}$. We restrict our attention to the case where $\frakQ^\alpha_{d,\psi}$ has Lebesgue measure zero; as explained at the beginning of Section~\ref{sec:desc}, these properties are trivial otherwise. In particular, we infer that for any gauge function $g$ and any nonempty open set $V\subseteq\R^d$,
	\[
		\hau^g(\frakQ^\alpha_{d,\psi}\cap V)=
		\begin{cases}
			\infty & \text{if } \sum_q q^d g_d(\psi(q))=\infty \\[2mm]
			0 & \text{if } \sum_q q^d g_d(\psi(q))<\infty.
		\end{cases}
	\]
	Note that we use here the elementary fact that a gauge function $g$ belongs to the set $\gauge(\frakn_{d,\psi})$ if and only if its $d$-normalization $g_d$ is such that the above series diverges; this follows from the monotonicity of $\psi$ and that of $g_d$ near the origin. We thus recover the extension established by Bugeaud~\cite{Bugeaud:2004zr} of a classical statement due to Jarn\'ik~\cite{Jarnik:1931qf}. Likewise, Theorems~\ref{thm:frakndescslip} and~\ref{thm:inhomdesc} allow us to recover the description of the large intersection properties of the set $\frakQ^\alpha_{d,\psi}$ that was obtained in~\cite{Durand:2007uq}.

Using Corollary~\ref{cor:frakndescslip}, we may also give a more concise dimensional statement. In fact, still focusing on the case where $\frakQ^\alpha_{d,\psi}$ has Lebesgue measure zero, we see that the integral $I_{d,\psi}$ converges and that the exponent associated with $\frakn_{d,\psi}$ {\em via}~(\ref{eq:df:sfraknrad01}) is
	\[
		s_{d,\psi}=\limsup_{q\to\infty}\frac{(d+1)\log q}{-\log\psi(q)},
	\]
	so we eventually obtain the next statement.

\begin{Corollary}\label{cor:inhomdesc}
		Let $\alpha$ be a point in $\R^d$ and let $\psi$ denote a positive nonincreasing continuous function defined on $[0,\infty)$ such that $I_{d,\psi}$ converges. Then, for any nonempty open set $V\subseteq\R^d$,
	\[
		\left\{\begin{array}{l}
			\Hdim (\frakQ^\alpha_{d,\psi}\cap V)=s_{d,\psi}\\[1mm]
			\Pdim (\frakQ^\alpha_{d,\psi}\cap V)=d\\[1mm]
			\frakQ^\alpha_{d,\psi}\in\lic{s_{d,\psi}}{V},
		\end{array}\right.
	\]
	where the last two properties are valid under the assumption that $s_{d,\psi}$ is positive.
\end{Corollary}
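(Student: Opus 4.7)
The plan is to derive Corollary~\ref{cor:inhomdesc} directly from Theorem~\ref{thm:inhomdesc} combined with the dimensional statement for $\frakn$-describable sets, namely Corollary~\ref{cor:frakndescslip}. First I would note that $\frakQ^\alpha_{d,\psi}$ is a $G_\delta$-subset of $\R^d$, as it is the countable decreasing intersection of the open sets $\bigcup_{q\geq Q}\bigcup_{p\in\Z^d}\opball_\infty((p+\alpha)/q,\psi(q))$. Since $I_{d,\psi}$ converges, Theorem~\ref{thm:inhomdesc} yields that $\frakQ^\alpha_{d,\psi}$ is $\frakn_{d,\psi}$-describable in $\R^d$ (and in particular lies in $\zeroleb(\R^d)$), so Corollary~\ref{cor:frakndescslip} applies on any nonempty open $V\subseteq\R^d$.

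The key computation is to identify the exponent $s_{\frakn_{d,\psi}}$ defined by~(\ref{eq:df:sfraknrad01}) with the parameter $s_{d,\psi}$ in the statement. For $s\in(0,d)$, the gauge $r\mapsto r^s$ is $d$-normalized and lies in $\gauge^\infty$, and it belongs to $\gauge(\frakn_{d,\psi})$ precisely when $\int_0^\infty q^d \psi(q)^s\,\dd q=\infty$. If $s<s_{d,\psi}$, then by definition of the $\limsup$ there is a sequence $q_k\to\infty$ with $\psi(q_k)\geq q_k^{-(d+1)/s}$; thinning so that $q_{k+1}\geq 2q_k$ and using that $\psi$ is nonincreasing, the integrand $q^d\psi(q)^s$ on each disjoint interval $[q_k/2,q_k]$ is bounded below by $(q_k/2)^d q_k^{-(d+1)}$, contributing a uniformly positive amount to the integral, which therefore diverges. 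Conversely, if $s>s_{d,\psi}$, pick $s'\in(s_{d,\psi},s)$; for all large enough $q$, $\psi(q)\leq q^{-(d+1)/s'}$, so $q^d\psi(q)^s\leq q^{d-(d+1)s/s'}$, and $d-(d+1)s/s'<-1$ yields convergence. Thus $s_{\frakn_{d,\psi}}=s_{d,\psi}$; the same monotone extraction shows that the assumed convergence of $I_{d,\psi}$ itself forces $s_{d,\psi}\leq d$.

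Feeding this identification into Corollary~\ref{cor:frakndescslip} applied to $\frakQ^\alpha_{d,\psi}$ in an arbitrary nonempty open set $V\subseteq\R^d$ immediately produces $\Hdim(\frakQ^\alpha_{d,\psi}\cap V)=s_{d,\psi}$, and, under the hypothesis $s_{d,\psi}>0$, both $\Pdim(\frakQ^\alpha_{d,\psi}\cap V)=d$ and $\frakQ^\alpha_{d,\psi}\in\lic{s_{d,\psi}}{V}$ (the latter membership using that $\frakQ^\alpha_{d,\psi}$ is already $G_\delta$). The only substantive step is the integral test in the middle paragraph; this is elementary Borel--Cantelli-style bookkeeping, and the main technical point to get right is leveraging the monotonicity of $\psi$ to upgrade a single witness $q_k$ of the $\limsup$ into genuine divergence of the integral by sweeping out a full dyadic block $[q_k/2,q_k]$ on which $\psi$ remains large. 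All remaining steps are direct appeals to the machinery already assembled in Sections~\ref{sec:desc} and~\ref{sec:optregsys}.
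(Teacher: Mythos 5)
Your proof is correct and takes essentially the same route as the paper, which derives the corollary in a brief remark just before its statement by applying Theorem~\ref{thm:inhomdesc} to get $\frakn_{d,\psi}$-describability, identifying the exponent $s_{\frakn_{d,\psi}}$ with $s_{d,\psi}$, and invoking Corollary~\ref{cor:frakndescslip}. The only substantive addition you make is the dyadic-block integral test verifying that the exponent of~(\ref{eq:df:sfraknrad01}) for $\frakn_{d,\psi}$ equals $\limsup_q (d+1)\log q/(-\log\psi(q))$ and that $I_{d,\psi}<\infty$ forces this to lie in $[0,d]$ --- the paper asserts this without detail, and your check is sound.
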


Another application is to describe the size and large intersection properties of the intersection of countably many sets of the form $\frakQ^\alpha_{d,\psi}$. To be specific, for each integer $n\geq 1$, let us consider a point $\alpha_n$ in $\R^d$ and a positive nonincreasing continuous function $\psi_n$ defined on $[0,\infty)$ such that $I_{d,\psi_n}$ converges. Then, similarly to~(\ref{eq:majominoEnfraknsn}), we may combine Theorem~\ref{thm:inhomdesc} with Propositions~\ref{prp:monomajomino} and~\ref{prp:cupcapmajomino} to infer that
	\begin{equation}\label{eq:capmajominoinhom}
		\left\{\begin{array}{l}
			\displaystyle\mino\left(\bigcap_{n=1}^\infty\frakQ^{\alpha_n}_{d,\psi_n},\R^d\right)\cap\gauge^\infty=\bigcap_{n=1}^\infty\gauge(\frakn_{d,\psi_n})\\[5mm]
			\displaystyle\majo\left(\bigcap_{n=1}^\infty\frakQ^{\alpha_n}_{d,\psi_n},\R^d\right)\supseteq\gauge^\infty\setminus\bigcap_{n=1}^\infty\gauge(\frakn_{d,\psi_n}).
		\end{array}\right.
	\end{equation}
	Hence, the intersection of the sets $\frakQ^{\alpha_n}_{d,\psi_n}$ is fully describable in $\R^d$. Further assumptions on $\psi_n$ can make the intersection of the sets $\gauge(\frakn_{d,\psi_n})$ more explicit, and yield more comprehensive results. For instance, if the measures $\frakn_{d,\psi_n}$ may be written in the form~(\ref{eq:df:frakns}), Proposition~\ref{prp:dichofraksdesc} implies that the intersection of the sets $\frakQ^{\alpha_n}_{d,\psi_n}$ is either $\frakn_s$-describable for some $s\in[0,d)$, or $\fraks$-describable for some $\fraks\in[0,d)$.

\subsection{An inhomogeneous Jarn\'ik-Besicovitch theorem}

Let us focus on the particular case where the function $\psi$ is of the form $q\mapsto q^{-\tau}$ on the interval $[1,\infty)$, for some real number $\tau>0$. Then, $\frakQ^\alpha_{d,\psi}$ reduces to the set defined by~(\ref{eq:df:Jdtaualpha}), namely,
	\[
		J^\alpha_{d,\tau}=\left\{x\in\R^d\Biggm|\left|x-\frac{p+\alpha}{q}\right|_\infty<\frac{1}{q^\tau}\quad\text{for i.m.~}(p,q)\in\Z^d\times\N\right\}.
	\]
	When $\alpha$ vanishes, the above set is the introductory set $J_{d,\tau}$ defined by~(\ref{eq:df:Jdtau}) that corresponds to the homogeneous setting. We complete the definition of the function $\psi$ by assuming that it is constant equal to one on $[0,1]$. Clearly, $\Iota_{d,\psi}$ converges if and only if $\tau>1+1/d$. In that case, the set $\gauge(\frakn_{d,\psi})$ coincides with $\gauge(\frakn_{(d+1)/\tau})$, where $\frakn_{(d+1)/\tau}$ is defined as in~(\ref{eq:df:frakns}). Theorem~\ref{thm:inhomdesc} then leads to the next statement.

\begin{Corollary}\label{cor:inhomJBdesc}
	For any point $\alpha$ in $\R^d$, the following properties hold:
	\begin{itemize}
		\item for any $\tau\leq 1+1/d$, the set $J^\alpha_{d,\tau}$ has full Lebesgue measure in $\R^d$\,;
		\item for any $\tau>1+1/d$, the set $J^\alpha_{d,\tau}$ is $\frakn_{(d+1)/\tau}$-describable in $\R^d$.
	\end{itemize}
\end{Corollary}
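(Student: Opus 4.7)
The plan is to deduce the corollary as a direct specialization of Theorem~\ref{thm:inhomdesc}, together with a change of variable to match the measure $\frakn_{d,\psi}$ with $\frakn_{(d+1)/\tau}$. First I would take the function $\psi:[0,\infty)\to (0,\infty)$ given by $\psi(q)=1$ for $q\in[0,1]$ and $\psi(q)=q^{-\tau}$ for $q\geq 1$; this is positive, continuous, and nonincreasing, so the results of Section~\ref{subsec:inhomdesc} apply. Since the indices in the definition of $J^\alpha_{d,\tau}$ range over $(p,q)\in\Z^d\times\N$ and only the asymptotic behavior of $\psi$ matters for the ``infinitely many'' condition, I would observe that $J^\alpha_{d,\tau}=\frakQ^\alpha_{d,\psi}$ verbatim.

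Next I would evaluate $I_{d,\psi}$ by splitting it as $\int_0^1 q^d\,\dd q+\int_1^\infty q^{d(1-\tau)}\,\dd q$. The second integral converges if and only if $d(1-\tau)<-1$, i.e.\ $\tau>1+1/d$. When $\tau\leq 1+1/d$, Theorem~\ref{thm:inhomdesc} then gives that $\frakQ^\alpha_{d,\psi}=J^\alpha_{d,\tau}$ has full Lebesgue measure in $\R^d$, which is the first bullet. When $\tau>1+1/d$, the same theorem says that $J^\alpha_{d,\tau}$ is $\frakn_{d,\psi}$-describable in $\R^d$; it remains to identify $\frakn_{d,\psi}$-describability with $\frakn_{(d+1)/\tau}$-describability.

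For that last step, since $\frakn$-describability is entirely determined by the set $\gauge(\frakn)$ of gauge functions for which $\croc{\frakn}{g_d}=\infty$, I would show that $\gauge(\frakn_{d,\psi})=\gauge(\frakn_{(d+1)/\tau})$. Starting from the defining identity for $\frakn_{d,\psi}$, I would write, for any $g\in\gauge^\infty$,
\[
\croc{\frakn_{d,\psi}}{g_d}=\int_0^1 q^d g_d(1)\,\dd q+\int_1^\infty q^d g_d(q^{-\tau})\,\dd q,
\]
and change variables $r=q^{-\tau}$, $\dd q=-\tau^{-1}r^{-1/\tau-1}\,\dd r$, so that $q^d=r^{-d/\tau}$. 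The tail integral becomes a positive constant multiple of $\int_0^1 g_d(r)\,r^{-(d+1)/\tau-1}\,\dd r=\croc{\frakn_{(d+1)/\tau}}{g_d}$, modulo the bounded term $g_d(1)/(d+1)$. Hence $\croc{\frakn_{d,\psi}}{g_d}$ and $\croc{\frakn_{(d+1)/\tau}}{g_d}$ are finite or infinite simultaneously, which yields the claimed equality of collections. Along the way I would also verify that $\frakn_{(d+1)/\tau}\in\rad01_d$ under the hypothesis $\tau>1+1/d$: indeed $\croc{\frakn_{(d+1)/\tau}}{r\mapsto r^d}=\int_0^1 r^{d-(d+1)/\tau-1}\,\dd r<\infty$ precisely in this range.

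The computation is essentially mechanical; the only point that needs attention is to ensure the bounded boundary contributions (from $q\in[0,1]$ in $\croc{\frakn_{d,\psi}}{g_d}$ and from the constant prefactor in the change of variable) do not affect the divergence criterion, which is clear since $g_d(1)$ is finite and $g\in\gauge^\infty$ forces $\croc{\frakn_{(d+1)/\tau}}{g_d}$ to dominate any finite additive constant when it is infinite.
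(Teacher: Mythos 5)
Your proposal follows exactly the paper's own route: specialize Theorem~\ref{thm:inhomdesc} to $\psi(q)=q^{-\tau}$ on $[1,\infty)$ (extended by $1$ on $[0,1]$), note that $I_{d,\psi}$ converges precisely when $\tau>1+1/d$, and identify $\gauge(\frakn_{d,\psi})$ with $\gauge(\frakn_{(d+1)/\tau})$ by the change of variable $r=q^{-\tau}$. Your write-up merely supplies the routine computations that the paper labels ``clearly''; the only (shared, and trivial) omission is the case $\tau<0$, where $\psi$ is no longer nonincreasing, which one handles by monotonicity of $\tau\mapsto J^\alpha_{d,\tau}$ or by direct inspection.
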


In the latter case, making use of Corollary~\ref{cor:frakndescslip} and recalling that the exponent associated through~(\ref{eq:df:sfraknrad01}) to the measure $\frakn_{(d+1)/\tau}$ is equal to $(d+1)/\tau$, we get
	\[
		\left\{\begin{array}{l}
			\Hdim J^\alpha_{d,\tau}=(d+1)/\tau\\[1mm]
			\Pdim J^\alpha_{d,\tau}=d\\[1mm]
			J^\alpha_{d,\tau}\in\lic{(d+1)/\tau}{\R^d}.
		\end{array}\right.
	\]
	This can also be seen as a consequence of Corollary~\ref{cor:inhomdesc}. In the homogeneous case where $\alpha$ vanishes, we thus recover the Jarn\'ik-Besicovitch theorem and the large intersection companion result, see Theorem~\ref{thm:JarnikBesicovitch} and Corollary~\ref{cor:JarnikBesicovitchsli}, respectively.

In light of the end of the previous section, we may also consider countably many values of the parameter $\alpha$ and study the size of the intersection of the corresponding sets $J^\alpha_{d,\tau}$, for possibly different values of the parameter $\tau$.

\begin{Corollary}\label{cor:inhomJBdescint}
	Given a sequence $(\alpha_n)_{n\geq 1}$ of points in $\R^d$ and a sequence $(\tau_n)_{n\geq 1}$ of real numbers, let us consider
	\[
		J_{d,\ast}=\bigcap_{n=1}^\infty J^{\alpha_n}_{d,\tau_n}
		\qquad\text{and}\qquad
		\tau_\ast=\sup_{n\geq 1}\tau_n.
	\]
	Then, the following properties hold:
	\begin{enumerate}
		\item if $\tau_\ast\leq1+1/d$, the set $J_{d,\ast}$ has full Lebesgue measure in $\R^d$\,;
		\item if $\tau_\ast>1+1/d$ and is attained, $J_{d,\ast}$ is $\frakn_{(d+1)/\tau_\ast}$-describable in $\R^d$\,;
		\item if $\tau_\ast>1+1/d$ and is not attained, $J_{d,\ast}$ is $((d+1)/\tau_\ast)$-describable in $\R^d$.
	\end{enumerate}
\end{Corollary}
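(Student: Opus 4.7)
The plan is to combine Corollary~\ref{cor:inhomJBdesc} with Proposition~\ref{prp:descincl}, exploiting the simple but crucial fact that a $G_\delta$-set with full Lebesgue measure lies in every generalized large intersection class. Case~(1) will be disposed of immediately: if $\tau_\ast\leq 1+1/d$, then every $\tau_n\leq 1+1/d$, so by Corollary~\ref{cor:inhomJBdesc} each $J^{\alpha_n}_{d,\tau_n}$ has full Lebesgue measure in $\R^d$, and hence so does the countable intersection $J_{d,\ast}$.

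For cases~(2) and~(3), assume $\tau_\ast>1+1/d$ and split $\N=N_1\sqcup N_2$ with $N_1=\{n:\tau_n\leq 1+1/d\}$ and $N_2=\{n:\tau_n>1+1/d\}$. By Corollary~\ref{cor:inhomJBdesc}, each $J^{\alpha_n}_{d,\tau_n}$ with $n\in N_2$ is $\frakn_{s_n}$-describable in $\R^d$ with $s_n=(d+1)/\tau_n\in(0,d)$, and each $J^{\alpha_n}_{d,\tau_n}$ with $n\in N_1$ is a $G_\delta$-set with full Lebesgue measure, which by Proposition~\ref{prp:lic0fullLeb} places it in $\lic{\zerofunc}{\R^d}$; unwinding Definition~\ref{df:liclocgauge}, this last class is contained in every $\lic{g}{\R^d}$ (the condition for $\zerofunc$ requires $\netm^h_\infty(F\cap V)=\netm^h_\infty(V)$ for all $d$-normalized $h$, which is strictly stronger than the condition for any other gauge). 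Since $N_2\neq\emptyset$, any $n\in N_2$ yields $J_{d,\ast}\subseteq J^{\alpha_n}_{d,\tau_n}\in\zeroleb(\R^d)$.

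To invoke Proposition~\ref{prp:descincl}, set $\frakH=\gauge(\frakn_{s_0})$ with $s_0=(d+1)/\tau_\ast$ in case~(2), and $\frakH=\gauge(\fraks)$ with $\fraks=(d+1)/\tau_\ast$ in case~(3); the required right-openness of $\frakH$ is provided by Lemma~\ref{lem:gaugemeasopen}(\ref{item:lem:gaugemeasopen2}) and Lemma~\ref{lem:gaugerealopen}(\ref{item:lem:gaugerealopen2}) respectively. For the minorizing inclusion $\frakH\subseteq\mino(J_{d,\ast},\R^d)$, I pick $g\in\frakH$ and use the monotonicity $s\mapsto\gauge(\frakn_s)$ together with~(\ref{eq:linkfraknfraksdesc}) to conclude $g\in\gauge(\frakn_{s_n})$ for every $n\in N_2$; the describability of $J^{\alpha_n}_{d,\tau_n}$ then yields subsets $F_n\in\lic{g}{\R^d}$ with $F_n\subseteq J^{\alpha_n}_{d,\tau_n}$, while for $n\in N_1$ the set $J^{\alpha_n}_{d,\tau_n}$ itself serves as such an $F_n$; Theorem~\ref{thm:stablicgauge}(\ref{item:thm:stablicgauge1}) then gives $\bigcap_n F_n\in\lic{g}{\R^d}$, and $\bigcap_n F_n\subseteq J_{d,\ast}$.

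For the majorizing inclusion $\frakH^\complement\subseteq\majo(J_{d,\ast},\R^d)$, I take $g\in\frakH^\complement$ and locate a single index $n\in N_2$ for which $g$ is already majorizing for $J^{\alpha_n}_{d,\tau_n}$: in case~(2) this is the attaining index $n_0$ directly, while in case~(3), from $g\notin\gauge(\fraks)$ and~(\ref{eq:linkfraknfraksdesc}) I extract $s>\fraks$ with $g\notin\gauge(\frakn_s)$, and non-attainment of the supremum lets me pick $n\in N_2$ with $s_n<s$, so $\gauge(\frakn_{s_n})\subseteq\gauge(\frakn_s)$ and $g\notin\gauge(\frakn_{s_n})$; in either case $\hau^g(J_{d,\ast})\leq\hau^g(J^{\alpha_n}_{d,\tau_n})=0$ by describability, and Proposition~\ref{prp:descincl} completes the argument. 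The main obstacle I anticipate is the clean treatment of the indices in $N_1$: the observation that full Lebesgue measure $G_\delta$-sets lie in every $\lic{g}{\R^d}$ is what makes these indices harmless and permits a uniform application of Proposition~\ref{prp:descincl} in both subcases.
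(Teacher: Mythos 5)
Your proof is correct and follows essentially the same strategy as the paper's: reduce to Proposition~\ref{prp:descincl} with $\frakH$ being either $\gauge(\frakn_{(d+1)/\tau_\ast})$ or $\gauge((d+1)/\tau_\ast)$, obtain right-openness from Lemmas~\ref{lem:gaugemeasopen}(\ref{item:lem:gaugemeasopen2}) and~\ref{lem:gaugerealopen}(\ref{item:lem:gaugerealopen2}), and neutralize the indices with $\tau_n\leq 1+1/d$ by observing their sets are Lebesgue full $G_\delta$-sets and hence lie in every class $\lic{g}{\R^d}$. The only cosmetic difference is that you verify the two inclusion hypotheses of Proposition~\ref{prp:descincl} by hand, whereas the paper delegates precisely that step to the ready-made Proposition~\ref{prp:dichofraksdesc} after reducing to the subfamily indexed by $\calN=N_2$ via Propositions~\ref{prp:monomajomino},~\ref{prp:lic0fullLeb},~\ref{prp:morelicgauge}(\ref{item:prp:morelicgauge2}) and Theorem~\ref{thm:stablicgauge}(\ref{item:thm:stablicgauge1}); your inline derivation is in effect a rederivation of the content of that proposition.
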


\begin{proof}
	The first case in an elementary consequence of Corollary~\ref{cor:inhomJBdesc}. We suppose from now on that $\tau_\ast>1+1/d$, so that the set $\calN$ of all integers $n\geq 1$ such that $\tau_n>1+1/d$ is nonempty. Note that $\tau_\ast$ is also the supremum of $\tau_n$ over $n\in\calN$. Now, Proposition~\ref{prp:monomajomino} yields on the one hand
	\[
		\majo(J_{d,\ast},\R^d)
		\supseteq\majo\left(\bigcap_{n\in\calN} J^{\alpha_n}_{d,\tau_n},\R^d\right).
	\]
	On the other hand, let us consider a gauge function $g$ that is minorizing in $\R^d$ for the intersection over $n\in\calN$ of the sets $J^{\alpha_n}_{d,\tau_n}$. Due to Corollary~\ref{cor:inhomJBdesc}, the intersection over $n\in\N\setminus\calN$ of these sets has full Lebesgue measure in $\R^d$. By Propositions~\ref{prp:lic0fullLeb} and~\ref{prp:morelicgauge}(\ref{item:prp:morelicgauge2}), any gauge function is minorizing in $\R^d$ for this set, and so is $g$ in particular. This shows with Theorem~\ref{thm:stablicgauge}(\ref{item:thm:stablicgauge1}) that $g$ is minorizing for $J_{d,\ast}$. Hence,
	\[
		\mino(J_{d,\ast},\R^d)
		\supseteq\mino\left(\bigcap_{n\in\calN} J^{\alpha_n}_{d,\tau_n},\R^d\right).
	\]
	Proposition~\ref{prp:dichofraksdesc} enables us to appropriately express the right-hand side of either of the two above inclusions in terms of either $\gauge(\frakn_{(d+1)/\tau_\ast})$ or $\gauge((d+1)/\tau_\ast)$, depending on whether or not $\tau_\ast$ is attained, respectively. We conclude using Proposition~\ref{prp:descincl}, along with Lemma~\ref{lem:gaugemeasopen}(\ref{item:lem:gaugemeasopen2}) in the first case, and Lemma~\ref{lem:gaugerealopen}(\ref{item:lem:gaugerealopen2}) in the second.
\end{proof}

Subsequently applying Corollary~\ref{cor:frakndescslip} or Corollary~\ref{cor:fraksdescslip} depending on the situation, we readily deduce from Corollary~\ref{cor:inhomJBdescint} that for any sequence $(\alpha_n)_{n\geq 1}$ of points in $\R^d$ and any sequence $(\tau_n)_{n\geq 1}$ of real numbers with supremum denoted by $\tau_\ast$,
	\[
		\Hdim \bigcap_{n=1}^\infty J^{\alpha_n}_{d,\tau_n}=\min\left\{\frac{d+1}{\tau_\ast},d\right\},
	\]
	with the usual convention that the right-hand side vanishes if $\tau_\ast$ is infinite.

\subsection{Inhomogeneous Liouville points}

Note that the mapping $\tau\mapsto J^\alpha_{d,\tau}$ is decreasing. In the spirit of the end of Section~\ref{subsubsec:fraksdesc}, this prompts us to introduce
	\[
		L^\alpha_d=\bigcap_{\tau>1+1/d}\downarrow J^\alpha_{d,\tau}.
	\]
	The monotonicity property satisfied by the sets $J^\alpha_{d,\tau}$ shows that $L^\alpha_d$ coincides for instance with the intersection over all $n\geq 1$ of the sets $J^\alpha_{d,n}$. We are in the setting of Corollary~\ref{cor:inhomJBdescint}, with the supremum being infinite. This yields the next statement.

\begin{Corollary}\label{cor:inhomLiouville0}
	For any point $\alpha$ in $\R^d$, the set $L^\alpha_d$ is $0$-describable in $\R^d$.
\end{Corollary}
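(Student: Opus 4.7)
The plan is to reduce the result to a direct application of Corollary~\ref{cor:inhomJBdescint}. First I would observe that, since the mapping $\tau\mapsto J^\alpha_{d,\tau}$ is nonincreasing in $\tau$, the monotone intersection defining $L^\alpha_d$ may be reindexed over any increasing sequence of exponents tending to infinity. In particular,
\[
L^\alpha_d=\bigcap_{n=1}^\infty J^\alpha_{d,n},
\]
which exhibits $L^\alpha_d$ as a countable intersection of inhomogeneous approximation sets of the type handled by Corollary~\ref{cor:inhomJBdescint}, with the constant point $\alpha_n=\alpha$ and exponents $\tau_n=n$.

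Next I would invoke Corollary~\ref{cor:inhomJBdescint} applied to this data. Setting $\tau_\ast=\sup_n\tau_n=\infty$, we are in the third case of the dichotomy: $\tau_\ast>1+1/d$ and the supremum is not attained. The corollary therefore yields that $L^\alpha_d$ is $((d+1)/\tau_\ast)$-describable in $\R^d$. Since $(d+1)/\tau_\ast=0$ when $\tau_\ast=\infty$, this means precisely that $L^\alpha_d$ is $0$-describable in $\R^d$, as claimed.

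The only point requiring a bit of care is ensuring that the proof of Corollary~\ref{cor:inhomJBdescint}, which rests on Proposition~\ref{prp:dichofraksdesc}, handles the extended case $\tau_\ast=\infty$ cleanly. I expect this to be a non-issue: in that proposition the relevant parameter is $\fraks=\inf_n s_n$ with $s_n=(d+1)/\tau_n$, and when $\tau_n=n$ one has $s_n=(d+1)/n\to 0$ with infimum $0$ not attained, placing us in the "infimum not attained" branch of the dichotomy and giving $\fraks$-describability with $\fraks=0$. Thus no truly new argument is needed; the statement follows by specialization, and I would simply record the reindexing above and quote Corollary~\ref{cor:inhomJBdescint}. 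There is no significant obstacle to overcome.
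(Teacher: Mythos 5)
Your proof is correct and matches the paper's own argument exactly: reindex the monotone intersection as $L^\alpha_d=\bigcap_n J^\alpha_{d,n}$, then invoke Corollary~\ref{cor:inhomJBdescint} with $\alpha_n=\alpha$, $\tau_n=n$, $\tau_\ast=\infty$ not attained. Your brief check that the underlying Proposition~\ref{prp:dichofraksdesc} handles $\fraks=0$ via the "infimum not attained" branch is a fair point of care but poses no real issue, as you say.
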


The complete description of the size and large intersection properties of the set $L^\alpha_d$ then follows from Theorem~\ref{thm:fraksdescslip}. Moreover, we deduce from Corollary~\ref{cor:fraksdescslip} that this set has Hausdorff dimension equal to zero and packing dimension equal to $d$ in every nonempty open subset of $\R^d$.

Let us now establish a connection between the set $L^\alpha_d$ and a natural extension to the inhomogeneous and multidimensional setting of the notion of Liouville number.

\begin{Definition}
	Let $\alpha$ be a point in $\R^d$. A point $x$ in $\R^d$ is called {\em $\alpha$-Liouville} if $x$ does not belong to $\Q^{d,\alpha}$ and if for any integer $n\geq 1$, there exists an integer $q\geq 1$ and a point $p\in\Z^d$ such that
	\[
		\left|x-\frac{p+\alpha}{q}\right|_\infty<\frac{1}{q^n}.
	\]
\end{Definition}

For $\alpha=0$ and $d=1$, we obviously recover the condition that defines Liouville numbers. Excluding the points in $\Q^{d,\alpha}$ from this definition is analogous to excluding the irrationals from the classical definition of Liouville numbers. In fact, this ensures that for each integer $n\geq 1$, there are infinitely many pairs $(p,q)$ such that the above inequality holds. As a consequence, the set of $\alpha$-Liouville points in $\R^d$ is equal to the set $L^\alpha_d\setminus\Q^{d,\alpha}$. As shown by the next statement, removing the points in $\Q^{d,\alpha}$ does not alter the describability properties of the set $L^\alpha_d$.

\begin{Corollary}\label{cor:inhomLiouville}
	For any point $\alpha$ in $\R^d$, the set $L^\alpha_d\setminus\Q^{d,\alpha}$ of all $\alpha$-Liouville points in $\R^d$ is $0$-describable in $\R^d$.
\end{Corollary}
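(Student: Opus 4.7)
The plan is to leverage Corollary~\ref{cor:inhomLiouville0}, which asserts that $L^\alpha_d$ itself is $0$-describable in $\R^d$, and then to show that excising the countable set $\Q^{d,\alpha}$ alters neither the majorizing nor the minorizing collection. Setting $E = L^\alpha_d\setminus\Q^{d,\alpha}$, I first note that $E$ is a Borel subset of $L^\alpha_d$, and hence inherits Lebesgue-nullity from $L^\alpha_d$, so $E\in\zeroleb(\R^d)$. Since $E\subseteq L^\alpha_d$, Proposition~\ref{prp:monomajomino} gives immediately that
\[
\majo(E,\R^d)\supseteq\majo(L^\alpha_d,\R^d)=\gauge(0)^\complement.
\]

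The substantive step is the reverse inclusion for the minorizing collection: I need to show that $\gauge(0)\subseteq\mino(E,\R^d)$. The key auxiliary fact is that for any point $y\in\R^d$ and any gauge function $g$, one has $\R^d\setminus\{y\}\in\lic{g}{\R^d}$. Indeed, $\R^d\setminus\{y\}$ is a $G_\delta$-set of full Lebesgue measure, so for any $d$-normalized gauge function $h$ with $h\prec g_d$ and any dyadic cube $\lambda\in\Lambda_h$, Lemma~\ref{lem:netmfullLeb} yields $\netm^h_\infty((\R^d\setminus\{y\})\cap\lambda)=h(\diam{\lambda})=\netm^h_\infty(\lambda)$, where the last equality comes from the extension~(\ref{eq:netmlambdagauge}) of Lemma~\ref{lem:netmlambda}; the generalization of Lemma~\ref{lem:netmlambdaV} to arbitrary $d$-normalized gauge functions then propagates the equality to all open subsets of $\R^d$, so the class membership follows from Definition~\ref{df:liclocgauge}.

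Since $\Q^{d,\alpha}$ is countable, applying Theorem~\ref{thm:stablicgauge}(\ref{item:thm:stablicgauge1}) to the sequence $(\R^d\setminus\{y\})_{y\in\Q^{d,\alpha}}$ gives $\R^d\setminus\Q^{d,\alpha}\in\lic{g}{\R^d}$ for every gauge function $g$. Now fix $g\in\gauge(0)$. By Corollary~\ref{cor:inhomLiouville0}, $g\in\mino(L^\alpha_d,\R^d)$, so there exists $F\subseteq L^\alpha_d$ with $F\in\lic{g}{\R^d}$. Invoking Theorem~\ref{thm:stablicgauge}(\ref{item:thm:stablicgauge1}) once more, the set $F\cap(\R^d\setminus\Q^{d,\alpha})$ belongs to $\lic{g}{\R^d}$, and it is contained in $E$. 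Hence $g\in\mino(E,\R^d)$, which establishes $\gauge(0)\subseteq\mino(E,\R^d)\cap\gauge^\infty$.

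Finally, having shown that $\mino(E,\R^d)\cap\gauge^\infty$ contains $\gauge(0)$ and $\majo(E,\R^d)$ contains $\gauge(0)^\complement$, I invoke Proposition~\ref{prp:descincl} together with the right-openness of $\gauge(0)$ supplied by Lemma~\ref{lem:gaugerealopen}(\ref{item:lem:gaugerealopen2}) to obtain the exact equalities
\[
\majo(E,\R^d)=\gauge(0)^\complement
\qquad\text{and}\qquad
\mino(E,\R^d)\cap\gauge^\infty=\gauge(0),
\]
which is precisely the definition of $0$-describability of $E$ in $\R^d$. The only step that is not purely formal is the verification that $\R^d\setminus\{y\}$ lies in every generalized large intersection class, but this is a direct consequence of Lemma~\ref{lem:netmfullLeb} applied to Lebesgue-full complements of singletons, so no genuine obstacle arises.
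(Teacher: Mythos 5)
Your proof is correct and follows essentially the same strategy as the paper: establish that $\R^d\setminus\Q^{d,\alpha}$ lies in every class $\lic{g}{\R^d}$, intersect with a large-intersection subset of $L^\alpha_d$ to transfer the minorizing property, and conclude via Proposition~\ref{prp:descincl} and Lemma~\ref{lem:gaugerealopen}(\ref{item:lem:gaugerealopen2}). The two points of departure are both minor, and the second is actually to your advantage. First, where you re-derive $\R^d\setminus\Q^{d,\alpha}\in\lic{g}{\R^d}$ from scratch (complements of singletons via Lemma~\ref{lem:netmfullLeb}, then countable intersection), the paper simply cites Proposition~\ref{prp:lic0fullLeb} together with Proposition~\ref{prp:morelicgauge}(\ref{item:prp:morelicgauge2}); your route costs you a few lines but uses only material you have explicitly justified, and you could shorten it by applying Lemma~\ref{lem:netmfullLeb} directly to $F=\R^d\setminus\Q^{d,\alpha}$ since $\Q^{d,\alpha}$ is already Lebesgue-null. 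Second, and more substantively, the paper obtains the minorizing collection by invoking Proposition~\ref{prp:cupcapmajomino} on the decomposition $L^\alpha_d\cap(\R^d\setminus\Q^{d,\alpha})$, but that proposition is stated only for sequences in $\zeroleb(U)$, and $\R^d\setminus\Q^{d,\alpha}$ is Lebesgue-full, so the citation is slightly informal; your explicit construction $F\cap(\R^d\setminus\Q^{d,\alpha})\in\lic{g}{\R^d}$ via Theorem~\ref{thm:stablicgauge}(\ref{item:thm:stablicgauge1}) sidesteps this hypothesis cleanly and is the more careful way to make the argument rigorous.
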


\begin{proof}
	The set $\R^d\setminus\Q^{d,\alpha}$ is clearly a Lebesgue full $G_\delta$-subset of $\R^d$. Owing to Propositions~\ref{prp:lic0fullLeb} and~\ref{prp:morelicgauge}(\ref{item:prp:morelicgauge2}), it thus belongs to the class $\lic{\zerofunc}{\R^d}$, and in fact to all the classes $\lic{g}{\R^d}$, for $g$ in $\gauge$. Due to Proposition~\ref{prp:cupcapmajomino} and Corollary~\ref{cor:inhomLiouville0}, we get
	\[
		\mino(L^\alpha_d\setminus\Q^{d,\alpha},\R^d)\cap\gauge^\infty
		=\mino(L^\alpha_d,\R^d)\cap\gauge^\infty=\gauge(0).
	\]
	In addition, Proposition~\ref{prp:monomajomino} and Corollary~\ref{cor:inhomLiouville0} show that
	\[
		\majo(L^\alpha_d\setminus\Q^{d,\alpha},\R^d)
		\supseteq\majo(L^\alpha_d,\R^d)=\gauge(0)^\complement.
	\]
	We conclude with the help of Proposition~\ref{prp:descincl} and Lemma~\ref{lem:gaugerealopen}(\ref{item:lem:gaugerealopen2}).
\end{proof}

Let us mention a noteworthy consequence of Corollary~\ref{cor:inhomLiouville}. Let us consider an arbitrary gauge function $g$ in $\gauge(0)$. Then, Theorem~\ref{thm:fraksdescslip} shows that the set of all $\alpha$-Liouville points in $\R^d$, namely, $L^\alpha_d\setminus\Q^{d,\alpha}$ belongs to the large intersection class $\lic{g}{\R^d}$. Now, for any given point $x$ in $\R^d$, the mapping $y\mapsto x-y$ is obviously bi-Lipschitz. We deduce from Theorem~\ref{thm:stablicgauge}(\ref{item:thm:stablicgauge1}--\ref{item:thm:stablicgauge2}) that the set
	\[
		(L^\alpha_d\setminus\Q^{d,\alpha})
		\cap(x-(L^\alpha_d\setminus\Q^{d,\alpha}))
	\]
	also belongs to $\lic{g}{\R^d}$. Hence, there are uncountably many ways of writing a given point as the sum of two $\alpha$-Liouville points. This substantially improves on a result by Erd\H{o}s~\cite{Erdos:1962uq} according to which any real number may be written as a sum of two Liouville numbers. Of course, variations are possible as one may freely replace $y\mapsto x-y$ above by any bi-Lipschitz mapping, or even a countable number thereof.

Finally, let us also point out that the set of Liouville numbers, {\em i.e.}~the set $L^0_1$ in the above notations, also comes into play in the theory of dynamical systems, especially in the study of the homeomorphisms of the circle, see~\cite{Durand:2008jk} for details.


\section{Fractional parts of sequences}\label{sec:fracpart}

We show in this section that the fractional parts of sequences yield emblematic examples of eutaxic sequences, and we detail various implications of this property in metric Diophantine approximation. Recall that $\{x\}$ stands for coordinatewise fractional part of the point $x\in\R^d$, and belongs to $[0,1)^d$.

\subsection{Sequencewise eutaxy}

We begin with the sequencewise version of eutaxy. As defined in Section~\ref{subsec:eutaxyseq}, this notion arises when choosing a sequence $(r_n)_{n\geq 1}$ in $\Rho_d$, and requiring a sequence of points under consideration to approximate within distances $r_n$ Lebesgue-almost every point of a given open set.

\subsubsection{Linear sequences}\label{subsubsec:fracpartlinseqeutaxy}

By this term, we mean sequences of the form $(\{nx\})_{n\geq 1}$ with $x$ in $\R^d$. Our main result is the following. Although intrinsic proofs are available, it is particularly easy to establish this result with the help of Theorem~\ref{thm:inhomdesc}.

\begin{Theorem}\label{thm:nxeutaxicd}
Let $(r_n)_{n\geq 1}$ be a sequence in $\Rho_d$. Then, for $\leb^d$-almost every point $x\in\R^d$, the sequence $(\{nx\})_{n\geq 1}$ is eutaxic in $(0,1)^d$ with respect to $(r_n)_{n\geq 1}$.
\end{Theorem}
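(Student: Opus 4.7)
The plan is to translate the eutaxy condition on $(\{nx\})_{n\geq 1}$ into an inhomogeneous Diophantine approximation condition on $x$, invoke Theorem~\ref{thm:inhomdesc} to get full Lebesgue measure, and then flip the roles of $x$ and $y$ via Fubini's theorem. Working throughout with the supremum norm (harmless by Proposition~\ref{prp:homubsyscst}), the starting observation is that for fixed $y \in (0,1)^d$ and for $n$ large enough that $r_n < \min_i \min\{y_i,1-y_i\}$, the inequality $|y-\{nx\}|_\infty < r_n$ is equivalent to the existence of $p\in\Z^d$ with $|nx-p-y|_\infty < r_n$, i.e., $|x-(p+y)/n|_\infty < r_n/n$. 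Since $(r_n)\in\Rho_d$, the sequence $r_n/n$ is nonincreasing and tends to zero, so we may linearly interpolate it to a continuous nonincreasing $\psi:[0,\infty)\to(0,\infty)$ with $\psi(n)=r_n/n$ at each integer $n\geq 1$. At integer denominators, the set $\frakQ^y_{d,\psi}$ defined in~(\ref{eq:df:Qdpsialpha}) is then exactly
\[
	G_y = \bigl\{x\in\R^d \bigm| |x-(p+y)/q|_\infty < r_q/q \text{ for i.m.~}(p,q)\in\Z^d\times\N\bigr\},
\]
where ``i.m.~pairs $(p,q)$'' automatically implies ``i.m.~$q$'' because, for each fixed $q$, only finitely many $p\in\Z^d$ can lie in the ball of radius $r_q$ around $qx-y$.

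Next, I would check $I_{d,\psi}=\int_0^\infty q^d\psi(q)^d\,\dd q=\infty$: on each interval $[n,n+1]$ one has $q^d\psi(q)^d\geq n^d(r_{n+1}/(n+1))^d\geq 2^{-d}r_{n+1}^d$, so $I_{d,\psi}\geq 2^{-d}\sum_{n\geq 2}r_n^d=\infty$. Theorem~\ref{thm:inhomdesc} then gives that $G_y=\frakQ^y_{d,\psi}$ has full Lebesgue measure in $\R^d$ for every $y\in(0,1)^d$. The set $A=\{(x,y)\in\R^d\times(0,1)^d : x\in G_y\}$ is Borel (it is a natural limsup of open sets in the product), so for any bounded open $V\subseteq\R^d$, Fubini's theorem yields
\[
	\int_V\leb^d\bigl(\{y\in(0,1)^d : x\notin G_y\}\bigr)\,\dd x
	=\int_{(0,1)^d}\leb^d(V\setminus G_y)\,\dd y=0.
\]
Consequently, for Lebesgue-a.e.~$x\in V$, the $y$-section $\{y\in(0,1)^d : x\notin G_y\}$ has Lebesgue measure zero; letting $V$ exhaust $\R^d$ and reinvoking the equivalence of the first paragraph, this is precisely the statement that $(\{nx\})_{n\geq 1}$ is eutaxic in $(0,1)^d$ with respect to $(r_n)_{n\geq 1}$.

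The hard part is nothing deep: it is really the bookkeeping to turn the discrete radii $r_n/n$ into a continuous nonincreasing $\psi$ while keeping $I_{d,\psi}$ divergent, to match ``i.m.~pairs $(p,q)$'' with ``i.m.~$q$'', and to justify Borel measurability of $A$ so Fubini applies. The substantive input, namely the full-measure property of $\frakQ^y_{d,\psi}$, is a direct application of Theorem~\ref{thm:inhomdesc}, which is why the author's remark that the theorem is ``particularly easy to establish'' once that result is available seems apt.
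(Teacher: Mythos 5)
Your proposal is correct and follows essentially the same route as the paper's proof: interpolate $r_n/n$ into a nonincreasing continuous $\psi$, check $I_{d,\psi}=\infty$, apply Theorem~\ref{thm:inhomdesc} to get full measure of $\frakQ^\alpha_{d,\psi}$ for every $\alpha\in(0,1)^d$, translate the inhomogeneous approximation condition into the fractional-part condition, and finish with Fubini/Tonelli. The only cosmetic difference is that you first set up the equivalence between $|y-\{nx\}|_\infty<r_n$ and the existence of $p$ with $|x-(p+y)/n|_\infty<r_n/n$, whereas the paper starts from the full-measure conclusion and identifies $p_n=\lfloor nx\rfloor$ afterward; the content is the same.
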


\begin{proof}	
	The sequence $(r_n/n)_{n\geq 1}$ is both positive and nonincreasing, so we may find a positive nonincreasing continuous function $\psi$ defined on $[0,\infty)$ that coincides on $\N$ with this sequence. Hence, the integral $I_{d,\psi}$ on which relies Theorem~\ref{thm:inhomdesc} satisfies
	\[
		I_{d,\psi}=\int_0^\infty q^d\psi(q)^d\,\dd q
		\geq\sum_{n=1}^\infty (n-1)^d\psi(n)^d
		=2^{-d}\sum_{n=2}^\infty r_n^d=\infty.
	\]
	It follows that for any $\alpha$ in $(0,1)^d$, the set $\frakQ^\alpha_{d,\psi}$ defined by~(\ref{eq:df:Qdpsialpha}) has full Lebesgue measure in $\R^d$. As a result, $\leb^d$-almost every point $x$ in $\R^d$ satisfies
	\[
		|nx-(p_n+\alpha)|_\infty<n\psi(n)=r_n
	\]
	with some integer point $p_n$, for infinitely many integers $n\geq 1$. For convenience, we work with the supremum norm; this does not alter the notion of eutaxy, see Section~\ref{subsec:eutaxyseq}. Letting $h=(1/2,\ldots,1/2)$, we have
	\[
		|\lfloor nx\rfloor-p_n|_\infty
		\leq|nx-(p_n+\alpha)|_\infty+|\{nx\}-h|_\infty+|\alpha-h|_\infty
		<r_n+\frac{1}{2}+|\alpha-h|_\infty
	\]
	The right-hand side is smaller than one for $n$ sufficiently large, because the sequence $(r_n)_{n\geq 1}$ converges to zero. The point $p_n$ is then necessarily equal to $\lfloor nx\rfloor$. We deduce that for all $\alpha\in(0,1)^d$ and for $\leb^d$-almost all $x\in\R^d$, the inequality
	\[
		|\alpha-\{nx\}|_\infty<r_n
	\]
	holds for infinitely many $n\geq 1$. This holds {\em a fortiori} for $\leb^d$-almost every $\alpha$. To conclude, we exchange the order of $\alpha$ and $x$ with the help of Tonelli's theorem.
\end{proof}

We may now apply Theorem~\ref{thm:approxeutaxic} to the example supplied by Theorem~\ref{thm:nxeutaxicd}. Here, the formula~(\ref{eq:df:Ftn}) for the sets $F_t$ becomes
	\[
		F_t(x)=\left\{y\in\R^d\bigm||y-\{nx\}|<r_n^t\quad\text{for i.m.~}n\geq 1\right\},
	\]
	where $x$ is chosen according to the Lebesgue measure. Due to the aforementioned results, we then know that for any sequence $(r_n)_{n\geq 1}$ in $\Rho_d$ such that $\sum_n r_n^s$ converges for all $s>d$, and for Lebesgue-almost every point $x\in\R^d$, we have both
	\begin{equation}\label{eq:silipFtnx}
		\Hdim(F_t(x)\cap U)=\frac{d}{t}
		\qquad\text{and}\qquad
		F_t(x)\in\lic{d/t}{U}
	\end{equation}
	for any real number $t>1$ and for any nonempty open subset $U$ of $(0,1)^d$. In metric Diophantine approximation, it is customary to recast such a result with the help of the distance to the nearest integer point defined for every $z$ in $\R^d$ by
	\begin{equation}\label{eq:df:distZd}
		\distZ{z}=\inf_{p\in\Z^d}|z-p|_\infty.
	\end{equation}
	This amounts to considering, instead of $F_t(x)$, the companion set
	\[
		F'_t(x)=\left\{y\in\R^d\bigm|\distZ{y-nx}<r_n^t\quad\text{for i.m.~}n\geq 1\right\}.
	\]
	We may now easily deduce the next result from~(\ref{eq:silipFtnx}).

\begin{Corollary}\label{cor:dimfracpart}
Let $(r_n)_{n\geq 1}$ be a sequence in $\Rho_d$ such that $\sum_n r_n^s$ converges for all $s>d$. Then, for Lebesgue-almost every $x\in\R^d$ and for any $t>1$,
	\[
		\Hdim F'_t(x)=\frac{d}{t}.
	\]
\end{Corollary}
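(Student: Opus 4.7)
The plan is to split the claim $\Hdim F'_t(x)=d/t$ into matching upper and lower bounds, each of which reduces to material already in hand. The upper bound will hold for every $x\in\R^d$ and rests only on Lemma~\ref{lem:upbndlimsup} together with the $\Z^d$-periodicity of $F'_t(x)$. The matching lower bound will follow, for Lebesgue-almost every $x$, from the elementary inclusion $F_t(x)\subseteq F'_t(x)$ combined with the already-established equality~(\ref{eq:silipFtnx}).

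For the upper bound, since $\distZ{y-nx}=\distZ{(y+k)-nx}$ for every $k\in\Z^d$, the set $F'_t(x)$ is $\Z^d$-periodic; by the countable stability of Hausdorff dimension, Proposition~\ref{prp:Hausdorff}(\ref{item:prp:Hausdorff2}), it suffices to bound $\Hdim(F'_t(x)\cap[0,1)^d)$. For each $n\geq 1$, the slice $\{y\in[0,1)^d:\distZ{y-nx}<r_n^t\}$ is covered by at most $3^d$ open $\ell^\infty$-balls of radius $r_n^t$, namely those centred at the $\Z^d$-translates of $nx$ lying within $\ell^\infty$-distance $1+r_n^t$ of $[0,1)^d$. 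Thus $F'_t(x)\cap[0,1)^d$ is contained in the limsup of this countable family of balls. Given any $s>d/t$, the hypothesis $\sum_n r_n^{ts}<\infty$ (valid because $ts>d$) together with Lemma~\ref{lem:upbndlimsup} applied to the gauge $r\mapsto r^s$ yields $\hau^s(F'_t(x)\cap[0,1)^d)=0$; letting $s\downarrow d/t$ gives $\Hdim F'_t(x)\leq d/t$.

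For the lower bound, I observe that $F_t(x)\subseteq F'_t(x)$: if $|y-\{nx\}|<r_n^t$, then taking $p=-\lfloor nx\rfloor\in\Z^d$ produces $|y-nx-p|=|y-\{nx\}|<r_n^t$, so $\distZ{y-nx}<r_n^t$. The norm appearing in~(\ref{eq:silipFtnx}) may differ from $|\cdot|_\infty$, but this is harmless for Hausdorff dimension thanks to equivalence of norms and Proposition~\ref{prp:Hausdorff}. Applying~(\ref{eq:silipFtnx}) with $U=(0,1)^d$ therefore furnishes a Lebesgue-full set of $x\in\R^d$ on which $\Hdim(F_t(x)\cap(0,1)^d)=d/t$ holds \emph{simultaneously} for every $t>1$; the monotonicity of Hausdorff dimension then yields $\Hdim F'_t(x)\geq d/t$.

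I anticipate no genuine obstacle, since all the substance was extracted in Theorem~\ref{thm:nxeutaxicd} and the ensuing application of Theorem~\ref{thm:approxeutaxic} that produced~(\ref{eq:silipFtnx}). The only points meriting attention are the quantifier order (the statement ``for a.e. $x$, for every $t>1$'' in~(\ref{eq:silipFtnx}) is exactly what allows both bounds to be realised on the same exceptional set) and the passage from the residues $\{nx\}\in[0,1)^d$ to the full orbit $nx\in\R^d$ modulo $\Z^d$, handled by periodicity and the inclusion above.
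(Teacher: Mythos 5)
Your proof is correct and follows essentially the same route as the paper's: the lower bound via the inclusion $F_t(x)\cap(0,1)^d\subseteq F'_t(x)$ combined with~(\ref{eq:silipFtnx}), and the upper bound via $\Z^d$-periodicity plus the covering of $F'_t(x)\cap[0,1)^d$ by $3^d$ balls of radius $r_n^t$ per index $n$ together with Lemma~\ref{lem:upbndlimsup}. Your remarks about the choice of norm (eutaxy and Hausdorff dimension are both norm-insensitive) and about the quantifier order in~(\ref{eq:silipFtnx}) are accurate and make explicit two small points the paper leaves implicit.
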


\begin{proof}
	For all $x\in\R^d$ and $t>1$, the set $F'_t(x)$ contains the set $F_t(x)\cap(0,1)^d$, so the lower bound on the dimension follows from~(\ref{eq:silipFtnx}). For the upper bound, it suffices to combine Lemma~\ref{lem:upbndlimsup} with the observation that
		\[
			F'_t(x)\cap[0,1)^d\subseteq\limsup_{n\to\infty}\bigcup_{p\in\{-1,0,1\}^d}\opball_\infty(\{nx\}+p,r_n^t)
		\]
		and that the sets $F'_t(x)$ are invariant under the translations by vectors in $\Z^d$.
\end{proof}

An emblematic particular case is obtained by letting the sequence of approximating radii be given by $r_n=n^{-1/d}$. This sequence clearly satisfies the assumptions of Corollary~\ref{cor:dimfracpart} and, up to a simple change of parameter, we deduce that for Lebesgue-almost every point $x\in\R^d$ and for every real number $\sigma>1/d$,
	\begin{equation}\label{eq:dimfracpartsigma}
		\Hdim\left\{y\in\R^d\Biggm|\distZ{y-nx}<\frac{1}{n^\sigma}\quad\text{for i.m.~}n\geq 1\right\}=\frac{1}{\sigma}.
	\end{equation}
	In the one-dimensional setting, this result is well known, and even holds when $x$ is an arbitrary irrational real number, see~\cite{Bugeaud:2003ye}.

\subsubsection{Other sequences}

Theorem~\ref{thm:nxeutaxicd} may be extended to the case in which the underlying sequence is driven by a nonconstant polynomial with integer coefficients. In fact, Schmidt~\cite{Schmidt:1964vn} established the following result.

\begin{Theorem}\label{thm:eutaxicpoly}
	Let $P$ be a nonconstant polynomial with coefficients in $\Z$ and let $(r_n)_{n\geq 1}$ be a sequence in $\Rho_d$. Then, for Lebesgue-almost every point $x\in\R^d$, the sequence $(\{P(n)x\})_{n\geq 1}$ is eutaxic in $(0,1)^d$ with respect to $(r_n)_{n\geq 1}$.
\end{Theorem}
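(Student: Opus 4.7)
The plan is to imitate the structure of the proof of Theorem~\ref{thm:nxeutaxicd}, reducing the statement to an inhomogeneous approximation claim and then swapping variables via Tonelli's theorem. Specifically, after fixing the supremum norm on $\R^d$, it will suffice to prove that for every $\alpha \in (0,1)^d$ and $\leb^d$-a.e.\ $x \in \R^d$, one has $|P(n)x - p - \alpha|_\infty < r_n$ for some $p \in \Z^d$ and for infinitely many $n \geq 1$; by the same argument as in the linear case, and using that $r_n \to 0$, this implies $|\{P(n)x\} - \alpha|_\infty < r_n$ i.o., and a final use of Tonelli exchanges $\alpha$ and $x$ to yield eutaxy in $(0,1)^d$.

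Unlike in Theorem~\ref{thm:nxeutaxicd}, however, this inhomogeneous statement cannot be obtained by a direct appeal to Theorem~\ref{thm:inhomdesc}, because the approximating denominators are restricted to the range of $P$ rather than ranging over all of $\N$. The plan is therefore to establish it by a direct second moment argument in the style of Lemma~\ref{lem:2ndmoment}. For fixed $\alpha$, I would set $E_n = \{x \in [0,1)^d : |P(n)x - p - \alpha|_\infty < r_n \text{ for some } p \in \Z^d\}$ and observe that, since $P$ is nonconstant with integer coefficients, $|P(n)|\to\infty$; hence for $n$ large enough and $r_n < 1/2$ the map $x \mapsto P(n)x \pmod{\Z^d}$ is measure-preserving on $[0,1)^d$, giving $\leb^d(E_n) = (2r_n)^d$. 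The hypothesis $(r_n) \in \Rho_d$ then yields $\sum_n \leb^d(E_n) = \infty$.

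The crux is a quasi-independence estimate of the form
\[
\leb^d(E_m \cap E_n) \ \leq\ 2^{2d}\,r_m^d\,r_n^d + C\,\Bigl(\frac{\gcd(P(m),P(n))}{\min(|P(m)|,|P(n)|)}\Bigr)^{d}\min(r_m,r_n)^{d},
\]
obtained for $m\neq n$ by partitioning $E_m$ into the $|P(m)|^d$ translates of a box of side $2r_m/|P(m)|$ and estimating, in each piece, the density of $E_n$ through the divisibility structure of the linear map $x\mapsto(P(m)x,P(n)x)$ on the torus. Summing over $m,n\leq N$ makes the first term contribute $\bigl(\sum_{n\leq N}\leb^d(E_n)\bigr)^{2}$, while the second term is controlled by a double divisor sum which, by standard arithmetic bounds exploiting that a nonconstant integer polynomial takes each value only $\bigo(1)$ times and that its values share only sparse common factors, is dominated by $\sum_{n\leq N}\leb^d(E_n)$. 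Lemma~\ref{lem:2ndmoment} then yields $\leb^d(\limsup_n E_n)\geq c>0$, and translation invariance of the problem in $x$ upgrades this to full Lebesgue measure, first in $[0,1)^d$, then in $\R^d$.

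The main obstacle will be the arithmetic estimate on $\sum_{m,n\leq N}(\gcd(P(m),P(n))/\min(|P(m)|,|P(n)|))^d$ that closes the quasi-independence argument. Although elementary, it must be pushed through for polynomials $P$ of arbitrary degree and for arbitrary sequences $(r_n)\in\Rho_d$, and this is precisely the substantive step performed in Schmidt's original treatment~\cite{Schmidt:1964vn}. Once this estimate is in hand, the rest of the proof is a routine assembly of the steps outlined above.
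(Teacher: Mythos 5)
The paper does not actually prove Theorem~\ref{thm:eutaxicpoly}: it is stated and attributed to Schmidt~\cite{Schmidt:1964vn} without proof (just as Theorem~\ref{thm:eutaxicpower} is attributed to Philipp), so there is no in-text proof to compare your outline against. That said, your reduction (fix $\alpha$, prove the inhomogeneous ``i.m.\ $n$'' statement for a.e.\ $x$, then exchange $\alpha$ and $x$ by Tonelli as in the proof of Theorem~\ref{thm:nxeutaxicd}), and your identification of a second-moment argument with a gcd-based quasi-independence estimate as the engine, are both in the spirit of Schmidt's actual argument, and you correctly point out that the shortcut through Theorem~\ref{thm:inhomdesc} is unavailable because the denominators $P(n)$ are sparse in $\N$.

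There is, however, a genuine gap in the closing step. You write that Lemma~\ref{lem:2ndmoment} yields $\leb^d(\limsup_n E_n)\geq c>0$ and that ``translation invariance of the problem in $x$'' upgrades this to full measure. The problem is not translation invariant in $x$: replacing $x$ by $x+t$ replaces $P(n)x$ by $P(n)x+P(n)t$, and $P(n)t\notin\Z^d$ in general, so the set $\limsup_n E_n(\alpha)$ is not translation invariant (it is at best periodic under translations by rational vectors with fixed denominator). The standard way to promote a positive lower bound to a full-measure conclusion in this setting is the Lebesgue density theorem: one must run the second-moment argument not only on $[0,1)^d$ but, with bounds uniform in scale, on every sufficiently small dyadic cube $\lambda$, obtaining $\leb^d\bigl(\lambda\cap\limsup_n E_n(\alpha)\bigr)\geq c\,\leb^d(\lambda)$ with $c>0$ independent of $\lambda$, and then conclude by density. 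This is precisely what the proof of Theorem~\ref{thm:CSeutaxy} in the paper does (via~\eqref{eq:minlebF1}), and without an argument of this sort your outline only proves $\leb^d(\limsup_n E_n(\alpha))\geq c$, which is not enough for eutaxy. Finally, as you acknowledge, the quasi-independence inequality and the double sum over $\gcd(P(m),P(n))$ are only sketched and the substantive arithmetic is deferred to Schmidt's paper; this is an honest attribution but means the proposal is an outline rather than a self-contained proof.
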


Subsequently, Philipp~\cite{Philipp:1967eu} showed that, in dimension one, the above property still holds when the polynomial is replaced by the exponential function to a given integer base $b\geq 2$\,; this is related with the base $b$ expansion of real numbers.

\begin{Theorem}\label{thm:eutaxicpower}
	Let us consider an integer $b\geq 2$ and a sequence $(r_n)_{n\geq 1}$ in $\Rho_d$. Then, for Lebesgue-almost every point $x\in\R$, the sequence $(\{b^n x\})_{n\geq 1}$ is eutaxic in $(0,1)$ with respect to $(r_n)_{n\geq 1}$.
\end{Theorem}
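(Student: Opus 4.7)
The plan is to fix an arbitrary $y\in(0,1)$ and show that for Lebesgue-almost every $x\in[0,1)$, the inequality $|y-\{b^n x\}|<r_n$ holds for infinitely many $n$; once this holds for every $y$, Fubini's theorem applied to the Borel set $\{(x,y)\in[0,1)\times(0,1):|y-\{b^n x\}|<r_n\text{ i.o.}\}$ converts ``every $y$, a.e.\ $x$'' into ``a.e.\ $x$, a.e.\ $y$'', which, by the periodicity of $x\mapsto\{b^n x\}$, is exactly the desired sequencewise eutaxy on all of $\R$. Let $Tx=\{bx\}$, so that $\{b^n x\}=T^n x$ on $[0,1)$; the map $T$ preserves Lebesgue measure, and for every subinterval $I\subseteq[0,1)$ and every $k\geq 0$ the preimage $T^{-k}I$ is a disjoint union of $b^k$ intervals of length $|I|/b^k$, equally spaced with period $1/b^k$.

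Fix $y\in(0,1)$, set $I_n=(y-r_n,y+r_n)\cap[0,1)$ and $E_n=T^{-n}I_n$. Since $r_n\to 0$, after discarding finitely many initial terms (which does not affect the limsup) I may assume $r_n\leq\min(y,1-y)$ for all relevant $n$, so that $\leb(E_n)=2r_n$ and $\sum_n\leb(E_n)=\infty$ thanks to the defining condition $\sum_n r_n=\infty$ of $\Rho_1$. For $m<n$ with $k=n-m$, using $E_m\cap E_n=T^{-m}(I_m\cap T^{-k}I_n)$ together with the equispaced description of $T^{-k}I_n$, I get
\[\leb(E_m\cap E_n)\,\leq\,4\,r_m r_n\,+\,C\,r_n b^{-k},\]
where the error term bounds the contribution of the at most two preimage subintervals that straddle an endpoint of $I_m$. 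Since $\sum_{m<n\leq N}r_n b^{-(n-m)}\leq (b-1)^{-1}\sum_{n\leq N}r_n$, summation yields
\[\sum_{m,n\leq N}\leb(E_m\cap E_n)\,\leq\,\Bigl(\sum_{n\leq N}\leb(E_n)\Bigr)^{\!2}+O\Bigl(\sum_{n\leq N}\leb(E_n)\Bigr).\]

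Inserting these estimates into Sprindzuk's second-moment lemma (Lemma~\ref{lem:2ndmoment}) applied to Lebesgue measure restricted to $[0,1)$ yields
\[\leb\Bigl(\limsup_{n\to\infty}E_n\Bigr)\,\geq\,\limsup_{N\to\infty}\frac{\bigl(\sum_{n\leq N}\leb(E_n)\bigr)^{2}}{\sum_{m,n\leq N}\leb(E_m\cap E_n)}\,=\,1,\]
which is the maximum possible mass on $[0,1)$, so the set of ``good'' $x$ has full Lebesgue measure in $[0,1)$, and Fubini closes the argument. The main technical obstacle is the intersection bound: the two boundary subintervals of $T^{-k}I_n$ that may partially overlap $I_m$ must be handled uniformly in $y\in(0,1)$ and in $m,n$, and the displayed estimate quantifies the fact that $T$ mixes intervals at the exponential rate $b^{-k}$. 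Should the bookkeeping prove awkward for very small $k$, it is enough to establish $\leb(\limsup E_n)>0$ and then invoke the ergodicity of the Bernoulli shift $T$: the monotonicity $r_{n+1}\leq r_n$ gives $T^{-1}E_n\supseteq E_{n+1}$, whence $\limsup_n E_n\subseteq T^{-1}(\limsup_n E_n)$ with equality modulo null sets by measure preservation, so $\limsup_n E_n$ is $T$-invariant and therefore has measure $0$ or $1$.
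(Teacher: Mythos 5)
Your argument is correct and self-contained. The paper itself supplies no proof of this statement, only the citation to Philipp~\cite{Philipp:1967eu}; your route---using the exponential mixing of the $\times b$ map $T$ to get the quasi-independence bound $\leb(E_m\cap E_n)\le 4r_mr_n+Cr_nb^{-(n-m)}$, feeding it into the second-moment estimate of Lemma~\ref{lem:2ndmoment} applied to Lebesgue measure on $[0,1)$, upgrading from a fixed target $y$ to almost every $y$ via Fubini, extending from $[0,1)$ to $\R$ by periodicity, and keeping the ergodic zero--one law in reserve---is precisely the standard mechanism behind Philipp's theorem and each step checks out. One small remark: the monotonicity $r_{n+1}\le r_n$ is not used in the second-moment computation (there $r_n\to 0$ and $\sum r_n=\infty$ suffice), but it is exactly what makes the fallback inclusion $E_{n+1}\subseteq T^{-1}E_n$ work, so the two halves of your argument draw on slightly different parts of the hypothesis that $(r_n)\in\Rho_1$; both halves are nonetheless legitimate.
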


Philipp showed that this property also holds for $x$ in a Lebesgue full subset of the interval $[0,1)$ when the multiplication by $b^n$ is replaced by the $n$-th iterate of either of the following mappings: the Gauss map $x\mapsto\{1/x\}$ for continued fractions; the $\theta$-adic expansion map $x\mapsto\{\theta x\}$, where $\theta>1$. We refer to~\cite{Philipp:1967eu} for precise statements. In all those cases, we may reproduce the approach developed in Section~\ref{subsubsec:fracpartlinseqeutaxy} so as to obtain dimensional results analogous to Corollary~\ref{cor:dimfracpart}.

\subsection{Uniform eutaxy}

This section is the counterpart of the previous one when the eutaxy property is supposed to be uniform in the sense of Section~\ref{subsec:eutaxyunif}. In that spirit, as regards linear sequences, the analog of Theorem~\ref{thm:nxeutaxicd} is a strong result due to Kurzweil; this is the main result that we establish below, see Theorem~\ref{thm:nxunifeutaxicd}. Of course, uniform eutaxy being more restrictive than sequencewise eutaxy, the resulting metrical implications are much stronger. As shown hereunder, the associated limsup sets indeed fall into the category of fully describable sets.

\subsubsection{Preliminary results}

Our approach depends on uniformly distributed sequences, so we begin by recalling some basic definitions and results on that topic. A sequence $(x_n)_{n\geq 1}$ of points in $\R^d$ is {\em uniformly distributed modulo one} if for any points $(a_1,\ldots,a_d)$ and $(b_1,\ldots,b_d)$ in $[0,1)^d$ such that $a_i\leq b_i$ for all $i\in\{1,\ldots,d\}$,
	\[
		\lim_{N\to\infty}\frac{1}{N}\#\left\{n\in\{1,\ldots,N\}\Biggm|\{x_n\}\in\prod_{i=1}^d [a_i,b_i)\right\}
		=\prod_{i=1}^d (b_i-a_i).
	\]
	When trying to prove that a sequence is uniformly distributed modulo one, a convenient tool is a criterion due to Weyl, see~{\em e.g.}~Theorems~1.4 and~1.19 in~\cite{Drmota:1997kx}. Applying this criterion to linear sequences, one obtains the following statement.

\begin{Theorem}\label{thm:nxudmod1}
	Let us consider a point $x=(x_1,\ldots,x_d)$ in $\R^d$. Then, the sequence $(nx)_{n\geq 1}$ is uniformly distributed modulo one if and only if the real numbers $1,x_1,\ldots,x_d$ are linearly independent over $\Q$.
\end{Theorem}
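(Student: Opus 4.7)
The plan is to invoke Weyl's criterion, cited just before the statement, which asserts that a sequence $(y_n)_{n\geq 1}$ in $\R^d$ is uniformly distributed modulo one if and only if
\[
	\lim_{N\to\infty}\frac{1}{N}\sum_{n=1}^N \ee^{2\pi\rmi\croc{h}{y_n}}=0
\]
for every $h\in\Z^d\setminus\{0\}$. Applied to the linear sequence $y_n=nx$, the exponential sum is geometric in the quantity $\theta_h=\croc{h}{x}=h_1 x_1+\ldots+h_d x_d$, and its Ces\`aro behavior is entirely governed by whether $\theta_h$ is an integer.

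The key computation is a dichotomy. If $\theta_h\in\Z$, each term of the sum equals $1$ and the Ces\`aro average equals $1$, so Weyl's criterion fails. If $\theta_h\notin\Z$, then $\ee^{2\pi\rmi\theta_h}\neq 1$, and the usual closed-form bound
\[
	\left|\sum_{n=1}^N \ee^{2\pi\rmi n\theta_h}\right|
	=\left|\frac{\ee^{2\pi\rmi\theta_h}(\ee^{2\pi\rmi N\theta_h}-1)}{\ee^{2\pi\rmi\theta_h}-1}\right|
	\leq \frac{2}{|1-\ee^{2\pi\rmi\theta_h}|}
\]
forces the Ces\`aro average to tend to zero. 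Combining the two cases, $(nx)_{n\geq 1}$ is uniformly distributed modulo one if and only if $\croc{h}{x}\notin\Z$ for every $h\in\Z^d\setminus\{0\}$.

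The final step is the elementary translation of this arithmetic condition into linear independence over $\Q$ of the family $1,x_1,\ldots,x_d$. If some nonzero $h\in\Z^d$ satisfies $\croc{h}{x}=m\in\Z$, then $h_1 x_1+\ldots+h_d x_d-m\cdot 1=0$ is a nontrivial $\Q$-linear relation. Conversely, given a nontrivial relation $q_0+q_1 x_1+\ldots+q_d x_d=0$ with $q_i\in\Q$, clearing denominators produces integer coefficients $p_0,p_1,\ldots,p_d$\,; the tuple $(p_1,\ldots,p_d)$ must be nonzero, for otherwise $p_0$ would vanish as well, and setting $h=(p_1,\ldots,p_d)$ yields $\croc{h}{x}=-p_0\in\Z$. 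This contradicts the arithmetic condition, completing the equivalence.

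There is no real obstacle: once Weyl's criterion is accepted as a black box, the argument reduces to a direct geometric-sum estimate and a one-line linear-algebraic reformulation. The only point requiring slight care is the verification in the converse direction that clearing denominators cannot inadvertently kill all the coefficients attached to the $x_i$, which is immediate from the nontriviality of the original relation together with the fact that $1\neq 0$.
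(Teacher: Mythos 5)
Your proof is correct and takes precisely the route the paper points to: the text preceding the statement announces that the result follows from Weyl's criterion applied to linear sequences, and cites Drmota--Tichy (Theorems~1.4 and~1.19) without writing out the computation. Your geometric-sum dichotomy on $\croc{h}{x}$ modulo $\Z$, followed by the translation into $\Q$-linear independence of $1,x_1,\ldots,x_d$, is exactly that standard argument spelled out in full, and every step (including the small check that clearing denominators cannot annihilate the tuple $(p_1,\ldots,p_d)$) is sound.
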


If a sequence $(x_n)_{n\geq 1}$ is uniformly distributed modulo one, then the sequence $(\{x_n\})_{n\geq 1}$ is clearly dense in $[0,1)^d$. Therefore, the above theorem enables us to recover a classical result due to Kronecker concerning the density of the sequence $(\{nx\})_{n\geq 1}$. Theorem~\ref{thm:nxudmod1} is thus a measure theoretic analog of the next result.

\begin{Theorem}[Kronecker]\label{thm:Kronecker}
	Let us consider a point $x=(x_1,\ldots,x_d)$ in $\R^d$. Then, the sequence $(\{nx\})_{n\geq 1}$ is dense in the unit cube $[0,1)^d$ if and only if the real numbers $1,x_1,\ldots,x_d$ are linearly independent over $\Q$.
\end{Theorem}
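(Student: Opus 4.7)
The plan is to deduce Kronecker's theorem directly from the measure-theoretic analog stated just above, namely Theorem~\ref{thm:nxudmod1}, together with an elementary contrapositive argument for the converse. In other words, I will treat uniform distribution modulo one as a black box and reduce the density statement to it.

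For the ``if'' direction, assume that $1,x_1,\ldots,x_d$ are linearly independent over $\Q$. By Theorem~\ref{thm:nxudmod1}, the sequence $(nx)_{n\geq 1}$ is uniformly distributed modulo one. As already observed in the paragraph immediately preceding the statement, uniform distribution modulo one entails that every half-open box $\prod_{i=1}^d[a_i,b_i)\subseteq[0,1)^d$ with positive $d$-dimensional volume contains $\{nx\}$ for infinitely many $n$ (in fact with asymptotic frequency equal to its volume). This is strictly stronger than density, so $(\{nx\})_{n\geq 1}$ is dense in $[0,1)^d$.

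For the ``only if'' direction, I will argue by contraposition. Suppose $1,x_1,\ldots,x_d$ are linearly dependent over $\Q$; after clearing denominators, there exist integers $k_0,k_1,\ldots,k_d$, not all zero, with at least one of $k_1,\ldots,k_d$ nonzero (otherwise the relation would reduce to $k_0=0$), such that
\[
k_0+k_1x_1+\cdots+k_dx_d=0.
\]
Multiplying by $n$ gives $n(k_1x_1+\cdots+k_dx_d)=-nk_0\in\Z$, and since $k_i\lfloor nx_i\rfloor\in\Z$ for each $i$, we conclude
\[
k_1\{nx_1\}+\cdots+k_d\{nx_d\}\in\Z \qquad\text{for every } n\geq 1.
\]
Hence every term of $(\{nx\})_{n\geq 1}$ lies in the set
\[
H=\bigl\{y\in[0,1)^d\bigm| k_1y_1+\cdots+k_dy_d\in\Z\bigr\},
\]
which is a finite union of affine hyperplane pieces inside $[0,1)^d$ and therefore has empty interior. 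In particular, $(\{nx\})_{n\geq 1}$ cannot be dense.

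The two directions together yield Kronecker's theorem. The only step with any subtlety is the contrapositive, where one has to extract from the $\Q$-linear dependence an \emph{integer} relation whose restriction to fractional parts still produces an integer value (handled by subtracting the integer parts $k_i\lfloor nx_i\rfloor$); this is routine. The ``if'' direction is essentially immediate from Theorem~\ref{thm:nxudmod1}, so the main technical work has in fact been absorbed into the proof of Weyl's criterion invoked there; no further obstacle is expected.
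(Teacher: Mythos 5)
Your proof is correct, and for the ``if'' direction it is exactly the route the paper indicates: invoke Theorem~\ref{thm:nxudmod1} (uniform distribution via Weyl's criterion), then observe that uniform distribution modulo one implies density. The paper, however, does not actually prove Kronecker's theorem; it simply states it, citing the uniform distribution theorem as a way to ``recover'' it. That remark only handles the ``if'' direction --- note that Theorem~\ref{thm:nxudmod1} applied in the other direction says that linear dependence rules out \emph{uniform distribution}, not that it rules out \emph{density}, so the converse genuinely needs a separate argument. You supply one, and it is correct: from a nontrivial integer relation $k_0 + k_1x_1 + \cdots + k_dx_d = 0$ (with some $k_i$, $i\geq 1$, nonzero, as you correctly justify), subtracting $\sum_i k_i\lfloor nx_i\rfloor$ shows $\sum_i k_i\{nx_i\}\in\Z$ for every $n$, so the orbit is trapped in the closed nowhere dense set $H = \{y\in[0,1)^d \mid k_1y_1+\cdots+k_dy_d\in\Z\}$, a finite union of hyperplane slices. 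The finiteness follows because $\sum_i k_iy_i$ is bounded on $[0,1)^d$, and emptiness of the interior follows because the linear form is nonconstant. In short, you have filled in the converse direction that the paper leaves implicit, and the filling is sound.
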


The badly approximable points will play a particularly important r\^ole in our study, so it is worth pointing out now a simple connection with linear independence over the rationals. In accordance with Section~\ref{subsec:bad} where it is defined, the set of badly approximable points is denoted by $\bad_d$ in what follows.

\begin{Lemma}\label{lem:badlinindep}
	Let us consider a point $x=(x_1,\ldots,x_d)$ in $\bad_d$. Then, the real numbers $1,x_1,\ldots,x_d$ are linearly independent over $\Q$.
\end{Lemma}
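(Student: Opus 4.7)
The plan is to argue by contraposition: assuming $1,x_1,\ldots,x_d$ are $\Q$-linearly dependent, I will construct rational approximations of $x$ that are too good for $x$ to lie in $\bad_d$. Clearing denominators in the linear relation yields integers $b_0,\ldots,b_d$, not all zero, with $b_0+\sum_{i=1}^d b_i x_i=0$. A rational point $p_0/q_0\in\Q^d$ obviously fails the defining inequality of $\bad_d$ (take $(p,q)=(p_0,q_0)$), so I may assume $x\notin\Q^d$; this forces some $b_i$ with $i\geq 1$ to be nonzero, and after permuting coordinates I take $b_d>0$. The same observation then forces $(x_1,\ldots,x_{d-1})\notin\Q^{d-1}$, since otherwise $x_d$ would be rational and hence $x\in\Q^d$. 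When $d=1$ this already settles the matter, so I assume $d\ge 2$ from now on.

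I now apply Corollary~\ref{cor:Dirichlet} to $(x_1,\ldots,x_{d-1})\in\R^{d-1}\setminus\Q^{d-1}$, which yields infinitely many pairs $(p',q)\in\Z^{d-1}\times\N$, necessarily with $q\to\infty$, such that
\[
|q(x_1,\ldots,x_{d-1})-p'|_\infty<q^{-1/(d-1)}.
\]
From each such pair I manufacture an approximation to the full $x\in\R^d$ by setting $q''=b_dq$, $p''_i=b_dp'_i$ for $i<d$, and $p''_d=-b_0q-\sum_{i<d}b_ip'_i\in\Z$. The rescaling by $b_d$ is precisely what guarantees $p''_d\in\Z$. Using the relation $b_dx_d=-b_0-\sum_{i<d}b_ix_i$, a direct computation gives
\[
q''x_d-p''_d=\sum_{i<d}b_i(p'_i-qx_i),\qquad q''x_i-p''_i=b_d(qx_i-p'_i)\quad(i<d).
\]
Combining both estimates and re-expressing the result in terms of $q''$ produces a constant $C>0$, depending only on $b_0,\ldots,b_d$, such that
\[
\left|x-\frac{p''}{q''}\right|_\infty\le\frac{C}{(q'')^{1+1/(d-1)}}.
\]

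To conclude, I compare this rate with the $\bad_d$-threshold: since $1+1/(d-1)>1+1/d$, for every $\varepsilon>0$ the inequality $C/(q'')^{1+1/(d-1)}<\varepsilon/(q'')^{1+1/d}$ holds as soon as $q''$ exceeds some $\varepsilon$-dependent bound. The construction produces infinitely many such pairs $(p'',q'')$ with $q''\to\infty$, which directly contradicts the definition of $\bad_d$ and completes the proof. The main obstacle is essentially bookkeeping: one must rescale by $b_d$ to keep $p''_d$ integral, and then track the exponents carefully so as to strictly improve the dimension-$(d{-}1)$ Dirichlet bound $q^{-1/(d-1)}$ into something beating the dimension-$d$ threshold $q^{-1/d}$; there is no deeper conceptual difficulty once the linear relation is used to reduce the problem to one lower dimension.
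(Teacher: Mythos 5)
Your proof is correct and follows essentially the same route as the paper's: both arguments rescale the denominator by the nonzero coefficient $b_d$ (the paper's $s_d$) and use the $(d{-}1)$-dimensional Dirichlet corollary to turn a good rational approximation of $(x_1,\ldots,x_{d-1})$ into one of $x$ at rate $1+1/(d{-}1)>1+1/d$, which is incompatible with badness. The paper packages the conclusion via the exponent $\kappa$ of~(\ref{eq:df:kappad}) and the characterization~(\ref{eq:caracbadkappad}) instead of unwinding the definition of $\bad_d$ directly, and leaves the edge cases $d=1$ and $(x_1,\ldots,x_{d-1})\in\Q^{d-1}$ implicit, but these are cosmetic differences.
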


Combining this result with Theorems~\ref{thm:nxudmod1} and~\ref{thm:Kronecker}, we directly deduce that when $x$ is a badly approximable point, the sequence $(nx)_{n\geq 1}$ is uniformly distributed modulo one, and the sequence $(\{nx\})_{n\geq 1}$ is dense in the unit cube $[0,1)^d$. We shall establish hereafter that the latter sequence is in fact uniformly eutaxic in the open cube $(0,1)^d$\,: this is Kurzweil's theorem, namely, Theorem~\ref{thm:nxunifeutaxicd}.

The proof of Lemma~\ref{lem:badlinindep} makes use of several notations that we now introduce. The distance to the nearest integer point given by~(\ref{eq:df:distZd}) enables us to define
\begin{equation}\label{eq:df:kappad}
\kappa(x)=\liminf_{q\to\infty} q^{1/d}\distZ{qx}
\end{equation}
for every $x$ in $\R^d$. If the point $x$ has rational coordinates, then $\kappa(x)$ clearly vanishes. Otherwise, we may use the corollary to Dirichlet's theorem, that is, Corollary~\ref{cor:Dirichlet} to prove that $\kappa(x)$ is bounded above by one. Finally, the exponent $\kappa$ characterizes the badly approximable points, namely,
\begin{equation}\label{eq:caracbadkappad}
x\in\bad_d \qquad\Longleftrightarrow\qquad \kappa(x)>0.
\end{equation}
Now that these notations are set, we may detail the proof of the lemma.

\begin{proof}[Proof of Lemma~\ref{lem:badlinindep}]
	We argue by contradiction. Let us assume the existence of integers $r,s_1,\ldots,s_d$ that do not vanish simultaneously and satisfy
	\[
		s_1 x_1+\ldots+s_d x_d=r.
	\]
	Up to rearranging the coordinates of $x$ and multiplying the above equation by minus one, we may assume that $s_d\geq 1$. Now, given $q$ in $\N$ and $p=(p_1,\ldots,p_{d-1})$ in $\Z^{d-1}$, we define $q'=s_d q$, as well as $p'_i=s_d p_i$ for $i\in\{1,\ldots,d-1\}$ and
	\[
		p'_d=rq-s_1p_1-\ldots-s_{d-1}p_{d-1}.
	\]
	If the index $i$ is different from $d$, it is clear that $q'x_i-p'_i$ is equal to $s_d(q x_i-p_i)$. Moreover, concerning the $d$-th coordinate, we have
	\[
		q'x_d-p'_d=s_1(p_1-qx_1)+\ldots+s_{d-1}(p_{d-1}-qx_{d-1}).
	\]
	Letting $|\,\cdot\,|_1$ stand as usual for the taxicab norm and letting $s$ denote the $d$-tuple $(s_1,\ldots,s_d)$, we infer that
	\[
		\max_{i\in\{1,\ldots,d\}}|q'x_i-p'_i|_\infty
		\leq |s|_1\max_{i\in\{1,\ldots,d-1\}}|q x_i-p_i|_\infty.
	\]
	Taking the infimum over all $(d-1)$-tuples $p$, we deduce that $\distZ{s_d q x}$ is bounded above by $|s|_1$ times $\distZ{q(x_1,\ldots,x_{d-1})}$, from which it follows that
	\[
		(s_d q)^{1/d}\distZ{s_d q x}
		\leq\frac{|s|_1 s_d^{1/d}}{q^{1/(d(d-1))}}\left(q^{1/(d-1)}\distZ{q(x_1,\ldots,x_{d-1})}\right).
	\]
	Since $\kappa(x_1,\ldots,x_{d-1})$ is bounded above by one, there is an infinite set of integers $q$ on which the term in parentheses in the above right-hand side is bounded. As this term is then divided by $q^{1/(d(d-1))}$, the latter upper bound implies that $\kappa(x)$ vanishes, thereby contradicting the fact that $x$ is badly approximable.
\end{proof}

We now establish two preliminary results on fractional parts of the form $\{a_n x\}$, where $a_n$ is the general term of an increasing sequence of positive integers. In the linear case, these results will straightforwardly lead to Kurzweil's theorem. Such a sequence $(a_n)_{n\geq 1}$ being given, we define its lower asymptotic density by
	\[
		\lad((a_n)_{n\geq 1})=\liminf_{N\to\infty}\frac{1}{N}\#\{n\geq 1\:|\:a_n\leq N\}.
	\]
	Moreover, we shall also use the exponent $\kappa$ defined by~(\ref{eq:df:kappad}), and accordingly work with the supremum norm, which does not alter uniform eutaxy, as mentioned in Section~\ref{subsec:eutaxyunif}. We then have the following result, established by Reversat~\cite{Reversat:1976yq}.

\begin{Proposition}\label{prp:nonbadnoneutaxic}
	Let us consider an increasing sequence $(a_n)_{n\geq 1}$ of positive integers with positive lower asymptotic density, and a point $x=(x_1,\ldots,x_d)$ in $\R^d$ such that the real numbers $1,x_1,\ldots,x_d$ are linearly independent over $\Q$. Then, for any nonempty dyadic subcube $\lambda$ of $[0,1)^d$,
	\[
		\liminf_{j\to\infty}2^{-dj}\#\Mu((\{a_n x\})_{n\geq 1};\lambda,j)
		\leq 480^d\left(\frac{\kappa(x)}{\lad((a_n)_{n\geq 1})}\right)^{d/(d+1)}.
	\]
\end{Proposition}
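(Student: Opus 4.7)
The strategy is to exploit the liminf defining $\kappa(x)$: for any $\eps>0$, there exist arbitrarily large $q \in \N$ and $p \in \Z^d$ with $\eta := |qx-p|_\infty \leq (\kappa(x)+\eps)q^{-1/d}$. Writing any positive integer $m$ in the form $m = kq+r$ with $0 \leq r < q$, the congruence $mx \equiv rx + k(qx-p) \pmod{\Z^d}$ places $\{mx\}$ at toroidal sup-distance at most $k\eta$ from $\{rx\}$. Consequently, the fractional parts $\{a_n x\}$ with $n \leq N$ all lie in a union of $q$ toroidal balls of radius $\rho := (M/q)\eta$ centered at the points $\{rx\}$, $0 \leq r < q$, where $M := \max_{n \leq N} a_n$. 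This clustering, combined with the equidistribution modulo one of $(rx)_{r\geq 1}$ supplied by Theorem~\ref{thm:nxudmod1} (applicable thanks to the linear independence hypothesis in the statement), is the key input.

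Fix the cube $\lambda$ and $\eps > 0$, and set $\lad := \lad((a_n)_{n\geq 1})$. Choose $q$ very large satisfying $q^{1/d}\eta \leq \kappa(x)+\eps$, and then pick the integer $j \geq 1$ so that $L := 2^{\gene{\lambda}+j}$ satisfies
\[
q\,\bigl[(\lad-\eps)/(\kappa(x)+\eps)\bigr]^{d/(d+1)} \leq L^d < 2^d\,q\,\bigl[(\lad-\eps)/(\kappa(x)+\eps)\bigr]^{d/(d+1)}.
\]
Such a $j$ exists for $q$ large and forces $j \to \infty$ with $q$; the balancing ensures $\rho L$ is bounded by a numerical constant. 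With $N := L^d$, the definition of lower asymptotic density gives $M \leq N/(\lad-\eps)$ for $j$ large. Now a cube $\lambda' \in \Mu((\{a_n x\})_{n \geq 1}; \lambda, j)$ must contain some $\{a_n x\}$ in $\lambda$, so its cluster center $\{rx\}$ must belong to the toroidal $\rho$-enlargement $\lambda_\rho$ of $\lambda$; by Theorem~\ref{thm:nxudmod1},
\[
\#\bigl\{0 \leq r < q \colon \{rx\} \in \lambda_\rho\bigr\} \leq q\,\leb^d(\lambda_\rho) + o(q) \qquad (q \to \infty),
\]
and since $\rho = o(2^{-\gene{\lambda}})$, the right-hand side is at most $(1+o(1))\,q\,2^{-d\gene{\lambda}}$. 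Each such cluster meets at most $(2\rho L + 2)^d = O(1)$ dyadic subcubes at generation $\gene{\lambda}+j$.

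Multiplying these counts gives $\#\Mu \leq C_d\, q\, 2^{-d\gene{\lambda}}$ for a constant $C_d$ depending only on $d$, and substituting the choice of $q$ yields
\[
2^{-dj}\,\#\Mu \leq C_d' \left(\frac{\kappa(x)+\eps}{\lad-\eps}\right)^{d/(d+1)}.
\]
Taking the liminf along the infinite sequence of $j$'s produced as $q$ ranges over the infinitely many approximations coming from the liminf in the definition of $\kappa(x)$, and letting $\eps \downarrow 0$, gives the stated inequality; the bookkeeping needed to push $C_d'$ up to the (very generous) value $480^d$ is straightforward but unenlightening. The main technical obstacle is controlling the equidistribution remainder uniformly as $\lambda_\rho$ varies with $q$: this is handled by majorizing $\lambda_\rho$ by a fixed enlargement $\lambda_{\rho_0}$ for $\rho_0$ small, applying equidistribution on this fixed set (where the error is genuinely $o(q)$ by Theorem~\ref{thm:nxudmod1}), and then letting $\rho_0 \downarrow 0$ before the final $\eps \downarrow 0$ limit.
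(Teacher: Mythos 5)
Your proof follows the same strategy as the paper's: decompose $a_m$ modulo a good denominator $q$ obtained from the liminf defining $\kappa(x)$, cluster the points $\{a_m x\}$ around the torus points $\{rx\}$, count the clusters near $\lambda$ by Weyl equidistribution of $(rx)_r$ (valid via Theorem~\ref{thm:nxudmod1} thanks to the linear independence hypothesis), bound the number of dyadic subcubes met by each cluster, and balance $q$ against the generation $\gene{\lambda}+j$. Two details deserve comment.

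First, the paper maps each relevant $a_m$ to its residue $r$ and shows $\{rx\}$ lands in the \emph{fixed} set $U(\lambda)$ of points within distance $2^{-\gene{\lambda}}$ of $y_\lambda+\Z^d$; equidistribution then applies directly, without any $q$-dependence in the target set, so the $o(q)$ error needs no special handling. Your variant targets the $q$-dependent enlargement $\lambda_\rho$, which is why you need the $\lambda_{\rho_0}$ workaround. Both are sound; the paper's is a touch cleaner.

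Second, and more seriously, the balancing is not innocuous. Your inequalities
\[
q\,\bigl[(\lad-\eps)/(\kappa(x)+\eps)\bigr]^{d/(d+1)} \leq L^d < 2^d\,q\,\bigl[(\lad-\eps)/(\kappa(x)+\eps)\bigr]^{d/(d+1)}
\]
place $q$ at the \emph{small} end of the admissible range relative to $L^d$. Tracing through, this forces only $\rho L\leq 2^{d+1}$, which is \emph{not} a numerical constant, contrary to your assertion. The cubes-per-cluster count is then of order $(2\rho L+4)^d\leq(2^{d+2}+4)^d$, which already dominates $480^d$ once $d\geq 7$ (indeed $2^{d+2}+4$ overtakes $480$ there); so the "straightforward bookkeeping" claim does not hold as written. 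The paper avoids this by using the opposite balance $c^{d/(d+1)}2^{d(\gene{\lambda}+j)}\leq q< c^{d/(d+1)}2^{d(\gene{\lambda}+j+1)}$, i.e.\ $q$ is at the \emph{large} end; with the tuning $c=2\kappa/\delta$ this drives $\rho L$ down to $1$, and the constant comes out to $240^d(2\kappa/\delta)^{d/(d+1)}\leq 480^d(\kappa/\delta)^{d/(d+1)}$. Your argument is conceptually correct, but you should flip the balance interval to $2^{-d}q\,A^{d/(d+1)}\leq L^d<q\,A^{d/(d+1)}$ (with $A=(\lad-\eps)/(\kappa(x)+\eps)$) so that $\rho L$ is genuinely $O(1)$, after which the stated constant is comfortably within reach.
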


\begin{proof}
	If $\delta$ is positive and smaller than $\lad((a_n)_{n\geq 1})$, we have $a_n\leq n/\delta$ for any sufficiently large integer $n$. Moreover, given $\kappa>\kappa(x)$, we know that there exists an infinite set $\calQ\subseteq\N$ such that $\distZ{q x}\leq\kappa/q^{1/d}$ for all $q\in\calQ$. We now fix a nonempty dyadic cube $\lambda$ contained in $[0,1)^d$, an integer $q\in\calQ$ and an integer $j\geq 0$ satisfying
	\begin{equation}\label{eq:condqj}
	c^{d/(d+1)}\,2^{d(\gene{\lambda}+j)}\leq q\leq c^{d/(d+1)}\,2^{d(\gene{\lambda}+j+1)},
	\end{equation}
	where $c$ is a positive parameter that will be tuned up at the end of the proof.
	
	Let us consider an integer $m\leq 2^{d(\gene{\lambda}+j)}$ such that $\{a_m x\}\in\lambda$. We decompose the integer $a_m$ in the form $hq+r$ with $h\in\N_0$ and $r\in\{1,\ldots,q\}$. If the integer $q$ is sufficiently large, the integer $j$ is large as well and we may assume that
	\[
	hq\leq a_m\leq a_{2^{d(\gene{\lambda}+j)}}\leq\frac{2^{d(\gene{\lambda}+j)}}{\delta}
	\qquad\text{and}\qquad
	2^{j-1}\geq\frac{\kappa}{\delta c}.
	\]
	As a consequence,
	\[
	\distZ{rx-a_m x}=\distZ{hqx}\leq h\distZ{qx}\leq\kappa\frac{hq}{q^{1+1/d}}
	\leq\frac{\kappa}{\delta c}2^{-(\gene{\lambda}+j)}\leq 2^{-(\gene{\lambda}+1)}.
	\]
	Letting $y_\lambda$ denote the center of the cube $\lambda$, we deduce that for some point $p$ in $\Z^d$,
	\[
	|\{r x\}-p-y_\lambda|_\infty\leq|\{rx\}-\{a_m x\}-p|_\infty+|\{a_m x\}-y_\lambda|_\infty
	\leq 2^{-\gene{\lambda}}.
	\]
	We conclude that $\{rx\}$ belongs to $U(\lambda)$, the set of points $y$ in $[0,1)^d$ that are within distance $2^{-\gene{\lambda}}$ from $y_\lambda+\Z^d$. Therefore, the integer $r$ is positive, bounded above by $c^{d/(d+1)}\,2^{d(\gene{\lambda}+j+1)}$, and verifies $\{rx\}\in U(\lambda)$\,; we define $R(\lambda,j)$ as the set of all integers that satisfy these three properties.
	
	Furthermore, let $\lambda'$ be the dyadic subcube of $\lambda$ with generation $\gene{\lambda}+j$ that contains the point $\{a_m x\}$. We consider another integer $m'\leq 2^{d(\gene{\lambda}+j)}$ such that $a_{m'}$ may be written in the form $h'q+r$ for some nonnegative integer $h'$. We have
	\[
	\distZ{a_m x-a_{m'}x}=\distZ{(h-h')qx}\leq|h-h'|\distZ{qx}
	\leq\kappa\frac{\max\{hq,h'q\}}{q^{1+1/d}}
	\leq\frac{\kappa}{\delta c}2^{-(\gene{\lambda}+j)}.
	\]
	Thus, letting $y_{\lambda'}$ denote the center of the subcube $\lambda'$, we observe that there exists a point $p$ in $\Z^d$ such that
	\begin{align*}
	|\{a_{m'}x\}-p-y_{\lambda'}|_\infty
	&\leq |\{a_{m'}x\}-\{a_m x\}-p|_\infty+|\{a_m x\}-y_{\lambda'}|_\infty\\
	&\leq\left(\frac{\kappa}{\delta c}+\frac{1}{2}\right)2^{-(\gene{\lambda}+j)}.
	\end{align*}
	This means that $\{a_{m'}x\}$ belongs to a closed ball centered at $p+y_{\lambda'}$ with radius the right-hand side above, that is denoted by $\rho$. Note that the number of dyadic cubes with generation $\gene{\lambda}+j$ that are required to cover this ball is bounded above by $((2\rho)2^{\gene{\lambda}+j}+2)^d$. In addition, it is easily seen that there are at most $5^d$ possible values for $p$, because the points $\{a_{m'}x\}$ and $y_{\lambda'}$ both belong to the unit cube. We conclude that the number of dyadic  subcubes of $\lambda$ with generation $\gene{\lambda}+j$ that may contain $\{a_{m'}x\}$ is bounded above by
	\[
	5^d((2\rho)2^{\gene{\lambda}+j}+2)^d=10^d\left(\frac{3}{2}+\frac{\kappa}{\delta c}\right)^d.
	\]
	
	The upshot is that for every choice of $r$, the above value gives an upper bound on the number of dyadic subcubes of $\lambda$ with generation $\gene{\lambda}+j$ that contain at least one point of the form $\{a_m x\}$, where $m\leq 2^{d(\gene{\lambda}+j)}$ and $a_m=hq+r$ for some nonnegative integer $h$. Recalling that $r$ necessarily belongs to the set $R(\lambda,j)$ when such an integer $a_m$ exists, we deduce that
	\[
	\#\Mu(\lambda,j)\leq 10^d\left(\frac{3}{2}+\frac{\kappa}{\delta c}\right)^d\#R(\lambda,j).
	\]
	This inequality is valid for infinitely many values of $j$, namely, for every integer $j$ satisfying~(\ref{eq:condqj}) for some $q\in\calQ$. It follows that
	\begin{equation}\label{eq:majMulambdajamx}
	\liminf_{j\to\infty} 2^{-dj}\#\Mu(\lambda,j)\leq 10^d\left(\frac{3}{2}+\frac{\kappa}{\delta c}\right)^d\limsup_{j\to\infty} 2^{-dj}\#R(\lambda,j).
	\end{equation}
	
	Given that the real numbers $1,x_1,\ldots,x_d$ are linearly independent over $\Q$, we may conclude with the help of Theorem~\ref{thm:nxudmod1}. Accordingly, the sequence $(rx)_{r\geq 1}$ is uniformly distributed modulo one, so that
	\[
	\#R(\lambda,j)\sim\lfloor c^{d/(d+1)}\,2^{d(\gene{\lambda}+j+1)}\rfloor\leb^d(U(\lambda))
	\qquad\text{as}\qquad j\to\infty.
	\]
	One easily check that the set $U(\lambda)$ has Lebesgue measure at most $6^d 2^{-d\gene{\lambda}}$. Hence, the limsup in~(\ref{eq:majMulambdajamx}) is bounded above by $12^d c^{d/(d+1)}$. We deduce that
	\[
	\liminf_{j\to\infty} 2^{-dj}\#\Mu(\lambda,j)\leq 120^d c^{d/(d+1)}\left(\frac{3}{2}+\frac{\kappa}{\delta c}\right)^d.
	\]
	We conclude by choosing $c=2\kappa/\delta$, and then by letting $\delta$ and $\kappa$ go to $\lad((a_n)_{n\geq 1})$ and $\kappa(x)$, respectively.
\end{proof}

The next result is a converse to Proposition~\ref{prp:nonbadnoneutaxic}. While the latter result involves the exponent $\kappa$ defined by~(\ref{eq:df:kappad}), we consider here the exponent $\kappa_\ast$ defined by
	\[
		\kappa_\ast(x)=\inf_{q\in\N} q^{1/d}\distZ{qx}
	\]
	for all $x$ in $\R^d$. Clearly, $\kappa_\ast(x)$ is bounded above by $\kappa(x)$. Moreover, $\kappa(x)$ and $\kappa_\ast(x)$ are both positive exactly on the set of badly approximable points. Indeed, similarly to~(\ref{eq:caracbadkappad}), we have
	\begin{equation}\label{eq:caracbadkappaastd}
		x\in\bad_d \qquad\Longleftrightarrow\qquad \kappa_\ast(x)>0.
	\end{equation}
	In connection with distributions modulo one, the statement below also calls upon the limiting ratios defined by
	\begin{equation}\label{eq:df:limratmodone}
		\underline{\rho}((x_n)_{n\geq 1};\lambda)
		=\liminf_{N\to\infty}\frac{1}{N}\#\{n\in\{1,\ldots,N\}\:|\:\{x_n\}\in\lambda\}
	\end{equation}
	when $(x_n)_{n\geq 1}$ is a sequence in $\R^d$ and $\lambda$ is a nonempty dyadic subcube of $[0,1)^d$. Note that each of these limiting ratios is equal to $\leb^d(\lambda)$ when $(x_n)_{n\geq 1}$ is uniformly distributed modulo one. The following result is also due to Reversat~\cite{Reversat:1976yq}.

\begin{Proposition}\label{prp:badeutaxic}
Let $(a_n)_{n\geq 1}$ be an increasing sequence of positive integers and let $x$ be a point in $\R^d$. Then, for any nonempty dyadic subcube $\lambda$ of $[0,1)^d$,
\[
\liminf_{j\to\infty}2^{-dj}\#\Mu((\{a_n x\})_{n\geq 1};\lambda,j)
\geq\frac{\kappa_\ast(x)^d\lad((a_n)_{n\geq 1})}{2^d\leb^d(\lambda)}\,\underline{\rho}((a_n x)_{n\geq 1};\lambda).
\]
\end{Proposition}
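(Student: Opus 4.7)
First, I will work with the supremum norm on $\R^d$, under which $\diam{\lambda'}_\infty=2^{-\gene{\lambda'}}$ and $\leb^d(\lambda)=2^{-d\gene{\lambda}}$, noting that the stated inequality is trivial whenever any of $\kappa_\ast(x)$, $\lad((a_n)_{n\geq 1})$, or $\underline{\rho}((a_n x)_{n\geq 1};\lambda)$ vanishes, so I may assume all three are strictly positive. Set $N_j=2^{d(\gene{\lambda}+j)}$ and fix a small $\eps>0$. For every sufficiently large $j$, the very definitions of $\lad$ (applied to the integer $a_{N_j}$, whose associated counting function equals $N_j$ by strict monotonicity of $(a_n)_{n\geq 1}$) and of $\underline{\rho}$ (applied to the integer $N_j$) respectively yield
\[
a_{N_j}\leq\frac{N_j}{\lad((a_n)_{n\geq 1})-\eps}
\qquad\text{and}\qquad
\#\{n\leq N_j\:|\:\{a_n x\}\in\lambda\}\geq N_j\bigl(\underline{\rho}((a_n x)_{n\geq 1};\lambda)-\eps\bigr).
\]

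The core step is a separation estimate within a single subcube. If two distinct indices $m,m'\leq N_j$ satisfy $\{a_m x\},\{a_{m'} x\}\in\lambda'$ for a common dyadic subcube $\lambda'\subset\lambda$ of generation $\gene{\lambda}+j$, the positive integer $q=|a_m-a_{m'}|$ fulfils $\distZ{qx}\leq\diam{\lambda'}_\infty=2^{-(\gene{\lambda}+j)}$; combining with the inequality $\distZ{qx}\geq\kappa_\ast(x)/q^{1/d}$ extracted from the definition of $\kappa_\ast(x)$ forces $q\geq\kappa_\ast(x)^d N_j$. Hence, if $K$ indices $n_1<\cdots<n_K\leq N_j$ all map into the same subcube $\lambda'$, strict monotonicity of $(a_n)_{n\geq 1}$ gives $a_{N_j}\geq a_{n_K}\geq(K-1)\kappa_\ast(x)^d N_j$, and plugging in the upper bound on $a_{N_j}$ from the previous display produces
\[
K\leq K_{\max}(\eps):=1+\frac{1}{(\lad((a_n)_{n\geq 1})-\eps)\,\kappa_\ast(x)^d}.
\]
A pigeon-hole argument combining this subcube-capacity bound with the $\underline{\rho}$-count above yields $\#\Mu((\{a_n x\})_{n\geq 1};\lambda,j)\geq N_j(\underline{\rho}-\eps)/K_{\max}(\eps)$.

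Dividing by $2^{dj}$, using $N_j\,2^{-dj}=2^{d\gene{\lambda}}=1/\leb^d(\lambda)$, passing to the $\liminf$ in $j$ and then letting $\eps\downarrow 0$ gives
\[
\liminf_{j\to\infty}2^{-dj}\#\Mu((\{a_n x\})_{n\geq 1};\lambda,j)\geq\frac{\underline{\rho}}{\leb^d(\lambda)}\cdot\frac{\lad\,\kappa_\ast(x)^d}{1+\lad\,\kappa_\ast(x)^d}.
\]
Since $\kappa_\ast(x)>0$ forces $x\in\bad_d\subset\R^d\setminus\Q^d$ by the characterization~(\ref{eq:caracbadkappaastd}), Corollary~\ref{cor:Dirichlet} yields $\kappa_\ast(x)\leq 1$, while $\lad\leq 1$ is immediate from the definition; therefore $\lad\,\kappa_\ast(x)^d\leq 1$ and $1+\lad\,\kappa_\ast(x)^d\leq 2\leq 2^d$, which transforms the last displayed fraction into the required bound $\kappa_\ast(x)^d\,\lad\,\underline{\rho}/(2^d\leb^d(\lambda))$. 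The chief obstacle is the separation step, and in particular the absorption of the spurious "$+1$" in $K_{\max}(\eps)$: without the mild universal inequalities $\kappa_\ast(x)\leq 1$ and $\lad\leq 1$, the pigeon-hole argument would only deliver a bound weaker by a bounded multiplicative factor rather than the sharp $2^d$ appearing on the right-hand side of the claim.
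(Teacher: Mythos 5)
Your proof is correct and obtains a slightly sharper intermediate bound than the paper before invoking the common relaxation $1+\lad\,\kappa_\ast(x)^d\leq 2\leq 2^d$. The global structure (reduce to the supremum norm, dispatch the trivial cases, pigeon-hole the relevant indices $n\leq N_j=2^{d(\gene{\lambda}+j)}$ among the cubes of $\Mu(\lambda,j)$ via a per-subcube capacity estimate, take $\liminf_j$ and then $\eps\downarrow 0$) is the same as the paper's. Where you genuinely diverge is in how the per-subcube capacity is established. The paper derives, for two relevant indices $m,m'$ landing in the same $\lambda'$, a \emph{spatial} lower bound $|\{a_m x\}-\{a_{m'}x\}|_\infty>\kappa\,\delta^{1/d}\,2^{-(\gene{\lambda}+j)}$ (combining the $\kappa_\ast$-separation with the bound $|a_m-a_{m'}|\leq N_j/\delta$ coming from the lower density), then decomposes $\lambda'$ into $\lceil 1/(\kappa\,\delta^{1/d})\rceil^d$ sub-subcubes each of which can hold at most one relevant point. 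You instead observe that the corresponding integer values $a_{n_1}<\cdots<a_{n_K}$ must be pairwise separated by at least $\kappa_\ast(x)^d N_j$, so their span $(K-1)\kappa_\ast(x)^d N_j\leq a_{n_K}\leq a_{N_j}\leq N_j/(\lad-\eps)$ caps $K$ directly, without any spatial refinement of $\lambda'$. Your route is arguably more elementary, trading a geometric covering argument for a one-dimensional arithmetic one; the two methods yield quantitatively comparable capacities ($\lceil a\rceil^d$ with $a=1/(\kappa\,\delta^{1/d})$ versus $1+a^d$), and the $2^d$ factor is absorbed in the paper via $\lceil a\rceil\leq 2a$ for $a\geq 1$ and in your argument via $1+\lad\,\kappa_\ast(x)^d\leq 2^d$, which both rest on the trivial facts $\kappa_\ast(x)\leq 1$ and $\lad\leq 1$. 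One small remark: $\kappa_\ast(x)\leq 1$ is even more immediate than your appeal to Corollary~\ref{cor:Dirichlet}, since $\kappa_\ast(x)\leq 1^{1/d}\distZ{x}\leq 1/2$.
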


\begin{proof}
We may obviously assume that $\kappa_\ast(x)$ and $\lad((a_n)_{n\geq 1})$ are both positive. If $\kappa$ is a positive real number smaller than $\kappa_\ast(x)$, it is clear that $\distZ{q x}>\kappa/q^{1/d}$ for all integers $q\geq 1$. Furthermore, if $\delta$ denotes a positive real number smaller than $\lad((a_n)_{n\geq 1})$, we know that the inequality $a_n\leq n/\delta$ holds for $n$ large enough. We now consider a nonempty dyadic subcube $\lambda$ of $[0,1)^d$, an integer $j\geq 0$, and a dyadic cube $\lambda'$ in the collection $\Mu(\lambda,j)$. In particular, the cube $\lambda'$ contains a point of the form $\{a_m x\}$ for some integer $m\leq 2^{d(\gene{\lambda}+j)}$. If $m'$ denotes another integer bounded above by $2^{d(\gene{\lambda}+j)}$ and for which $\{a_{m'} x\}$ belongs to $\lambda'$ as well, then
\[
|\{a_m x\}-\{a_{m'}x\}|_\infty\geq\distZ{(a_m-a_{m'})x}
>\frac{\kappa}{|a_m-a_{m'}|^{1/d}}\geq\frac{\kappa\,\delta^{1/d}}{2^{\gene{\lambda}+j}}.
\]
The last bound holds for $j$ sufficiently large, because the positive integers $a_m$ and $a_{m'}$ are then both bounded above by $2^{d(\gene{\lambda}+j)}/\delta$. We may naturally decompose the cube $\lambda'$ as the disjoint union of $\lceil 1/(\kappa\,\delta^{1/d})\rceil^d$ half-open subcubes with sidelength equal to $2^{-(\gene{\lambda}+j)}/\lceil 1/(\kappa\,\delta^{1/d})\rceil$. Moreover, if we consider any of these subcubes, the above inequalities imply that at most one integer $m\leq 2^{d(\gene{\lambda}+j)}$ can be such that the point $\{a_m x\}$ lies in the cube. So, there can be no more than $\lceil 1/(\kappa\,\delta^{1/d})\rceil^d$ integers $m\leq 2^{d(\gene{\lambda}+j)}$ for which $\{a_m x\}$ is in $\lambda'$. As a consequence,
\[
\#\{m\leq 2^{d(\gene{\lambda}+j)}\:|\:\{a_m x\}\in\lambda\}
\leq\left\lceil\frac{1}{\kappa\,\delta^{1/d}}\right\rceil^d\#\Mu(\lambda,j),
\]
from which we readily deduce that
\[
2^{-dj}\#\Mu(\lambda,j)
\geq \frac{\kappa^d\delta}{2^d\leb^d(\lambda)}\,2^{-d(\gene{\lambda}+j)}\#\{m\leq 2^{d(\gene{\lambda}+j)}\:|\:\{a_m x\}\in\lambda\}.
\]
The result follows in a straightforward manner by letting $j$ tend to infinity, and then by letting $\kappa$ and $\delta$ go to $\kappa_\ast(x)$ and $\lad((a_n)_{n\geq 1})$, respectively.
\end{proof}

\subsubsection{Linear sequences: Kurzweil's theorem}\label{subsubsec:Kurzweil}

Regarding the uniform eutaxy of the sequences $(\{nx\})_{n\geq 1}$, the main result is Theorem~\ref{thm:nxunifeutaxicd} below, which was first obtained by Kurzweil~\cite{Kurzweil:1955la} and subsequently recovered by Lesca~\cite{Lesca:1968rm}. For the sake of completeness, let us mention in addition that Kurzweil also obtained in~\cite{Kurzweil:1955la} an extension of Theorem~\ref{thm:nxunifeutaxicd} that deals with linear forms. In order to let the reader compare the next result with Theorem~\ref{thm:nxeutaxicd}, it is worth mentioning some metric properties of the set $\bad_d$ of badly approximable points defined in Section~\ref{subsec:bad}. Specifically, Proposition~\ref{prp:badlebesgued} therein shows that $\bad_d$ has Lebesgue measure zero, and Schmidt~\cite{Schmidt:1969fk} proved that this set has Hausdorff dimension $d$.

\begin{Theorem}[Kurzweil]\label{thm:nxunifeutaxicd}
	For any point $x$ in $\R^d$, the sequence $(\{nx\})_{n\geq 1}$ is uniformly eutaxic in $(0,1)^d$ if and only if $x$ is badly approximable.
\end{Theorem}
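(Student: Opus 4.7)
The plan is to combine the two Reversat-type estimates (Propositions~\ref{prp:nonbadnoneutaxic} and~\ref{prp:badeutaxic}) with the covering-count criteria for uniform eutaxy (Theorems~\ref{thm:CSeutaxy} and~\ref{thm:CNeutaxy}), applied to the specific sequence $a_n=n$ of integers, whose lower asymptotic density equals one.

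\textbf{Sufficiency.} Assume $x\in\bad_d$, so that $\kappa_\ast(x)>0$ by~(\ref{eq:caracbadkappaastd}). By Lemma~\ref{lem:badlinindep}, the reals $1,x_1,\dots,x_d$ are linearly independent over $\Q$, hence Theorem~\ref{thm:nxudmod1} asserts that $(nx)_{n\geq 1}$ is uniformly distributed modulo one. Consequently the limiting ratio~(\ref{eq:df:limratmodone}) satisfies $\underline{\rho}((nx)_{n\geq 1};\lambda)=\leb^d(\lambda)$ for every nonempty dyadic subcube $\lambda$ of $[0,1)^d$. Feeding this into Proposition~\ref{prp:badeutaxic} with $a_n=n$ yields
\[
\liminf_{j\to\infty}2^{-dj}\#\Mu((\{nx\})_{n\geq 1};\lambda,j)\geq\frac{\kappa_\ast(x)^d}{2^d}>0,
\]
a bound that is \emph{uniform} over all nonempty dyadic $\lambda\subseteq[0,1)^d$. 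This is precisely the strong condition~(\ref{eq:infliminfpos}), which implies the pointwise hypothesis~(\ref{eq:liminfpos}) of Theorem~\ref{thm:CSeutaxy}. That theorem then gives uniform eutaxy of $(\{nx\})_{n\geq 1}$ in $(0,1)^d$.

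\textbf{Necessity.} Argue by contrapositive: suppose $x\notin\bad_d$, i.e.\ $\kappa(x)=0$ by~(\ref{eq:caracbadkappad}). I split into two cases. If $1,x_1,\dots,x_d$ are linearly dependent over $\Q$, Kronecker's theorem (Theorem~\ref{thm:Kronecker}) shows that $(\{nx\})_{n\geq 1}$ is not dense in $[0,1)^d$, so its closure misses an open set, which in turn contains a nonempty dyadic cube $\lambda\subseteq(0,1)^d$ for which $\#\Mu((\{nx\})_{n\geq 1};\lambda,j)=0$ for every $j$; Theorem~\ref{thm:CNeutaxy} then rules out uniform eutaxy in $(0,1)^d$. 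If instead $1,x_1,\dots,x_d$ are linearly independent over $\Q$, Proposition~\ref{prp:nonbadnoneutaxic} applied with $a_n=n$ gives
\[
\liminf_{j\to\infty}2^{-dj}\#\Mu((\{nx\})_{n\geq 1};\lambda,j)\leq 480^d\,\kappa(x)^{d/(d+1)}=0
\]
for \emph{every} nonempty dyadic cube $\lambda\subseteq[0,1)^d$. Choosing any such $\lambda$ that is also contained in $(0,1)^d$ (for example $2^{-j}(1+[0,1))^d$ with $j\geq 2$), Theorem~\ref{thm:CNeutaxy} again precludes uniform eutaxy.

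\textbf{Main obstacle.} The genuine content has already been packaged into Propositions~\ref{prp:nonbadnoneutaxic} and~\ref{prp:badeutaxic}, so no further delicate estimate is needed; the plan reduces to correctly matching their hypotheses to the sequence $(\{nx\})_{n\geq 1}$. The only mild subtlety is the necessity direction when linear independence fails: one has to note that $\kappa(x)=0$ automatically in that case (the coordinates of $x$ then realise a rational linear relation that forces $q^{1/d}\distZ{qx}$ to go to zero along multiples of the common denominator), but this can be bypassed entirely by invoking Kronecker's theorem as above, which is why the proof separates into the two cases.
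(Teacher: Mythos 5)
Your proof is correct and follows essentially the same route as the paper: Lemma~\ref{lem:badlinindep}, Theorem~\ref{thm:nxudmod1}, and Proposition~\ref{prp:badeutaxic} feeding into Theorem~\ref{thm:CSeutaxy} for sufficiency; and for necessity, the same split into the linearly dependent case (handled by Kronecker's theorem) and the linearly independent case (handled by Proposition~\ref{prp:nonbadnoneutaxic} with $\kappa(x)=0$ and Theorem~\ref{thm:CNeutaxy}). The only cosmetic difference is that you spell out an explicit $\#\Mu=0$ argument via Theorem~\ref{thm:CNeutaxy} in the dependent case, where the paper simply notes that a non-dense sequence is obviously not eutaxic.
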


\begin{proof}
	The idea is to apply Propositions~\ref{prp:nonbadnoneutaxic} and~\ref{prp:badeutaxic} to the sequence $(n)_{n\geq 1}$, which is increasing and has lower asymptotic density equal to one. Let us first assume that the point $x$ is not badly approximable, and let $x_1,\ldots,x_d$ denote its coordinates. If the real numbers $1,x_1,\ldots,x_d$ are linearly dependent over the rationals, it follows from Kronecker's theorem, namely, Theorem~\ref{thm:Kronecker} that the sequence $(\{nx\})_{n\geq 1}$ is not dense in $[0,1)^d$. This sequence is thus clearly not eutaxic in $(0,1)^d$. Now, if the above real numbers are linearly independent over $\Q$, we may apply Proposition~\ref{prp:nonbadnoneutaxic}, thereby inferring that for any $x\in\R^d$ and any nonempty dyadic cube $\lambda\subseteq[0,1)^d$,
	\[
		\liminf_{j\to\infty}2^{-dj}\#\Mu((\{nx\})_{n\geq 1};\lambda,j)
		\leq 480^d\kappa(x)^{d/(d+1)}.
	\]
	Since $x$ is not badly approximable, the exponent $\kappa(x)$ vanishes by virtue of~(\ref{eq:caracbadkappad}). The left-hand side above thus vanishes as well, and Theorem~\ref{thm:CNeutaxy} ensures that the sequence $(\{nx\})_{n\geq 1}$ is not uniformly eutaxic in $(0,1)^d$.
	
	Conversely, let us assume that $x$ is badly approximable. Lemma~\ref{lem:badlinindep} ensures that the real numbers $1,x_1,\ldots,x_d$ are linearly independent over $\Q$. We then deduce from Theorem~\ref{thm:nxudmod1} that the sequence $(nx)_{n\geq 1}$ is uniformly distributed modulo one, so that for any nonempty dyadic subcube $\lambda$ of $[0,1)^d$, the limiting ratio $\underline{\rho}((n x)_{n\geq 1};\lambda)$ defined by~(\ref{eq:df:limratmodone}) is equal to $\leb^d(\lambda)$. Applying Proposition~\ref{prp:badeutaxic}, we thus infer that
	\[
		\liminf_{j\to\infty}2^{-dj}\#\Mu((\{nx\})_{n\geq 1};\lambda,j)
		\geq 2^{-d}\kappa_\ast(x)^d.
	\]
	Finally, in view of~(\ref{eq:caracbadkappaastd}), the exponent $\kappa_\ast(x)$ is positive, and we conclude with the help of Theorem~\ref{thm:CSeutaxy} that the sequence $(\{nx\})_{n\geq 1}$ is uniformly eutaxic in $(0,1)^d$.
\end{proof}

In the vein of Corollary~\ref{cor:dimfracpart} and the discussion that precedes its statement, an interesting application is the study of the Diophantine approximation properties of the sequence $(\{nx\})_{n\geq 1}$ when $x$ is a badly approximable point. That sequence being uniformly eutaxic, we end up with much stronger results, specifically, a complete description of the size and large intersection properties of the set
	\begin{equation}\label{eq:df:Frxlin}
		F_\rmr(x)=\left\{y\in\R^d\bigm||y-\{nx\}|<r_n\quad\text{for i.m.~}n\geq 1\right\},
	\end{equation}
	where $\rmr=(r_n)_{n\geq 1}$ is a nonincreasing sequence of positive real numbers. In fact, this set is clearly of the form~(\ref{eq:df:Fxnrn}), so Theorems~\ref{thm:desceutaxy} and~\ref{thm:nxunifeutaxicd} directly entail the next statement. Here, $\frakn_\rmr$ denotes as above the measure characterized by~(\ref{eq:df:fraknrmr}).

\begin{Theorem}\label{thm:fracpartlindesc}
	For any point $x$ in $\bad_d$ and for any nonincreasing sequence $\rmr=(r_n)_{n\geq 1}$ of positive real numbers, the following properties hold:
	\begin{enumerate}
		\item if $\sum_n r_n^d$ diverges, then $F_\rmr(x)$ has full Lebesgue measure in $(0,1)^d$\,;
		\item if $\sum_n r_n^d$ converges, then $F_\rmr(x)$ is $\frakn_\rmr$-describable in $(0,1)^d$.
	\end{enumerate}
\end{Theorem}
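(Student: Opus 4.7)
The plan is to recognize the set $F_\rmr(x)$ defined in~(\ref{eq:df:Frxlin}) as exactly the limsup set~(\ref{eq:df:Fxnrn}) associated with the sequence of points $x_n=\{nx\}$ and the approximation radii $r_n$, and then to invoke Theorems~\ref{thm:nxunifeutaxicd} and~\ref{thm:desceutaxy} in succession. This identification is immediate: for each $n\geq 1$ the point $\{nx\}$ lies in $\R^d$, and the condition $|y-\{nx\}|<r_n$ in the definition of $F_\rmr(x)$ matches verbatim the one defining $\frakF((\{nx\},r_n)_{n\geq 1})$.

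First I would apply Kurzweil's theorem, namely, Theorem~\ref{thm:nxunifeutaxicd}, to the badly approximable point $x\in\bad_d$. Since $x$ is badly approximable by hypothesis, the theorem ensures that the sequence $(\{nx\})_{n\geq 1}$ is uniformly eutaxic in the open cube $(0,1)^d$. Note that this already uses the nontrivial input that badly approximable points have the maximal approximation obstruction $\kappa_\ast(x)>0$, which was exploited in the ``only if'' direction of Theorem~\ref{thm:nxunifeutaxicd}; here we are on the easier side of that equivalence and simply consume its conclusion.

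Then, with the uniform eutaxy of $(\{nx\})_{n\geq 1}$ in $U=(0,1)^d$ in hand, I would apply the describability result for limsup sets issued from uniformly eutaxic sequences, namely, Theorem~\ref{thm:desceutaxy}. With the open set $U=(0,1)^d$, the sequence of points $(\{nx\})_{n\geq 1}$, and the nonincreasing sequence $\rmr=(r_n)_{n\geq 1}$ of positive reals, the theorem yields precisely the announced dichotomy: if $\sum_n r_n^d=\infty$ then $\frakF((\{nx\},r_n)_{n\geq 1})$ has full Lebesgue measure in $(0,1)^d$, whereas if $\sum_n r_n^d<\infty$ then $\frakF((\{nx\},r_n)_{n\geq 1})$ is $\frakn_\rmr$-describable in $(0,1)^d$, where $\frakn_\rmr$ is the measure defined by~(\ref{eq:df:fraknrmr}).

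There is no real obstacle in this argument since the heavy lifting has already been done in the proofs of Theorems~\ref{thm:nxunifeutaxicd} and~\ref{thm:desceutaxy}. The only point to verify carefully is the notational match between $F_\rmr(x)$ and $\frakF((\{nx\},r_n)_{n\geq 1})$, and the fact that $\rmr$ need not lie in $\Rho_d$ (in particular $r_n$ need not tend to zero); but this is already handled inside the proof of Theorem~\ref{thm:desceutaxy}, both in the divergent case via the auxiliary sequence $\widetilde r_n=\min\{r_n,1/(2n^{1/d})\}$ and in the convergent case where $r_n\to 0$ is automatic from summability.
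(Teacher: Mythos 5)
Your proposal is correct and is exactly the paper's own argument: the paper likewise presents Theorem~\ref{thm:fracpartlindesc} as an immediate consequence of observing that $F_\rmr(x)$ is of the form~(\ref{eq:df:Fxnrn}) and combining Kurzweil's theorem (Theorem~\ref{thm:nxunifeutaxicd}) with the describability result for uniformly eutaxic sequences (Theorem~\ref{thm:desceutaxy}). One small slip in your parenthetical aside: the input $\kappa_\ast(x)>0$, via~(\ref{eq:caracbadkappaastd}) and Proposition~\ref{prp:badeutaxic}, is used in the ``if'' direction of Kurzweil's theorem --- the implication you actually consume --- whereas the ``only if'' direction relies on $\kappa(x)=0$ and Proposition~\ref{prp:nonbadnoneutaxic}.
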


We may recast this result with the help of the distance to the nearest integer point defined by~(\ref{eq:df:distZd}), thus considering instead of $F_\rmr(x)$ the companion set
	\[
		F'_\rmr(x)=\left\{y\in\R^d\bigm|\distZ{y-nx}<r_n\quad\text{for i.m.~}n\geq 1\right\}.
	\]
	The resulting statement bearing on this set is the following one. The describability property is now valid on the whole space $\R^d$ instead of the mere open unit cube $(0,1)^d$\,; this is because the companion set $F'_\rmr(x)$ may basically be seen as the initial set $F_\rmr(x)$, along with its images under all translations by vectors in $\Z^d$.

\begin{Corollary}\label{cor:fracpartlindesc}
	For any point $x$ in $\bad_d$ and for any nonincreasing sequence $\rmr=(r_n)_{n\geq 1}$ of positive real numbers, the following properties hold:
	\begin{enumerate}
		\item if $\sum_n r_n^d$ diverges, then $F'_\rmr(x)$ has full Lebesgue measure in $\R^d$\,;
		\item if $\sum_n r_n^d$ converges, then $F'_\rmr(x)$ is $\frakn_\rmr$-describable in $\R^d$.
	\end{enumerate}
\end{Corollary}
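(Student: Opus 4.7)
My plan is to leverage the $\Z^d$-periodicity of $F'_\rmr(x)$ together with the large intersection transference principle of Theorem~\ref{thm:largeintprinc}. The key preliminary identity is
\[
F'_\rmr(x)=\frakF\bigl((nx+q,r_n)_{(n,q)\in\N\times\Z^d}\bigr),
\]
with the right-hand side defined as in~(\ref{eq:df:Fxiri}). Indeed, each $n$ satisfying $\distZ{y-nx}<r_n$ produces at least one pair $(n,q)$ with $|y-nx-q|_\infty<r_n$, whereas at most $(2r_n+1)^d$ values of $q$ can qualify per $n$, so infinitely many pairs $(n,q)$ is equivalent to infinitely many $n$. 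Whenever $r_n\to 0$, the family $(nx+q,r_n)_{n,q}$ is an approximation system in the sense of Definition~\ref{df:approxsys}. Moreover, $F'_\rmr(x)$ is manifestly a $\Z^d$-invariant $G_\delta$-subset of $\R^d$, and the inclusion $F_\rmr(x)\subseteq F'_\rmr(x)$ is immediate since the sup-norm distance dominates $\distZ{\cdot}$.

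In the divergent case, I would imitate the beginning of the proof of Theorem~\ref{thm:desceutaxy}: the truncated sequence $\widetilde r_n=\min\{r_n,1/(2n^{1/d})\}$ belongs to $\Rho_d$, so Theorem~\ref{thm:fracpartlindesc} gives $\leb^d((0,1)^d\setminus F_{\widetilde\rmr}(x))=0$. Combining the inclusions $F_{\widetilde\rmr}(x)\subseteq F'_{\widetilde\rmr}(x)\subseteq F'_\rmr(x)$ with the $\Z^d$-periodicity of $F'_\rmr(x)$ then forces $F'_\rmr(x)$ to have full Lebesgue measure in all of $\R^d$.

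Assume now that $\sum_n r_n^d$ converges, whence $r_n\to 0$. By Proposition~\ref{prp:descincl} and Lemma~\ref{lem:gaugemeasopen}(\ref{item:lem:gaugemeasopen2}), to establish $\frakn_\rmr$-describability of $F'_\rmr(x)$ in $\R^d$ it suffices to verify the two inclusions $\gauge(\frakn_\rmr)^\complement\subseteq\majo(F'_\rmr(x),\R^d)$ and $\gauge(\frakn_\rmr)\subseteq\mino(F'_\rmr(x),\R^d)$. For the first, fix $g\in\gauge(\frakn_\rmr)^\complement$, so that $\sum_n g_d(r_n)<\infty$; on any bounded subset $B$ of $\R^d$ only a uniformly bounded (in $n$) number of translates $nx+q$ can contribute to the defining condition, so Lemma~\ref{lem:upbndlimsup} and Proposition~\ref{prp:compnormalgauge} yield $\hau^g(F'_\rmr(x)\cap B)=0$, and countable subadditivity over a cover of $\R^d$ by bounded sets produces $\hau^g(F'_\rmr(x))=0$. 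For the second and main inclusion, fix $g\in\gauge(\frakn_\rmr)$ and set $s_n=g_d(r_n)^{1/d}$. Since $r_n\downarrow 0$ and $g_d$ is nondecreasing near zero, $(s_n)$ is eventually nonincreasing and tends to zero, and $\sum_n s_n^d=\sum_n g_d(r_n)=\infty$ by the choice of $g$; after discarding finitely many terms (which leaves the infinitely-many condition defining $F'_\rmr(x)$ untouched) the sequence $(s_n)$ belongs to $\Rho_d$. Kurzweil's theorem, namely Theorem~\ref{thm:nxunifeutaxicd}, applies because $x\in\bad_d$ and ensures that $(\{nx\})_n$ is uniformly eutaxic in $(0,1)^d$, so that the analog of $F_\rmr(x)$ obtained by replacing $r_n$ with $s_n$ has full Lebesgue measure in $(0,1)^d$. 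Invoking once more the inclusion of the Euclidean limsup set into the toroidal one and the $\Z^d$-periodicity of the latter, we conclude that $\frakF((nx+q,s_n)_{(n,q)\in\N\times\Z^d})$ has full Lebesgue measure in $\R^d$, which is precisely the statement that $(nx+q,r_n)_{n,q}$ is homogeneously $g$-ubiquitous in $\R^d$ in the sense of Definition~\ref{df:homubiqsysgauge}. The large intersection transference principle, Theorem~\ref{thm:largeintprinc}, then delivers $F'_\rmr(x)\in\lic{g}{\R^d}$, as required.

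The main technical nuisance I foresee is bookkeeping rather than anything deep: verifying that convergence of $\sum_n g_d(r_n)$ matches the condition $g\in\gauge(\frakn_\rmr)^\complement$ under the definition~(\ref{eq:df:fraknrmr}) of $\frakn_\rmr$, controlling the uniformly-bounded-in-$n$ number of $q$'s in the covering estimate (via the doubling bound $g_d(2r)\leq 2^d g_d(r)$ coming from the nonincreasing character of $r\mapsto g_d(r)/r^d$ recorded in Proposition~\ref{prp:compnormalgauge}), and justifying at each step that passing from the unit-cube statement to the full-space statement via $\Z^d$-periodicity is legitimate (this latter point ultimately reducing to the trivial observation that a $\Z^d$-periodic set with full Lebesgue measure in $(0,1)^d$ has full Lebesgue measure in $\R^d$).
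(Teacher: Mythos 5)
Your proposal is correct, and the convergence case takes a genuinely different route from the paper. The paper's proof treats Theorem~\ref{thm:fracpartlindesc} as a black box: it starts from $F_\rmr(x)\in\lic{g}{(0,1)^d}$, places each small dyadic cube $\lambda$ inside a translate $k+[0,1)^d$, uses the Lipschitz estimate~(\ref{eq:netmlip}) to compare $\netm^h_\infty(F'_\rmr(x)\cap\lambda)$ with $\netm^h_\infty(F_\rmr(x)\cap(-k+\lambda))$, and then concludes via the extensions of Lemmas~\ref{lem:netmlambdaV} and~\ref{lem:netmst} to arbitrary gauge functions cited from~\cite{Durand:2007uq}. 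You instead rewrite $F'_\rmr(x)$ once and for all as $\frakF((nx+q,r_n)_{(n,q)\in\N\times\Z^d})$ --- a single approximation system in the sense of Definition~\ref{df:approxsys} --- show that this system is homogeneously $g$-ubiquitous in $\R^d$ (via Kurzweil's uniform eutaxy in $(0,1)^d$ and $\Z^d$-periodicity of the toroidal limsup set), and invoke the large intersection transference principle, Theorem~\ref{thm:largeintprinc}, exactly once with $U=\R^d$. Your route avoids the net-measure bookkeeping and the appeal to the external lemmas from~\cite{Durand:2007uq}, at the cost of re-deriving what Theorem~\ref{thm:fracpartlindesc} already encodes; the paper's route is more modular (it reuses the unit-cube theorem verbatim) but requires explicit net-measure manipulation. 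Both rest on the same ultimate ingredients (Kurzweil and the transference machinery), so the depth is comparable. Two minor remarks: in the divergence case your truncation step is harmless but superfluous, since Theorem~\ref{thm:fracpartlindesc}(1) already gives $\leb^d$-fullness of $F_\rmr(x)$ in $(0,1)^d$ whenever $\sum_n r_n^d$ diverges; and in the majorizing step the paper packages the uniform bound on contributing $q$'s per $n$ as the finite union over $p\in\{-1,0,1\}^d$ inside $[0,1)^d$, which is a concrete instance of the bound you describe.
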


\begin{proof}
	The divergence case results from Theorem~\ref{thm:fracpartlindesc} and the observation that $F'_\rmr(x)$ contains the images of $F_\rmr(x)$ under all translations by vectors in $\Z^d$, along with the subadditivity of Lebesgue measure and its translation invariance.

	Placing ourselves in the convergence case, let us consider a gauge function $g$ in $\gauge(\frakn_\rmr)$, a $d$-normalized gauge function $h$ satisfying $h\prec g_d$, and a nonempty dyadic cube $\lambda$ in the collection $\Lambda_h$ introduced in Section~\ref{subsubsec:netmrev}. We also assume that $\lambda$ has diameter at most that of the unit cube $[0,1)^d$, which is equal to one because we work with the supremum norm when considering the distance to the nearest integer point. Thus, $\lambda$ is included in the dyadic cube $k+[0,1)^d$ for some $k\in\Z^d$. Given that $F'_\rmr(x)$ contains the image of $F_\rmr(x)$ under the translation by vector $k$ and that~(\ref{eq:netmlip}) remains valid for such translations, along with the net measures associated with general gauge functions, we get
		\[
			\netm^h_\infty(F'_\rmr(x)\cap\lambda)
			\geq\netm^h_\infty(k+(F_\rmr(x)\cap(-k+\lambda)))
			\geq 3^{-d}\netm^h_\infty(F_\rmr(x)\cap(-k+\lambda)).
		\]
		In addition, the interior of $-k+\lambda$ is contained in the open unit cube $(0,1)^d$, and Theorem~\ref{thm:fracpartlindesc} implies that $F_\rmr(x)$ satisfies a large intersection property with respect to $g$ in the latter open cube. Hence,
		\[
			\netm^h_\infty(F_\rmr(x)\cap\interior{(-k+\lambda)})=\netm^h_\infty(\interior{(-k+\lambda)})=\netm^h_\infty(\lambda),
		\]
		where the last equality is due to~(\ref{eq:netmlambdagauge}). We deduce that the set $F'_\rmr(x)$ belongs to the class $\lic{g}{\R^d}$ by making use of Lemmas~10 and~12 in~\cite{Durand:2007uq}, namely, the natural extension of Lemmas~\ref{lem:netmlambdaV} and~\ref{lem:netmst} to general gauge functions. Therefore,
		\[
			\mino(F'_\rmr(x),\R^d)\supseteq\gauge(\frakn_\rmr).
		\]
		
	Conversely, we recall from the proof of Corollary~\ref{cor:dimfracpart} that the set $F'_\rmr(x)$ is invariant under the translations by vectors in $\Z^d$, and that
		\[
			F'_\rmr(x)\cap[0,1)^d\subseteq\limsup_{n\to\infty}\bigcup_{p\in\{-1,0,1\}^d}\opball_\infty(\{nx\}+p,r_n).
		\]
		We deduce from Lemma~\ref{lem:upbndlimsup} and Proposition~\ref{prp:compnormalgauge} that $F'_\rmr(x)$ has Hausdorff $g$-measure zero for any gauge function $g$ such that $\sum_n g_d(r_n)$ converges. This means that
		\[
			\majo(F'_\rmr(x),\R^d)\supseteq\gauge(\frakn_\rmr)^\complement.
		\]
		To conclude, it suffices to apply Proposition~\ref{prp:descincl}.
\end{proof}

A simple example is obtained by assuming that the sequence $\rmr$ is defined by $r_n=n^{-\sigma}$ for all $n\geq 1$, and for a fixed $\sigma>1/d$. Indeed, one then easily checks that $\gauge(\frakn_\rmr)$ coincides with $\gauge(\frakn_{1/\sigma})$, where $\frakn_{1/\sigma}$ is defined as in~(\ref{eq:df:frakns}). If $x$ is badly approximable, we deduce from Corollary~\ref{cor:fracpartlindesc} that the set of all $y\in\R^d$ such that
	\[
		\distZ{y-nx}<\frac{1}{n^\sigma}\qquad\text{for i.m.~}n\geq 1
	\]
	is $\frakn_{1/\sigma}$-describable in $\R^d$, thereby ending up with a major improvement on~(\ref{eq:dimfracpartsigma}).

Similarly to the end of Section~\ref{subsec:inhomdesc}, a typical application consists in considering the intersection of countably many sets of the form $F'_\rmr(x)$. Specifically, for each integer $n\geq 1$, let us consider a badly approximable point $x_n$ and a nonincreasing sequence $\rmr_n=(r_{n,m})_{m\geq 1}$ of positive real numbers such that $\sum_m r_{n,m}^d$ converges. Using Corollary~\ref{cor:fracpartlindesc} together with Propositions~\ref{prp:monomajomino} and~\ref{prp:cupcapmajomino}, we infer that~(\ref{eq:capmajominoinhom}) still holds when the sets $\frakQ^{\alpha_n}_{d,\psi_n}$ and $\gauge(\frakn_{d,\psi_n})$ are replaced by the sets $F'_{\rmr_n}(x_n)$ and $\gauge(\frakn_{\rmr_n})$, respectively. The intersection of the sets $F'_{\rmr_n}(x_n)$ is thus fully describable in $\R^d$. By way of illustration, if we assume in addition that $r_{n,m}=m^{-\sigma_n}$ for all $m\geq 1$ and some $\sigma_n>1/d$, then the each set $F'_{\rmr_n}(x_n)$ is $\frakn_{1/\sigma_n}$-describable in $\R^d$. According to Proposition~\ref{prp:dichofraksdesc}, we conclude that their intersection is either $\frakn_{1/\sigma_\ast}$-describable or $(1/\sigma_\ast)$-describable, depending respectively on whether or not the supremum, denoted by $\sigma_\ast$, of all parameters $\sigma_n$ is attained.

\subsubsection{Sequences with very fast growth}

The uniform analogs of Theorems~\ref{thm:eutaxicpoly} and~\ref{thm:eutaxicpower} need not be valid, because the Lebesgue null set of points $x$ on which each of these results may fail depends on the choice of the sequence $(r_n)_{n\geq 1}$, and there are of course uncountably many sequences in $\Rho_d$. In that direction, we have however the following one-dimensional statement, established by Reversat~\cite{Reversat:1976yq}.

\begin{Theorem}\label{thm:eutaxicanfast}
Let $(a_n)_{n\geq 1}$ be a sequence of positive real numbers such that the series $\sum_n a_n/a_{n+1}$ converges. Then, for Lebesgue-almost every real number $x$, the sequence $(\{a_nx\})_{n\geq 1}$ is uniformly eutaxic in $(0,1)$.
\end{Theorem}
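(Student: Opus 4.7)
The plan is to verify the sufficient condition for uniform eutaxy provided by Theorem~\ref{thm:CSeutaxy}, applied to the sequence $(\{a_n x\})_{n\geq 1}$ for $\leb^1$-almost every $x$. Specifically, it suffices to show that for $\leb^1$-a.e. $x \in \R$, one has, for $\leb^1$-a.e. $y\in(0,1)$,
\[
\liminf_{j_0,j\to\infty} 2^{-j}\,\#\Mu((\{a_n x\})_{n\geq 1};\lambda_{j_0}(y),j) > 0.
\]
By Fubini's theorem this amounts to establishing the lower bound for $\leb^2$-almost every pair $(x,y)\in[0,1]\times(0,1)$, and it will in fact be more natural to prove the stronger uniform-in-$\lambda$ variant~(\ref{eq:infliminfpos}).

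The first step is to extract the consequences of the growth hypothesis $\sum_n a_n/a_{n+1}<\infty$. Since the general term tends to zero, $a_{n+1}/a_n\to\infty$, so $(a_n)$ grows super-geometrically: for every $\rho>1$, one has $a_n\geq\rho^n$ for all large $n$, and in particular $\sum_n n^k/a_n<\infty$ for every integer $k\geq 0$. The second step is a quasi-independence estimate: from the elementary bound
\[
\leb^1\bigl(\{x\in[0,1]:\distZ{\alpha x}<\epsilon\}\bigr) = 2\epsilon + O(1/\alpha)\qquad(\alpha>0,\ 0<\epsilon<1/2),
\]
one deduces that for any dyadic subcubes $\lambda',\lambda''\subseteq[0,1)$ of common generation $J$ and any indices $m<n$,
\[
\leb^1\bigl(\{x:\{a_m x\}\in\lambda',\ \{a_n x\}\in\lambda''\}\bigr)=|\lambda'|\,|\lambda''|+O(2^{-J}/a_n).
\]

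The core combinatorial argument then runs as follows. Fix a nonempty dyadic cube $\lambda\subseteq(0,1)$ of generation $j_0$, an integer $j\geq 1$, set $J=j_0+j$, and for each dyadic subcube $\lambda'$ of $\lambda$ at generation $J$, let $P_{\lambda'}(x)=\#\{n\leq 2^J:\{a_n x\}\in\lambda'\}$. Using the elementary inequality $\mathbf{1}\{P\geq 1\}\geq P-\binom{P}{2}$, valid for every nonnegative integer $P$, we obtain
\[
\#\Mu((\{a_n x\})_{n\geq 1};\lambda,j) \geq \sum_{\lambda'\subseteq\lambda} P_{\lambda'}(x) - \sum_{\lambda'\subseteq\lambda}\binom{P_{\lambda'}(x)}{2}.
\]
The $x$-average of the first sum equals $2^j+O(2^J/a_{2^J})=2^j(1+o(1))$, while the $x$-average of the second is bounded by $\tfrac{1}{2}\cdot 2^j+o(2^j)$, the error contributions summing to a finite quantity thanks to the super-geometric growth of $(a_n)$. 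A second-moment computation, again relying on the quasi-independence estimate to bound the covariances, shows that both sums concentrate around their means, and a Borel--Cantelli argument along dyadic scales of $j$ yields, for $\leb^1$-almost every $x$ and every nonempty dyadic cube $\lambda\subseteq(0,1)$, a lower bound of the form $\#\Mu(\lambda,j)\geq c\,2^j$ for all $j$ sufficiently large, with a constant $c>0$ independent of $\lambda$. Theorem~\ref{thm:CSeutaxy} then delivers the desired uniform eutaxy.

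The main obstacle will be the uniform bookkeeping of error terms in the variance computation: the quasi-independence estimate controls pairwise correlations with an error $O(2^{-J}/a_n)$, but this control must be carried simultaneously over all $2^{2j}$ pairs of subcubes of every dyadic cube $\lambda\subseteq(0,1)$ and over an exhaustive sequence of generations. The full strength of $\sum_n a_n/a_{n+1}<\infty$, as opposed to a mere Hadamard lacunarity condition, enters precisely at this stage, ensuring that the accumulated errors remain summable, so that Borel--Cantelli can be invoked to obtain the required pointwise (in $x$) lower bounds on $\#\Mu$.
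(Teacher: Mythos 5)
Your proposal takes a genuinely different route from the paper's. The author explicitly states that the (omitted) proof of Theorem~\ref{thm:eutaxicanfast} is ``fairly parallel to that of Theorem~\ref{thm:eutaxicindepunif}'', which is the Chernoff/entropy argument: bound the probability (in $x$) that a prescribed small collection of subcubes captures all of $\{a_1x\},\ldots,\{a_{2^{d(\gene\lambda+j)}}x\}$, sum over the $\binom{2^{dj}}{\lfloor\alpha 2^{dj}\rfloor}$ choices, invoke Stirling, and apply Borel--Cantelli. Your scheme instead runs a Bonferroni inequality $\ind\{P\geq 1\}\geq P-\binom{P}{2}$ through first and second moments, which is the Sprind\v zuk-type mechanism already present in the proof of Theorem~\ref{thm:CSeutaxy} itself, and in that sense is arguably more elementary. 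Both mechanisms exploit the same heuristic (quasi-independence of $\{a_nx\}$ from superexponential growth), but the large-deviation route gives exponentially small failure probabilities essentially for free once independence is established, whereas yours must extract summable tail bounds from variance estimates.

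On the technical side, a few points need repair or amplification. First, the quasi-independence error term is not $O(2^{-J}/a_n)$ but rather $O(a_m 2^{-J}/a_n + 2^{-2J}/a_m)$: for each of the $\sim a_m$ intervals $I_k$ making up $\{x:\{a_mx\}\in\lambda'\}$, the endpoint contribution is $O(2^{-J}/a_n)$, so the total is $O(a_m 2^{-J}/a_n)$. This distinction matters: with the corrected error, summing over $m<n\leq 2^J$ and over the $2^j$ subcubes $\lambda'$ gives a contribution $2^j\cdot 2^{-J}\sum_{m<n}a_m/a_n=O(2^{-j_0})$, which is negligible precisely because $\sum_n a_n/a_{n+1}<\infty$ — so your closing remark that the full strength of the hypothesis (and not mere lacunarity) is what makes the bookkeeping close is correct, but the statement of the error you wrote would not reveal that. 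One must also check separately the ``close'' pairs with $a_n/a_m<2^J$, for which the pair probability is not close to $2^{-2J}$; the saving there is the bound $\leq 2^{-J}a_m/a_n$, which again lands in the summable regime. Second, and more substantially, the claim that ``a second-moment computation, again relying on the quasi-independence estimate to bound the covariances, shows that both sums concentrate around their means'' is glossed: $\sum_{\lambda'}\binom{P_{\lambda'}}{2}$ is already quadratic in the indicators, so its variance involves \emph{four-point} correlations of $(\{a_mx\})$, not merely the pairwise estimate you stated; you need the multi-point analogue of the quasi-independence bound. Third, the Borel--Cantelli step needs summable failure probabilities, so Chebyshev must yield something like $O(2^{-cj})$; this does follow (the dominant term in the variance is the $O(2^j)$ diagonal against a mean of order $2^j$), but it deserves to be written out rather than asserted, especially since it is the place where one obtains an a.s.\ lower bound valid for all large $j$ and uniformly over $\lambda$, as required by~(\ref{eq:infliminfpos}).
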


We omit the proof from these notes. However, it is fairly parallel to that of Theorem~\ref{thm:eutaxicindepunif} below, so the reader may refer to the proof of the latter result to get a glimpse of that of Theorem~\ref{thm:eutaxicanfast}. The fundamental reason behind this similarity is that when $\sum_n a_n/a_{n+1}$ converges and $x$ is chosen according to the Lebesgue measure, the real numbers $a_n$ tend so fast to infinity that the fractional parts $\{a_nx\}$ behave fairly like independent and uniform random variables on $(0,1)$, which is precisely the situation addressed by Theorem~\ref{thm:eutaxicindepunif}.

Note that Theorem~\ref{thm:eutaxicanfast} does not apply to the case where $a_n=b^n$, which corresponds to the $b$-adic expansion of real numbers, simply because the corresponding series $\sum_n a_n/a_{n+1}$ does not converge. In fact, the hypothesis of Theorem~\ref{thm:eutaxicanfast} is satisfied if the sequence $(a_n)_{n\geq 1}$ grows superexponentially fast, such as for instance when $a_n=n^{(1+\eps)n}$ for some $\eps>0$, or when $a_n=b^{n^2}$ for some $b>1$.

Furthermore, we deduce from Theorem~\ref{thm:eutaxicanfast} metrical results similar to those obtained in Section~\ref{subsubsec:Kurzweil}. Let us fix a sequence $(a_n)_{n\geq 1}$ of positive real numbers such that the series $\sum_n a_n/a_{n+1}$ converges and a nonincreasing sequence $\rmr=(r_n)_{n\geq 1}$ of positive real numbers. The set given by~(\ref{eq:df:Frxlin}) is now replaced by
	\[
		F_\rmr(x)=\left\{y\in\R\bigm||y-\{a_n x\}|<r_n\quad\text{for i.m.~}n\geq 1\right\}.
	\]
	Applying Theorems~\ref{thm:desceutaxy} and~\ref{thm:eutaxicanfast}, and letting $\frakn_\rmr$ still denote the measure characterized by~(\ref{eq:df:fraknrmr}), we reach the following analog of Theorem~\ref{thm:fracpartlindesc}.

\begin{Theorem}\label{thm:fracpartfastdesc}
	Let $(a_n)_{n\geq 1}$ be a sequence of positive reals such that $\sum_n a_n/a_{n+1}$ converges. Then, for Lebesgue-almost every $x\in\R$ and for every nonincreasing sequence $\rmr=(r_n)_{n\geq 1}$ of positive real numbers, the following properties hold:
	\begin{enumerate}
		\item if $\sum_n r_n$ diverges, then $F_\rmr(x)$ has full Lebesgue measure in $(0,1)$\,;
		\item if $\sum_n r_n$ converges, then $F_\rmr(x)$ is $\frakn_\rmr$-describable in $(0,1)$.
	\end{enumerate}
\end{Theorem}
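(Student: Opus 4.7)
The proof will be a direct combination of two previously established results, and the main work has already been done in proving them. Here is the plan.

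First, I would invoke Theorem~\ref{thm:eutaxicanfast} to obtain a Lebesgue-null set $N\subseteq\R$ such that for every $x\in\R\setminus N$, the sequence $(\{a_n x\})_{n\geq 1}$ is uniformly eutaxic in $(0,1)$. The crucial point here is the order of quantifiers: uniform eutaxy (Definition~\ref{df:eutaxicunif}) is a statement about all sequences in $\Rho_1$ simultaneously, so the exceptional null set $N$ does \emph{not} depend on $\rmr$. This is exactly what allows us to swap the quantifiers in the desired statement and claim the conclusion for a.e.~$x$ and for \emph{every} $\rmr$.

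Next, I would fix any $x\in\R\setminus N$ and any nonincreasing sequence $\rmr=(r_n)_{n\geq 1}$ of positive reals, and observe that the set $F_\rmr(x)$ is precisely of the form $\frakF((\{a_n x\},r_n)_{n\geq 1})$ as defined in~(\ref{eq:df:Fxnrn}). Applying Theorem~\ref{thm:desceutaxy} in dimension $d=1$ to the uniformly eutaxic sequence $(\{a_n x\})_{n\geq 1}$ in the open set $U=(0,1)$ and the sequence $\rmr$, the dichotomy is immediate: if $\sum_n r_n$ diverges, then $F_\rmr(x)$ has full Lebesgue measure in $(0,1)$; if $\sum_n r_n$ converges, then $F_\rmr(x)$ is $\frakn_\rmr$-describable in $(0,1)$.

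There is genuinely no hard step in this argument once Theorems~\ref{thm:eutaxicanfast} and~\ref{thm:desceutaxy} are in hand; the only subtlety worth flagging explicitly in the write-up is the quantifier issue mentioned above, which relies on the fact that uniform eutaxy is, by design, a single statement covering all choices of approximation radii at once. In that sense, the real ``hard part'' has been outsourced to Theorem~\ref{thm:eutaxicanfast}, whose proof (omitted in the excerpt) exploits the superexponential growth condition $\sum_n a_n/a_{n+1}<\infty$ to make the fractional parts $\{a_n x\}$ behave, for typical $x$, like an independent uniformly distributed sequence on $(0,1)$, thus mimicking the random setting of Theorem~\ref{thm:eutaxicindepunif}.
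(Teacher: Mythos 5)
Your proposal is correct and follows exactly the same route as the paper, which states Theorem~\ref{thm:fracpartfastdesc} as an immediate application of Theorems~\ref{thm:eutaxicanfast} and~\ref{thm:desceutaxy}. The quantifier point you flag — that uniform eutaxy already ranges over all $\rmr\in\Rho_d$, so the exceptional Lebesgue-null set of $x$ does not depend on $\rmr$ — is indeed the only subtlety, and you handle it correctly.
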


We now rephrase this result by means of the distance to the nearest integer point defined by~(\ref{eq:df:distZd}), thereby dealing with the companion set
	\[
		F'_\rmr(x)=\left\{y\in\R\bigm|\distZ{y-a_n x}<r_n\quad\text{for i.m.~}n\geq 1\right\}.
	\]
	The statement bearing on this set is the following counterpart to Corollary~\ref{cor:fracpartlindesc}.

\begin{Corollary}\label{cor:fracpartfastdesc}
	Let $(a_n)_{n\geq 1}$ be a sequence of positive reals such that $\sum_n a_n/a_{n+1}$ converges. Then, for Lebesgue-almost every $x\in\R$ and for every nonincreasing sequence $\rmr=(r_n)_{n\geq 1}$ of positive real numbers, the following properties hold:
	\begin{enumerate}
		\item if $\sum_n r_n$ diverges, then $F'_\rmr(x)$ has full Lebesgue measure in $\R$\,;
		\item if $\sum_n r_n$ converges, then $F'_\rmr(x)$ is $\frakn_\rmr$-describable in $\R$.
	\end{enumerate}
\end{Corollary}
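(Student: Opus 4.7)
The plan is to mirror, almost line by line, the proof of Corollary~\ref{cor:fracpartlindesc}, since the only structural difference between the two statements is that here $d=1$, the underlying uniformly eutaxic sequence is $(\{a_nx\})_{n\geq 1}$ rather than $(\{nx\})_{n\geq 1}$, and the eutaxy is obtained for $\leb$-a.e.~$x$ via Theorem~\ref{thm:eutaxicanfast} (equivalently, via Theorem~\ref{thm:fracpartfastdesc}) instead of for every badly approximable $x$ via Kurzweil's theorem. Accordingly, I first fix a full-measure set $X\subseteq\R$ of real numbers $x$ for which Theorem~\ref{thm:fracpartfastdesc} delivers the desired conclusion about $F_\rmr(x)$ simultaneously for every nonincreasing $\rmr$ (this simultaneity follows from Theorem~\ref{thm:eutaxicanfast}, which produces uniform eutaxy for a.e.~$x$). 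From now on $x\in X$ and $\rmr$ is fixed; the key point is that $F'_\rmr(x)$ is invariant under translations by $\Z$ and, for each $k\in\Z$, contains $k+(F_\rmr(x)\cap(0,1))$.

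In the divergence case $\sum_n r_n=\infty$, Theorem~\ref{thm:fracpartfastdesc} gives $\leb(F_\rmr(x)\cap(0,1))=1$, so $\Z$-translation and the translation invariance and subadditivity of Lebesgue measure yield $\leb(\R\setminus F'_\rmr(x))=0$, as required.

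In the convergence case, I will establish the two inclusions
\[
\mino(F'_\rmr(x),\R)\supseteq\gauge(\frakn_\rmr)
\qquad\text{and}\qquad
\majo(F'_\rmr(x),\R)\supseteq\gauge(\frakn_\rmr)^\complement
\]
and then invoke Proposition~\ref{prp:descincl} together with Lemma~\ref{lem:gaugemeasopen}(\ref{item:lem:gaugemeasopen2}). For the minorizing inclusion, I take $g\in\gauge(\frakn_\rmr)$, a $1$-normalized $h\prec g_d$, and a dyadic cube $\lambda\in\Lambda_h$ of diameter at most one; then $\lambda\subseteq k+[0,1)$ for a unique $k\in\Z$, and the inclusion $k+(F_\rmr(x)\cap(-k+\lambda))\subseteq F'_\rmr(x)\cap\lambda$ combined with the translation version of~(\ref{eq:netmlip}) gives
\[
\netm^h_\infty(F'_\rmr(x)\cap\lambda)\geq 3^{-1}\,\netm^h_\infty(F_\rmr(x)\cap\interior{(-k+\lambda)})=3^{-1}\netm^h_\infty(\lambda),
\]
where the equality uses the fact that the interior of $-k+\lambda$ lies in $(0,1)$, together with Theorem~\ref{thm:fracpartfastdesc} (which places $F_\rmr(x)$ in $\lic{g}{(0,1)}$) and the generalized version of Lemma~\ref{lem:netmlambda} given by~(\ref{eq:netmlambdagauge}). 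The generalized Lemmas~10 and~12 of~\cite{Durand:2007uq} (i.e., the extensions to arbitrary gauge functions of Lemmas~\ref{lem:netmlambdaV} and~\ref{lem:netmst}) then promote this dyadic bound to $F'_\rmr(x)\in\lic{g}{\R}$.

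For the majorizing inclusion, I use that $F'_\rmr(x)$ is $\Z$-invariant and that
\[
F'_\rmr(x)\cap[0,1)\subseteq\limsup_{n\to\infty}\bigcup_{p\in\{-1,0,1\}}\opball(\{a_nx\}+p,r_n);
\]
for any $g\in\gauge(\frakn_\rmr)^\complement$ the series $\sum_n g_d(r_n)$ converges, so Lemma~\ref{lem:upbndlimsup} together with Proposition~\ref{prp:compnormalgauge} yields $\hau^g(F'_\rmr(x)\cap[0,1))=0$, and translation invariance then gives $\hau^g(F'_\rmr(x))=0$. The main (minor) obstacle is the bookkeeping in the large intersection step, where one must be careful that a dyadic cube of diameter at most one does fit inside a single shifted unit interval and that the constant $3^{-1}$ produced by the translation trick is harmless once Lemmas~10 and~12 of~\cite{Durand:2007uq} are applied; everything else is a direct transcription of the argument already carried out for Corollary~\ref{cor:fracpartlindesc}.
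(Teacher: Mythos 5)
Your proposal is correct and follows exactly the route the paper intends: the paper leaves this proof to the reader with the explicit instruction to ``simply adapt the arguments employed to deduce Corollary~\ref{cor:fracpartlindesc} from Theorem~\ref{thm:fracpartlindesc}'', and your transcription of that argument with $d=1$, the points $\{a_n x\}$, and the full-measure set $X$ from Theorem~\ref{thm:fracpartfastdesc} in place of $\bad_d$ is faithful. Your explicit mention of the simultaneity in $\rmr$ (inherited from uniform eutaxy in Theorem~\ref{thm:eutaxicanfast}) and of Lemma~\ref{lem:gaugemeasopen}(\ref{item:lem:gaugemeasopen2}) as the openness hypothesis needed by Proposition~\ref{prp:descincl} are correct details that the paper leaves implicit.
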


The above corollary may be deduced from Theorem~\ref{thm:fracpartfastdesc} by simply adapting the arguments employed to deduce Corollary~\ref{cor:fracpartlindesc} from Theorem~\ref{thm:fracpartlindesc}. We leave the proof to the reader. Moreover, in the particular case where $r_n=n^{-\sigma}$ for all $n\geq 1$ and some fixed $\sigma>1$, we deduce from Corollary~\ref{cor:fracpartfastdesc} that for Lebesgue-almost every real number $x$ and for every $\sigma>1$, the set of all points $y\in\R$ such that
	\[
		\distZ{y-a_n x}<\frac{1}{n^\sigma}\qquad\text{for i.m.~}n\geq 1
	\]
	is $\frakn_{1/\sigma}$-describable in $\R$. In view of Corollary~\ref{cor:frakndescslip}, we thus have a set with large intersection with Hausdorff dimension equal to $1/\sigma$. Besides, we could as well consider countable intersections of such sets, similarly to the end of Section~\ref{subsubsec:Kurzweil}. Let us finally mention that a challenging problem is to understand how the Hausdorff dimension of sets of the form $F'_t(x)$ behaves when one considers their intersection with a given compact set. We do not address this problem here, and we refer to~\cite{Bugeaud:2013fk} for precise statements and motivations.


\section{Approximation by algebraic numbers}\label{sec:alg}

We now turn our attention to the examples supplied by the real algebraic numbers and the real algebraic integers. Our treatment will be somewhat brief, as for instance we shall not detail all the proofs; for further details, we refer to the seminal paper by Baker and Schmidt~\cite{Baker:1970jf}, subsequent important works by Beresnevich~\cite{Beresnevich:1999ys} and Bugeaud~\cite{Bugeaud:2002uq}, and the references therein. We shall show that the algebraic numbers and integers lead to optimal regular systems, and we shall state the metrical results obtained from subsequently applying Theorem~\ref{thm:descoptregsys}.

\subsection{Associated optimal regular system}\label{subsec:algoptregsys}

The collection of all real algebraic numbers is denoted by $\A$. The na\"\i ve height of a number $a$ in $\A$, denoted by $\Eta(a)$, is the maximum of the absolute values of the coefficients of its minimal defining polynomial over $\Z$. Moreover, the set of all real algebraic numbers with degree at most $n$ is denoted by $\A_n$. Baker and Schmidt~\cite{Baker:1970jf} proved that the set $\A_n$, endowed with the height function
	\[
	a\mapsto\frac{\Eta(a)^{n+1}}{(\max\{1,\log\Eta(a)\})^{3n^2}},
	\]
	forms a regular system. The trouble is that, due to the logarithmic term, this height function does not lead to the best possible metrical statements. However, Beresnevich proved that the height function
	\[
		H_n(a)=\frac{\Eta(a)^{n+1}}{(1+|a|)^{n(n+1)}},
	\]
	where there is no logarithmic term, is actually convenient. We shall therefore privilege the following statement when deriving metrical results underneath.

\begin{Theorem}[Beresnevich]\label{thm:algnumoptregsys}
	For any integer $n\geq 1$, the pair $(\A_n,H_n)$ is an optimal regular system in $\R$.
\end{Theorem}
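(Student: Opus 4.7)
The plan is to verify the three clauses of Definition~\ref{df:optregsys} for $(\A_n,H_n)$ with ambient dimension $d=1$. Admissibility is essentially bookkeeping: the conditions $|a|<m$ and $H_n(a)\le m$ together imply $\Eta(a)^{n+1}\le m(1+m)^{n(n+1)}$, so the minimal polynomial of $a$ lies in a finite subset of $\Z[X]_{\le n}$, and each such polynomial has at most $n$ real roots. Optimality is equally direct: for $B=(x_0-r,x_0+r)$ and $a\in\A_n\cap B$ with $H_n(a)\le h$, the definition of $H_n$ gives $\Eta(a)\le\kappa_B\,h^{1/(n+1)}$ with $\kappa_B$ depending on $r$ and $|x_0|$; counting the $(2\kappa_B h^{1/(n+1)}+1)^{n+1}$ candidate polynomials, each contributing at most $n$ real roots, produces a bound of order $h$.

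The regularity clause is the substantive step. Fix $B\subseteq\R$ and $h$ large. For each $x\in B$, apply Minkowski's theorem on linear forms to the system
\[
\bigl|p_0+p_1x+\dots+p_nx^n\bigr|\le c_n(1+|x|)^n Q^{-n},\qquad |p_i|\le Q\quad(i=0,\dots,n),
\]
with $Q\asymp h^{1/(n+1)}$ chosen so that any resulting nonzero $P_x\in\Z[X]_{\le n}$ has $\Eta(P_x)\le Q$ and $|P_x(x)|\le C_B\,Q^{-n}$. To pass from smallness of $|P_x(x)|$ to the existence of a nearby real root, one uses the elementary fact that, along any interval where $|P_x'|\ge\eta$, the polynomial has a real root within distance $|P_x(x)|/\eta$ of $x$. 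Hence one must remove from $B$ the \emph{exceptional set} $E_h$ of those $x$ for which either $|P_x'(x)|$ is too small or $P_x$ factors in a way that prevents its relevant irreducible component from controlling the height.

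The key analytic input is a quantitative resultant estimate: two distinct irreducible integer polynomials $P,P'\in\Z[X]_{\le n}$ of height $\le Q$ cannot both be very small at the same point without their resultant, an integer, becoming smaller than $1$ in absolute value; this forces a lower bound on the separation of their near-roots. Combining this with a Markov--Bernstein inequality bounding $\{x\in B:|P_x'(x)|<\eta\}$ by a polynomial expression in $\eta$ and $Q$, one obtains $|E_h|\le\tfrac12|B|$ for $h$ large. For each $x\in B\setminus E_h$ there is then an algebraic number $a_x\in\A_n$ with $|x-a_x|\ll h^{-1}$, $\Eta(a_x)\lesssim h^{1/(n+1)}$, and, because $|a_x|\le\sup_B|y|+o(1)$, $H_n(a_x)\le h$. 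A greedy extraction of a maximal $1/h$-separated subfamily from $\{a_x\}_{x\in B\setminus E_h}$, combined with the elementary observation that each selected $a$ covers an interval of length $\lesssim h^{-1}$ of $B\setminus E_h$, yields $\#\calA_{B,h}\gtrsim|B|h$.

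The main obstacle is the exceptional-set estimate: the naive Baker--Schmidt argument loses a $(\log h)^{3n^2}$ factor precisely because it does not properly handle points where $|P_x'(x)|$ is atypically small or where two competing integer polynomials are simultaneously tiny at $x$. Beresnevich's refinement replaces the crude union bound by a resultant-based dyadic decomposition, showing that the measure of $x\in B$ admitting two distinct irreducible $P,P'$ with $\Eta\le Q$ and $|P(x)|,|P'(x)|\le Q^{-n}$ is $o(|B|)$ as $h\to\infty$. Carrying out this counting cleanly, and verifying that the resulting constants depend only on $B$, is where the technical heart of the proof lies; everything else reduces to Minkowski's theorem and standard polynomial estimates.
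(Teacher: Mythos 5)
The paper does not prove Theorem~\ref{thm:algnumoptregsys}: it remarks only that admissibility and optimality are elementary to check, and for the regularity refers to Beresnevich's paper~\cite{Beresnevich:1999ys}. There is thus no in-text proof to compare against. Your admissibility and optimality checks are correct and match the paper's characterization as elementary; both reduce to bounding $\Eta(a)$ from the given constraints, counting integer polynomials of bounded degree and height, and multiplying by $n$.

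Your regularity argument is explicitly a roadmap rather than a proof, and within that caveat it does capture the expected skeleton: Minkowski's theorem to produce small integer polynomials, removal of an exceptional set of $x$ where the derivative is small or where the polynomial is badly reducible (the latter is necessary, since algebraic numbers of degree $<n$ and height $\lesssim h^{1/(n+1)}$ are too sparse to account for $\gtrsim |B|h$ points), and a greedy $1/h$-separated extraction. One minor naming issue: the measure estimate for $\{x\in B:|P_x'(x)|<\eta\}$ is not a Markov--Bernstein inequality (those bound derivatives from above) but a Cartan/Remez-type lower estimate exploiting that $P_x'$ is a nonzero integer polynomial of bounded degree. More substantially, the genuine gap --- which you name yourself --- is carrying out the exceptional-set estimate without the $(\log h)^{3n^2}$ loss of Baker--Schmidt; that refinement is the actual content of Beresnevich's paper, and neither your proposal nor the present text supplies it.
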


It is elementary to check that $(\A_n,H_n)$ is an optimal system. Establishing the regularity is much more difficult and relies on a fine knowledge of the distribution of real algebraic numbers; we refer to~\cite{Beresnevich:1999ys} for a detailed proof. Note that $\A_1$ obviously coincides with the set $\Q$ of rational numbers. Moreover, writing an element $a$ in $\A_1$ in the form $p/q$ for two coprime integers $p$ and $q$, the latter being positive, we have
	\[
		H_1(a)=\frac{\Eta(a)^2}{(1+|a|)^2}=\frac{\max\{|p|,q\}^2}{(1+|a|)^2}=\left(\frac{\max\{1,|a|\}}{1+|a|}\right)^2 q^2,
	\]
	so that $H_1(a)$ is between $q^2/4$ and $q^2$. Hence, the height of $a$, viewed as an algebraic number with degree one, is comparable with its height when regarded as a rational point of the real line, see~(\ref{eq:df:heightQalphad}) in the homogeneous case.

\subsection{General metrical implications}\label{subsec:algmetric}

Our purpose is now to describe the size and large intersection properties of the set
	\begin{equation}\label{eq:df:Anpsi}
		\frakA_{n,\psi}=\left\{x\in\R\bigm||x-a|<\psi(\Eta(a))\quad\text{for i.m.~}a\in\A_n\right\},
	\end{equation}
	where $\psi$ denotes a positive nonincreasing continuous function $\psi$ defined on $[0,\infty)$. Our approach is parallel to that leading to Theorem~\ref{thm:inhomdesc}. As a matter of fact, we shall show that $\frakA_{n,\psi}$ is well approximated by sets of the form~(\ref{eq:df:FphAH}) when the underlying system is $(\A_n,H_n)$. That system being optimal and regular due to Theorem~\ref{thm:algnumoptregsys}, we shall therefore be able to apply Theorem~\ref{thm:descoptregsys} to reach our goal. An integral and a measure that are close to~(\ref{eq:df:Iph}) and~(\ref{eq:df:fraknph}), respectively, will naturally come into play, specifically, the integral
	\[
		I_{n,\psi}=\int_0^\infty h^n\psi(h)\,\dd h
	\]
	and the Borel measure $\frakn_{n,\psi}$ on $(0,1]$ characterized by the condition that for any nonnegative measurable function $f$ supported in $(0,1]$,
	\[
		\int_{(0,1]} f(r)\,\frakn_{n,\psi}(\dd r)=\int_0^\infty h^n f(\psi(h))\,\dd h.
	\]
	We are now in position to state and establish our main result.

\begin{Theorem}\label{thm:algdesc}
	Let $n$ be a positive integer and let $\psi$ denote a positive nonincreasing continuous function defined on $[0,\infty)$. Then, the following properties hold:
	\begin{itemize}
		\item if $I_{n,\psi}$ diverges, then $\frakA_{n,\psi}$ has full Lebesgue measure in $\R$\,;
		\item if $I_{n,\psi}$ converges, then $\frakA_{n,\psi}$ is $\frakn_{n,\psi}$-describable in $\R$.
	\end{itemize}
\end{Theorem}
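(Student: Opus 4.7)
The plan is to reduce the statement to Theorem~\ref{thm:descoptregsys} applied to the optimal regular system $(\A_n,H_n)$ provided by Theorem~\ref{thm:algnumoptregsys}. The essential technical issue, compared to the proof of Theorem~\ref{thm:inhomdesc}, is that $\frakA_{n,\psi}$ is defined through the na\"ive height $\Eta$, while the optimal regular system uses $H_n(a)=\Eta(a)^{n+1}/(1+|a|)^{n(n+1)}$. These heights are not monotonically related at the global level but become comparable once $a$ is restricted to a bounded region. My strategy is therefore to sandwich $\frakA_{n,\psi}$ between two sets of the form~(\ref{eq:df:FphAH}) attached to $(\A_n,H_n)$, with the lower bound valid only on bounded intervals.

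Concretely, I would introduce the positive nonincreasing continuous functions $\ph_+(h)=\psi(h^{1/(n+1)})$ and, for each integer $M\geq 1$, $\ph_-^M(h)=\psi((2+M+\psi(0))^n h^{1/(n+1)})$. Since $\Eta(a)\geq H_n(a)^{1/(n+1)}$ always, the inclusion $\frakA_{n,\psi}\subseteq F_{\ph_+}$ holds globally. Conversely, if $x\in(-M-1,M+1)$ admits infinitely many $a\in\A_n$ with $|x-a|<\ph_-^M(H_n(a))$, then $|x-a|\leq\psi(0)$ forces $1+|a|<2+M+\psi(0)$, which yields $\Eta(a)<(2+M+\psi(0))^n H_n(a)^{1/(n+1)}$ and hence $|x-a|<\psi(\Eta(a))$; thus $F_{\ph_-^M}\cap(-M-1,M+1)\subseteq\frakA_{n,\psi}$. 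The change of variable $\eta=h^{n+1}$ in~(\ref{eq:df:Iph}) and~(\ref{eq:df:fraknph}) shows that both $I_{\ph_+}$ and $I_{\ph_-^M}$ converge iff $I_{n,\psi}$ does, and that $\frakn_{\ph_+}$ and $\frakn_{\ph_-^M}$ are positive scalar multiples of $\frakn_{n,\psi}$, so that $\gauge(\frakn_{\ph_\pm})=\gauge(\frakn_{n,\psi})$.

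In the divergence case, Theorem~\ref{thm:descoptregsys} gives that $F_{\ph_-^M}$ has full Lebesgue measure in $\R$, so the lower inclusion yields that $\frakA_{n,\psi}$ has full Lebesgue measure in $(-M-1,M+1)$; letting $M\to\infty$ gives the first conclusion. In the convergence case, Theorem~\ref{thm:descoptregsys} makes both $F_{\ph_+}$ and each $F_{\ph_-^M}$ an $\frakn_{n,\psi}$-describable set in $\R$. The upper inclusion combined with Proposition~\ref{prp:monomajomino} immediately gives $\majo(\frakA_{n,\psi},\R)\supseteq\gauge(\frakn_{n,\psi})^\complement$.

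The hard part is the converse inclusion $\mino(\frakA_{n,\psi},\R)\supseteq\gauge(\frakn_{n,\psi})$, because the subset containment $F_{\ph_-^M}\subseteq\frakA_{n,\psi}$ is only local. For $g\in\gauge(\frakn_{n,\psi})$, each $F_{\ph_-^M}$ is a $G_\delta$-set belonging to $\lic{g}{\R}$ by Proposition~\ref{prp:morelicgauge}(\ref{item:prp:morelicgauge1}). To produce a single $G_\delta$-subset of $\frakA_{n,\psi}$ lying in $\lic{g}{\R}$, I would splice these into
\[
F=\bigcap_{M=1}^\infty\bigl(F_{\ph_-^M}\cup(\R\setminus[-M-1,M+1])\bigr).
\]
Each factor is $G_\delta$ (being the union of two $G_\delta$-sets) and, as a $G_\delta$-superset of $F_{\ph_-^M}$, belongs to $\lic{g}{\R}$ by Proposition~\ref{prp:morelicgauge}(\ref{item:prp:morelicgauge1}); Theorem~\ref{thm:stablicgauge}(\ref{item:thm:stablicgauge1}) then gives $F\in\lic{g}{\R}$. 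Every $x\in\R$ lies in $(-M-1,M+1)$ for some $M$, so the local inclusion above yields $F\subseteq\frakA_{n,\psi}$, whence $g$ is minorizing. Proposition~\ref{prp:descincl} combined with Lemma~\ref{lem:gaugemeasopen}(\ref{item:lem:gaugemeasopen2}) then concludes that $\frakA_{n,\psi}$ is $\frakn_{n,\psi}$-describable in $\R$.
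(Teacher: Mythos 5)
Your proposal is correct and follows essentially the same strategy as the paper: sandwich $\frakA_{n,\psi}$ between sets of the form~(\ref{eq:df:FphAH}) attached to the optimal regular system $(\A_n,H_n)$, with the scaling constant in the auxiliary function chosen large enough to absorb the local comparison between the na\"ive height $\Eta$ and Beresnevich's height $H_n$. The paper works with the family $\ph_k(\eta)=\psi(k\,\eta^{1/(n+1)})$ indexed by $k\geq 1$; your $\ph_-^M$ is the same device with a slightly different parametrization (and your $\ph_+$ is the paper's $\ph_1$). The only substantive divergence is your splicing construction $F=\bigcap_M\bigl(F_{\ph_-^M}\cup(\R\setminus[-M-1,M+1])\bigr)$ for the minorizing collection, which is unnecessary: the plain decreasing intersection $\bigcap_M F_{\ph_-^M}$ is already a global $G_\delta$-subset of $\frakA_{n,\psi}$, since for any $x$ one can pick $M$ with $|x|<M+1$ and invoke the local inclusion $F_{\ph_-^M}\cap(-M-1,M+1)\subseteq\frakA_{n,\psi}$. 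The paper exploits exactly this by applying Proposition~\ref{prp:cupcapmajomino} directly to $\bigcap_k F_{\ph_k}\subseteq\frakA_{n,\psi}$, obtaining $\mino(\frakA_{n,\psi},\R)\supseteq\bigcap_k\mino(F_{\ph_k},\R)=\gauge(\frakn_{n,\psi})$ without any gluing along a bounded exhaustion. Your argument still goes through, just with a superfluous step.
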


\begin{proof}
	We begin by proving that $\frakA_{n,\psi}$ may be approximated by sets of the form~(\ref{eq:df:FphAH}) when the underlying system is $(\A_n,H_n)$. For any integer $k\geq 1$, let $\ph_k$ denote the function defined for all $\eta\geq 0$ by $\ph_k(\eta)=\psi(k\,\eta^{1/(n+1)})$. We then have
	\begin{equation}\label{eq:sandFphkAnpsi}
		\bigcap_{k=1}^\infty\downarrow F_{\ph_k}\subseteq\frakA_{n,\psi}\subseteq F_{\ph_1}.
	\end{equation}
	Indeed, let $x$ be in the left-hand side and let $k$ be an integer larger than or equal to $(1+|x|+\psi(0))^n$. Since $x$ is in $F_{\ph_k}$, there are infinitely many points $a$ in $\A_n$ with
	\[
		|x-a|<\ph_k(H_n(a))=\psi(k\,H_n(a)^{1/(n+1)})
	\]
	However, the function $\psi$ is nonincreasing and the integer $k$ is bounded below by $(1+|x|+\psi(0))^n$, and thus by $(1+|a|)^n$. Hence, we have
	\[
		|x-a|<\psi((1+|a|)^n\,H_n(a)^{1/(n+1)})=\psi(\Eta(a))
	\]
	for infinitely many $a$ in $\A_n$, so that $x$ is in $\frakA_{n,\psi}$. Furthermore, in that situation, since the inequality $|x-a|<\psi(\Eta(a))$ holds for infinitely many $a$ in $\A_n$, we get
	\[
		|x-a|<\psi(\Eta(a))=\psi((1+|a|)^n\,H_n(a)^{1/(n+1)})\leq\psi(H_n(a)^{1/(n+1)})=\ph_1(H_n(a)),
	\]
	again because $\psi$ is nonincreasing, so that $x$ belongs to $F_{\ph_1}$.

	Let us deal with the divergence case. Thanks to~(\ref{eq:sandFphkAnpsi}), it suffices to prove that all the sets $F_{\ph_k}$ have full Lebesgue measure in $\R$. However, a simple change of variable shows that the divergence of $I_{n,\psi}$ implies that of all the integrals $I_{\ph_k}$ defined as in~(\ref{eq:df:Iph}). We thus conclude with the help of Theorems~\ref{thm:descoptregsys} and~\ref{thm:algnumoptregsys}.

	Turning our attention to the convergence case, we first combine~(\ref{eq:sandFphkAnpsi}) with Propositions~\ref{prp:monomajomino} and~\ref{prp:cupcapmajomino} in order to write that
	\[
		\majo(\frakA_{n,\psi},\R)\supseteq\majo(F_{\ph_1},\R)
		\qquad\text{and}\qquad
		\mino(\frakA_{n,\psi},\R)\supseteq\bigcap_{k=1}^\infty\mino(F_{\ph_k},\R).
	\]
	Again, an elementary change of variable shows that the convergence of $I_{n,\psi}$ implies that of all the integrals $I_{\ph_k}$. Likewise, $\gauge(\frakn_{n,\psi})$ coincides with all the sets $\gauge(\frakn_{\ph_k})$, where the measures $\frakn_{\ph_k}$ are characterized as in~(\ref{eq:df:fraknph}). Applying Theorem~\ref{thm:descoptregsys}, we deduce that all the sets $F_{\ph_k}$ are $\frakn_{n,\psi}$-describable in $\R$. As a consequence,
	\[
		\majo(\frakA_{n,\psi},\R)\supseteq\gauge(\frakn_{n,\psi})^\complement
		\qquad\text{and}\qquad
		\mino(\frakA_{n,\psi},\R)\supseteq\gauge(\frakn_{n,\psi}),
	\]
	and we conclude thanks to Proposition~\ref{prp:descincl} and Lemma~\ref{lem:gaugerealopen}(\ref{item:lem:gaugerealopen2}).
\end{proof}

We now catalog some consequences of Theorem~\ref{thm:algdesc}. First, the statement entails that the set $\frakA_{n,\psi}$ has full Lebesgue measure in $\R$ if the integral $I_{n,\psi}$ diverges, and Lebesgue measure zero if it converges. Hence, for any nonempty open set $V\subseteq\R$,
	\[
		\leb^1(\frakA_{n,\psi}\cap V)=
		\begin{cases}
			\leb^1(V) & \text{if } \sum_h h^n \psi(h)=\infty \\[2mm]
			0 & \text{if } \sum_h h^n \psi(h)<\infty.
		\end{cases}
	\]
	In fact, the monotonicity of $\psi$ shows that the convergence of $I_{n,\psi}$ amounts to that of the above series. We thus recover a result due to Beresnevich~\cite{Beresnevich:1999ys}.

Combined with Theorem~\ref{thm:frakndescslip}, the previous result yields a complete description of the size and large intersection properties of the set $\frakA_{n,\psi}$. We focus on the case where $\frakA_{n,\psi}$ has Lebesgue measure zero, because these properties are trivial otherwise, in light of the opening discussion in Section~\ref{sec:desc}. In particular, we recover a characterization of the Hausdorff measures of $\frakA_{n,\psi}$ obtained independently by Beresnevich, Dickinson and Velani~\cite{Beresnevich:2006ve}, and by Bugeaud~\cite{Bugeaud:2002fk}. Specifically, for any gauge function $g$ and any nonempty open set $V\subseteq\R$,
	\[
		\hau^g(\frakA_{n,\psi}\cap V)=
		\begin{cases}
			\infty & \text{if } \sum_h h^n g_1(\psi(h))=\infty \\[2mm]
			0 & \text{if } \sum_h h^n g_1(\psi(h))<\infty.
		\end{cases}
	\]
	We also used here the fact that $g\in\gauge(\frakn_{n,\psi})$ if and only if the above series diverges, owing to the monotonicity of $\psi$ and that of $g_1$ near the origin. Similarly, we recover the description of the large intersection properties of $\frakA_{n,\psi}$ obtained in~\cite{Durand:2007uq}.

Finally, Corollary~\ref{cor:frakndescslip} enables us to give a more concise dimensional statement. In fact, still assuming that $\frakA_{n,\psi}$ has Lebesgue measure zero, we infer that the integral $I_{n,\psi}$ converges and that the exponent associated with $\frakn_{n,\psi}$ through~(\ref{eq:df:sfraknrad01}) is
	\begin{equation}\label{eq:df:snpsi}
		s_{n,\psi}=\limsup_{h\to\infty}\frac{(n+1)\log h}{-\log\psi(h)},
	\end{equation}
	so we eventually conclude that for any nonempty open set $V\subseteq\R$,
	\[
		\left\{\begin{array}{l}
			\Hdim (\frakA_{n,\psi}\cap V)=s_{n,\psi}\\[1mm]
			\Pdim (\frakA_{n,\psi}\cap V)=d\\[1mm]
			\frakA_{n,\psi}\in\lic{s_{n,\psi}}{V},
		\end{array}\right.
	\]
	where the last two properties are valid under the assumption that $s_{n,\psi}$ is positive.

\subsection{Koksma's classification of real transcendental numbers}

Let us now concentrate on the case in which the function $\psi$ is of the form $h\mapsto h^{-\omega-1}$ on the interval $[1,\infty)$, for some real number $\omega>-1$. In order to stress on the r\^ole of $\omega$ and ensure some coherence with Koksma's notations, the set $\frakA_{n,\psi}$ is denoted by $K^\ast_{n,\omega}$ in what follows, namely,
	\[
		K^\ast_{n,\omega}=\left\{x\in\R\bigm||x-a|<\Eta(a)^{-\omega-1}\quad\text{for i.m.~}a\in\A_n\right\}.
	\]
	Furthermore, to complete the definition of $\psi$, we suppose that it is constant equal to one on the interval $[0,1]$. It is clear that the integral $\Iota_{n,\psi}$ converges if and only if $\omega>n$ and, in that situation, the sets $\gauge(\frakn_{n,\psi})$ and $\gauge(\frakn_{(n+1)/(\omega+1)})$ coincide, where the measure $\frakn_{(n+1)/(\omega+1)}$ is again defined as in~(\ref{eq:df:frakns}). We then readily deduce the next statement from Theorem~\ref{thm:algdesc}.

\begin{Corollary}\label{cor:algdesc}
	For any integer $n\geq 1$ and any real parameter $\omega>-1$, the following properties hold:
	\begin{enumerate}
		\item if $\omega\leq n$, then $K^\ast_{n,\omega}$ has full Lebesgue measure in $\R$\,;
		\item if $\omega>n$, then $K^\ast_{n,\omega}$ is $\frakn_{(n+1)/(\omega+1)}$-describable in $\R$.
	\end{enumerate}
\end{Corollary}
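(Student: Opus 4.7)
The plan is to view the corollary as a direct specialization of Theorem~\ref{thm:algdesc} to the function $\psi$ on $[0,\infty)$ defined by $\psi(h)=h^{-\omega-1}$ for $h\geq 1$ and $\psi(h)=1$ for $h\in[0,1]$. This $\psi$ is positive, nonincreasing and continuous, and since $\Eta(a)\geq 1$ for every real algebraic number $a$, the set $K^\ast_{n,\omega}$ coincides exactly with $\frakA_{n,\psi}$ from~(\ref{eq:df:Anpsi}).

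Next I would compute
\[
I_{n,\psi}=\int_0^\infty h^n\psi(h)\,\dd h=\frac{1}{n+1}+\int_1^\infty h^{n-\omega-1}\,\dd h,
\]
which is finite if and only if $\omega>n$. When $\omega\leq n$, the divergence half of Theorem~\ref{thm:algdesc} immediately gives that $K^\ast_{n,\omega}$ has full Lebesgue measure in $\R$, settling case~(i).

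In the regime $\omega>n$, Theorem~\ref{thm:algdesc} yields that $K^\ast_{n,\omega}$ is $\frakn_{n,\psi}$-describable in $\R$. Since, by Definition~\ref{df:frakndesc}, $\frakn$-describability depends on $\frakn\in\rad01_d$ only through the class $\gauge(\frakn)$, it remains to verify that $\gauge(\frakn_{n,\psi})=\gauge(\frakn_{(n+1)/(\omega+1)})$. Splitting the defining identity of $\frakn_{n,\psi}$ at $h=1$ and performing the change of variable $r=h^{-\omega-1}$ on $[1,\infty)$ produces, for every nonnegative measurable $f$ supported in $(0,1]$,
\[
\int_{(0,1]}f(r)\,\frakn_{n,\psi}(\dd r)=\frac{f(1)}{n+1}+\frac{1}{\omega+1}\int_0^1\frac{f(r)}{r^{(n+1)/(\omega+1)+1}}\,\dd r.
\]
Comparing with~(\ref{eq:df:frakns}), the absolutely continuous part on $(0,1)$ is a positive multiple of $\frakn_{(n+1)/(\omega+1)}$, while the Dirac term at $r=1$ contributes only $g_d(1)/(n+1)$ to $\croc{\frakn_{n,\psi}}{g_d}$, a quantity that is finite for every gauge function $g$ because $g_d$ is bounded on $(0,1]$ (see Section~\ref{subsubsec:frakndesc}). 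Consequently $\croc{\frakn_{n,\psi}}{g_d}$ and $\croc{\frakn_{(n+1)/(\omega+1)}}{g_d}$ are simultaneously finite or infinite on $\gauge^\infty$, so the two gauge classes coincide.

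The only point requiring care is checking that both $\frakn_{n,\psi}$ and $\frakn_{(n+1)/(\omega+1)}$ lie in $\rad01_d$ so that the describability notion actually applies: the infinite total mass and the local finiteness of $\Phi_\frakn$ are clear for both, and the integrability condition $\croc{\frakn}{r\mapsto r}<\infty$ reduces, via the same change of variable, to $(n+1)/(\omega+1)<1$, which is precisely the standing assumption $\omega>n$. There is no substantive obstacle beyond this routine bookkeeping, since the heavy lifting has already been done in Theorems~\ref{thm:algnumoptregsys} and~\ref{thm:algdesc}.
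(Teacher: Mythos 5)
Your proof is correct and follows essentially the same route as the paper: specialize Theorem~\ref{thm:algdesc} to the same function $\psi$ (equal to $h^{-\omega-1}$ on $[1,\infty)$ and to $1$ on $[0,1]$), compute $I_{n,\psi}$, and identify $\gauge(\frakn_{n,\psi})$ with $\gauge(\frakn_{(n+1)/(\omega+1)})$. The paper declares the coincidence of these gauge classes and the convergence criterion ``clear'' and moves directly to the conclusion; you supply the change-of-variable computation and the harmless accounting for the atom at $r=1$ that justify those claims, but the underlying argument is identical.
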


We use this result to comment on a classification of real transcendental numbers due to Koksma~\cite{Koksma:1939fk}. First, it is clear that the mapping $\omega\mapsto K^\ast_{n,\omega}$ is nonincreasing; for every real number $x$, we thus naturally introduce the exponent
	\[
		\omega^\ast_n(x)=\sup\{\omega>-1\:|\:x\in K^\ast_{n,\omega}\}.
	\]

Note that when $n=1$ and $x$ is irrational, one essentially recovers the irrationality exponent defined by~(\ref{eq:df:irrexpo}). Indeed, as observed in Section~\ref{subsec:algoptregsys}, the set $\A_1$ coincides with $\Q$, and writing an element $a\in\A_1$ in the form $p/q$ for two coprime integers $p$ and $q$, the latter being positive, we have $\Eta(a)=\max\{|p|,q\}$. It is then easy to check that for all $\omega>0$,
	\[
		K^\ast_{1,\omega}\subseteq J_{1,\omega+1}\setminus\Q\subseteq\bigcap_{\eps>0}\downarrow K^\ast_{1,\omega-\eps},
	\]
	and therefore that for any irrational number $x$,
	\[
		\omega^\ast_1(x)=\tau(x)-1.
	\]

Koksma introduced a classification of the real transcendental numbers $x$ which is based on the way the exponents $\omega^\ast_n(x)$ evolve as $n$ grows. This amounts to studying how the quality with which a real number $x$ is approximated by algebraic numbers behaves when their degree is allowed to increase. Specifically, let us define
	\[
		\omega^\ast(x)=\limsup_{n\to\infty} \frac{\omega^\ast_n(x)}{n}.
	\]
	Koksma classifies the real transcendental numbers $x$ according to whether or not $\omega^\ast(x)$ is finite, see~\cite[Section~3.3]{Bugeaud:2004fk}. In the first situation, that is, if $\omega^\ast(x)$ is finite, he calls $x$ an {\em $S^\ast$-number}. Besides, let us mention that a result due to Wirsing~\cite{Wirsing:1961fk} shows that a real number $x$ is transcendental if and only if $\omega^\ast(x)$ is positive, see~\cite{Bugeaud:2004fk}.
	
	As we now explain, Corollary~\ref{cor:algdesc} entails that Lebesgue-almost every real number $x$ is an $S^\ast$-number satisfying $\omega^\ast_n(x)=n$ for every $n\geq 1$. In fact, for any real parameter $\omega>0$, let $\widehat K^\ast_{n,\omega}$ denote the set of all real numbers $x$ for which the exponent $\omega^\ast_n(x)$ is bounded below by $(n+1)\omega-1$. Observing that
	\[
		\widehat K^\ast_{n,\omega}=\bigcap_{\omega'<(n+1)\omega-1}\downarrow K^\ast_{n,\omega'},
	\]
	we deduce from Corollary~\ref{cor:algdesc} that the set $\widehat K^\ast_{n,\omega}$ has full Lebesgue measure in $\R$ when $\omega\leq 1$, and Lebesgue measure zero otherwise.
	
Our aim is now to describe the size and large intersection properties of the set $\widehat K^\ast_{n,\omega}$. As usual, we may exclude the trivial case in which this set has full Lebesgue measure, and therefore suppose that $\omega>1$. Due to the monotonicity of the mapping $\omega'\mapsto K^\ast_{n,\omega'}$, we may assume in the above intersection that $\omega'$ ranges over a sequence of real numbers strictly between $n$ and $(n+1)\omega-1$ that monotonically tends to the latter value. In view of Corollary~\ref{cor:algdesc}, we fall into the setting of Proposition~\ref{prp:dichofraksdesc} in the case where the infimum is not attained. We end up with the next result.

\begin{Corollary}\label{cor:Koksmandesc}
	For any integer $n\geq 1$ and any real parameter $\omega>1$, the set $\widehat K^\ast_{n,\omega}$ is $(1/\omega)$-describable in $\R$.
\end{Corollary}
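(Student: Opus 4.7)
The plan is to realize $\widehat K^\ast_{n,\omega}$ as a countable decreasing intersection of $\frakn_s$-describable sets and then invoke Proposition~\ref{prp:dichofraksdesc}. Since $\omega>1$, the real number $(n+1)\omega-1$ is strictly larger than $n$, so we may choose an increasing sequence $(\omega'_k)_{k\geq 1}$ of real numbers in the open interval $(n,(n+1)\omega-1)$ that converges to $(n+1)\omega-1$. Using the monotonicity of the mapping $\omega'\mapsto K^\ast_{n,\omega'}$, the inclusion $K^\ast_{n,\omega'}\supseteq K^\ast_{n,\omega'_k}$ holds whenever $\omega'\leq \omega'_k$\,; as any $\omega'<(n+1)\omega-1$ is eventually dominated by some $\omega'_k$, the defining intersection of $\widehat K^\ast_{n,\omega}$ coincides with the countable intersection $\bigcap_{k\geq 1}\downarrow K^\ast_{n,\omega'_k}$.

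Next, I would apply Corollary~\ref{cor:algdesc}: for each $k\geq 1$, since $\omega'_k>n$, the set $K^\ast_{n,\omega'_k}$ has Lebesgue measure zero in $\R$ and is $\frakn_{s_k}$-describable in $\R$, where
\[
s_k=\frac{n+1}{\omega'_k+1}.
\]
Recalling the comment following~(\ref{eq:df:frakns}), the exponent associated with the measure $\frakn_{s_k}$ through~(\ref{eq:df:sfraknrad01}) is equal to $s_k$ itself. By construction, the sequence $(s_k)_{k\geq 1}$ is strictly decreasing and converges to
\[
\inf_{k\geq 1} s_k=\frac{n+1}{(n+1)\omega}=\frac{1}{\omega},
\]
and this infimum is not attained. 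Since moreover $\omega>1$, the limiting value $1/\omega$ lies in $[0,1)=[0,d)$, as required to speak of $(1/\omega)$-describability in dimension $d=1$.

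Finally, Proposition~\ref{prp:dichofraksdesc}, applied in the case where the infimum of the exponents is not attained, yields that the intersection $\bigcap_{k\geq 1} K^\ast_{n,\omega'_k}$, which equals $\widehat K^\ast_{n,\omega}$, is $(1/\omega)$-describable in $\R$. The only step requiring any real care is the verification that the countable intersection along $(\omega'_k)_{k\geq 1}$ faithfully recaptures the uncountable intersection defining $\widehat K^\ast_{n,\omega}$, but this is immediate from monotonicity and cofinality; the rest is a direct bookkeeping application of the two cited results.
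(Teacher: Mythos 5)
Your proof is correct and follows essentially the same approach as the paper: rewrite $\widehat K^\ast_{n,\omega}$ as a countable decreasing intersection of the sets $K^\ast_{n,\omega'_k}$ along a sequence $\omega'_k\uparrow(n+1)\omega-1$ with $\omega'_k>n$, apply Corollary~\ref{cor:algdesc} to see that each is $\frakn_{s_k}$-describable with $s_k=(n+1)/(\omega'_k+1)$, and conclude via Proposition~\ref{prp:dichofraksdesc} in the non-attained-infimum case. The verification that the countable intersection agrees with the uncountable one, and the observation that $\omega>1$ forces $1/\omega\in[0,1)$, are the only points needing care and you have handled both correctly.
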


In order to make the connection with Koksma's classification, we need to consider all the integers $n$ simultaneously. Accordingly, let us introduce the set
	\[
		\widehat K^\ast_{\omega}=\bigcap_{n=1}^\infty\widehat K^\ast_{n,\omega}.
	\]
	When $\omega\leq 1$, what precedes ensures that $\widehat K^\ast_{\omega}$ has full Lebesgue measure in $\R$, and its size and large intersection properties are trivially described. Let us assume oppositely that $\omega>1$. Combining Corollary~\ref{cor:Koksmandesc} with Propositions~\ref{prp:monomajomino} and~\ref{prp:cupcapmajomino}, we straightforwardly establish that
	\[
		\majo(\widehat K^\ast_{\omega},\R)\supseteq\gauge(1/\omega)^\complement
		\qquad\text{and}\qquad
		\mino(\widehat K^\ast_{\omega},\R)\supseteq\gauge(1/\omega).
	\]
	Applying Proposition~\ref{prp:descincl} and Lemma~\ref{lem:gaugerealopen}(\ref{item:lem:gaugerealopen2}), we eventually get the following result.

\begin{Corollary}\label{cor:Koksmadesc}
	For any $\omega>1$, the set $\widehat K^\ast_{\omega}$ is $(1/\omega)$-describable in $\R$.
\end{Corollary}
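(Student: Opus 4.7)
The plan is to follow exactly the road map sketched by the author just before the statement, namely to transfer the $(1/\omega)$-describability from each factor $\widehat K^\ast_{n,\omega}$ to the countable intersection, and then invoke Proposition~\ref{prp:descincl} together with the openness of $\gauge(1/\omega)$.

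First, I would observe that the set $\widehat K^\ast_{\omega}$ belongs to $\zeroleb(\R)$. Indeed each $\widehat K^\ast_{n,\omega}$ is a monotonic countable intersection of the Borel sets $K^\ast_{n,\omega'}$, hence Borel; and the assumption $\omega>1$ forces $\widehat K^\ast_{\omega}\subseteq\widehat K^\ast_{1,\omega}$, which by Corollary~\ref{cor:Koksmandesc} (or a direct invocation of Corollary~\ref{cor:algdesc}) has Lebesgue measure zero.

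Next, I would compute the minorizing and majorizing collections. On the one hand, Proposition~\ref{prp:cupcapmajomino} applied to the countable intersection defining $\widehat K^\ast_{\omega}$ gives
\[
\mino(\widehat K^\ast_{\omega},\R)\cap\gauge^\infty
=\bigcap_{n=1}^\infty\bigl(\mino(\widehat K^\ast_{n,\omega},\R)\cap\gauge^\infty\bigr)
=\gauge(1/\omega),
\]
where the last equality uses Corollary~\ref{cor:Koksmandesc}, which identifies each factor with $\gauge(1/\omega)$. On the other hand, since $\widehat K^\ast_{\omega}$ is contained in every $\widehat K^\ast_{n,\omega}$, the monotonicity stated in Proposition~\ref{prp:monomajomino} yields
\[
\majo(\widehat K^\ast_{\omega},\R)\supseteq\majo(\widehat K^\ast_{n,\omega},\R)=\gauge(1/\omega)^\complement,
\]
again by Corollary~\ref{cor:Koksmandesc}.

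To close the argument, I would apply Proposition~\ref{prp:descincl} with the set $\frakH=\gauge(1/\omega)\subseteq\gauge^\infty$ and its complement $\frakH^\complement=\gauge(1/\omega)^\complement$ in $\gauge^\infty$: the two inclusions above provide exactly the two hypotheses of that proposition, and the right-openness of $\gauge(1/\omega)$ supplied by Lemma~\ref{lem:gaugerealopen}(\ref{item:lem:gaugerealopen2}) supplies the missing openness assumption. The conclusion is the pair of equalities $\majo(\widehat K^\ast_\omega,\R)=\gauge(1/\omega)^\complement$ and $\mino(\widehat K^\ast_\omega,\R)\cap\gauge^\infty=\gauge(1/\omega)$, which is precisely the definition of $(1/\omega)$-describability in $\R$. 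There is no real obstacle: the substantive work has already been invested in Corollary~\ref{cor:Koksmandesc}; the only point requiring mild care is confirming that $\widehat K^\ast_\omega\in\zeroleb(\R)$ so that the majorizing/minorizing machinery of Section~\ref{sec:desc} is legitimately applicable.
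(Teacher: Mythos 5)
Your proof is correct and follows essentially the same route as the paper: both combine Corollary~\ref{cor:Koksmandesc} with Propositions~\ref{prp:monomajomino} and~\ref{prp:cupcapmajomino} to obtain the inclusions $\mino(\widehat K^\ast_{\omega},\R)\supseteq\gauge(1/\omega)$ and $\majo(\widehat K^\ast_{\omega},\R)\supseteq\gauge(1/\omega)^\complement$, and then close via Proposition~\ref{prp:descincl} with the right-openness of $\gauge(1/\omega)$ from Lemma~\ref{lem:gaugerealopen}(\ref{item:lem:gaugerealopen2}). Your explicit check that $\widehat K^\ast_{\omega}\in\zeroleb(\R)$ is a sensible addition, though it is implicitly handled by the discussion preceding the statement in the paper.
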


Again, combining this result with Theorem~\ref{thm:fraksdescslip}, we obtain a complete description of the size and large intersection properties of $\widehat K^\ast_{\omega}$, thereby recovering results previously established in~\cite{Bugeaud:2004wc,Durand:2007uq}. One may also use Corollary~\ref{cor:fraksdescslip} if only dimensional results are desired. In particular, we observe that the set $\widehat K^\ast_{\omega}$ has Hausdorff dimension equal to $1/\omega$. We thus recover a result established by Baker and Schmidt~\cite{Baker:1970jf}.

The connection with Koksma's classification now consists in making the obvious remark that for any real parameter $\omega>0$, the set
	\[
		\Omega^\ast_\omega=\{x\in\R\:|\:\omega^\ast(x)\geq\omega\}
	\]
	contains $\widehat K^\ast_{\omega}$. In particular, we recover the fact that $\Omega^\ast_\omega$ has full Lebesgue measure in $\R$ when $\omega\leq 1$. As regards size and large intersection properties, the opposite case is richer and is covered by the next result.

\begin{Theorem}\label{thm:Koksmafdesc}
	For any real parameter $\omega>1$, the set $\Omega^\ast_\omega$ of all real numbers $x$ such that $\omega^\ast(x)\geq\omega$ is $(1/\omega)$-describable in $\R$.
\end{Theorem}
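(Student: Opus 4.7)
The strategy is to leverage the two-sided bound already established for $\widehat K^\ast_\omega$ and extend it to the larger set $\Omega^\ast_\omega$. Since $\widehat K^\ast_\omega\subseteq\Omega^\ast_\omega$, Proposition~\ref{prp:monomajomino} and Corollary~\ref{cor:Koksmadesc} immediately yield
\[
\mino(\Omega^\ast_\omega,\R)\supseteq\mino(\widehat K^\ast_\omega,\R)\supseteq\gauge(1/\omega).
\]
Once I can show the matching inclusion $\majo(\Omega^\ast_\omega,\R)\supseteq\gauge(1/\omega)^\complement$, Proposition~\ref{prp:descincl}, combined with the right-openness of $\gauge(1/\omega)$ granted by Lemma~\ref{lem:gaugerealopen}(\ref{item:lem:gaugerealopen2}), will give the announced $(1/\omega)$-describability. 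Taking $g(r)=r$ (which lies in $\gauge(1/\omega)^\complement$ because $\omega>1$) will in particular produce $\leb^1(\Omega^\ast_\omega)=0$, confirming that $\Omega^\ast_\omega\in\zeroleb(\R)$, while Borel measurability of $\Omega^\ast_\omega$ is routine from that of each $K^\ast_{n,\omega}$.

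The key step is the majorizing inclusion. The starting observation is that if $x\in\Omega^\ast_\omega$, then $\limsup_n \omega^\ast_n(x)/n\geq\omega$, so for every $\epsilon\in(0,\omega-1)$ there are infinitely many integers $n$ with $\omega^\ast_n(x)>n(\omega-\epsilon)$, and hence $x\in K^\ast_{n,n(\omega-\epsilon)}$. In particular, for every $N\geq 1$,
\[
\Omega^\ast_\omega\subseteq\bigcup_{n\geq N} K^\ast_{n,n(\omega-\epsilon)}.
\]
Now fix $g\in\gauge(1/\omega)^\complement$, so that $g_1(r)=\smallo(r^{s_0})$ as $r\to 0$ for some $s_0>1/\omega$, and choose $\epsilon>0$ small enough that $s_0(\omega-\epsilon)>1$. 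For every $n$ sufficiently large, $n(\omega-\epsilon)>n$, so Corollary~\ref{cor:algdesc} asserts that $K^\ast_{n,n(\omega-\epsilon)}$ is $\frakn_{s_n}$-describable in $\R$ with
\[
s_n=\frac{n+1}{n(\omega-\epsilon)+1}\xrightarrow[n\to\infty]{}\frac{1}{\omega-\epsilon}<s_0.
\]
For all $n$ greater than some $N$ one therefore has $s_0>s_n$, so $g_1(r)=\smallo(r^{s_0})=\smallo(r^{s_n})$, which places $g$ in $\gauge(\frakn_{s_n})^\complement$. Theorem~\ref{thm:frakndescslip}(\ref{item:thm:frakndescslip1}) then yields $\hau^g(K^\ast_{n,n(\omega-\epsilon)})=0$ for every $n\geq N$, and by countable subadditivity
\[
\hau^g(\Omega^\ast_\omega)\leq\sum_{n\geq N}\hau^g(K^\ast_{n,n(\omega-\epsilon)})=0.
\]
Thus $g\in\majo(\Omega^\ast_\omega,\R)$, as required.

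The main obstacle—mild but genuine—is selecting the parameter $\epsilon$ compatibly with $s_0$: one must arrange simultaneously $s_0>1/(\omega-\epsilon)$ (to force eventual convergence of the integrability condition governing $\gauge(\frakn_{s_n})^\complement$) and $\omega-\epsilon>1$ (to keep each $K^\ast_{n,n(\omega-\epsilon)}$ in the Lebesgue null regime of Corollary~\ref{cor:algdesc}). Both constraints are satisfied by taking any $\epsilon\in(0,\omega-\max\{1,1/s_0\})$, an interval that is nonempty because $\omega>1$ and $s_0>1/\omega$. The remainder of the proof is a direct packaging of Corollary~\ref{cor:Koksmadesc}, Theorem~\ref{thm:frakndescslip}, Proposition~\ref{prp:descincl} and Lemma~\ref{lem:gaugerealopen}, with no further technical input needed.
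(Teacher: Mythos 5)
Your argument is correct, but it routes the majorizing inclusion differently from the paper. The paper works with the parameter $(n+1)\omega'_m-1$ precisely because Corollary~\ref{cor:algdesc} then makes $K^\ast_{n,(n+1)\omega'_m-1}$ $\frakn_{1/\omega'_m}$-describable with an exponent independent of $n$; this lets one compute $\majo(\Omega^\ast_\omega,\R)\supseteq\bigcup_m\bigcap_n\majo(K^\ast_{n,(n+1)\omega'_m-1},\R)=\bigcup_m\gauge(\frakn_{1/\omega'_m})^\complement$ in a single stroke via Propositions~\ref{prp:monomajomino} and~\ref{prp:cupcapmajomino}, and then identifies the union with $\gauge(1/\omega)^\complement$ through the monotone-intersection identity~(\ref{eq:linkfraknfraksdesc}). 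You instead pick the parameter $n(\omega-\epsilon)$, for which the describability exponent $s_n=(n+1)/(n(\omega-\epsilon)+1)$ does depend on $n$, and so you argue gauge-by-gauge: extract $s_0>1/\omega$ from a fixed $g\in\gauge(1/\omega)^\complement$, choose $\epsilon$ so that $1/(\omega-\epsilon)<s_0$ and $\omega-\epsilon>1$, and then use the tail cover $\Omega^\ast_\omega\subseteq\bigcup_{n\geq N}K^\ast_{n,n(\omega-\epsilon)}$ with $N$ large enough that $s_n<s_0$ for $n\geq N$. This trades the paper's uniform computation of the collection $\majo(\Omega^\ast_\omega,\R)$ for a pointwise argument whose parameters depend on $g$; it avoids~(\ref{eq:linkfraknfraksdesc}) and the simultaneous $\bigcap_m\bigcup_n$ decomposition, at the cost of losing the uniformity. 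Both are valid. One small slip: $g(r)=r$ does \emph{not} lie in $\gauge(1/\omega)^\complement$, since $\ell_g=1<\infty$ means $g\notin\gauge^\infty$ and $\gauge(1/\omega)^\complement$ is by definition the complement \emph{inside} $\gauge^\infty$. The intended conclusion $\Omega^\ast_\omega\in\zeroleb(\R)$ is still immediate, most directly because each $K^\ast_{n,n(\omega-\epsilon)}$ with $n(\omega-\epsilon)>n$ is Lebesgue null by Corollary~\ref{cor:algdesc}, and the countable tail cover transmits this to $\Omega^\ast_\omega$.
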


\begin{proof}
	Since $\Omega^\ast_\omega\supseteq\widehat K^\ast_{\omega}$, we deduce from Proposition~\ref{prp:monomajomino} and Corollary~\ref{cor:Koksmadesc} that
	\[
		\mino(\Omega^\ast_\omega,\R)\supseteq\mino(\widehat K^\ast_{\omega},\R)\supseteq\gauge(1/\omega).
	\]
	Furthermore, let us consider a sequence $(\omega'_m)_{m\geq 1}$ of real numbers strictly between one and $\omega$ that monotonically tends to the latter value. We clearly have
	\[
		\Omega^\ast_\omega\subseteq
		\bigcap_{m=1}^\infty\bigcup_{n=1}^\infty K^\ast_{n,(n+1)\omega'_m-1}.
	\]
	By virtue of Propositions~\ref{prp:monomajomino} and~\ref{prp:cupcapmajomino}, and also Corollary~\ref{cor:algdesc}, this entails that
	\[
		\majo(\Omega^\ast_\omega,\R)\supseteq
		\bigcup_{m=1}^\infty\bigcap_{n=1}^\infty\majo(K^\ast_{n,(n+1)\omega'_m-1},\R)
		=\bigcup_{m=1}^\infty\gauge(\frakn_{1/\omega'_m})^\complement.
	\]
	Indeed, each set $K^\ast_{n,(n+1)\omega'_m-1}$ is $\frakn_{1/\omega'_m}$-describable in $\R$. We finally infer from~(\ref{eq:linkfraknfraksdesc}) that the right-hand side is equal to $\gauge(1/\omega)^\complement$, and we conclude thanks to Proposition~\ref{prp:descincl} and Lemma~\ref{lem:gaugerealopen}(\ref{item:lem:gaugerealopen2}).
\end{proof}

It is possible to formally let $\omega$ tend to infinity in Theorem~\ref{thm:Koksmafdesc}. This amounts to considering the intersection of the sets $\Omega^\ast_\omega$, in conjunction with the observation that the intersection of the sets $\gauge(1/\omega)$ reduces to the set $\gauge(0)$. Using the methods developed up to now, the reader should easily prove the next result.

\begin{Corollary}
	The set $\Omega^\ast_\infty$ of all real numbers $x$ such that $\omega^\ast(x)=\infty$ is $0$-describable in $\R$.
\end{Corollary}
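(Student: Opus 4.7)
The plan is to imitate the derivation of Corollary~\ref{cor:Koksmadesc} from Corollary~\ref{cor:Koksmandesc}, now using Theorem~\ref{thm:Koksmafdesc} in place of Corollary~\ref{cor:Koksmandesc} and letting the approximation exponent tend to infinity. First I would observe that, by monotonicity of $\omega\mapsto\Omega^\ast_\omega$,
\[
\Omega^\ast_\infty=\bigcap_{m=1}^\infty\Omega^\ast_m.
\]
Each set $\Omega^\ast_m$, for $m\geq 2$, is $(1/m)$-describable in $\R$ by Theorem~\ref{thm:Koksmafdesc}. In particular every $\Omega^\ast_m$ has Lebesgue measure zero, so $\Omega^\ast_\infty\in\zeroleb(\R)$.

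Next I would translate this into inclusions for the minorizing and majorizing collections. Proposition~\ref{prp:cupcapmajomino} gives
\[
\mino(\Omega^\ast_\infty,\R)\cap\gauge^\infty=\bigcap_{m\geq 2}\bigl(\mino(\Omega^\ast_m,\R)\cap\gauge^\infty\bigr)=\bigcap_{m\geq 2}\gauge(1/m),
\]
while the monotonicity of $E\mapsto\majo(E,\R)$ provided by Proposition~\ref{prp:monomajomino}, combined with the fact that $\Omega^\ast_\infty\subseteq\Omega^\ast_m$, yields
\[
\majo(\Omega^\ast_\infty,\R)\supseteq\bigcup_{m\geq 2}\majo(\Omega^\ast_m,\R)=\bigcup_{m\geq 2}\gauge(1/m)^\complement.
\]
The key elementary identity to check is that $\bigcap_{m\geq 2}\gauge(1/m)=\gauge(0)$. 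Since $\fraks\mapsto\gauge(\fraks)$ is nondecreasing, the inclusion $\supseteq$ is clear; for the reverse, any $g$ in the intersection satisfies $g_d(r)\neq\smallo(r^s)$ for every $s>1/m$ and every $m$, hence for every $s>0$, so $g\in\gauge(0)$.

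Combining, I get both
\[
\mino(\Omega^\ast_\infty,\R)\cap\gauge^\infty\supseteq\gauge(0)
\qquad\text{and}\qquad
\majo(\Omega^\ast_\infty,\R)\supseteq\gauge(0)^\complement.
\]
Lemma~\ref{lem:gaugerealopen}(\ref{item:lem:gaugerealopen2}) ensures that $\gauge(0)$ is right-open, so Proposition~\ref{prp:descincl} applies with $\frakH=\gauge(0)$ and upgrades both inclusions to equalities, giving exactly the two defining identities of $0$-describability. There is no serious obstacle here: the proof is a bookkeeping exercise in the describability framework, and the only point where one has to be a little careful is the passage from the sequence of exponents $1/m$ to the limiting value $0$, which reduces to the identity $\gauge(0)=\bigcap_{m}\gauge(1/m)$ noted above.
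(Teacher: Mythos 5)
Your proof is correct and follows precisely the route the paper indicates (letting $\omega\to\infty$ in Theorem~\ref{thm:Koksmafdesc}, writing $\Omega^\ast_\infty$ as the intersection of the $\Omega^\ast_m$, and using the identity $\bigcap_m\gauge(1/m)=\gauge(0)$ before invoking Proposition~\ref{prp:descincl} together with Lemma~\ref{lem:gaugerealopen}). All cited lemmas are applied with their hypotheses satisfied, in particular the $m=1$ term is correctly dropped so that Proposition~\ref{prp:cupcapmajomino} only sees sets in $\zeroleb(\R)$.
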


Note that, referring to Koksma's classification, the set $\Omega^\ast_\infty$ consists of the transcendental numbers $x$ that are not $S^\ast$-numbers; they are call either {\em $T^\ast$-numbers} or {\em $U^\ast$-numbers}, depending respectively on whether $\omega^\ast_n(x)$ is finite for all $n\geq 1$, or infinite from some $n$ onwards. Let us finally mention that Koksma's classification is very close to that previously introduced by Mahler~\cite{Mahler:1932uq} and for which large intersection properties also come into play, see~\cite{Bugeaud:2004fk,Durand:2007uq} for details.

\subsection{The case of algebraic integers}

Bugeaud~\cite{Bugeaud:2002uq} obtained an analog of Theorem~\ref{thm:algnumoptregsys} for the set of real algebraic integers, that is, the real algebraic numbers whose minimal defining polynomial over $\Z$ is monic. In what follows, $\A'$ denotes the subset of $\A$ formed by the real algebraic integers, and $\A'_n$ denotes the set $\A'\cap\A_n$ of all real algebraic integers with degree at most $n$.

\begin{Theorem}[Bugeaud]\label{thm:algintoptregsys}
	For any integer $n\geq 2$, the pair $(\A'_n,H_{n-1})$ is an optimal regular system in $\R$.
\end{Theorem}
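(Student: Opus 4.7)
The plan is to verify in turn the three conditions of Definition~\ref{df:optregsys} for the pair $(\A'_n,H_{n-1})$, proceeding in order of increasing difficulty. First, for admissibility, if $a\in\A'_n$ satisfies $|a|<m$ and $H_{n-1}(a)\leq m$, then
\[
\Eta(a)=H_{n-1}(a)^{1/n}(1+|a|)^{n-1}\leq m^{1/n}(1+m)^{n-1},
\]
so the minimal defining monic integer polynomial of $a$ has degree at most $n$ and coefficients bounded in terms of $m$ only. There are finitely many such polynomials, each with at most $n$ real roots. Optimality is handled by the same estimate: if $B$ has center $x_0$ and radius $r$ and $a\in\A'_n\cap B$ satisfies $H_{n-1}(a)\leq h$, then $\Eta(a)\leq C_{x_0,r}\,h^{1/n}$, so $a$ is a real root of one of at most $(2C_{x_0,r}\,h^{1/n}+1)^n\leq \kappa'_B\,h$ monic integer polynomials of degree at most $n$, which immediately gives the required bound on $\#\{a\in\A'_n\cap B\:|\:H_{n-1}(a)\leq h\}$.

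The main obstacle is regularity. The governing heuristic is that the monic integer polynomials of degree $n$ with coefficients bounded by $T$ form a family of cardinality of order $T^n$, paralleling the family of integer polynomials of degree $n-1$ with coefficients bounded by $T$, which also has cardinality of order $T^n$; this precise matching of degrees of freedom is why the relevant height function here is $H_{n-1}$ rather than $H_n$. I would therefore mimic Beresnevich's proof of Theorem~\ref{thm:algnumoptregsys} from~\cite{Beresnevich:1999ys}, transferring it through the natural parametrization
\[
(b_0,\ldots,b_{n-1})\longleftrightarrow X^n+b_{n-1}X^{n-1}+\ldots+b_0
\]
of monic integer polynomials of degree $n$ by $n$-tuples of integers. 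Concretely, for a ball $B$ of center $x_0$ and radius $r$ and a large parameter $h$, I would restrict each coefficient $b_k$ to an interval of length of order $h^{1/n}(1+|x_0|+r)^{n-k}$, arranged so that the resulting polynomial $P$ is forced to change sign on $B$, producing of order $\diam{B}\,h$ admissible polynomials. A counting estimate of the number of such $P$ with at least one real root in $B$, combined with the intermediate value theorem, then yields of order $\diam{B}\,h$ candidate algebraic integers in $B$ with $H_{n-1}\leq h$.

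The delicate step, where the bulk of the technical difficulty will reside, is the separation: two distinct polynomials may share a real root, and a single polynomial may contribute several roots close together in $B$. The plan is to apply Mahler-type lower bounds on the resultant of two coprime integer polynomials, so that if two real algebraic integers $a,a'$ of degree at most $n$ with $H_{n-1}(a),H_{n-1}(a')\leq h$ lie within distance less than $1/h$ of one another, then the corresponding minimal polynomials must share a common factor; a pruning of the family of candidates then produces a subfamily $\calA_{B,h}$ that still has cardinality $\kappa\,\diam{B}\,h$ but whose elements are pairwise separated by at least $1/h$. The key technical point, carried out in Bugeaud's paper~\cite{Bugeaud:2002uq}, is to track carefully the dependence on $|a|$ of both the resultant bounds and the polynomial-coefficient bounds, so that the renormalization factor $(1+|a|)^{n(n-1)}$ featuring in $H_{n-1}$ comes out with the right exponent; once this is done, all three conditions of Definition~\ref{df:optregsys} are in place and the theorem follows.
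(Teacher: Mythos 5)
The paper itself gives no proof of this theorem, simply attributing the result to Bugeaud~\cite{Bugeaud:2002uq} in the surrounding text, so there is no proof in the paper to compare against.

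Your proposal is nonetheless correct and more explicit than what the paper offers. The identity $\Eta(a)=H_{n-1}(a)^{1/n}(1+|a|)^{n-1}$ is right, and the admissibility and optimality checks that follow from it are routine and sound: bounded $|a|$ and bounded $H_{n-1}(a)$ force bounded na\"\i ve height, and counting monic integer polynomials of degree $\leq n$ with coefficients of size $\lesssim h^{1/n}$ gives the required $O(h)$ bound for optimality. The degrees-of-freedom heuristic is also on target: a monic integer polynomial of degree $n$ and an integer polynomial of degree $n-1$ each carry $n$ free integer coefficients, which is precisely why $(\A'_n,H_{n-1})$ and $(\A_{n-1},H_{n-1})$ are governed by the same height function, whereas $(\A_n,H_n)$ corresponds to $n+1$ free coefficients. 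You correctly flag regularity — and within it the resultant/Liouville-type separation, with the normalization by powers of $1+|a|$ tracked carefully — as the genuinely hard step, and correctly point to~\cite{Bugeaud:2002uq} (building on~\cite{Beresnevich:1999ys}) as where that work is done. As a proof sketch this is faithful to Bugeaud's argument; a full proof would of course have to carry out the construction of the family of monic polynomials changing sign on $B$ and the pruning quantitatively, but that is exactly the content the paper also leaves to the reference.
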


Combining Theorem~\ref{thm:algintoptregsys} with the methods of Section~\ref{subsec:algmetric}, we may describe the elementary size and large intersection properties of the set $\frakA'_{n,\psi}$ defined as that obtained when replacing $\A_n$ by $\A'_n$ in~(\ref{eq:df:Anpsi}), namely,
	\[
		\frakA'_{n,\psi}=\left\{x\in\R\bigm||x-a|<\psi(\Eta(a))\quad\text{for i.m.~}a\in\A'_n\right\}.
	\]
	Actually, adapting the proof of Theorem~\ref{thm:algdesc}, we may establish the next statement.

\begin{Theorem}
	For any integer $n\geq 2$ and any positive nonincreasing continuous function $\psi$ defined on $[0,\infty)$, the following properties hold:
	\begin{itemize}
		\item if $I_{n-1,\psi}$ diverges, then $\frakA'_{n,\psi}$ has full Lebesgue measure in $\R$\,;
		\item if $I_{n-1,\psi}$ converges, then $\frakA'_{n,\psi}$ is $\frakn_{n-1,\psi}$-describable in $\R$.
	\end{itemize}
\end{Theorem}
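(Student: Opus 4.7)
The plan is to adapt the proof of Theorem~\ref{thm:algdesc} by replacing the system $(\A_n,H_n)$ with the system $(\A'_n,H_{n-1})$ supplied by Theorem~\ref{thm:algintoptregsys}, which is an optimal regular system in $\R$. The only bookkeeping change is that the defining identity $\Eta(a)=(1+|a|)^n H_n(a)^{1/(n+1)}$ used in Theorem~\ref{thm:algdesc} is replaced, for algebraic integers $a\in\A'_n$, by
\[
\Eta(a)=(1+|a|)^{n-1}\,H_{n-1}(a)^{1/n},
\]
which follows directly from the definition of $H_{n-1}$.

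With this in hand, for each integer $k\geq 1$ I would introduce the function $\ph_k(\eta)=\psi(k\,\eta^{1/n})$ and, considering the sets $F_{\ph_k}$ associated to the optimal regular system $(\A'_n,H_{n-1})$ as in~(\ref{eq:df:FphAH}), establish the two-sided inclusion
\[
\bigcap_{k=1}^\infty\downarrow F_{\ph_k}\;\subseteq\;\frakA'_{n,\psi}\;\subseteq\;F_{\ph_1}.
\]
The right inclusion is immediate because $(1+|a|)^{n-1}\geq 1$ forces $\Eta(a)\geq H_{n-1}(a)^{1/n}$ and $\psi$ is nonincreasing. For the left inclusion, if $x$ lies in every $F_{\ph_k}$ and we fix $k\geq (1+|x|+\psi(0))^{n-1}$, then any $a\in\A'_n$ with $|x-a|<\ph_k(H_{n-1}(a))$ satisfies $|a|\leq|x|+\psi(0)$, whence $(1+|a|)^{n-1}\leq k$ and therefore $\Eta(a)\leq k\,H_{n-1}(a)^{1/n}$, so that $|x-a|<\psi(\Eta(a))$.

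Next I would transfer the integrability hypotheses from $I_{n-1,\psi}$ and $\frakn_{n-1,\psi}$ to the objects $I_{\ph_k}$ and $\frakn_{\ph_k}$ attached to the ubiquity framework. The change of variable $h=k\,\eta^{1/n}$ yields
\[
I_{\ph_k}=\int_0^\infty\ph_k(\eta)\,\dd\eta=\frac{n}{k^n}\int_0^\infty h^{n-1}\psi(h)\,\dd h=\frac{n}{k^n}\,I_{n-1,\psi},
\]
and the same substitution in~(\ref{eq:df:fraknph}) shows that $\frakn_{\ph_k}=(n/k^n)\,\frakn_{n-1,\psi}$ as Borel measures on $(0,1]$, so in particular $\gauge(\frakn_{\ph_k})=\gauge(\frakn_{n-1,\psi})$ for every $k\geq 1$. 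The divergence case is now immediate: if $I_{n-1,\psi}=\infty$, then every $I_{\ph_k}$ diverges, so Theorem~\ref{thm:descoptregsys} applied to the optimal regular system $(\A'_n,H_{n-1})$ shows that each $F_{\ph_k}$ has full Lebesgue measure in $\R$, and hence so does the countable intersection, and a fortiori so does $\frakA'_{n,\psi}$.

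In the convergence case, Theorem~\ref{thm:descoptregsys} implies that every set $F_{\ph_k}$ is $\frakn_{n-1,\psi}$-describable in $\R$. Combining the sandwich with Propositions~\ref{prp:monomajomino} and~\ref{prp:cupcapmajomino} and Theorem~\ref{thm:stablicgauge}(\ref{item:thm:stablicgauge1}) then yields
\[
\majo(\frakA'_{n,\psi},\R)\supseteq\gauge(\frakn_{n-1,\psi})^\complement
\qquad\text{and}\qquad
\mino(\frakA'_{n,\psi},\R)\supseteq\gauge(\frakn_{n-1,\psi}),
\]
and the conclusion follows from Proposition~\ref{prp:descincl} together with Lemma~\ref{lem:gaugemeasopen}(\ref{item:lem:gaugemeasopen2}). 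There is no real obstacle in this argument: the regularity and optimality of $(\A'_n,H_{n-1})$ is the genuinely hard content, and it has already been established in Theorem~\ref{thm:algintoptregsys}; the only point requiring a small amount of care is the correct scaling of the height exponent from $n+1$ to $n$, which is what produces the shift from $\frakn_{n,\psi}$ in Theorem~\ref{thm:algdesc} to $\frakn_{n-1,\psi}$ here.
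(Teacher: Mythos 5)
Your proposal is correct and is exactly the adaptation the paper has in mind: the paper itself omits the proof, saying only that one should adapt the argument for Theorem~\ref{thm:algdesc} using the optimal regular system $(\A'_n,H_{n-1})$ from Theorem~\ref{thm:algintoptregsys}, and your rescaling of the height exponent from $1/(n+1)$ to $1/n$, the two-sided sandwich by the sets $F_{\ph_k}$, and the resulting identification $\gauge(\frakn_{\ph_k})=\gauge(\frakn_{n-1,\psi})$ are precisely the bookkeeping changes required.
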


Next, applying Theorem~\ref{thm:frakndescslip}, we get a complete description of the size and large intersection properties of $\frakA'_{n,\psi}$. Likewise, Corollary~\ref{cor:frakndescslip} provides us with dimensional results on this set. For instance, when $I_{n-1,\psi}$ is convergent, we deduce that $\frakA'_{n,\psi}$ is a set with large intersection with Hausdorff dimension $s_{n-1,\psi}$ given by~(\ref{eq:df:snpsi}).


\section{Independent and uniform coverings}\label{sec:Dvoretzky}

We consider from now on probabilistic models where uniform eutaxy comes into play and enables us to analyze size and large intersection properties for associated limsup sets. The simplest model consists of a sequence $(X_n)_{n\geq 1}$ of points that are independently and uniformly distributed in a nonempty bounded open set $U\subseteq\R^d$.

Hence, the random points $X_n$ are stochastically independent and distributed according to the normalized Lebesgue measure $\leb^d(\,\cdot\,\cap U)/\leb^d(U)$. For any sequence $(r_n)_{n\geq 1}$ in $\Rho_d$ and any point $x$ in $U$, we have
\[
\prob(x\in\opball(X_n,r_n))=\frac{\leb^d(U\cap\opball(x,r_n))}{\leb^d(U)}=\frac{\leb^d(\opball(0,1))}{\leb^d(U)}\,r_n^d
\]
for $n$ sufficiently large. Hence, the Borel-Cantelli lemma ensures that the inequality $|x-X_n|<r_n$ holds infinitely often with probability one. By virtue of Tonelli's theorem, this implies that the sequence $(X_n)_{n\geq 1}$ is almost surely eutaxic in $U$ with respect to $(r_n)_{n\geq 1}$. However, the almost sure event on which this property holds may depend on the sequence $(r_n)_{n\geq 1}$, and we cannot yet deduce that eutaxy is uniform in the sense of Section~\ref{subsec:eutaxyunif}.

In order to show that uniform eutaxy holds, and thus establish the next result due to Reversat~\cite{Reversat:1976yq}, we need to develop more involved arguments.

\begin{Theorem}\label{thm:eutaxicindepunif}
	Let $(X_n)_{n\geq 1}$ be a sequence of random points distributed independently and uniformly in a nonempty bounded open subset $U$ of $\R^d$. Then, with probability one, the sequence $(X_n)_{n\geq 1}$ is uniformly eutaxic in $U$.
\end{Theorem}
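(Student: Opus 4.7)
The plan is to verify the sufficient condition for uniform eutaxy supplied by Theorem~\ref{thm:CSeutaxy}; in fact I will prove the stronger uniform-in-$\lambda$ condition~(\ref{eq:infliminfpos}), namely that almost surely
\[
  \inf_{\lambda\in\Lambda\setminus\{\emptyset\}\atop\lambda\subseteq U}
  \liminf_{j\to\infty} 2^{-dj}\,\#\Mu((X_n)_{n\geq 1};\lambda,j) > 0.
\]
Since the set of nonempty dyadic cubes contained in $U$ is countable, it suffices, by Borel--Cantelli, to establish a suitable deviation estimate for each fixed such cube $\lambda$, and then to take a countable intersection of almost sure events.

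Fix $\lambda\subseteq U$ and $j\geq 0$, and set $N=2^{d(\gene{\lambda}+j)}$. For each of the $2^{dj}$ dyadic subcubes $\lambda'\subseteq\lambda$ of generation $\gene{\lambda}+j$, let $I_{\lambda'}$ denote the indicator of the event $\{X_n\in\lambda'\text{ for some }n\leq N\}$, and set $Y_{\lambda,j}=\sum_{\lambda'}I_{\lambda'}$, which is exactly $\#\Mu((X_n)_{n\geq 1};\lambda,j)$. Since $\leb^d(\lambda')=2^{-d(\gene{\lambda}+j)}$, the common probability $p=\leb^d(\lambda')/\leb^d(U)$ satisfies $Np=1/\leb^d(U)$, and therefore as $j\to\infty$ one has $(1-p)^N\to \ee^{-1/\leb^d(U)}$. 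Consequently, writing $c_1=1-\ee^{-1/\leb^d(U)}>0$,
\[
  \esp[Y_{\lambda,j}] = 2^{dj}\bigl(1-(1-p)^N\bigr)\;\xrightarrow[j\to\infty]{}\; c_1\,2^{dj},
\]
so $\esp[Y_{\lambda,j}]\geq \tfrac{3}{4}c_1\,2^{dj}$ for all $j$ beyond some $j_0(\lambda)$.

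The main computational step, and the key observation in the argument, is that the indicators $I_{\lambda'}$ are \emph{negatively correlated}: for distinct subcubes $\lambda'$ and $\lambda''$, independence of the $X_n$ gives
\[
  \mathrm{Cov}(I_{\lambda'},I_{\lambda''})
  = (1-2p)^N - (1-p)^{2N} \leq 0,
\]
the inequality following from $(1-2p)\leq (1-p)^2$ raised to the $N$-th power. As a consequence $\mathrm{Var}(Y_{\lambda,j})\leq \sum_{\lambda'}\mathrm{Var}(I_{\lambda'})\leq 2^{dj}/4$. Chebyshev's inequality then yields
\[
  \prob\!\left(Y_{\lambda,j}\leq \tfrac{c_1}{2}\,2^{dj}\right)
  \leq \prob\!\left(\bigl|Y_{\lambda,j}-\esp[Y_{\lambda,j}]\bigr|\geq \tfrac{c_1}{4}\,2^{dj}\right)
  \leq \frac{4}{c_1^{2}\,2^{dj}},
\]
for $j\geq j_0(\lambda)$, and this upper bound is summable in $j$.

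By Borel--Cantelli, for each fixed dyadic cube $\lambda\subseteq U$ there is an almost sure event on which $2^{-dj}Y_{\lambda,j}\geq c_1/2$ for all sufficiently large $j$, so in particular the liminf is bounded below by $c_1/2>0$. Intersecting these countably many almost sure events, one obtains an almost sure event on which~(\ref{eq:infliminfpos}) holds with lower bound $c_1/2$, and this implies~(\ref{eq:liminfpos}). Theorem~\ref{thm:CSeutaxy} then gives the uniform eutaxy of $(X_n)_{n\geq 1}$ in $U$. The only genuine difficulty is the variance bound above; once the negative correlation of the hit indicators $I_{\lambda'}$ is identified, the rest is a routine second-moment/Borel--Cantelli argument. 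The boundedness of $U$ is used precisely to ensure that $\leb^d(U)$ is finite and hence $c_1>0$.
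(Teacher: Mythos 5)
Your proof is correct, and it takes a genuinely different route from the paper's. The paper establishes the same strong condition~(\ref{eq:infliminfpos}) by a large-deviations argument: for each dyadic cube $\lambda$ and each $\alpha\in(0,1)$, it bounds $\prob(\#\Mu(\lambda,j)\leq\alpha\,2^{dj})$ by a union bound over the $\binom{2^{dj}}{\lfloor\alpha 2^{dj}\rfloor}$ possible configurations of occupied subcubes, invokes Stirling's formula to express the binomial coefficient via the Shannon entropy $\Eta(\alpha)$, and concludes by Borel--Cantelli that the liminf is almost surely at least any $\alpha<\alpha_0$, where $\alpha_0$ is the unique root of $\Eta(\alpha)=(1-\alpha)/\leb^d(U)$. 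You instead run a second-moment argument: you compute the first moment of the occupancy count $Y_{\lambda,j}$ exactly, identify the negative correlation $(1-2p)^N-(1-p)^{2N}\leq 0$ of the hit indicators $I_{\lambda'}$ for disjoint subcubes, deduce $\mathrm{Var}(Y_{\lambda,j})\leq 2^{dj}/4$, and finish with Chebyshev and Borel--Cantelli. Your approach is more elementary --- it avoids Stirling's formula and the combinatorial union bound entirely, at the cost of producing only a polynomial-in-$2^{dj}$ tail bound instead of the paper's exponential one (which is immaterial here since both are summable in $j$). The constants differ --- your lower bound $c_1/2=(1-\ee^{-1/\leb^d(U)})/2$ versus the paper's $\alpha_0$ --- but since only positivity is needed, both suffice. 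The negative-correlation observation is the genuinely new ingredient in your proof and it is the right one: independence of the $X_n$ combined with disjointness of the subcubes is exactly what makes the covariance inequality $(1-2p)\leq(1-p)^2$ go in the right direction.
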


\begin{proof}
	Let us consider a dyadic cube $\lambda\subseteq U$, a real number $\alpha\in(0,1)$ and an integer $j\geq 0$, and let us suppose that the condition
	\begin{equation}\label{eq:condrandXnleq}
		\#\Mu((X_n)_{n\geq 1};\lambda,j)\leq \alpha\,2^{dj}
	\end{equation}
	holds. Then, the first $2^{d(\gene{\lambda}+j)}$ points $X_n$ are contained in either the complement in $\R^d$ of the cube $\lambda$, or the union of $\lfloor\alpha\,2^{dj}\rfloor$ subcubes of $\lambda$ with generation $\gene{\lambda}+j$, denoted by $\lambda'_1,\ldots,\lambda'_{\lfloor\alpha\,2^{dj}\rfloor}$. Each point $X_n$ is uniformly distributed in $U$, so that
	\[
		\prob(X_n\in(\R^d\setminus\lambda)\sqcup\lambda'_1\sqcup\ldots\sqcup\lambda'_{\lfloor\alpha\,2^{dj}\rfloor})
		=1-\frac{2^{-d\gene{\lambda}}}{\leb^d(U)}+\lfloor\alpha\,2^{dj}\rfloor\frac{2^{-d(\gene{\lambda}+j)}}{\leb^d(U)}.
	\]
	Moreover, combining the fact that the points $X_n$ are independent with the obvious bound $1+z\leq\ee^z$, for $z$ in $\R$, we deduce that
	\[
		\prob(X_1,\ldots,X_{2^{d(\gene{\lambda}+j)}}\in(\R^d\setminus\lambda)\sqcup\lambda'_1\sqcup\ldots\sqcup\lambda'_{\lfloor\alpha\,2^{dj}\rfloor})
		\leq\exp\left(-\frac{1-\alpha}{\leb^d(U)}\,2^{dj}\right).
	\]
	As a consequence, taking into account all the possible choices for the subcubes $\lambda'_1,\ldots,\lambda'_{\lfloor\alpha\,2^{dj}\rfloor}$ that result from the assumption~(\ref{eq:condrandXnleq}), we conclude that
	\[
		\prob(\#\Mu((X_n)_{n\geq 1};\lambda,j)\leq \alpha\,2^{dj})
		\leq\binom{2^{dj}}{\lfloor\alpha\,2^{dj}\rfloor}\exp\left(-\frac{1-\alpha}{\leb^d(U)}\,2^{dj}\right).
	\]
	By virtue of Stirling's formula, the logarithm of the binomial coefficient above is equivalent to $\Eta(\alpha)\,2^j$ as $j$ goes to infinity, where
	\[
		\Eta(\alpha)=-\alpha\log\alpha-(1-\alpha)\log(1-\alpha).
	\]
	Note that $\Eta(\alpha)$ is the Shannon entropy of the probability vector $(\alpha,1-\alpha)$. Hence,
	\[
		\limsup_{j\to\infty}\frac{1}{2^{dj}}\log\prob(\#\Mu((X_n)_{n\geq 1};\lambda,j)\leq \alpha\,2^{dj})
		\leq\Eta(\alpha)-\frac{1-\alpha}{\leb^d(U)}.
	\]
	
	The right-hand side vanishes for a unique $\alpha\in(0,1)$, denoted by $\alpha_0$. Furthermore, the right-hand side is negative when $\alpha<\alpha_0$, and the Borel-Cantelli lemma ensures that almost surely, the condition~(\ref{eq:condrandXnleq}) is satisfied for finitely many values of $j$ only. Hence, for every dyadic cube $\lambda\subseteq U$ and every $\alpha\in(0,\alpha_0)$,
	\[
		\as\qquad\liminf_{j\to\infty} 2^{-dj}\#\Mu((X_n)_{n\geq 1};\lambda,j)\geq\alpha.
	\]
	We may let $\alpha$ tend to $\alpha_0$ along a countable sequence, and the limiting value $\alpha_0$ does not depend on the choice of the dyadic cube $\lambda$. In addition, there are countably many dyadic cubes contained in $U$. The upshot is that the sequence $(X_n)_{n\geq 1}$ verifies~(\ref{eq:infliminfpos}) with probability one. Therefore, the weaker condition~(\ref{eq:liminfpos}) is also satisfied almost surely, and we may conclude with the help of Theorem~\ref{thm:CSeutaxy}.
\end{proof}

Combining Theorems~\ref{thm:desceutaxy} and~\ref{thm:eutaxicindepunif}, we may study the size and large intersection properties of the corresponding instance of~(\ref{eq:df:Fxnrn}), namely, the random set
	\[
		F_\rmr=\left\{x\in\R^d\bigm||x-X_n|<r_n\quad\text{for i.m.~}n\geq 1\right\},
	\]
	where $\rmr=(r_n)_{n\geq 1}$ is a nonincreasing sequence of positive real numbers. In fact, letting $\frakn_\rmr$ be the measure characterized by~(\ref{eq:df:fraknrmr}), we get the next statement.

\begin{Theorem}\label{thm:Dvoretzkydesc}
	Let $(X_n)_{n\geq 1}$ be a sequence of random points distributed independently and uniformly in a nonempty bounded open subset $U$ of $\R^d$. Then, with probability one, for any nonincreasing sequence $\rmr=(r_n)_{n\geq 1}$ of positive real numbers, the following properties hold:
	\begin{itemize}
		\item if $\sum_n r_n^d$ diverges, then $F_\rmr$ has full Lebesgue measure in $U$\,;
		\item if $\sum_n r_n^d$ converges, then $F_\rmr$ is $\frakn_\rmr$-describable in $U$.
	\end{itemize}
\end{Theorem}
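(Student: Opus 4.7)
The plan is to obtain Theorem~\ref{thm:Dvoretzkydesc} as an immediate combination of the two preceding theorems, Theorem~\ref{thm:eutaxicindepunif} and Theorem~\ref{thm:desceutaxy}. First, I would invoke Theorem~\ref{thm:eutaxicindepunif}, which supplies an almost sure event $\Omega_0$ on which the realization $(X_n(\omega))_{n\geq 1}$ is uniformly eutaxic in $U$. The decisive point here is that, in view of Definition~\ref{df:eutaxicunif}, uniform eutaxy is a single deterministic property that already quantifies \emph{over all} nonincreasing sequences in $\Rho_d$. Hence $\Omega_0$ does not depend on any particular sequence $\rmr$, which is precisely what the statement of Theorem~\ref{thm:Dvoretzkydesc} requires, since it asks that the dichotomy hold, on a common event of full probability, for every nonincreasing $\rmr$.

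Second, I would fix an arbitrary $\omega \in \Omega_0$ and an arbitrary nonincreasing sequence $\rmr = (r_n)_{n\geq 1}$ of positive real numbers, and apply Theorem~\ref{thm:desceutaxy} to the deterministic uniformly eutaxic sequence $(X_n(\omega))_{n\geq 1}$ together with the radii $\rmr$. In the divergent regime $\sum_n r_n^d = \infty$, the first bullet of Theorem~\ref{thm:desceutaxy} yields that $F_\rmr$ has full Lebesgue measure in $U$; in the convergent regime, the second bullet yields that $F_\rmr$ is $\frakn_\rmr$-describable in $U$, which is exactly the conclusion sought.

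There is essentially no serious obstacle in this argument; the substantive work has been absorbed into the two theorems being invoked. The only step that deserves explicit mention is the observation that the almost sure event of uniform eutaxy can be chosen independently of $\rmr$, so that the quantifier over $\rmr$ may legitimately be pulled inside the almost sure statement. This is in sharp contrast with the na\"ive argument based on the Borel--Cantelli lemma alluded to before Theorem~\ref{thm:eutaxicindepunif}, where the exceptional null set genuinely depends on the chosen radii; here, uniform eutaxy is precisely the device that circumvents this difficulty.
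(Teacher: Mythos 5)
Your proposal is correct and is precisely the paper's own argument: the theorem is obtained by combining Theorem~\ref{thm:eutaxicindepunif} (almost sure uniform eutaxy of the random sequence) with Theorem~\ref{thm:desceutaxy} (the deterministic describability dichotomy for uniformly eutaxic sequences). You also correctly identify the one point worth highlighting, namely that uniform eutaxy yields an almost sure event that does not depend on $\rmr$, which is what lets the quantifier over $\rmr$ sit inside the almost sure statement; the paper stresses exactly this in the remark following the theorem.
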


As usual, in combination with Theorem~\ref{thm:frakndescslip}, the above result yields a complete description of the size and large intersection properties of the random set $F_\rmr$. Such a description was first obtained in~\cite{Durand:2010fk}\,; Theorem~\ref{thm:Dvoretzkydesc} is however stronger than the description given in that paper in the sense that the almost sure event on which the statement holds does not depend on the sequence $\rmr$. Furthermore, as far as dimensional results are concerned, Corollary~\ref{cor:frakndescslip} is sufficient. By way of illustration, let us apply this result here. We assume that $\sum_n r_n^d$ converges, because the set has trivial dimensional properties otherwise, see the initial discussion in Section~\ref{sec:desc}. The exponent associated with the measure $\frakn_\rmr$ through~(\ref{eq:df:sfraknrad01}) is nothing but the critical exponent $s_\rmr$ for the convergence of the series $\sum_n r_n^s$ that is characterized by~(\ref{eq:condsumrn}). We conclude that almost surely, for any nonempty open set $V\subseteq U$,
	\[
		\left\{\begin{array}{l}
			\Hdim (F_\rmr\cap V)=s_\rmr\\[1mm]
			\Pdim (F_\rmr\cap V)=d\\[1mm]
			F_\rmr\in\lic{s_\rmr}{V},
		\end{array}\right.
	\]
	where the last two properties are valid under the additional assumption that $s_\rmr$ is positive. This can also be seen as a straightforward consequence of Corollary~\ref{cor:desceutaxy}. Finally, restricting to power functions for the approximating radii, we deduce that with probability one, for all $c>0$ and all $\sigma>1/d$,
	\[
		\Hdim\left\{x\in\R^d\Biggm||x-X_n|<\frac{c}{n^\sigma}\quad\text{for i.m.~}n\geq 1\right\}=\frac{1}{\sigma},
	\]
	thus extending a result due to Fan and Wu~\cite{Fan:2004it}, who addressed the one-dimensional case where $U$ is the open unit interval.

The above study is related with the famous problem regarding random coverings of the circle raised in~1956 by Dvoretzky~\cite{Dvoretzky:1956bs}. We now restrict our attention to the one-dimensional case. As mentioned above, the fact that a sequence $(r_n)_{n\geq 1}$ belongs to $\Rho_1$ implies, through a simple application of the Borel-Cantelli lemma and Tonelli's theorem, that with probability one, {\em Lebesgue-almost every} point $x$ of $(0,1)$ is covered by the open interval centered at $X_n$ with radius $r_n$, {\em i.e.}~satisfies $|x-X_n|<r_n$, for infinitely many integers $n\geq 1$. Dvoretzky's question can then be recast as follows: find a necessary and sufficient condition on the sequence $(r_n)_{n\geq 1}$ to ensure that with probability one, {\em every} point of the open unit interval $(0,1)$ satisfies the previous property. The problem raised the interest of many mathematicians such as Billard, Erd\H{o}s, Kahane and L\'evy, and was finally solved in 1972 by Shepp~\cite{Shepp:1972qj} who discovered that the condition is
\[
\sum_{n=1}^\infty\frac{1}{n^2}\exp(2(r_1+\ldots+r_n))=\infty.
\]
This criterion is very subtle in the sense that constants do matter: when $r_n$ is of the specific form $c/n$ with $c>0$, the condition is satisfied if and only if $c\geq 1/2$. We refer to~\cite{Durand:2010fk} and the references therein for more information on this topic.


\section{Poisson coverings}\label{sec:Poisson}

Comparable results may be obtained when the approximating points and the approximation radii are distributed according to a Poisson point measure. We begin by briefly recalling some basic facts about Poisson measures; we refer to {\em e.g.}~\cite{Kingman:1993gf,Neveu:1977mz} for additional details. The theory may be nicely developed for instance in locally compact topological spaces with a countable base. If $S$ denotes such a topological space, we call a {\em point measure} on $S$ any nonnegative measure $\varpi$ on $S$ that may be written as a sum of Dirac point masses, namely,
	\[
		\varpi=\sum_{n\in\calN}\delta_{s_n}
		\qquad\text{with}\qquad
		s_n\in S,
	\]
	and that assigns a finite mass to each compact subset of $S$. Note that the above points $s_n$ need not be distinct, but the index set $\calN$ is necessarily countable. The set of all point measures may be endowed with the $\sigma$-field generated by the mappings $\varpi\mapsto\varpi(F)$, where $F$ ranges over the Borel subsets of $S$. Naturally, a {\em random point measure} on $S$ is then a measurable mapping $\Pi$ defined on some abstract probability space and valued in the measurable space of point measures. One can show that the probability distribution of such a random point measure $\Pi$ is characterized by the distributions of all the random vectors of the form $(\Pi(E_1),\ldots,\Pi(E_n))$, where the sets $E_1,\ldots,E_n$ range over any fixed class of relatively compact Borel subsets of $S$ that is closed under finite intersections and generate the Borel $\sigma$-field on $S$. This enables us to now introduce our main definition.

\begin{Definition}
	Let $S$ be a locally compact topological space with a countable base, and let $\pi$ be a positive Radon measure thereon. There exists a random point measure $\Pi$ on $S$ such that the following two properties hold:
	\begin{itemize}
		\item for every Borel subset $E$ of $S$, the random variable $\Pi(E)$ is Poisson distributed with parameter $\pi(E)$\,;
		\item for all Borel subsets $E_1,\ldots,E_n$ of $S$ that are pairwise disjoint, the random variables $\Pi(E_1),\ldots,\Pi(E_n)$ are independent.
	\end{itemize}
	The random point measure $\Pi$ is called a {\em Poisson point measure} with intensity $\pi$, and its law is uniquely determined by the above two properties.
\end{Definition}

Note that we adopt the usual convention that a Poisson random variable with infinite parameter is almost surely equal to $\infty$. In addition to the aforementioned characterization, the distribution of a random point measure $\Pi$ is also determined by its {\em Laplace functional}, namely, the mapping defined by the formula
	\[
		\frakL_\Pi(f)=\esp\left[\exp\left(-\int_S f(s)\,\Pi(\dd s)\right)\right],
	\]
	where $f$ is any nonnegative Borel measurable function defined on $S$. Thus, $\Pi$ is a Poisson point measure with intensity $\pi$ if and only if for any such $f$,
	\[
		\frakL_\Pi(f)=\exp\left(-\int_S(1-\ee^{-f(s)})\,\pi(\dd s)\right).
	\]

Throughout the remainder of this section, we shall restrict our attention to Poisson point measures on the interval $(0,1]$, the product space $(0,1]\times\R^d$, or subsets thereof. Let $\nu$ be a measure in $\rad01$. We recall from Section~\ref{subsubsec:frakndesc} that $\nu$ is then a positive Borel measure on $(0,1]$ with infinite total mass and such that~(\ref{eq:condrad01}) holds, namely, the proper subintervals of the form $[r,1]$ all have finite mass. In addition, given a nonempty open subset $U$ of $\R^d$, we consider on the product space
	\[
		U_+=(0,1]\times U
	\]
	a Poisson point measure, denoted by $\Pi$, with intensity $\nu\otimes\leb^d(\,\cdot\,\cap U)$. This enables us to introduce the random set
	\begin{equation}\label{eq:df:Fnu}
		F_\nu=\left\{y\in\R^d\Biggm|\int_{U_+} \ind_{\{|y-x|<r\}}\,\Pi(\dd r,\dd x)=\infty\right\}.	
	\end{equation}

Although this might not be clear at first sight, this set is of the form~(\ref{eq:df:Fxnrn}). In fact, evaluating the Laplace functional of $\Pi$ at the function constant equal to one on $U_+$, and using the fact that $\nu$ has infinite total mass, we infer that with probability one, $\Pi$ has infinite total mass as well. Hence, it may almost surely be written as a countably infinite sum of Dirac point masses located at random pairs in $U_+$, denoted by $(R_n,X_n)$, for $n\geq 1$. Therefore, we also have almost surely
	\begin{equation}\label{eq:df:FnuXnRn}
		F_\nu=\left\{y\in\R^d\bigm||y-X_n|<R_n\quad\text{for i.m.~}n\geq 1\right\}.
	\end{equation}
	This shows in particular that $F_\nu$ is almost surely a $G_\delta$-subset of $\R^d$.

Our main result is the following complete description of the size and large intersection properties of $F_\nu$. We recall from~(\ref{eq:df:rad01d}) that $\nu$ is in $\rad01_d$ if and only if
	\[
		\croc{\nu}{r\mapsto r^d}=\int_{(0,1]} r^d\,\nu(\dd r)<\infty.
	\]

\begin{Theorem}\label{thm:Poissondesc}
	For any measure $\nu\in\rad01$ and a nonempty open set $U\subseteq\R^d$, the following properties hold:
	\begin{itemize}
		\item if $\nu\not\in\rad01_d$, then $F_\nu$ almost surely has full Lebesgue measure in $U$\,;
		\item if $\nu\in\rad01_d$, then $F_\nu$ is almost surely $\nu$-describable in $U$.
	\end{itemize}
\end{Theorem}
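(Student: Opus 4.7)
The proof mirrors that of Theorems~\ref{thm:desceutaxy} and~\ref{thm:descoptregsys}: treat the divergence case directly, then in the convergence case produce the two inclusions required by Proposition~\ref{prp:descincl} together with the right-openness of $\gauge(\nu)$ from Lemma~\ref{lem:gaugemeasopen}(\ref{item:lem:gaugemeasopen2}).

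For the divergence case ($\nu\notin\rad01_d$), I would show pointwise that every $y\in U$ lies in $F_\nu$ almost surely and then conclude by Tonelli's theorem. Picking $r_0>0$ with $\opball(y,r_0)\subseteq U$, the intensity of the Borel set $\{(r,x)\in U_+:|y-x|<r\}$ dominates $\int_{(0,r_0]}\leb^d(\opball(0,r))\,\nu(\dd r)$, which is infinite since $\int_{(0,1]}r^d\,\nu(\dd r)=\infty$ and $\nu([r_0,1])<\infty$. Hence $\Pi$ charges this set with infinite mass almost surely, so $y\in F_\nu$ by~(\ref{eq:df:Fnu}); exchanging the order of integration yields $\leb^d(U\setminus F_\nu)=0$ almost surely.

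For the convergence case ($\nu\in\rad01_d$), my plan is to localize to bounded subsets and appeal to Theorem~\ref{thm:desceutaxy}. Cover $U$ by a countable family of bounded open sets $V$, each embedded in a bounded open set $V'\subseteq U$ with $\{x\in U:\dist{x}{V}\leq 1\}\subseteq V'$. Since atoms of $\Pi$ have radii in $(0,1]$, for $y\in V$ the event $y\in F_\nu$ depends only on atoms sitting in $V'_+$, and enumerating these atoms by decreasing radii as $(R_n^{V'},X_n^{V'})_{n\geq 1}$ (which is possible because $\nu([\rho,1])\leb^d(V')<\infty$ for all $\rho>0$) gives $F_\nu\cap V=\frakF((X_n^{V'},R_n^{V'})_{n\geq 1})\cap V$. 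Conditionally on the radius sequence, the centers $(X_n^{V'})_n$ are i.i.d.\ uniform on $V'$, so by Theorem~\ref{thm:eutaxicindepunif} they form an almost surely uniformly eutaxic sequence in $V'$. On this almost sure event Theorem~\ref{thm:desceutaxy} applies and yields that $\frakF((X_n^{V'},R_n^{V'})_{n\geq 1})$ is $\frakn_{\rmr^{V'}}$-describable in $V'$, where $\frakn_{\rmr^{V'}}=\sum_n\delta_{R_n^{V'}}$ is the random empirical measure of the radii.

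The main obstacle is to upgrade this $\frakn_{\rmr^{V'}}$-describability to the desired $\nu$-describability. This reduces to showing that, on a single almost sure event, $\gauge(\frakn_{\rmr^{V'}})=\gauge(\nu)$, i.e., that $\sum_n g_d(R_n^{V'})$ and $\leb^d(V')\croc{\nu}{g_d}$ have matching finiteness simultaneously for \emph{every} $g\in\gauge^\infty$. For a fixed $g$ this follows from Campbell's formula and the standard Poisson integrability criterion, but the uniform version demands more. I would establish it by dyadic decomposition: setting $I_k=(2^{-k-1},2^{-k}]$, the count $N_k^{V'}=\Pi(I_k\times V')$ is Poisson with parameter $\leb^d(V')\nu(I_k)$, and Chernoff bounds combined with Borel--Cantelli produce almost sure two-sided envelopes comparing $N_k^{V'}$ to $\leb^d(V')\nu(I_k)$ up to an $O(\log k)$ error for all large $k$; since $d$-normalization of $g$ makes $g_d$ comparable to a constant on each slab $I_k$, these envelopes translate into the uniform series/integral equivalence. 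Once $\gauge(\frakn_{\rmr^{V'}})=\gauge(\nu)$ is secured on a single almost sure event, gluing over the countable cover via the monotonicity of $\majo$ and $\mino$ (Proposition~\ref{prp:monomajomino}), the countable subadditivity of $\hau^g$, and Proposition~\ref{prp:morelicgauge}(\ref{item:prp:morelicgauge2}) produces $\nu$-describability in $U$ through Proposition~\ref{prp:descincl}.
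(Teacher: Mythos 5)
Your overall strategy coincides with the paper's: treat the divergence case directly, then localize to bounded subsets, represent the Poisson measure via a nonincreasing radius sequence and independent uniform centers, invoke Theorem~\ref{thm:eutaxicindepunif} and Theorem~\ref{thm:desceutaxy}, and finally upgrade the empirical-measure describability to $\nu$-describability. Your divergence argument is in fact a genuine simplification: instead of passing through the Laplace functional of the marginal point measure and the eutaxy of the centers as the paper does, you show directly that the intensity of $\{(r,x)\in U_+:|y-x|<r\}$ is infinite for every $y\in U$, so $y\in F_\nu$ almost surely, and close by Tonelli. This is cleaner and just as rigorous.

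The gap is in the key lemma asserting $\gauge(\frakn_{\rmr^{V'}})=\gauge(\nu)$ on a single almost sure event, which you propose to establish via dyadic slabs $I_k=(2^{-k-1},2^{-k}]$ and Chernoff bounds giving $|N_k^{V'}-\lambda_k|\leq C\log k$ with $\lambda_k=\leb^d(V')\nu(I_k)$. This envelope is achievable, but it does not ``translate into the uniform series/integral equivalence'': inserting it gives $\sum_k g_d(2^{-k})N_k^{V'}\leq\sum_k g_d(2^{-k})\lambda_k+C\sum_k g_d(2^{-k})\log k$, and the error term can diverge even when the main term converges. For instance, with $g_d(r)=1/\log(1/r)$ near the origin (a legitimate $d$-normalized gauge function in $\gauge^\infty$) and $\nu(I_k)\asymp 1/\log^2 k$, one has $\sum_k g_d(2^{-k})\nu(I_k)\asymp\sum_k 1/(k\log^2 k)<\infty$ whereas $\sum_k g_d(2^{-k})\log k\asymp\sum_k(\log k)/k=\infty$; so the envelope is useless here, and term-by-term concentration is simply the wrong tool. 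The conclusion is nonetheless true, and the correct route is the one the paper takes in Lemma~\ref{lem:caracgaugePoisson}: compare \emph{cumulative} counts rather than slab counts. One shows that almost surely $\Phi_{\Nu^U}(\rho)\sim\leb^d(U)\,\Phi_\nu(\rho)$ as $\rho\to 0$, which is a law-of-large-numbers statement about a Poisson variable with diverging parameter and therefore is robust to the erratic behaviour of individual $\nu(I_k)$. The passage from this cumulative asymptotic to $\croc{\Nu^U}{g_d}$ versus $\croc{\nu}{g_d}$ is then exactly Abel summation, equivalently the Lebesgue--Stieltjes manipulation in the paper's proof, exploiting the monotonicity of the normalized gauge to control the ``tail'' contribution. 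Without this cumulative estimate your argument, as written, would not close. Your final gluing step (from the bounded pieces to all of $U$) is in the right spirit, but note that to obtain membership in $\lic{g}{U}$ one must prove $\netm^h_\infty(F_\nu\cap W)=\netm^h_\infty(W)$ for \emph{every} open $W\subseteq U$, not only for $W$ contained in one of your cover pieces; the paper achieves this by taking an increasing exhaustion $U_\ell\uparrow U$ with $\closure{U_\ell}\subseteq U_{\ell+1}$ and invoking the monotone limit property of the outer measure $\netm^h_\infty$, which is a detail worth making explicit.
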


Before establishing Theorem~\ref{thm:Poissondesc}, let us make some comments. The description of the size and large intersection properties of the set $F_\nu$ follows as usual from the combination of that result with Theorem~\ref{thm:frakndescslip}. Also, we may restrict our attention to the case where $\nu$ is in $\rad01_d$, as otherwise these properties are trivial, in light of the beginning of Section~\ref{sec:desc}. Moreover, if one is only interested in dimensional results, Corollary~\ref{cor:frakndescslip} is enough, and actually implies in that situation that with probability one, for any nonempty open set $V\subseteq U$,
	\[
		\left\{\begin{array}{l}
			\Hdim (F_\nu\cap V)=s_\nu\\[1mm]
			\Pdim (F_\nu\cap V)=d\\[1mm]
			F_\nu\in\lic{s_\nu}{V},
		\end{array}\right.
	\]
	where the last two properties hold if $s_\nu$ is positive. Here, the exponent $s_\nu$ is that associated with $\nu$ through~(\ref{eq:df:sfraknrad01})\,; it is also characterized by the following condition:
	\[
		\left\{\begin{array}{lll}
			s<s_\nu & \Longrightarrow & \displaystyle{\int_{(0,1]}r^s\,\nu(\dd r)=\infty}\\[4mm]
			s>s_\nu & \Longrightarrow & \displaystyle{\int_{(0,1]}r^s\,\nu(\dd r)<\infty}.
		\end{array}
		\right.
	\]

Besides, in the spirit of Dvoretzky's covering problem briefly discussed in Section~\ref{sec:Dvoretzky}, one may ask for a necessarily and sufficient condition on the measure $\nu$ to ensure that with probability one, {\em all} the points of the open set $U$ are covered by the Poisson distributed balls, {\em i.e.}~that the set $F_\nu$ contains the whole open set $U$ almost surely. This problem was posed by Mandelbrot~\cite{Mandelbrot:1972aa} and solved by Shepp~\cite{Shepp:1972lr} in dimension $d=1$ when the open set $U$ is equal to the whole real line. We refer to~\cite{Bierme:2012aa} and the references therein for further results in that direction.

The remainder of this section is devoted to the proof of Theorem~\ref{thm:Poissondesc}. Since it is quite long, we split it into several parts.

\subsection{Preliminary lemmas}

For any nonempty bounded open set $V\subseteq U$, let
	\begin{equation}\label{eq:df:FVnu}
		F^V_\nu=\left\{y\in\R^d\Biggm|\int_{V_+} \ind_{\{|y-x|<r\}}\,\Pi(\dd r,\dd x)=\infty\right\}.
	\end{equation}
	The proof calls upon the following connection between $F^V_\nu$ and the intersection with $V$ of the initial set $F_\nu$.

\begin{Lemma}\label{lem:restricPoisson}
	Let $V$ be a nonempty bounded open subset of $U$. Then, the restriction $\Pi(\,\cdot\,\cap V_+)$ is a Poisson point measure on $V_+$ with intensity $\nu\otimes\leb^d(\,\cdot\,\cap V)$. Moreover, with probability one,
	\[
		F_\nu\cap V\subseteq F^V_\nu\subseteq F_\nu\cap\closure{V}.
	\]
\end{Lemma}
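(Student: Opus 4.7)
The plan is to establish the two assertions in sequence, and the key obstacle throughout will be to reduce ``for every $y$ simultaneously'' statements to a countable collection of almost-sure events, so that one single exceptional null set suffices.

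For the first assertion, I would simply invoke the characterization of Poisson point measures through Laplace functionals. For any nonnegative Borel measurable $f$ on $V_+$, extend $f$ by zero to $U_+$ to get $\widetilde f$. Then $\int_{V_+}f\,\dd(\Pi(\,\cdot\,\cap V_+))=\int_{U_+}\widetilde f\,\dd\Pi$, so
\[
\frakL_{\Pi(\,\cdot\,\cap V_+)}(f)=\frakL_\Pi(\widetilde f)=\exp\!\left(-\int_{U_+}(1-\ee^{-\widetilde f(r,x)})\,\nu(\dd r)\leb^d(\dd x\cap U)\right),
\]
which, because $\widetilde f$ vanishes outside $V_+$ and $V\subseteq U$, equals $\exp\!\left(-\int_{V_+}(1-\ee^{-f})\,\dd(\nu\otimes\leb^d(\,\cdot\,\cap V))\right)$. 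This is the Laplace functional of a Poisson point measure with intensity $\nu\otimes\leb^d(\,\cdot\,\cap V)$, and such a functional characterizes the law.

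For the inclusion $F_\nu\cap V\subseteq F^V_\nu$, choose $M>0$ with $V\subseteq\opball(0,M)$, set $A=\{(r,x)\in U_+\:|\:|x|\leq M+1\}$ and, for each integer $k\geq 1$, $A_k=\{(r,x)\in U_+\:|\:r\geq 1/k\}$. Because $\nu\in\rad01$, the measure $\nu\otimes\leb^d(\,\cdot\,\cap U)$ of $A\cap A_k$ is bounded by $\Phi_\nu(1/k)\,\leb^d(\opball(0,M+1))<\infty$, so $\Pi(A\cap A_k)$ is almost surely finite, and the event $\Omega_1=\bigcap_{k\geq 1}\{\Pi(A\cap A_k)<\infty\}$ has probability one. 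On $\Omega_1$, fix $y\in F_\nu\cap V$ and let $r_y=\dist(y,\R^d\setminus V)>0$. Any Poisson atom $(R_n,X_n)$ with $|y-X_n|<R_n$ lies in $A$, and by the definition of $\Omega_1$ only finitely many satisfy $R_n\geq 1/k$ for $k>1/r_y$; hence infinitely many satisfy $R_n<r_y$, forcing $X_n\in\opball(y,r_y)\subseteq V$, so $(R_n,X_n)\in V_+$ and $y\in F^V_\nu$.

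For the inclusion $F^V_\nu\subseteq F_\nu\cap\closure V$, the inclusion into $F_\nu$ is immediate because $V_+\subseteq U_+$. For the containment in $\closure V$, set $B_q=\{(r,x)\in V_+\:|\:r\geq q\}$ for each rational $q>0$\,; its measure is at most $\Phi_\nu(q)\,\leb^d(V)<\infty$, so $\Omega_2=\bigcap_{q\in\Q_{>0}}\{\Pi(B_q)<\infty\}$ also has probability one. On $\Omega_2$, if $y\notin\closure V$, pick a rational $q$ with $0<q<\dist(y,\closure V)$\,; any atom $(R_n,X_n)\in V_+$ with $|y-X_n|<R_n$ forces $R_n>\dist(y,\closure V)>q$, so lies in $B_q$, and there are only finitely many such, whence $y\notin F^V_\nu$. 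Working on $\Omega_1\cap\Omega_2$ yields both inclusions simultaneously with probability one.

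The main obstacle, as noted, is the quantifier over all $y$. I would expect the expository risk to lie in making the two exceptional sets $\Omega_1$ and $\Omega_2$ genuinely depend only on a countable data (the integers $k$ and the rationals $q$), rather than on the point $y$\,; the key trick is precisely that $r_y$ and $\dist(y,\closure V)$ can each be bypassed by a rational threshold. Once this is arranged, the rest is a routine finite-intensity/Borel--Cantelli type argument.
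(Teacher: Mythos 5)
Your proof is correct and follows essentially the same strategy as the paper's: the distributional statement is handled by the Laplace functional, and the two inclusions are obtained by showing that, on a single almost-sure event, only finitely many Poisson atoms with radius above a threshold can cover a given point. The only cosmetic difference is the localizing set: the paper introduces the thickening $V_1=\{x\in U : \dist{x}{V}<1\}$ and uses a single family $[\rho,1]\times V_1$ whose finiteness (via monotonicity in $\rho$) serves both inclusions, whereas you use the ball $\opball(0,M+1)$ with $M$ an $L^\infty$-bound on $V$ for the first inclusion and a separate family $B_q\subseteq V_+$ for the second; both choices achieve the same reduction to a countable collection of finite-intensity events.
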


\begin{proof}
	The law of the restriction is easily obtained by computing its Laplace functional. In order to establish the inclusions, we define $V_1$ as the set of points $x$ in $U$ such that $\dist{x}{V}<1$, and we observe that for any $\rho\in(0,1]$, the random variable $\Pi([\rho,1]\times V_1)$ is Poisson distributed with parameter $\Phi_\nu(\rho)\leb^d(V_1)$. This parameter is finite by virtue of~(\ref{eq:condrad01}) and the boundedness of $V$. Therefore, $\Pi([\rho,1]\times V_1)$ is almost surely finite. However, this random variable is a monotonic function of $\rho$. We deduce that with probability one, all the values $\Pi([\rho,1]\times V_1)$, for $\rho\in(0,1]$, are finite. From now on, we assume that the corresponding almost sure event holds.
	
	Let us consider a point $y$ in $F_\nu\cap V$. Given that the set $V$ is open, it contains the open ball $\opball(y,\delta)$ for some $\delta>0$. Let us consider a pair $(r,x)$ in $U_+$ satisfying $|y-x|<r$. Then, this pair actually belongs to $V_+$ when $r<\delta$, and to $[\delta,1]\times V_1$ otherwise. As a consequence,
	\[
		\infty=\int_{U_+} \ind_{\{|y-x|<r\}}\,\Pi(\dd r,\dd x)
		\leq\int_{V_+} \ind_{\{|y-x|<r\}}\,\Pi(\dd r,\dd x)+\Pi([\delta,1]\times V_1).
	\]
	On the almost sure event that we considered, the second term in the right-hand side above is finite. Hence, the first term is infinite, {\em i.e.}~the point $y$ is in $F^V_\nu$.
	
	Conversely, let us consider a point $y$ in $F^V_\nu$. Given that $V_+\subseteq U_+$, the point $y$ is automatically in $F_\nu$. In order to show that $y$ also belongs to the closure of $V$, it suffices to consider an arbitrary real number $\delta>0$ and to prove that the ball $\opball(y,\delta)$ meets $V$. If $(r,x)$ denotes a pair in $V_+$ with $|y-x|<r$, we remark that $x$ belongs to that ball if $r<\delta$, and simply to the set $V_1$ otherwise. Accordingly,
	\[
		\infty=\int_{V_+} \ind_{\{|y-x|<r\}}\,\Pi(\dd r,\dd x)
		\leq\Pi((0,1]\times(\opball(y,\delta)\cap V))+\Pi([\delta,1]\times V_1).
	\]
	Again, the second term in the right-hand side is finite, so the first term is infinite, which means in particular that the sets $\opball(y,\delta)$ and $V$ intersect.
\end{proof}

Lemma~\ref{lem:restricPoisson} will enable us to reduce the proof of Theorem~\ref{thm:Poissondesc} to the case of bounded open sets. The advantage is that, with the help of the next lemma, we will be able to use a convenient representation of the Poisson point measure $\Pi$.

\begin{Lemma}\label{lem:prodUnifPoisson}
	Let us assume that the open set $U$ is bounded.
	\begin{enumerate}
		\item\label{item:lem:prodUnifPoisson1} Let $\Nu^U$ denote a Poisson point measure on the interval $(0,1]$ with intensity
		\[
			\nu^U=\leb^d(U)\,\nu.
		\]
		Then, there exists a nonincreasing sequence $(R_n)_{n\geq 1}$ of positive random variables that converges to zero such that with probability one,
		\begin{equation}\label{eq:PoissonDiracNu}
			\Nu^U=\sum_{n=1}^\infty\delta_{R_n}.
		\end{equation}
		\item\label{item:lem:prodUnifPoisson2} Let $(X_n)_{n\geq 1}$ be a sequence of random variables that are independently and uniformly distributed in $U$, and are also independent on $\Nu^U$. Then,
		\begin{equation}\label{eq:df:DiracNuU}
			\Nu^U_+=\sum_{n=1}^\infty\delta_{(R_n,X_n)}
		\end{equation}
		is a Poisson point measure on $U_+$ with intensity $\nu\otimes\leb^d(\,\cdot\,\cap U)$.
	\end{enumerate}
\end{Lemma}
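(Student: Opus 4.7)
The plan is to prove the two parts in succession, with part~(\ref{item:lem:prodUnifPoisson1}) being essentially a structural observation about the atoms of $\Nu^U$, and part~(\ref{item:lem:prodUnifPoisson2}) being a direct verification through the Laplace functional.

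For part~(\ref{item:lem:prodUnifPoisson1}), I would first observe that, because $\nu$ belongs to $\rad01$, the intensity $\nu^U=\leb^d(U)\,\nu$ still satisfies $\Phi_{\nu^U}(\rho)<\infty$ for every $\rho\in(0,1]$ and has infinite total mass (using here that $U$ is bounded and has positive Lebesgue measure, so $\leb^d(U)$ is a positive finite factor). Consequently the random variable $\Nu^U([\rho,1])$ is Poisson with finite parameter for every $\rho>0$, hence almost surely finite, whereas $\Nu^U((0,1])$ has infinite parameter and is therefore almost surely infinite. Taking a countable sequence $\rho_k\downarrow 0$, on a single almost sure event $\Nu^U$ puts only finitely many atoms on each $[\rho_k,1]$ and infinitely many in total; enumerating these atoms in nonincreasing order gives the desired sequence $(R_n)_{n\geq 1}$, which necessarily tends to zero since otherwise some $[\rho_k,1]$ would contain infinitely many atoms. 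The convention~(\ref{eq:PoissonDiracNu}) is then just this enumeration.

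For part~(\ref{item:lem:prodUnifPoisson2}), the plan is to compute the Laplace functional of $\Nu^U_+$ at an arbitrary nonnegative Borel function $f$ on $U_+$ and compare it with the known Poisson functional. Conditioning on $\Nu^U$, and using the independence of the $X_n$ together with the fact that each $X_n$ is uniform on $U$, I would write
\[
\frakL_{\Nu^U_+}(f)
=\esp\!\left[\prod_{n=1}^\infty\esp\bigl[\ee^{-f(R_n,X_n)}\bigm|\Nu^U\bigr]\right]
=\esp\!\left[\exp\!\left(-\sum_{n=1}^\infty g(R_n)\right)\right],
\]
where $g(r)=-\log\bigl(\leb^d(U)^{-1}\int_U \ee^{-f(r,x)}\,\dd x\bigr)$ is a nonnegative Borel function on $(0,1]$ (nonnegativity comes from the bound $\ee^{-f(r,x)}\leq 1$). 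The right-hand side is exactly $\frakL_{\Nu^U}(g)$, and applying the Laplace functional of the Poisson measure $\Nu^U$ with intensity $\nu^U=\leb^d(U)\,\nu$ gives
\[
\frakL_{\Nu^U_+}(f)
=\exp\!\left(-\int_{(0,1]}\!\!\left(1-\frac{1}{\leb^d(U)}\int_U \ee^{-f(r,x)}\,\dd x\right)\leb^d(U)\,\nu(\dd r)\right)
=\exp\!\left(-\!\!\int_{U_+}\!\!(1-\ee^{-f(r,x)})\,\nu(\dd r)\,\dd x\right),
\]
which is the Laplace functional of a Poisson point measure on $U_+$ with intensity $\nu\otimes\leb^d(\,\cdot\,\cap U)$. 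Since the distribution of a random point measure is determined by its Laplace functional, this identifies the law of $\Nu^U_+$ as required.

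The only step requiring any care is the conditional computation above: one must justify the interchange of expectation and infinite product, which follows from monotone convergence applied to partial products (they are nonnegative and bounded by one), together with the measurability of $g$, which is a routine consequence of Fubini's theorem. There is no deep obstacle, the result being essentially a restatement of the standard marked Poisson process construction specialised to the uniform mark distribution on $U$.
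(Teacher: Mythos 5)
Your proof is correct and follows essentially the same route as the paper: part~(\ref{item:lem:prodUnifPoisson1}) by observing that $\nu^U$ has infinite total mass while each $[\rho,1]$ carries finite intensity, so the atoms accumulate only at the origin and admit a nonincreasing enumeration tending to zero, and part~(\ref{item:lem:prodUnifPoisson2}) by computing the Laplace functional of $\Nu^U_+$, recognizing it as $\frakL_{\Nu^U}$ evaluated at $r\mapsto-\log\bigl(\leb^d(U)^{-1}\int_U\ee^{-f(r,x)}\,\dd x\bigr)$, and invoking the Poisson Laplace functional for $\Nu^U$. Your extra remarks on the countable sequence $\rho_k\downarrow 0$ and on the monotone-convergence justification of the product interchange are a bit more explicit than the paper, but do not change the argument.
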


\begin{proof}
	In order to prove~(\ref{item:lem:prodUnifPoisson1}), we begin by observing that the Poisson point measure $\Nu^U$ must have infinite total mass with probability one, because its intensity $\nu^U$ has infinite total mass too. Thus, there is a sequence $(R_n)_{n\geq 1}$ of positive random variables such that~(\ref{eq:PoissonDiracNu}) holds. However, the assumption~(\ref{eq:condrad01}) implies that
	\[
		\forall\rho>0\qquad
		\esp[\#\{n\geq 1\:|\:R_n\geq\rho\}]=\esp[\Nu^U([\rho,1])]=\Phi_{\nu^U}(\rho)<\infty.
	\]
	Thus, $(R_n)_{n\geq 1}$ converges to zero with probability one. Now, up to rearranging the terms, we can assume that this sequence is nonincreasing and still verifies~(\ref{eq:PoissonDiracNu}).
	
	The property~(\ref{item:lem:prodUnifPoisson2}) may be established by computing the Laplace functional of the random point measure $\Nu^U_+$. Let $f$ denote a nonnegative Borel measurable function defined on $U_+$. Then, we have
	\[
		\frakL_{\Nu^U_+}(f)
		=\esp\left[\exp\left(-\sum_{n=1}^\infty f(R_n,X_n)\right)\right]
		=\esp\left[\prod_{n=1}^\infty\left(\int_U\ee^{-f(R_n,x)}\frac{\dd x}{\leb^d(U)}\right)\right].
	\]
	The right-hand side may be rewritten as the Laplace functional of the random point measure $\Nu^U$ evaluated at the nonnegative Borel measurable function
	\[
		r\mapsto -\log\int_U\ee^{-f(r,x)}\frac{\dd x}{\leb^d(U)}.
	\]
	Since $\Nu^U$ is a Poisson point measure with intensity $\nu^U$, we finally deduce that for every nonnegative Borel measurable function $f$ defined on the set $U_+$, we have
	\[
		\frakL_{\Nu^U_+}(f)
		=\exp\left(-\int_{U_+} (1-\ee^{-f(r,x)})\,\nu^U(\dd r)\otimes\frac{\dd x}{\leb^d(U)}\right),
	\]
	from which we may determine the law of the random point measure $\Nu^U_+$.
\end{proof}

The representation supplied by Lemma~\ref{lem:prodUnifPoisson} calls upon a sequence of independent uniform random points. In view of Theorem~\ref{thm:eutaxicindepunif}, it thus establishes a connection with eutaxy that we shall exploit in the proof of Theorem~\ref{thm:Poissondesc}. The proof will also make a crucial use of the following result on the integrability of gauge functions with respect to Poisson random measures.

\begin{Lemma}\label{lem:caracgaugePoisson}
	Let us suppose that the measure $\nu$ is in $\rad01_d$ and that the open set $U$ is bounded, and let us consider a Poisson point measure $\Nu^U$ on the interval $(0,1]$ with intensity equal to $\leb^d(U)\,\nu$. Then, with probability one,
	\[
		\Nu^U\in\rad01_d
		\qquad\text{and}\qquad
		\gauge(\Nu^U)=\gauge(\nu).
	\]
\end{Lemma}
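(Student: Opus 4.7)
The plan is to establish the three required properties in turn, using standard tools for Poisson random measures.

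First, I would verify the two easier assertions. For $\Nu^U \in \rad01$ a.s., note that $\Nu^U((0,1])$ is Poisson distributed with infinite parameter $\leb^d(U)\,\nu((0,1])$, hence a.s.\ infinite, while for each $\rho \in (0,1]$, $\Phi_{\Nu^U}(\rho) = \Nu^U([\rho,1])$ is Poisson with finite parameter $\leb^d(U)\,\Phi_\nu(\rho)$, hence a.s.\ finite; the monotonicity of $\rho \mapsto \Nu^U([\rho,1])$ then produces a single a.s.\ event on which $\Phi_{\Nu^U}(\rho) < \infty$ for every $\rho \in (0,1]$. For $\Nu^U \in \rad01_d$ a.s., Campbell's formula yields
\[
\esp\!\left[\int_{(0,1]} r^d\,\Nu^U(\dd r)\right] = \leb^d(U)\int_{(0,1]} r^d\,\nu(\dd r) < \infty,
\]
so that $\croc{\Nu^U}{r \mapsto r^d}$ is a.s.\ finite.

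The core of the lemma is the equality $\gauge(\Nu^U) = \gauge(\nu)$ a.s. For each fixed gauge function $g \in \gauge^\infty$ with $d$-normalization $g_d$, the Laplace functional of $\Nu^U$ gives
\[
\esp[\exp(-\croc{\Nu^U}{g_d})] = \exp\!\left(-\leb^d(U)\int_{(0,1]}\bigl(1 - \ee^{-g_d(r)}\bigr)\,\nu(\dd r)\right).
\]
Since $g_d$ is bounded on $(0,1]$ and vanishes at the origin, $g_d$ and $1 - \ee^{-g_d}$ are comparable in a neighborhood of zero, while both are integrable against $\nu$ on any $[r_0,1]$ with $r_0 > 0$; hence $\int(1-\ee^{-g_d})\,\dd\nu$ and $\croc{\nu}{g_d}$ are simultaneously finite or infinite. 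Combined with Campbell's formula applied directly to $g_d$, this yields the pointwise dichotomy: for each fixed $g$, $\croc{\Nu^U}{g_d} = \infty$ a.s.\ when $g \in \gauge(\nu)$, and $\croc{\Nu^U}{g_d} < \infty$ a.s.\ when $g \notin \gauge(\nu)$.

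The main obstacle is to upgrade this pointwise-in-$g$ statement to a single a.s.\ event valid for every gauge function simultaneously. To this end, I plan to establish the almost-sure tail equivalence
\[
\frac{\Phi_{\Nu^U}(\rho)}{\leb^d(U)\,\Phi_\nu(\rho)} \,\xrightarrow[\rho\downarrow 0]{}\, 1 \qquad \as,
\]
by representing (with standard adjustments when $\nu$ has atoms) the process $t \mapsto \Phi_{\Nu^U}(\Psi(t))$, where $\Psi(t) = \inf\{\rho \in (0,1] : \leb^d(U)\,\Phi_\nu(\rho) \leq t\}$, as a rate-one Poisson process on $[0,\infty)$ and invoking the classical strong law $M(t)/t \to 1$. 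On the a.s.\ event $\Omega_0$ where this tail equivalence holds, the layer-cake identity
\[
\croc{\mu}{g_d} = \int_0^{g_d(\eps_{g_d})} \Phi_\mu\bigl(g_d^{-1}(t)\bigr)\,\dd t + \bigl(\text{a.s.\ finite boundary term}\bigr),
\]
valid for any $d$-normalized $g_d$ (nondecreasing on $[0,\eps_{g_d}]$ and bounded on $(0,1]$), forces $\croc{\Nu^U}{g_d}$ and $\leb^d(U)\,\croc{\nu}{g_d}$ to be simultaneously finite or infinite for \emph{every} $g \in \gauge^\infty$ at once. This gives $\gauge(\Nu^U) = \gauge(\nu)$ on this single a.s.\ event, completing the proof.
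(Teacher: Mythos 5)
Your proposal is correct and follows essentially the same route as the paper: both establish the almost-sure tail asymptotic $\Phi_{\Nu^U}(\rho)\sim\leb^d(U)\,\Phi_\nu(\rho)$ as $\rho\to 0$ and then transfer integrability of $g_d$ against $\Nu^U$ to integrability against $\nu$ for \emph{all} gauge functions at once via a Fubini/Tonelli identity expressing $\croc{\mu}{g_d}$ in terms of $\Phi_\mu$ (what you call the layer-cake identity, the paper instead phrases via the Lebesgue--Stieltjes measure of a monotone modification $\widetilde g$ of $g_d$). You correctly identify that the whole point of the tail equivalence is to collapse the $g$-dependent null sets into a single almost-sure event, which the fixed-$g$ Laplace-functional computation you mention first cannot do on its own. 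Two small points where you deviate harmlessly: you obtain $\Nu^U\in\rad01_d$ directly from Campbell's formula rather than as a by-product of the tail equivalence (a slightly more direct route), and you sketch the proof of the tail equivalence via a time-change to a standard Poisson process and the strong law, whereas the paper simply cites it from \cite[Lemma~5]{Durand:2010uq}; your sketch is the right idea, with the caveat about atoms of $\nu$ being a real (if standard) subtlety that would need to be spelled out in a complete write-up.
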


\begin{proof}
	Similarly to~(\ref{eq:condrad01}), for any real number $\rho\in(0,1]$, we define $\Phi_{\Nu^U}(\rho)$ and $\Phi_\nu(\rho)$ as being equal to $\Nu^U([\rho,1])$ and $\nu([\rho,1])$, respectively. The proof of the lemma relies on the crucial observation that with probability one,
	\begin{equation}\label{eq:asymptNu}
		\Phi_{\Nu^U}(\rho)\sim\leb^d(U)\,\Phi_\nu(\rho) \quad\text{as}\quad \rho\to 0,
	\end{equation}
	see~\cite[Lemma~5]{Durand:2010uq}. This shows in particular that $\Nu^U$ is almost surely in $\rad01$.

Let us consider a gauge function $g$ in $\gauge$ with $d$-normalization denoted by $g_d$ as usual. The function $g_d$ is nondecreasing and continuous near zero, but need not satisfy this property on the whole interval $[0,1]$. However, $g_d$ clearly coincides near zero with some function denoted by $\widetilde g$ which is both nondecreasing and continuous on the whole $[0,1]$. Moreover, due to~(\ref{eq:condrad01}) and the fact that $g_d$ is bounded on $(0,1]$,
	\[
		\int_{(0,1]} g_d(r)\,\nu(\dd r)=\infty
		\qquad\Longleftrightarrow\qquad
		\int_{(0,1]} \widetilde g(r)\,\nu(\dd r)=\infty.
	\]
	Likewise, the above characterization is valid if $\nu$ is replaced by $\Nu^U$, on the almost sure event on which it belongs to $\rad01$.

Now, similarly to the proof of Theorem~\ref{thm:descoptregsys}, let us introduce the Lebesgue-Stieltjes measure associated with the monotonic function $\widetilde g$. Specifically, let $\zeta$ be the premeasure satisfying $\zeta((r,r'])=\widetilde g(r')-\widetilde g(r)$ when $0\leq r\leq r'\leq 1$, and let $\zeta_\ast$ be the outer measure defined by~(\ref{eq:df:zetaastm}). Theorem~\ref{thm:bormeasm} shows that the Borel sets contained in $(0,1]$ are $\zeta_\ast$-measurable, and we may thus integrate locally bounded Borel-measurable functions with respect to $\zeta_\ast$. One may prove that $\zeta_\ast$ coincides with the premeasure $\zeta$ on the intervals where it is defined, and in particular that $\zeta_\ast((0,r])=\widetilde g(r)$ for any $r\in(0,1]$. Using Tonelli's theorem, we deduce that
	\[
		\int_{(0,1]} \widetilde g(r)\,\nu(\dd r)=\int_{(0,1]} \Phi_\nu(\rho)\,\zeta_\ast(\dd\rho),
	\]
	and that the same property holds when $\nu$ is replaced by $\Nu^U$. Placing ourselves on the almost sure event where~(\ref{eq:asymptNu}) holds, we conclude that $\Nu^U$ belongs to $\rad01_d$ because $\nu$ does, and that the sets $\gauge(\Nu^U)$ and $\gauge(\nu)$ coincide.
\end{proof}

\subsection{Reduction to the bounded case}

We now reduce the study to the case in which the ambient open set is bounded. The sets defined for $\ell\geq 1$ by
	\[
		U_\ell=\{x\in U\cap\opball(0,\ell)\:|\:\dist{x}{\R^d\setminus(U\cap\opball(0,\ell))}>1/\ell\},
	\]
	form a nondecreasing sequence of bounded open sets with union $U$. In particular, the sets $U_\ell$ are nonempty from some $\ell_0$ onwards. We have actually $\closure{U_\ell}\subseteq U_{\ell+1}$ for all $\ell\geq\ell_0$. Letting $F^{U_\ell}_\nu$ be defined as in~(\ref{eq:df:FVnu}), we infer from Lemma~\ref{lem:restricPoisson} that
	\begin{equation}\label{eq:decompFnuU}
		F_\nu\cap U=\bigcup_{\ell=\ell_0}^\infty\uparrow (F^{U_\ell}_\nu\cap U_\ell).
	\end{equation}

Let us assume that Theorem~\ref{thm:Poissondesc} holds for bounded open sets, and let us begin by supposing that the measure $\nu$ is not in $\rad01_d$. Then, for any $\ell\geq\ell_0$, with probability one, the set $F^{U_\ell}_\nu$ is Lebesgue full in $U_\ell$. We readily deduce from~(\ref{eq:decompFnuU}) and the basic properties of Lebesgue measure that $F_\nu$ is almost surely Lebesgue full in $U$.
	
	Let us now suppose that $\nu$ is in $\rad01_d$. Then, for any $\ell\geq\ell_0$, with probability one, any gauge function in $\gauge(\nu)^\complement$ is majorizing for $F^{U_\ell}_\nu$ in $U_\ell$. Hence, with probability one, any such gauge function is majorizing for $F_\nu$ in $U$\,; this is due to~(\ref{eq:decompFnuU}) and the fact that Hausdorff measures are outer measures. In other words,
	\[
		\as \qquad \majo(F_\nu,U)\supseteq\gauge(\nu)^\complement.
	\]
	Furthermore, we also know that for any $\ell\geq\ell_0$, with probability one, any gauge function in $\gauge(\nu)$ is minorizing for $F^{U_\ell}_\nu$ in $U_\ell$. Thus, with probability one, for any such gauge function $g$, each set $F^{U_\ell}_\nu$ with $\ell\geq\ell_0$ is in $\lic{g}{U_\ell}$. By Definition~\ref{df:liclocgauge}, for any $d$-normalized gauge function $h\prec g_d$ and any open set $V\subseteq U$, we get
	\[
		\netm^h_\infty(F^{U_\ell}_\nu\cap U_\ell\cap V)=\netm^h_\infty(U_\ell\cap V),
	\]
	because $U_\ell\cap V$ is then an open subset of $U_\ell$. The sets in the right-hand side are nondecreasing with respect to $\ell$ and their union is equal to $V$. Owing to~(\ref{eq:decompFnuU}), the sets in the left-hand side satisfy the same monotonicity property, with an union equal to $F_\nu\cap V$. We now use the fact that~(\ref{eq:measincunion}) holds for the outer measure $\netm^h_\infty$ even if the involved sets need not be measurable, see~\cite[Theorem~52]{Rogers:1970wb}. We get
	\[
		\netm^h_\infty(F_\nu\cap V)=\netm^h_\infty(V).
	\]
	We have thus proved that with probability one, for any gauge function $g$ in $\gauge(\nu)$, the set $F_\nu$ belongs to the large intersection class $\lic{g}{U}$. As a result,
	\[
		\as \qquad \mino(F_\nu,U)\supseteq\gauge(\nu).
	\]
	To conclude that $F_\nu$ is almost surely $\nu$-describable in $U$, it suffices to apply Proposition~\ref{prp:descincl} and Lemma~\ref{lem:gaugemeasopen}(\ref{item:lem:gaugemeasopen2}).

\subsection{Proof in the bounded case}

It remains us to establish Theorem~\ref{thm:Poissondesc} in the case where the open set $U$ is bounded. Let $\Nu^U$ denote a Poisson point measure on $(0,1]$ with intensity $\leb^d(U)\,\nu$. Lemma~\ref{lem:prodUnifPoisson}(\ref{item:lem:prodUnifPoisson1}) ensures the existence of a nonincreasing sequence $(R_n)_{n\geq 1}$ of positive random variables that converges to zero such that~(\ref{eq:PoissonDiracNu}) holds with probability one. Moreover, let $(X_n)_{n\geq 1}$ be a sequence of random variables that are independently and uniformly distributed in $U$, and are also independent on $\Nu^U$. Lemma~\ref{lem:prodUnifPoisson}(\ref{item:lem:prodUnifPoisson2}) now implies that the random point measure defined on $U_+$ by~(\ref{eq:df:DiracNuU}) is Poisson distributed with intensity $\nu\otimes\leb^d(\,\cdot\,\cap U)$. Hence, the random point measures $\Pi$ and $\Nu^U_+$ share the same law, and we may assume that $\Pi$ is replaced by $\Nu^U_+$ in the definition of the random set $F_\nu$. This enables us to write $F_\nu$ in the alternate form~(\ref{eq:df:FnuXnRn}), where the random variables $R_n$ and $X_n$ are defined as above. On top of that, Theorem~\ref{thm:eutaxicindepunif} ensures that with probability one, the sequence $(X_n)_{n\geq 1}$ is almost surely uniformly eutaxic in $U$.

Evaluating the Laplace functional of $\Nu^U$ at the function $r\mapsto r^d$, we obtain
	\[
		\esp\left[\exp\left(-\sum_{n=1}^\infty R_n^d\right)\right]
		=\exp\left(-\leb^d(U)\int_{(0,1]}(1-\ee^{-r^d})\,\nu(\dd r)\right).
	\]
Therefore, if $\nu$ is not in $\rad01_d$, the integral in the right-hand side is infinite, so that the expectation in the left-hand side vanishes. This means that $\sum_n R_n^d$ diverges almost surely, and thus that the sequence $(R_n)_{n\geq 1}$ belongs to the set $\Rho_d$. By Definition~\ref{df:eutaxicunif}, the set $F_\nu$ almost surely has full Lebesgue measure in $U$.

Lastly, if $\nu$ is in $\rad01_d$, Lemma~\ref{lem:caracgaugePoisson} entails that with probability one, $\Nu^U$ belongs to $\rad01_d$, and thus that $\sum_n R_n^d$ converges. Applying Theorem~\ref{thm:desceutaxy}, we then deduce that with probability one, the set $F_\nu$ is $\Nu^U$-describable in $U$. However, Lemma~\ref{lem:caracgaugePoisson} shows that the sets $\gauge(\Nu^U)$ and $\gauge(\nu)$ coincide almost surely. It follows that $F_\nu$ is almost surely $\nu$-describable in $U$.


\section{Singularity sets of L\'evy processes}

With that level of generality, Theorem~\ref{thm:Poissondesc} does not appear anywhere else in the literature. However, in dimension $d=1$, results of the same flavor have been obtained in~\cite{Durand:2007fk} with a view to studying the singularity sets of L\'evy processes. Similar results are used in~\cite{Durand:2010uq} to perform the multifractal analysis of multivariate extensions of L\'evy processes; this also corresponds to the case where $d=1$, but the approximating points are replaced by Poisson distributed hyperplanes. Actually,~\cite{Durand:2007fk} already gives a description of the size and large intersection the singularity sets of L\'evy processes. Thanks to Theorem~\ref{thm:Poissondesc}, we shall recast the results obtained therein in terms of describability, and also drop some inessential assumptions.

We recall from~\cite{Bertoin:1996uq,Sato:1999fk} that any $\R^{d'}$-valued L\'evy process may be written as an independent sum of a Brownian motion with drift, a compound Poisson process with jumps of magnitude larger than one, and a third process that we now define; this is the L\'evy-It\=o decomposition of the process. We also recall that the multifractal analysis of L\'evy processes was initiated by Jaffard~\cite{Jaffard:1999fg}. Starting from a Borel measure $\frakj$ with support in the set $\clball(0,1)\setminus\{0\}$ such that
	\[
		\int_{0<|z|\leq 1} |z|^2\,\frakj(\dd z)<\infty,
	\]
	we consider on the product of the latter set with the open interval $U=(0,\infty)$ a Poisson point measure, denoted by $\frakJ$, with intensity the product of the measures $\frakj$ and $\leb^1(\,\cdot\,\cap U)$. Then, for any $\delta\in(0,1)$, we may consider the process defined by
	\[
		Z^\delta_t
		=\lim_{\eps\to 0} Z^{\eps,\delta}_t
		\qquad\text{with}\qquad
		Z^{\eps,\delta}_t=\int_{\eps<|z|\leq\delta\atop 0<x\leq t} z\,\frakJ(\dd z,\dd x)-t\int_{\eps<|z|\leq\delta} z\,\frakj(\dd z)
	\]
	and convergence holds uniformly on all compact subsets of $[0,\infty)$, with probability one. The process $(Z^\delta_t)_{t\geq 0}$ is a L\'evy process with L\'evy measure $\ind_{\{|z|\leq\delta\}}\frakj(\dd z)$. The third process in the aforementioned L\'evy-It\=o decomposition is of the previous form with $\delta=1$. We restrict our attention to this process only, because the two other components are trivial from the viewpoint of multifractal analysis, see~\cite{Jaffard:1999fg}. This process is denoted by $Z=(Z_t)_{t\geq 0}$, for short, instead of $(Z^1_t)_{t\geq 0}$. To avoid another trivial situation, we also assume that the L\'evy measure $\frakj$ has infinite total mass.

In the context of multifractal analysis, the pointwise regularity of the process $Z$ is measured by means of the {\em H\"older exponent} $h_Z(t)$, {\em i.e.}~the supremum of all $h>0$ such that there exist a real number $c>0$ and a $d'$-tuple $P$ of polynomials satisfying
	\[
		|Z_{t'}-P(t'-t)|\leq c\, |t'-t|^h
	\]
	for any nonnegative real number $t'$ in a neighborhood of $t$, see~\cite{Jaffard:2004fh}. A {\em singularity set} is then a set of times at which the process is continuous and has H\"older exponent bounded above by a given value, that is, a set of the form
	\[
		S_Z(h)=\{ t\in [0,\infty)\setminus J_Z \:|\: h_Z(t)\leq h \},
	\]
	with $h\in[0,\infty]$, where $J_Z$ is the set of jump times of $Z$. Note that $Z$ is almost surely right-continuous with finite left limits at every time $t>0$, so we may define $\Delta Z_t=Z_t-Z_{t-}$. The set $J_Z$ is then formed of the times $t>0$ at which $\Delta Z_t\neq 0$.
	
The singularity sets exhibit a remarkable connection with Poisson coverings. Indeed, for any real number $\alpha>0$, let us consider the random set
	\[
	G_\alpha=\left\{t\in\R\Biggm|\int_{0<|z|\leq 1\atop x>0} \ind_{\{|t-x|<|z|^{1/\alpha}\}}\,\frakJ(\dd z,\dd x)=\infty\right\}.
	\]
	It is clear that the sets $G_\alpha$ are nondecreasing with respect to $\alpha$, and that each of them is distributed as the set $F_{\frakj_\alpha}$ defined as in~(\ref{eq:df:Fnu}), where $\frakj_\alpha$ denotes the pushforward of the L\'evy measure $\frakj$ under the mapping $z\mapsto|z|^{1/\alpha}$. Note that $\frakj_\alpha$ belongs to the collection $\rad01$. Moreover, letting $\beta$ denote the Blumenthal-Getoor exponent of the process, namely, the exponent
		\[
			\beta=
			\inf\left\{ \gamma\geq 0 \:\Biggm|\: \int_{0<|z|\leq 1} |z|^\gamma\,\frakj(\dd z)<\infty \right\}
			\in[0,2],
		\]
		we see that $\frakj_\alpha$ does not belong to $\rad01_1$ when $\alpha>1/\beta$. Due to Theorem~\ref{thm:Poissondesc}, the set $G_\alpha$ almost surely has full Lebesgue measure in the open interval $U=(0,\infty)$. In that case, one can actually show that $G_\alpha$ is almost surely equal to the whole interval $[0,\infty)$\,; this follows for instance from the result on Poisson random coverings obtained by Shepp~\cite{Shepp:1972lr} and mentioned after the statement of Theorem~\ref{thm:Poissondesc} above. The connexion with the singularity sets is then embodied by the next statement established by Jaffard~\cite{Jaffard:1999fg}, up to a minor assumption on the L\'evy measure.

\begin{Proposition}\label{prp:linkSZGalpha}
	With probability one, for any real number $h\geq 0$,
	\[
	S_Z(h)=\left(\bigcap_{\alpha>h}\downarrow G_\alpha\right)\setminus J_Z.
	\]
\end{Proposition}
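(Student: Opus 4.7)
The plan is to work on an almost sure event that absorbs the various null sets needed, and to establish the two inclusions separately. On the one hand, the random set $J_Z$ is at most countable, equal to the projection onto the time axis of the atoms of $\frakJ$. On the other hand, each $G_\alpha$ admits the description in terms of $\frakJ$ given above, and the mapping $\alpha\mapsto G_\alpha$ is nondecreasing, so the intersection over $\alpha>h$ can be reduced to a countable intersection along any decreasing sequence $\alpha_k\downarrow h$.

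For the inclusion $(\bigcap_{\alpha>h}G_\alpha)\setminus J_Z\subseteq S_Z(h)$, I would fix $t$ in the left-hand side and $\alpha>h$, and exploit membership in $G_\alpha$ directly. By definition this yields an infinite sequence of atoms $(z_n,x_n)$ of $\frakJ$ with $|t-x_n|<|z_n|^{1/\alpha}$, and admissibility forces $x_n\to t$ and $|z_n|\to 0$. Each such atom produces a jump of $Z$ at time $x_n$ of magnitude $|z_n|>|t-x_n|^\alpha$. Since $t\notin J_Z$, any candidate polynomial $P$ in the definition of the H\"older exponent must be continuous at $t$, so the oscillation of $Z-P(\cdot-t)$ on a neighbourhood of $t$ of radius $2|t-x_n|$ is still at least $|z_n|$ minus a negligible polynomial contribution. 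Hence $Z$ cannot be H\"older of exponent $\alpha$ at $t$, so $h_Z(t)\leq\alpha$. Letting $\alpha\downarrow h$ gives $h_Z(t)\leq h$, and therefore $t\in S_Z(h)$.

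For the reverse inclusion $S_Z(h)\subseteq(\bigcap_{\alpha>h}G_\alpha)\setminus J_Z$, I would argue contrapositively: assuming $t\notin J_Z$ and $t\notin G_\alpha$ for some $\alpha>h$, I want $h_Z(t)\geq\alpha>h$. The hypothesis $t\notin G_\alpha$ means only finitely many atoms $(z,x)$ of $\frakJ$ satisfy $|t-x|<|z|^{1/\alpha}$, so for some $\delta>0$ every atom with $|t-x|<\delta$ satisfies $|z|\leq|t-x|^\alpha$. Splitting $Z=Z^{>\delta}+Z^{\leq\delta}$ where $Z^{>\delta}$ collects jumps of size larger than $\delta$ and $Z^{\leq\delta}$ is the compensated sum of the smaller ones, the contribution of $Z^{>\delta}$ near $t$ reduces to finitely many jumps whose magnitudes are dominated by $|t-x|^\alpha$, yielding an explicit $\alpha$-H\"older bound at $t$ (after subtracting the affine drift). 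The main obstacle is then to prove an almost sure uniform H\"older estimate for the compensated small-jump martingale $Z^{\leq\delta}$ of the form $|Z^{\leq\delta}_{t'}-Z^{\leq\delta}_t|=\bigo(|t'-t|^{1/\gamma}|\log|t'-t||^C)$ for every $\gamma>\beta$ and every $t$ outside a universal null set, which combined with $\alpha>h$ (and the case distinction $\alpha<1/\beta$ versus $\alpha\geq 1/\beta$) gives the required lower bound $h_Z(t)\geq\alpha$. This regularity result is the true technical core: it is obtained by a dyadic multiscale decomposition, slicing the small jumps according to size and combining martingale maximal inequalities, exponential deviation bounds, and a Borel--Cantelli argument over a dyadic grid of time intervals; the Blumenthal--Getoor exponent $\beta$ enters precisely through the integrability condition on $\frakj$ that makes these exponential moments finite. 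When $h\geq 1/\beta$, the intersection $\bigcap_{\alpha>h}G_\alpha$ is almost surely equal to $(0,\infty)$ (since then $\frakj_\alpha\notin\rad01_1$, and Poisson covering results ensure the whole half-line is covered), reducing the identity to the known fact that $h_Z(t)\leq 1/\beta$ at every continuity time almost surely.
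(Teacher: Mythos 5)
Your two-inclusion strategy is exactly the one used in the paper: the proof in the text simply refers to Jaffard's Proposition~1 in~\cite{Jaffard:1999fg} for the bulk of the argument, and supplies only the additional uniform estimate that lets one drop an integrability hypothesis appearing in~\cite{Jaffard:1999fg}. Your first inclusion (a jump of magnitude larger than $|t-x_n|^\alpha$ at $x_n\to t$ forces $h_Z(t)\le\alpha$ when $t\notin J_Z$, then let $\alpha\downarrow h$) is the standard argument and is correct. For the reverse inclusion, you have correctly identified the technical core --- an almost sure uniform modulus-of-continuity estimate for the compensated small-jump part of $Z$, proved by Bernstein/exponential deviation bounds, maximal inequalities, and Borel--Cantelli over dyadic grids --- which is precisely what the paper establishes.

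One point of care worth flagging: as written, your key estimate $|Z^{\le\delta}_{t'}-Z^{\le\delta}_t|=\bigo(|t'-t|^{1/\gamma}|\log|t'-t||^C)$ cannot hold with a \emph{fixed} truncation level $\delta$ for all $t$ outside a null set, because $Z^{\le\delta}$ still has jumps on a set of times that is almost surely dense (the L\'evy measure restricted to $\{|z|\le\delta\}$ is infinite), so the modulus of continuity is destroyed arbitrarily close to any $t$. The truncation level must be matched to the time scale: the paper's estimate reads
\[
\sup_{0\le t\le t'\le n,\ t'-t\le 2^{-j}}\bigl|Z^{2^{-\alpha j}}_{t'}-Z^{2^{-\alpha j}}_t\bigr|\le j\,2^{-\alpha j}
\]
for all $j$ large, almost surely. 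Once the truncation is scale-dependent, the jumps of size beyond the truncation level near $t$ are controlled separately by $t\notin G_\alpha$, and the compensated truncated part is controlled by the deviation bound; this gives $|Z_{t'}-Z_t|=\bigo(|t'-t|^\alpha\log(1/|t'-t|))$ directly. Your phrase ``slicing the small jumps according to size'' suggests you have this multiscale truncation in mind, so this is a matter of stating the lemma precisely rather than a wrong idea.

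Finally, to obtain the statement \emph{for all} $h\ge 0$ simultaneously on a single almost sure event, one should observe (as you do) that $\alpha\mapsto G_\alpha$ is monotone, restrict to a countable dense set of $\alpha$'s, and take the uniform estimate over a countable family of parameters; the paper's formulation handles this by quantifying over all $\alpha\in(0,1/\beta)$ and all $n\geq 1$.
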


\begin{proof}
	This is essentially Proposition~1 in~\cite{Jaffard:1999fg}, except that the arguments therein require an additional integrability assumption on the L\'evy measure $\frakj$, namely, the condition numbered by~(3) in~\cite{Jaffard:1999fg}. We refer however to that paper for the proof, and we content ourselves with briefly explaining how to drop the assumption.
	
	We assert that, in order to get rid of that inessential assumption, it suffices to combine Jaffard's approach with the following uniform estimate on the processes $(Z^\delta_t)_{t\geq 0}$\,: for any integer $n\geq 1$ and any real number $\alpha\in (0,1/\beta)$, with probability one, for any integer $j$ large enough,
	\[
		\sup_{0\leq t\leq t'\leq n \atop t'-t\leq 2^{-j}} |Z^{2^{-\alpha j}}_{t'}-Z^{2^{-\alpha j}}_{t}| \leq j\,2^{-\alpha j}.
	\]
	Let us establish this bound. For $\delta\in(0,1)$ and $i\in\{1,\ldots,d'\}$, let $(Z^{\delta,i}_t)_{t\geq 0}$ denote the $i$-th coordinate of the process $(Z^\delta_t)_{t\geq 0}$. We apply a Bernstein-type inequality for real-valued integrals with respect to compensated Poisson point measures, see {\em e.g.}~\cite[Lemma~4]{Durand:2010uq}. We thus infer that for any real numbers $T\geq 0$ and $\zeta>0$,
	\[
		\prob\left(\sup_{0\leq t\leq T} |Z^{\delta,i}_t|\geq\zeta\right)
		\leq 2\exp\left(-\frac{3\zeta^2}{2\delta\zeta+6T\Iota_{\delta,i}}\right)
		\quad\text{with}\quad
		\Iota_{\delta,i}=\int_{0<|z|\leq\delta} z_i^2\,\frakj(\dd z),
	\]
	where $z_i$ is the $i$-th coordinate of $z$. Furthermore, for any interval $I\subseteq [0,\infty)$, any real numbers $\delta\in(0,1)$ and $\zeta>0$, and any positive real number $\eta\leq\min\{1/2,|I|\}$,
	\[
		\prob\left( \sup_{t,t'\in I \atop |t'-t|\leq\eta} |Z^{\delta,i}_{t'}-Z^{\delta,i}_t| \geq \zeta \right)
		\leq \frac{2|I|}{\eta}\,\prob\left(\sup_{0\leq t\leq 1/q} |Z^{\delta,i}_t|\geq\frac{\zeta}{4} \right),
	\]
	where $q$ denotes the integer part of $1/\eta$. This results from the stationarity of the increments of the process, and a standard comparison with its increments on the natural lattice where consecutive points are distant from $1/q$. On top of that, for any $\alpha\in (0,1/\beta)$, it is clear that $\eta\,\Iota_{\eta^{\alpha},i}=\smallo(\eta^{2\alpha})$ as $\eta\to 0$. It follows that for any real number $c>0$, any integer $n\geq 1$, and for $\eta$ small enough,
	\[
		\prob\left(\sup_{0\leq t\leq t'\leq n \atop t'-t\leq\eta} |Z^{\eta^\alpha,i}_{t'}-Z^{\eta^\alpha,i}_t|\geq c\,\eta^\alpha\log\frac{1}{\eta}\right)\leq 4\,n\,\eta^{c/4}.
	\]
	To obtain the required bound, it suffices to apply the Borel-Cantelli lemma, and to merge all the coordinates together.
\end{proof}

We may now describe the size and large intersection properties of the singularity sets $S_Z(h)$. It follows from Proposition~\ref{prp:linkSZGalpha} and the preceding discussion that with probability one, the sets $S_Z(h)$, for $h\geq 1/\beta$, all coincide with $[0,\infty)\setminus J_Z$. However, the set $J_Z$ of jump times of the process is almost surely countable. This means that these sets have full Lebesgue measure in $(0,\infty)$.

We rule out, as trivial, this case and we assume from now on that $h<1/\beta$. Then, Proposition~\ref{prp:linkSZGalpha} shows that the singularity sets $S_Z(h)$ are based on the sets $G_\alpha$, for $\alpha<1/\beta$. In that situation, it follows from the definition of the Blumenthal-Getoor exponent that the measure $\frakj_\alpha$ belongs to the collection $\rad01_1$. Theorem~\ref{thm:Poissondesc} entails that the set $G_\alpha$ is almost surely $\frakj_\alpha$-describable in $U=(0,\infty)$. Using the monotonicity of these sets in conjunction with Propositions~\ref{prp:monomajomino} and~\ref{prp:cupcapmajomino}, we deduce that with probability one, for any real number $h\in[0,1/\beta)$,
	\[
		\mino(S_Z(h),(0,\infty))\cap\gauge^\infty=\gauge(\frakj,h)
		\qquad\text{and}\qquad
		\majo(S_Z(h),(0,\infty))\supseteq\gauge(\frakj,h)^\complement.
	\]
	We also used again the fact that $J_Z$ has Lebesgue measure zero almost surely, so that removing this set does not alter the involved minorizing and majorizing classes. Above, $\gauge(\frakj,h)$ and $\gauge(\frakj,h)^\complement$ denote the set of gauge functions defined by
	\[
		\gauge(\frakj,h)=\bigcap_{h<\alpha<1/\beta}\downarrow\gauge(\frakj_\alpha)
	\]
	and its complement in $\gauge^\infty$, respectively. This means in particular that the singularity sets $S_Z(h)$ are fully describable in $(0,\infty)$. Moreover, it is possible to show that the collection $\gauge(\frakj,h)$ is right-open, see the proof of~\cite[Proposition~5]{Durand:2007fk}. Thus, applying Proposition~\ref{prp:descincl}, we end up with the next statement.

\begin{Theorem}
	With probability one, for any real number $h\in[0,1/\beta)$,
	\[
		\mino(S_Z(h),(0,\infty))\cap\gauge^\infty=\gauge(\frakj,h)
		\qquad\text{and}\qquad
		\majo(S_Z(h),(0,\infty))=\gauge(\frakj,h)^\complement.
	\]
\end{Theorem}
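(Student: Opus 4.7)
The plan is to fill in the arguments sketched in the paragraph immediately preceding the statement, paying particular attention to making the almost-sure event uniform in the uncountably many values of~$h$, and then to invoke Proposition~\ref{prp:descincl} to convert containments into equalities.

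First, I would fix a countable dense subset $D\subseteq(0,1/\beta)$, for instance $D=\Q\cap(0,1/\beta)$. For each $\alpha\in D$, the pushforward measure $\frakj_\alpha$ belongs to $\rad01_1$, so Theorem~\ref{thm:Poissondesc} gives an almost-sure event on which $G_\alpha$ is $\frakj_\alpha$-describable in $(0,\infty)$. Intersecting these countably many events with the almost-sure event of Proposition~\ref{prp:linkSZGalpha} and with the standard almost-sure event that the jump set $J_Z$ is countable, I obtain a single almost-sure event on which every statement below holds simultaneously for all $h\in[0,1/\beta)$.

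Next, fix $h\in[0,1/\beta)$. By monotonicity of $\alpha\mapsto G_\alpha$ and density of $D$ in $(h,1/\beta)$, Proposition~\ref{prp:linkSZGalpha} may be rewritten as $S_Z(h)=\bigl(\bigcap_{\alpha\in D,\,\alpha>h}G_\alpha\bigr)\setminus J_Z$. Applying Propositions~\ref{prp:monomajomino} and~\ref{prp:cupcapmajomino} to this countable intersection of $\frakj_\alpha$-describable sets yields
\[
\mino\Bigl(\bigcap_{\alpha\in D,\,\alpha>h}G_\alpha,(0,\infty)\Bigr)\cap\gauge^\infty=\bigcap_{\alpha\in D,\,\alpha>h}\gauge(\frakj_\alpha),
\]
together with the containment $\majo\bigl(\bigcap_{\alpha\in D,\,\alpha>h}G_\alpha,(0,\infty)\bigr)\supseteq\gauge^\infty\setminus\bigcap_{\alpha\in D,\,\alpha>h}\gauge(\frakj_\alpha)$. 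Since the map $\alpha\mapsto\gauge(\frakj_\alpha)$ is nondecreasing on $(0,1/\beta)$ (because $|z|^{1/\alpha}$ is nondecreasing in $\alpha$ on $(0,1]$ and any $d$-normalised gauge is nondecreasing near the origin), density of $D$ shows that this $D$-indexed intersection equals $\bigcap_{h<\alpha<1/\beta}\gauge(\frakj_\alpha)=\gauge(\frakj,h)$. Because $J_Z$ is countable, its net measure $\netm^{g}_\infty$ vanishes for every normalised gauge $g$, so a subadditivity argument shows that removing $J_Z$ from a member of $\lic{g}{(0,\infty)}$ produces another member of the same class; combined with the monotonicity of $\majo$ recorded in Proposition~\ref{prp:monomajomino}, this transfers both containments to $S_Z(h)$ itself.

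Finally, the collection $\gauge(\frakj,h)$ is right-open by the argument in \cite[proof of Proposition~5]{Durand:2007fk}, so Proposition~\ref{prp:descincl} upgrades the two containments to the claimed equalities. The main obstacle, and essentially the only step requiring genuine thought beyond what the excerpt already sketches, is the uniformity of the almost-sure event in $h$; this is resolved by the reduction to the countable dense set $D$ together with monotonicity and density, while everything else is a direct assembly of tools already established in the excerpt.
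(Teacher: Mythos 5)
Your proof is correct and follows essentially the same plan as the paragraph preceding the theorem in the paper: reduce $S_Z(h)$ to the countable intersection of $\frakj_\alpha$-describable sets $G_\alpha$ via Proposition~\ref{prp:linkSZGalpha}, apply Propositions~\ref{prp:monomajomino} and~\ref{prp:cupcapmajomino}, discard $J_Z$, and conclude with the right-openness of $\gauge(\frakj,h)$ and Proposition~\ref{prp:descincl}. The only places you go beyond the paper are in making explicit the reduction to a countable dense set of values of $\alpha$ so that the almost-sure event is genuinely uniform in $h$ (which the paper leaves implicit behind the word ``monotonicity''), and in using a direct net-measure subadditivity argument to remove $J_Z$ where the paper appeals, more in the style of the proof of Corollary~\ref{cor:inhomLiouville}, to $J_Z$ being Lebesgue null and $\R\setminus J_Z$ therefore belonging to every class $\lic{g}{(0,\infty)}$ --- both routes are valid and give the same conclusion.
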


Subsequently applying Corollary~\ref{cor:linkopenmet}, we deduce a thorough description of the size and large intersection properties of the singularity sets. In particular, each singularity set $S_Z(h)$ is a set with large intersection in $(0,\infty)$ with dimension $\beta h$. Finally, under explicit assumptions on the L\'evy measure $\frakj$, a more tractable expression for the sets $\gauge(\frakj,h)$ may be obtained. For instance, in the stable case, the polar representation of $\frakj$ is the product of the measure $\frakn_\beta$ defined as in~(\ref{eq:df:frakns}) with a finite measure on the unit sphere. Hence, each measure $\frakj_\alpha$ coincides with $\frakn_{\alpha\beta}$ up to some multiplicative constant; in view of~(\ref{eq:linkfraknfraksdesc}), the sets $\gauge(\frakj,h)$ and $\gauge(\beta h)$ are thus the same. We conclude that with probability one, for any real number $h\in[0,1/\beta)$, the singularity set $S_Z(h)$ is $(\beta h)$-describable in $(0,\infty)$.


\end{document}